\newtheorem{thm}{Theorem}[subsection]
\newtheorem{lmm}[thm]{Lemma}
\newtheorem{prop}[thm]{Proposition}
\newtheorem{defn}[thm]{Definition}
\newtheorem{assumption}[thm]{Assumption}
\theoremstyle{definition}
\newtheorem{remark}[thm]{Remark}
\newtheorem{ex}[thm]{Example}
\numberwithin{equation}{subsection}
\renewcommand{\P}{\mathbb{P}}
\renewcommand{\d}{\mathrm{d}}
\date{}
\newtheorem{theorem}{Theorem}[section]
\newtheorem{corollary}[theorem]{Corollary}
\newcommand{\continuation}{??}
\Crefname{equation}{}{}
\Crefname{theorem}{Theorem}{Theorems}
\Crefname{assumption}{Assumption}{Assumptions}
\Crefname{remark}{Remark}{Remarks}
\Crefname{lemma}{Lemma}{Lemmas}
\Crefname{lemma}{Lemma}{Lemmas}
\Crefname{enumi}{}{}
\begin{document}
 \title{On the stability of solutions to random optimization problems under small perturbations}
\author{Sourav Chatterjee\thanks{Department of Statistics, Stanford University, 390 Jane Stanford Way, Stanford, CA 94305, USA. Email: \href{mailto:souravc@stanford.edu}{\tt souravc@stanford.edu}.  
}}
\affil{Stanford University}
\author{Souvik Ray\thanks{School of Data Science and Society, University of North Carolina, 211 Manning Drive, Chapel Hill, NC 27599, USA. Email: \href{mailto:souvikr@unc.edu}{\tt souvikr@unc.edu}. 
}}
\affil{ University of North Carolina at Chapel Hill}
\maketitle

\begin{abstract}
Consider the Euclidean traveling salesman problem with $n$ random points on the plane. Suppose that one of the points is shifted to a new random location. This gives us a new optimal path. Consider such shifts for each of the $n$ points. Do we get $n$ very different optimal paths? In this article, we show that this is not the case --- in fact, the number of truly different paths can be at most $\bigO(1)$ as $n\to \infty$. The proof is based on a general argument which allows us to prove similar stability results in a number of other settings, such as branching random walk, the Sherrington--Kirkpatrick model of mean-field spin glasses, the Edwards--Anderson model of short-range spin glasses, and the Wigner ensemble of random matrices. \newline
\newline
\noindent {\scriptsize {\it Key words and phrases.} Combinatorial optimization, stability, spin glass, random matrix.}
\newline
\noindent {\scriptsize {\it 2020 Mathematics Subject Classification.} 90C27, 60C05, 82B44, 82D30}
\end{abstract}



\tableofcontents 

\section{Introduction}
\label{intro}
\subsection{Motivation}
Let $X_1, \ldots, X_n$ be  independently distributed uniform random points in the $[0,1]^d$, for some $d \geq 2$. Consider the traveling salesman problem (TSP), i.e., the problem of finding the tour along these $n$ points, starting and ending at the same point and visiting every other points exactly once, with the shortest Euclidean length. Since the distribution of the random points is absolutely continuous with respect to the Lebesgue measure, the shortest tour will be unique almost surely. Let us denote this optimal tour by $\widehat{G}=\widehat{G}(X_1, \ldots, X_n)$. Now take $i \in \{1, \ldots,n\}$ and replace the $i$-th point $X_i$ with a copy $X_i^{\prime}$, independent of $(X_1, \ldots, X_n)$. The new set of $n$ points $\{X_1, \ldots, X_{i-1}, X_i^{\prime}, X_{i+1}, \ldots, X_n\}$ will generate another optimal tour, which we denote by $\widehat{G}_i$. Varying $i$ over $\{1, \ldots,n\}$, we get a set consisting of the $n$ optimal tours $\{\widehat{G}_1, \ldots, \widehat{G}_n \}$. Are these paths likely to be all very different than each other? The following result, which is one of the main theorems of this paper, shows that this is not the case.

\begin{thm}{\label{clean:tsp}}
Fix $d \geq 2$ and take $X_1,\ldots, X_n \stackrel{i.i.d.}{\sim} \mathrm{Uniform}([0,1]^d)$. For each $i \in \{1, \ldots,n\}$, take a copy $X_i^{\prime}$ of $X_i$, independent of $(X_1, \ldots,X_n)$. Define $X_j^{(i)}$ to be equal to $X_i^{\prime}$ if $j=i$ and to be equal to $X_j$ if $j \neq i$. Let $\widehat{G}_i$ be the (almost surely unique) shortest traveling salesman tour through $\{X_1^{(i)}, \ldots, X_n^{(i)}\}$. In other words,
$$ \widehat{G}_i := \arg \min_{G \in \mathcal{G}_n} \sum_{\{j,k\} \in E(G)} \|X_j^{(i)} - X_k^{(i)} \|_2,$$
where $\|\cdot \|_2$ denotes the $\ell_2$-norm, $\mathcal{G}_n$ is the collection of all Hamiltonian tours in a complete graph with $n$ points and $E(G)$ denotes the set of edges for any graph $G$. We can metrize the space $\mathcal{G}_n$ using the normalized graph distance $d_n$ defined as 
 $$ d_n(G_1,G_2) := \dfrac{1}{n} \operatorname{card} ( E(G_1) \Delta E(G_2)), \; \forall \; G_1, G_2 \in \mathcal{G}_n, $$
 where $\Delta$ denotes the symmetric difference between two sets. Then for any $\varepsilon, \delta >0$, there exists $K=K(\varepsilon,\delta) \in \mathbb{N}$ such that for any $n \geq 1$ with probability at least $1-\delta$, there exists a (random) subset $S \subseteq \{1, \ldots, n\}$  with cardinality at least $(1-\varepsilon)n$ satisfying the property that the set $\{\widehat{G}_i : i \in S\}$  can be covered by $K$ many balls (with respect to metric $d_n$) of radius $\varepsilon$. 
\end{thm}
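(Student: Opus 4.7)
My plan is to reduce the clustering claim to a quantitative stability estimate for the Euclidean TSP optimum under single-coordinate resampling, and then apply Markov's inequality. Concretely, I would aim to prove a bound of the form
$$\mathbb{E}\bigl[|E(\widehat{G}) \,\Delta\, E(\widehat{G}_i)|\bigr] \leq C,$$
with $C = C(d)$ independent of $n$ and $i$, and then show that with high probability all but a bounded number of the $\widehat{G}_i$'s lie within $d_n$-distance $\varepsilon$ of the single reference tour $\widehat{G}$, so that $K=1$ suffices.

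The first step is the stability bound. Intuitively, replacing $X_i$ by an independent copy $X_i'$ should only perturb the optimum tour locally, in a neighborhood of $X_i$ and $X_i'$. I would try to make this rigorous by building an explicit near-optimal competitor to $\widehat{G}_i$: splice $X_i'$ into $\widehat{G}$ between its two nearest neighbors in $\widehat{G}$ and repair the former neighbors of $X_i$ with a short local detour, modifying only $O(1)$ edges. Comparing the length of the competitor to that of $\widehat{G}_i$ via optimality, and using that typical nearest-neighbor distances in a cloud of $n$ uniform points in $[0,1]^d$ are of order $n^{-1/d}$, should let me bound the expected number of disagreeing edges by a dimension-dependent constant.

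For the second step, summing the stability bound and using $d_n(\cdot,\cdot) = |E(\cdot)\,\Delta\,E(\cdot)|/n$ gives
$$\mathbb{E}\Bigl[\sum_{i=1}^n d_n(\widehat{G}, \widehat{G}_i)\Bigr] = \frac{1}{n}\sum_{i=1}^n \mathbb{E}\bigl|E(\widehat{G}) \,\Delta\, E(\widehat{G}_i)\bigr| \leq C,$$
so by Markov's inequality, with probability at least $1-\delta$, the number of indices $i$ with $d_n(\widehat{G}, \widehat{G}_i) > \varepsilon$ is at most $C/(\varepsilon\delta)$, a constant independent of $n$. Taking $S$ to be the complement of this bad set yields $|S| \geq (1-\varepsilon)n$ as soon as $n \geq C/(\varepsilon^2 \delta)$, while $\{\widehat{G}_i : i \in S\}$ is contained in the single $d_n$-ball of radius $\varepsilon$ around $\widehat{G}$; the finitely many smaller $n$ are handled by the trivial choice $K=n$ and then absorbed into a final constant $K(\varepsilon,\delta)$.

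The main obstacle is the stability bound of the first step. Its content, that a single-coordinate perturbation of the Euclidean TSP optimum typically only flips $O(1)$ edges in expectation, is classical in spirit (in the vein of Rhee--Talagrand type estimates) but combinatorially delicate: one must control atypical configurations where $X_i$ or $X_i'$ lands in a sparse region, in which case the rerouting could a priori cascade across many vertices before stabilizing. I expect to need a careful coupling of the two optima together with concentration of the point density on scale $n^{-1/d}$. If the pointwise-in-$i$ form of the bound is too strong, an averaged version $\frac{1}{n}\sum_i \mathbb{E}|E(\widehat{G}) \,\Delta\, E(\widehat{G}_i)| \leq C$ would still suffice to carry out the Markov step and conclude.
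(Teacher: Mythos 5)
The second half of your argument (Markov plus a union over indices) is fine, but it sits entirely on top of your Step 1 estimate $\mathbb{E}\bigl|E(\widehat{G})\,\Delta\,E(\widehat{G}_i)\bigr|\leq C$, and that estimate is exactly where the proposal breaks. Your splicing construction produces a competitor tour on the perturbed point set whose \emph{length} exceeds the optimal length of $\widehat{G}_i$ by at most a couple of nearest-neighbour distances, i.e.\ by $\bigO(n^{-1/d})$ in expectation. That is an estimate on values of the objective, not on edge sets. To pass from ``within $\bigO(n^{-1/d})$ of the optimal length'' to ``differs from the optimizer in only $\bigO(1)$ edges'' you need to know that any tour close to optimal in value is close to the optimal tour in $d_n$ — this is precisely the almost essential uniqueness (AEU) property for the Euclidean TSP, which Aldous conjectured and which remains open (the paper discusses this explicitly; the analogous scaling-exponent result is only known for the MST). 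Even your weakened, averaged version $\tfrac1n\sum_i \mathbb{E}|E(\widehat{G})\Delta E(\widehat{G}_i)|\leq C$ encodes the same structural rigidity of the optimizer and is not available by a competitor/optimality argument alone; note also that if your Step 1 held you would get the conclusion with $K=1$, which is strictly stronger than the theorem and would resolve questions the paper deliberately leaves untouched.

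The paper's route avoids ever comparing $\widehat{G}_i$ to the single reference tour $\widehat{G}$. It constructs a ``sister'' tour $G^*_{n,i}$ by modifying $\widehat{G}_i$ itself (delete $X_i'$, reconnect its two neighbours, splice $X_i$ next to its nearest neighbour), so that $d_n(G^*_{n,i},\widehat{G}_i)\leq 6/n$ holds trivially by construction, and shows $\mathbb{E}[\psi_n(\mathbf{X}^n;G^*_{n,i})-\psi_{n,\mathrm{opt}}(\mathbf{X}^n)]=\bigO(n^{-1/d})$ using nearest-neighbour distances — this is the only role the splicing plays. The genuinely hard step, which has no counterpart in your proposal, is proving that the set $\mathcal{N}_{n,\theta_n}$ of tours within $\theta_n=\bigO(n^{-1/d})$ of the optimum has $\varepsilon$-packing numbers that are tight in $n$ (Theorem \ref{nearoptgraph}, via a Metropolis--Hastings/Langevin perturbation of the inputs and Sudakov minoration). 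With that, most $\widehat{G}_i$ lie near a set of uniformly bounded metric entropy, yielding coverage by $K(\varepsilon,\delta)$ balls — possibly many balls, not one. You would need either to supply a proof of your single-tour stability estimate (essentially the open AEU-type statement) or to replace it with a bound on the entropy of the near-optimal set, as the paper does.
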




In other words, most of the new tours $\widehat{G}_i$ are asymptotically located in a small neighborhood of a finite number of graphs.  \Cref{clean:tsp} is a special case of the general version in  \Cref{thm:tsp}. The traveling salesman problem is not special in this regard. Given any random optimization problem, we can ask the same question about the ``stability" of its solutions  in the following sense: \textit{If we replace small parts of the random input data by independent copies, can the set of optimizers be well-approximated by a finite set with high probability?}  In this article, we develop a general theory that allows us to prove versions of the above theorem for a plethora  of optimization problems. 


The paper is organized in the following fashion. In \Cref{litreview}, we discuss some motivations behind our investigation of ``stability" in the sense described above. In that context, we mention some other notions of stability that exist in the current literature and how they compare to the above paradigm. In \Cref{stochopt}, we formally define random optimization problem and the notion of stability that we are interested in. We also describe the general strategy to prove stability for a given random optimization problem. In \Cref{sec:thm}, we describe a couple of general  theorems which provide sufficient conditions on the random optimization problems to satisfy a stability property. In \Cref{sec:lin}, we apply these results to prove stability for a list of examples from statistical physics, computational geometry, combinatorial optimization, and random matrix theory, to name a few. Discussion of relevant works form mathematical and statistical literature are appropriately distributed throughout the entire paper. Throughout \Cref{stochopt} and \Cref{sec:thm}, we shall use the TSP as a demonstrative example to elucidate the definitions and notations introduced, while the concrete analysis of TSP in our context is deferred to \Cref{sec:nonlin}. Finally, \Cref{appnd} includes supplementary materials and digressions which are instrumental to our discussions in \Cref{stochopt} and \Cref{sec:lin}.

\subsection{Literature review}{\label{litreview}}

It is natural to inquire about the motivation for considering stability under the perturbation scheme described above, i.e., replacing one (or an appropriate small fraction) of the input data entries by independent copies. For example, a popular method for perturbing the data entails taking a completely different dataset from a slightly perturbed version of the original probability measure and then evaluate the change in statistical properties of the optimal solution as the size of the dataset grows large. This technique, known as \textit{influence functions}, is a standard way in statistical literature to quantify the robustness of statistical inference procedures under contaminated datasets, see \cite{robust} for detailed discussion of this aspect. One of earliest examples of considering the stability of random optimization problem under perturbations in the probability literature came in \cite{dubins}  
which considered the stability of optimal strategies in gambling problems under small changes in the house strategy. \cite{newman} discusses stability of ground states for spin glass models under perturbations of a single bond disorder, which somewhat resembles our perturbation scheme.  

Our perturbation scheme is distinct from all of these above-mentioned procedure since it changes a portion of the data points while \textit{keeping the distribution of the data input unchanged}. Perturbations of this form come up quite often in the literature on concentration inequalities. As an example, suppose we want to quantify the concentration of $f(X_1, \ldots, X_n)$ around its expected value, where $X_1, \ldots, X_n$ are i.i.d.~random variables and $f$ is some function. To measure the influence of the $j$-th co-ordinate of the argument on the output, we take an independent copy $X_j^{\prime}$ and consider the difference $\Delta_jf(X) := f(X_1, \ldots, X_n)-f(X_1, \ldots, X_{j-1}, X_j^{\prime}, X_{j+1}, \ldots, X_n)$. Some distributional properties (for example, conditional variance, entropy etc.) of the random variables $\left\{\Delta_jf(X) : j=1, \ldots, n \right\}$  can then be combined to obtain concentration inequalities for $f(X_1, \ldots,X_n)$. Standard concentration inequalities like the Azuma--Hoeffding bound and the bounded difference inequality fall under this category. Similar ideas lie behind the tensorization method, which is extensively used in high-dimensional applications to obtain dimension-free concentration inequalities by tensorizing concentration inequalities from lower dimensions. See the exposition by \citet*{boucheron} for details. These ideas have been hugely influential in the theoretical computer science literature, especially in the study of Boolean functions where one changes one co-ordinate of the argument (keeping others fixed) to measure the influence of that co-ordinate. We defer to the excellent book by \citet{ryan} for a survey. 

Another example of this perturbation method comes from proving the central limit theorems (CLTs). For example, \citet{cha} devised a version of Stein's method for proving CLTs for functions of independent random variables  using  properties of small perturbations such functions. More generally, there is a deep connection between the stability of solutions to random optimization problems and central limit theorems for the optimum values.  A seminal work in this direction is by~\citet{penrose:yukich}, who used stability under perturbations to prove CLTs for complex geometric functionals defined on Poisson and Binomial point process. The notion of ``stability" used there is defined in terms of ``add one cost", which means the increment in the value of the functional caused by inserting a point at the origin.  This approach to proving CLTs for geometric functionals has been subsequently developed in \cite{penrose:yukich2, penrose:yukich3, penrose:yukich4,penrose}. Some recent development on this front can be found in \citet*{lachieze} and~\citet{khanh}.


Leaving the discussion of the rationale behind our notion of stability, let us briefly mention the main strategy to prove stability in our context and how it relates to existing works in the literature. A key concept in our strategy is the notion of \textit{near optimal solutions}, which are  basically, as the name suggests, the points for which the value of the objective function in the optimization problem is close to its optimum value; the degree of closeness varies depending upon the problem. For example, in case of TSP on $[0,1]^d$ as introduced in \Cref{clean:tsp}, we consider as the optimal solutions all such Hamiltonian tours with tour length $\bigO(n^{-1/d})$ away from the optimum tour length.  We then establish that with high probability this set of near optimal solutions can be covered by a finite number of balls of radius $\varepsilon$ in the normalized Hamming metric. Upon further showing that most of the different optimizers obtained from the perturbed datasets (where we replace small part of the original dataset by independent copies) lie in the near optimal set for the original input data, we get the validity of our notion of stability for that particular problem. 

The investigation of the near-optimal solution set can be thought of as another approach to inspect stability of random optimization problems since we expect that ``stable" optimization problems would result in near-optimal solution sets whose elements are not too different from each other. One such approach is the concept of \textit{almost essential uniqueness} (AEU), introduced by David Aldous~\cite{aldous}, which can be described as follows: \textit{Any solution to the random optimization problem, which is a positive distance (with respect to some properly normalized metric like $d_n$ in case of TSP) away from the optimal solution, results in the value of the objective function which is typically a positive fraction away from the optimal value, regardless the size of input data}. In other words, the only near optimal solutions are those points which are very close to the optimal points and thus the optimum point is \textit{essentially unique}. The  AEU property was proved by Aldous~\cite{aldous} for the random assignment model, and was conjectured to be true for the Euclidean TSP.  The AEU property also holds true for minimum spanning tree (MST) problem as shown in \cite{aldous:mst}. 

A related but opposite concept is the \textit{multiple valley property} (MVP) (also called the multiple peaks property) introduced by Chatterjee~\cite{chabook}. An optimization problem  is said to satisfy MVP if \textit{with high probability there are large number of points which are some positive distance away from each other but the values of the objective functions evaluated at those points are only an infinitesimal fraction away from the optimal value} and thus in essence there are many substantially different near-optimal solutions.  The MVP property was demonstrated to be closely related to the phenomena of \textit{superconcentration} and \textit{chaos} as introduced in \cite{chabook}. MVP was also  proved for a number of problems, including the directed polymer model, the Sherrington--Kirkpatrick model of spin glasses, and the largest eigenvector of GOE and GUE random matrices in~\cite{chabook}.  Since then, MVP (or absence of it) and related concepts of superconcentration and chaos have been established for a number of interesting examples: See \cite{ding} for MVP in Gaussian fields,  \cite{kuo-chen} and \cite{kuo-chen2} for mixed $p$-spin models, \cite{ganguly2020} for dynamical last passage percolation, \cite{cha:surface} for growing random surfaces, \cite{ahlberg} for first-passage percolation and \cite{chaea} for the Edwards--Anderson model.

Compared to  AEU and MVP, we look at near-optimal solutions  in a much smaller window. This contrast is due to the nature of the perturbations that we are considering. We only care about near-optimal solutions in an window size of the same order as the typical amount by which the optimal value changes when one random input is replaced by an independent copy. This, in general, is much smaller than typical size of the optimal value of the problem, which is the typical window size while considering AEU and MVP. This contrast needs to be kept in mind to not get confused by the disparity of language between our work and \cite{chabook}. An example of this disparity is the  statement ``the Sherrington--Kirkpatrick model is chaotic under small perturbations", proved in \cite{chabook}, where being ``chaotic" is equivalent (in some sense) to satisfying MVP. On the other hand, in this paper we will prove the statement ``the Sherrington--Kirkpatrick model is stable under small perturbations".  However, in view of the  different nature of  perturbations involved, these two statements are not contradictory. In \Cref{stochopt}, we give more technical account of AEU and MVP and their comparison with our results.

For other recent works on the structure and geometry of the near optimal solution sets of random optimization problems, see \cite{ding2014} and \cite{biskup} for discrete Gaussian free fields, \cite{bates} for Gaussian disordered systems at low temperature, \cite{mordant} for Gaussian random assignment field and \cite{ganguly2023} for Gaussian polymer model.  Another recent work on the topic of near-optimal solution of random optimization problems, although not directly relevant to our work,  is the paper of Gamarnik et al.~\cite{gamarnik}, where the authors have established the inadequacy of two popular algorithms in producing near-optimal solutions in a broad class of random optimization problems.


\subsection{Notation}
 We shall write $[n]$ for the set $\left\{1, \ldots, n\right\}$. The  $\ell_p$  norm  of a vector  $x \in \mathbb{R}^d$ will be denoted by $\|x\|_p$. The inner product of two vectors $x, y \in \mathbb{R}^d$ will be denoted by $x \cdot y$. The notation $X^{\top}$ will be used to denote the transpose of a matrix $X$, whereas $\|X \|_{\mathrm{op}}$ will denote its operator norm, i.e., for an $m \times n$ matrix $X$, we have  $\|X \|_{\mathrm{op}} := \sup \left\{\|Xu\|_2 : u \in \mathbb{R}^n, \|u\|_2 \leq 1\right\}$.   For two sets $A$ and $B$, $A \Delta B$ will denote their symmetric difference. For two real symmetric matrices $A$ and $B$, we shall write $A \preceq B$ if $B-A$ is non-negative definite. The $k$-th order derivative of a real-valued function $g$ defined on some open interval of the real line will be denoted by $g^{(k)}$. For  $g$  defined on some open set in $\mathbb{R}^d$ with $d \geq 2$, its gradient, second order derivative and Laplacian will be denoted by $\nabla g$,$ \nabla^2 g$ and $\Delta g$, respectively.  Note that same symbols are used to denote set-difference and the Laplacian, but it will be clear from the context which meaning applies at each specific part of this article. For a bijective function $g$, its inverse will be denoted by $g^{-1}$ as usual, whereas if the function $g : \mathbb{R} \to \mathbb{R}$ is non-decreasing, the notation $g^{\leftarrow}$ will denote its (left continuous) inverse defined as $g^{\leftarrow}(y) := \inf \left\{x : g(x) \geq y\right\}$. 
   
   For two sequences of real numbers $\{a_n\}$ and $\{b_n\}$, we will use the notation $a_n \sim b_n$ to mean that $a_n/b_n\to 1$ as $n \to \infty$.  The notation $a_n=o(b_n)$ will mean that $a_n/b_n \to 0$ as $n \to \infty$, and $a_n=\mathcal{O}(b_n)$ will mean that the sequence $\{a_n/b_n : n \geq 1\}$ is bounded. On the other hand, $a_n \gg b_n$ will imply that $a_n/b_n \to \infty$ as $n \to \infty$.
   
   For a random variable $X$, the notation $X \sim \mathcal{P}$ will mean that $X$ has law $\mathcal{P}$.  The notations $\stackrel{a.s.}{\longrightarrow}, \stackrel{p}{\longrightarrow}$ and $\stackrel{d}{\longrightarrow}$ will stand for almost sure convergence, convergence in probability, and weak convergence, respectively. Similar to the order notations for non-random sequences, for sequences random variables $\{X_n : n \geq 1\}$ and $\{Y_n : n \geq1\}$, we will write $X_n=o_p(Y_n)$ to mean that $X_n/Y_n \stackrel{p}{\longrightarrow} 0$ as $n \to \infty$, and $X_n=\mathcal{O}_p(Y_n)$ to mean that the sequence $\{X_n/Y_n : n \geq 1\}$ is tight. 
   
   The notation $\mathbbm{1}(A)$ or $\mathbbm{1}_A$ will denote the indicator function for an event $A$. For any probability space $(\Omega,\mathcal{F},\mathbb{P})$, the notation $\overline{\mathcal{F}}$ will denote the completion of the $\sigma$-algebra $\mathcal{F}$ with respect to $\mathbb{P}$. The notation $X \perp\!\!\!\perp Y$ will mean that two random variables (or vectors) $X$ and $Y$ are independent. For a random variable $X$, we shall denote its $L^p$-norm $(\mathbb{E}|X|^p)^{1/p}$ by $\|X\|_{L^p}$. $\mathrm{Binomial}(n,p)$, $\mathrm{Multinomial}(n;p_1, \ldots,p_k)$ and $\mathrm{Poisson}(\lambda)$ will respectively denote Binomial, Multinomial and Poisson distributions with the corresponding parameters. $\operatorname{Symm}(n)$ will  denote the symmetric group of order $n$, i.e., the group of all permutations of $[n]$. For $d \geq 1$, we shall refer by $B_d(\mathbf{x},r)$ to the closed ball of radius $r$ around the point $\mathbf{x} \in \mathbb{R}^d$. Also, we shall permit the dual indices in \textit{Holder's inequality} to be $(1, \infty)$ or $(\infty,1)$.


\subsection{Theoretical framework}\label{stochopt}
Consider a sequence of random optimization problems $\{\mathscr{P}_n : n \geq 1\}$. Without loss of generality we only consider minimization problems. Each problem $\mathscr{P}_n$ then has the following components. 

\begin{enumerate}[label=(Com:\Alph*)]{\label{ass:structure}}
\item \label{item:assA} A common sample space for the random input, denoted by $\mathcal{X}$, along with a $\sigma$-algebra $\mathcal{A}$ on it. Usually $\mathcal{X}$ would be a subset of $\mathbb{R}^d$ for some $d \geq 1$ and $\mathcal{A}$ will be the Borel $\sigma$-algebra on this space.
\item \label{item:asBA} The input distribution $\mathcal{P}$, which is a probability measure on the space $(\mathcal{X}, \mathcal{A})$.
\item  \label{item:assC} An index set for the random inputs, denoted by $I_n$, with $|I_n| := k_n \in \mathbb{N}$.
\item \label{item:assD} A non-empty parameter space for the problem, denoted by $\mathcal{S}_n$  and equipped with a metric $d_n$. We shall only consider cases where $\mathcal{S}_n$ is compact and the metric $d_n$ is normalized in such a way that $\operatorname{diam}(\mathcal{S}_n)$ is uniformly bounded in $n$.
\item \label{item:assE} The objective function $\psi_n : \mathcal{X}^{I_n} \times \mathcal{S}_n \rightarrow \mathbb{R}$. We assume that 
$\psi_n$ is measurable when we endow the space $\mathcal{X}^{I_n} \times \mathcal{S}_n$ with the $\sigma$-algebra $ \overline{\mathcal{A}^{\otimes I_n}} \otimes \mathcal{B}_n$,  where $\overline{\mathcal{A}^{\otimes I_n}}$ is the completion of the $\sigma$-algebra $\mathcal{A}^{\otimes I_n}$ with respect to the probability measure $\mathcal{P}^{\otimes I_n}$ and   $\mathcal{B}_n$ is the Borel $\sigma$-algebra on $\mathcal{S}_n$ generated by the open sets of the topology induced by the metric $d_n$.
\end{enumerate}
The minimization problem $\mathscr{P}_n$ is defined as follows. 

\begin{defn}{\label{defran}}
Suppose that $\{X_{n,i} : i \in I_n\}$ is an i.i.d. collection of random variables 
having distribution $\mathcal{P}$. Then 
\begin{equation}
\mathscr{P}_n := \text{ Minimize } \psi_n ( (X_{n,i} )_{i \in I_n}; \omega ) \text{ over } \omega \in \mathcal{S}_n. 
\end{equation}
\end{defn}
Let $\psi_{n, \mathrm{opt}} : \mathcal{X}^{I_n} \rightarrow [-\infty, \infty)$ be the function denoting the optimized value in problem $\mathscr{P}_n$, that is, 
\[
\psi_{n,\mathrm{opt}} ( (x_i)_{i \in I_n}) := \inf_{\omega \in \mathcal{S}_n} \psi_n((x_i)_{i \in I_n}; \omega) \in [-\infty, \infty),
\]
for $(x_i)_{i \in I_n} \in \mathcal{X}^{I_n}$.  We shall restrict our attention only to the the situations, as stated in \Cref{unique}, when the minimum in the problem $\mathscr{P}_n$ is attained at a unique parameter value. It is to be noted that,  the optimum value in the problem $\mathscr{P}_n$ is always attained if $\mathcal{S}_n$ is finite or $\psi_n$ is continuous in its second argument (since $\mathcal{S}_n$ is compact). 

\begin{assumption}{\label{unique}}
	Consider the following set:
	$$\mathscr{X}_n := \{ (x_i)_{i \in I_n} \mid \; \exists \textup{ unique } \omega^* \in \mathcal{S}_n \textup{ such that } \psi_n((x_i)_{i \in I_n}; \omega^*) = \psi_{n, \mathrm{opt}}((x_i)_{i \in I_n}) \} \subseteq \mathcal{X}^{I_n}.$$
	We assume that $\mathscr{X}_n^c$ is a null-set with respect to $\mathcal{P}^{\otimes I_n}$, and hence is in the completed $\sigma$-algebra $\overline{\mathcal{A}^{\otimes I_n}} $ on $\mathcal{X}^{I_n}$. 
	In other words, if $\{X_{n,i} : i \in I_n\}$ is an i.i.d.~collection of random variables 
	having distribution $\mathcal{P}$, then 
	$$ \mathbb{P} [\mathbf{X}^n:=(X_{n,i})_{i \in I_n} \in \mathscr{X}_n^c ] = 0, \; \forall \; n \geq 1.$$
\end{assumption}


Without further assumptions on $\mathscr{P}_n$, we cannot guarantee measurability of the function $\psi_{n, \mathrm{opt}}$. Nonetheless, we can define the optimizer for the problem $\mathscr{P}_n$ as a 
function $\widehat{\omega}_n  : (\mathcal{X}^{I_n}, \mathcal{A}^{I_n}) \to (\mathcal{S}_n, \mathcal{B}_n)$ such that 
\[
\psi_n((x_{i})_{i \in I_n}; \widehat{\omega}_n((x_i)_{i \in I_n})) = \psi_{n, \mathrm{opt}}((x_i)_{i \in I_n}).
\]
The function $\widehat{\omega}_n$ is well-defined on $\mathscr{X}_n$. Moreover, if  $\widehat{\omega}_n : \mathscr{X}_n \to \mathcal{S}_n$ is measurable (with respect to the $\sigma$-algebra on $\mathscr{X}_n$ induced by $\overline{\mathcal{A}^{\otimes I_n}} $), we can define a measurable extension of $\widehat{\omega}_n$ to the whole of $\mathcal{X}^{I_n}$. This requires some mild assumptions $\psi_n$. To avoid this technical issue, for now we include measurability of $\psi_{n, \mathrm{opt}}$ and $\widehat{\omega}_n$ as assumptions, along with the assumption of integrability of the optimum value in the problem $\mathscr{P}_n$. Later, we will see these measurability  assumptions to be easily verified in examples (see Remark~\ref{mescont}).

\begin{assumption}{\label{ass:meas}} We assume the following.
\begin{enumerate}[leftmargin=*,label=(A\arabic*)]
\item \label{item:A1} The map $\psi_{n, \mathrm{opt}} : (\mathcal{X}^{I_n}, \overline{\mathcal{A}^{\otimes I_n}}) \rightarrow ([-\infty, \infty), \mathcal{B}_{[-\infty, \infty)})$ is measurable, where $\mathcal{B}_{[-\infty, \infty)}$ is the Borel $\sigma$-algebra on $[-\infty, \infty)$.
\item \label{item:A2} The map	$\widehat{\omega}_n : \mathscr{X}_n \to (\mathcal{S}_n, \mathcal{B}_n)$ is measurable, when $\mathscr{X}_n$ is equipped with  the $\sigma$-algebra induced on it by $\overline{\mathcal{A}^{\otimes I_n}} $.
\item \label{item:A3} If $\mathbf{X}^n = (X_{n,i})_{i \in I_n} \sim \mathcal{P}^{\otimes I_n}$, then 
	$$  \mathbb{E} \biggl[ \inf_{\omega \in \mathcal{S}_n} \psi_n( \mathbf{X}^n; \omega)\biggr] = \mathbb{E} [  \psi_{n, \mathrm{opt}}( \mathbf{X}^n)] \in \mathbb{R}, \;\; \text{ for all } n \geq 1.$$
\end{enumerate}

\end{assumption}

\begin{ex}{\label{exam:tsp}}
For understanding the notations introduced above, consider TSP on Euclidean spaces. Here $\mathcal{X}=\mathbb{R}^d$ for some $d \geq 1$, equipped with its Borel $\sigma$-algebra. We have $n$ i.i.d.~points $X_{n,1},\ldots,X_{n,n}$ in $\mathcal{X}$, drawn from the common distribution $\mathcal{P}$. Thus, $I_n=[n]$ and $k_n=n$. The parameter space $\mathcal{S}_n$ is the set of all Hamiltonian cycles on the complete graph with $n$ vertices, equipped with the metric 
$$ d_n(G_1,G_2) := \dfrac{\operatorname{card}( E(G_1) \Delta E(G_2))}{n}, \; \; \forall\; G_1,G_2 \in \mathcal{S}_n,$$
where $E(G)$ denotes the set of edges of the graph $G \in \mathcal{S}_n$ and $\operatorname{card}(A)$ denotes the cardinality of a set $A$. The normalization in the definition of $d_n$ ensures that $0 \leq d_n \leq 2$. The goal in TSP is to find the Hamiltonian cycle with smallest Euclidean length, and thus,
$$ \psi_n( (x_{n,i})_{i \in [n]}; G ) := \sum_{\{i,j\} \in E(G)} \|x_{n,i}-x_{n,j}\|_2, \; \forall \; G \in \mathcal{S}_n,  (x_{n,i})_{i \in [n]} \in \mathcal{X}^n.$$
The random optimization problem in Definition~\ref{defran} then corresponds to the TSP. 
\end{ex}

\begin{remark}{\label{mescont}}
	Suppose that $\mathcal{X}$ is equipped with some topology and $\mathcal{X}^{I_n} \times \mathcal{S}_n$ is endowed with the corresponding product topology; recall that the topology on $\mathcal{S}_n$ is generated by the open balls corresponding to the metric $d_n$.  For example, if $\mathcal{X}$ is a subset of $\mathbb{R}^d$, then we can take the standard topology generated by the open balls. If we assume $\psi_n$ to be jointly continuous in its arguments (with respect to the previously mentioned product topology), then the assumption that the parameter space $\mathcal{S}_n$ is a compact metric space, hence separable, guarantees that the map $\psi_{n, \mathrm{opt}}$ is measurable as required in \Cref{item:A1} of \Cref{ass:meas}. Moreover, under \Cref{unique}, compactness of $\mathcal{S}_n$ and continuity of $\psi_n$ also guarantee that the map $\widehat{\omega}_n : \mathscr{X}_n \to \mathcal{S}_n$ is continuous (see \Cref{argmincont} for the proof) and hence measurable. 
Thus, the hypotheses in \ref{item:A1} and  \ref{item:A2} are automatically satisfied. Since  in all the examples discussed in this work $\psi_n$ is continuous,  we do not bother to check the statements \ref{item:A1} and \ref{item:A2} of \Cref{ass:meas}.
\end{remark}

 Our objective is to demonstrate some  notion of ``stability" for the optimal solution $\widehat{\omega}_n(\mathbf{X}^n)$ under small perturbations of the input data $\mathbf{X}^n$. In this paper, we shall consider stability under a specific kind of perturbation; namely what happens to the optimal  solution if we replace some of the random inputs by  i.i.d.~copies. To be more precise, consider 
a collection of subsets $\{J_{n,l} : l \in \mathcal{L}_n\}$ of $I_n$, with 
\[
m_n:=\operatorname{card}(\mathcal{L}_n) \in \mathbb{N},
\]
satisfying $\cup_{l \in \mathcal{L}_n} J_{n,l}=I_n$. We emphasize that $\mathcal{J}_n:=\{J_{n,l} : l \in \mathcal{L}_n\}$ need not form a partition of $I_n$. The elements of $\mathcal{J}_n$ will be our perturbation blocks. In other words,  for all $l \in \mathcal{L}_n$, we take an independent copy of $\mathbf{X}^n$, say $\mathbf{X}^{n,l}:=(X_{n,i}^{(l)})_{i \in I_n}$, and define the $l$-th perturbed input $\mathbf{X}_{l}^n $ as follows.
\begin{equation}{\label{perturb}}
(\mathbf{X}_l^n)(i) := \begin{cases}
X_{n,i}, & \text{if } i \in I_n \setminus J_{n,l} \\
X_{n,i}^{(l)}, & \text{if } i \in J_{n,l}.
\end{cases}
\end{equation}
It should be noted that the only assumption we made on the joint distribution of $(\mathbf{X}^n, \mathbf{X}^{n,l} : l \in \mathcal{L}_n)$ is that $\mathbf{X}^{n,l}$ is an independent copy of $\mathbf{X}^n$, for all $l \in \mathcal{L}_n$.

Let us briefly get back to the TSP example to understand the perturbation scheme described above. In the context of TSP, we will take $\mathcal{L}_n = [n]$, with $m_n=n$ and $J_{n,l} = \{l\}$, for  $l \in [n]=\mathcal{L}_n$. According to the scheme described in \eqref{perturb}, the $l$-th perturbed input is obtained from $\mathbf{X}^n$ by replacing $X_{n,l}$ with $X_{n,l}^{(l)}$, an independent copy of $X_{n,l}$. This is in accordance with the perturbation scheme described in the beginning of \Cref{intro}.



Assumptions \ref{unique}  and \ref{ass:meas} guarantee that $\widehat{\omega}_n(\mathbf{X}_l^n)$ is a well-defined random variable for all $l \in \mathcal{L}_n$. Our notion of stability will mean that \textit{there are not too many drastically different optimal solutions for different perturbed inputs}, i.e., the ``size" of the set $\{\widehat{\omega}_n(\mathbf{X}_l^n) : l \in \mathcal{L}_n\}$ is ``small" compared to $m_n$, where ``small'' means that \textit{for any $\varepsilon >0$, with high probability we can throw away approximately $\varepsilon m_n$ many points from the set  $\{\widehat{\omega}_n(\mathbf{X}_l^n) : l \in \mathcal{L}_n\}$ and the remaining points can be covered by finitely many balls of radius $\varepsilon$, the number of such balls being uniformly bounded in $n$.} In order to rigorously write down this notion, we introduce the following notation. Let 
\begin{align}\label{onadef}
 \mathcal{O}_{n,A} := \{\widehat{\omega}_n(\mathbf{X}_l^n) : l \in A\}, \; \forall \; A \subseteq \mathcal{L}_n.
 \end{align}
Let us briefly recall the concepts of covering numbers and packing numbers for metric spaces.
\begin{defn}
Let $(S,d_S)$ be a metric space. 
\begin{enumerate}
	\item For any $T \subseteq S$ and $\delta >0$, we say $T^{\prime} \subseteq S$ forms an $\delta$-cover of $T$ if for any $t \in T$ there exists an $t^{\prime} \in T^{\prime}$ such that $d_S(t,t^{\prime}) \leq \delta.$ The $\delta$-covering number of $T$, denoted by $N(T,d_S,\delta)$, is the size of the smallest $\delta$-cover of $T$, i.e.,
	$$ N(T,d_S,\delta) = \inf \{k \geq 1 : \; \exists \;\; \text{a $\delta$-cover } T^{\prime} \text{ of } T \text{ such that } \operatorname{card}(T^{\prime})=k\}.$$
	\item For any $T \subseteq S$ and $\delta >0$, we say that $T^{\prime} \subseteq T$ forms an $\delta$-packing of $T$ if $d_S(t_1,t_2) > \delta$ for all $t_1 \neq t_2 \in T^{\prime}$. The $\delta$-packing number of $T$, denoted by $P(T,d_S,\delta)$, is the size of the largest $\delta$-packing of $T$, i.e.,
	$$ P(T,d_S,\delta) = \sup \{k \geq 1 : \; \exists \;\; \text{a $\delta$-packing } T^{\prime} \text{ of } T \text{ such that } \operatorname{card}(T^{\prime})=k\}.$$
\end{enumerate}

\end{defn}

 Note that, an $\varepsilon$-cover of $T \subseteq S$ is permitted to contain points from $S \setminus T$ and we make use of the following well-known fact (see \cite[Section 4.2]{hdp} for related results), 
\begin{equation}{\label{pnp}}
P(T, d_S, 2\delta) \leq N(T, d_S, \delta) \leq P(T, d_S, \delta), \; \forall \; T \subseteq S, \; \delta >0.
\end{equation}

\begin{defn}{\label{stable}}
We say that the random optimization problem sequence $\{\mathscr{P}_n : n \geq 1\}$ is \emph{stable under small perturbations} with perturbation blocks $\{\mathcal{J}_n : n \geq 1 \}$ if for any $\varepsilon, \delta >0$, there exists $M(\varepsilon, \delta) < \infty$ such that 
\begin{equation}{\label{maindef}}
\limsup_{n \to \infty} \mathbb{P} \biggl[ \min_{A \subseteq \mathcal{L}_n : |A| > (1-\varepsilon)m_n} P ( \mathcal{O}_{n,A}, d_n, \varepsilon )  \geq M(\varepsilon, \delta)\biggr] \leq \delta,
\end{equation}  
where $\mathcal{O}_{n,A}$ is defined as in \eqref{onadef}.
\end{defn}

In other words, Definition \ref{stable} says that for any $\varepsilon, \delta >0$, there exists a finite number $M(\varepsilon, \delta)$ (not depending on $n$) such that for all large enough $n$, with high probability (at least $1-2\delta$) we can find a (random) subset $\mathscr{A}_n$ of $\mathcal{L}_n$, which is  obtained by removing at most an $\varepsilon$-fraction of  elements from $\mathcal{L}_n$ such that the (random) subset $\mathcal{O}_{n,\mathscr{A}_n}$, formed by optimal solutions $\widehat{\omega}_n(\mathbf{X}_l^n)$ corresponding to the remaining points $l \in \mathscr{A}_n$, can be covered by at most $M(\varepsilon, \delta)$ many balls (with respect to metric $d_n$) of radius $\varepsilon$.  
As a technical note, \Cref{maindef} is well-defined since $P(\mathcal{O}_{n,A}, d_n, \varepsilon)$ is measurable for all $A \subseteq \mathcal{L}_n$; this is guaranteed by the finiteness of $A$. 

Our main objective in this paper is to derive conditions under which a problem sequence $\{\mathscr{P}_n\}$ is stable under small perturbations with perturbation blocks $\{\mathcal{J}_n \}$, and then to establish  these conditions for some well-known examples. Let us now briefly recall our main strategy which we shortly mentioned in \Cref{litreview}. A useful concept in this regard is the notion of \textit{near optimal solutions}. For 
any \textit{window length} $\theta >0$, we define $\mathcal{N}_{n, \theta}$ to be the set of points in $\mathcal{S}_n$ where the objective function has values at most $\theta$ away from the optimal value. That is, 
\begin{equation}
\mathcal{N}_{n, \theta} := \{ \omega \in \mathcal{S}_n \mid \psi_n(\mathbf{X}^n,\omega) \leq \psi_{n, \mathrm{opt}}(\mathbf{X}^n) + \theta\}, \; \forall \; n \geq 1.
\end{equation}
The related  concept of \textit{Almost Essential Uniqueness} (AEU), introduced by David Aldous~\cite{aldous}, is defined as follows. We say that the random optimization problem sequence $\{\mathscr{P}_n : n \geq 1\}$ satisfies the AEU property if for any $\delta >0$ there exists $\varepsilon(\delta) >0$ such that for any  (random) sequence $\{\widetilde{\omega}_n : n \geq 1\}$ with $\widetilde{\omega}_n \in \mathcal{S}_n$  satisfying $\mathbb{E}d_n(\widehat{\omega}_n, \widetilde{\omega}_n) \geq \delta $ for all $n \geq 1$, we have 
\[
\liminf_{n \to \infty} \frac{\mathbb{E}(\psi_n(\mathbf{X}^n; \widetilde{\omega}_n))}{\mathbb{E}\psi_{n,\mathrm{opt}}(\mathbf{X}^n)} \geq 1+ \varepsilon(\delta).
\]
The related but opposite concept of \textit{Multiple Valley Property} (MVP), introduced in \cite{chabook}, is defined as follows:  An optimization problem sequence  $\{\mathscr{P}_n : n \geq 1\}$ is said to satisfy MVP if there are sequences $\varepsilon_n$ and $\delta_n$ tending to zero, a sequence $K_n \to\infty$,  and some $c>0$, such that for all $n$, with probability at least $1-\delta_n$, there exists a subset $\mathcal{A}_n \subseteq \mathcal{S}_n$ of size $\ge K_n$, such that 
\[
\inf_{\omega, \omega^{\prime} \in \mathcal{A}_n} d_n(\omega, \omega^{\prime}) \ge c, \; \; \text{ and } 
\sup_{ \omega \in \mathcal{A}_n} \biggl|\frac{\psi_n(\mathbf{X}^n; \omega)}{\psi_{n, \mathrm{opt}}(\mathbf{X}^n)} - 1\biggr|\le \varepsilon_n.
\]
For AEU and MVP, we look at near-optimal solutions in a window of size $o(\psi_{n, \mathrm{opt}}(\mathbf{X}^n))$, whereas in this paper we will be looking at near-optimal solutions in a much smaller window,  the typical amount by which the optimal value changes when one random input is replaced by an independent copy. We have already mentioned in \Cref{litreview}, how these two related and similarly worded concepts might create confusions. 


Our basic approach for proving stability in the sense of Definition~\ref{stable} can be outlined briefly as follows. For a suitably chosen sequence $\{\theta_n : n \geq 1 \}$, we will establish that the set $\mathcal{N}_{n, \theta_n}$ has uniformly bounded metric entropy with high probability,  along with the fact that most of the points in $\mathcal{O}_{n,\mathcal{L}_n}$ can be well-approximated by the elements of $\mathcal{N}_{n, \theta_n}$. The strategy is rigorously formulated in the following theorem. 

\begin{thm}{\label{strat}}
Consider a sequence of random minimization problems $\{\mathscr{P}_n : n \geq 1\}$ satisfying Assumption \ref{unique} and Assumption \ref{ass:meas} along with  perturbation block $\mathcal{J}_n$ for the problem $\mathscr{P}_n$. Suppose that there exists a sequence of positive real numbers $\{\theta_n : n \geq 1\}$ satisfying the following. 
\begin{enumerate}[label=(\Alph*)] 
\item \label{item:A} 
The sequence $\{P(\mathcal{N}_{n,c\theta_n}, d_n, \varepsilon) : n \geq 1\}$ is a tight sequence of random variables for any $c \in (0, \infty)$ and $\varepsilon >0$. 
\item \label{item:B} There are random elements $\{\omega_{n,l}^* : l \in \mathcal{L}_n \}$, measurable with respect to the $\sigma$-algebra generated by $( \mathbf{X}^n, \mathbf{X}^{n}_l : l \in \mathcal{L}_n)$, such that 
\begin{equation} {\label{most}}
\begin{split}
&\max_{l \in \mathcal{L}_n}  d_n(\widehat{\omega}_n(\mathbf{X}_l^n),\omega^*_{n,l}) \stackrel{p}{\longrightarrow} 0 \; \text{ and }  \\
&\lim_{c \to \infty} \limsup_{n \to \infty} \dfrac{1}{m_n}\sum_{l \in \mathcal{L}_n} \mathbb{P} [\psi_n(\mathbf{X}^n, \omega^*_{n,l}) -  \psi_{n, \textup{opt}}( \mathbf{X}^n) > c\theta_n ] =0.
\end{split}
\end{equation}

\end{enumerate}
Then $\{\mathscr{P}_n : n \geq 1\}$ is stable under small perturbations with perturbation blocks $\mathcal{J}_n$.

\end{thm}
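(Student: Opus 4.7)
The plan is to exhibit, for any fixed $\varepsilon, \delta > 0$, a (random) subset $A \subseteq \mathcal{L}_n$ with $|A| > (1-\varepsilon)m_n$ for which $P(\mathcal{O}_{n,A}, d_n, \varepsilon) \leq M$ on a high-probability event, where $M = M(\varepsilon, \delta)$ is independent of $n$. The set $A$ will be taken to be the indices $l$ whose proxy $\omega_{n,l}^*$ lies in the near-optimal set $\mathcal{N}_{n, c\theta_n}$ for a suitably large $c$. Hypothesis \ref{item:B} guarantees that most indices $l$ satisfy this, and that the perturbed optimizer $\widehat{\omega}_n(\mathbf{X}_l^n)$ is uniformly close to $\omega_{n,l}^*$; hypothesis \ref{item:A} then controls the packing number of $\mathcal{O}_{n,A}$ via the triangle inequality combined with \eqref{pnp}.

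Concretely, I would first use the second part of \ref{item:B} to choose $c = c(\varepsilon, \delta)$ so that $\limsup_n \mathbb{E}[F_n] \leq \varepsilon \delta/4$, where $F_n := m_n^{-1} \#\{l \in \mathcal{L}_n : \psi_n(\mathbf{X}^n, \omega_{n,l}^*) - \psi_{n,\text{opt}}(\mathbf{X}^n) > c\theta_n\}$; Markov's inequality then yields $\mathbb{P}[F_n > \varepsilon] \leq \delta/3$ for all large $n$. With this $c$ fixed, apply \ref{item:A} to choose $M$ such that $\limsup_n \mathbb{P}[P(\mathcal{N}_{n, c\theta_n}, d_n, \varepsilon/4) > M] \leq \delta/3$; by \eqref{pnp}, on the complement $\mathcal{N}_{n, c\theta_n}$ admits an $(\varepsilon/4)$-cover $\{z_1, \ldots, z_M\}$. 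The first part of \ref{item:B} provides, for all large $n$, $\mathbb{P}\bigl[\max_{l \in \mathcal{L}_n} d_n(\widehat{\omega}_n(\mathbf{X}_l^n), \omega_{n,l}^*) > \varepsilon/4\bigr] \leq \delta/3$. A union bound yields a good event of probability at least $1 - \delta$ on which all three conditions hold simultaneously.

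On this good event, set $A := \{l \in \mathcal{L}_n : \omega_{n,l}^* \in \mathcal{N}_{n, c\theta_n}\}$, so that $|A| \geq (1-\varepsilon) m_n$. For every $l \in A$ there exists some $z_k$ with $d_n(\omega_{n,l}^*, z_k) \leq \varepsilon/4$, whence by the triangle inequality $d_n(\widehat{\omega}_n(\mathbf{X}_l^n), z_k) \leq \varepsilon/2$. Hence $\mathcal{O}_{n,A}$ admits an $(\varepsilon/2)$-cover of size at most $M$, and a further application of \eqref{pnp} gives $P(\mathcal{O}_{n,A}, d_n, \varepsilon) \leq N(\mathcal{O}_{n,A}, d_n, \varepsilon/2) \leq M$, which is the required stability estimate. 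The argument is essentially bookkeeping; the only detail to be careful about is propagating the $\varepsilon/4$ buffers through two triangle inequalities so that the final $\varepsilon$-packing bound comes out cleanly. I do not anticipate a serious obstacle in this theorem itself — the genuine difficulty lies downstream, in verifying \ref{item:A} and \ref{item:B} for concrete models (TSP, SK, Edwards--Anderson, Wigner, etc.), which is where the problem-specific arguments will need to be developed.
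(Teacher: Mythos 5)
Your proposal is correct and follows essentially the same route as the paper's proof: choose $c$ from the second part of \ref{item:B}, take $A = \{l : \omega^*_{n,l} \in \mathcal{N}_{n,c\theta_n}\}$, show $A$ is large via Markov, and transfer the entropy bound from $\mathcal{N}_{n,c\theta_n}$ to $\mathcal{O}_{n,A}$ through the $\varepsilon/4$ proximity in the first part of \ref{item:B}. The only cosmetic difference is that you route the transfer through covering numbers using both sides of \eqref{pnp}, whereas the paper compares packings directly (an $\varepsilon$-packing of $\mathcal{O}_{n,A}$ induces an $\varepsilon/2$-packing of $\{\omega^*_{n,l} : l \in A\} \subseteq \mathcal{N}_{n,c\theta_n}$), which is the same bookkeeping.
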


To put Theorem~\ref{strat} into words, $\omega^*_{n,l}$ is a ``sister solution'' to the optimization problem $\mathscr{P}_n$ with input $\mathbf{X}^n_l$ in the sense that it approximates (according to the metric $d_n$) the optimal solution $\widehat{\omega}_n(\mathbf{X}_l^n)$. 
Display~\eqref{most} asserts that most of these ``sister solutions'' are near-optimal solutions for the problem with original input $\mathbf{X}^n$. Since \Cref{item:A} guarantees that the near-optimal solution space is not too large, we can conclude that there exists a subset of $\mathcal{O}_{n,\mathcal{L}_n}$ which is not too large despite containing most of the near-optimal solutions. For a number of examples that we shall discuss later, it is enough to take the sister solution $\omega^*_{n,l}$ to be the actual optimal solution $\widehat{\omega}_n(\mathbf{X}_l^n)$; see \Cref{mostprop} for a general result in this direction.  On the contrary, we refer to the proofs of \Cref{thm:mst} and \Cref{thm:tsp} for examples where taking $\omega^*_{n,l}= \widehat{\omega}_n(\mathbf{X}_l^n)$ does not work and hence we needed some explicit construction for $\omega^*_{n,l}$. 

\begin{proof}[Proof of Theorem \ref{strat}]
Fix $\varepsilon, \delta >0$ and get $c=c(\varepsilon,\delta) \in (0, \infty)$ such that
$$ \limsup_{n \to \infty} \dfrac{1}{m_n}  \sum_{l \in \mathcal{L}_n} \mathbb{P} [   \psi_n(\mathbf{X}^n, \omega^*_{n,l}) -  \psi_{n, \mathrm{opt}}( \mathbf{X}^n) \geq c  \theta_n]  \leq  \delta\varepsilon /2.$$ Assumption \ref{item:B} guarantees the existence of such a $c$. Assumption \ref{item:A} guarantees the existence of $M_1(\varepsilon,\delta) \in (0, \infty)$ such that
\begin{equation}{\label{neartight}}
 \sup_{n \geq 1} \mathbb{P} [ P ( \mathcal{N}_{n,c\theta_n}, d_n, \varepsilon/2) \geq M_1 (\varepsilon, \delta) ] \leq \delta/2.
\end{equation}
  On the event $\{\upsilon_n := \max_{l \in \mathcal{L}_n} d_n(\widehat{\omega}_n(\mathbf{X}_l^n),\omega^*_{n,l})< \varepsilon/4 \}$, any $\varepsilon$-packing  of $\mathcal{O}_{n,A}$ generates a $\varepsilon/2$-packing 
  of $\mathcal{O}^*_{n,A} := \{\omega^*_{n,l} : l \in A\}$ for any $A \subseteq \mathcal{L}_n$, and hence, 
  \[
  P(\mathcal{O}_{n,A}, d_n, \varepsilon) \leq P(\mathcal{O}^*_{n,A}, d_n, \varepsilon/2).
  \]
Setting $A_n := \{l \in \mathcal{L}_n : \omega_{n,l}^* \in \mathcal{N}_{n, c\theta_n}\}$ we can write the following:
\begin{align}
& \mathbb{P} \biggl[ \min_{A \subseteq \mathcal{L}_n : |A| > (1-\varepsilon)m_n} P ( \mathcal{O}_{n,A}, d_n, \varepsilon )  \geq M_1(\varepsilon, \delta)\biggr] \notag \\
 &\leq \mathbb{P} \bigl( P ( \mathcal{O}_{n,A_n}, d_n, \varepsilon )  \geq M_1(\varepsilon, \delta) \bigr)+ \mathbb{P}(|A_n| \leq (1-\varepsilon) m_n) \nonumber \\
& \leq  \mathbb{P} \bigl( P ( \mathcal{O}^*_{n,A_n}, d_n, \varepsilon/2 )  \geq M_1(\varepsilon, \delta)\bigr) + \mathbb{P}(\upsilon_n \geq \varepsilon/4)  + \mathbb{P}(|A_n| \leq (1-\varepsilon) m_n) \nonumber \\
&\leq   \mathbb{P} \bigl( P ( \mathcal{N}_{n,c\theta_n}, d_n, \varepsilon/2)  \geq M_1(\varepsilon, \delta)\bigr)  + \mathbb{P}(\upsilon_n \geq \varepsilon/4) + \mathbb{P}(|A_n| \leq (1-\varepsilon) m_n) \label{eq3}. 
\end{align}
The limit superior of the first term in the right hand side of \Cref{eq3} is at most $\delta/2$, whereas the second term converges to $0$ by the first assumption in \Cref{most}. 
As for the third term, observe that
\begin{align*}
&\mathbb{P}(|A_n| \leq (1-\varepsilon) m_n) \leq \mathbb{P} \Biggl[ \sum_{l \in \mathcal{L}_n} \mathbbm{1}( \omega^*_{n,l} \notin \mathcal{N}_{n, c\theta_n} ) > \varepsilon m_n \Biggr] \\
& \leq \dfrac{1}{\varepsilon m_n} \sum_{l \in \mathcal{L}_n} \mathbb{P} [ \psi_n(\mathbf{X}^n, \omega^*_{n,l}) -  \psi_{n, \mathrm{opt}}( \mathbf{X}^n) > c\theta_n ],
\end{align*}
which shows that the third term in the right hand side of \Cref{eq3} also has limit superior at most $\delta/2$. This completes the proof. 
\end{proof}

\begin{remark}{\label{rem:strateasy}}
By an application of Markov's inequality, it is enough to establish any of the following two conditions, where obviously the latter is stronger than the former, in order to guarantee the second assertion in \Cref{most}:
\begin{align*}
&\sum_{l \in \mathcal{L}_n} \mathbb{E} [ \psi_n(\mathbf{X}^n, \omega^*_{n,l}) -  \psi_{n, \mathrm{opt}}( \mathbf{X}^n)] = \bigO(m_n\theta_n), \\
&\sup_{l \in \mathcal{L}_n} \mathbb{E}[ \psi_n(\mathbf{X}^n, \omega^*_{n,l}) -  \psi_{n, \mathrm{opt}}( \mathbf{X}^n)] = \bigO(\theta_n).
\end{align*}
 Indeed, this will be the case in all the examples which we shall discuss in \Cref{sec:lin}.
\end{remark}

\begin{remark}
The statement in \Cref{item:A} requires us to choose $\theta_n$ as small as possible whereas \Cref{item:B} forces $\theta_n$ to be large. Remark~\ref{rem:strateasy} gives us an idea about the suitable choice of $\theta_n$. Since $\omega_{n,l}^*$ is very close to $\widehat{\omega}_n(\mathbf{X}_l^n)$ and the input data $\mathbf{X}_l^n$ is just a small perturbation of original input data $\mathbf{X}^n$, we can see from Remark~\ref{rem:strateasy} that $\theta_n$ has to be at least of the same order as the typical value of $|\psi_{n,\mathrm{opt}}(\mathbf{X}^n_l) - \psi_{n,\mathrm{opt}}(\mathbf{X}^n)|$, which is the typical amount by which the optimal value changes under the small perturbation. While this is only an heuristic observation, our  theorems in  \Cref{sec:thm}  provide the proper choice of $\theta_n$.
\end{remark}

\begin{remark}
	To make \ref{item:A} mathematically sensible, we need to ensure that $P(\mathcal{N}_{n,\theta_n},d_n, \varepsilon)$ is measurable. To accomplish this goal, we consider the space of all compact metric spaces $\mathscr{M}$, identified up to isometry and equipped with the Gromov--Hausdorff metric $d_{GH}$; see \cite[Sections~7.3--4]{bbi} for details on Gromov--Hausdorff distance. Provided that either $\mathcal{S}_n$ is finite or $\omega \mapsto  \psi_n(\cdot, \omega)$ is continuous (which will be the case in all the examples discussed in \Cref{sec:lin}), we can argue that $\mathcal{N}_{n, \theta}$ is a random element in $\mathcal{M}$; here we recall that $\mathcal{S}_n$ is assumed to be compact. It is easy to see that for any metric space $(X,d_X)$, the map $Y \mapsto P(Y,d_X, \varepsilon)$ is lower semi-continuous, when defined on the space of compact subsets of $X$ equipped with Gromov--Hausdorff metric, hence guaranteeing the measurability of  $P(\mathcal{N}_{n,\theta_n},d_n, \varepsilon)$.
\end{remark}


In most of  the examples, the challenging task will be to prove \Cref{item:A}. Throughout the rest of the paper we will consider $\mathcal{X} \subseteq \mathbb{R}^d$ for some $d \geq 1$. The technique employed to prove \Cref{item:A} can be summarized as follows. For any fixed $n$, we take a small Markovian step, with stationary distribution $\mathcal{P}^{\otimes I_n}$, starting from $\mathbf{X}^n$ to arrive at $\widetilde{\mathbf{X}}^n$. We then perform a Taylor-like approximation of $\psi_n(\widetilde{\mathbf{X}}^n; \omega) - \psi_n(\mathbf{X}^n; \omega)$ and show that the minimum of these differences over $\omega \in \mathcal{N}_{n, \theta_n}$ takes a large negative value on the event where $P(\mathcal{N}_{n,\theta_n},d_n, \varepsilon)$ is large. This type of implication will be justified by an application of the \textit{Sudakov's minoration inequality} for Gaussian processes, stated below.  

\begin{defn}
Let $(\mathcal{T},d_{\mathcal{T}})$ be a non-empty  pseudo-metric space. A collection of random variables $\{X_t : t \in \mathcal{T}\}$ indexed by $\mathcal{T}$ is called a centered Gaussian process on $(\mathcal{T},d_{\mathcal{T}})$ if  
\begin{enumerate}
\item $(X_{t_1}, \ldots, X_{t_k})$ is a centered Gaussian random vector for any $k \geq 1$ and any $t_1, \ldots, t_k \in \mathcal{T}$, and 
\item $\|X_t-X_s\|_{L^2} := \bigl(\mathbb{E}\bigl[ (X_t-X_s)^2\bigr]\bigr)^{1/2} = d_{\mathcal{T}}(t,s), \;\; \forall \; s,t \in \mathcal{T}.$
\end{enumerate}
\end{defn}

\begin{prop}[\textbf{Sudakov minoration}]{\label{sudakov}}
Let $(\mathcal{T},d_{\mathcal{T}})$ be a non-empty  pseudo-metric space and suppose $\{X_t : t \in \mathcal{T}\}$ is a Gaussian process on $(\mathcal{T},d_{\mathcal{T}})$. Then for any $\varepsilon >0$, 
$$ \mathbb{E} \inf_{t \in \mathcal{T}} X_t \leq -A^* \varepsilon \sqrt{ \log P(\mathcal{T},d_{\mathcal{T}},\varepsilon)},$$
 where $A^*>0$ is a universal constant.

\end{prop}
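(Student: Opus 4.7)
The plan is to reduce the statement to a finite packing, apply the Sudakov--Fernique comparison inequality against an auxiliary independent Gaussian sequence, and conclude using the classical lower bound on the expected maximum of i.i.d.\ standard Gaussians.

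First, I would pass to the finite case by packing. Set $N := P(\mathcal{T}, d_{\mathcal{T}}, \varepsilon)$ and pick points $t_1, \ldots, t_N \in \mathcal{T}$ with $d_{\mathcal{T}}(t_i, t_j) > \varepsilon$ for all $i \neq j$; if $N = \infty$, pick an arbitrarily large finite subpacking and let its size tend to infinity at the end. The pathwise inequality $\inf_{t \in \mathcal{T}} X_t \leq \min_{1 \leq i \leq N} X_{t_i}$ reduces matters to proving that there is a universal constant $c > 0$ such that
\[
\mathbb{E} \min_{1 \leq i \leq N} X_{t_i} \;\leq\; -c \, \varepsilon \sqrt{\log N}
\]
for every $N \geq 2$.

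Next, I would introduce an auxiliary independent Gaussian sequence and apply Sudakov--Fernique, which asserts that if two centered Gaussian vectors $(U_i)$ and $(V_i)$ satisfy $\mathbb{E}(U_i - U_j)^2 \geq \mathbb{E}(V_i - V_j)^2$ for all $i,j$, then $\mathbb{E} \max_i U_i \geq \mathbb{E} \max_i V_i$. Let $g_1, \ldots, g_N$ be i.i.d.\ standard normals, independent of $(X_t)_{t \in \mathcal{T}}$, and set $Y_i := (\varepsilon/\sqrt{2})\, g_i$, so that $\mathbb{E}(Y_i - Y_j)^2 = \varepsilon^2$ whenever $i \neq j$. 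By the choice of packing, $\mathbb{E}(X_{t_i} - X_{t_j})^2 = d_{\mathcal{T}}(t_i, t_j)^2 > \varepsilon^2$, so the vectors $(-X_{t_i})_{i \leq N}$ and $(Y_i)_{i \leq N}$ satisfy the hypotheses of Sudakov--Fernique (the former with the larger pairwise $L^2$ increments). This yields
\[
\mathbb{E} \max_{1 \leq i \leq N} (-X_{t_i}) \;\geq\; \mathbb{E} \max_{1 \leq i \leq N} Y_i \;=\; \frac{\varepsilon}{\sqrt{2}} \, \mathbb{E} \max_{1 \leq i \leq N} g_i.
\]
Combining with the classical lower bound $\mathbb{E} \max_{1 \leq i \leq N} g_i \geq c_0 \sqrt{\log N}$, valid for some universal $c_0 > 0$ and all $N \geq 2$, gives the displayed inequality with $A^* := c_0/\sqrt{2}$.

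The substantive work lies in Sudakov--Fernique, which I would establish by Gaussian interpolation. Set $Z_i(s) := \sqrt{s}\,(-X_{t_i}) + \sqrt{1-s}\, Y_i$ for $s \in [0,1]$ with the $X$- and $Y$-processes independent, let $F_\beta(z) := \beta^{-1} \log \sum_i e^{\beta z_i}$ be the soft-max, and compute via Gaussian integration by parts
\[
\frac{d}{ds} \mathbb{E} F_\beta(Z(s)) = \frac{1}{2} \sum_{i,j} \mathbb{E}\bigl[\partial_i \partial_j F_\beta(Z(s))\bigr] \Bigl( \mathbb{E}[X_{t_i} X_{t_j}] - \mathbb{E}[Y_i Y_j] \Bigr).
\]
Since $\sum_j \partial_j F_\beta \equiv 1$ forces $\sum_j \partial_i \partial_j F_\beta \equiv 0$, the right-hand side can be rewritten purely in terms of pairwise squared differences, namely
\[
-\frac{1}{4} \sum_{i \neq j} \mathbb{E}\bigl[\partial_i \partial_j F_\beta(Z(s))\bigr] \Bigl( \mathbb{E}(X_{t_i} - X_{t_j})^2 - \mathbb{E}(Y_i - Y_j)^2 \Bigr),
\]
which is nonnegative because $\partial_i \partial_j F_\beta \leq 0$ for $i \neq j$ (a direct log-sum-exp computation) and the covariance-difference factor is nonnegative by assumption. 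Sending $\beta \to \infty$ recovers the comparison for the true maxima. I expect this interpolation step to be the main obstacle: the Gaussian integration by parts together with the algebraic manipulation converting covariances into squared-increments is easy to botch, and once verified correctly, everything else is bookkeeping.
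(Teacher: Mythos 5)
Your argument is correct: the packing reduction, the comparison of $(-X_{t_i})$ against an independent Gaussian sequence scaled by $\varepsilon/\sqrt{2}$ via Sudakov--Fernique, the interpolation proof of that comparison with the soft-max $F_\beta$ (including the cancellation of variance terms through $\sum_j \partial_i\partial_j F_\beta \equiv 0$), and the lower bound $\mathbb{E}\max_{i\le N} g_i \ge c_0\sqrt{\log N}$ all check out. The paper does not prove this proposition itself but cites \cite[Theorem 7.4.1]{hdp}, and your route is essentially the proof given in that reference, so there is nothing further to reconcile.
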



A proof of Proposition~\ref{sudakov} can be found in \cite[Theorem 7.4.1]{hdp}. Taking the Metropolis--Hastings algorithm as the Markovian step and assuming a Taylor-like expansion for the objective function $\psi_n$ along with some smoothness and growth conditions on the components of the Taylor expansion, we prove in \Cref{genthm} a general result of the form \Cref{item:A}. 

When working out applications of Theorem \ref{genthm}, we first  consider examples where $\psi_n(\cdot, \omega)$ is linear for every $\omega \in \mathcal{S}_n$. 
These include the branching random walk, the Sherrington--Kirkpatrick Model of mean-field spin glasses, the Edwards--Anderson model of lattice spin glasses, eigenvectors of Wigner matrices, the random assignment model, and symmetric optimization problems on complete weighted graphs. After that, we consider the setting where the objective function is nonlinear, for example the largest eigenvectors of Wishart matrices, the traveling salesman problem and minimum spanning tree on Euclidean spaces. 


\begin{remark}
Note that a statement like \Cref{item:A} has nothing to do with the particular choice of perturbation blocks for the concerned problem. Therefore, deriving a result of this form with the optimal choice of $\theta_n$ is of independent interest. It is not hard to show that a sequence of random compact metric spaces $\{(\mathcal{C}_n,d_n) : n \geq 1 \}$ is tight (when considered as $(\mathscr{M},d_{GH})$-valued random elements) if and only if the sequences of real-valued random variables $\{\operatorname{diam}(\mathcal{C}_n) : n \geq 1\}$ and $\{P(\mathcal{C}_n,d_n,\delta) : n \geq 1\}$ are tight for all $\delta >0$. Since we have already assumed $\mathcal{S}_n$ is compact, a statement like \Cref{item:A} implies that $\{(\mathcal{N}_{n,\theta_n}, d_n) : n \geq 1\}$ is tight. One can then try to figure out whether this sequence of random compact sets indeed converges weakly or not, since a weak limit will provide us with an idea of the geometry of the near-optimal  solution space  for this particular problem, as input size grows to infinity. For example, if the weak limit is a singleton set, then one can say that the optimal solution is ``unique" in some limiting sense.  Establishing the existence and characterizing such weak limits is another interesting avenue for future research.
\end{remark}

\section{Main abstract results}
{\label{sec:thm}}

\subsection{General results to prove condition~\ref{item:A} of Theorem \ref{strat}}
\label{sec:thm1}
The main purpose of this section is to introduce the two theorems about tightness of the sequence $\{P(\mathcal{N}_{n, \theta_n}, d_n, \varepsilon) : n \geq 1\}$.  As mentioned earlier, both  involve taking a small Markovian step starting from our given random inputs. The difference lies in the choice for this Markov process; Theorem~\ref{genthm} uses Metropolis--Hastings Algorithm, whereas \Cref{genthm:g} involves an application of Langevin dynamics. We defer the detailed discussion of the qualitative differences between these two approaches  to the end of this  section.

 Theorem~\ref{genthm} provides us with a range of suitable choices for $\{\theta_n : n \geq 1\}$ for making the sequence $\{P(\mathcal{N}_{n, \theta_n}, d_n, \varepsilon) : n \geq 1\}$ tight for a generic objective function. It assumes only a Taylor-like expansion for the objective function $\psi_n$, as given in Assumption~\ref{ass:psi} below, along with a series of growth conditions on the constituent components of that expansion.
 
 \begin{assumption}{\label{ass:psi}}
Suppose that the random inputs take values in $\mathcal{X} = \mathbb{R}^d$, equipped with its Borel $\sigma$-algebra, and there exist $L \in \mathbb{N}$,  measurable functions $H_{n,i} : \mathcal{X}^{I_n} \times \mathcal{S}_n  \to \mathbb{R}^d$ 
for all $i \in I_n$; $R_{n,1}, \ldots, R_{n,L} : \mathcal{X}^{I_n} \times \mathcal{X}^{I_n} \times \mathcal{S}_n \to \mathbb{R}_{\geq 0}$ and $\widetilde{R}_{n,1}, \ldots, \widetilde{R}_{n,L} : \mathcal{X}^{I_n} \to \mathbb{R}_{\geq 0}\;$ for which the following conditions hold true for any $\mathbf{x}^n = (x_i)_{i \in I_n}, \mathbf{c}^n = (c_i)_{i \in I_n} \in \mathcal{X}^{I_n} $. 
\begin{enumerate} [leftmargin=*,label=(B\arabic*)]
\item \label{item:B1}For any $\omega \in \mathcal{S}_n$, 
 		$$ \psi_n(\mathbf{x}^n + \mathbf{c}^n; \omega) - \psi_n(\mathbf{x}^n; \omega) \leq \sum_{i \in I_n} c_i \cdot H_{n,i}(\mathbf{x}^n; \omega) + \sum_{l=1}^L R_{n,l}(\mathbf{x}^n, \mathbf{c}^n; \omega) \widetilde{R}_{n,l}(\mathbf{x}^n+\mathbf{c}^n),  $$
 		for almost every $\mathbf{x}^n,\mathbf{c}^n$ with respect to Lebesgue measure on $\mathcal{X}^{I_n}$.
 	\item \label{item:B2} For $j \in [L]$, there exists a constant $\upsilon_j \in (0, \infty)$ such that  for all  $s>0$ we have
 	$$ R_{n,j}(\mathbf{x}^n, s \mathbf{c}^n; \omega) = s^{1+\upsilon_j}R_{n,j}(\mathbf{x}^n, \mathbf{c}^n; \omega), \; \forall \; \omega \in \mathcal{S}_n.$$
\end{enumerate}
 		 The condition \Cref{item:B2} is sometimes  replaced by a stronger version.
 		 \begin{enumerate}[label=(B\arabic*)$'$]
 		 \setcounter{enumi}{1}
 		 \item \label{item:B2prime} For all  $s>0$ and $\widetilde{\mathbf{c}}^n = (\widetilde{c}_{i})_{i \in I_n} \in \mathcal{X}^{I_n}$ satisfying $\|\widetilde{c}_{i}\|_2 \leq \|c_i \|_2$ for all $i \in I_n$, we have
 		  	$$ R_{n,j}(\mathbf{x}^n, s \widetilde{\mathbf{c}}^n; \omega) \leq s^{1+\upsilon_j}R_{n,j}(\mathbf{x}^n, \mathbf{c}^n; \omega), \; \forall \; \omega \in \mathcal{S}_n.$$
 		 \end{enumerate}
 \end{assumption}

For an  example of the Taylor-like expansion in Assumption~\ref{ass:psi}, let us go back to the example of TSP  (\Cref{exam:tsp}).  Recall that in that example, we had $\mathcal{X}=\mathbb{R}^d$ with $d \geq 2$, $I_n=[n]$ and $k_n=n$ with 
 $$ \psi_n( (x_{n,i})_{i \in [n]}; G ) := \sum_{\{i,j\} \in E(G)} \|x_{n,i}-x_{n,j}\|_2, \; \forall \; G \in \mathcal{S}_n,  (x_{n,i})_{i \in [n]} \in \mathcal{X}^n,$$
 where $\mathcal{S}_n$ is the set of all Hamiltonian cycles on the complete graph with $n$ vertices and $E(G)$ is the set of edges of $G \in \mathcal{S}_n$. It is clear that $\psi_n$ is jointly continuous and convex; and hence we can apply a first order approximation as follows: For (almost) any $\mathbf{x}^n=(x_i)_{i \in [n]}, \mathbf{c}^n=(c_i )_{i \in [n]} \in (\mathbb{R}^d)^{[n]}$,
 \begin{equation}{\label{ex:first}}
 \psi_n(\mathbf{x}^n+\mathbf{c}^n; G)-\psi_n(\mathbf{x}^n; G) \leq \sum_{i \in [n]} c_i \cdot \partial_i \psi_n(\mathbf{x}^n+\mathbf{c}^n; G),
 \end{equation}
 where $\partial_i$ refers to the (partial) gradient with respect to the $i$-th input. Computing the gradient and plugging it into (\ref{ex:first}), we obtain the following. We write $\Delta x_{ij}$ and $\Delta c_{ij}$ to denote $x_i-x_j$ and $c_i-c_j$ respectively. 
 \begin{align}
&\psi_n(\mathbf{x}^n+\mathbf{c}^n; G)-\psi_n(\mathbf{x}^n; G)  \leq  \sum_{i \in [n]} c_i \cdot \Biggl[ \sum_{\substack{j : \{i,j \} \in E(G)}} \dfrac{\Delta x_{ij} + \Delta c_{ij}}{\big\|\Delta x_{ij}+\Delta c_{ij}\big\|_2}\Biggr] \nonumber \\
& = \sum_{\substack{\left\{i,j\right\} \\ \left\{i,j\right\} \in E(G)}} \dfrac{\Delta c_{ij} \cdot (\Delta x_{ij}+\Delta c_{ij})}{{\big\|\Delta x_{ij}+\Delta c_{ij}\big\|_2}} \nonumber \\
& \leq  \sum_{i \in [n]} c_i \cdot \Biggl[ \sum_{\substack{j : \{i,j \} \in E(G)}} \dfrac{\Delta x_{ij}}{\big\|\Delta x_{ij}\big\|_2}\Biggr] + \sum_{\substack{\left\{i,j\right\} \\ \left\{i,j\right\} \in E(G)}} \big\| \Delta c_{ij} \big\|_2 \;\; \Bigg\|\dfrac{\Delta x_{ij}+\Delta c_{ij}}{{\big\|\Delta x_{ij}+\Delta c_{ij}\big\|_2}} - \dfrac{\Delta x_{ij}}{\big\|\Delta x_{ij}\big\|_2}\Bigg\|_2. \label{ex:second}
 \end{align}
Applying triangle inequality and H\"older's Inequality appropriately to the second term in \Cref{ex:second}, we arrive at the following : 
\begin{align*}
  &\psi_n(\mathbf{x}^n+\mathbf{c}^n; G)-\psi_n(\mathbf{x}^n; G) \\
 	& \leq  \sum_{i \in [n]} c_i \cdot \Biggl[ \sum_{\substack{j : \{i,j \} \in E(G)}} \dfrac{\Delta x_{ij}}{\big\|\Delta x_{ij}\big\|_2}\Biggr]
+ 2\biggl[ \sum_{i \in [n]} \sum_{j : \{i,j\} \in E(G)} \|\Delta c_{ij}\|_2^6 \biggr]^{1/3}\\
	&\hspace{3 in}  \cdot\Biggl[ \sum_{i\in [n]} \left(\min_{j \in [n] : j \neq i}\|\Delta x_{ij} + \Delta c_{ij}\|_2 \right)^{-3/2} \Biggr]^{2/3}.
 \end{align*}
 We refer the equation \Cref{choice5} for a detailed proof of this expansion, 
 which amounts to \Cref{item:B1} with $L=1$ and 
\begin{align}
& H_{n,i}(\mathbf{x}^n;G) = \sum_{\substack{j : \{i,j \} \in E(G)}} \dfrac{x_i-x_j}{\|x_i-x_j\|_2}, \forall\; i \in [n], \label{express1}\\ 
& R_{n,1}(\mathbf{x}^n,\mathbf{c}^n;G) = 2\Biggl[ \sum_{i \in [n]} \sum_{j : \{i,j\} \in E(G)} \|c_i-c_j\|_2^6 \Biggr]^{1/3}, \label{express2}\\
& \widetilde{R}_{n,1}(\mathbf{x}^n) =   \Biggl[ \sum_{i \in [n]}  \left(\min_{j \in [n] : j \neq i}\|x_i-x_j\|_2 \right)^{-3/2} \Biggr]^{2/3}. \label{express3}
\end{align}
 Clearly, \Cref{item:B2} is then also satisfied with $\upsilon_1 =1$. 

Coming back to our general discussion, the Markovian step in the proof of Theorem~\ref{genthm} below is obtained from the Metropolis--Hastings Algorithm with Gaussian proposal density. 
 To make the proof work with this choice of the Markovian step, we need to make sure that the Metropolis--Hastings proposal for the Gaussian density is not rejected too many times. Assumption~\ref{ass:p} restricts the choice of $\mathcal{P}$ (the distribution for the random inputs) so that we indeed have a small rejection probability in the MH step.

\begin{assumption}{\label{ass:p}}
The input probability measure $\mathcal{P}$ on $\mathbb{R}^d$ has a density $f$ with respect to Lebesgue measure on $\mathbb{R}^d$ such that for $X \sim \mathcal{P}, W \sim N_d(\mathbf{0},I_d)$ with $X \perp\!\!\!\perp W$ and for any $p,s \geq 0$, we have 
		$$ \mathbb{E} \biggl[\|W\|_2^{p} \biggl(1-\dfrac{f(X+sW)}{f(X)} \biggr)_+\biggr] \leq C(f,p)s,$$
		where $C(f,p)$ is a finite positive constant depending on $f$ and $p$ only. 
\end{assumption}

At first glance it might not be apparent why the condition in Assumption~\ref{ass:p} actually reduces the rejection probability in the MH step. To further illustrate this point, see the sufficient condition for Assumption~\ref{ass:p} given in \Cref{f1}. For a reasonably well-behaved stationary density $f$, it is generally true that as the initial point of the MH step moves towards the boundary of the support of $f$ the rejection probability tends to increase. The second condition in Lemma~\ref{f1} dictates that the boundary region of the support of $\mathcal{P}$ has small mass; thus controlling the rejection rate.

\begin{lmm}{\label{f1}}
Let $S$ be the support of the probability measure $\mathcal{P}$ on $\mathbb{R}^d$ and $f$ be its density with respect to Lebesgue measure. Then  $f$ satisfies Assumption~\ref{ass:p} whenever the following two conditions hold true.
\begin{enumerate}

\item $f$ is differentiable in $\operatorname{int}(S)$ and $\|\nabla f\|_2 \in L^1(\mathbb{R}^d)$, upon setting $\nabla f \equiv 0$ outside $\operatorname{int}(S)$.
\item There exists a finite constant $C(f)$, depending only on $f$, such that 
$$ \mathcal{P}( S_r) = \int_{S_r} f(x)\, dx \leq C(f)r\; \; \forall \; r \geq 0,$$
where $S_r:= \{x \in \mathbb{R}^d : \inf_{y \in S^c} \|x-y\|_2 \leq r\}$, with $S_r$ being empty if $S=\mathbb{R}^d$. 
\end{enumerate}
\end{lmm}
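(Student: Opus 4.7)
The plan is first to reduce the expectation in Assumption~\ref{ass:p} to an $L^1$-modulus of continuity estimate for $f$. Expanding the expectation as a Lebesgue integral and noting that $f(X) > 0$ almost surely under $\mathcal{P}$,
\[
\mathbb{E}\!\left[\|W\|_2^{p} \Bigl(1 - \tfrac{f(X+sW)}{f(X)}\Bigr)_+\right]
= \int_{\mathbb{R}^d}\!\!\int_{\mathbb{R}^d} \|w\|_2^{p}\,\bigl(f(x) - f(x+sw)\bigr)_+ \phi(w)\, dx\, dw,
\]
where $\phi$ is the standard Gaussian density on $\mathbb{R}^d$. Using $(a)_+ = \tfrac{1}{2}(a + |a|)$ together with the translation-invariance identity $\int_{\mathbb{R}^d} (f(x) - f(x+sw))\, dx = 0$ (valid since $f$ is a density), the signed part integrates to zero and it suffices to prove
\[
\int_{\mathbb{R}^d} |f(x) - f(x+sw)|\, dx \;\le\; C_0(f)\, s\, \|w\|_2 \qquad \text{for all } s > 0,\; w \in \mathbb{R}^d.
\]
Integrating this against $\tfrac{1}{2}\|w\|_2^{p} \phi(w)$ would then produce the required bound with $C(f,p) = \tfrac{1}{2}C_0(f)\,\mathbb{E}\|W\|_2^{p+1}$, the Gaussian moment being a finite constant depending only on $p$ and $d$.

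To prove the modulus estimate, I would split the $x$-integral according to the geometry of the segment $\{x + \tau sw : \tau \in [0,1]\}$ relative to $S$. On the ``good'' set
\[
A_1 := \bigl\{x \in \mathbb{R}^d : x + \tau sw \in \operatorname{int}(S) \text{ for all } \tau \in [0,1]\bigr\},
\]
condition (1) makes $f$ differentiable along the entire segment, so the fundamental theorem of calculus gives $|f(x+sw) - f(x)| \le s\|w\|_2 \int_0^1 \|\nabla f(x + \tau sw)\|_2\, d\tau$. Integrating over $x \in A_1$, exchanging integrals via Fubini, and applying the translation-invariant change of variables $y = x + \tau sw$ produces the bound $s\|w\|_2\, \|\nabla f\|_{L^1(\mathbb{R}^d)}$, which is finite by condition~(1).

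The main obstacle is the ``bad'' complement $A_2 := \mathbb{R}^d \setminus A_1$, where the segment crosses $\partial S$ and $f$ may jump in a way not captured by the classical gradient, so the argument above collapses. Here I would use the geometric observation that if the segment $\{x + \tau sw\}_{\tau \in [0,1]}$ meets $S^c$ at some $\tau^* \in [0,1]$, then $\operatorname{dist}(x, S^c) \le \tau^* s\|w\|_2 \le s\|w\|_2$, and analogously $\operatorname{dist}(x+sw, S^c) \le s\|w\|_2$; hence on $A_2$, both $x$ and $x+sw$ lie in the boundary layer $S_{s\|w\|_2}$. Using the crude bound $|f(x) - f(x+sw)| \le f(x) + f(x+sw)$ on $A_2$ and applying condition~(2) -- together with the change of variables $y = x+sw$ for the second piece -- gives
\[
\int_{A_2} |f(x) - f(x+sw)|\, dx \;\le\; 2\, \mathcal{P}(S_{s\|w\|_2}) \;\le\; 2 C(f)\, s\,\|w\|_2.
\]
Combining with the $A_1$ bound yields the modulus estimate with $C_0(f) = \|\nabla f\|_{L^1(\mathbb{R}^d)} + 2 C(f)$, completing the proof. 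The only real technical subtlety is condition~(2) doing the work that absolute continuity of $f$ would normally do in the classical $W^{1,1}$ modulus-of-continuity estimate.
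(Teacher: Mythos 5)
Your proof is correct and follows essentially the same route as the paper: a fundamental-theorem-of-calculus bound controlled by $\|\nabla f\|_{L^1(\mathbb{R}^d)}$ on the set of $x$ for which the segment from $x$ to $x+sw$ stays in $\operatorname{int}(S)$, and the boundary-layer bound from condition (2) on its complement, which (as you observe) is contained in $S_{s\|w\|_2}$. The only cosmetic difference is your symmetrization to the full $L^1$ difference $\int_{\mathbb{R}^d}|f(x)-f(x+sw)|\,dx$, which forces one extra change of variables on the bad set; the paper instead bounds $(f(x)-f(x+sw))_+\leq f(x)$ there directly.
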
 

\begin{lmm}{\label{f2}}
Suppose $\mathcal{P}$ can be written as $\mathcal{P}_1 \otimes \mathcal{P}_2$, where $\mathcal{P}_i$ is a probability measure on $\mathbb{R}^{d_i}$ with density $f_i$ with respect the Lebesgue measure on $\mathbb{R}^{d_i}$, for $i=1,2$ and $d=d_1+d_2$. If $\mathcal{P}_1$ and $\mathcal{P}_2$ both satisfies Assumption~\ref{ass:p} then so does $\mathcal{P}$. Moreover, if $\mathcal{P}$ satisfies Assumption~\ref{ass:p}, then so does any non-constant affine transformation of $\mathcal{P}$.
\end{lmm}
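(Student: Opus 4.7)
The plan is to handle the two assertions separately. For the product statement, write $X = (X_1,X_2)$ and $W = (W_1,W_2)$ with $W \sim N_d(\mathbf{0},I_d)$ and set $A_i := f_i(X_i + sW_i)/f_i(X_i)$, so that $f(X+sW)/f(X) = A_1 A_2$. Since $A_1 \geq 0$, the identity $1 - A_1 A_2 = (1 - A_1) + A_1(1 - A_2)$ combined with $(a+b)_+ \leq a_+ + b_+$ gives
\[
\Bigl(1 - \tfrac{f(X+sW)}{f(X)}\Bigr)_+ \leq (1-A_1)_+ + A_1(1-A_2)_+.
\]
I would then bound $\|W\|_2^p \leq c_p(\|W_1\|_2^p + \|W_2\|_2^p)$ via a standard power-mean inequality and expand to reduce the expectation in Assumption \ref{ass:p} for $f$ to four pieces.

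By independence of $(X_1,W_1)$ and $(X_2,W_2)$, each piece factors into a product of expectations involving a single index $i$. The mixed terms $\mathbb{E}[\|W_j\|_2^p (1 - A_i)_+]$ with $j \neq i$ split as $\mathbb{E}\|W_j\|_2^p \cdot \mathbb{E}[(1 - A_i)_+]$, and the second factor is $O(s)$ by Assumption \ref{ass:p} applied to $f_i$ with exponent $0$. For the two terms containing the factor $A_1$ I would invoke the elementary identity
\[
\mathbb{E}[h(W_i)\, A_i] = \int\!\!\int h(w)\, \tfrac{f_i(x+sw)}{f_i(x)}\, f_i(x)\, \phi(w)\,dx\,dw = \int h(w)\,\phi(w)\,dw = \mathbb{E}[h(W_i)],
\]
which holds because $f_i$ is a probability density; this absorbs the factor $A_1$, after which Assumption \ref{ass:p} for $f_{3-i}$ bounds the surviving $(1-A_{3-i})_+$ factor. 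Collecting, every piece is $O(s)$ with a constant depending only on $f_1, f_2, d, p$.

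For the affine part, let $Y = TX + b$ with $T$ invertible, so that $g(y) = |\det T|^{-1} f(T^{-1}(y-b))$ and
\[
\frac{g(Y + sW)}{g(Y)} = \frac{f(X + sT^{-1}W)}{f(X)}, \qquad X := T^{-1}(Y-b) \sim \mathcal{P}.
\]
The only obstruction is that $T^{-1} W$ is not a standard Gaussian. I would perform the substitution $z = T^{-1} w$ in the Gaussian integral to obtain
\[
\mathbb{E}\Bigl[\|W\|_2^p \bigl(1 - \tfrac{g(Y+sW)}{g(Y)}\bigr)_+\Bigr] = \int\!\!\int \|Tz\|_2^p \bigl(1 - \tfrac{f(x+sz)}{f(x)}\bigr)_+ f(x)\, \phi(Tz)\, |\det T|\,dz\,dx.
\]
Setting $\sigma := \|T^{-1}\|_{\mathrm{op}}$, the bounds $\|Tz\|_2 \geq \|z\|_2/\sigma$ and $\|Tz\|_2 \leq \|T\|_{\mathrm{op}}\|z\|_2$ dominate the integrand by $\|T\|_{\mathrm{op}}^p \|z\|_2^p$ times $|\det T|$ times the density of $N(\mathbf{0}, \sigma^2 I_d)$ at $z$. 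Rescaling $z = \sigma u$ produces a constant multiple of $\mathbb{E}[\|U\|_2^p(1 - f(X + s\sigma U)/f(X))_+]$ with $U \sim N(\mathbf{0}, I_d)$, which Assumption \ref{ass:p} for $f$ bounds by $C(f,p)\, s\sigma$, giving the required $O(s)$ estimate.

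The two arguments are mostly bookkeeping. The chief subtlety in the product case is recognizing the identity $\mathbb{E}[h(W_i) A_i] = \mathbb{E}[h(W_i)]$ that lets one dispose of $A_1$ in the cross-term; the chief subtlety in the affine case is controlling the non-isotropic Gaussian weight, which I would do via the crude operator-norm bound $\|Tz\|_2 \geq \|z\|_2/\|T^{-1}\|_{\mathrm{op}}$ rather than a more refined SVD-based reduction.
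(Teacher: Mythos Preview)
Your argument is correct; both parts go through as written. The route differs from the paper's in two respects worth noting.

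For the product assertion, the paper bypasses the $A_1$ factor entirely by using the cruder inequality $(1-uv)_+ \le (1-u)_+ + (1-v)_+$, valid for $u,v \ge 0$. This yields four terms of the form $\mathbb{E}[\|W_j\|_2^p(1-A_i)_+]$ directly, each of which factors by independence and is $O(s)$ without ever invoking your absorption identity $\mathbb{E}[h(W_1)A_1]=\mathbb{E}[h(W_1)]$. Your decomposition $(1-A_1A_2)_+ \le (1-A_1)_+ + A_1(1-A_2)_+$ is sharper but costs an extra step; the paper's version is shorter bookkeeping.

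For the affine assertion, your argument is in fact more general than the paper's. The paper treats only the scalar case $Y=a+bX$ with $b>0$, where the ratio collapses to $f(X+b^{-1}sW)/f(X)$ with $W$ still standard Gaussian, so Assumption~\ref{ass:p} for $f$ applies directly with $s$ replaced by $b^{-1}s$. Your change of variables $z=T^{-1}w$ together with the operator-norm bound $\|Tz\|_2 \ge \|z\|_2/\|T^{-1}\|_{\mathrm{op}}$ handles arbitrary invertible $T$, at the price of the constant picking up factors of $\|T\|_{\mathrm{op}}$, $\|T^{-1}\|_{\mathrm{op}}$ and $|\det T|$. Given that the lemma as stated speaks of ``any non-constant affine transformation'', your version actually matches the statement more closely.
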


We refer to Section~\ref{appnd} for the proofs of Lemmas~\ref{f1} and~\ref{f2}. These two lemmas provide a large number of examples for densities $f$ satisfying Assumption~\ref{ass:p}. These examples include the standard distributions like the uniform distribution on a measurable set with non-zero finite Lebesgue measure and a well-behaved measure-zero boundary (e.g., the boundary is a closed $C^2$-curve in $\mathbb{R}^d$), the Gaussian distribution,  products of Gamma distributions, products of Beta distributions with parameters greater than or equal to $1$, and Dirichlet distributions with all the parameters greater than or equal to $1$, to name a few. 

Before stating \Cref{genthm}, it is useful to recall the concept of sub-Gamma variables, which will also be instrumental in the discussion of \Cref{genthm:g}, the other main abstract result of this section.

\begin{defn}{\label{subGamma}}
A real-valued integrable random variable $X$ is said to be sub-Gamma with variance proxy $\sigma^2 \geq 0$ and scale parameter $c \geq 0$ if 
$$ \log \mathbb{E} \exp [ \lambda ( X- \mathbb{E}X)] \leq \dfrac{\lambda^2\sigma^2}{2(1-c|\lambda|)}, \; \forall \; |\lambda| < 1/c.$$
\end{defn}

The collection of such random variables is denoted by $\mathscr{G}(\sigma^2,c)$. If $c=0$, then $X$ is called sub-Gaussian with variance proxy $\sigma^2$. A random vector is called sub-Gamma with variance proxy $\sigma^2$ and scale parameter $c$ if all of its coordinates are in $\mathscr{G}(\sigma^2,c)$. The collection of all $p$-dimensional sub-Gamma vectors with variance proxy $\sigma^2$ and scale parameter $c$ is denoted by $\mathscr{G}(\sigma^2,c;p)$. We refer to \cite[Sections 2.3--2.5]{boucheron} for a detailed exposition on sub-Gaussian and sub-Gamma random variables and their properties. An important property of sub-Gamma vectors which we shall use frequently needs an earlier mention : whenever $\sigma_1^2 \geq \sigma_2^2 \geq 0$ and $c_1 \geq c_2 \geq 0$, we have $\mathscr{G}(\sigma_2^2,c_2;p) \subseteq \mathscr{G}(\sigma_1^2,c_1;p)$.  \Cref{subgamma} contains all the relevant materials about this topic that will be needed in this article. 
We are now ready to state the first main result of this section.
\begin{thm}{\label{genthm}}
	Consider a sequence of random minimization problems $\{\mathscr{P}_n : n \geq 1\}$ with random inputs in $\mathcal{X}=\mathbb{R}^d$ and  satisfying Assumptions \ref{unique}, \ref{ass:meas} and \ref{ass:psi} (with either \Cref{item:B2} or \cref{item:B2prime}). 
	Suppose that the probability measure $\mathcal{P}$  (for the inputs)  satisfies Assumption~\ref{ass:p}. 
	Further assume the existence of  a sequence of positive real numbers 
$\{\theta_n\}_{n \geq 1} $ satisfying the following conditions.
	\begin{enumerate} [leftmargin=*,label=(C\arabic*)]
		\item  \label{item:C1} Assume that for some $\lambda \in [0,\infty)$, there exists a sequence of positive real numbers $\{\varsigma_{n,\lambda} : n \geq 1\}$ satisfying 
			$$ \biggl( \mathbb{E} \sup_{\omega \in \mathcal{N}_{n,\theta_n}} \sum_{i \in I_n} \|  H_{n,i} ( \mathbf{X}^n; \omega) \|_2^{(1+\lambda)/\lambda} \biggr)^{\lambda/(1+\lambda)} = \bigO(\varsigma_{n,\lambda}), \; \text{ as } n \to \infty.$$
		For the case $\lambda=0$, the  above condition is understood as 
		$$ \bigg \| \sup_{ \omega \in \mathcal{N}_{n,\theta_n}} \sup_{i \in I_n} 	\|  H_{n,i} ( \mathbf{X}^n; \omega) \|_2 \bigg \|_{L^{\infty}}= \bigO(\varsigma_{n,0}), \; \text{ as } n \to \infty.$$
			
		\item \label{item:C2} 
		For all $j \in [L]$, assume that there exists $\gamma_j \in [0, \infty]$ and  sequences of positive real numbers $\{\nu_{n,j} : n \geq 1\}$ and $\{\widetilde{\nu}_{n,j} : n \geq 1\}$ 
		such that the following holds true. 
		$$  \biggl[ \mathbb{E}  \biggl( \sup_{\omega \in \mathcal{N}_{n, \theta_n}} R_{n,j}( \mathbf{X}^n, \mathbf{\widetilde{W}}^n; \omega )^{(1+\gamma_j)/\gamma_j} \biggr) \biggr]^{\gamma_j/(1+\gamma_j)} = \bigO(\nu_{n,j}),$$
		and 
		$$ [\mathbb{E}(\widetilde{R}_{n,j}(\mathbf{X}^n)^{1+\gamma_j}) ]^{1/(1+\gamma_j)} = \bigO (\widetilde{\nu}_{n,j}),$$
		as $n \to \infty$. Here $\mathbf{\widetilde{W}}^n := (\widetilde{W}_{n,i})_{i \in I_n}$ is a  collection of mean-zero sub-Gaussian vectors with variance proxy $1$ (i.e., $\widetilde{W}_{n,i} \in \mathscr{G}(1,0;d)$ for all $i \in I_n$) such that $\{(X_{n,i},\widetilde{W}_{n,i})\}_{i \in I_n}$ is an i.i.d.~collection. If Assumption~\ref{ass:psi} is satisfied with the stronger assumption \ref{item:B2prime}, instead of \ref{item:B2}, then it is enough to consider $\widetilde{\mathbf{W}}^n$ to be collection of independent $d$-dimensional standard Gaussian vectors, independent of $\mathbf{X}^n$. 
	
		\item \label{item:C3} Define the the following (random) pseudo-metric on $\mathcal{S}_n$: 
		$$ d_{2,n}(\omega_1, \omega_2) := \biggl(\sum_{i \in I_n} \| H_{n,i}(\mathbf{X}^n;\omega_1)-H_{n,i}(\mathbf{X}^n;\omega_2) \|_2^2 \biggr)^{1/2},\; \forall \; \omega_1, \omega_2 \in \mathcal{S}_n.$$
		Assume the existence of a positive real number $\varepsilon_0$ and a sequence of positive real numbers $\{\tau_n : n \geq 1  \}$ such that for any $\varepsilon \in (0, \varepsilon_0)$ and $\delta>0$ there exists $\kappa_{\varepsilon, \delta} >0$ and $K_{\varepsilon, \delta} \in \mathbb{N}$ such that 
		$$ \limsup_{n \to \infty} \mathbb{P} [ P (\mathcal{N}_{n,\theta_n},d_n, \varepsilon) > K_{\varepsilon,\delta}P(\mathcal{N}_{n,\theta_n}, d_{2,n}, \tau_n \kappa_{\varepsilon,\delta})] \leq \delta.$$
		
	\end{enumerate}
	Under the above conditions, $\{P(\mathcal{N}_{n,\theta_n}, d_n, \varepsilon) : n \geq 1\}$ is a tight family of random variables for any $\varepsilon >0$, provided $\{\theta_n : n \geq 1\}$ satisfies the following growth condition:
	\begin{equation}{\label{item:C4}}
	\theta_n = \bigO \biggl( \tau_n \min \biggl\{  \dfrac{\tau_n^{1+\lambda}}{k_n\varsigma_{n,\lambda}^{1+\lambda}},\;\;  \min_{j=1}^L \biggl( \dfrac{\tau_n}{\nu_{n,j}\widetilde{\nu}_{n,j}}\biggr)^{1/\upsilon_j} \biggr\}\biggr).
	\end{equation}
\end{thm}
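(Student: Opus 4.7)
The plan is to execute the Markovian-step-plus-Sudakov-minoration strategy outlined just before the theorem. For a step size $s > 0$ to be optimized at the end, I would draw $\mathbf{W}^n = (W_{n,i})_{i \in I_n}$ i.i.d.\ $N_d(0, I_d)$ independent of $\mathbf{X}^n$ and perform one Metropolis--Hastings step in each coordinate: if $\bar A_i$ denotes acceptance at coordinate $i$ (with conditional probability $1 \wedge f(X_{n,i}+sW_{n,i})/f(X_{n,i})$), set $\widetilde X_{n,i} := X_{n,i} + sW_{n,i}\mathbf{1}_{\bar A_i}$ and $c_i := \widetilde X_{n,i} - X_{n,i}$. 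Coordinate-wise stationarity yields $\widetilde{\mathbf{X}}^n \stackrel{d}{=} \mathbf{X}^n$. For every $\omega \in \mathcal{N}_{n,\theta_n}$, \ref{item:B1} together with near-optimality gives
\[
\psi_{n,\mathrm{opt}}(\widetilde{\mathbf{X}}^n) \leq \psi_n(\widetilde{\mathbf{X}}^n; \omega) \leq \psi_{n,\mathrm{opt}}(\mathbf{X}^n) + \theta_n + \sum_i c_i \cdot H_{n,i}(\mathbf{X}^n; \omega) + \sum_{j=1}^L R_{n,j}\widetilde R_{n,j}.
\]
Minimizing over $\omega$, using $\min_\omega(A+B) \leq \min_\omega A + \sup_\omega B$, taking expectations, and invoking $\widetilde{\mathbf{X}}^n \stackrel{d}{=} \mathbf{X}^n$ with \ref{item:A3} for integrability produces
\[
\mathbb{E}\min_{\omega \in \mathcal{N}_{n,\theta_n}} \sum_i c_i \cdot H_{n,i}(\mathbf{X}^n; \omega) \geq -\theta_n - C\sum_j s^{1+\upsilon_j}\nu_{n,j}\widetilde\nu_{n,j},
\]
where the $s^{1+\upsilon_j}$ factor emerges from \ref{item:B2} applied to $c_i = sW_{n,i}\mathbf{1}_{\bar A_i}$, and the remainder $\mathbb{E}\sup_\omega \sum_j R_{n,j}\widetilde R_{n,j}$ is controlled via H\"older with exponents $\bigl(\tfrac{1+\gamma_j}{\gamma_j}, 1+\gamma_j\bigr)$ together with \ref{item:C2} (under just \ref{item:B2}, the sub-Gaussian family $\widetilde W_{n,i} := W_{n,i}\mathbf{1}_{\bar A_i}$ is what plays the role of $\widetilde{\mathbf{W}}^n$ in \ref{item:C2}; under \ref{item:B2prime} one may dominate by an independent Gaussian family directly).

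\textbf{Sudakov upper bound.} Writing $c_i = sW_{n,i} - sW_{n,i}\mathbf{1}_{A_i}$ decomposes $\sum_i c_i \cdot H_{n,i}(\omega) = sZ_\omega - s\sum_i W_{n,i}\mathbf{1}_{A_i}\cdot H_{n,i}(\omega)$ with $Z_\omega := \sum_i W_{n,i}\cdot H_{n,i}(\mathbf{X}^n; \omega)$. Conditional on $\mathbf{X}^n$, $\{Z_\omega\}$ is a centered Gaussian process whose intrinsic $L^2$ pseudo-metric is exactly $d_{2,n}$, so Proposition \ref{sudakov} on $\mathcal{N}_{n,\theta_n}$ at scale $\tau_n\kappa$ gives $\mathbb{E}[\min_\omega Z_\omega \mid \mathbf{X}^n] \leq -A^*\tau_n\kappa\sqrt{\log P(\mathcal{N}_{n,\theta_n}, d_{2,n}, \tau_n\kappa)}$. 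The rejection residual I would bound by H\"older with exponents $(1+\lambda, (1+\lambda)/\lambda)$: the $(1+\lambda)$-th moment of $(\sum_i \|W_{n,i}\|^{1+\lambda}\mathbf{1}_{A_i})^{1/(1+\lambda)}$ is $\sum_i \mathbb{E}[\|W_{n,i}\|^{1+\lambda}\mathbf{1}_{A_i}] = O(k_n s)$ by Assumption \ref{ass:p} applied coordinate-wise with $p = 1+\lambda$, whereas the $L^{(1+\lambda)/\lambda}$-norm of $\sup_\omega \|(\|H_{n,i}(\omega)\|_2)_i\|_{(1+\lambda)/\lambda}$ is $O(\varsigma_{n,\lambda})$ by \ref{item:C1}. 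Multiplying by the leading $s$ from the decomposition gives
\[
\mathbb{E}\min_\omega \sum_i c_i \cdot H_{n,i}(\omega) \leq -A^* s\tau_n\kappa\,\mathbb{E}\sqrt{\log P(\mathcal{N}_{n,\theta_n}, d_{2,n}, \tau_n\kappa)} + Cs^{1+1/(1+\lambda)}k_n^{1/(1+\lambda)}\varsigma_{n,\lambda}.
\]

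\textbf{Optimization of $s$ and transfer to $d_n$.} Chaining the lower and upper bounds and dividing through by $s\tau_n\kappa$:
\[
A^*\mathbb{E}\sqrt{\log P(\mathcal{N}_{n,\theta_n}, d_{2,n}, \tau_n\kappa)} \leq \frac{\theta_n}{s\tau_n\kappa} + C\sum_j \frac{s^{\upsilon_j}\nu_{n,j}\widetilde\nu_{n,j}}{\tau_n\kappa} + \frac{Cs^{1/(1+\lambda)}k_n^{1/(1+\lambda)}\varsigma_{n,\lambda}}{\tau_n\kappa}.
\]
Choosing $s \asymp \theta_n/\tau_n$, a direct substitution shows that \eqref{item:C4} is precisely the calibration making each of the three right-hand terms $O(1/\kappa)$: the $j$-th summand matches the $(\tau_n/\nu_{n,j}\widetilde\nu_{n,j})^{1/\upsilon_j}$ constraint in the minimum, and the last term corresponds (after raising both sides to the $(1+\lambda)$-th power) to the $k_n\varsigma_{n,\lambda}^{1+\lambda}$ constraint. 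Hence $\mathbb{E}\sqrt{\log P(\mathcal{N}_{n,\theta_n}, d_{2,n}, \tau_n\kappa)} = O(1/\kappa)$, and Markov's inequality makes $\{P(\mathcal{N}_{n,\theta_n}, d_{2,n}, \tau_n\kappa)\}_n$ tight for each fixed $\kappa$. Finally, hypothesis \ref{item:C3} transfers this tightness to $\{P(\mathcal{N}_{n,\theta_n}, d_n, \varepsilon)\}_n$, completing the argument.

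\textbf{Main obstacle.} The principal technical subtlety is that the acceptance indicators $\mathbf{1}_{\bar A_i}$ depend on $(\mathbf{X}^n, \mathbf{W}^n)$ jointly, so the process $\{s\sum_i W_{n,i}\mathbf{1}_{\bar A_i}\cdot H_{n,i}(\omega)\}_\omega$ is not itself Gaussian and Proposition \ref{sudakov} does not apply directly to it; one has to peel off the pure Gaussian backbone $\{Z_\omega\}$ and handle the $\mathbf{1}_{A_i}$-correction as a genuinely lower-order remainder. Assumption \ref{ass:p} is calibrated exactly so that this correction is $o(s\tau_n\sqrt{\log P})$ once $s = O(\theta_n/\tau_n)$; under the stronger hypothesis \ref{item:B2prime} one may further bypass the issue for the nonlinear remainders by domination by a pure independent Gaussian family, but the Sudakov step still requires this delicate separation. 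A secondary difficulty is aligning the H\"older exponents --- parameter $\lambda$ for the rejection residual, $\gamma_j$ for the nonlinear remainders --- so that the three independent scales comprising the minimum in \eqref{item:C4} emerge naturally from a single optimization in $s$.
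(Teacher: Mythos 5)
Your proposal is correct and follows essentially the same route as the paper's proof: a Metropolis--Hastings step with Gaussian proposal, stationarity plus the Taylor-type expansion \ref{item:B1}--\ref{item:B2}, Sudakov minoration on the Gaussian backbone conditional on $\mathbf{X}^n$, H\"older with Assumption~\ref{ass:p} and \ref{item:C1} for the rejection correction, H\"older with \ref{item:C2} for the nonlinear remainders, the choice $t_n \asymp \theta_n/\tau_n$, and finally \ref{item:C3} to transfer tightness from $d_{2,n}$ to $d_n$. The only points handled implicitly rather than explicitly (that the thinned vectors $\widetilde W_{n,i}$ are mean-zero and that $\widetilde R_{n,j}(\widetilde{\mathbf{X}}^n) \stackrel{d}{=} \widetilde R_{n,j}(\mathbf{X}^n)$) are exactly the observations the paper records, so no gap remains.
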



To summarize the nature of the assumptions in Theorem~\ref{genthm}, note that 
\Cref{item:C1,item:C2} impose growth conditions on the averages of the first and second-order terms in the Taylor-like expansion of $\psi_n$ assumed in Assumption~\ref{ass:psi}, while \ref{item:C3} connects the metric $d_n$ to the objective function $\psi_n$ through the pseudo-metric $d_{2,n}$, where we emphasize that $d_{2,n}$ may not be a proper metric. Theorem~\ref{genthm} is rather a recipe for proving results similar to its final assertion than an end-product by itself. 

\begin{remark}{\label{boundedH}}
	In many examples that we shall consider in this article, we shall have for all $n \geq 1$,
	\begin{equation}{\label{uniupperH}}
	\sup_{ \omega \in  \mathcal{S}_n}  \sup_{i \in I_n}  \|H_{n,i}( \mathbf{X}^n; \omega) \|_2 \leq \varsigma_n < \infty, \;\text{almost surely},
	\end{equation}
	for some non-random sequence $\{\varsigma_n : n \geq 1\}$ and $X_{n,i} \stackrel{i.i.d.}{\sim} f$. In such situation, it is clear that the assumption \Cref{item:C1} is satisfied with $\lambda = 0$ and $\varsigma_{n,0} = \varsigma_n$ and thus the restriction on $\theta_n$ in \Cref{genthm} simplifies to the following :
	 \begin{equation}{\label{item:C4special}}
	 	\theta_n = \bigO \biggl( \tau_n \min \biggl\{  \dfrac{\tau_n}{k_n\varsigma_n},\;\;  \min_{j=1}^L \biggl( \dfrac{\tau_n}{\nu_{n,j}\widetilde{\nu}_{n,j}}\biggr)^{1/\upsilon_j} \biggr\}\biggr).
	 	\end{equation}
 	As an example, TSP on Euclidean spaces satisfies the condition \Cref{uniupperH} with $\varsigma_n \equiv 1$. 
\end{remark}

Let us now briefly return to the example of TSP on Euclidean spaces (\Cref{exam:tsp}) for a demonstration of Theorem~\ref{genthm}. We list the key components and the final result here; the details can be found in the proof of Theorem~\ref{nearoptgraph} in \Cref{sec:nonlin}. First note that the expression for $H_{n,i}$ in \Cref{express1} for TSP ensures that $\|H_{n,i}(\mathbf{x}^n;G)\| \leq 2$ for any $\mathbf{x}^n$ and any Hamiltonian cycle $G$; therefore we can apply the conclusion made in \Cref{boundedH} to guarantee that the condition in  \Cref{item:C1}  is satisfied for the choices $\lambda =0$ and $\varsigma_{n,0} \equiv 2$. Reflecting upon the expression for $R_{n,1}$ presented in \Cref{express2}, one can see that the condition \Cref{item:B2} is trivially satisfied with $\upsilon_1=1$; moreover we also have, 
\begin{equation*}
R_{n,1}(\mathbf{x}^n,\mathbf{c}^n;G)^3 \leq 8\sum_{i \in [n]} \sum_{j : \{i,j\}\in E(G)} 2^5(\|c_i\|_2^6 + \|c_j\|_2^6 ) = 2^{10} \sum_{i \in [n]} \|c_i\|_2^6.
\end{equation*}
Therefore, for a collection of random elements $\widetilde{\mathbf{W}}^n$ as described in \Cref{item:C2}, we have the following for any~$\theta_n$:
$$ \mathbb{E} \biggl( \sup_{\omega \in \mathcal{N}_{n, \theta_n}} R_{n,j}( \mathbf{X}^n, \mathbf{\widetilde{W}}^n; \omega )^{3} \biggr) \leq 2^{10} \sum_{i \in [n]} \mathbb{E} \|\widetilde{W}_{n,i}\|_2^6 = \bigO(n).$$
Thus  the first condition in \Cref{item:C2} is satisfied for $\gamma _1= 1/2, \nu_{n,1}= n^{1/3}$. Similarly, from the expression in~\Cref{express3}, we have 
$$ \widetilde{R}_{n,1}(\mathbf{X}^n)^{3/2} = \sum_{i \in [n]} \biggl(\min_{j \in [n] : j \neq i} \|X_{n,i}-X_{n,j}\|_2 \biggr)^{-3/2}.$$
In the above expression, the term within the parentheses is the nearest-neighbor distance for the input point $X_{n,i}$. It is well-known that the nearest neighbor distance for any input point from a collection of $n$ independent and identically distributed points in $\mathbb{R}^d$ is of order $n^{-1/d}$, provided the sampling density is `nice'. Following this argument, we can establish that 
$$ \mathbb{E}  \widetilde{R}_{n,1}(\mathbf{X}^n)^{3/2} = \sum_{i \in [n]} \mathbb{E} \biggl(\min_{j \in [n] : j \neq i} \|X_{n,i}-X_{n,j}\|_2 \biggr)^{-3/2} = \bigO (n^{1+ \frac{3}{2d}}).$$
Thus, the second condition in \Cref{item:C2} is satisfied for $\widetilde{\nu}_{n,1} = n^{2/3+1/d}$. Finally, let us present a short heuristic argument behind \Cref{item:C3} in the case of TSP. If we take two (random) Hamiltonian cycles $G_{1,n}, G_{2,n} \in \mathcal{N}_{n,\theta_n}$ with $d_n(G_{1,n},G_{2,n}) > \varepsilon$, then there exists at least $\varepsilon n/4$ many vertices which have different neighbor sets in $G_{1,n}$ and $G_{2,n}$. Using a nearest-neighbor argument, it can be shown that for any such vertex $i$, the quantity $\|H_{n,i}(\mathbf{X}^n;G_{1,n})-H_{n,i}(\mathbf{X}^n;G_{2,n}) \|_2$ is bounded away from $0$ with high probability. This guarantees that $n^{-1/2}d_{2,n}(G_{1,n},G_{2,n})$ is bounded away from $0$ with high probability and hence the condition \Cref{item:C3} will be satisfied for $\tau_n=\sqrt{n}$. 
Plugging-in these quantities in \Cref{item:C4special} and applying Theorem~\ref{genthm}, we can conclude that for TSP on $\mathbb{R}^d$, choosing $\theta_n= \bigO(n^{-1/d})$ yields that  $\{P(\mathcal{N}_{n,\theta_n}, d_n, \varepsilon) : n \geq 1\}$ is a tight sequence for any $\varepsilon >0$.

A special case of interest is when the objective function $\psi_n((x_{n,i})_{i \in I_n})$ is linear in $(x_{n,i})_{i \in I_n}$, as described in \Cref{ass:linear} below. This structure of the objective function is present in many problems of interest, some of which will be analyzed in \Cref{sec:lin}. 

\begin{assumption}[Linearized version of \Cref{ass:psi}]{\label{ass:linear}}
	There exist measurable functions $ \; H_{n,i} : (\mathcal{S}_n, \mathcal{B}_n) \to (\mathbb{R}^d, \mathcal{B}_{\mathbb{R}^d})$, for $i \in I_n$, such that 
	\begin{equation}
		\psi_n ( (x_{n,i})_{i \in I_n}; \omega) = \sum_{i \in I_n} x_{n,i} \cdot H_{n,i}(\omega), \; \forall \; \omega \in \mathcal{S}_n, \; (x_{n,i})_{i \in I_n} \in \mathcal{X}^{I_n}.
	\end{equation}
\end{assumption}

In this particular scenario, we can take $L=1, R_{n,1} \equiv \widetilde{R}_{n,1} \equiv 0$ and $\nu_{n,1} \equiv \widetilde{\nu}_{n,1} \equiv 0$ in statement of \Cref{genthm}, simplifying it to the following form.

\begin{thm}[Linearized version of Theorem \ref{genthm}]{\label{genthm2}}
	Consider a sequence of random minimization problems $\{\mathscr{P}_n : n \geq 1\}$ with random inputs in $\mathcal{X}=\mathbb{R}^d$ and  satisfying Assumptions~\ref{unique}, \ref{ass:meas} and~\ref{ass:linear}. Suppose that the probability measure $\mathcal{P}$ (for the random inputs)   has a density $f$ with respect to Lebesgue measure on $\mathbb{R}^d$ satisfying Assumption~\ref{ass:p}. Further assume the following two conditions.
		\begin{enumerate} [leftmargin=*,label=(D\arabic*)]
			\item  \label{item:D1} Assume that for some $\lambda \in [0,\infty)$, there exists a sequence of positive real numbers $\{\varsigma_{n,\lambda} : n \geq 1\}$ satisfying 
				$$ \biggl( \sup_{\omega \in \mathcal{S}_{n}} \sum_{i \in I_n} \|  H_{n,i} (  \omega) \|_2^{(1+\lambda)/\lambda} \biggr)^{\lambda/(1+\lambda)} = \bigO(\varsigma_{n,\lambda}), \; \text{ as } n \to \infty.$$
			For the case $\lambda=0$, the  above condition is understood as 
			$$ \sup_{ \omega \in \mathcal{S}_{n}} \sup_{i \in I_n} 	\|  H_{n,i} ( \omega) \|_2 = \bigO(\varsigma_{n,0}), \; \text{ as } n \to \infty.$$
			
			\item  \label{item:D2} Consider the the following  pseudo-metric on $\mathcal{S}_n$: 
					$$ d_{2,n}(\omega_1, \omega_2) := \biggl(\sum_{i \in I_n} \| H_{n,i}(\omega_1)-H_{n,i}(\omega_2)\|_2^2 \biggr)^{1/2},\; \forall \; \omega_1, \omega_2 \in \mathcal{S}_n.$$
					Assume the existence of a positive real number $\varepsilon_0$ and a sequence of positive real numbers $\{\tau_n : n \geq 1 \}$ such that for any $\varepsilon \in (0, \varepsilon_0)$ there exists $\kappa_{\varepsilon} >0$ and $K_{\varepsilon} \in \mathbb{N}$ such that 
					\begin{equation*}
					 P(\mathcal{B},d_n, \varepsilon) \leq  K_{\varepsilon}P(\mathcal{B}, d_{2,n}, \tau_n \kappa_{\varepsilon}),\;\; \forall \; \mathcal{B} \subseteq \mathcal{S}_n.
					\end{equation*}
			\end{enumerate}
	Then  $\{P(\mathcal{N}_{n,\theta_n}, d_n, \varepsilon) : n \geq 1\}$ is a tight family of random variables for any $\varepsilon >0$, provided that $\theta_n = \bigO ( \tau_n^{2+\lambda}k_n^{-1}\varsigma_{n,\lambda}^{-1-\lambda})$ as $n \to \infty$.
\end{thm}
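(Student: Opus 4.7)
The plan is to run a single Metropolis--Hastings step on $\mathbf{X}^n$ with a Gaussian proposal of step size $s>0$ (to be optimized at the end), exploit the linearity of $\psi_n$ to convert the fluctuation of the optimum into an infimum of a Gaussian process indexed by $\mathcal{N}_{n,\theta_n}$, and then invoke Sudakov minoration (Proposition~\ref{sudakov}) together with condition \ref{item:D2} to turn a large packing number into a correspondingly large negative fluctuation, forcing the packing number itself to be tight. More concretely, first I would take i.i.d.\ standard Gaussians $\mathbf{W}^n=(W_{n,i})_{i\in I_n}$ independent of $\mathbf{X}^n$ and perform a coordinatewise MH step with proposal $X_{n,i}+sW_{n,i}$, so that $\Delta_{n,i}:=\widetilde{X}_{n,i}-X_{n,i}=sW_{n,i}\mathbbm{1}_{A_i}$ where $A_i$ is the acceptance event. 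Since the chain has stationary law $\mathcal{P}^{\otimes I_n}$, $\widetilde{\mathbf{X}}^n\stackrel{d}{=}\mathbf{X}^n$, and \ref{item:A3} yields $\mathbb{E}\psi_{n,\mathrm{opt}}(\widetilde{\mathbf{X}}^n)=\mathbb{E}\psi_{n,\mathrm{opt}}(\mathbf{X}^n)$. For any $\omega\in\mathcal{N}_{n,\theta_n}$, linearity of $\psi_n$ in its first argument gives
\[
\psi_{n,\mathrm{opt}}(\widetilde{\mathbf{X}}^n)\le\psi_n(\widetilde{\mathbf{X}}^n;\omega)\le\psi_{n,\mathrm{opt}}(\mathbf{X}^n)+\theta_n+\sum_{i\in I_n}\Delta_{n,i}\cdot H_{n,i}(\omega),
\]
so minimizing over $\omega$ and taking expectations produces the pivotal bound $\mathbb{E}\inf_{\omega\in\mathcal{N}_{n,\theta_n}}\sum_{i}\Delta_{n,i}\cdot H_{n,i}(\omega)\ge-\theta_n$.

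Next, split $\Delta_{n,i}=sW_{n,i}-sW_{n,i}\mathbbm{1}_{A_i^c}$. Conditional on $\mathbf{X}^n$ the set $\mathcal{N}_{n,\theta_n}$ is deterministic, and $\{\sum_i sW_{n,i}\cdot H_{n,i}(\omega)\}_\omega$ is a centered Gaussian process whose intrinsic $L^2$-pseudometric is $s\cdot d_{2,n}$; crucially, in the linear setting $H_{n,i}(\omega)$ does not depend on $\mathbf{X}^n$, so the law of this Gaussian process is nonrandom. Applying Proposition~\ref{sudakov} at scale $s\tau_n\kappa_\varepsilon$, invoking \ref{item:D2} in the form $P(\mathcal{N}_{n,\theta_n},d_{2,n},\tau_n\kappa_\varepsilon)\ge P(\mathcal{N}_{n,\theta_n},d_n,\varepsilon)/K_\varepsilon$, and averaging over $\mathbf{X}^n$, gives
\[
-\mathbb{E}\inf_{\omega}\sum_{i}sW_{n,i}\cdot H_{n,i}(\omega)\ge A^{*}\,s\tau_n\kappa_\varepsilon\,\mathbb{E}\sqrt{\bigl(\log P(\mathcal{N}_{n,\theta_n},d_n,\varepsilon)-\log K_\varepsilon\bigr)_+}.
\]
For the rejection remainder, H\"older with conjugate exponents $(1+\lambda)/\lambda$ and $1+\lambda$ (replaced by the $\ell^{\infty}/\ell^{1}$ pairing when $\lambda=0$) together with \ref{item:D1} gives the deterministic bound $\sup_{\omega}|\sum_{i}sW_{n,i}\mathbbm{1}_{A_i^c}\cdot H_{n,i}(\omega)|\le C\,s\,\varsigma_{n,\lambda}\bigl(\sum_{i}\|W_{n,i}\|_2^{1+\lambda}\mathbbm{1}_{A_i^c}\bigr)^{1/(1+\lambda)}$. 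Since $A_i^c$ has conditional probability $(1-f(X_{n,i}+sW_{n,i})/f(X_{n,i}))_+$, Assumption~\ref{ass:p} yields $\mathbb{E}[\|W_{n,i}\|_2^{1+\lambda}\mathbbm{1}_{A_i^c}]\le C(f,1+\lambda)\,s$, and Jensen's inequality then gives the expected bound $C'\,s^{(2+\lambda)/(1+\lambda)}k_n^{1/(1+\lambda)}\varsigma_{n,\lambda}$.

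Combining the three displays with the pivotal bound and dividing by $s\tau_n$ leaves
\[
A^{*}\kappa_\varepsilon\,\mathbb{E}\sqrt{\log^{+}\!\bigl(P(\mathcal{N}_{n,\theta_n},d_n,\varepsilon)/K_\varepsilon\bigr)}\le\frac{\theta_n}{s\tau_n}+\frac{C'\,s^{1/(1+\lambda)}k_n^{1/(1+\lambda)}\varsigma_{n,\lambda}}{\tau_n},
\]
and optimizing in $s$ (equivalently, balancing the two right-hand terms) leaves an upper bound of order $\bigl(\theta_n k_n\varsigma_{n,\lambda}^{1+\lambda}/\tau_n^{2+\lambda}\bigr)^{1/(2+\lambda)}$, which is $\mathcal{O}(1)$ exactly under the stated growth hypothesis $\theta_n=\mathcal{O}(\tau_n^{2+\lambda}k_n^{-1}\varsigma_{n,\lambda}^{-1-\lambda})$. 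Tightness of $P(\mathcal{N}_{n,\theta_n},d_n,\varepsilon)$ then follows from Markov's inequality applied to $\sqrt{\log^{+}P}$. The main obstacle is the MH rejection term: Assumption~\ref{ass:p} is tailored precisely so that the factor $s$ it extracts from the rejection probability combines with the $s$ from the proposal to yield a rejection contribution of order $s^{(2+\lambda)/(1+\lambda)}$, which is the unique scaling that matches the Gaussian term and preserves the sharp exponent $2+\lambda$ in the hypothesis on $\theta_n$; any weaker control of the rejection rate would degrade the admissible scale.
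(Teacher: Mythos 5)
Your proof is correct and follows essentially the same route as the paper: Theorem~\ref{genthm2} is stated in the paper as a direct specialization of Theorem~\ref{genthm} (take $L=1$, $R_{n,1}\equiv\widetilde R_{n,1}\equiv 0$), and your self-contained argument reproduces the proof of Theorem~\ref{genthm} in the linear case — Metropolis--Hastings step, linear decomposition, Sudakov minoration conditionally on $\mathbf{X}^n$, and a H\"older/Jensen control of the rejection remainder using Assumption~\ref{ass:p}. The only (cosmetic) deviations are that you balance the two error terms to pick $s$ whereas the paper fixes $t_n=\theta_n/\tau_n$, and that you pass through \ref{item:D2} directly inside the Sudakov step rather than first establishing tightness in the $d_{2,n}$-metric; since \ref{item:D2} here is deterministic, both are equivalent and produce the same constraint $\theta_n=\mathcal{O}(\tau_n^{2+\lambda}k_n^{-1}\varsigma_{n,\lambda}^{-1-\lambda})$.
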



Let us spend a few words on the assumption in \Cref{item:D2} which is the counterpart of \Cref{item:C3} in the simplified context of \Cref{genthm2}; \Cref{item:D2} being easier to prove since the metric $d_{2,n}$ is non-random in \Cref{genthm2}. In some examples satisfying \Cref{ass:linear}, we would have $d_n=\tau_n^{-1}d_{2,n}$ and hence \Cref{item:D2} would be trivial with $\kappa_{\varepsilon}=\varepsilon$ and $K_{\varepsilon}=1$. This will be the situation for symmetric graph optimization problems on weighted complete graphs, discussed in \Cref{graphcom},  and for the random assignment model, discussed in \Cref{ram}. In other cases, e.g., the Sherrington--Kirkpartick Model in \Cref{skmodel},  proving \Cref{item:D2} needs some short arguments. Proving \Cref{item:C3} needs considerable effort due to randomness of the metric $d_{2,n}$; an example of this situation can be found in the proof of \Cref{nearoptgraph}. 

\begin{remark}{\label{uniqueremark}}
Consider an optimization problem sequence satisfying \Cref{ass:linear}. If the input distribution $\mathcal{P}$ has density with respect to Lebesgue measure on $\mathbb{R}^d$, then for any $\omega_1, \omega_2 \in \mathcal{S}_n$, 
\begin{align*}
 \mathbb{P}(\psi_n(\mathbf{X}^n;\omega_1) = \psi_n(\mathbf{X}^n;\omega_2)) >0 &\Rightarrow \mathbb{P} \biggl(\sum_{i \in I_n} X_{n,i} \cdot (H_{n,i}(\omega_1)-H_{n,i}(\omega_2)) =0 \biggr) >0 \\
 & \Rightarrow H_{n,i}(\omega_1) = H_{n,i}(\omega_2) \; \forall \; i \in I_n \Rightarrow d_{2,n}(\omega_1, \omega_2)=0.
\end{align*}
Provided that $d_{2,n}$ is a proper metric on $\mathcal{S}_n$ and $\mathcal{S}_n$ is finite or countably infinite, we can therefore guarantee that  \Cref{unique} is satisfied. 
\end{remark}

\begin{remark}{\label{meanupper}}
Consider an optimization problem sequence satisfying \Cref{ass:linear} and \Cref{item:D1} with the choice $\lambda=0$. . If the input distribution $\mathcal{P}$ is integrable, we have 
$$ \mathbb{E}  \bigl| \inf_{\omega \in \mathcal{S}_n} \psi_n(\mathbf{X}^n; \omega) \bigr| \leq \mathbb{E} \sup_{\omega \in \mathcal{S}_n} \biggl| \sum_{i \in I_n} X_{n,i} \cdot H_{n,i}(\omega) \biggr| \leq k_n \varsigma_{n,0} \mathbb{E}_{\mathcal{P}}\|X\|_2 < \infty, $$
and hence the integrability condition in \Cref{item:A3} of \Cref{ass:meas} is satisfied.  
\end{remark}

\begin{proof}[Proof of Theorem~\ref{genthm}]
Fix $\varepsilon \in (0, \varepsilon_0)$ and $ \delta >0$.	We start by taking a small Metropolis--Hastings step from $\mathbf{X}^n$ with the choice of proposal step being  $d$-dimensional standard Gaussian distribution. To be more precise, fix a sequence $\{t_n : n \geq 1\}$ of positive real numbers. Generate an i.i.d.~collection of standard $d$-dimensional Gaussian random vectors $\{W_{n,i} : i \in I_n\}$ and an i.i.d.~collection of $\text{Uniform}[0,1]$ variables $\{U_{n,i} : i \in I_n\}$, both of these collection being independent of each other and jointly independent of $\mathbf{X}^n$. Define
	$$ \widetilde{X}_{n,i} := X_{n,i} + t_nW_{n,i}\mathbbm{1}\biggl\{U_{n,i} \leq \dfrac{f(X_{n,i}+t_nW_{n,i})}{f(X_{n,i})} \biggr\} = X_{n,i}+t_n \widetilde{W}_{n,i}, \; \; \forall \; i \in I_n,$$
	with $\widetilde{\mathbf{W}}^n :=(\widetilde{W}_{n,i})_{i \in I_n}$ having been defined accordingly. Setting $\widetilde{\mathbf{X}}^n := (\widetilde{X}_{n,i})_{i \in I_n}$, we observe that it is a collection of i.i.d.~random variables having density $f$. This follows from the validity of Metropolis--Hastings algorithm 
	and the fact that the standard Gaussian measure is symmetric around the origin; see \cite[Chapter 3.1]{mcmc} for more details on MH algorithm. It also implies that the random vectors $\widetilde{W}_{n,i}$'s are mean-zero, even without any integrability assumption on the probability measure induced by the  density $f$.   
	Therefore, 
	\begin{equation}{ \label{lower21}}
		\mathbb{E} \biggl[ \inf_{\omega \in \mathcal{S}_n} \psi_n(\widetilde{\mathbf{X}}^n; \omega) \biggr] = \mathbb{E} \biggl[ \inf_{\omega \in \mathcal{S}_n} \psi_n(\mathbf{X}^n; \omega) \biggr],
	\end{equation}
	whereas by \Cref{item:B1} and the weaker assumption \Cref{item:B2}, we have that for any $\omega \in \mathcal{S}_n$, 
	\begin{align}
		&\psi_n( \widetilde{\mathbf{X}}^n; \omega) - \psi_n(\mathbf{X}^n; \omega) = \psi_n( \mathbf{X}^n + t_n\widetilde{\mathbf{W}}^n; \omega ) -\psi_n(\mathbf{X}^n; \omega) \nonumber \\
		&\leq t_n\sum_{i \in I_n} \widetilde{W}_{n,i} \cdot H_{n,i}( \mathbf{X}^n; \omega) +  \sum_{j=1}^L  R_{n,j}( \mathbf{X}^n, t_n\widetilde{\mathbf{W}}^n; \omega ) \widetilde{R}_{n,j}( \mathbf{X}^n + t_n\widetilde{\mathbf{W}}^n) \nonumber \\
		& \leq t_n \sum_{i \in I_n} W_{n,i} \cdot H_{n,i} ( \mathbf{X}^n; \omega) - t_n\sum_{i \in I_n}  (W_{n,i}-\widetilde{W}_{n,i}) \cdot H_{n,i} ( \mathbf{X}^n; \omega) \nonumber \\
		& \hspace{ 1.2 in}  + \sum_{j=1}^L t_n^{1+\upsilon_j} R_{n,j}( \mathbf{X}^n, \mathbf{\widetilde{W}}^n; \omega ) \widetilde{R}_{n,j}( \mathbf{X}^n + t_n\widetilde{\mathbf{W}}^n ). \label{taylor}
	\end{align}
	 This  enables us to write the following estimate: 
	\begin{align}
	&\inf_{\omega \in \mathcal{N}_{n,\theta_n}} [\psi_n (\widetilde{\mathbf{X}}^n; \omega) - \psi_n (\mathbf{X}^n; \omega)  ]  \nonumber \\
	 &\hspace{0.5 in} \leq t_n \inf_{\omega \in \mathcal{N}_{n,\theta_n}}  \sum_{i \in I_n} W_{n,i} \cdot H_{n,i} ( \mathbf{X}^n; \omega) + t_n\sup_{\omega \in \mathcal{N}_{n,\theta_n}}  \biggl| \sum_{i \in I_n}  (W_{n,i}-\widetilde{W}_{n,i}) \cdot H_{n,i} ( \mathbf{X}^n; \omega) \biggr| \nonumber  \\
	 &\qquad \hspace{1 in}  +  \sum_{j=1}^L t_n^{1+\upsilon_j} \inf_{\omega \in \mathcal{N}_{n,\theta_n}}   R_{n,j}( \mathbf{X}^n, \widetilde{\mathbf{W}}^n; \omega) \widetilde{R}_{n,j}( \mathbf{X}^n + t_n\widetilde{\mathbf{W}}^n ). \label{maxest}
	\end{align}
	From the definition of $\mathcal{N}_{n,\theta_n}$, we have 
	\begin{equation}{\label{maxest2}}
  \sup_{\omega \in \mathcal{N}_{n, \theta_n}} 	\psi_n (\mathbf{X}^n; \omega) \leq  \inf_{\omega \in \mathcal{S}_n} \psi_n(\mathbf{X}^n; \omega) + \theta_n.
	\end{equation}
	Combining \Cref{maxest} and \Cref{maxest2}, we get the following:
	\begin{align}
		& \inf_{\omega \in \mathcal{S}_n} \psi_n(\widetilde{\mathbf{X}}^n; \omega) \nonumber \\ &\leq  \inf_{\omega \in \mathcal{N}_{n,\theta_n}}  \psi_n (\widetilde{\mathbf{X}}^n; \omega) \nonumber \\ 
		& = \inf_{\omega \in \mathcal{N}_{n,\theta_n}}  [\psi_n (\widetilde{\mathbf{X}}^n; \omega) - \psi_n (\mathbf{X}^n; \omega) + \psi_n (\mathbf{X}^n; \omega) ] \nonumber \\
		& \leq  \inf_{\omega \in \mathcal{N}_{n,\theta_n}}  [\psi_n (\widetilde{\mathbf{X}}^n; \omega) - \psi_n (\mathbf{X}^n; \omega)  ] + \sup_{\omega \in \mathcal{N}_{n,\theta_n}} 	\psi_n (\mathbf{X}^n; \omega)   \nonumber \\
		& \leq t_n\inf_{\omega \in \mathcal{N}_{n,\theta_n}}  \sum_{i \in I_n} W_{n,i} \cdot H_{n,i} ( \mathbf{X}^n; \omega) + t_n\sup_{\omega \in \mathcal{N}_{n,\theta_n}}   \biggl| \sum_{i \in I_n}  (W_{n,i}-\widetilde{W}_{n,i}) \cdot H_{n,i} ( \mathbf{X}^n; \omega) \biggr|  \nonumber \\
		&\qquad +  \sum_{j=1}^L t_n^{1+\upsilon_j} \sup_{\omega \in \mathcal{N}_{n,\theta_n}}    R_{n,j}( \mathbf{X}^n, \widetilde{\mathbf{W}}^n; \omega) \widetilde{R}_{n,j}( \mathbf{X}^n + t_n\widetilde{\mathbf{W}}^n )    +  \inf_{\omega \in \mathcal{S}_n} \psi_n(\mathbf{X}^n; \omega) + \theta_n. \label{ineq}
	\end{align}
	Since the $W_{n,i}$'s are i.i.d.~Gaussian random variables, and are independent of $\mathbf{X}^n$, the process 
	\[
	\biggl\{\sum_{i \in I_n} W_{n,i} \cdot H_{n,i} ( \mathbf{X}^n; \omega) : \omega \in \mathcal{S}_n \biggr\}
	\]
	is Gaussian on the pseudo-metric space $(\mathcal{S}_n,d_{2,n})$, conditional on $\mathbf{X}^n$. Therefore, we can employ \Cref{sudakov} (Sudakov minoration) to conclude that for any $\varepsilon^{\prime}>0$, 
	\begin{align}{\label{first term}}
		\mathbb{E} \biggl[ \inf_{\omega \in \mathcal{N}_{n,\theta_n}}  \sum_{i \in I_n} W_{n,i} \cdot H_{n,i} ( \mathbf{X}^n; \omega) \biggl| \mathbf{X}^n \biggr] \leq -A^*\varepsilon^{\prime} \sqrt{\log P( \mathcal{N}_{n,\theta_n}, d_{2,n},\varepsilon^{\prime})}.
	\end{align}
	We now concentrate on the second term on the right side in \Cref{ineq}. Defining
	$$ \Upsilon_{n,i} := \biggl(1-\dfrac{f(X_{n,i}+t_nW_{n,i})}{f(X_{n,i})} \biggr)_{+}\; \; \forall \; i \in I_n,$$
	observe that for all $i \in I_n$, 
	\begin{align*}
	\mathbb{E}\bigl [ \|W_{n,i}-\widetilde{W}_{n,i}\|_2^{1+\lambda} \bigl| \mathbf{X}^n, \mathbf{W}^n \bigr] &= \|W_{n,i}\|_2^{1+\lambda}\mathbb{P} \biggl( U_{n,i} > \frac{f(X_{n,i}+t_nW_{n,i})}{f(X_{n,i})} \biggl| \mathbf{X}^n, \mathbf{W}^n  \biggr) \\
	&= \|W_{n,i}\|_2^{1+\lambda} \Upsilon_{n,i}.
	\end{align*}
This aids us in deriving the following upper bound when $\lambda \in (0, \infty)$ upon repeated application of H\"older's inequality.  
\begin{align}
		&\mathbb{E} \biggl[ \sup_{\omega \in \mathcal{N}_{n,\theta_n}}  \biggl| \sum_{i \in I_n}  (W_{n,i}-\widetilde{W}_{n,i}) \cdot H_{n,i} ( \mathbf{X}^n; \omega) \biggr|  \biggr]  \notag \\
		& \leq \mathbb{E} \biggl[ \sup_{\omega \in \mathcal{N}_{n,\theta_n}}  \sum_{i \in I_n}    \| W_{n,i}-\widetilde{W}_{n,i} \|_2 \|  H_{n,i} ( \mathbf{X}^n; \omega) \|_2  \biggr] \nonumber \\
		& \leq \mathbb{E} \biggl[ \sup_{\omega \in \mathcal{N}_{n,\theta_n}} \biggl(  \sum_{i \in I_n}    \| W_{n,i}-\widetilde{W}_{n,i} \|_2^{1+\lambda} \biggr)^{1/(1+\lambda)} \biggl( \sum_{i \in I_n} \|  H_{n,i} ( \mathbf{X}^n; \omega) \|_2^{(1+\lambda)/\lambda} \biggr)^{\lambda/(1+\lambda)}  \biggr] \nonumber \\
		& \leq    \biggl( \mathbb{E} \sum_{i \in I_n}    \| W_{n,i}-\widetilde{W}_{n,i} \|_2^{1+\lambda} \biggr)^{1/(1+\lambda)} \biggl( \mathbb{E} \sup_{\omega \in \mathcal{N}_{n,\theta_n}} \sum_{i \in I_n} \|  H_{n,i} ( \mathbf{X}^n; \omega) \|_2^{(1+\lambda)/\lambda} \biggr)^{\lambda/(1+\lambda)}  \nonumber \\
		& \leq    \biggl( \mathbb{E} \sum_{i \in I_n}    \| W_{n,i} \|_2^{1+\lambda} \Upsilon_{n,i} \biggr)^{1/(1+\lambda)} \biggl( \mathbb{E} \sup_{\omega \in \mathcal{N}_{n,\theta_n}} \sum_{i \in I_n} \|  H_{n,i} ( \mathbf{X}^n; \omega) \|_2^{(1+\lambda)/\lambda} \biggr)^{\lambda/(1+\lambda)}  \nonumber \\
	& = ( \bigO(k_nt_n) )^{1/(1+\lambda)} \bigO(\varsigma_{n,\lambda}) = \bigO( k_n^{1/(1+\lambda)} t_n^{1/(1+\lambda)}\varsigma_{n,\lambda}), \label{second2}
	\end{align}
	where 
	\Cref{second2} follows from \Cref{item:C1} and \Cref{ass:p}. 
	The final assertion in \Cref{second2} is also valid for $\lambda=0$, since 
	\begin{align}
			&  \mathbb{E} \biggl[ \sup_{\omega \in \mathcal{N}_{n,\theta_n}}  \sum_{i \in I_n}    \| W_{n,i}-\widetilde{W}_{n,i} \|_2 \|  H_{n,i} ( \mathbf{X}^n; \omega) \|_2  \biggr] \nonumber \\
			& \leq  \mathbb{E}  \biggl(  \sum_{i \in I_n}    \| W_{n,i}-\widetilde{W}_{n,i} \|_2 \biggr) \biggl \| \sup_{ \omega \in \mathcal{N}_{n,\theta_n}} \sup_{i \in I_n} 	\|  H_{n,i} ( \mathbf{X}^n; \omega) \|_2 \biggr \|_{L^{\infty}} \nonumber \\
			& \leq  \mathbb{E}  \biggl(  \sum_{i \in I_n}    \| W_{n,i} \|_2 \Upsilon_{n,i} \biggr) \biggl \| \sup_{ \omega \in \mathcal{N}_{n,\theta_n}} \sup_{i \in I_n} 	\|  H_{n,i} ( \mathbf{X}^n; \omega) \|_2 \biggr \|_{L^{\infty}}  = \bigO(k_nt_n\varsigma_{n,0}). \label{second3}
		\end{align}
For the third term on the right side  in \Cref{ineq}, first note that 
		\begin{equation}{\label{contraction}}
		\|\widetilde{W}_{n,i}\|_2 = \|W_{n,i}\|_2\mathbbm{1}\biggl\{U_{n,i} \leq \dfrac{f(X_{n,i}+t_nW_{n,i})}{f(X_{n,i})} \biggr\} \leq \|W_{n,i}\|_2,  \text{ for all } i \in I_n,
		\end{equation} 
and hence $\widetilde{W}_{n,i} \in \mathscr{G}(1,0;d)$ for all $i$. By construction it is clear that $\{ (X_{n,i},\widetilde{W}_{n,i})\}_{i \in I_n}$ is an i.i.d.~collection and hence the following estimates can be obtained from \Cref{item:C2} upon  application of H\"older's inequality:
	\begin{align}
		& \mathbb{E} \biggl[ \sup_{\omega \in \mathcal{N}_{n,\theta_n}} R_{n,j}( \mathbf{X}^n, \widetilde{\mathbf{W}}^n; \omega) \widetilde{R}_{n,j}( \mathbf{X}^n + t_n\widetilde{\mathbf{W}}^n ) \biggr] \nonumber \\
		&\leq  \biggl[ \mathbb{E}  \biggl(\sup_{\omega \in \mathcal{N}_{n,\theta_n}} R_{n,j}( \mathbf{X}^n, \widetilde{\mathbf{W}}^n; \omega)^{(1+\gamma_j)/\gamma_j}\biggr)\biggr]^{\gamma_j/(1+\gamma_j)}  [ \mathbb{E}(\widetilde{R}_{n,j}( \mathbf{X}^n + t_n\widetilde{\mathbf{W}}^n)^{1+\gamma_j}  )]^{1/(1+\gamma_j)} \nonumber \\
	& =  \biggl[ \mathbb{E}  \biggl(\sup_{\omega \in \mathcal{N}_{n,\theta_n}} R_{n,j}( \mathbf{X}^n, \widetilde{\mathbf{W}}^n; \omega)^{(1+\gamma_j)/\gamma_j}\biggr)\biggr]^{\gamma_j/(1+\gamma_j)}  [ \mathbb{E}(\widetilde{R}_{n,j}( \mathbf{X}^n)^{1+\gamma_j} )]^{1/(1+\gamma_j)} \nonumber\\
		&=\bigO(\nu_{n,j}\widetilde{\nu}_{n,j}), \label{thirdeq}
	\end{align} 
	where the first equality in  \Cref{thirdeq} follows from the fact that $\mathbf{X}^n$ has the same law as $\widetilde{\mathbf{X}}^n=\mathbf{X}^n+t_n\widetilde{\mathbf{W}}^n$, whereas the second equality  is derived using  \Cref{item:C2}. If the stronger assumption \Cref{item:B2prime} is satisfied, we could have used \Cref{contraction} to argue that 
	$$  R_{n,j}( \mathbf{X}^n, \widetilde{\mathbf{W}}^n; \omega) \leq  R_{n,j}( \mathbf{X}^n, \mathbf{W}^n; \omega), \; \forall \; \omega \in \mathcal{S}_n, \; i \in I_n,$$
	and could have derived \Cref{thirdeq} similarly from \Cref{item:C2} using only the weaker assumption that 
	$$ \biggl[ \mathbb{E}  \biggl(\sup_{\omega \in \mathcal{N}_{n,\theta_n}} R_{n,j}( \mathbf{X}^n, \widetilde{\mathbf{W}}^n; \omega)^{(1+\gamma_j)/\gamma_j}\biggr)\biggr]^{\gamma_j/(1+\gamma_j)} = \bigO(\nu_{n,j}).$$ 
	Taking expectations on both sides of  \Cref{ineq} and using \Cref{lower21}, \Cref{first term}, \Cref{second2} and \Cref{thirdeq}, 
	we get that for any  $\varepsilon^{\prime} >0$. 
	\begin{align}
		A^* \varepsilon^{\prime} \mathbb{E} \sqrt{\log P( \mathcal{N}_{n,\theta_n},d_{2,n},\varepsilon^{\prime})} \leq \dfrac{\theta_n}{t_n} +   \bigO( k_n^{1/(1+\lambda)} t_n^{1/(1+\lambda)}\varsigma_{n,\lambda}) +  \sum_{j=1}^L t_n^{\upsilon_j}\bigO( \nu_{n,j}\widetilde{\nu}_{n,j}), \label{new1}
	\end{align} 
	or equivalently, if we replace $\varepsilon^{\prime}$ in  \Cref{new1} by $\tau_n \kappa_{\varepsilon, \delta}$ (which we can since $\tau_n,\kappa_{\varepsilon, \delta} >0$ by \Cref{item:C3}) and divide both sides by $A^*\tau_n$, we obtain the following:  
	\begin{align*}
		\kappa_{\varepsilon, \delta} \mathbb{E} \sqrt{\log P(\mathcal{N}_{n, \theta_n}, d_{2,n}, \tau_n\kappa_{\varepsilon, \delta})} \leq \dfrac{\theta_n}{A^*t_n\tau_n} + \dfrac{C_1 k_n^{1/(1+\lambda)} t_n^{1/(1+\lambda)}\varsigma_{n,\lambda}}{\tau_n} +  C_1\sum_{j=1}^L \dfrac{t_n^{\upsilon_j} \nu_{n,j}\widetilde{\nu}_{n,j}}{\tau_n}.
	\end{align*} 
	 Now set $t_n = \theta_n/\tau_n$. The growth conditions assumed in \Cref{item:C4} guarantees that
	\begin{align}
\kappa_{\varepsilon, \delta} \mathbb{E} \sqrt{\log P(\mathcal{N}_{n, \theta_n}, d_{2,n}, \tau_n\kappa_{\varepsilon, \delta})}	& \leq \dfrac{1}{A^*} + \bigO \biggl(\dfrac{ k_n^{1/(1+\lambda)} \theta_n^{1/(1+\lambda)}\varsigma_{n,\lambda}}{\tau_n^{(2+\lambda)/(1+\lambda)}}\biggr) +  \sum_{j=1}^L \bigO\biggl(\dfrac{\theta_n^{\upsilon_j} \nu_{n,j}\widetilde{\nu}_{n,j}}{\tau_n^{1+\upsilon_j}}\biggr) \nonumber \\
&= \bigO(1). 
	\end{align}
	Therefore, $\{P(\mathcal{N}_{n, \theta_n}, d_{2,n}, \tau_n\kappa_{\varepsilon, \delta}) : n \geq 1\}$ is a tight sequence of random variables for any $\varepsilon \in (0, \varepsilon_0)$ and $ \delta >0$. Assumption \Cref{item:C3}, together with this tightness, now guarantees that
	\begin{align*}
		&\lim_{N \to \infty} \limsup_{n \to \infty} \mathbb{P} [ P(\mathcal{N}_{n, \theta_n}, d_{n}, \varepsilon) \geq NK_{\varepsilon,\delta}] \nonumber \\
		& \leq \limsup_{n \to \infty} \mathbb{P} \biggl( \dfrac{P (\mathcal{N}_{n,\theta_n},d_n, \varepsilon)}{P(\mathcal{N}_{n,\theta_n}, d_{2,n}, \tau_n \kappa_{\varepsilon,\delta})} > K_{\varepsilon,\delta}\biggr) \\
		& \hspace{ 0.5 in} + \lim_{N \to \infty} \sup_{n \geq 1} \mathbb{P} [ P(\mathcal{N}_{n, \theta_n}, d_{2,n}, \tau_n\kappa_{\varepsilon, \delta}) \geq N]  \leq \delta.
	\end{align*}
This establishes that $\{P(\mathcal{N}_{n, \theta_n}, d_{n}, \varepsilon) : n \geq 1\}$ is a tight sequence for any $\varepsilon \in (0, \varepsilon_0)$ and consequently for any $\varepsilon >0$. This completes the proof.
	\end{proof}

Unfortunately, it turns out that \Cref{genthm} fails to provide the optimal choice for the sequence $\{\theta_n : n \geq 1\}$ in many examples. Though the optimal order of $\theta_n$ is not always necessary for our purpose in this article, it is nice to have a different technique which can achieve a better outcome. To understand this deficiency of \Cref{genthm}, we need to look at the computations carried out to arrive at \Cref{second2}. Due to the ``discontinuous" nature of Metropolis-Hastings algorithm, the random variable $\|W_{n,i}-\widetilde{W}_{n,i} \|_2$ does not scale well with respect to higher order moments, in the sense that $\mathbb{E} \|W_{n,i}-\widetilde{W}_{n,i} \|_2^p = \bigO(t_n)$ for any $p \geq 0$ instead of being $\bigO(t_n^p)$. As a consequence, we obtain the final growth rate bound in \Cref{second2} to be $\bigO(k_n^{1/(1+\lambda)}t_n^{1/(1+\lambda)}\varsigma_{n,\lambda})$ instead of $\bigO(k_n^{1/(1+\lambda)}t_n\varsigma_{n,\lambda})$, and thus losing out a tighter upper bound whenever we take $\lambda>0$.  The remedy for this problem  is to use the same approach as in \Cref{genthm}, but with a different Markovian step which provides us with a ``continuous" perturbation of the original data.   Using \textit{Langevin dynamics} as the Markovian step we can achieve this goal, although its application imposes further limitations on the choice of input probability density function $f$. We shall now restrict our attention only to the cases when the probability measure $\mathcal{P}$ for the inputs satisfy the following conditions stated in \Cref{mu}. 



\begin{assumption}{\label{mu}}
We will say that the input probability measure $\mathcal{P}$ satisfies this assumption for the pair $(\rho,g)$ if there exists $p \in \mathbb{N}$, $\rho : \mathbb{R}^{p} \to \mathbb{R}$ and $g : \mathbb{R}^{p} \to \mathbb{R}^d$ satisfying the following conditions.
\begin{enumerate}[label=(\arabic*)]
\item \label{ass:F1} $\exp(-\rho)$ is integrable on $\mathbb{R}^{p}$. 
\item \label{ass:F2} $\rho$ is almost everywhere (with respect to Lebesgue measure) differentiable on $\mathbb{R}^{p}$ and the gradient $\nabla \rho$ is Lipschitz. 
\item \label{ass:F3}If $Y$ is a random vector having density (with respect to Lebesgue measure) proportional to $\exp(-\rho)$, then $\nabla \rho (Y)$ is a sub-Gamma vector with variance proxy $\sigma^2>0$ and scale parameter $c \geq 0$, i.e., $\nabla \rho(Y) \in \mathscr{G}(\sigma^2,c;p)$.
\item \label{ass:F4} $g$ has uniformly bounded first and second order derivatives and $g(Y) \sim \mathcal{P}$  where $Y$ is as in \Cref{ass:F3} above.
\end{enumerate}
The condition \Cref{ass:F2} can be replaced by either one of the following.
\begin{enumerate}[label=(\arabic*)$'$]
\setcounter{enumi}{1}
\item \label{ass:F2p} $\rho$ is twice continuously differentiable and $\|\nabla \rho\|_2^2 - \frac{1}{2} \Delta \rho$ is bounded below. Here $\Delta$ denotes the Laplacian operator.
\end{enumerate}
\begin{enumerate}[label=(\arabic*)$''$]
\setcounter{enumi}{1}
\item \label{ass:F2pp} $\rho$ is twice continuously differentiable and there exists $a,b \in \mathbb{R}$ such that $x \cdot \nabla \rho(x) \geq -a\|x\|_2^2 -b$ for all $x \in \mathbb{R}^{p}$.
\end{enumerate}
\end{assumption}
As a notational clarification, we mention that $\nabla \rho$ will be considered as row or column vector-valued function depending upon the context. On the contrary, $\nabla g$ will be always considered as $d \times p$ matrix-valued function. In \Cref{ass:F4} above, the uniform boundedness of a vector-valued function refers to uniform-boundedness of its individual  coordinates.


\Cref{mu} enables us to make use of \textit{overdamped Langevin dynamics}, also known as \textit{Kolmogorov process}, to take the Markovian step from a sample drawn from $\mathcal{P}$. We refer to \cite[Section 2.2.2]{sobolev} for a detailed exposition on Kolmogorov processes. To summarize the literature for our current application, consider the multi-dimensional differential operator $\mathcal{L}$ defined as 
$$ \mathcal{L} \varphi := - \nabla \rho \cdot \nabla \varphi + \Delta \varphi.$$
Under either of the assumptions \Cref{ass:F2}, \Cref{ass:F2p} or \Cref{ass:F2pp},   the Markov semi-group corresponding to the diffusion with infinitesimal generator $\mathcal{L}$ has stationary measure $\mathcal{Q}_{\rho}$, the probability measure on $\mathbb{R}^{p}$ with density proportional to $\exp(-\rho)$. See \cite[Chapter 5.9, 5.15]{bhatt} and \cite[Lemma 2.2.23]{sobolev} for these kinds of results. Moreover, if $Y \sim \mathcal{Q}_{\rho}$ and $\{W_t : t \geq 0\}$ is a standard $p$-dimensional Brownian Motion started from $0$, independent of $Y$, then the stochastic differential equation 
$$ dY_t = -\nabla \rho(Y_t) \; dt + \sqrt{2} dW_t,$$
with  boundary condition $Y_0:= Y$ has a unique continuous solution $t \mapsto Y_t$ defined on $\mathbb{R}_{\geq 0}$ almost surely; see \cite[Theorem 7.2.4, 7.2.5]{bhatt} and \cite[Thorem 2.2.19]{sobolev}. The stationarity condition  guarantees that 
\begin{equation}{\label{markovstep}}
Y_t = Y_0 - \int_{0}^t \nabla \rho(Y_s) \; ds + \sqrt{2}W_t \sim \mathcal{Q}_{\rho}, \; \forall \; t \geq 0.
\end{equation} 
The stationary Markov process, as described in \Cref{markovstep}, will provide the crucial Markovian steps needed in the proof of \Cref{genthm:g}, which is the second main result of this section.

\begin{thm}{\label{genthm:g}}
Consider a sequence of random minimization problems $\{\mathscr{P}_n : n \geq 1\}$ with random inputs in $\mathcal{X}=\mathbb{R}^d$, satisfying  Assumptions~\ref{unique}, \ref{ass:meas} and \Cref{ass:psi} with condition \ref{item:B2}. 
Assume that the distribution of the random inputs, $\mathcal{P}$, satisfies Assumption~\ref{mu} for the pair $(\rho,g)$. By virtue of this assumption, we can take the random inputs as $X_{n,i}=g(Y_{n,i})$, where $\{Y_{n,i} : i \in I_n\}$ is a collection of i.i.d.~random variables from the distribution $\mathcal{Q}_{\rho}$. 	Further assume the existence of  a sequence of positive real numbers 
$\{\theta_n\}_{n \geq 1} $ satisfying the following conditions.
	\begin{enumerate} [leftmargin=*,label=(E\arabic*)]
		\item  \label{item:E1} Assume that for some $\lambda \in [0,\infty)$, there exists a sequence of positive real numbers $\{\varsigma_{n,\lambda} : n \geq 1\}$ satisfying 
			$$ \biggl( \mathbb{E} \sup_{\omega \in \mathcal{N}_{n,\theta_n}} \sum_{i \in I_n} \|  H_{n,i} ( \mathbf{X}^n; \omega) \|_2^{(1+\lambda)/\lambda} \biggr)^{\lambda/(1+\lambda)} = \bigO(\varsigma_{n,\lambda}), \; \text{ as } n \to \infty.$$
		For the case $\lambda=0$, the  above condition is understood as 
		$$ \biggl \| \sup_{ \omega \in \mathcal{N}_{n,\theta_n}} \sup_{i \in I_n} 	\|  H_{n,i} ( \mathbf{X}^n; \omega) \|_2 \biggr \|_{L^{\infty}}= \bigO(\varsigma_{n,0}), \; \text{ as } n \to \infty.$$
			
		\item \label{item:E2} 
		For all $j \in [L]$, assume that there exists $\gamma_j \in [0, \infty]$ and  sequences of positive real numbers $\{\nu_{n,j} : n \geq 1\}$ and $\{\widetilde{\nu}_{n,j} : n \geq 1\}$ 
		such that the following holds true:
		$$  \biggl[ \mathbb{E}  \biggl( \sup_{\omega \in \mathcal{N}_{n, \theta_n}} R_{n,j}( \mathbf{X}^n, \mathbf{Z}^n; \omega)^{(1+\gamma_j)/\gamma_j} \biggr) \biggr]^{\gamma_j/(1+\gamma_j)} = \bigO(\nu_{n,j}),$$
		and 
		$$ [\mathbb{E} (\widetilde{R}_{n,j}(\mathbf{X}^n)^{1+\gamma_j} )]^{1/(1+\gamma_j)} = \bigO (\widetilde{\nu}_{n,j}),$$
		as $n \to \infty$. Here $\mathbf{Z}^n := (Z_{n,i})_{i \in I_n}$ is any  collection 
		of sub-Gamma vectors with variance proxy $1$ and scale parameter $1$ (i.e., $Z_{n,i} \in \mathscr{G}(1,1;d)$ for all $i \in I_n$) such that $\{(Y_{n,i},Z_{n,i})\}_{i \in I_n}$ is an i.i.d.~collection.
	
		\item \label{item:E3} Define the the following (random) pseudo-metric on $\mathcal{S}_n$:
		$$ d_{g,n}(\omega_1, \omega_2) := \biggl(\sum_{i \in I_n} \| (\nabla g(Y_{n,i}))^{\top}(H_{n,i}(\mathbf{X}^n;\omega_1)-H_{n,i}(\mathbf{X}^n;\omega_2) ) \|_2^2 \biggr)^{1/2},\; \forall \; \omega_1, \omega_2 \in \mathcal{S}_n.$$
		Assume the existence of a positive real number $\varepsilon_0$ and a sequence of positive real numbers $\{\tau_n : n \geq 1 \}$ such that for any $\varepsilon \in (0, \varepsilon_0)$ and $\delta>0$ there exists $\kappa_{\varepsilon, \delta} >0$ and $K_{\varepsilon, \delta} \in \mathbb{N}$ such that 
		$$ \limsup_{n \to \infty} \mathbb{P} [ P (\mathcal{N}_{n,\theta_n},d_n, \varepsilon) > K_{\varepsilon,\delta}P(\mathcal{N}_{n,\theta_n}, d_{g,n}, \tau_n \kappa_{\varepsilon,\delta})] \leq \delta.$$
		
	\end{enumerate}
		Under the above conditions, $\{P(\mathcal{N}_{n,\theta_n}, d_n, \varepsilon) : n \geq 1\}$ is a tight family of random variables for any $\varepsilon >0$, provided that $\{\theta_n : n \geq 1\}$ satisfies the following growth condition.
		\begin{equation}{\label{item:E4}}
		\theta_n = \bigO \biggl( \tau_n \min \biggl\{ 1,\;\; \dfrac{\tau_n}{\varsigma_{n,\lambda} k_n^{1/(1+\lambda)}},\;\;  \min_{j=1}^L \biggl( \dfrac{\tau_n}{\nu_{n,j}\widetilde{\nu}_{n,j}}\biggr)^{1/\upsilon_j} \biggr\}\biggr).
		\end{equation}
\end{thm}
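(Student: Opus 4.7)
The plan is to mirror the proof of \Cref{genthm}, but to replace the Metropolis--Hastings step with one step of the overdamped Langevin diffusion that preserves $\mathcal{Q}_{\rho}$. Fix $\varepsilon\in(0,\varepsilon_0)$ and $\delta>0$, let $\{t_n\}$ be a positive sequence to be chosen, and for each $i\in I_n$ run the stationary Langevin SDE from $Y_{n,i}$ for time $t_n$ to obtain
$$
\widetilde{Y}_{n,i}=Y_{n,i}-\int_{0}^{t_n}\nabla\rho(Y_{n,i,s})\,ds+\sqrt{2}\,W_{n,i,t_n},
$$
driven by independent standard $p$-dimensional Brownian motions $\{W_{n,i,s}\}_{s\geq 0}$. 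By \eqref{markovstep}, $\widetilde{Y}_{n,i}\sim \mathcal{Q}_{\rho}$, hence $\widetilde{X}_{n,i}:=g(\widetilde{Y}_{n,i})\sim \mathcal{P}$ and $\widetilde{\mathbf{X}}^n:=(\widetilde{X}_{n,i})_{i\in I_n}$ has the same joint law as $\mathbf{X}^n$. Because $\nabla g$ and $\nabla^2 g$ are uniformly bounded by \Cref{ass:F4} of \Cref{mu}, a second-order Taylor expansion of $g$ gives $\widetilde{X}_{n,i}-X_{n,i}=\nabla g(Y_{n,i})(\widetilde{Y}_{n,i}-Y_{n,i})+\mathcal{E}_{n,i}$ with $\|\mathcal{E}_{n,i}\|_2\leq C\|\widetilde{Y}_{n,i}-Y_{n,i}\|_2^{2}$. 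Substituting this into \Cref{item:B1} of \Cref{ass:psi} and mimicking the manipulations leading to \eqref{taylor} and \eqref{ineq}, the increment $\psi_n(\widetilde{\mathbf{X}}^n;\omega)-\psi_n(\mathbf{X}^n;\omega)$ decomposes into a leading Gaussian piece $\sqrt{2}\sum_{i\in I_n}W_{n,i,t_n}\cdot(\nabla g(Y_{n,i}))^{\top}H_{n,i}(\mathbf{X}^n;\omega)$, a Langevin drift, a second-order Taylor remainder coming from $g$, and the higher-order $R_{n,j}\widetilde{R}_{n,j}$ contributions.

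Conditional on $\mathbf{Y}^n:=(Y_{n,i})_{i\in I_n}$, the Gaussian piece is a centered Gaussian process on $\mathcal{S}_n$ with intrinsic pseudo-metric $\sqrt{2t_n}\,d_{g,n}$, so \Cref{sudakov} applied on $\mathcal{N}_{n,\theta_n}$ at radius $\sqrt{2t_n}\,\tau_n\kappa_{\varepsilon,\delta}$ yields the lower bound $-A^{*}\sqrt{2t_n}\,\tau_n\kappa_{\varepsilon,\delta}\,\mathbb{E}\sqrt{\log P(\mathcal{N}_{n,\theta_n},d_{g,n},\tau_n\kappa_{\varepsilon,\delta})}$ on its expected infimum over $\mathcal{N}_{n,\theta_n}$. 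The Langevin drift is controlled through H\"older together with the sub-Gamma moment bound on $\nabla\rho$ supplied by \Cref{ass:F3} of \Cref{mu} (which by stationarity applies to $\nabla\rho(Y_{n,i,s})$ for every $s\leq t_n$) and assumption \Cref{item:E1}, producing an $O(t_n\,k_n^{1/(1+\lambda)}\varsigma_{n,\lambda})$ absolute bound; the Taylor remainder from $g$ is of the same order, since $\mathbb{E}\|\widetilde{Y}_{n,i}-Y_{n,i}\|_2^{2}=O(t_n)$. For the higher-order pieces, the scaling identity in \Cref{item:B2} of \Cref{ass:psi} gives
$$
R_{n,j}(\mathbf{X}^n,\widetilde{\mathbf{X}}^n-\mathbf{X}^n;\omega)=t_n^{(1+\upsilon_j)/2}\,R_{n,j}(\mathbf{X}^n,\mathbf{Z}^n;\omega),\qquad Z_{n,i}:=(\widetilde{X}_{n,i}-X_{n,i})/\sqrt{t_n}.
$$
The boundedness of $\nabla g$, the $N(0,I_p)$ distribution of $W_{n,i,t_n}/\sqrt{t_n}$, and the sub-Gamma bound on $\nabla\rho(Y_{n,i,s})$ together put each $Z_{n,i}$ in $\mathscr{G}(1,1;d)$ uniformly in $n$ for $t_n$ sufficiently small, while the pairs $(Y_{n,i},Z_{n,i})$ are i.i.d.\ by construction; H\"older together with \Cref{item:E2} then produces an $O(t_n^{(1+\upsilon_j)/2}\nu_{n,j}\widetilde{\nu}_{n,j})$ bound.

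Assembling the four contributions exactly as in \eqref{new1}, dividing through by $A^{*}\tau_n\kappa_{\varepsilon,\delta}\sqrt{2t_n}$, and choosing $t_n=(\theta_n/\tau_n)^{2}$ reduces the normalized $\theta_n$-contribution to a universal constant, while the growth condition \Cref{item:E4} is precisely what forces the normalized bounds $\sqrt{t_n}\,k_n^{1/(1+\lambda)}\varsigma_{n,\lambda}/\tau_n$ and $t_n^{\upsilon_j/2}\nu_{n,j}\widetilde{\nu}_{n,j}/\tau_n$ to stay $O(1)$. Therefore $\mathbb{E}\sqrt{\log P(\mathcal{N}_{n,\theta_n},d_{g,n},\tau_n\kappa_{\varepsilon,\delta})}=O(1)$, and \Cref{item:E3} transfers this tightness back to $\{P(\mathcal{N}_{n,\theta_n},d_n,\varepsilon)\}$ exactly as in the final step of the proof of \Cref{genthm}. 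The main obstacle is the careful bookkeeping needed to establish $\mathscr{G}(1,1;d)$-membership of $Z_{n,i}$ through the Langevin increment and the Jacobian $\nabla g(Y_{n,i})$, while also controlling the drift $\int_{0}^{t_n}\nabla\rho(Y_{n,i,s})\,ds$, which, unlike the symmetric Gaussian proposal in the Metropolis--Hastings proof, enjoys no mean-zero cancellation; the payoff is the clean $\sqrt{t_n}$-scaling of all moments of the Brownian increment, which is exactly what converts the $t_n^{1/(1+\lambda)}$ factor from the proof of \Cref{genthm} into the $\sqrt{t_n}$ factor behind the improved growth condition \Cref{item:E4}.
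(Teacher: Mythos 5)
Your proposal is correct and follows essentially the same strategy as the paper's proof: one step of the stationary Langevin diffusion in place of the Metropolis--Hastings step, Taylor expansion of $g$ to isolate a leading Gaussian piece $\sqrt{2}\,\nabla g(Y_{n,i})W_{t_n,n,i}$, Sudakov minoration for that piece conditionally on $\mathbf{Y}^n$ with intrinsic scale $\sqrt{2t_n}\,d_{g,n}$, H\"older together with sub-Gamma moment bounds (\Cref{lemsubg}) for the drift and Taylor remainder to reach $O(t_n\,k_n^{1/(1+\lambda)}\varsigma_{n,\lambda})$, the scaling identity of \ref{item:B2} plus \ref{item:E2} for the higher-order terms, and the choice $t_n\asymp(\theta_n/\tau_n)^2$. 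One small inaccuracy: contrary to your closing remark, the drift does enjoy a mean-zero cancellation --- $\mathbb{E}_{\mathcal{Q}_\rho}[\nabla\rho]=0$ by integration by parts, and more to the point the combined drift-plus-remainder $V_{t,n,i}+\widetilde{V}_{t,n,i}$ is mean-zero because $X_{t_n,n,i}$ and $X_{0,n,i}$ are identically distributed, a fact the paper invokes explicitly when applying its sub-Gamma moment lemma --- although this is a bookkeeping point rather than a gap, since the resulting $O(t_n^{1+\lambda})$ bound on the $(1+\lambda)$-th moment holds either way.
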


\begin{remark}{\label{boundedH2}}
Similar to the case discussed in \Cref{boundedH}, if our problem satisfies the condition~\Cref{uniupperH}, we have the assumption~\Cref{item:E1} satisfied with $\lambda = 0$ and $\varsigma_{n,0} = \varsigma_n$, and thus the growth condition \eqref{item:E4} simplifies to the following:
	 \begin{equation}{\label{item:E4special}}
	 	\theta_n = \bigO \biggl( \tau_n \min \biggl\{1,\;\;  \dfrac{\tau_n}{k_n\varsigma_n},\;\;  \min_{j=1}^L \biggl( \dfrac{\tau_n}{\nu_{n,j}\widetilde{\nu}_{n,j}}\biggr)^{1/\upsilon_j} \biggr\}\biggr).
	 	\end{equation}
We observe that in this particular case, as expected, \Cref{genthm} and \Cref{genthm:g} give same growth bound on $\theta_n$.
\end{remark}

Similar to \Cref{genthm2}, we can also write a corollary from \Cref{genthm:g} for the particular case when the optimization problem satisfies the linearity assumption \ref{ass:linear}.

\begin{thm}[Linearized version of Theorem \ref{genthm:g}]{\label{genthm:g2}}
	Consider a sequence of random minimization problems $\{\mathscr{P}_n : n \geq 1\}$ with random inputs in $\mathcal{X}=\mathbb{R}^d$ and  satisfying Assumptions~\ref{unique}, \ref{ass:meas} and~\ref{ass:linear}. Suppose that the probability measure $\mathcal{P}$ (for the random inputs)   has a density $f$ with respect to Lebesgue measure on $\mathbb{R}^d$ satisfying Assumption~\ref{mu}  for the pair $(\rho,g)$. By virtue of this assumption, we can take the random inputs as $X_{n,i}=g(Y_{n,i})$, where $\{Y_{n,i} : i \in I_n\}$ is a collection of i.i.d.~random variables from the distribution $\mathcal{Q}_{\rho}$.  Further assume the following two conditions.
		\begin{enumerate} [leftmargin=*,label=(F\arabic*)]
			\item  \label{item:F1} Assume that for some $\lambda \in [0,\infty)$, there exists a sequence of positive real numbers $\{\varsigma_{n,\lambda} : n \geq 1\}$ satisfying 
				$$ \biggl( \sup_{\omega \in \mathcal{S}_{n}} \sum_{i \in I_n} \|  H_{n,i} (  \omega) \|_2^{(1+\lambda)/\lambda} \biggr)^{\lambda/(1+\lambda)} = \bigO(\varsigma_{n,\lambda}), \; \text{ as } n \to \infty.$$
			For the case $\lambda=0$, the  above condition is understood as 
			$$ \sup_{ \omega \in \mathcal{S}_{n}} \sup_{i \in I_n} 	\|  H_{n,i} ( \omega) \|_2 = \bigO(\varsigma_{n,0}), \; \text{ as } n \to \infty.$$
			\item  \label{item:F2}Consider the the following  pseudo-metric on $\mathcal{S}_n$: 
					$$ d_{g,n}(\omega_1, \omega_2) := \biggl(\sum_{i \in I_n}\| \nabla g(Y_{n,i})^{\top} ( H_{n,i}(\omega_1)-H_{n,i}(\omega_2)) \|_2^2 \biggr)^{1/2},\; \forall \; \omega_1, \omega_2 \in \mathcal{S}_n.$$
				Assume the existence of a positive real number $\varepsilon_0$ and a sequence of positive real numbers $\{\tau_n : n \geq 1  \}$ such that for any $\varepsilon \in (0, \varepsilon_0)$ and $\delta>0$ there exist $\kappa_{\varepsilon, \delta} >0$ and $K_{\varepsilon, \delta} \in \mathbb{N}$ such that 
						$$ \limsup_{n \to \infty} \mathbb{P} [ P (\mathcal{N}_{n,\theta_n},d_n, \varepsilon) > K_{\varepsilon,\delta}P(\mathcal{N}_{n,\theta_n}, d_{g,n}, \tau_n \kappa_{\varepsilon,\delta})] \leq \delta.$$
			\end{enumerate}
	Then  $\{P(\mathcal{N}_{n,\theta_n}, d_n, \varepsilon) : n \geq 1\}$ is a tight family of random variables for any $\varepsilon >0$, provided $\theta_n = \bigO ( \min \{\tau_n, \tau_n^{2}\varsigma_{n,\lambda}k_n^{-1/(1+\lambda)}\})$ as $n \to \infty$.
\end{thm}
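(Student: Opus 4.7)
The plan is to derive Theorem~\ref{genthm:g2} as an immediate specialization of Theorem~\ref{genthm:g}. Under the linearity hypothesis of Assumption~\ref{ass:linear}, the Taylor-like inequality in condition \ref{item:B1} of Assumption~\ref{ass:psi} holds as an equality with no remainder,
\[
\psi_n(\mathbf{x}^n+\mathbf{c}^n;\omega)-\psi_n(\mathbf{x}^n;\omega)=\sum_{i\in I_n} c_i\cdot H_{n,i}(\omega),
\]
so I will take $L=1$ with the degenerate choice $R_{n,1}\equiv 0$ and $\widetilde{R}_{n,1}\equiv 0$. With this choice, condition \ref{item:B2} (and even its stronger form \ref{item:B2prime}) is satisfied vacuously for any positive exponent $\upsilon_1$.

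With this parameterization in hand, each hypothesis of Theorem~\ref{genthm:g} collapses to the corresponding hypothesis of Theorem~\ref{genthm:g2}. Since the $H_{n,i}$ are now independent of $\mathbf{X}^n$, the quantity and growth bound in condition \ref{item:E1} become exactly those of \ref{item:F1}. The pseudo-metric $d_{g,n}$ appearing in \ref{item:E3} is identical to the one defined in \ref{item:F2}, so \ref{item:E3} reduces verbatim to \ref{item:F2}. Finally, \ref{item:E2} is vacuous because both moment bounds that it imposes are of the form $0=\bigO(\nu_{n,1}\widetilde{\nu}_{n,1})$, which is consistent with the formal choice $\nu_{n,1}=\widetilde{\nu}_{n,1}=0$. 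Assumption~\ref{mu} for $(\rho,g)$ is assumed outright, and the integrability Assumption~\ref{item:A3} may be handled as in Remark~\ref{meanupper}.

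The last task is to read off the growth restriction. In \eqref{item:E4} the ``$\min_{j=1}^L$'' piece, when evaluated at the degenerate $\nu_{n,1}=\widetilde{\nu}_{n,1}=0$, is formally $+\infty$ and therefore drops out of the minimum. What remains is precisely the bound on $\theta_n$ asserted in the statement of Theorem~\ref{genthm:g2}. Invoking the conclusion of Theorem~\ref{genthm:g} then gives tightness of $\{P(\mathcal{N}_{n,\theta_n},d_n,\varepsilon)\}_{n\ge 1}$ for every $\varepsilon>0$, which is exactly Theorem~\ref{genthm:g2}. There is no serious analytic obstacle at this stage, since the Langevin-dynamics construction, the Sudakov-minoration lower bound for the Brownian-motion part of the step, and the sub-Gamma control of the $\nabla\rho$-drift are all already packaged inside Theorem~\ref{genthm:g}; the linear setting merely erases the higher-order remainder terms that would otherwise need to be estimated, and the verification is purely bookkeeping.
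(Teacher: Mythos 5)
Your proof is correct and takes exactly the route the paper intends: the paper itself states Theorem~\ref{genthm:g2} without a separate proof, remarking only that (in analogy with the passage from \Cref{genthm} to \Cref{genthm2}) one may take $L=1$, $R_{n,1}\equiv\widetilde R_{n,1}\equiv 0$, $\nu_{n,1}\equiv\widetilde\nu_{n,1}\equiv 0$ in \Cref{genthm:g}, which is precisely your specialization. One small caution: when you say the surviving piece of \eqref{item:E4} reads off as ``precisely the bound asserted,'' note that \eqref{item:E4} gives $\theta_n=\bigO(\min\{\tau_n,\ \tau_n^2\varsigma_{n,\lambda}^{-1}k_n^{-1/(1+\lambda)}\})$ (and this is also what the paper's later comparison paragraph records), whereas the displayed statement of \Cref{genthm:g2} appears to carry a sign typo, writing $\varsigma_{n,\lambda}$ instead of $\varsigma_{n,\lambda}^{-1}$; you should derive the bound from \eqref{item:E4} rather than accept the printed form at face value.
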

Before diving into the proof of \Cref{genthm:g}, let us spend some time discussing which probability distributions satisfy \Cref{mu}. It is easy to see that an analogue of \Cref{f2} also holds in this situation, and hence we shall mainly focus on finding probability densities on the real line satisfying \Cref{mu}, i.e., $d=1$. Indeed, this will encompass many situations where \Cref{genthm:g} will be applied. 

\begin{ex}{\label{exmu}}
The following list describes some techniques to check whether any particular density on the real line satisfies \Cref{mu} or not, along with a non-exhaustive list of such standard distributions.
\begin{enumerate}[label=(\alph*)]
\item \label{item:a} If $\mathcal{P}$ satisfies \Cref{mu} for the pair $(\rho,g_1)$ and $g_2$ has uniformly bounded first and second order derivatives, then clearly $\mathcal{P} \circ g_2^{-1}$ satisfies \Cref{mu} for the pair $(\rho,g_2 \circ g_1)$. 
\item \label{item:b} The observation made in \Cref{item:a} ensures that any affine transformation of a distribution satisfying \Cref{mu} also satisfies \Cref{mu}. Similarly, convolutions of a finite number of distributions, where each of them satisfies \Cref{mu} with either  \Cref{ass:F2}, or \Cref{ass:F2p} or \Cref{ass:F2pp}, also satisfies \Cref{mu}. The same observation can be made about their products (if they are distributions on real line) provided all of them have bounded support.
\item \label{item:c} It is easy to check that $\rho(x):=C|x|^r$ for all $x \in \mathbb{R}$ and for some constant $C \in (0, \infty)$ satisfies \Cref{mu} if $r \geq 2$; in particular it satisfies \Cref{ass:F2p} instead of \Cref{ass:F2}. Since we can take $g \equiv 1$, all of these densities are valid candidates for $\mathcal{P}$. The case of $r=2$ is of particular interest since it yields the Gaussian distribution. 
\item \label{item:d} If $\mathcal{P}$ has  distribution function $F_{\mathcal{P}}$, then $F_{\mathcal{P}}^{\leftarrow} \circ F_{\rho}(X) \sim \mathcal{P}$ where $X \sim \mathcal{Q}_{\rho}$ and $F_{\rho}$ is its distribution function. Recall that $F_{\mathcal{P}}^{\leftarrow}$ denotes the left continuous inverse of $F_{\mathcal{P}}$. 
Thus if $F_{\mathcal{P}}^{\leftarrow} \circ F_{\rho}$ has continuous and bounded first and second order derivatives, we can conclude that $\mathcal{P}$ satisfies \Cref{mu}. This technique yields a class of distributions satisfying \Cref{mu} as demonstrated in \Cref{condrho} below. As an extra advantage, under the conditions of \Cref{condrho}, we can take $g$ to be strictly increasing and $\mathcal{Q}_{\rho} \circ (\rho^{(1)})^{-1}$ to have sub-Gaussian tail. Recall that the notation $g^{(k)}$ denotes the $k$-th order derivative for the function $g$. Some standard distributions  covered by \Cref{condrho} include the uniform distribution on any bounded interval, and Beta, Gamma, Exponential, Gaussian and Gumbel distributions, to name a few. 

\item  \label{item:e} Since Dirichlet distribution can be simulated by multiplying affine transformations of independent Beta distributions (for example using the ``string cutting" procedure), the discussion in \Cref{item:b} and \Cref{item:d} ensures that Dirichlet distributions are also included in the scope of \Cref{mu}.

\end{enumerate}
\end{ex}

\begin{lmm}{\label{condrho}}

Let the input distribution $\mathcal{P}$ on the real line have a continuous distribution function $F$ and density $f$. 
Suppose that for some $- \infty \leq a_- < a_+ \leq \infty$, the density $f$ is strictly positive and continuously differentiable on $(a_-,a_+)$ and is equal to zero outside this interval, satisfying the following two conditions.
\begin{enumerate}
\item At least one of the following three conditions is satisfied. 
\begin{enumerate}
\item \label{n1}$1/f(x) + |f^{(1)}(x)| = \bigO(1)$ as $x \downarrow a_-$ (in which case $a_-$ has to be finite).
\item \label{n2}$f(x)$ converges to $0$ or $\infty$ and $\log f(x)/\log F(x)$ converges to $\beta_- \in \mathbb{R}$ as $x \downarrow a_-$. Also $F(x)/f(x) = \bigO(1)$ as $x \downarrow a_-$. The last condition is trivially satisfied if $f(x)$ converges to $\infty$ as $x \downarrow - \infty$. 
\item \label{n3}There exists $\gamma_{-} >0$ satisfying $\log f(x) \sim -A_-|x|^{\gamma_{-}}$ as $x \downarrow a_- = - \infty$, where $A_-$ is some finite positive constant.
\end{enumerate}
\item At least one of the following three conditions is satisfied.
\begin{enumerate}
\item \label{p1}$1/f(x) + |f^{(1)}(x)| = \bigO(1)$ as $x \uparrow a_+$ (in which case $a_+$ has to be finite) .
\item \label{p2}$f(x)$ converges to $0$ or $\infty$ and $\log f(x)/\log (1-F(x))$ converges to $\beta_+ \in \mathbb{R}$ as $x \uparrow a_+$. Also $(1-F(x))/f(x) = \bigO(1)$ as $x \uparrow a_+$.
\item \label{p3}There exists $\gamma_{+} >0$ satisfying $\log f(x) \sim -A_+|x|^{\gamma_{+}}$ as $x \uparrow a_+= \infty$, where $A_+$ is some finite positive constant.
\end{enumerate}
\end{enumerate}
Under these conditions $\mathcal{P}$ satisfies Assumption~\ref{mu} for the pair $(\rho,g)$ with $p=1$ where $g$ is strictly increasing,  and
$\rho$ satisfies condition  \Cref{ass:F3} of Assumption~\ref{mu} with $c=0$. 
\end{lmm}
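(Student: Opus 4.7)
\textbf{Proof proposal for Lemma \ref{condrho}.} Since $g$ is required to be strictly increasing and must push $\mathcal{Q}_\rho$ forward to $\mathcal{P}$, we are forced (up to the choice of $\rho$) to take
\[
g \;=\; F^{-1} \circ F_\rho,
\]
where $F_\rho$ is the CDF of $\mathcal{Q}_\rho$. The entire proof therefore reduces to constructing a one-dimensional $\rho$ (depending on $f$) so that (i)~$\exp(-\rho)$ is integrable and $\rho'$ is globally Lipschitz (so \ref{ass:F2} holds and \ref{ass:F1} is automatic), (ii)~$\rho'(Y)$ is sub-Gaussian when $Y \sim \mathcal{Q}_\rho$, and (iii)~the resulting $g$ has uniformly bounded first and second derivatives on $\mathbb{R}$.

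Differentiating the identity $F(g(y)) = F_\rho(y)$ gives the key relations
\[
g'(y) \;=\; \frac{f_\rho(y)}{f(g(y))}, \qquad g''(y) \;=\; -\rho'(y)\,g'(y) \;-\; \frac{f'(g(y))}{f(g(y))}\,g'(y)^2,
\]
where $f_\rho = C_\rho e^{-\rho}$. In any bulk region where $g(y)$ stays in a compact subset of $(a_-, a_+)$, $f$ and $f'$ are controlled and one simply picks $\rho(y) = y^2/2$, which yields $g', g''$ bounded on that region and makes $\rho'(Y)$ a standard Gaussian (hence sub-Gaussian with $c=0$). The entire difficulty concentrates in the two tails $y \to \pm\infty$, and by symmetry it suffices to analyse the left tail, matching a suitably chosen $\rho$ to the three cases \ref{n1}--\ref{n3}.

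I would construct $\rho$ piecewise, gluing smoothly to $y^2/2$ outside a large interval. In case \ref{n1}, $f$ and $1/f$ are bounded near the finite endpoint $a_-$, so $a_- - g(y)$ is comparable to $F(g(y)) = F_\rho(y)$, which decays like a Gaussian tail; straightforward asymptotics then show that $g', g''$ are bounded and one can keep $\rho(y) = y^2/2$ in this tail. In case \ref{n2}, the hypotheses $\log f / \log F \to \beta_-$ and $F/f = O(1)$ give a power-law-type asymptotic $f(x) \asymp F(x)^{\beta_-}$, which through the change of variables $F(x) = F_\rho(y)$ translates into an explicit growth rate for $-\log f(g(y))$; choosing $\rho$ in the left tail so that $\rho'(y) \asymp -|y|$ makes both $g'$ and $g''$ bounded via the displayed formula. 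In case \ref{n3}, $f(x) \asymp \exp(-A_-|x|^{\gamma_-})$, and a direct computation using Mills-type estimates on $F_\rho$ dictates that the correct choice is $\rho(y) \sim c_- |y|^{2}$ in the left tail with $c_-$ determined by matching exponents (so that $g(y)$ grows like $|y|^{2/\gamma_-}$, $f(g(y)) \sim f_\rho(y)/g'(y)$ is comparable, and the cross term $f'/f \cdot g'^2$ remains bounded).

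The main obstacle is the second-derivative bound: the term $(f'/f)(g(y)) \, g'(y)^2$ involves the possibly unbounded logarithmic derivative of $f$ at the endpoint, and it must be tamed by making $g'$ decay at the correct rate, which in turn constrains the tail exponent of $\rho$. The verification that $\rho$ built this way is $C^1$ with Lipschitz derivative (hence \ref{ass:F2}) is routine since we glue to $y^2/2$ in the bulk; sub-Gaussianity of $\rho'(Y)$ with $c=0$ follows because $\rho$ is strongly convex on $\mathbb{R}$ (so $\mathcal{Q}_\rho$ satisfies a log-Sobolev inequality by Bakry--Émery and the Lipschitz function $\rho'$ concentrates sub-Gaussianly around its mean). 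Finally, $g$ is strictly increasing by construction since $f_\rho > 0$ and $f > 0$ on $(a_-, a_+)$, completing the verification of Assumption~\ref{mu}.
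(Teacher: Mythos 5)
Your reduction to $g=F^{-1}\circ F_\rho$ and the two identities $g'(y)=f_\rho(y)/f(g(y))$, $g''(y)=-\rho'(y)g'(y)-\frac{f'(g(y))}{f(g(y))}g'(y)^2$ are exactly the right starting point and agree with the paper. The gap is in your choice of $\rho$: you insist on (essentially) Gaussian tails for $\mathcal{Q}_\rho$, and in case (\ref{n3}) you yourself note that the matching $F(g(y))=F_\rho(y)$ then forces $g(y)\asymp |y|^{2/\gamma_-}$. But that gives $g'(y)\asymp |y|^{2/\gamma_- -1}$, which is unbounded whenever $\gamma_-<2$; the same failure occurs in case (\ref{n2}) at an infinite endpoint with exponential-type tails (there $g(y)\asymp -y^2/2$, so $g'(y)\asymp |y|$). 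Hence condition \Cref{ass:F4} of Assumption~\ref{mu} — uniform boundedness of $g'$ and $g''$ — fails precisely for the distributions the lemma is meant to cover: Exponential, Gamma and Gumbel all have $\gamma_\pm=1$. This is not fixable by more careful gluing: a strictly increasing transport map from a lighter-tailed reference to a heavier-tailed target must expand in the tails, so any $\rho$ with quadratic (or faster) growth is ruled out from the start.

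The paper's proof goes in the opposite direction. It fixes one global, case-independent $\rho(x)=|x|^{\alpha}\tanh^2(x)$ with $\alpha\in(0,\gamma_-\wedge\gamma_+\wedge 1)$, so that $\mathcal{Q}_\rho$ has \emph{heavier} tails than $\mathcal{P}$ in every case. This has two payoffs your construction lacks: first, $\rho'$ is uniformly bounded, so conditions \Cref{ass:F1}, \Cref{ass:F2} and \Cref{ass:F3} (with $c=0$) are immediate, with no need for strong convexity or a Bakry--\'Emery/log-Sobolev argument (which, incidentally, your piecewise gluing would not obviously preserve); second, since $\alpha<\gamma_\pm$, the same computation you attempted now yields $g'(x)\asymp |x|^{\alpha/\gamma_- -1}\to 0$, and l'H\^opital-type asymptotics relating $f_\rho/F_\rho$ and $f/F$ give $g''(x)\to 0$ as well, uniformly handling all three sub-cases at each endpoint (the finite-endpoint case (\ref{n1}) being trivial). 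If you want to salvage your route, you must replace the quadratic tail of $\rho$ by one growing strictly slower than $|y|^{\gamma_-\wedge\gamma_+\wedge 1}$, and then re-derive sub-Gaussianity of $\rho'(Y)$ by some means other than strong convexity — boundedness of $\rho'$ is the cheap way to do it.
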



Let us now make a short comparative evaluation of the scopes and strengths of \Cref{genthm} and \Cref{genthm:g}. The conditions assumed on the input density $f$ in the statements of those two theorems, namely \Cref{ass:p} and \Cref{mu} respectively, are not exactly comparable; although it appears to be the case that \Cref{mu} is probably more restrictive. Nevertheless, both of them cover most standard densities in their scope. What we gain in \Cref{genthm:g} is less restrictions on the possible choices of $\theta_n$. To be precise, \Cref{genthm} requires $\theta_n = \bigO(\tau_n^{2+\lambda}k_n^{-1}\varsigma_{n,\lambda}^{-1-\lambda})$ whereas \Cref{genthm:g} requires $\theta_n = \bigO(\tau_n^2k_n^{-1/(1+\lambda)}\varsigma_{n,\lambda}^{-1})$. One can easily check that, for the case $\lambda=0$, the first condition is more restrictive provided that $(\tau_n/\varsigma_{n,\lambda})^{1+\lambda} = \bigO(k_n)$. This is usually the situation in most of the examples which we shall consider in this article. Particularly in the cases where only a fraction of the $H_{n,i}(\mathbf{x}^n;\omega)$'s are non-zero for fixed $\mathbf{x}^n$ and $\omega$, this phenomenon is more apparent since the total number of random inputs $k_n$ tends to be of a significantly higher order of magnitude compared to $\tau_n$; see \Cref{brwtight} for one such example. In order to gain this improvement in the choices of $\theta_n$ in \Cref{genthm:g}, we had to sacrifice a couple of things. Firstly, compared to \Cref{item:C2}, we now have to consider sub-Gamma vectors (in place of sub-Gaussian vectors) in \Cref{item:E2}. More importantly, in place of $d_{2,n}$ in \Cref{item:C3}, we have to tackle a more complicated random pseudo-metric $d_{g,n}$ in \Cref{item:E3} which turns out to be more challenging; we refer to \Cref{brwtight} again for one such example. This contrast is more stark when we compare the linearized versions of the two theorems, i.e., \Cref{genthm2} and \Cref{genthm:g2}, respectively. Whereas $d_{2,n}$ is reduced to a non-random pseudo-metric in \Cref{genthm2}, the pseudo-metric $d_{g,n}$ in \Cref{genthm:g2} is still random (if $g$ is not a linear map) and hence difficult to handle.

\begin{remark}{\label{choice2}}
Consider the case $d=p=1$. The condition \Cref{item:E3} is satisfied trivially when $g^{(1)}(Y) \geq \eta >0$ almost surely for some non-random $\eta$ where $Y \sim \mathcal{Q}_{\rho}$; since in this particular situation $d_{2,n} \leq \eta^{-1} d_{g,n}$, almost surely. One particular case is $g(x) =x$ for all $x \in \mathbb{R}$, which implies that if $\mathcal{P}=\mathcal{Q}_{\rho}$ for some $\rho$ satisfying conditions stated in \Cref{mu}, then we do not need to check \Cref{item:D1} at all. This particular case clearly involves Gaussian densities, the most common and well-studied choice of input distribution in most of the examples discussed later.
\end{remark}

\begin{proof}[Proof of Theorem~\ref{genthm:g}]
Recall that $g : \mathbb{R}^p \to \mathbb{R}^d$ is assumed to be twice continuously differentiable. We write $g=(g_1,\ldots,g_d)$ where $g_k$ is a real-valued function on $\mathbb{R}^p$ for each $k \in [d]$. Uniform boundedness of the derivatives of $g$ guarantee that 
$$ C_1 := \sup_{y \in \mathbb{R}^p} \|\nabla g (y)\|_{\mathrm{op}} < \infty, \; C_2 := \sup_{k \in [d]} \sup_{y \in \mathbb{R}^p} \|\nabla^2 g_k (y)\|_{\mathrm{op}} < \infty.$$
On the other hand, twice continuous differentiability ensures that for any $y_1,y_0 \in \mathbb{R}^p$, there exists $y^{(0)},y^{(1)}, \ldots,y^{(d)} \in \mathbb{R}^p$, lying on the line segment joining $y_0$ to $y_1$, such that 
\begin{align}{\label{MVT}}
g(y_1)-g(y_0) = \nabla g(y^{(0)}) (y_1-y_0),\; \text{ hence }  \|g(y_1)-g(y_0)\|_2 \leq C_1\|y_1-y_0\|_2,
\end{align} 
and 
\begin{align}{\label{MVT2}}
g(y_1)-g(y_0) &= ( g_k(y_1)-g_{k}(y_0))_{1 \leq k \leq d} \nonumber \\
&= \biggl( \nabla g_k(y_0) \cdot (y_1-y_0) + \dfrac{1}{2} (y_1-y_0)^{\top} \nabla^2g_k(y^{(k)})(y_1-y_0)\biggr)_{1 \leq k \leq d}.
\end{align}
With the preparatory works taken care of, we start with the i.i.d.~collection $\{Y_{0,n,i}=Y_{n,i} : i \in I_n\}$ from probability measure $\mathcal{Q}_{\rho}$ and a collection of independent standard $p$-dimensional Brownian motions on non-negative real line $\{W_{\cdot,n,i} : i \in I_n\}$; these two collection being mutually independent. As discussed earlier, we can construct the process $\{Y_{t,n,i} : t \geq 0\}$ satisfying 
$$  Y_{t,n,i} = Y_{0,n,i} - \int_{0}^t  \nabla \rho^{}(Y_{s,n,i}) \; ds + \sqrt{2}W_{t,n,i} \sim \mathcal{Q}_{\rho}, \; \forall \; t \geq 0, i \in I_n.$$ 
We define $X_{t,n,i} := g(Y_{t,n,i})$ for all $t,n,i$, noting that $X_{n,i}=X_{0,n,i}=g(Y_{0,n,i})=g(Y_{n,i})$. We shall use $\mathbf{X}_t^n$ and $\mathbf{Y}_t^n$ to denote the collections $\{ X_{t,n,i} : i \in I_n\}$ and  $\{ Y_{t,n,i} : i \in I_n\}$. Similar to the initial steps in the proof of \Cref{genthm}, we can write the following:
\begin{align}
\mathbb{E} \inf_{\omega \in \mathcal{S}_n} \psi_n( \mathbf{X}_0^n;\omega) &= \mathbb{E} \inf_{\omega \in \mathcal{S}_n} \psi_n( \mathbf{X}_t^n;\omega) \nonumber \\
& \leq \mathbb{E} \inf_{\omega \in \mathcal{N}_{n,\theta_n}} \psi_n( \mathbf{X}_t^n;\omega) \nonumber \\
& \leq \mathbb{E} \inf_{\omega \in \mathcal{N}_{n,\theta_n}} [\psi_n( \mathbf{X}_t^n;\omega) - \psi_n( \mathbf{X}_0^n;\omega)] + \mathbb{E} \inf_{\omega \in \mathcal{S}_n} \psi_n( \mathbf{X}_0^n;\omega) + \theta_n, \label{step1}
\end{align}
where \Cref{step1} follows from the definition of the (random) set $\mathcal{N}_{n,\theta_n}$. Define
$$ U_{t,n,i} :=  - \int_{0}^t \nabla \rho^{}(Y_{s,n,i}) \; ds, \; \forall \; t \geq 0, i \in I_n.$$
Note that
\begin{align}
\mathbb{E} \|U_{t,n,i}\|_1 &= \mathbb{E} \biggl \| \int_{0}^t  \nabla \rho(Y_{s,n,i}) \; ds \biggr \|_1 \notag \\
&\leq  \int_{0}^t \mathbb{E} \|\nabla \rho(Y_{s,n,i})\|_1 \; ds = t \mathbb{E} \|\nabla \rho(Y_{0,n,i})\|_1 = t\mu < \infty, \label{int}
\end{align}
where $\mu := \mathbb{E} \|\nabla \rho(Y_{n,i})\|_1$. Note that $\mu$ is finite by virtue of the assumption \Cref{ass:F2} in the theorem statement, which says that $\nabla \rho(Y_{n,i})$ is a sub-Gamma vector. Moreover, \Cref{lemsubg} yields that $U_{t,n,i}\in \mathscr{G}(32t^2\sigma^2+2^7t^2c^2, 8tc;p)$, since $\nabla \rho(Y_{s,n,i}) \in \mathscr{G}(\sigma^2,c;p).$

Similar to the proof of \Cref{genthm}, we now use \Cref{item:B1} to perform the following expansion in order to control the first term in the right hand side of \Cref{step1}.
\begin{align}
&\psi_n( \mathbf{X}_t^n;\omega) - \psi_n( \mathbf{X}_0^n;\omega) \nonumber \\
& \leq \sum_{i \in I_n} ( X_{t,n,i}-X_{0,n,i}) \cdot H_{n,i}( \mathbf{X}_0^n;\omega) + \sum_{j=1}^L R_{n,j}( \mathbf{X}_0^n,\mathbf{X}_t^n - \mathbf{X}_0^n; \omega)\widetilde{R}_{n,j}( \mathbf{X}_t^n). \label{expan2}
\end{align}
Applying \Cref{MVT} and \Cref{MVT2}, we obtain the following expansion for any $i \in I_n$. 
\begin{align}
X_{t,n,i}-X_{0,n,i} &=g(Y_{t,n,i})-g(Y_{0,n,i}) \nonumber \\
& = g(Y_{0,n,i}+\sqrt{2}W_{t,n,i})-g(Y_{0,n,i}) + g(Y_{t,n,i})-g(Y_{0,n,i}+\sqrt{2}W_{t,n,i}) \nonumber \\
& =  \sqrt{2} \nabla g( Y_{0,n,i}) W_{t,n,i} + ( W_{t,n,i}^{\top}\nabla^2 g_k(Y_{t,n,i}^{(k)} )W_{t,n,i} )_{k \in [d]} + \nabla g( Y_{t,n,i}^{(0)}) U_{t,n,i} \nonumber \\
& =  \sqrt{2} \nabla g( Y_{0,n,i}) W_{t,n,i} + V_{t,n,i} + \widetilde{V}_{t,n,i}. \label{expan}
\end{align}
where $Y_{t,n,i}^{(0)}, \ldots, Y_{t,n,i}^{(d)}$ are (random) points on the line joining $Y_{0,n,i}$ to $Y_{t,n,i}$ and 
$$V_{t,n,i} :=  (  W_{t,n,i}^{\top}\nabla^2 g_k(Y_{t,n,i}^{(k)} )W_{t,n,i} )_{1 \leq k \leq d}, \; \widetilde{V}_{t,n,i} := \nabla g( Y_{t,n,i}^{(0)}) U_{t,n,i}.$$ 
 Since $Y_{0,n,i}$ and $W_{t,n,i}$ are independent of each other, the first term in \Cref{expan} has mean zero. Uniform boundedness of $\nabla g$ and $\nabla^2 g$ also guarantee integrability of the random vectors $V_{t,n,i}$ and $\widetilde{V}_{t,n,i}$. Therefore, $ V_{t,n,i} + \widetilde{V}_{t,n,i}$ also has zero mean; this follows from the fact that $X_{t,n,i}$ and $X_{0,n,i}$ are identically distributed. We also have, $\|\nabla g(Y_{t,n,i}^{(0)})\|_{\mathrm{op}} \leq C_1$ and hence each entry of $\nabla g(Y_{t,n,i}^{(0)})$ is bounded by $C_1$. \Cref{lemsubg} and \Cref{int} now guarantee that $\widetilde{V}_{t,n,i} \in \mathscr{G}(C_3t^2(\sigma^2+c^2+\mu^2), C_3tc;d)$ where $C_3$ is some finite constant (depending on $p$ and $C_1$). Similarly, we can argue that $V_{t,n,i} \in \mathscr{G}(C_4t^2,C_4t;d)$  for some finite constant $C_4$ (depending on $p$ and $C_2$). Combining these two observations with the aid of \Cref{lemsubg}, we can come to the conclusion that $V_{t,n,i}+\widetilde{V}_{t,n,i}$ is a sub-Gamma vector with mean zero, variance proxy $C_5t^2$ and scale parameter $C_5t$, for some finite constant $C_5$ depending on $p,\sigma^2,c,\mu,C_1$ and $C_2$. 
 
Let us now introduce the notations 
 $$ \widetilde{\mathbf{W}}_t^n = (\widetilde{W}_{t,n,i})_{i \in I_n} := ( \sqrt{2}\nabla g(Y_{0,n,i})W_{t,n,i})_{i \in I_n}, \; \mathbf{V}_t^n := (V_{t,n,i})_{i \in I_n}, \; \; \widetilde{\mathbf{V}}_t^n := (\widetilde{V}_{t,n,i})_{i \in I_n}.$$
 Plugging  the expansion \Cref{expan} in the inequality in \Cref{expan2}, we obtain the following.
 \begin{align}
& \mathbb{E} \inf_{\omega \in \mathcal{N}_{n,\theta_n}} [\psi_n( \mathbf{X}_t^n;\omega) - \psi_n( \mathbf{X}_0^n;\omega)] \nonumber \\
 & \leq \mathbb{E} \inf_{\omega \in \mathcal{N}_{n,\theta_n}} \sum_{i \in I_n} \widetilde{W}_{t,n,i} \cdot H_{n,i}( \mathbf{X}^n_0;\omega ) + \mathbb{E} \sup_{\omega \in \mathcal{N}_{n,\theta_n}} \sum_{i \in I_n} ( V_{t,n,i}+\widetilde{V}_{t,n,i}) \cdot H_{n,i}( \mathbf{X}^n_0;\omega) \nonumber \\
 & \hspace{1.5 in} + \sum_{j=1}^L \mathbb{E}\sup_{\omega \in \mathcal{N}_{n,\theta_n}} R_{n,j}( \mathbf{X}_0^n,\widetilde{\mathbf{W}}_t^n+\mathbf{V}_t^n+\widetilde{\mathbf{V}}_t^n; \omega )\widetilde{R}_{n,j}( \mathbf{X}_t^n). \label{expan3}
 \end{align}
 The rest of the proof is now very similar to the proof of \Cref{genthm} and hence we shall skip some details to avoid unnecessary repetitions. To control the first term in the expression \Cref{expan3}, we note that conditional on $\mathbf{Y}_0^n$, the process 
 \[
 \biggl\{ \sum_{i \in I_n} \widetilde{W}_{t,n,i} \cdot H_{n,i}(\mathbf{X}^n_0;\omega) : \omega \in \mathcal{S}_n\biggr\}
 \]
  is Gaussian on the pseudo-metric space $(\mathcal{S}_n,\sqrt{2t}d_{g,n})$. 
 Applying \Cref{sudakov} (Sudakov minoration), we arrive at the following for any $\varepsilon^{\prime}>0$:
 \begin{align}
 \mathbb{E} \inf_{\omega \in \mathcal{N}_{n,\theta_n}} \sum_{i \in I_n} \widetilde{W}_{t,n,i} \cdot H_{n,i}( \mathbf{X}^n_0;\omega ) &\leq -A^*\sqrt{2t}\varepsilon^{\prime} \mathbb{E} \sqrt{\log P ( \mathcal{N}_{n,\theta_n},\sqrt{2t}d_{g,n}, \sqrt{2t}\varepsilon^{\prime})} \nonumber \\
 & = -A^*\sqrt{2t}\varepsilon^{\prime} \mathbb{E} \sqrt{\log P ( \mathcal{N}_{n,\theta_n},d_{g,n},\varepsilon^{\prime})}. \label{thm2:sud}
 \end{align}
To control the second term in the expression \Cref{expan3}, recall that $t^{-1}(V_{t,n,i}+\widetilde{V}_{t,n,i})$ is a mean-zero sub-Gamma vector with parameters $(C_5,C_5)$. Thus,
\begin{align}
&\mathbb{E} \sup_{\omega \in \mathcal{N}_{n,\theta_n}} \sum_{i \in I_n} ( V_{t,n,i}+\widetilde{V}_{t,n,i}) \cdot H_{n,i}( \mathbf{X}^n_0;\omega) \nonumber \\
 & \leq \mathbb{E} \biggl[ \biggl( \sum_{i \in I_n} \|  V_{t,n,i}+\widetilde{V}_{t,n,i} \|_2^{1+\lambda} \biggr)^{1/(1+\lambda)} \biggl( \sup_{ \omega \in \mathcal{N}_{n, \theta_n}} \sum_{i \in I_n} \| H_{n,i} (\mathbf{X}_0^n;\omega) \|_2^{(1+\lambda)/\lambda} \biggr)^{\lambda/(1+\lambda)}\biggr] \nonumber \\
 & \leq   \biggl( \mathbb{E} \sum_{i \in I_n} \|  V_{t,n,i}+\widetilde{V}_{t,n,i} \|_2^{1+\lambda} \biggr)^{1/(1+\lambda)} \biggl( \mathbb{E} \sup_{ \omega \in \mathcal{N}_{n, \theta_n}} \sum_{i \in I_n} \| H_{n,i} (\mathbf{X}_0^n;\omega) \|_2^{(1+\lambda)/\lambda} \biggr)^{\lambda/(1+\lambda)} \nonumber \\
 & \leq (C_6k_n t^{1+\lambda})^{1/(1+\lambda)} \varsigma_{n,\lambda} = C_7t k_n^{1/(1+\lambda)} \varsigma_{n,\lambda}, \label{thm2:sec}
\end{align} 
where $C_6$ and $C_7$ are finite constants depending on $C_5, \lambda$ and $d$. We have applied Lemmas~\ref{lemsubg:4} and \ref{item:E1} to obtain the first expression in \Cref{thm2:sec}. The argument also works for the case $\lambda =0$, similar to what we did in the proof of \Cref{genthm}.

Recall that $\widetilde{W}_{t,n,i} = \sqrt{2}\nabla g(Y_{0,n,i})W_{t,n,i}$. Conditioned on $\mathbf{Y}_0^n$, this is a Gaussian random vector with mean $0$ and covariance matrix $2t(\nabla g(Y_{0,n,i})(\nabla g(Y_{0,n,i}))^{\top}$. 
The uniform bound on the operator norm of $\nabla g$ guarantees that $\|\nabla g(Y_{0,n,i})(\nabla g(Y_{0,n,i}))^{\top} \|_{\mathrm{op}} \leq C_1^2$, and consequently $2t(\nabla g(Y_{0,n,i})(\nabla g(Y_{0,n,i}))^T \preceq 2tC_1^2I_d$. 
This shows that the conditional distribution of $\widetilde{W}_{t,n,i}$, and hence also the unconditional distribution, is sub-Gaussian with variance proxy $2tC_1^2$. Therefore, we can apply \Cref{lemsubg:2} to conclude that $\widetilde{W}_{t,n,i} + V_{t,n,i}+\widetilde{V}_{t,n,i}$  is a mean-zero sub-Gamma vector with variance proxy $(C_8t+C_8t^2)$ and scale parameter $C_8t$, for some finite constant $C_8 \geq 1$ not depending on $t$.  In particular, for any $t \in (0,1]$, the random vector $(2C_8)^{-1}t^{-1/2}(\widetilde{W}_{t,n,i} + V_{t,n,i}+\widetilde{V}_{t,n,i} )$ is mean-zero and belongs to the class $\mathscr{G}(1,1;d)$. Applying \Cref{item:B2} 
and \Cref{item:E2}, we conclude that for all $t \in (0,1]$,
\begin{align}
&\mathbb{E} \sup_{ \omega \in  \mathcal{N}_{n,\theta_n}} R_{n,j}( \mathbf{X}_0^n,\widetilde{\mathbf{W}}_t^n+ \mathbf{V}_t^n+\widetilde{\mathbf{V}}_t^n; \omega) \widetilde{R}_{n,j}(\mathbf{X}_t^n) \nonumber \\
& \leq (2C_8\sqrt{t})^{(1+\upsilon_j)} \mathbb{E} \sup_{ \omega \in  \mathcal{N}_{n,\theta_n}} R_{n,j}\biggl( \mathbf{X}_0^n,\dfrac{\widetilde{\mathbf{W}}_t^n+ \mathbf{V}_t^n+\widetilde{\mathbf{V}}_t^n}{2C_8\sqrt{t}}; \omega\biggr) \widetilde{R}_{n,j}(\mathbf{X}_t^n) \nonumber \\
& \leq  (2C_8\sqrt{t})^{(1+\upsilon_j)} \biggl[ \mathbb{E} \sup_{ \omega \in  \mathcal{N}_{n,\theta_n}} R_{n,j}\biggl( \mathbf{X}_0^n,\dfrac{\widetilde{\mathbf{W}}_t^n+ \mathbf{V}_t^n+\widetilde{\mathbf{V}}_t^n}{2C_8\sqrt{t}}; \omega\biggr)^{\tfrac{1+\gamma_j}{\gamma_j}} \biggr]^{\tfrac{\gamma_j}{1+\gamma_j}}  [ \mathbb{E} \widetilde{R}_{n,j}(\mathbf{X}_t^n)^{1+\gamma_j} ]^{\tfrac{1}{1+\gamma_j}} \nonumber \\
& =  (2C_8\sqrt{t})^{(1+\upsilon_j)} \biggl[ \mathbb{E} \sup_{ \omega \in  \mathcal{N}_{n,\theta_n}} R_{n,j}\biggl( \mathbf{X}_0^n,\dfrac{\widetilde{\mathbf{W}}_t^n+ \mathbf{V}_t^n+\widetilde{\mathbf{V}}_t^n}{2C_8\sqrt{t}}; \omega\biggr)^{\tfrac{1+\gamma_j}{\gamma_j}} \biggr]^{\tfrac{\gamma_j}{1+\gamma_j}}  [ \mathbb{E} \widetilde{R}_{n,j}(\mathbf{X}_0^n)^{1+\gamma_j} ]^{\tfrac{1}{1+\gamma_j}} \nonumber \\
& \leq C_{9}t^{(1+\upsilon_j)/2} \nu_{n,j}\widetilde{\nu}_{n,j},  \label{thm2:third}
\end{align}
for some finite constant $C_{9}$, independent of $t,j$ and $n$. 
We now combine \Cref{expan3} with \Cref{thm2:sud}, \Cref{thm2:sec} and \Cref{thm2:third} to arrive at the following for any $t \in (0,1]$ and $\varepsilon^{\prime} >0$.  
\begin{align}{\label{genthm2:finalstep1}}
A^*\sqrt{2t}\varepsilon^{\prime} \mathbb{E} \sqrt{\log P ( \mathcal{N}_{n,\theta_n},d_{g,n},\varepsilon^{\prime})} \leq \theta_n + C_7t k_n^{1/(1+\lambda)} \varsigma_{n,\lambda} + C_{9} \sum_{j=1}^L t^{(1+\upsilon_j)/2} \nu_{n,j}\widetilde{\nu}_{n,j}.
\end{align} 
Recall that we have assumed in the statement of the theorem that $\theta_n =\bigO(\tau_n)$ and hence $\theta_n \leq C_{10}\tau_n$ for some finite constant $C_{10}$. Fix $\varepsilon \in (0, \varepsilon_0)$ and $\delta >0$. Get $\kappa_{\varepsilon,\delta}$ using \Cref{item:E3} and plug in the following choices in \Cref{genthm2:finalstep1} : $\varepsilon^{\prime} = \tau_n \kappa_{\varepsilon,\delta}, t= \theta_n^2/(C_{10}^2\tau_n^2) \in (0,1]$. A few algebraic simplifications lead to the following upper bound.
\begin{align}{\label{last:proof}}
 \kappa_{\varepsilon,\delta} \mathbb{E} \sqrt{\log P ( \mathcal{N}_{n,\theta_n},d_{g,n},\tau_n \kappa_{\varepsilon,\delta})} \leq C_{11}\biggl( 1 + \dfrac{\theta_n k_n^{1/(1+\lambda)} \varsigma_{n,\lambda}}{\tau_n^2} + \sum_{j=1}^L \dfrac{\theta_n^{\upsilon_j}\nu_{n,j}\widetilde{\nu}_{n,j}}{\tau_n^{1+\upsilon_j}} \biggr) = \bigO(1),
\end{align}
 where the last equality follows from the growth conditions assumed on $\theta_n$ in \Cref{item:E4}. The above expression shows that $\{P(\mathcal{N}_{n, \theta_n}, d_{g,n}, \tau_n\kappa_{\varepsilon, \delta}) : n \geq 1\}$ is a tight sequence of random variables for any $\varepsilon \in (0, \varepsilon_0)$ and $ \delta >0$. Using \Cref{item:E3}, we now complete the proof.
\end{proof}

With a closer look at the proof of \Cref{genthm:g}, one can observe that we have not used the assumption of sub-Gamma-ness in its full strength. With a little more effort, \Cref{genthm:g} can be proved assuming only appropriate bounds on moments of all orders for the random vector $\nabla \rho(Y)$ when $Y \sim \mathcal{Q}_{\rho}$. Nevertheless, if we consider linear optimization problems, in particular those satisfying \Cref{ass:linear}, we can improve on \Cref{genthm:g2} using the full strength of sub-Gamma assumption.


\begin{thm}[Improved linearized version of \Cref{genthm:g}]{\label{genthm:g3}}
	Consider a sequence of random minimization problems $\{\mathscr{P}_n : n \geq 1\}$ with random inputs in $\mathcal{X}=\mathbb{R}^d$ and  satisfying Assumptions~\ref{unique},~\ref{ass:meas} and~\ref{ass:linear}. Suppose that the probability measure $\mathcal{P}$ (for the random inputs)   has a density $f$ with respect to Lebesgue measure on $\mathbb{R}^d$ satisfying Assumption~\ref{mu}  for the pair $(\rho,g)$. By virtue of this assumption, we can take the random inputs as $X_{n,i}=g(Y_{n,i})$, where $\{Y_{n,i} : i \in I_n\}$ is a collection of i.i.d.~random variables from the distribution $\mathcal{Q}_{\rho}$.  Further assume the following two conditions.
		\begin{enumerate} [leftmargin=*,label=(G\arabic*)]
			\item  \label{item:G1} Assume that there exists a sequence of positive real numbers $\{M_n : n \geq 1\}$ satisfying the following. If the input distribution $\mathcal{P}$ has zero mean and is sub-Gamma with variance proxy $1 $ and scale parameter $1$, then 
			$$ \infty > \mathbb{E} \biggl[ \inf_{\omega \in \mathcal{S}_n} \psi_n(\mathbf{X}^n; \omega) \biggr] = \mathbb{E} \biggl[ \inf_{\omega \in \mathcal{S}_n} \sum_{i \in I_n} X_{n,i}\cdot H_{n,i}(\omega) \biggr] \geq  - M_n, \; \forall \; n \geq 1.$$ 
			
			\item  \label{item:G2} Assume that the condition \Cref{item:F2} of Theorem \ref{genthm:g2} holds. Recall that $\left\{\tau_n : n \geq 1\right\}$ is the normalizing sequence such that $\tau_n^{-1}d_{g,n}$ is greater than (upto some constant) the metric $d_n$ with high probability, in the sense as described in \ref{item:F2}.
			\end{enumerate}
	Then  $\{P(\mathcal{N}_{n,\theta_n}, d_n, \varepsilon) : n \geq 1\}$ is a tight family of random variables for any $\varepsilon >0$, provided that $\theta_n = \bigO (\tau_n^2/M_n)$ as $n \to \infty$. 
\end{thm}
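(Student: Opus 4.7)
The plan is to adapt the Langevin-dynamics argument from the proof of \Cref{genthm:g}, exploiting two structural features of the linear case~\Cref{ass:linear}: the absence of nonlinear remainder terms, and the assumption~\Cref{item:G1} which provides a direct structural lower bound on the expected infimum when the inputs are sub-Gamma$(1,1)$. The improvement over \Cref{genthm:g2} will come from replacing the H\"older-type control of the correction terms by an application of~\Cref{item:G1} itself, feeding the sub-Gamma corrections back into the very assumption we are trying to exploit.

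Concretely, I mirror the construction in the proof of \Cref{genthm:g}: start from $Y_{0,n,i}=Y_{n,i}\sim\mathcal{Q}_{\rho}$, run the Langevin diffusion for time $t\in(0,1]$ with independent standard Brownian motions $W_{\cdot,n,i}$, set $X_{t,n,i}=g(Y_{t,n,i})$, and Taylor-expand
\begin{align*}
Z_{t,n,i} := X_{t,n,i} - X_{0,n,i} = \underbrace{\sqrt{2}\,\nabla g(Y_{0,n,i})\,W_{t,n,i}}_{\widetilde{W}_{t,n,i}} + \underbrace{V_{t,n,i} + \widetilde{V}_{t,n,i}}_{\widetilde{U}_{t,n,i}}.
\end{align*}
As established inside the proof of \Cref{genthm:g}, $\widetilde{W}_{t,n,i}$ is conditionally Gaussian on $\mathbf{Y}_0^n$ and induces the pseudo-metric $\sqrt{2t}\,d_{g,n}$ on $\mathcal{S}_n$, while $\widetilde{U}_{t,n,i}$ is mean-zero, i.i.d.\ across $i\in I_n$, and satisfies $\widetilde{U}_{t,n,i}/(C_0 t)\in\mathscr{G}(1,1;d)$ for some absolute constant $C_0\geq 1$ and every $t\in(0,1]$.

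From the invariance $\mathbb{E}\inf_{\mathcal{S}_n}\psi_n(\mathbf{X}_t^n)=\mathbb{E}\inf_{\mathcal{S}_n}\psi_n(\mathbf{X}_0^n)$ and the definition of $\mathcal{N}_{n,\theta_n}$, I get, exactly as in the proof of \Cref{genthm:g},
\begin{align*}
-\theta_n \leq \mathbb{E}\inf_{\omega\in\mathcal{N}_{n,\theta_n}}\sum_{i} Z_{t,n,i}\cdot H_{n,i}(\omega) \leq \mathbb{E}\inf_{\omega\in\mathcal{N}_{n,\theta_n}}\sum_{i}\widetilde{W}_{t,n,i}\cdot H_{n,i}(\omega) + \mathbb{E}\sup_{\omega\in\mathcal{S}_n}\sum_{i}\widetilde{U}_{t,n,i}\cdot H_{n,i}(\omega).
\end{align*}
The first term on the right is bounded by conditional Sudakov minoration on $\mathbf{Y}_0^n$, giving $-A^*\sqrt{2t}\,\varepsilon'\,\mathbb{E}\sqrt{\log P(\mathcal{N}_{n,\theta_n},d_{g,n},\varepsilon')}$ for any $\varepsilon'>0$. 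For the second term, by negation symmetry of the sub-Gamma class, $-\widetilde{U}_{t,n,i}/(C_0 t)$ also lies in $\mathscr{G}(1,1;d)$ and is i.i.d.\ with zero mean, so applying~\Cref{item:G1} to this rescaled input yields $\mathbb{E}\sup_{\omega\in\mathcal{S}_n}\sum_{i}\widetilde{U}_{t,n,i}\cdot H_{n,i}(\omega)\leq C_0\, t\, M_n$.

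Combining the three bounds, taking $\varepsilon'=\tau_n\kappa_{\varepsilon,\delta}$ from~\Cref{item:G2}, and dividing by $\sqrt{t}$ yields
\begin{align*}
A^*\sqrt{2}\,\tau_n\kappa_{\varepsilon,\delta}\,\mathbb{E}\sqrt{\log P(\mathcal{N}_{n,\theta_n},d_{g,n},\tau_n\kappa_{\varepsilon,\delta})} \leq \frac{\theta_n}{\sqrt{t}} + C_0\sqrt{t}\,M_n.
\end{align*}
Choosing $t=\min\{1,\theta_n/(C_0 M_n)\}$ optimizes the right side to $O(\sqrt{M_n\theta_n}\vee\theta_n)=O(\tau_n)$ under the hypothesis $\theta_n=O(\tau_n^2/M_n)$, so the left side is $O(1)$. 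This establishes tightness of $\{P(\mathcal{N}_{n,\theta_n},d_{g,n},\tau_n\kappa_{\varepsilon,\delta})\}_{n\geq 1}$, and assumption~\Cref{item:G2} then transfers this to tightness of $\{P(\mathcal{N}_{n,\theta_n},d_n,\varepsilon)\}_{n\geq 1}$ precisely as in the concluding lines of the proof of \Cref{genthm:g}. The main technical subtlety is the negation-symmetry step for converting the \emph{lower} bound on an infimum given by~\Cref{item:G1} into an \emph{upper} bound on a supremum, together with verifying that the sub-Gamma normalization $\widetilde{U}_{t,n,i}/(C_0 t)\in\mathscr{G}(1,1;d)$ and the i.i.d.~structure hold uniformly for $t\in(0,1]$, since the entire improvement hinges on this application of~\Cref{item:G1}.
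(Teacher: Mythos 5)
Your proposal is essentially the paper's own proof: the same Langevin step, the same conditional Sudakov bound in the pseudo-metric $d_{g,n}$, and---crucially---the same key trick of applying \ref{item:G1} to the negated, rescaled corrections $-(V_{t,n,i}+\widetilde{V}_{t,n,i})/(C_5 t)\in\mathscr{G}(1,1;d)$ to bound the supremum term by $C_5\,t\,M_n$, followed by the transfer from $d_{g,n}$ to $d_n$ via \ref{item:G2}. The only deviation is your time choice $t=\min\{1,\theta_n/(C_0M_n)\}$: the cap at $1$ is unnecessary in the linear case (the sub-Gamma property of $V_{t,n,i}+\widetilde{V}_{t,n,i}$ with parameters $(C_5t^2,C_5t)$ holds for every $t>0$, since no recombination with $\widetilde{W}_{t,n,i}$ is needed here), and keeping it leaves the regime $\theta_n>C_0M_n$ with $\theta_n\gg\tau_n$ unhandled, because $\theta_n=\bigO(\tau_n^2/M_n)$ does not by itself imply $\theta_n=\bigO(\tau_n)$; dropping the cap, or using the paper's choice $t=\theta_n^2/\tau_n^2$, closes this small gap without any further change to your argument.
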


\begin{proof}
We shall complete the proof by altering certain estimates used in the proof of \Cref{genthm:g} in light of the assumption \Cref{item:G1}. Note that \Cref{step1} remains unaltered, whereas \Cref{expan3} simplifies to 
\begin{align}
& \mathbb{E} \inf_{\omega \in \mathcal{N}_{n,\theta_n}} [\psi_n( \mathbf{X}_t^n;\omega) - \psi_n( \mathbf{X}_0^n;\omega)] \nonumber \\
 & \leq \mathbb{E} \inf_{\omega \in \mathcal{N}_{n,\theta_n}} \sum_{i \in I_n} \widetilde{W}_{t,n,i} \cdot H_{n,i}(\omega) + \mathbb{E} \sup_{\omega \in \mathcal{N}_{n,\theta_n}} \sum_{i \in I_n} ( V_{t,n,i}+\widetilde{V}_{t,n,i}) \cdot H_{n,i}(\omega). \label{expan4}
 \end{align}
 Whereas the upper bound of the first term in the right hand side of \Cref{expan4}, as expressed in \Cref{thm2:sud}, doesn't need to be changed, we can improve on the upper bound of the second term in the right hand side of \Cref{expan4} compared to what we did in \Cref{thm2:sec}, with the aid of \Cref{item:G1}. Recall that $V_{t,n,i}+\widetilde{V}_{t,n,i}$ was a sub-Gamma vector with mean zero, variance proxy $C_5t^2$ and scale parameter $C_5t$. Without loss of generality, we can assume that $C_5 \geq 1$ and hence $-(V_{t,n,i}+\widetilde{V}_{t,n,i})/(C_5t)$ are i.i.d.~sub-Gamma vectors with mean zero, variance proxy $1$ and scale parameter $1$. Therefore, 
 \begin{align}
 \mathbb{E} \sup_{\omega \in \mathcal{N}_{n,\theta_n}} \sum_{i \in I_n} ( V_{t,n,i}+\widetilde{V}_{t,n,i}) \cdot H_{n,i}(\omega) &\leq  \mathbb{E} \sup_{\omega \in \mathcal{S}_n} \sum_{i \in I_n} ( V_{t,n,i}+\widetilde{V}_{t,n,i}) \cdot H_{n,i}(\omega) \nonumber \\
 & = - C_5t \mathbb{E} \inf_{\omega \in \mathcal{S}_n} \sum_{i \in I_n} \biggl( \frac{-V_{t,n,i}-\widetilde{V}_{t,n,i}}{C_5t}\biggr) \cdot H_{n,i}(\omega) \nonumber \\
 & \leq C_5tM_n. \label{step1:genthm:g3}
 \end{align}
 Combining the estimate in \Cref{step1:genthm:g3} with \Cref{thm2:sud}, \Cref{expan4} and \Cref{step1}, we obtain the following revised version of \Cref{genthm2:finalstep1}. For any $\varepsilon^{\prime} >0$, 
 \begin{align}{\label{genthmg3:finalstep1}}
 A^*\sqrt{2t}\varepsilon^{\prime} \mathbb{E} \sqrt{\log P ( \mathcal{N}_{n,\theta_n},d_{g,n},\varepsilon^{\prime})} \leq \theta_n + C_5tM_n.
 \end{align}
  Fix $\varepsilon \in (0, \varepsilon_0)$ and $\delta >0$. Get $\kappa_{\varepsilon,\delta}$ using \Cref{item:G2} and plug-in the following choices in \Cref{genthmg3:finalstep1} : $\varepsilon^{\prime} = \tau_n \kappa_{\varepsilon,\delta}, t= \theta_n^2/\tau_n^2$. Algebraic simplifications now lead to the following upper bound.
 \begin{align*}
  \kappa_{\varepsilon,\delta} \mathbb{E} \sqrt{\log P ( \mathcal{N}_{n,\theta_n},d_{g,n},\tau_n \kappa_{\varepsilon,\delta})} \leq C_{12}\biggl( 1 + \dfrac{\theta_n M_n} {\tau_n^2} \biggr) = \bigO(1),
 \end{align*}
  where the last equality follows from the growth conditions assumed on $\theta_n$. The above expression shows that $\{P(\mathcal{N}_{n, \theta_n}, d_{g,n}, \tau_n\kappa_{\varepsilon, \delta}) : n \geq 1\}$ is a tight sequence of random variables for any $\varepsilon \in (0, \varepsilon_0)$ and $ \delta >0$. Using \Cref{item:G2}, we now complete the proof.
   \end{proof}

A key step in applications of \Cref{genthm:g3} for optimization problems with linear objective function, i.e.,  satisfying \Cref{ass:linear}, is to verify \Cref{item:G1}. This type of growth condition is well-known in the literature for many well-studied problem like branching random walk (discussed in \Cref{brw}). Nevertheless,  with the help of the well-developed theory of sub-Gamma variables (see \Cref{subgamma}),  we can actually get a crude lower bound for $\mathbb{E} \inf_{\omega \in \mathcal{S}_n} \psi_n(\mathbf{X}^n; \omega)$, at least in situations with finite $\mathcal{S}_n$, which turns out to be optimal for most of the examples discussed in this article.

\begin{prop}{\label{subgmaxgrowth}}
Consider a random optimization problem satisfying Assumption~\ref{ass:linear}. Assume that 
$$ \sup_{\omega \in \mathcal{S}_n} \sum_{i \in I_n}\|H_{n,i}(\omega)\|_2^2 =: \zeta_{n,2}^2 < \infty \; \text{ and } \sup_{\omega \in \mathcal{S}_n} \sup_{i \in I_n}\|H_{n,i}(\omega)\|_{\infty} =: \zeta_{n,\infty} < \infty .$$
Then 
$$ \infty> \mathbb{E} \inf_{\omega \in \mathcal{S}_n} \psi_n(\mathbf{X}^n; \omega) \geq - \sqrt{2d\sigma^2 \zeta_{n,2}^2 \log \operatorname{card}(\mathcal{S}_n)} - dc\zeta_{n,\infty} \log \operatorname{card}(\mathcal{S}_n), \; \forall \; n \geq 1,$$
provided that the $X_{n,i}$'s are sub-Gamma random vectors with zero mean, variance proxy $\sigma^2$ and scale parameter $c$. In other words, we can take the absolute value of the right hand side above as a choice for $M_n$ in Theorem~\ref{genthm:g3} after plugging in $\sigma^2=c=1$.  
\end{prop}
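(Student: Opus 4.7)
The plan is to reduce the bound on $\mathbb{E}\inf_{\omega} \psi_n(\mathbf{X}^n;\omega)$ to an application of the standard sub-Gamma maximal inequality over the finite set $\mathcal{S}_n$. Concretely, I would try to show that for each fixed $\omega \in \mathcal{S}_n$, the random variable
\[
\psi_n(\mathbf{X}^n;\omega) = \sum_{i \in I_n} X_{n,i} \cdot H_{n,i}(\omega)
\]
is sub-Gamma with mean zero, variance proxy $d\sigma^2\zeta_{n,2}^2$ and scale parameter $dc\zeta_{n,\infty}$, after which the desired lower bound follows immediately from applying the standard maximal inequality (see \Cref{subgamma}) to the family $\{-\psi_n(\mathbf{X}^n;\omega)\}_{\omega \in \mathcal{S}_n}$.

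The main technical obstacle is that being a sub-Gamma \emph{vector} only controls the marginal MGF of each coordinate and imposes no joint structure, so coordinates of $X_{n,i}$ may be arbitrarily dependent. The fix is to use the generalized H\"older inequality on the joint MGF. First I would compute, for any deterministic $v = (v^{(1)}, \ldots, v^{(d)}) \in \mathbb{R}^d$ and any admissible $\lambda$,
\[
\mathbb{E}\bigl[\exp(\lambda \, X_{n,i} \cdot v)\bigr] = \mathbb{E}\Bigl[\prod_{k=1}^d \exp\bigl(\lambda v^{(k)} X_{n,i}^{(k)}\bigr)\Bigr] \leq \prod_{k=1}^d \bigl(\mathbb{E}[\exp(d\lambda v^{(k)} X_{n,i}^{(k)})]\bigr)^{1/d},
\]
and then invoke the coordinate-wise sub-Gamma MGF bound $X_{n,i}^{(k)} \in \mathscr{G}(\sigma^2,c)$ to get
\[
\mathbb{E}\bigl[\exp(\lambda \, X_{n,i} \cdot v)\bigr] \leq \exp\!\left(\frac{d\lambda^2 \sigma^2 \|v\|_2^2}{2(1 - dc |\lambda| \|v\|_\infty)}\right),
\]
valid whenever $|\lambda| < 1/(dc\|v\|_\infty)$. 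This establishes $X_{n,i} \cdot v \in \mathscr{G}(d\sigma^2\|v\|_2^2, \, dc\|v\|_\infty)$.

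Next, I would use independence of $\{X_{n,i}\}_{i \in I_n}$ together with the additivity property for sub-Gamma variables (\Cref{subgamma}) applied coordinate by coordinate with $v = H_{n,i}(\omega)$, to conclude that $\psi_n(\mathbf{X}^n;\omega)$ is sub-Gamma with variance proxy
\[
d\sigma^2 \sum_{i \in I_n} \|H_{n,i}(\omega)\|_2^2 \leq d\sigma^2\zeta_{n,2}^2
\]
and scale parameter $dc \max_{i \in I_n} \|H_{n,i}(\omega)\|_\infty \leq dc\zeta_{n,\infty}$. Note that $\mathbb{E}\psi_n(\mathbf{X}^n;\omega) = 0$ for every $\omega$ since each $X_{n,i}$ has mean zero, which gives the trivial upper bound $\mathbb{E}\inf_\omega \psi_n \leq 0 < \infty$.

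Finally, noting that $-\psi_n(\mathbf{X}^n;\omega)$ lies in the same sub-Gamma class (the class is symmetric about the mean), applying the standard maximal inequality for sub-Gamma variables over the finite set $\mathcal{S}_n$ yields
\[
\mathbb{E}\Bigl[-\inf_{\omega \in \mathcal{S}_n} \psi_n(\mathbf{X}^n;\omega)\Bigr] = \mathbb{E}\Bigl[\sup_{\omega \in \mathcal{S}_n} (-\psi_n(\mathbf{X}^n;\omega))\Bigr] \leq \sqrt{2d\sigma^2\zeta_{n,2}^2 \log \operatorname{card}(\mathcal{S}_n)} + dc\zeta_{n,\infty} \log \operatorname{card}(\mathcal{S}_n),
\]
which, after multiplying by $-1$, is exactly the stated bound. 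The key insight is really just the generalized H\"older step, which converts coordinate-wise sub-Gamma information into a single-variable sub-Gamma statement at the cost of an extra factor of $d$ in both parameters; everything else is routine.
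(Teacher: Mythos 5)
Your proof is correct and follows essentially the same route as the paper: reduce each $\psi_n(\mathbf{X}^n;\omega)$ to a single mean-zero sub-Gamma variable with parameters $(d\sigma^2\zeta_{n,2}^2, dc\zeta_{n,\infty})$ and then apply the finite maximal inequality to $\{-\psi_n(\mathbf{X}^n;\omega)\}_{\omega\in\mathcal{S}_n}$. The only difference is cosmetic: your generalized H\"older computation is precisely the content of the paper's Lemma~\ref{lemsubg:2} (dependent and independent cases), which the paper simply cites twice instead of re-deriving.
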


\begin{proof}
Since the $X_{n,i}$'s are integrable, so is $\psi_n(\mathbf{X}^n, \omega)$ for any $\omega$; thus the infimum has expectation less than infinity. On the other hand,  since $X_{n,i} \in \mathscr{G}(\sigma^2,c;d)$ we can apply Lemma~\Cref{lemsubg:2} to conclude that  for all $i \in I_n$ and $\omega \in \mathcal{S}_n$,
$$ X_{n,i} \cdot  H_{n,i}(\omega) \in \mathscr{G} ( d\sigma^2\|H_{n,i}(\omega)\|_2^2, dc\|H_{n,i}(\omega)\|_{\infty}).$$
Since $X_{n,i}$'s are independent, a further application of Lemma~\Cref{lemsubg:2} guarantees that 
\begin{align*}
\psi_n( \mathbf{X}^n; \omega) = \sum_{i \in I_n} X_{n,i} \cdot H_{n,i}(\omega) &\in \mathscr{G} \biggl( d\sigma^2\sum_{i \in I_n}\|H_{n,i}(\omega)\|_2^2, dc \max_{i \in I_n} \|H_{n,i}(\omega)\|_{\infty}\biggr) \\
& \subseteq \mathscr{G} (d\sigma^2 \zeta_{n,2}^2, dc  \zeta_{n,\infty}).
\end{align*}
The $X_{n,i}$'s having zero mean implies the same for $\psi_n(\mathbf{X}^n, \omega)$ for all $\omega$; hence applying Proposition \Cref{subg:maxc}, we establish the required assertion. 
\end{proof}

\begin{remark}
An analogous computation can be performed even when $\mathcal{S}_n$ is not finite. Note that for $\omega_1, \omega_2 \in \mathcal{S}_n$, Lemma~\Cref{lemsubg:2} implies the following:
\begin{align*}
&\psi_n(\mathbf{X}^n;\omega_1) - \psi_n(\mathbf{X}^n;\omega_2) = \sum_{i \in I_n} X_{n,i} \cdot (H_{n,i}(\omega_1)-H_{n,i}(\omega_2) ) \\
& \in \mathscr{G} \biggl(  d\sigma^2\sum_{i \in I_n}\|H_{n,i}(\omega_1)-H_{n,i}(\omega_2)\|_2^2, dc \max_{i \in I_n} \|H_{n,i}(\omega_1)-H_{n,i}(\omega_2)\|_{\infty}\biggr) \\
& \subseteq \mathscr{G} ( d\sigma^2 d_{2,n}(\omega_1,\omega_2)^2 , dcd_{2,n}(\omega_1,\omega_2)).
\end{align*}
Thus, $\{\psi_n(\mathbf{X}^n;\omega) : \omega \in \mathcal{S}_n\}$ is a sub-Gamma process with parameters $(d\sigma^2,dc)$ on the pseudo-metric space $(\mathcal{S}_n,d_{2,n})$, in the sense of \Cref{subgamma:process}. We can therefore apply \Cref{dudley} to conclude that 
$$ \mathbb{E} \inf_{\omega \in \mathcal{S}_n} \psi_n(\mathbf{X}^n;\omega) \geq - 12 \int_{0}^{\infty} \biggl[ \sqrt{d}\sigma \sqrt{\log N(\mathcal{S}_n, d_{2,n},\varepsilon)} + dc \log N(\mathcal{S}_n, d_{2,n},\varepsilon) \biggr] \, d \varepsilon, $$
and hence we can take 
$$ M_n = 72d \int_{0}^{\infty}   \log N(\mathcal{S}_n, d_{2,n},\varepsilon) \,d \varepsilon,$$
while applying \Cref{genthm:g3}. We will not apply this bound in this article, since it results in a sub-optimal bound in the examples discussed here. Nevertheless, it can turn out to be useful in other examples. 
\end{remark}

Let us make a short comment on the improvement made by \Cref{genthm:g3} compared to the linearized version of the general result, i.e., \Cref{genthm:g2}. We restrict ourselves to the set-up described in \Cref{boundedH2}, i.e., 
$$ \sup_{ \omega \in  \mathcal{S}_n}  \sup_{i \in I_n}  \|H_{n,i}(  \omega ) \|_2 \leq \varsigma_n < \infty, \; \forall \; n \geq 1,$$
 and start by making the following basic observation:
\begin{align*}
\mathbb{E} [-\inf_{\omega \in \mathcal{S}_n} \psi_n(\mathbf{X}^n; \omega) ] &= \mathbb{E} \biggl[-\inf_{\omega \in \mathcal{S}_n} \sum_{i \in I_n} X_{n,i} \cdot H_{n,i}(\omega) \biggr]\\
&\geq  \mathbb{E} \biggl[- \sum_{i \in I_n} \|X_{n,i}\|_2 \varsigma_n  \biggr] = -k_n \varsigma_n \mathbb{E}_{\mathcal{P}} \|X\|_2.
\end{align*} 
Hence, we can always take $M_n = \bigO(k_n \varsigma_n)$ while applying \Cref{genthm:g3}. With this particular choice applied, we get back \Cref{genthm:g2} as can be observed with the help of \Cref{boundedH2}. In many cases, $\bigO(k_n\varsigma_n)$ will prove to be a cruder bound on $M_n$ (since it does not take into account sub-Gamma tails of input variables) and hence will yield sub-optimal choice for $\theta_n$; this, in particular, will be the situation for the examples discussed in \Cref{brw,skmodel,eigmatrix}. 

\begin{remark}{\label{future}}
We have exploited a smoothness condition on the objective function with respect to its random inputs in \Cref{ass:psi}. \Cref{ass:linear} being a special case of \Cref{ass:p}, all of our main theorems require the use of that smoothness criterion. This excludes a large class of random optimization problems where the function $\psi_n$ takes values in a discrete set. An interesting example of this kind is the problem of finding the largest independent set in a Erdos-Renyi random graph on a lattice. Our methods do not cover these types of situations; proving stability properties for this kind of problem could be an interesting avenue for further research. 
\end{remark}

\subsection{General results to prove condition~\ref{item:B} of Theorem \ref{strat}}

We now want to spend some time talking about techniques to prove \Cref{item:B}. As we have mentioned earlier, the technique to prove a statement like \Cref{item:B} varies from case to case, depending on the choice of perturbation blocks. Nevertheless, for linear objective functions we can write down a somewhat general theorem which encompasses a lot of examples that we are going to discuss later. The choice for the perturbation blocks we make here amounts to replacing a uniformly bounded number of  random inputs by i.i.d.~copies. In other words, we have $\mathcal{J}_n = \{J_{n,l} : l \in \mathcal{L}_n\}$ satisfying $\operatorname{card}(J_{n,l}) \leq J$ for all $l \in \mathcal{L}_n$, for some $J < \infty$. The scenario where we replace one input at a time, i.e. $J_{n,i}=\{i\}$ for all $i \in I_n = \mathcal{L}_n$, falls under its scope. 

\begin{prop}{\label{mostprop}}
Consider a sequence of random minimization problems $\{\mathscr{P}_n : n \geq 1\}$ with random inputs in $\mathcal{X}=\mathbb{R}^d$ with distribution $\mathcal{P}$, satisfying Assumptions~\ref{ass:linear} and~\ref{unique}. Moreover, assume that 
\begin{enumerate}
\item $  \sup_{i \in I_n} \sup_{\omega \in \mathcal{S}_n} \|H_{n,i}(\omega)\|_2  := \varsigma_n < \infty.$
\item $\{P(\mathcal{N}_{n,C\varsigma_n},d_n,\varepsilon) : n \geq 1 \}$ is a tight sequence for any $\varepsilon ,C>0$. 
\end{enumerate}
Then $\{\mathscr{P}_n : n \geq 1\}$ is stable under small perturbations with perturbation blocks $\mathcal{J}_n = \{J_{n,l} : l \in \mathcal{L}_n\}$ satisfying $\sup_{n \geq 1} \sup_{l \in \mathcal{L}_n} \operatorname{card}(J_{n,l}) =  J < \infty$, provided  that $\mathbb{E}_{\mathcal{P}}\|X\|_2 < \infty$.
\end{prop}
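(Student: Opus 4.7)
The plan is to apply \Cref{strat} with the choice $\theta_n = \varsigma_n$ and with the ``sister solution'' $\omega_{n,l}^* := \widehat{\omega}_n(\mathbf{X}_l^n)$ — i.e., taking the actual perturbed optimizer itself. Under this choice the first part of \Cref{most} is automatically satisfied since $d_n(\widehat{\omega}_n(\mathbf{X}_l^n),\omega_{n,l}^*) \equiv 0$, and assumption \ref{item:A} of \Cref{strat} reduces to tightness of $\{P(\mathcal{N}_{n,c\varsigma_n},d_n,\varepsilon)\}$ for every $c,\varepsilon>0$, which is precisely hypothesis (2) of the proposition. The only real work is verifying the second half of \Cref{most}, and for this it suffices (via \Cref{rem:strateasy}) to show
\[
\sup_{l \in \mathcal{L}_n} \mathbb{E}\bigl[\psi_n(\mathbf{X}^n;\widehat{\omega}_n(\mathbf{X}_l^n)) - \psi_{n,\mathrm{opt}}(\mathbf{X}^n)\bigr] = \mathcal{O}(\varsigma_n).
\]

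The key step is a standard ``swap'' argument that exploits linearity. Write $\widehat{\omega} := \widehat{\omega}_n(\mathbf{X}^n)$ and $\widehat{\omega}_l := \widehat{\omega}_n(\mathbf{X}_l^n)$, and let $D_{n,i} := H_{n,i}(\widehat{\omega}_l) - H_{n,i}(\widehat{\omega})$. By the optimality of $\widehat{\omega}_l$ for the perturbed input $\mathbf{X}_l^n$ (whose $i$-th coordinate is $X_{n,i}$ for $i \notin J_{n,l}$ and $X_{n,i}^{(l)}$ for $i \in J_{n,l}$),
\[
\sum_{i \notin J_{n,l}} X_{n,i} \cdot D_{n,i} + \sum_{i \in J_{n,l}} X_{n,i}^{(l)} \cdot D_{n,i} \leq 0.
\]
Adding and subtracting $\sum_{i \in J_{n,l}} X_{n,i} \cdot D_{n,i}$ and invoking the linearity assumption \ref{ass:linear}, this rearranges to
\[
\psi_n(\mathbf{X}^n;\widehat{\omega}_l) - \psi_n(\mathbf{X}^n;\widehat{\omega}) = \sum_{i \in I_n} X_{n,i}\cdot D_{n,i} \leq \sum_{i \in J_{n,l}} (X_{n,i} - X_{n,i}^{(l)}) \cdot D_{n,i}.
\]

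With the rearrangement in hand, Cauchy--Schwarz combined with the uniform bound $\|H_{n,i}(\omega)\|_2 \leq \varsigma_n$ gives $\|D_{n,i}\|_2 \leq 2\varsigma_n$, so
\[
\psi_n(\mathbf{X}^n;\widehat{\omega}_l) - \psi_{n,\mathrm{opt}}(\mathbf{X}^n) \leq 2\varsigma_n \sum_{i \in J_{n,l}} \bigl(\|X_{n,i}\|_2 + \|X_{n,i}^{(l)}\|_2\bigr).
\]
Taking expectations, using $|J_{n,l}| \leq J$ and the integrability $\mathbb{E}_{\mathcal{P}}\|X\|_2 < \infty$, the right-hand side is at most $4J\varsigma_n\,\mathbb{E}_{\mathcal{P}}\|X\|_2 = \mathcal{O}(\varsigma_n)$, uniformly in $l$. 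By \Cref{rem:strateasy} this verifies the second assertion of \Cref{most} with $\theta_n = \varsigma_n$, and \Cref{strat} then delivers stability under the perturbation blocks $\mathcal{J}_n$.

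There is no real obstacle here: the only subtle point is the swap step, where one must be careful to add back the ``missing'' $J_{n,l}$-indexed terms so that the left-hand side becomes the full sum over $I_n$, which by linearity is exactly $\psi_n(\mathbf{X}^n;\widehat{\omega}_l) - \psi_n(\mathbf{X}^n;\widehat{\omega})$. Everything else — the crude bound on the displacement $D_{n,i}$ and the union bound over the bounded-size perturbation block — is immediate. In effect, the proposition is a clean corollary of \Cref{strat} once one observes that for linear objectives with bounded coefficient vectors, the $\mathbf{X}_l^n$-optimizer automatically lies in the $\mathcal{O}(\varsigma_n)$-near-optimal set of the original problem in expectation.
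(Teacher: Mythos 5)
Your proof is correct, and the bound you obtain is of the right order, but the mechanism by which you localize the difference to the perturbed coordinates $J_{n,l}$ differs from the paper's. You invoke the \emph{pathwise} optimality inequality $\psi_n(\mathbf{X}_l^n;\widehat{\omega}_l) \leq \psi_n(\mathbf{X}_l^n;\widehat{\omega})$ and then rearrange deterministically, which gives a pointwise bound $\psi_n(\mathbf{X}^n;\widehat{\omega}_l) - \psi_{n,\mathrm{opt}}(\mathbf{X}^n) \leq \sum_{i \in J_{n,l}}(X_{n,i}-X_{n,i}^{(l)})\cdot D_{n,i}$ valid almost surely. The paper instead works in expectation: it uses the distributional equality $\mathbf{X}^n \stackrel{d}{=} \mathbf{X}_l^n$ to replace $\mathbb{E}\,\psi_n(\mathbf{X}^n;\widehat{\omega})$ by $\mathbb{E}\,\psi_n(\mathbf{X}_l^n;\widehat{\omega}_l)$, after which the terms outside $J_{n,l}$ cancel by linearity, and then bounds $\|H_{n,i}(\widehat{\omega}_l)\|_2 \leq \varsigma_n$ directly, yielding the sharper constant $2J\varsigma_n\mathbb{E}_{\mathcal{P}}\|X\|_2$ (versus your $4J\varsigma_n\mathbb{E}_{\mathcal{P}}\|X\|_2$, since your $\|D_{n,i}\|_2 \leq 2\varsigma_n$ pays an extra factor of two). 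Your approach is slightly more robust in that it does not require the exchangeability $\mathbf{X}^n \stackrel{d}{=} \mathbf{X}_l^n$ to cancel the main term; the paper's is slightly leaner in its constants. A minor stylistic point in your favor: you take $\theta_n = \varsigma_n$, which lines up exactly with hypothesis (2) of the proposition (tightness of $P(\mathcal{N}_{n,C\varsigma_n},d_n,\varepsilon)$), whereas the paper takes $\theta_n \equiv 1$, which makes the match with condition \ref{item:A} of Theorem \ref{strat} a shade less immediate when $\varsigma_n$ is not bounded away from zero.
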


\begin{proof}
We shall establish that \Cref{strat} is applicable for the choice $\theta_n \equiv 1$. It will suffice to take the obvious choice $\omega^*_{n,l} = \widehat{\omega}_n(\mathbf{X}^n_l)$. Then the first assertion in \Cref{item:B} is trivial, and observe that for any $l \in \mathcal{L}_n$, 
\begin{align}
\mathbb{E}[\psi_n( \mathbf{X}^n; \widehat{\omega}_n(\mathbf{X}^n_l)) - \psi_{n, \mathrm{opt}}( \mathbf{X}^n)] &= \mathbb{E}[\psi_n( \mathbf{X}^n; \widehat{\omega}_n(\mathbf{X}^n_l)) - \psi_n( \mathbf{X}^n; \widehat{\omega}_n(\mathbf{X}^n))] \nonumber \\
& = \mathbb{E}[\psi_n( \mathbf{X}^n; \widehat{\omega}_n(\mathbf{X}^n_l)) - \psi_n( \mathbf{X}_l^n; \widehat{\omega}_n(\mathbf{X}_l^n)) ] \label{samedist} \\
& = \mathbb{E}  \sum_{i \in J_{n,l}} (X_{n,i} - X_{n,i}^{(l)} )\cdot H_{n,i} ( \widehat{\omega}_n(\mathbf{X}^n_l)) \nonumber \\
& \leq \mathbb{E}  \sum_{i \in J_{n,l}} (\|X_{n,i}\|_2 + \|X_{n,i}^{(l)}\|_2)\|H_{n,i} ( \widehat{\omega}_n(\mathbf{X}^n_l))\|_2 \nonumber \\
& \leq (2J\mathbb{E}_{\mathcal{P}}\|X\|_2 ) \varsigma_n, \nonumber 
\end{align}
where \Cref{samedist} follows from the fact that $\mathbf{X}^n \stackrel{d}{=} \mathbf{X}^n_l$, for any $l \in \mathcal{L}_n$. This completes the proof with the aid of the observation made in \Cref{rem:strateasy}. 
\end{proof}

If we can not apply \Cref{mostprop} directly to a particular problem, the technique used to prove \Cref{mostprop} are generally useful even in that scenario, especially for the problems satisfying \Cref{ass:linear}, as we shall see in \Cref{sec:lin}.

\section{Examples}
\label{sec:lin}

We shall first consider several well-known examples of optimization problems which fall under the scope of \Cref{ass:linear}. We shall demonstrate that for natural choices of perturbation blocks, solutions of all these problems are stable in the sense discussed in \Cref{stochopt}. 

\subsection{Branching random walk}
\label{brw}
Branching random walk is an important model in statistical physics and
probability. The basic model is very simple and intuitive. It starts with a particle at the
origin. The particle splits into a random number of particles following a specified progeny
distribution, given by a probability mass function $\{p_k : k \geq 0\}$, and each new particle undergoes a random displacement on $\mathbb{R}$. The new particles
form the first generation. Each particle in the first generation splits into a random number of
particles according to the same law and independently of the past  as well as of the other particles
in the same generation. Each new particle makes a random displacement from the position of its
parent following the same displacement distribution, independently from other particles. The new
particles form the second generation. This mechanism  goes on. This resulting system is called a
branching random walk (BRW).

It is clear that the particles in the system described above form a rooted Galton--Watson
tree if we forget about their positions. 
This Galton--Watson tree
will be denoted by $\mathbb{T}=(V,E)$, where $V$ is the set of vertices of the tree and $E$ is the
collection of edges. The collection of particles or vertices at the $n$-th generation will be
denoted by $D_n$, with $\operatorname{card}(D_n)=:Z_n$, whereas the collection of all edges with both end-vertices belonging to $n$-th generation or before is denoted by $E_n$.


We identify each edge $e_v$ of the Galton--Watson tree with its vertex $v$ away from the root; we then assign
a real-valued random variable $X_{e_v}$,  the displacement of the corresponding
particle. Our model implies that conditioned on the Galton-Watson tree $\mathbb{T}$,
$\{X_e: \, e\in E\}$ is a collection of i.i.d.~random variables. We shall denote the unique $m$-th generation ancestor of vertex/particle $v \in D_n$ by $A_m(v)$ for all $0 \leq m \leq n$. Because of the underlying tree
structure, for any two vertices $v_1$ and $v_2$, there is a unique geodesic path connecting them; we shall
denote the collection of all edges on this  path by $I(v_1 \leftrightarrow v_2)$.  The position of the
particle corresponding to the vertex $v \in V$ is therefore given  by
$$ S_v := \sum_{e \in I(\mathrm{root} \leftrightarrow v)} X_e.$$
The collection $\{S_v: v \in V \}$ is called the branching random walk (BRW) induced by
the tree $\mathbb{T}=(V,E)$ and the displacements $\{X_e : e \in E \}$. The main focus of our discussion will be the behavior of the minimum displacement,
$M_n := \min_{v \in D_n} S_v$; the minimum over an empty set is always defined to be $\infty$.  To express this problem in the form of the generic optimization problem introduced in \Cref{stochopt}, we shall condition on the underlying Galton--Watson tree $\mathbb{T}$, assuming that it does not become extinct. We shall assume the well-studied \textit{Kesten--Stigum condition}~\cite{Kesten}, i.e., the progeny mean $m=\sum_{k \geq 1} kp_k \in (1,\infty)$ and $\sum_{k \geq 1} kp_k \log k < \infty$. Under this assumption, the progeny mean is finite and the process is super-critical. This will ensure that the tree $\mathbb{T}$ survives with positive probability. Moreover, $Z_n/m^n$ converges almost surely to a finite random variable $W$ and $\mathbb{P}(W >0 \mid \mathbb{T} \text{ survives })=1$.   Conditioned on the tree $\mathbb{T}$, the random inputs for the optimization problems are the displacements associated with the edges; i.e., in the language of \Cref{stochopt}, we can take $I_n = E_n$ and $X_{n,e}=X_e$ for all $e \in E_n$ and $n \geq 1$. The random inputs take values in $\mathcal{X}=\mathbb{R}$, and the parameter space $\mathcal{S}_n$ can be thought of as $D_n$, the set of all particles in generation $n$. The obvious way to metrize this parameter space is to take $d_n(v_1,v_2)$ to be the shortest distance between the vertices $v_1$ and $v_2$, normalized by $n$; i.e.
$$ d_n(v_1,v_2) := \dfrac{1}{n} \operatorname{card}(I(v_1 \leftrightarrow v_2)) = \dfrac{1}{n} \operatorname{card}(I(\mathrm{root} \leftrightarrow v_1) \Delta I(\mathrm{root} \leftrightarrow v_2) ), \; \forall\; v_1, v_2 \in D_n.$$
Here $\Delta$ refers to the symmetric difference between two sets.
The objective function then can be expressed as 
\begin{align*}
\psi_n ( (X_{n,e})_{e \in E_n} ; v) :=  \sum_{e \in I(\mathrm{root} \leftrightarrow v)} X_{n,e}, \; \forall \; v \in D_n.
\end{align*}   
Clearly, this is in the form of \Cref{ass:linear} with $H_{n,e}(v) = \mathbbm{1}(e \in I(\text{root} \leftrightarrow v))$, for all $e \in E_n$.

The earliest works on branching random walks include \cite{Hammersley,Kingman,Biggins}, who established that the leftmost particle in $D_n$ has linear asymptotic velocity in $n$, provided that the displacement variables satisfy some exponential moment assumption. The result can be summarized in the following theorem. 

\begin{thm}[\cite{shi2015}]{\label{brw:thm}}
Consider BRW with progeny mean $m \in (1,\infty)$ and displacement distribution $\mathcal{P}$ satisfying
$$ \Psi(t) := \log m + \log \mathbb{E} \exp(-tX) \in \mathbb{R}, $$
for some $t>0$; here $X \sim \mathcal{P}$. Then almost surely on the set of non-extinction of the tree $\mathbb{T}$, we have
$$ \lim_{n \to \infty} \dfrac{1}{n} \min_{v \in D_n} S_v = - \inf_{s>0} \dfrac{\Psi(s)}{s} =: -\psi^* \in \mathbb{R}.$$
\end{thm}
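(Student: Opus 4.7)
The plan is to prove the two matching bounds
\begin{equation*}
\liminf_{n\to\infty}\frac{M_n}{n}\ge -\psi^*\text{ a.s.}\quad\text{and}\quad\limsup_{n\to\infty}\frac{M_n}{n}\le-\psi^*\text{ a.s.\ on survival,}
\end{equation*}
where $M_n=\min_{v\in D_n}S_v$. For the lower bound I would apply the classical \emph{many-to-one identity}: since conditional on the tree the increments along any root-to-leaf path are i.i.d.\ copies of $X$ and $\mathbb{E} Z_n=m^n$, one obtains $\mathbb{E}\sum_{v\in D_n}e^{-sS_v}=e^{n\Psi(s)}$ for each $s>0$ in the domain of $\Psi$. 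An exponential Markov bound then gives $\mathbb{P}(M_n\le -an)\le e^{san+n\Psi(s)}$, and for $a=\psi^*+\varepsilon$ one picks $s>0$ with $\Psi(s)/s<\psi^*+\varepsilon/2$, making the bound summable in $n$. Borel--Cantelli (no conditioning on survival is needed) yields $\liminf M_n/n\ge-\psi^*-\varepsilon$ a.s., and letting $\varepsilon\downarrow 0$ along rationals closes the lower bound.

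For the upper bound, which holds only on the survival event, I would work with the \emph{Biggins additive martingale} $W_n(s):=e^{-n\Psi(s)}\sum_{v\in D_n}e^{-sS_v}$, which is a non-negative mean-one martingale by a one-step computation using the branching property. Under the Kesten--Stigum assumption $\sum_k k p_k\log k<\infty$ and for $s$ in the strictly subcritical range (i.e.\ $s\Psi'(s)<\Psi(s)$, equivalent to $s$ lying below the minimizer of $t\mapsto\Psi(t)/t$), the Biggins--Kyprianou--Lyons theorem gives $W_n(s)\to W_\infty(s)$ a.s.\ and in $L^1$ with $W_\infty(s)>0$ almost surely on non-extinction. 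A truncated second-moment argument (or, equivalently, the spine change of measure) then shows that whenever $W_\infty(s)>0$, for all $n$ sufficiently large there must exist a particle with $S_v\le -n\,\Psi(s)/s+o(n)$; letting $s$ approach the minimizer $s^*$ from below yields $\limsup M_n/n\le-\psi^*$ a.s.\ on survival.

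The principal obstacle is this second step. The naive inequality $W_n(s)\ge e^{-sM_n-n\Psi(s)}$ combined with the $L^1$-boundedness of $W_n(s)$ only recovers the lower bound $M_n\ge -n\Psi(s)/s+O(1)$, which is the wrong direction. Producing a particle at height $\sim-\psi^*n$ genuinely requires both the non-degeneracy $W_\infty(s)>0$ and a description of \emph{where} the mass of $W_n(s)$ concentrates; this is precisely what the spine decomposition of Lyons provides, and the Kesten--Stigum condition $\sum_k k p_k\log k<\infty$ is exactly what is needed to make the associated size-biased spine process uniformly integrable throughout the subcritical range of $s$.
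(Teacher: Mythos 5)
This theorem is not proved in the paper at all: it is quoted as the classical Hammersley--Kingman--Biggins law of large numbers for the minimal displacement, with the reader pointed to Shi's lecture notes (Theorem 1.3 there) for a proof of a general version. So there is no internal argument to compare yours against, and what follows assesses your sketch on its own terms.

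Your lower bound is the standard first-moment/Chernoff computation and is essentially fine, modulo a sign slip: the bound should read $\mathbb{P}(M_n\le -an)\le e^{-san+n\Psi(s)}$, which is what your choice of $s$ with $\Psi(s)/s<\psi^*+\varepsilon/2$ actually makes summable. The genuine gap is in the upper bound. You claim that for $s$ strictly inside the subcritical range, $W_\infty(s)>0$ forces, for all large $n$, a particle with $S_v\le -n\Psi(s)/s+o(n)$. This is false, and in fact contradicts the lower bound you have just established: for $s\neq s^*$ one has $\Psi(s)/s>\psi^*$, so the level $-n\Psi(s)/s$ lies a linearly growing distance \emph{below} $-n\psi^*$, where almost surely no particle ever goes once $n$ is large. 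What positivity of the additive martingale actually localizes is different: the mass of $W_n(s)$ concentrates around the mean path of the $s$-tilted walk, i.e.\ near $-n\Psi^{(1)}(s)$, and in the subcritical range $\Psi^{(1)}(s)<\Psi(s)/s$, with $\Psi^{(1)}(s)\to\Psi^{(1)}(s^*)=\psi^*$ as $s\uparrow s^*$. A correct completion along your lines is: by the many-to-one identity, the expected contribution to $W_n(s)$ from particles with $S_v\ge -n\bigl(\Psi^{(1)}(s)-\delta\bigr)$ equals the probability that the tilted walk exceeds its mean by $\delta n$, which is exponentially small; Borel--Cantelli shows this contribution vanishes a.s., so on $\{W_\infty(s)>0\}$ there must eventually exist particles below $-n\bigl(\Psi^{(1)}(s)-\delta\bigr)$, and letting $\delta\downarrow 0$ and $s\uparrow s^*$ gives $\limsup_n M_n/n\le-\psi^*$ on survival. (The classical alternative avoids martingale limits: a truncated second-moment bound produces, with probability bounded away from zero, a particle below $-L(\psi^*-\varepsilon)$ at a large fixed generation $L$; such particles seed an embedded supercritical Galton--Watson process, and restarting from the many particles of a late generation upgrades the conclusion to a.s.\ on non-extinction.)

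Two further points of scope. The theorem as stated assumes only $m\in(1,\infty)$ and $\Psi(t)<\infty$ for some $t>0$; the Kesten--Stigum condition is not among its hypotheses (the paper invokes it only later, where the theorem is applied), so a proof that requires it establishes a strictly weaker statement. Moreover, non-degeneracy of $W_\infty(s)$ in Biggins' theorem is governed by $\mathbb{E}[W_1(s)\log^+W_1(s)]<\infty$ together with $s\Psi^{(1)}(s)<\Psi(s)$ --- a condition involving the displacement law, not $\sum_k kp_k\log k<\infty$ alone --- and your argument also implicitly assumes that the infimum of $\Psi(s)/s$ is attained at an interior point where $\Psi$ is differentiable, which the stated hypotheses do not guarantee.
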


\begin{remark}{\label{BRW:sol}}
The function $\Psi$ defined in the statement of \Cref{brw:thm} is differentiable in the open interval, on which it takes finite values. It is easy to see upon differentiation that the function $s \mapsto \Psi(s)/s$ is minimized on the positive real line at the solution of the equation $s\Psi^{(1)}(s)=\Psi(s)$, where $\Psi^{(1)}$ is the first order derivative of the function $\Psi$. The equation  $s\Psi^{(1)}(s)=\Psi(s)$ has a unique solution $s^*$ on the positive real line,  and thus $\Psi(s^*)=\psi^*s^*.$ The existence and uniqueness of the solution is ensured by the continuity and strong convexity of the function $s \mapsto \log \mathbb{E} \exp(-sX)$,
\end{remark}

We refer to the exposition by Shi~\cite{shi2015} for a detailed background on BRW, as well as a proof of a general version of \Cref{brw:thm}; see~\cite[Theorem 1.3]{shi2015}. Building upon \Cref{brw:thm} and applying \Cref{genthm:g3}, we can arrive at the following proposition.

\begin{prop}{\label{brwtight}}
Consider super-critical BRW  where the progeny distribution satisfies the Kesten--Stigum condition, and the displacement variables have common distribution $\mathcal{P}$ with finite mean and  density $f$  satisfying Assumption~\ref{mu} for a pair $(\rho,g)$ such that $g$ is strictly monotonic on $\mathbb{R}$. Moreover assume that $\mathcal{P}$ satisfies the hypothesis of Theorem~\ref{brw:thm}.  Then $\{P(\mathcal{N}_{n,\theta_n}, d_n, \varepsilon) : n \geq 1\}$ is a tight sequence almost surely on the non-extinction of the tree for any $\theta_n=\bigO(1)$.
\end{prop}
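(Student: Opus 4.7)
The plan is to apply Theorem \ref{genthm:g3}, conditionally on a non-extinct realization of the underlying Galton--Watson tree $\mathbb{T}$, with the choice $\tau_n = \sqrt{n}$. The problem fits Assumption \ref{ass:linear} with $H_{n,e}(v) = \mathbbm{1}(e \in I(\mathrm{root} \leftrightarrow v))$, so $\|H_{n,e}(v)\|_2 \le 1$ and the induced pseudo-metric $d_{2,n}$ separates distinct vertices of $D_n$ (a root-to-leaf path is determined by its indicator vector on $E_n$). Consequently Assumption \ref{unique} follows from Remark \ref{uniqueremark}, Assumption \ref{ass:meas} is trivial by finiteness of $\mathcal{S}_n = D_n$, and Remark \ref{meanupper} furnishes the integrability required in \ref{item:A3}. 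Under the Kesten--Stigum condition $Z_n/m^n \to W > 0$ almost surely on survival, so $\log Z_n \le 2 n \log m$ for all large $n$ almost surely.

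I would verify \ref{item:G1} by invoking Proposition \ref{subgmaxgrowth} with $d = 1$ and $\sigma^2 = c = 1$. Since every root-to-leaf path has length $n$ we have $\zeta_{n,2}^2 = n$ and $\zeta_{n,\infty} = 1$. Combined with $\log|\mathcal{S}_n| = \log Z_n = O(n)$, this yields
\[ M_n \;\le\; \sqrt{2n \cdot 2n \log m} + 2n \log m \;=\; O(n) \]
almost surely on non-extinction. Therefore $\tau_n^2/M_n$ is bounded below by a positive constant, and every $\theta_n = O(1)$ satisfies the growth condition $\theta_n = O(\tau_n^2/M_n)$.

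The principal task lies in verifying condition \ref{item:G2}, which is condition \ref{item:F2} of Theorem \ref{genthm:g2}. Here the random pseudo-metric simplifies, because $d=p=1$, to
\[ d_{g,n}(v_1, v_2)^2 \;=\; \sum_{e \in I(\mathrm{root} \leftrightarrow v_1)\,\Delta\, I(\mathrm{root} \leftrightarrow v_2)} g'(Y_{n,e})^2, \]
which is a sum of $n \, d_n(v_1, v_2)$ distinct (hence i.i.d.) copies of $Z := g'(Y)^2$, where $Y \sim \mathcal{Q}_\rho$. Strict monotonicity of $g$ forces $Z > 0$ almost surely, so the Cram\'er rate function of $Z$ satisfies $\Lambda^*(0) = +\infty$. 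Given $\varepsilon > 0$ I would pick $\kappa_\varepsilon > 0$ small enough that $\varepsilon\, \Lambda^*(\kappa_\varepsilon^2/\varepsilon) > 3 \log m$. For each pair $v_1 \ne v_2 \in D_n$ with $d_n(v_1, v_2) > \varepsilon$ the sum defining $d_{g,n}^2$ has at least $\varepsilon n$ i.i.d.\ summands, so Cram\'er's theorem yields
\[ \mathbb{P}\bigl( d_{g,n}(v_1, v_2)^2 < n \kappa_\varepsilon^2 \bigr) \;\le\; e^{-\varepsilon n\, \Lambda^*(\kappa_\varepsilon^2/\varepsilon)} \;\le\; m^{-3n}. \]
A union bound over the at most $Z_n^2 = O(m^{2n})$ such pairs shows that, with probability $1 - O(m^{-n})$, \emph{every} pair with $d_n > \varepsilon$ also satisfies $d_{g,n} > \sqrt{n}\,\kappa_\varepsilon$. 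On this high-probability event any $\varepsilon$-packing in $d_n$ is automatically a $\sqrt{n}\,\kappa_\varepsilon$-packing in $d_{g,n}$, so \ref{item:F2} holds with $K_\varepsilon = 1$, and Theorem \ref{genthm:g3} delivers the desired tightness.

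The delicate point, and the main obstacle, is the small-$\varepsilon$ regime of \ref{item:F2}: a naive Bernstein bound at a threshold comparable to $\mathbb{E} Z$ gives an exponent of the form $\varepsilon\, \Lambda^*(\text{const})$ that cannot beat the $m^{2n}$ coming from the union bound once $\varepsilon$ is small. The strict positivity of $g'(Y)$ is essential precisely because it drives $\Lambda^*(\kappa^2/\varepsilon) \to \infty$ as $\kappa \downarrow 0$, allowing the exponent to be pushed above $2 \log m / \varepsilon$ no matter how small $\varepsilon$ is.
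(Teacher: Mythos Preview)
Your argument is correct and matches the paper exactly through the verification of \ref{item:G1}. The difference lies in how condition \ref{item:G2} (i.e., \ref{item:F2}) is established. The paper does not argue pairwise via Chernoff and a union bound; instead it proves an auxiliary uniform statement about the BRW itself (Lemma~\ref{lem:brw}): for any $\varepsilon,\delta>0$ one can find a compact interval $[b_1,b_2]$ such that, almost surely on survival, the fraction of edges in the last $\varepsilon n$ steps of \emph{every} root-to-leaf path whose displacement lies in $[b_1,b_2]$ is eventually at least $1-\delta$. This is obtained by applying Theorem~\ref{brw:thm} to the auxiliary BRW with displacements $-|X_e|$, and is precisely where the exponential-moment hypothesis on $\mathcal{P}$ enters. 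Pulling $[b_1,b_2]$ back through $g^{-1}$ gives a compact interval on which $g^{(1)}$ is bounded below by some $\eta_\varepsilon>0$, and since any two leaves with $d_n>\varepsilon$ have their last $\varepsilon n/2$ steps lying in the symmetric difference, one gets $d_{g,n}\ge c_\varepsilon\sqrt{n}$ uniformly.

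Your route is more elementary and self-contained: it bypasses Lemma~\ref{lem:brw} entirely and therefore does not use the exponential-moment assumption at this step (that hypothesis becomes superfluous for Proposition~\ref{brwtight} under your argument). The trade-off is that the paper's approach yields an almost-sure, uniform-over-all-leaves lower bound on $d_{g,n}$, whereas yours gives only ``with high probability'' control, which is all that \ref{item:F2} requires. One small caveat that applies to both arguments: strict monotonicity of a $C^2$ function does not by itself guarantee $g^{(1)}\neq 0$ almost everywhere (think of a fat Cantor set), so the step ``strict monotonicity forces $Z>0$ almost surely'' tacitly uses a little more; the paper's proof makes the same implicit leap when it asserts $g^{(1)}>0$ everywhere.
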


\begin{proof}
We emphasize that every statement made in this proof is conditioned on the tree $\mathbb{T}$, which does not go extinct. The  pseudo-metric $d_{2,n}$ in this situation is given as follows. For all $v_1,v_2 \in D_n$,
\begin{align*}
d_{2,n}(v_1,v_2) = \sqrt{\sum_{e \in E_n} ( H_{n,e}(v_1)-H_{n,e}(v_2) )^2} &= \sqrt{\operatorname{card}(I(\mathrm{root} \leftrightarrow v_1) \Delta I(\mathrm{root} \leftrightarrow v_1))} \\
&= \sqrt{n d_n(v_1,v_2)}.
\end{align*}
It is clear that $d_{2,n}$ is a proper metric on $D_n$. Existence of the density $f$ and integrability guarantees that \Cref{unique} and \Cref{ass:meas} are satisfied; see \Cref{uniqueremark} and \Cref{meanupper} for justification. 

In this problem, the assumptions made in \Cref{subgmaxgrowth} are satisfied since,
$$ \zeta_{n,2}^2 = \sup_{v \in D_n} \sum_{e \in E_n} \mathbbm{1} (e \in I(\mathrm{root} \leftrightarrow v) ) = n, \; \zeta_{n, \infty} =\sup_{v \in D_n} \sup_{e \in E_n} \mathbbm{1} (e \in I(\mathrm{root} \leftrightarrow v) )=1. $$
Therefore, if the displacement variables are mean-zero sub-Gamma with variance proxy and scale parameter both equal to $1$, 
we can apply \Cref{subgmaxgrowth} to conclude that  \Cref{item:G1} is satisfied with the choice $M_n = \sqrt{2n\log Z_n} +  \log Z_n.$ The Kesten--Stigum condition guarantees that $ (\log Z_n)/n \to m$ almost surely on non-extinction and hence $M_n \sim n$ as $n \to \infty$. 
It is now enough to prove that \Cref{item:G2} is satisfied with the choice $\tau_n = \bigO(\sqrt{n})$ to apply \Cref{genthm:g3} and conclude the proof. 

Without loss of generality we assume that $g$ is strictly increasing. According to the statement of \Cref{genthm:g3}, the displacement variables in this case are taken to be $\{X_e = g(Y_e) : e \in E\}$ where $\{Y_e : e \in E\}$ is an i.i.d.~collection from $\mathcal{Q}_{\rho}$, conditioned on the tree $\mathbb{T}$.  Since $\mathcal{Q}_{\rho}$ has positive density on the entire real line, monotonicity of $g$ guarantees that $\mathcal{P}$ has positive density on $(a_1,a_2) = g(\mathbb{R})$.  Fix $\varepsilon >0$ and apply \Cref{lem:brw}  to get hold of $b_{1,\varepsilon}, b_{2,\varepsilon} \in (a_1,a_2)$ satisfying 
$$ \liminf_{n \to \infty}  \dfrac{1}{(\varepsilon/2) n}   \inf_{v \in D_n} \sum_{e \in I(A_{\lfloor (1-\varepsilon/2)n \rfloor}(v) \leftrightarrow v)} \mathbbm{1}( b_{1,\varepsilon} \leq X_e \leq  b_{2,\varepsilon})  \geq \dfrac{1}{2}, \; \text{ almost surely on  } (\mathbb{T} \text{ survives}).$$
Let $a_{i,\varepsilon} := g^{-1}(b_{i,\varepsilon})$ for $i=1,2$. Since $g^{(1)}$ is strictly positive and continuous on $\mathbb{R}$, we have that $\eta_{\varepsilon}:=\inf_{a_{1,\varepsilon} \leq y \leq  a_{2,\varepsilon}} g^{(1)}(y) >0$. Now take any $v_1,v_2 \in D_n$ satisfying $d_{2,n}(v_1,v_2) >  \sqrt{\varepsilon n}$, i.e.,  $d_n(v_1,v_2) > \varepsilon$. Since the geodesic path from $v_1$ to $v_2$ needs at least $n\varepsilon$ many edges, the latest common ancestor of this two particles is in $l$-th generation for some $l \leq (1-\varepsilon/2)n$. Therefore
\begin{align*}
d_{g,n}(v_1,v_2)^2 & = \sum_{e \in E_n} ( H_{n,e}(v_1)-H_{n,e}(v_2))^2(g^{(1)}(Y_e))^2 \\
&\geq  \sum_{e \in I(A_{\lfloor (1-\varepsilon/2)n \rfloor}(v_1) \leftrightarrow v_1)} (g^{(1)}(Y_e))^2 \\
& \geq \sum_{e \in I(A_{\lfloor (1-\varepsilon/2)n \rfloor}(v_1) \leftrightarrow v_1)} \eta_{\varepsilon}^2 \mathbbm{1}(a_{1,\varepsilon} \leq Y_e \leq  a_{2,\varepsilon}).
\end{align*}
In other words,
\begin{align*}
\inf_{\substack{v_1,v_2 \in \mathcal{N}_{n, \theta_n} \\d_{n}(v_1,v_2) > \varepsilon }} \frac{d_{g,n}(v_1,v_2)}{\sqrt{n}} &\geq \inf_{\substack{v_1,v_2 \in D_n \\d_{n}(v_1,v_2) > \varepsilon }} \frac{d_{g,n}(v_1,v_2)}{\sqrt{n}} \\
& \geq \eta_{\varepsilon}\sqrt{ \dfrac{1}{ n}   \inf_{v \in D_n} \sum_{e \in I(A_{\lfloor (1-\varepsilon/2)n \rfloor}(v) \leftrightarrow v)} \mathbbm{1}( a_{1,\varepsilon} \leq Y_e \leq  a_{2,\varepsilon})} \\
 & = \eta_{\varepsilon}\sqrt{ \dfrac{1}{ n}   \inf_{v \in D_n} \sum_{e \in I(A_{\lfloor (1-\varepsilon/2)n \rfloor}(v) \leftrightarrow v)} \mathbbm{1}( b_{1,\varepsilon} \leq X_e \leq  b_{2,\varepsilon})}
\end{align*}
The limit inferior of the right hand side above is at least $\sqrt{\varepsilon} \eta_{\varepsilon}/2 >0$ almost surely on non-extinction of the tree $\mathbb{T}$ and hence the reciprocal of the left hand side is tight. As a consequence, for any $\delta >0$, there exists $\kappa^{\prime}_{\varepsilon,\delta}>0$ such that
\begin{equation}{\label{step:brwproof}}
\limsup_{n \to \infty} \mathbb{P} \biggl[\inf_{\substack{v_1,v_2 \in \mathcal{N}_{n, \theta_n} \\d_{n}(v_1,v_2) > \varepsilon }} d_{g,n}(v_1,v_2) < \kappa^{\prime}_{\varepsilon,\delta}\sqrt{n} \biggr] \leq \delta.
\end{equation}
 On the complement of the event inside the probability in \Cref{step:brwproof}, any $\varepsilon$-packing set of $\mathcal{N}_{n,\theta_n}$ with respect to metric $d_n$ is also a $\kappa^{\prime}_{\varepsilon,\delta}\varepsilon \sqrt{n}$-packing set with respect to metric $d_{g,n}$; hence $P(\mathcal{N}_{n,\theta_n},d_{g,n},\kappa^{\prime}_{\varepsilon,\delta}\varepsilon \sqrt{n}) \geq P(\mathcal{N}_{n,\theta_n},d_n,\varepsilon)$. This shows that \Cref{item:G2} is satisfied with $\tau_n=\sqrt{n}$, which completes the proof. 
\end{proof}

The choice of window-length in \Cref{brwtight} is optimal, in the sense that if we chose $\theta_n \to \infty$ as $n \to \infty$ then there will be too many near-optimal points. This conclusion can be argued using the weak convergence results for the left-most displacements for BRW. The following result was proved by \citet[Theorem 1.1]{aidekon}. 

\begin{thm}[\cite{aidekon}]{\label{brw:weak}}
Consider the set-up of Theorem~\ref{brw:thm}, and let $s^*$ be defined as in Remark~\ref{BRW:sol}. Moreover, assume that the progeny distribution has finite second moment (i.e., $\sum_{k \geq 0} k^2p_k < \infty$), and the displacement distribution $\mathcal{P}$ is continuous.  Under these conditions, there exists an almost surely finite random variable $L_{\infty}$, which is strictly positive on the set of non-extinction of the tree $\mathbb{T}$ and zero on the set of extinction, such that for all $x \in \mathbb{R}$, 
$$ \lim_{n \to \infty} \mathbb{P} \biggl( s^*M_n + n\psi^*s^* - \dfrac{3}{2}\log n \geq x\biggr) = \mathbb{E} [\exp(-L_{\infty}e^x)].$$
In other words, as $n \to \infty$, 
$$ s^*M_n + n\psi^*s^* - \dfrac{3}{2}\log n \stackrel{d}{\longrightarrow} - \mathrm{Gumbel} - \log L_{\infty}, $$
where $\mathrm{Gumbel}$ is a random variable, independent of $L_{\infty}$, having standard Gumbel distribution, i.e., having distribution function $\mathbb{P}(\mathrm{Gumbel} \leq x) = \exp(-e^{-x})$ for all $x \in \mathbb{R}$.  
\end{thm}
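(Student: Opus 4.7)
My plan is to follow the strategy pioneered by Aïdékon~\cite{aidekon}, which combines a Biggins-style change of measure with a delicate barrier analysis and the derivative martingale. The first step is to recenter the BRW at the optimal tilt $s^*$ identified in \Cref{BRW:sol}. Define, for $v\in D_n$,
\begin{equation*}
V(v) := s^* S_v + n\,\psi^* s^*,
\end{equation*}
so the target statement becomes $\min_{v\in D_n} V(v) - \tfrac{3}{2}\log n \Rightarrow -\mathrm{Gumbel} - \log L_\infty$. The choice $s^* \Psi^{(1)}(s^*) = \Psi(s^*)$ is precisely the condition under which the many-to-one lemma converts expectations of functionals of $\{V(v): v\in D_n\}$ into expectations of a \emph{centered} random walk. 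This reduction is what lets one import classical fluctuation theory for random walks.

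The second step is to introduce the additive and derivative martingales
\begin{equation*}
W_n := \sum_{v\in D_n} e^{-V(v)}, \qquad D_n := \sum_{v\in D_n} V(v)\,e^{-V(v)}.
\end{equation*}
Under the Kesten--Stigum hypothesis and the $L \log L$-type integrability in the statement, $W_n\to 0$ almost surely (the tilt is critical), while $D_n$ converges almost surely to a nonnegative limit $L_\infty$ that is strictly positive on the survival event. This identifies the random factor $L_\infty$ appearing in the statement. A spinal (size-biased) decomposition along the lines of Lyons--Pemantle--Peres is the cleanest way to establish convergence and nonnegativity of $D_n$.

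The third step, and the heart of the matter, is a refined barrier estimate: one shows that with high probability every near-minimal particle has an ancestral path lying above a moving curve of height $\tfrac{3}{2}\log n + O(1)$. Using truncated first- and second-moment computations against this barrier, together with ballot-type random-walk estimates (Mogul'skii / Tanaka), one derives a conditional tail asymptotic of the form
\begin{equation*}
\mathbb{P}\!\left(\min_{v\in D_n} V(v) - \tfrac{3}{2}\log n \geq x \;\Big|\; \mathcal{F}_k\right) \;\longrightarrow\; \mathbb{E}\bigl[\exp(-C_* L_\infty\, e^x)\bigm|\mathcal{F}_k\bigr]
\end{equation*}
as $n\to\infty$ followed by $k\to\infty$, where $C_*>0$ is a universal constant that can be absorbed into the definition of $L_\infty$. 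Taking unconditional expectations yields the Gumbel-type limit in the statement.

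The main obstacle is obtaining the precise $\tfrac{3}{2}\log n$ correction: barrier estimates that are too loose lose the correct logarithmic order entirely, while estimates that are too tight are not integrable enough for the second-moment argument to close. A practical way to manage this is to first establish \emph{tightness} of $\min_{v\in D_n} V(v) - \tfrac{3}{2}\log n$ (following Hu--Shi and Addario-Berry--Reed), and only then upgrade tightness to distributional convergence via a change of measure biased by $D_n$ itself, which makes the limit law emerge naturally from the law-of-large-numbers behavior of the tilted spine. The continuity assumption on $\mathcal{P}$ and the finite second moment for the progeny enter precisely to rule out lattice pathologies in the spine walk and to guarantee uniform integrability of $D_n$.
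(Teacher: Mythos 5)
There is nothing in the paper to compare your argument against: Theorem~\ref{brw:weak} is not proved in this paper at all, it is quoted from A\"{i}d\'{e}kon \cite{aidekon} (his Theorem~1.1, with the universal constant $C^*$ absorbed into the definition of $L_\infty$), and the surrounding remark explicitly defers all details to \cite{aidekon} and \cite{shi2015}. Judged against that source, your outline is an accurate summary of the actual route: critical tilting at $s^*$ (which is exactly the ``boundary case'' $\mathbb{E}\sum e^{-V(v)}=1$, $\mathbb{E}\sum V(v)e^{-V(v)}=0$ after the change of variables), many-to-one reduction to a centered walk, additive and derivative martingales with the derivative-martingale limit playing the role of $L_\infty$, a barrier at height $\tfrac32\log n$ with truncated first- and second-moment estimates, and tightness (Hu--Shi, Addario-Berry--Reed) upgraded to convergence in law.

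What you have written, however, is a roadmap rather than a proof: every step that carries the difficulty is named but not executed. Concretely, (i) the a.s.\ convergence of the derivative martingale to a limit that is finite, nonnegative, and strictly positive exactly on survival is itself a theorem (Biggins--Kyprianou/A\"{i}d\'{e}kon) whose hypotheses you must verify here --- in particular you never check that the assumptions of Theorem~\ref{brw:thm} plus finite second moment of the progeny law and continuity of $\mathcal{P}$ imply A\"{i}d\'{e}kon's integrability conditions on $X=\sum e^{-V(v)}$ and $\tilde X=\sum V(v)_+e^{-V(v)}$; (ii) the ballot-type barrier estimates producing the precise $\tfrac32\log n$ correction and the conditional tail asymptotic $\mathbb{P}(\min V-\tfrac32\log n\ge x\mid\mathcal{F}_k)\to\mathbb{E}[\exp(-C_*L_\infty e^x)\mid\mathcal{F}_k]$ are asserted, not derived, and you would also need $C_*>0$ for the absorption of the constant into $L_\infty$ to preserve strict positivity on survival; (iii) the spine change of measure biased by the derivative martingale (A\"{i}d\'{e}kon's key device for identifying the limit) is mentioned but never set up. Two small slips: your recentering should read $V(v)=s^*S_v+|v|\psi^*s^*$ so that the martingales are defined at every generation, and your symbol $D_n$ for the derivative martingale collides with the paper's notation $D_n$ for the $n$-th generation vertex set. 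As a citation-level sketch of \cite{aidekon} your plan is sound; as a self-contained proof it has all of its substance still outstanding.
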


\begin{remark}
The random variable $L_{\infty}$ is connected to the limit of derivative martingales associated with a properly centered and normalized version of the BRW; see \cite{aidekon} and \cite{shi2015} for more details on derivative martingales which is a key instrument in the analysis of BRW. Also the conditions on the progeny and displacement distributions as stated can be weakened considerably, see \cite{aidekon} for the weaker conditions. We shall not use the full strength of \Cref{brw:weak}, rather the conclusion that conditioned on the survival of the tree, the distribution of the random variable $s^*M_n +n \psi^*s^* - \frac{3}{2}\log n$ converges weakly to a continuous distribution. This corollary will useful in proving the next result, which is about the optimality of the window-length in \Cref{brwtight}.  
\end{remark}

\begin{prop}
Consider the set-up of Theorem~\ref{brw:weak} and take any $\theta_n \to \infty$ as $n \to \infty$. Then for any $\varepsilon \in (0,2)$, the sequence of random variables $\{P(\mathcal{N}_{n,\theta_n}, d_n, \varepsilon) : n \geq 1\}$ is not  tight almost surely on the non-extinction of the tree $\mathbb{T}$. 
\end{prop}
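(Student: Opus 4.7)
The plan is to condition on the tree $\mathbb{T}$ and to show that, for a fixed realization on non-extinction, the leftmost particles of many distinct subtrees of an early generation produce a huge $\varepsilon$-packing of $\mathcal{N}_{n,\theta_n}$.

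Fix any non-extinct realization of $\mathbb{T}$ and an integer $k \ge 1$. Decompose the tree at generation $k$ into the subtrees $\mathbb{T}^{(j)}$ rooted at the vertices $v_j \in D_k$. Let $\mathcal{S} \subseteq D_k$ be the collection of those vertices whose subtree survives forever; since the Kesten--Stigum condition still holds for the thinned process of surviving lines of descent, $Z_k^{\mathrm{surv}} := |\mathcal{S}|$ tends to infinity as $k \to \infty$. For each $v_j \in \mathcal{S}$ and each $n \ge k$, let $\widehat v_j^{\, n} \in D_n$ be the (almost surely unique, since the displacement distribution is continuous) descendant of $v_j$ at generation $n$ with minimum position $M_n^{(j)} := S_{\widehat v_j^{\, n}}$. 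Then $M_n = \min_{j \in \mathcal{S}} M_n^{(j)}$ for $n$ larger than the (deterministic) extinction times of the finite subtrees. For $j_1 \ne j_2$ both in $\mathcal{S}$, the vertices $\widehat v_{j_1}^{\, n}$ and $\widehat v_{j_2}^{\, n}$ share at most $k-1$ ancestors, so
\[
d_n(\widehat v_{j_1}^{\, n}, \widehat v_{j_2}^{\, n}) \ge \frac{2(n-k+1)}{n},
\]
which exceeds any given $\varepsilon < 2$ once $n$ is large enough. Hence $\{\widehat v_j^{\, n} : j \in \mathcal{S},\, M_n^{(j)} \le M_n + \theta_n\}$ is an $\varepsilon$-packing of $\mathcal{N}_{n,\theta_n}$, giving
\[
P(\mathcal{N}_{n,\theta_n}, d_n, \varepsilon) \;\ge\; N_n \;:=\; \#\{j \in \mathcal{S} : M_n^{(j)} \le M_n + \theta_n\}.
\]

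Next I would show that $N_n \to Z_k^{\mathrm{surv}}$ in probability (for each fixed $k$) as $n \to \infty$. Applying Theorem~\ref{brw:weak} to each subtree $\mathbb{T}^{(j)}$ (which is itself a BRW satisfying the same hypotheses, conditioned on its survival), and using that the subtree internals $\{(X_e)_{e \in \mathbb{T}^{(j)}}\}_{j \in \mathcal{S}}$ are mutually independent and independent of the first $k$ generations, I obtain the joint convergence in distribution
\[
\Bigl( s^* M_n^{(j)} + (n-k)\psi^* s^* - \tfrac{3}{2}\log(n-k) \Bigr)_{j \in \mathcal{S}} \;\stackrel{d}{\longrightarrow}\; \bigl( s^* S_{v_j} - G_j - \log L_\infty^{(j)} \bigr)_{j \in \mathcal{S}} \;=:\; (Y_j)_{j \in \mathcal{S}},
\]
where $(G_j, L_\infty^{(j)})_{j \in \mathcal{S}}$ are independent across $j$, independent of the first $k$ generations, with $G_j$ standard Gumbel and $L_\infty^{(j)} > 0$ a.s.~on subtree survival. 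Subtracting the (deterministic) normalization and using the continuous mapping theorem yields
\[
\bigl( s^*(M_n^{(j)} - M_n) \bigr)_{j \in \mathcal{S}} \;\stackrel{d}{\longrightarrow}\; \bigl( Y_j - \min_{j' \in \mathcal{S}} Y_{j'} \bigr)_{j \in \mathcal{S}}.
\]
Since each $Y_j$ and $\min_{j'} Y_{j'}$ is almost surely finite, for every $j \in \mathcal{S}$ one has $\mathbb{P}(Y_j - \min_{j'} Y_{j'} \le M) \to 1$ as $M \to \infty$. Given $\eta > 0$, choose $M$ so large that $\sum_{j \in \mathcal{S}}\mathbb{P}(Y_j - \min_{j'} Y_{j'} > M) < \eta$. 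Since $\theta_n \to \infty$, eventually $s^*\theta_n > M$, and then
\[
\mathbb{E}\bigl[ Z_k^{\mathrm{surv}} - N_n \bigr] \;\le\; \sum_{j \in \mathcal{S}} \mathbb{P}\bigl( s^*(M_n^{(j)} - M_n) > M \bigr) \;\xrightarrow[n\to\infty]{}\; \sum_{j \in \mathcal{S}}\mathbb{P}(Y_j - \min_{j'} Y_{j'} > M) \;<\; \eta.
\]
Therefore $\mathbb{P}(N_n \le Z_k^{\mathrm{surv}} - 1) \le \mathbb{E}[Z_k^{\mathrm{surv}} - N_n] < \eta$ for all sufficiently large $n$.

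To conclude, fix any $L \in \mathbb{N}$ and $\eta > 0$. Choose $k = k(L)$ so that $Z_k^{\mathrm{surv}} \ge L+1$ (possible on non-extinction); by the previous paragraph, for all $n$ large enough depending on $k$ and $\eta$,
\[
\mathbb{P}\bigl( P(\mathcal{N}_{n,\theta_n}, d_n, \varepsilon) \ge L \bigr) \;\ge\; \mathbb{P}(N_n \ge L) \;\ge\; 1 - \eta.
\]
This precludes tightness. The only genuine difficulty in this plan is justifying the joint-in-$j$ weak convergence above from the single-subtree statement of Theorem~\ref{brw:weak}; the step is routine once one conditions on the $\sigma$-algebra generated by the first $k$ generations (rendering the $S_{v_j}$'s frozen) and uses independence of the subtree internals together with the fact that the limiting marginals and the normalization depend only on the common law of the process.
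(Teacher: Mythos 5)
Your proposal is correct and follows essentially the same route as the paper's own proof: decompose the tree at generation $k$, apply Theorem~\ref{brw:weak} to the subtrees (jointly in $j$, using independence given the first $k$ generations), use $\theta_n \to \infty$ to conclude that every surviving subtree contributes a near-minimizer of $\mathcal{N}_{n,\theta_n}$, pack these at pairwise distance at least $2(n-k)/n > \varepsilon$, and let $k \to \infty$ so that the number of surviving subtrees diverges. The only caveat is one of framing: fixing a full non-extinct realization of $\mathbb{T}$ at the outset is incompatible with invoking the annealed Theorem~\ref{brw:weak}; as you yourself indicate at the end, one should condition only on the first $k$ generations (together with the subtree survival indicators), and then the identity $M_n=\min_{j\in\mathcal{S}}M_n^{(j)}$ should be handled through an event of probability tending to one (the paper does this via $\mathbb{P}(v_n^*\in D_k')\to 1$) rather than through realization-dependent extinction times of the non-surviving subtrees.
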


\begin{proof}
Fix any $j,k \in \mathbb{N}$. Consider the $k$-th generation of the tree $\mathbb{T}$, and for any $v \in D_k$, let $\mathbb{T}_v$ be the sub-tree rooted at $v$. Let $M_{n,v}$ be the left-most displacement (from its root at vertex $v$) in the $n$-th generation of the sub-tree $\mathbb{T}_v$, and $u_{n,v}$ be the vertex where it is achieved. As usual, the minimum is achieved at a unique vertex almost surely due to continuity of the displacement distribution. Note that 
$$M_n = \min_{v \in D_k} (S_{u_v} + M_{n-k,v} ),$$ 
and let $v_n^* \in D_k$ denotes the vertex where the above minimum has been achieved. \Cref{brw:weak} guarantees the existence of an independent collection $\{(\mathrm{Gumbel}_v, L_{\infty,v}) : v \in D_k\}$ such that as $n \to \infty$, for all $v \in D_k$, 
\begin{equation}{\label{brw:weak2}}
 s^*M_{n,v} + n\psi^*s^* - \dfrac{3}{2}\log n \stackrel{d}{\longrightarrow} -\mathrm{Gumbel}_v - \log L_{\infty,v}, 
\end{equation}
where $\mathrm{Gumbel}_v$ and $L_{\infty,v}$ are independent for all $v \in D_k$. Let $D_k^{\prime} \subseteq D_k$ consists of those those vertices $v \in D_k$ for which $\mathbb{T}_v$ does not get extinct and hence $L_{\infty,v}>0$ almost surely. It is clear that $\mathbb{P}(v_n^* \in D_k^{\prime} \mid \mathbb{T} \text{ survives}) \to 1$ as $n \to \infty$, since $M_n \stackrel{a.s.}{\to} - \infty$ on the non-extinction of the tree $\mathbb{T}$. 

Now fix any $\varepsilon \in (0,2)$ and get $n_0$ such that $k < (1-\varepsilon/2)n_0$. For any $n \geq n_0$ and $v \neq v^{\prime} \in D_k$, we have $d_n(u_{v},u_{v^{\prime}}) = 2(n-k)/n > \varepsilon$, and hence,
$$ \{ P(\mathcal{N}_{n,\theta_n},d_n,\varepsilon) \geq \operatorname{card}(D_k^{\prime})\} \supseteq \{ v_n^* \in D_{k}^{\prime}, \; \max_{v \in D_k^{\prime}} (S_{u_v}+M_{n-k,v} ) \leq M_n + \theta_n \}.$$
From the weak convergence in \Cref{brw:weak2}, we have 
\begin{align}
&\max_{v \in D_k^{\prime}}(S_{u_v}+M_{n-k,v} ) - M_n = \max_{v \in D_k^{\prime}} (S_{u_v}+M_{n-k,v})  - \min_{v \in D_k^{\prime}} (S_{u_v}+M_{n-k,v}) \nonumber  \\
& \stackrel{d}{\longrightarrow} \max_{v \neq v^{\prime} \in D_k^{\prime}} \biggl( S_{u_v}-S_{u_{v^{\prime}}}-\mathrm{Gumbel}_v+\mathrm{Gumbel}_{v^{\prime}}-\log \dfrac{L_{\infty,v}}{L_{\infty,v^{\prime}}}\biggr). \label{brw:weak3}
\end{align} 
Since the random variable in \Cref{brw:weak3} is finite almost surely and $\theta_n \to \infty$, we can conclude that as $n \to \infty$,
$$ \mathbb{P} [  P(\mathcal{N}_{n,\theta_n},d_n,\varepsilon) \geq \operatorname{card}(D_k^{\prime})|\mathbb{T} \text{ survives}] \to 1.$$
The above holds true for any $k \geq 1$, and thus it is enough to show that conditionally on the survival of the tree $\mathbb{T}$, we have $\operatorname{card}(D_k^{\prime}) \stackrel{p}{\to} \infty$ as $k \to \infty$. But this is trivially true, since $ \operatorname{card}(D_k^{\prime}) \sim \mathrm{Binomial}( Z_k, p_{\mathrm{surv}})$, where $p_{\mathrm{surv}}:= \P(\mathbb{T} \text{ survives}) >0$ and $Z_k \stackrel{a.s.}{\to} \infty$ on the event $\{\mathbb{T} \text{ survives}\}$ by super-criticality of the underlying Galton--Watson tree. This completes the proof.  
\end{proof}

The intuitive choice for perturbation blocks in BRW amounts to change displacement associated with one edge at a time. Since $|H_{n,e}(v)| \leq 1$ for all $n,e,v$, we have 
\[
\varsigma_n := \sup_{e \in E_n} \sup_{v \in D_n} |H_{n,e}(v)| =1,
\]
and hence we can directly apply \Cref{mostprop} to arrive at the following stability result for BRW.

\begin{thm}{\label{brwstability}}
Consider super-critical BRW  where the progeny distribution satisfies the Kesten--Stigum condition, and the displacement variables have common distribution $\mathcal{P}$ with  finite mean and density $f$  satisfying Assumption~\ref{mu} for the pair $(\rho,g)$, such that $g$ is strictly monotonic on $\mathbb{R}$. Moreover assume that $\mathcal{P}$ satisfies the hypothesis of Theorem~\ref{brw:thm}. Then the optimization problem sequence $\{\mathscr{P}_n : n \geq 1 \}$, where $\mathscr{P}_n$ deals with the problem of  finding the left-most particle in $n$-th generation, is  stable under small perturbations with perturbation blocks $\mathcal{J}_n =\{J_{n,e} : e \in E_n\}$ where $J_{n,e} = \{e\}$ for all $e \in E_n$.
\end{thm}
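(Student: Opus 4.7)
The plan is to deduce Theorem \ref{brwstability} as a direct application of Proposition \ref{mostprop}, after conditioning on the survival of the underlying Galton--Watson tree $\mathbb{T}$. Conditional on extinction, the parameter spaces $\mathcal{S}_n = D_n$ are empty (or finite and eventually empty) for large $n$, so the stability assertion is vacuous there; hence it suffices to work on the event $\{\mathbb{T} \text{ survives}\}$. On that event I will verify each of the hypotheses of Proposition \ref{mostprop} for the choice $I_n = E_n$, $\mathcal{L}_n = E_n$ and $J_{n,e} = \{e\}$, so that trivially $\sup_{n,e} \operatorname{card}(J_{n,e}) = 1 < \infty$.

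First, I note that Assumption~\ref{ass:linear} is satisfied with $H_{n,e}(v) = \mathbbm{1}(e \in I(\mathrm{root} \leftrightarrow v))$, which immediately gives
\[
\varsigma_n := \sup_{e \in E_n}\sup_{v \in D_n} |H_{n,e}(v)| \leq 1.
\]
The uniqueness assumption~\ref{unique} holds because $\mathcal{P}$ has a density, so the probability that two distinct particles in $D_n$ share the same total displacement is zero (use the argument of Remark~\ref{uniqueremark}, together with the observation that $d_{2,n}$ is a proper metric on $D_n$). The integrability assumption in~\ref{ass:meas} follows from the finite first moment of $\mathcal{P}$ combined with the uniform bound on $H_{n,e}$, as in Remark~\ref{meanupper}, noting that $D_n$ is finite conditional on $\mathbb{T}$. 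The finite-mean hypothesis in Proposition~\ref{mostprop} also gives $\mathbb{E}_{\mathcal{P}}\|X\|_2 < \infty$.

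Second, I must check that $\{P(\mathcal{N}_{n, C\varsigma_n}, d_n, \varepsilon) : n \geq 1\}$ is a tight sequence of random variables for every $\varepsilon, C > 0$. Since $\varsigma_n = 1$ and $C$ is just a positive constant, this is exactly the conclusion of Proposition~\ref{brwtight} applied with window length $\theta_n = C$, which is $\mathcal{O}(1)$. The hypotheses of Proposition~\ref{brwtight} are precisely those we have assumed: the Kesten--Stigum condition on the progeny law, Assumption~\ref{mu} for $(\rho,g)$ with $g$ strictly monotonic, and the exponential moment hypothesis of Theorem~\ref{brw:thm}. Tightness therefore holds almost surely on the event of non-extinction.

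With both hypotheses of Proposition~\ref{mostprop} verified, stability follows for the perturbation blocks $\mathcal{J}_n = \{\{e\} : e \in E_n\}$. I do not anticipate a significant obstacle here, since the heavy lifting (Sudakov minoration together with the Langevin-based argument controlling the geometry of $\mathcal{N}_{n,\theta_n}$) has already been carried out in Proposition~\ref{brwtight}; the role of the present proof is simply to observe that the BRW setup with single-edge perturbations fits the template of Proposition~\ref{mostprop}, and to remark that on extinction the statement is trivial so nothing is lost by conditioning on survival.
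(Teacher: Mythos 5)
Your proposal is correct and follows essentially the same route as the paper: the paper itself observes that $|H_{n,e}(v)|\leq 1$ gives $\varsigma_n=1$ and then directly invokes Proposition~\ref{mostprop}, with Proposition~\ref{brwtight} supplying the required tightness. You merely spell out the verification of Assumptions~\ref{unique} and~\ref{ass:meas} (via Remarks~\ref{uniqueremark} and~\ref{meanupper}) and make explicit the conditioning on survival of $\mathbb{T}$, both of which the paper leaves implicit.
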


In light of \Cref{exmu}, standard distributions like Uniform, Beta, Gamma, Gaussian, Gumbel are all valid choices for $\mathcal{P}$ in \Cref{brwstability}.

\begin{remark}{\label{brw"litstab}}
The vast literature on BRW focuses mostly on the asymptotic evaluation of its minimum displacements or the point process generated by properly centered displacements in a generation. As per our current knowledge, the result in \Cref{brwstability} is the only result regarding stability of optimal vertices in BRW upon perturbations of edge displacement variables.
\end{remark}

\subsection{The Sherrington--Kirkpatrick model} 
\label{skmodel}
In 1975, Sherrington and Kirkpatrick~\cite{skmodel} introduced a mean field model for a spin glass --- a disordered magnetic alloy that exhibits unusual magnetic behavior. Consider a system of $n$ spins $\boldsymbol{\sigma}=(\sigma_1, \ldots,\sigma_n)$ with $\sigma_i \in \{+1,-1\}$. The Sherrington--Kirkpatrick (SK) model is defined by the Hamiltonian
\begin{equation}{\label{skhamil}}
\mathcal{H}_n(\mathbf{X}^n;\boldsymbol{\sigma}) := - \sum_{i,j \in [n] : i<j} X_{n,(i,j)}\sigma_i\sigma_j, \; \forall \; \boldsymbol{\sigma} \in \{+1,-1\}^n,  
\end{equation}
where $\{X_{n,(i,j)} : 1 \leq i<j \leq n\}$ is collection of mean-zero i.i.d.~random variables, collectively called the \textit{disorder of the model}. The most studied and well-understood is the case when the disorder variables of the model are Gaussian. The Hamiltonian $\mathcal{H}_n$ is generally taken to to be normalized by $\sqrt{n}$ in the literature, but this will not be an issue in our analysis since we are only concerned about the optimal configuration. The configuration with lowest energy is known as the ground state of the system. In other words, the optimization problem of interest, $\mathscr{P}_n$, in this case is 
\begin{equation}{\label{skprob}}
\text{minimize } \;\;\mathcal{H}_n(\mathbf{X}^n;\boldsymbol{\sigma})=- \sum_{i<j} X_{n,(i,j)}\sigma_i\sigma_j, \; \text{over } \boldsymbol{\sigma} \in \{+1,-1\}^n.
\end{equation}
A  detailed account of the SK model can be found in \cite{talagrand} and \cite{panchenko}.  
To express the  optimization problem in \Cref{skprob} in the formulation of \Cref{stochopt}, let us start by observing that the Hamiltonian $\mathcal{H}_n$ is invariant under sign-flips of all the spins, i.e., $\mathcal{H}_n(\mathbf{X}^n;\boldsymbol{\sigma}) = \mathcal{H}_n(\mathbf{X}^n;-\boldsymbol{\sigma})$ for all $\boldsymbol{\sigma} \in \{+1,-1\}^n$. To get a unique solution corresponding to the lowest energy, we therefore consider the corresponding quotient space as the parameter space,
$$ \mathcal{S}_n := \{\boldsymbol{\sigma}=(\sigma_1, \ldots, \sigma_n) : \sigma_1=+1, \sigma_i \in \{+1,-1\}, \; \forall \; 2 \leq i \leq n \}.$$
We metrize this space with normalized Hamming distance, defined as 
$$ d_n (\boldsymbol{\sigma}, \boldsymbol{\sigma}^{\prime} ) = \dfrac{1}{n} \sum_{i=1}^n \mathbbm{1}(\sigma_i \neq \sigma_i^{\prime}), \; \forall \; \boldsymbol{\sigma} = (\sigma_1, \ldots, \sigma_n), \boldsymbol{\sigma}^{\prime}=(\sigma_1^{\prime}, \ldots, \sigma^{\prime}_n) \in \mathcal{S}_n.$$ 
The disorder variables are the random inputs of the problem; hence $I_n = \{(i,j) : 1 \leq i < j \leq n\}$ and $k_n=n(n-1)/2.$ The optimization problem in \Cref{skprob} clearly satisfies \Cref{ass:linear} with $H_{n,(i,j)}(\boldsymbol{\sigma}) = -\sigma_i\sigma_j$, for all $i,j,\boldsymbol{\sigma}$. We remind the reader that we have used the notation $\mathcal{H}_n$ to denote the Hamiltonian, instead of $H$ or $H_n$ which is generally used in the standard literature. We will now apply \Cref{genthm2} to get a tightness result for SK model optimization problem.

\begin{prop}{\label{sktight1}}
Consider SK model as defined in \Cref{skhamil} where the disorder variables have common distribution $\mathcal{P}$ with density $f$ having zero mean and  satisfying Assumption \ref{ass:p}. Then the collection of random variables  $\{P(\mathcal{N}_{n,\theta_n}, d_n, \varepsilon) : n \geq 1\}$ is  tight  for any $\theta_n=\bigO(1)$. 
\end{prop}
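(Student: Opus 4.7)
The plan is to invoke the linearized tightness result, Theorem~\ref{genthm2}, with the choices $\tau_n = n$, $\lambda = 0$, and $\varsigma_{n,0} = 1$. The discussion preceding the proposition already casts the SK ground-state search in the linear framework of Assumption~\ref{ass:linear} with $H_{n,(i,j)}(\boldsymbol{\sigma}) = -\sigma_i\sigma_j$ and $k_n = n(n-1)/2$. Assumption~\ref{ass:p} is a hypothesis, and Assumptions~\ref{unique} and~\ref{ass:meas} follow from the finiteness of $\mathcal{S}_n$, the existence of a density, and the zero mean (integrability) of the disorder, by Remarks~\ref{uniqueremark} and~\ref{meanupper}. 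Since $|H_{n,(i,j)}(\boldsymbol{\sigma})| = 1$, condition~\ref{item:D1} is immediate with $\lambda = 0$ and $\varsigma_{n,0} = 1$, and the growth constraint on $\theta_n$ in Theorem~\ref{genthm2} reduces to $\theta_n = \mathcal{O}(\tau_n^2 k_n^{-1}) = \mathcal{O}(1)$, matching the hypothesis. So the core task is to verify condition~\ref{item:D2}.

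I would start by computing the pseudo-metric explicitly: using $(\sigma_i\sigma_j - \sigma_i'\sigma_j')^2 = 4\, \mathbbm{1}(\sigma_i\sigma_j \neq \sigma_i'\sigma_j')$ and a short combinatorial check, one finds
\begin{equation*}
d_{2,n}(\boldsymbol{\sigma}, \boldsymbol{\sigma}')^2 \;=\; 4\,k\,(n-k), \qquad k := n\,d_n(\boldsymbol{\sigma}, \boldsymbol{\sigma}').
\end{equation*}
The main obstacle is that this quantity degenerates when $k$ is close to $n$, that is, when $\boldsymbol{\sigma}'$ is nearly the spin-flip of $\boldsymbol{\sigma}$: two such configurations can have $d_n$ close to $1$ but $d_{2,n}$ only of order $\sqrt{n}$, reflecting the $\boldsymbol{\sigma}\leftrightarrow -\boldsymbol{\sigma}$ symmetry of the Hamiltonian. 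A pointwise comparison between $d_n$ and $n^{-1}d_{2,n}$ is therefore impossible, and a given $d_n$-packing has to be thinned to remove ``near-antipodal'' pairs.

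The thinning is done by a graph argument. Fix $\varepsilon \in (0, 2/3)$, let $T' \subseteq \mathcal{B}$ be an $\varepsilon$-packing in $d_n$, and form the graph $G$ on $T'$ in which $\boldsymbol{\sigma} \sim \boldsymbol{\sigma}'$ precisely when $d_n(\boldsymbol{\sigma}, \boldsymbol{\sigma}') > 1 - \varepsilon/2$. I claim $G$ has maximum degree one. Indeed, if $\boldsymbol{\sigma}^{(2)}$ and $\boldsymbol{\sigma}^{(3)}$ were both adjacent to $\boldsymbol{\sigma}^{(1)}$, their agreement sets $A_2, A_3$ with $\boldsymbol{\sigma}^{(1)}$ would each have size less than $n\varepsilon/2$; since the disagreement set of $\boldsymbol{\sigma}^{(2)}$ and $\boldsymbol{\sigma}^{(3)}$ is $A_2 \triangle A_3$, one obtains $d_n(\boldsymbol{\sigma}^{(2)}, \boldsymbol{\sigma}^{(3)}) = |A_2 \triangle A_3|/n < \varepsilon$, contradicting the packing property. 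A graph of maximum degree one admits an independent set $T'' \subseteq T'$ with $|T''| \geq |T'|/2$, and on $T''$ every pair satisfies $k/n \in (\varepsilon, 1 - \varepsilon/2]$. On this range a direct computation gives $(k/n)(1 - k/n) \geq \varepsilon/4$, so $d_{2,n} \geq \sqrt{\varepsilon}\, n$ pairwise on $T''$. This yields
\begin{equation*}
P(\mathcal{B}, d_n, \varepsilon) \;\leq\; 2\, P\!\left(\mathcal{B},\, d_{2,n},\, \sqrt{\varepsilon}\, n\right),
\end{equation*}
which verifies \ref{item:D2} with $K_\varepsilon = 2$, $\kappa_\varepsilon = \sqrt{\varepsilon}$, and $\tau_n = n$. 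Theorem~\ref{genthm2} then delivers the claimed tightness for any $\theta_n = \mathcal{O}(1)$.
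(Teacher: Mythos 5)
Your proof is correct and reaches the same conclusion as the paper, but the combinatorial argument for verifying condition~\ref{item:D2} is genuinely different. The paper works with covers: it starts from an $\varepsilon$-cover in $d_{2,n}/(2n)$, doubles it by adjoining the spin-flip representatives $\widetilde{\boldsymbol{\sigma}}^i$, shows the enlarged set is an $\eta_\varepsilon$-cover in $d_n$ (with $\eta_\varepsilon(1-\eta_\varepsilon) = \varepsilon^2$), and then converts between covers and packings via \Cref{pnp}. You work directly with packings: you take a $d_n$-packing, observe that the ``near-antipodal'' pairs (those with $d_n > 1 - \varepsilon/2$) form a graph of maximum degree one --- a nice self-contained lemma, proved by the $A_2 \triangle A_3$ computation --- and discard one endpoint from each such pair to obtain a $d_{2,n}$-packing of at least half the size. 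Both approaches exploit the $\boldsymbol{\sigma} \leftrightarrow -\boldsymbol{\sigma}$ degeneracy of $d_{2,n}$ and yield the same $K_\varepsilon = 2$, $\tau_n = n$, and $\kappa_\varepsilon = \Theta(\sqrt{\varepsilon})$; your route avoids the cover--packing detour and is, if anything, slightly more direct, at the cost of the graph-degree lemma. The remaining bookkeeping (checking Assumptions~\ref{unique}, \ref{ass:meas}, and condition~\ref{item:D1}, and verifying that the growth condition on $\theta_n$ collapses to $\theta_n = \bigO(1)$) matches the paper. One small cosmetic point: to respect the strict-inequality convention in the definition of packing, note that since $k/n > \varepsilon$ is strict, the concavity of $t \mapsto t(1-t)$ gives $(k/n)(1-k/n)$ strictly greater than $\varepsilon/4$ on $(\varepsilon, 1-\varepsilon/2]$, so $d_{2,n} > \sqrt{\varepsilon}\,n$ holds strictly, as needed.
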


\begin{proof}
First, note that
\begin{equation}
d_{2,n}(\boldsymbol{\sigma},\boldsymbol{\sigma}^{\prime})^2   = \sum_{i<j} (\sigma_i\sigma_j-\sigma^{\prime}_i\sigma_j^{\prime})^2 =  4n^2 d_n(\boldsymbol{\sigma},\boldsymbol{\sigma}^{\prime})(1-d_n(\boldsymbol{\sigma},\boldsymbol{\sigma}^{\prime})), \; \forall \;\boldsymbol{\sigma},\boldsymbol{\sigma}^{\prime} \in \mathcal{S}_n. 
\end{equation}
Since $d_n(\boldsymbol{\sigma},\boldsymbol{\sigma}^{\prime}) \leq (n-1)/n$ for any 
$\boldsymbol{\sigma},\boldsymbol{\sigma}^{\prime} \in \mathcal{S}_n$, this shows  that $d_{2,n}$ is a proper 
metric on $\mathcal{S}_n$, i.e., $d_{2,n}(\boldsymbol{\sigma},\boldsymbol{\sigma}^{\prime}) =0 $ if and only if $\boldsymbol{\sigma}=\boldsymbol{\sigma}^{\prime}$. Existence of densities for the disorder variables with finite mean and finiteness of $\mathcal{S}_n$ guarantee that \Cref{unique} and \Cref{ass:meas} are satisfied; see \Cref{uniqueremark} and \Cref{meanupper}. The observation $|H_{n,(i,j)}(\boldsymbol{\sigma})| \leq 1$  
 for any $n,i,j,\boldsymbol{\sigma}$ guarantees that we can take $\lambda =0$ and $\varsigma_{n,0}=1$ to satisfy \Cref{item:D1}. 
Since $k_n=n(n-1)/2$, it is now enough to show that $\ref{item:D2}$ holds true with  $\tau_n=n$  to apply \Cref{genthm2} and complete the proof. 

Fix any $\varepsilon \in (0, 1/2)$, $\mathcal{B}\subseteq \mathcal{S}_n$ and start with an $\varepsilon$-cover of $\mathcal{B}$, say $\{\boldsymbol{\sigma}^1, \ldots,\boldsymbol{\sigma}^k\}$, with respect to the metric $d_{2,n}/(2n)$. Consider the set $\{\boldsymbol{\sigma}^i,\widetilde{\boldsymbol{\sigma}}^i : 1 \leq i \leq k\}$, where $\widetilde{\boldsymbol{\sigma}}^i$ is obtained by making the first coordinate of $-\boldsymbol{\sigma}^i$ to be $+1$. We shall show that the above set is an $\eta_{\varepsilon}$-cover of $\mathcal{B}$ with respect to metric $d_n$, where $\eta_{\varepsilon} \in (0,1/2)$ is the smallest solution of the quadratic equation $\eta(1-\eta)=\varepsilon^2$; the other solution being $1-\eta_{\varepsilon}$. Take any $\boldsymbol{\sigma} \in \mathcal{B}$ and get $i \in [k]$ such that $ d_{2,n}(\boldsymbol{\sigma},\boldsymbol{\sigma}^i)/(2n) \leq \varepsilon $ and hence $d_{n}(\boldsymbol{\sigma},\boldsymbol{\sigma}^i)(1-d_{n}(\boldsymbol{\sigma},\boldsymbol{\sigma}^i)) \leq \varepsilon^2$.  It is now easy to observe that we have either $d_{n}(\boldsymbol{\sigma},\boldsymbol{\sigma}^i) \leq \eta_{\varepsilon}$ or $d_{n}(\boldsymbol{\sigma},-\boldsymbol{\sigma}^i) \leq \eta_{\varepsilon}$. In the second case, we clearly have $d_{n}(\boldsymbol{\sigma},\widetilde{\boldsymbol{\sigma}}^i) \leq \eta_{\varepsilon}$, since both $\boldsymbol{\sigma}$ and $\widetilde{\boldsymbol{\sigma}}^i$ have their first co-ordinate to be $+1$. We, therefore, can conclude that 
$N(\mathcal{B},d_{n}, \eta_{\varepsilon}) \leq 2N(\mathcal{B},d_{2,n}, 2n\varepsilon).$ Combining this with \Cref{pnp} guarantees that $ P(\mathcal{B},d_n,2\eta_{\varepsilon}) \leq 2P(\mathcal{B},d_{2,n}, 2n\varepsilon).$
As $\varepsilon$ varies from $0$ to $1/2$, we have $\eta_{\varepsilon}$ to be strictly increasing from $0$ to $1/2$. Hence, for any $\varepsilon \in (0,1)$, we can write the following for any $\mathcal{B} \subseteq \mathcal{S}_n$.
\begin{equation}{\label{step0:sk}}
P(\mathcal{B},d_n,\varepsilon) \leq  2P(\mathcal{B},d_{2,n}, n\sqrt{\varepsilon(2-\varepsilon)})
\end{equation}
This shows that \Cref{item:D2} is satisfied for $\tau_n=n, \varepsilon_0=1, K_{\varepsilon}=2$ and $ \kappa_{\varepsilon} = \sqrt{\varepsilon(2-\varepsilon)}$. Recalling that here $k_n=n(n-1)/2$, we complete the proof. 
\end{proof}

Combining \Cref{sktight1} and \Cref{mostprop}, we can write our first stability result for SK model which states that \textit{the solution of the SK model is stable under the perturbation scheme which replaces one disorder variable at a time}.

\begin{thm}{\label{skthm1}}
Consider SK model as defined in \Cref{skhamil} where the disorder variables have common distribution $\mathcal{P}$ with density $f$ having zero mean and  satisfying Assumption~\ref{ass:p}. 
This optimization problem is stable under small perturbations with perturbation blocks $\mathcal{J}_n = \{J_{n,(i,j)} : (i,j) \in [n]^2, i< j\}$ where $J_{n,(i,j)} = \{(i,j)\}$ for all $1 \leq i<j \leq n$.
\end{thm}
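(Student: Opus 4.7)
The plan is to obtain the theorem essentially for free by combining Proposition \ref{sktight1} with the general transfer result Proposition \ref{mostprop}, so my sketch will amount to verifying each hypothesis of \Cref{mostprop} for the SK optimization problem set up just above \Cref{skthm1}.

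First I would record that the problem fits \Cref{ass:linear} with $H_{n,(i,j)}(\boldsymbol{\sigma}) = -\sigma_i \sigma_j$, which is a deterministic function on the finite parameter space $\mathcal{S}_n$. Uniqueness (\Cref{unique}) follows from \Cref{uniqueremark}: the pseudo-metric $d_{2,n}$ was shown to be a proper metric on $\mathcal{S}_n$ in the proof of \Cref{sktight1}, and $\mathcal{S}_n$ is finite, so any two distinct configurations (modulo global sign flip) yield coefficients differing with positive probability under the absolutely continuous disorder. The measurability hypotheses in \Cref{ass:meas} are automatic since $\mathcal{S}_n$ is finite, and integrability of the optimum is guaranteed by \Cref{meanupper} together with the zero-mean (hence integrable) assumption on $\mathcal{P}$.

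Next I would verify the two core hypotheses of \Cref{mostprop}. Boundedness of $H_{n,(i,j)}$ is immediate: $|H_{n,(i,j)}(\boldsymbol{\sigma})| = |\sigma_i \sigma_j| = 1$, so $\varsigma_n = 1$ and the sequence $\{C\varsigma_n : n \geq 1\}$ is just $\{C\}$, which is $\mathcal{O}(1)$. Hence \Cref{sktight1} applies with $\theta_n \equiv C$ for any $C > 0$, yielding tightness of $\{P(\mathcal{N}_{n,C\varsigma_n}, d_n, \varepsilon) : n \geq 1\}$ for every $\varepsilon, C > 0$, which is exactly the second hypothesis of \Cref{mostprop}.

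Finally I would check the perturbation-block condition: the blocks $J_{n,(i,j)} = \{(i,j)\}$ all have cardinality one, so $\sup_{n,l} |J_{n,l}| = 1 < \infty$, and $\mathbb{E}_{\mathcal{P}}|X| < \infty$ because $\mathcal{P}$ has a density with finite (zero) mean. With every hypothesis verified, \Cref{mostprop} yields stability of $\{\mathscr{P}_n\}$ under these perturbation blocks, which is the statement of \Cref{skthm1}. I do not anticipate any obstacle here --- all the real work was done in \Cref{sktight1} (relating $d_n$ and $d_{2,n}$ via the identity $d_{2,n}(\boldsymbol{\sigma},\boldsymbol{\sigma}')^2 = 4n^2 d_n(1-d_n)$ and invoking \Cref{genthm2}) and in the general machinery of \Cref{mostprop}, so the proof of \Cref{skthm1} should be a short paragraph of verifications rather than a new argument.
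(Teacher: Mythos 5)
Your proposal is correct and is essentially verbatim what the paper does: the paper states \Cref{skthm1} immediately after \Cref{sktight1} with the one-line remark that it follows by combining \Cref{sktight1} with \Cref{mostprop}, and the hypothesis checks you spell out ($\varsigma_n \equiv 1$, finiteness of $\mathcal{S}_n$, blocks of cardinality one, integrability of $\mathcal{P}$) are exactly the verifications implicit in that remark.
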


Another natural  choice of perturbation blocks corresponds to replacing all the disorder variables associated with a particular spin by i.i.d.~copies. In other words, we consider the perturbation blocks $\mathcal{J}_n = \{J_{n,i} : i \in [n]\}$ where $J_{n,i}=\{(i,j) : j >i\} \cup \{(j,i) : i >j\}$. Unfortunately, \Cref{sktight1} is not strong enough to guarantee a stability result for this choice of perturbation blocks. Luckily, \Cref{genthm:g3} comes to our rescue to yield the following result.

\begin{prop}{\label{sktight2}}
Consider SK model as defined in \Cref{skhamil} where the disorder variables have common distribution $\mathcal{P}$ with density $f$ having zero mean and  satisfying \Cref{mu} for the pair $(\rho,g)$ such that $g^{(1)}(Y) \neq 0$ almost surely when $Y \sim \mathcal{Q}_{\rho}$. Then $\{P(\mathcal{N}_{n,\theta_n}, d_n, \varepsilon) : n \geq 1\}$ is a tight sequence for any $\theta_n=\bigO(\sqrt{n})$. 
\end{prop}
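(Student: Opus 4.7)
My plan is to apply \Cref{genthm:g3} with $\tau_n=n$ and a choice of $M_n$ of order $n^{3/2}$, which immediately yields the admissible window length $\theta_n=\bigO(\tau_n^2/M_n)=\bigO(\sqrt{n})$ claimed by the proposition. \Cref{unique} and \Cref{ass:meas} are handled exactly as in the proof of \Cref{sktight1} via the density of $\mathcal{P}$ and the finiteness of $\mathcal{S}_n$, so only the hypotheses \ref{item:G1} and \ref{item:G2} of \Cref{genthm:g3} remain to be verified.

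For \ref{item:G1}, I would invoke \Cref{subgmaxgrowth} directly: with $d=1$, $H_{n,(i,j)}(\boldsymbol{\sigma})=-\sigma_i\sigma_j$, we have $\zeta_{n,2}^2=n(n-1)/2$, $\zeta_{n,\infty}=1$, and $\operatorname{card}(\mathcal{S}_n)=2^{n-1}$, so for any zero-mean sub-Gamma disorder with variance proxy and scale parameter both equal to one,
\[
\mathbb{E}\inf_{\boldsymbol{\sigma}\in\mathcal{S}_n}\mathcal{H}_n(\mathbf{X}^n;\boldsymbol{\sigma}) \geq -\sqrt{n(n-1)^2\log 2}-(n-1)\log 2 = -\bigO(n^{3/2}),
\]
so we may take $M_n=\bigO(n^{3/2})$.

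The hard part is \ref{item:G2}, which demands a high-probability upper bound on the $d_n$-packing number of $\mathcal{N}_{n,\theta_n}$ in terms of its $d_{g,n}$-packing number at scale $\tau_n\kappa_{\varepsilon,\delta}$. My strategy is to route through the deterministic metric $d_{2,n}$. The proof of \Cref{sktight1} already supplies, for every $\mathcal{B}\subseteq \mathcal{S}_n$,
\[
P(\mathcal{B},d_n,\varepsilon) \leq 2\,P(\mathcal{B},d_{2,n},n\sqrt{\varepsilon(2-\varepsilon)}),
\]
so it is enough to show that with high probability $d_{g,n}\geq c\,d_{2,n}$ uniformly over all pairs in $\mathcal{S}_n$ whose $d_{2,n}$-distance is at least of order $\sqrt{n}$.

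To obtain this uniform comparison I would exploit the hypothesis $g^{(1)}(Y)\neq 0$ a.s.\ to pick $\eta>0$ and $p>1/2$ with $\mathbb{P}(|g^{(1)}(Y)|>\eta)\geq p$, and set $A:=\{(i,j):|g^{(1)}(Y_{n,(i,j)})|>\eta\}$. Writing $D(\boldsymbol{\sigma}_1,\boldsymbol{\sigma}_2):=\{(i,j):\sigma_{1,i}\sigma_{1,j}\neq \sigma_{2,i}\sigma_{2,j}\}$, one has $d_{2,n}^2=4|D|$ and $d_{g,n}^2\geq 4\eta^2|D\cap A|$. Hoeffding's inequality applied to the i.i.d.\ Bernoulli indicators $\{\mathbbm{1}((i,j)\in A)\}$ gives, for each fixed pair with $|D|\geq c_0 n$, the estimate $\mathbb{P}(|D\cap A|<(p/2)|D|)\leq \exp(-p^2c_0 n/2)$. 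Union bounding over the at most $4^n$ choices of $(\boldsymbol{\sigma}_1,\boldsymbol{\sigma}_2)\in \mathcal{S}_n^2$ and taking $c_0>4(\log 2)/p^2$ shows that, with probability tending to one, every such pair satisfies $d_{g,n}\geq \eta\sqrt{p/2}\,d_{2,n}$. Since $n\sqrt{\varepsilon(2-\varepsilon)}\geq 2\sqrt{c_0 n}$ for all large $n$, combining this with the deterministic inequality above delivers \ref{item:G2} with $\tau_n=n$, $K_{\varepsilon,\delta}=2$, and a positive constant $\kappa_{\varepsilon,\delta}$, which completes the plan.
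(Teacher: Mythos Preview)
Your proposal is correct and follows the paper's route exactly: apply \Cref{genthm:g3}, verify \ref{item:G1} via \Cref{subgmaxgrowth} with $M_n=\Theta(n^{3/2})$, and verify \ref{item:G2} with $\tau_n=n$ by passing through $d_{2,n}$ using the deterministic packing inequality from the proof of \Cref{sktight1}. The one minor technical difference is in comparing $d_{g,n}$ to $d_{2,n}$: the paper bounds the global ``bad'' set $F_n=\{(i,j):|g^{(1)}(Y_{n,(i,j)})|\leq\kappa\}$ by the law of large numbers (so that $|D\cap A|\geq |D|-|F_n|$ holds deterministically and uniformly over all pairs, avoiding any union bound), whereas you apply Hoeffding to each fixed pair and union-bound over $\mathcal{S}_n^2$; both arguments are valid, the paper's being marginally cleaner.
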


\begin{proof}
\Cref{mu} implies that we can take $X_{n,i}=g(Y_{n,i})$ where $Y_{n,i}$'s are i.i.d.~from the distribution $\mathcal{Q}_{\rho}$. As was the case for \Cref{sktight1}, existence of density $f$ for disorder variables and finiteness of $\mathcal{S}_n$ guarantee  that \Cref{unique} are \Cref{ass:meas} are satisfied. Moreover,
$$ \sum_{i <j} H_{n,(i,j)}(\boldsymbol{\sigma})^2 = n(n-1)/2, \; |  H_{n,(i,j)}(\boldsymbol{\sigma})| \leq 1, \; \forall \; \boldsymbol{\sigma} \in \mathcal{S}_n.$$
Thus the hypothesis of \Cref{subgmaxgrowth} is satisfied with $\zeta_{n,2}=\sqrt{n(n-1)/2}$ and $\zeta_{n,\infty}=1$. Applying \Cref{subgmaxgrowth} and observing that $\operatorname{card}(\mathcal{S}_n) =2^{n-1}$, we can deduce that \Cref{item:G1} is satisfied for the choice
\begin{equation}{\label{subgmax:sk}}
 M_n = \sqrt{ n(n-1)^2 \log 2} +   (n-1) \log 2 = \Theta(n^{3/2}), \; \forall \; \text{as } n \to \infty.
\end{equation}
Therefore, in light of \Cref{genthm:g3}, it is now enough to establish \Cref{item:G2} for any $\theta_n=\bigO(\sqrt{n})$ and for the choice $\tau_n=n$. 
Recall that 
$$ d_{g,n}(\boldsymbol{\sigma}, \boldsymbol{\sigma}^{\prime})^2 = \sum_{i<j} (g^{(1)}(Y_{n,(i,j)}) )^2 (\sigma_i\sigma_j - \sigma^{\prime}_i\sigma^{\prime}_j)^2, \; \forall \; \boldsymbol{\sigma}, \boldsymbol{\sigma}^{\prime} \in \mathcal{S}_n.$$
Fix $\varepsilon \in (0,1)$. 
Since $g^{(1)}(Y) \neq 0$ almost surely for $Y \sim \mathcal{Q}_{\rho}$, we can get a finite positive number  $\kappa$ such that $\mathbb{P}(|g^{(1)}(Y)| \leq  \kappa) \leq \varepsilon(2-\varepsilon)/2$ and set $F_n := \{(i,j) : 1 \leq i<j \leq n, |g^{(1)}(Y_{n,(i,j)})| \leq \kappa\}$. Take any $\boldsymbol{\sigma}, \boldsymbol{\sigma}^{\prime} \in \mathcal{S}_n$ with $d_{2,n}(\boldsymbol{\sigma}, \boldsymbol{\sigma}^{\prime} ) > \sqrt{\varepsilon(2-\varepsilon)} n$; hence
$$ \varepsilon(2-\varepsilon) n^2 < d_{2,n}(\boldsymbol{\sigma}, \boldsymbol{\sigma}^{\prime} )^2 = \sum_{i<j} (\sigma_i\sigma_j - \sigma^{\prime}_i\sigma^{\prime}_j)^2. $$
Since $(\sigma_i\sigma_j - \sigma^{\prime}_i\sigma^{\prime}_j)^2 \in \{0,4\}$, we can deduce that 
$$ \operatorname{card} ( \{(i,j) : i<j,(\sigma_i\sigma_j - \sigma^{\prime}_i\sigma^{\prime}_j)^2 = 4 \}) > \varepsilon(2-\varepsilon) n^2/4,$$
and hence
$$ \operatorname{card} ( \{(i,j) : i<j,(\sigma_i\sigma_j - \sigma^{\prime}_i\sigma^{\prime}_j)^2 = 4, |g^{(1)}(Y_{n,(i,j)})| > \kappa \}) > \varepsilon(2-\varepsilon) n^2/4 - \operatorname{card}(F_n).$$
It is then easy to see that 
$$ d_{g,n}(\boldsymbol{\sigma}, \boldsymbol{\sigma}^{\prime} )^2 \geq \kappa ( \varepsilon(2-\varepsilon) n^2 - 4\operatorname{card}(F_n)).$$
We can use the above estimate to write down the following.
\begin{align}
&\mathbb{P} \biggl[P(\mathcal{N}_{n,\theta_n}, d_n, \varepsilon) > 2P(\mathcal{N}_{n,\theta_n}, d_{g,n}, n\sqrt{\kappa\varepsilon(2-\varepsilon)}/2)  \biggr] \nonumber  \\
& \hspace{ 1 in} \leq \mathbb{P} \biggl[ P(\mathcal{N}_{n,\theta_n},d_{2,n}, n\sqrt{\varepsilon(2-\varepsilon)}) > P(\mathcal{N}_{n,\theta_n}, d_{g,n}, n\sqrt{\kappa\varepsilon(2-\varepsilon)}/2) \biggr] \label{step:sk} \\
& \hspace{ 1 in} \leq \mathbb{P} \biggl( \inf_{\substack{\boldsymbol{\sigma}, \boldsymbol{\sigma}^{\prime} \in \mathcal{S}_n \\ d_{2,n}(\boldsymbol{\sigma}, \boldsymbol{\sigma}^{\prime} ) > n \sqrt{\varepsilon(2-\varepsilon)} }} d_{g,n}(\boldsymbol{\sigma}, \boldsymbol{\sigma}^{\prime} )/n < \sqrt{\kappa\varepsilon(2-\varepsilon)}/2 \biggr) \nonumber \\ 
& \hspace{ 1 in} \leq \mathbb{P}( \kappa ( \varepsilon(2-\varepsilon) n^2 - 4\operatorname{card}(F_n)) < \kappa\varepsilon(2-\varepsilon) n^2/4) \nonumber \\
& \hspace{ 1 in} = \mathbb{P}( \operatorname{card}(F_n) > 3\varepsilon(2-\varepsilon)n^2/16), \label{binbound}
\end{align}
where \Cref{step:sk} follows from \Cref{step0:sk}. By law of large numbers, $\operatorname{card}(F_n)/n^2$ converges almost surely to $\mathbb{P}(|g^{(1)}(Y)| \leq \kappa)/2 < \varepsilon(2-\varepsilon)/4$ and hence the expression in \Cref{binbound} converges to $0$ exponentially fast. This establishes \Cref{item:G2} with the choice $\tau_n=n$ and completes the proof.
\end{proof}

\begin{thm}{\label{skthm2}}
Consider SK model as defined in \Cref{skhamil} where the disorder variables have common distribution $\mathcal{P}$ with density $f$ satisfying the assumptions made in the statement of Proposition~\ref{sktight2}. Further assume that $\mathcal{P}$ is also a sub-Gamma distribution.
Then the SK model optimization problem is stable under small perturbations with perturbation blocks $\mathcal{J}_n = \{J_{n,i} : i \in [n]\}$ where $J_{n,i}=\{(i,j) : j >i\} \cup \{(j,i) : i >j\}$ for all $i \in [n]$.
\end{thm}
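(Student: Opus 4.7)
The plan is to invoke Theorem~\ref{strat} with window length $\theta_n = \sqrt{n}$. Condition~\ref{item:A} is handled by Proposition~\ref{sktight2}: under the stated hypotheses on $\mathcal{P}$, the sequence $\{P(\mathcal{N}_{n,c\sqrt{n}}, d_n, \varepsilon)\}_{n \geq 1}$ is tight for every $c,\varepsilon > 0$. So the entire argument reduces to verifying condition~\ref{item:B}, and for this I would take the natural sister solutions $\omega^*_{n,i} := \widehat{\omega}_n(\mathbf{X}^n_i)$, which makes the first display in \eqref{most} automatic. By Remark~\ref{rem:strateasy}, it suffices to show
\begin{equation*}
\sum_{i=1}^n \mathbb{E}\bigl[\psi_n(\mathbf{X}^n;\widehat{\omega}_n(\mathbf{X}^n_i)) - \psi_{n,\mathrm{opt}}(\mathbf{X}^n)\bigr] = \bigO(n^{3/2}).
\end{equation*}

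Since $\mathbf{X}^n \stackrel{d}{=} \mathbf{X}^n_i$, the $i$-th summand equals $\mathbb{E}[\psi_n(\mathbf{X}^n;\widehat{\omega}_n(\mathbf{X}^n_i)) - \psi_n(\mathbf{X}^n_i;\widehat{\omega}_n(\mathbf{X}^n_i))]$, which by linearity unfolds to $\mathbb{E}\sum_{(j,k) \in J_{n,i}}(X_{n,(j,k)} - X^{(i)}_{n,(j,k)})H_{n,(j,k)}(\widehat{\omega}_n(\mathbf{X}^n_i))$. The piece containing $X_{n,(j,k)}$ vanishes because $\widehat{\omega}_n(\mathbf{X}^n_i)$ is independent of $\{X_{n,(j,k)} : (j,k) \in J_{n,i}\}$ and $\mathcal{P}$ has mean zero. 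The remaining piece, $-\mathbb{E}\sum_{(j,k) \in J_{n,i}} X^{(i)}_{n,(j,k)} H_{n,(j,k)}(\widehat{\omega}_n(\mathbf{X}^n_i))$, equals $-\mathbb{E}\sum_{(j,k) \in J_{n,i}} X_{n,(j,k)} H_{n,(j,k)}(\widehat{\boldsymbol{\sigma}})$ by the symmetry that swaps the two i.i.d.\ copies, where $\widehat{\boldsymbol{\sigma}} := \widehat{\omega}_n(\mathbf{X}^n)$. Recalling $H_{n,(j,k)}(\boldsymbol{\sigma}) = -\sigma_j\sigma_k$, this reads $\mathbb{E}[\widehat{\sigma}_i \sum_{j \neq i} X_{n,(i\wedge j, i\vee j)}\widehat{\sigma}_j]$. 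The one-spin-flip optimality of $\widehat{\boldsymbol{\sigma}}$ forces $\widehat{\sigma}_i = \sign(\sum_{j \neq i} X_{n,(i\wedge j, i\vee j)}\widehat{\sigma}_j)$ almost surely (by continuity of $\mathcal{P}$), so this further reduces to $\mathbb{E}|\sum_{j \neq i} X_{n,(i\wedge j, i\vee j)}\widehat{\sigma}_j|$. Summing over $i$ and invoking the identity $\sum_i \widehat{\sigma}_i \sum_{j \neq i} X_{n,(i\wedge j,i\vee j)}\widehat{\sigma}_j = 2\sum_{i<j} X_{n,(i,j)}\widehat{\sigma}_i\widehat{\sigma}_j = -2\psi_{n,\mathrm{opt}}(\mathbf{X}^n)$ collapses the total to exactly $2\mathbb{E}|\psi_{n,\mathrm{opt}}(\mathbf{X}^n)|$.

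This is where the sub-Gamma hypothesis earns its keep: Proposition~\ref{subgmaxgrowth} applied with $d=1$, $\zeta_{n,2}^2 = n(n-1)/2$, $\zeta_{n,\infty}=1$, and $\operatorname{card}(\mathcal{S}_n) = 2^{n-1}$ yields $\mathbb{E}|\psi_{n,\mathrm{opt}}(\mathbf{X}^n)| = \bigO(n^{3/2})$, closing the argument. The main obstacle is that each perturbation block $J_{n,i}$ here has size $n - 1$ rather than $\bigO(1)$, so Proposition~\ref{mostprop} is unavailable: its naive per-block bound would deliver only $\bigO(n)$ excess and thus $\bigO(n^2)$ in total, a factor of $\sqrt{n}$ too large compared to $m_n\theta_n = n^{3/2}$. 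The savings come precisely from the global cancellation afforded by one-spin-flip optimality, which pins the summed excess directly to the typical magnitude of the ground-state energy.
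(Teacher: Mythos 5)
Your proof is correct and follows essentially the same route as the paper's: same sister solution $\omega^*_{n,i} = \widehat{\omega}_n(\mathbf{X}^n_i)$, same use of linearity, mean-zero disorder and independence to kill the first piece, same resampling symmetry for the second piece, and the same terminal appeal to Proposition~\ref{subgmaxgrowth}. The only cosmetic difference is bookkeeping at the end: the paper bounds each summand separately via spin-exchangeability (so each term is $\bigO(\sqrt{n})$), whereas you sum over $i$ first and invoke the double-counting identity $\sum_i \widehat{\sigma}_i \sum_{j\ne i} X_{n,(i\wedge j,i\vee j)}\widehat\sigma_j = -2\psi_{n,\mathrm{opt}}(\mathbf{X}^n)$; your one-spin-flip observation $\widehat\sigma_i=\sign(\sum_{j\ne i}X\widehat\sigma_j)$ is a nice remark but is not actually needed, since $\sum_i\mathbb{E}[\widehat\sigma_i\sum_{j\ne i}X\widehat\sigma_j]=-2\mathbb{E}\psi_{n,\mathrm{opt}}$ already follows from the identity without passing to absolute values.
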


\begin{proof}
We shall make a small abuse of notations by defining $X_{n,(i,j)}$ to be $X_{n,(j,i)}$, whenever $i>j$; same abuse of notations apply for other similar quantities also. For all $i \in [n]$, consider $\widehat{\boldsymbol{\sigma}}(\mathbf{X}^n_i)=(\widehat{\sigma}^{(i)}_1,\ldots,\widehat{\sigma}_n^{(i)})$ which is the optimal configuration for the input $\mathbf{X}^n_i$, where all the disorders associated  with the $i$-th spin has been replaced by i.i.d.~copies. $\widehat{\boldsymbol{\sigma}}(\mathbf{X}^n)=(\widehat{\sigma}_1,\ldots,\widehat{\sigma}_n)$ will be the optimal configuration for the input $\mathbf{X}^n$. Then 
\begin{align}
&\mathbb{E} [ \mathcal{H}_n(\mathbf{X}^n;\widehat{\boldsymbol{\sigma}}(\mathbf{X}^n_i)) -\mathcal{H}_{n,\mathrm{opt}}(\mathbf{X}^n)] \notag \\
& = 
\mathbb{E} [ \mathcal{H}_n(\mathbf{X}^n;\widehat{\boldsymbol{\sigma}}(\mathbf{X}^n_i)) -\mathcal{H}_n(\mathbf{X}^n_i;\widehat{\boldsymbol{\sigma}}(\mathbf{X}^n_i))] \nonumber \\ 
& =  \mathbb{E} \biggl[  \sum_{j \in [n] : j \neq i} X^{(i)}_{n,(i,j)} \widehat{\sigma}^{(i)}_i\widehat{\sigma}^{(i)}_j -\sum_{j \in [n] : j \neq i} X_{n,(i,j)}\widehat{\sigma}^{(i)}_i\widehat{\sigma}^{(i)}_j \biggr] \nonumber \\
& = \mathbb{E} \biggl[  \sum_{j \in [n] : j \neq i} X^{(i)}_{n,(i,j)} \widehat{\sigma}^{(i)}_i\widehat{\sigma}^{(i)}_j \biggr] = \mathbb{E} \biggl[  \sum_{j \in [n] : j \neq i} X_{n,(i,j)} \widehat{\sigma}_i\widehat{\sigma}_j \biggr] \label{mzero},
\end{align}
where the first equality in \Cref{mzero} follows from the fact that the disorder variables have mean zero and  $\widehat{\boldsymbol{\sigma}}(\mathbf{X}^n_i)$ is independent of the collection  $\{X_{n,(i,j)} : j \in [n], j \neq i\}$ for all $i \in [n]$.  
Symmetry between the spins in SK model Hamiltonian guarantees that for any $i \in [n]$,
\begin{align}
\mathbb{E} \biggl[  \sum_{j \in [n] : j \neq i} X_{n,(i,j)} \widehat{\sigma}_i\widehat{\sigma}_j \biggr] = \dfrac{1}{n}\mathbb{E} \biggl[  \sum_{i, j \in [n] : i \neq j} X_{n,(i,j)} \widehat{\sigma}_i\widehat{\sigma}_j \biggr] &= \dfrac{2}{n}\mathbb{E} \biggl[  \sum_{i <j} X_{n,(i,j)} \widehat{\sigma}_i\widehat{\sigma}_j \biggr] \nonumber \\
& = - \dfrac{2}{n}\mathbb{E} \biggl[ \inf_{\boldsymbol{\sigma} \in \mathcal{S}_n}  \mathcal{H}_n( \mathbf{X}^n; \boldsymbol{\sigma} )\biggr]. \label{eq2}
\end{align}
Since the distribution $\mathcal{P}$ is sub-Gamma, we can conclude from \Cref{subgmaxgrowth} that there exists $C(f) \in (0,\infty)$, depending on $f$, such that 
\begin{equation}{\label{eq1}}
 \inf_{\boldsymbol{\sigma} \in \mathcal{S}_n}  \mathcal{H}_n( \mathbf{X}^n; \boldsymbol{\sigma} ) \geq - C(f)n^{3/2}, \; \forall \; n \geq 2.
\end{equation}
See the arguments leading to \Cref{subgmax:sk} for more details on the argument behind \Cref{eq1}. Combining \Cref{mzero} with \Cref{eq2} and \Cref{eq1}, we can conclude that 
$$ \mathbb{E} [ \mathcal{H}_n(\mathbf{X}^n;\widehat{\boldsymbol{\sigma}}(\mathbf{X}^n_i)) -\mathcal{H}_{n, \mathrm{opt}}(\mathbf{X}^n)]  \leq C(f)\sqrt{n}, \; \forall \; i \in [n], \; n \geq 2.$$
Applying \Cref{sktight2} and \Cref{rem:strateasy}, we observe that the hypotheses of \Cref{strat} is satisfied for $\theta_n=\sqrt{n}$. This concludes the proof.
\end{proof}

\begin{remark}
If $g$ is strictly monotone in \Cref{skthm2}, then clearly $g^{(1)}(Y) \neq 0$ when $Y \sim \mathcal{Q}_{\rho}$. Thus all the examples described in \Cref{exmu} which has sub-Gamma tails, e.g., Uniform, Beta, Gamma, Gaussian and Gumbel density, satisfy the hypotheses for \Cref{skthm2}.   As it is mentioned earlier, the most well-studied choice for disordered distribution is Gaussian. Since the Gaussian distribution satisfies all the assumptions needed for the validity of \Cref{skthm1} and \Cref{skthm2}, the SK model with Gaussian inputs is stable under both types of perturbations discussed in this section.
\end{remark}

Before wrapping up this section, let us briefly discuss existing stability results for  the SK model and how they compare with ours. The most influential notion of stability (or rather lack of stability) in this context, formulated in terms of \textit{chaos}, was first established in \cite{chabook}. To understand the concept, first recall the definition of the SK model at nonzero temperature. At  inverse temperature $0\le \beta<\infty$, the model defines a random measure, called the \textit{Gibbs measure} and denoted by $\mathcal{G}_{\beta}$, on the configuration space $\left\{+1,-1\right\}^n$. The Gibbs measure being defined as 
$$ \mathcal{G}_{\beta} \left(\boldsymbol{\sigma} \right) = \dfrac{\exp\left(-\beta \mathcal{H}_n (\mathbf{X}^n;\boldsymbol{\sigma}) \right)}{\sum_{\boldsymbol{\sigma}^{\prime} \in \left\{+1,-1\right\}^n}\exp\left(-\beta \mathcal{H}_n (\mathbf{X}^n;\boldsymbol{\sigma}^{\prime}) \right)}, \; \forall \; \boldsymbol{\sigma} \in \left\{+1,-1\right\}^n.$$
The zero temperature case corresponds to $\beta=\infty$, when the Gibbs measure becomes supported on the unique pair of ground states. Suppose that the disorder variables are i.i.d.~standard Gaussian. Fix some small $\varepsilon >0$, and consider a perturbed disorder  $\{X_{n,(i,j)}^{(\varepsilon)} : i,j \in [n], i<j\}$ such that $\{(X_{n,(i,j)},X_{n,(i,j)}^{(\varepsilon)}) : i,j \in [n], i<j\}$ is a collection of i.i.d.~bivariate mean-zero Gaussian random vectors with equal variances and correlation $e^{-\varepsilon}$. Let $\mathcal{G}_{\beta}^{(\varepsilon)}$ be the Gibbs measure associated with this perturbed disorder. Conditioned on the disorder variables, we now take two independent samples  $\boldsymbol{\sigma} \sim \mathcal{G}_{\beta}$ and $\boldsymbol{\sigma}^{(\varepsilon)} \sim \mathcal{G}_{\beta}^{(\varepsilon)}$. If the energy landscape is not much altered due to the perturbation, we would expect the two configurations to be on average close to each other (or their negations). In other words, we shall expect their \textit{overlap} $R_{1,2}^{(\varepsilon)}$, as defined below, to be concentrated near $\left\{+1,-1\right\}$:
$$ R_{1,2}^{(\varepsilon)} := \sum_{i \in [n]} \sigma_i \sigma_i^{(\varepsilon)} = 1-2d_n( \boldsymbol{\sigma}, \boldsymbol{\sigma}^{(\varepsilon)} ).$$
It was established in \cite{chabook} that the situation is actually quite the opposite. The following theorem, which is a specialization of \cite[Theorem 1.11]{chabook}, formulates this rigorously.

\begin{thm}[\cite{chabook}]{\label{sk:chaos}}
Let $R_{1,2}^{(\varepsilon)}$ be defined as above. Then for any integer $k \geq 1$ and $\varepsilon \in (0,1)$, 
$$ \mathbb{E}[ ( R_{1,2}^{(\varepsilon)})^{2k} ] \leq C_1 \exp( -C_2 \varepsilon \log n),$$
where $C_1,C_2$ are finite positive constants depending only on $\beta$ and $k$. 
\end{thm}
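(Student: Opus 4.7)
The plan is to apply Chatterjee's superconcentration-and-chaos framework, which is tailor-made for Gaussian disordered systems like the SK model. Since the disorder variables are standard Gaussian and the perturbation $X_{n,(i,j)}^{(\varepsilon)}$ is the Ornstein--Uhlenbeck evolution of $X_{n,(i,j)}$ at time $\varepsilon$, one has access to Gaussian integration-by-parts identities and to the hypercontractive property of the OU semigroup.

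The first step would be to establish the basic integral identity connecting the variance of the free energy $F_n(\mathbf{X}^n) = \beta^{-1}\log \sum_{\boldsymbol{\sigma}} \exp(-\beta \mathcal{H}_n(\mathbf{X}^n;\boldsymbol{\sigma}))$ with overlaps of perturbed systems. Using Gaussian integration by parts on each disorder variable and the fact that $\partial_{(i,j)} F_n = -\langle \sigma_i \sigma_j\rangle_\beta$, one arrives at a formula of the form
\begin{equation*}
\mathrm{Var}(F_n) = \int_0^\infty e^{-\varepsilon} \, \mathbb{E}\Biggl[\sum_{i<j} \langle \sigma_i \sigma_j\rangle_{\beta} \, \langle \sigma_i \sigma_j\rangle_{\beta}^{(\varepsilon)}\Biggr] d\varepsilon,
\end{equation*}
and the integrand is, up to a factor of $\binom{n}{2}$, exactly $\mathbb{E}[\langle (R_{1,2}^{(\varepsilon)})^2\rangle_{\otimes}]$ where $\langle \cdot\rangle_{\otimes}$ denotes the product Gibbs measure of one replica from each system.

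The second step is the superconcentration bound for the free energy, namely $\mathrm{Var}(F_n) = O(n/\log n)$. This is the heart of the matter and is itself a non-trivial theorem proved in Chatterjee's book by bootstrapping the above identity: start from the trivial Gaussian-Poincar\'e bound $\mathrm{Var}(F_n) \lesssim n$, plug it into a hypercontractive estimate for $\mathbb{E}[\langle (R_{1,2}^{(\varepsilon)})^2\rangle_{\otimes}]$ to get integrability that improves the variance bound, and iterate. Combining this with the identity above (and the monotonicity of $\varepsilon \mapsto \mathbb{E}[\langle (R_{1,2}^{(\varepsilon)})^2\rangle_{\otimes}]$ coming from the OU semigroup) yields a pointwise bound of the form $\mathbb{E}[\langle (R_{1,2}^{(\varepsilon)})^2\rangle_{\otimes}] \lesssim (n \log n)^{-1}$, which already gives the $k=1$ case in a weak averaged sense.

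The final step, which is the main technical obstacle, is promoting the second-moment bound on the overlap to a $2k$-th moment bound with the claimed exponential-in-$\varepsilon$ rate. This is where Gaussian hypercontractivity is decisive: since $R_{1,2}^{(\varepsilon)} = n^{-1} \sum_i \sigma_i \sigma_i^{(\varepsilon)}$ can be viewed (after conditioning on the replica configurations) as a bilinear form in two Gaussian vectors correlated at rate $e^{-\varepsilon}$, the Bonami--Beckner inequality for the OU semigroup gives
\begin{equation*}
\|T_\varepsilon \phi\|_{L^q} \leq \|\phi\|_{L^p}, \qquad q - 1 \leq e^{2\varepsilon}(p-1),
\end{equation*}
and iterating this across Wiener chaos levels converts each factor of $e^{-\varepsilon}$ in a moment bound into a geometric improvement. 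The hard part is combining the hypercontractive moment amplification with the underlying second-moment smallness from superconcentration in a way that tracks the dependence on $\beta$ and $k$ cleanly, ultimately yielding $\mathbb{E}[(R_{1,2}^{(\varepsilon)})^{2k}] \leq C_1 \exp(-C_2 \varepsilon \log n) = C_1 n^{-C_2 \varepsilon}$. The exponent $\log n$ rather than just a constant is precisely the signature of superconcentration entering through the $\log n$ factor in the refined variance bound.
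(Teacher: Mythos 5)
First, a point of reference: the paper does not prove Theorem~\ref{sk:chaos} at all --- it is quoted from \cite{chabook} (as a specialization of Theorem 1.11 there), and the proof is explicitly deferred to Section 4.3 of that book. So your proposal can only be judged against the argument in the cited source and against its own internal completeness, and on both counts there are genuine gaps.

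The two load-bearing steps of your outline are not proved but outsourced to the very reference the theorem carries: the superconcentration bound $\mathrm{Var}(F_n)=\bigO(n/\log n)$ at every $\beta$, and the upgrade from an averaged second-moment estimate to the $2k$-th moment bound with rate $\exp(-C_2\varepsilon\log n)$. The first is not reachable by the route you sketch (bootstrapping the Poincar\'e bound through hypercontractivity): Talagrand-type $L^1$--$L^2$ improvements need the influences $\partial_{(i,j)}F_n\propto\langle\sigma_i\sigma_j\rangle_{\beta}$ to have small $L^1/L^2$ ratios, which fails (or is unknown) outside the high-temperature phase; and in \cite{chabook} the implication is run in the opposite direction --- the chaos estimate is established first by a direct argument and the $n/\log n$ variance bound is then deduced from it via the very interpolation identity you quote --- so taking superconcentration as the input makes your plan circular unless you supply an independent proof, which you do not. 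Second, the mechanism you offer for the $2k$-th moment is not viable as stated: $R_{1,2}^{(\varepsilon)}$ is a function of the replica spins, which are drawn from Gibbs measures that are highly nonlinear functionals of the disorder; it is not a low-order polynomial (fixed Wiener chaos) of the Gaussian field, and conditioning on the replica configurations makes it deterministic, so Bonami--Beckner hypercontractivity of the OU semigroup has nothing to act on. The standard route instead uses the conditional independence of the two samples to write $\mathbb{E}[(R_{1,2}^{(\varepsilon)})^{2k}]=n^{-2k}\sum_{i_1,\dots,i_{2k}}\mathbb{E}[\langle\sigma_{i_1}\cdots\sigma_{i_{2k}}\rangle_{\beta}\,\langle\sigma_{i_1}\cdots\sigma_{i_{2k}}\rangle_{\beta}^{(\varepsilon)}]$ and then controls these multi-point correlation sums (e.g.\ through free energies perturbed by auxiliary Gaussian multi-spin terms, combined with the same semigroup monotonicity), which is precisely where the dependence of $C_1,C_2$ on $\beta$ and $k$ is earned; none of this appears in your sketch. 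Your Step 1 covariance identity and the monotonicity in $\varepsilon$ of the gradient correlation are correct, but by themselves they yield only the soft second-moment heuristic, not the stated theorem.
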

From \Cref{sk:chaos}, one can conclude that even if the perturbation amount $\varepsilon$ is small but $\gg 1/\log n$, then the two configurations are almost orthogonal to each other. In other words, the energy landscape is greatly altered by a small perturbation and thus the ground state is \textit{chaotic}. \Cref{sk:chaos} leads us to the \textit{multiple valley property} (MVP) for the zero-temperature SK model, as proved in \cite[Theorem 1.16]{chabook}.

\begin{thm}[\cite{chabook}]{\label{sk:mvp}}
There exist $\gamma_n, \delta_n, \varepsilon_n \to 0$ and $K_n \to \infty$, such that with probability $1-\gamma_n$, there exists a set $\mathcal{N}_n \subseteq \mathcal{S}_n$ with  $\operatorname{card}(\mathcal{N}_n) \geq K_n$ such that $d_n ( \boldsymbol{\sigma}, \boldsymbol{\sigma}^{\prime}) \in ( 1/2-\varepsilon_n,1/2+\varepsilon_n )$ for all $\boldsymbol{\sigma} \neq \boldsymbol{\sigma}^{\prime} \in  \mathcal{N}_n$, and  
$$ \sup_{ \boldsymbol{\sigma} \in \mathcal{N}_{n}} \Bigg \rvert \dfrac{\mathcal{H}_n(\mathbf{X}^n;\boldsymbol{\sigma})}{ \inf_{ \boldsymbol{\sigma}^{\prime} \in \mathcal{S}_{n}} \mathcal{H}_n(\mathbf{X}^n;\boldsymbol{\sigma}^{\prime})} - 1\Bigg \rvert \leq \delta_n.$$
\end{thm}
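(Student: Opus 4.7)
The strategy is the classical superconcentration-to-MVP pipeline: use \Cref{sk:chaos} to produce many mutually almost-orthogonal ground states of slightly perturbed Hamiltonians, and then show each of them is near-optimal for the original disorder via a one-line Gaussian regression identity. Concretely, let $\mathbf{X}^{n,(t)} := e^{-t}\mathbf{X}^n + \sqrt{1-e^{-2t}}\,(\mathbf{X}^n)'$ be the Ornstein--Uhlenbeck perturbation driven by an independent Gaussian copy, and let $\hat{\boldsymbol{\sigma}}^{(t)}$ denote the ground state of $\mathcal{H}_n(\mathbf{X}^{n,(t)};\cdot)$. Pick an arithmetic grid $t_j = j\varepsilon_n$ for $j = 0,1,\ldots,K_n-1$ with scales to be tuned, and take $\mathcal{N}_n := \{\hat{\boldsymbol{\sigma}}^{(t_0)},\ldots,\hat{\boldsymbol{\sigma}}^{(t_{K_n-1})}\}$.

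For the near-optimality, since $\hat{\boldsymbol{\sigma}}^{(t)}$ is $\mathbf{X}^{n,(t)}$-measurable and $\mathbb{E}[X_{n,(i,j)} \mid \mathbf{X}^{n,(t)}] = e^{-t}X^{(t)}_{n,(i,j)}$ by Gaussian regression, I get
\begin{equation*}
\mathbb{E}\bigl[\mathcal{H}_n(\mathbf{X}^n;\hat{\boldsymbol{\sigma}}^{(t)})\bigr] = e^{-t}\,\mathbb{E}\bigl[\mathcal{H}_n(\mathbf{X}^{n,(t)};\hat{\boldsymbol{\sigma}}^{(t)})\bigr] = e^{-t}\,\mathbb{E}\bigl[\textstyle\inf_{\boldsymbol{\sigma}}\mathcal{H}_n(\mathbf{X}^n;\boldsymbol{\sigma})\bigr],
\end{equation*}
the last equality using $\mathbf{X}^{n,(t)}\stackrel{d}{=}\mathbf{X}^n$. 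Hence the expected energy gap equals $(1-e^{-t})|\mathbb{E}\inf\mathcal{H}_n| = \Theta(t\,n^{3/2})$, using the classical $\Theta(n^{3/2})$ order of the ground state energy. Since the gap is nonnegative, Markov's inequality plus Gaussian concentration of the map $\mathbf{X}^n \mapsto \mathcal{H}_n(\mathbf{X}^n;\boldsymbol{\sigma})$ (Lipschitz constant $\sqrt{n(n-1)/2}$, giving fluctuations of order $O(n)$) and a union bound over the $K_n$ values of $j$ give
\begin{equation*}
\max_{0\le j<K_n}\Bigl|\mathcal{H}_n(\mathbf{X}^n;\hat{\boldsymbol{\sigma}}^{(t_j)})\big/\!\textstyle\inf_{\boldsymbol{\sigma}'}\mathcal{H}_n(\mathbf{X}^n;\boldsymbol{\sigma}') - 1\Bigr| \le \delta_n := O\bigl(K_n\varepsilon_n + n^{-1/2}\sqrt{\log K_n}\bigr)
\end{equation*}
with probability $1-o(1)$, which is $o(1)$ as soon as $K_n\varepsilon_n \to 0$.

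For the near-orthogonality, observe that for $j \ne k$ the pair $(\mathbf{X}^{n,(t_j)}, \mathbf{X}^{n,(t_k)})$ is itself a correlated Gaussian pair with correlation $e^{-|t_j-t_k|}$, so the zero-temperature version of \Cref{sk:chaos} (established in \cite{chabook} and applied to the ground states) yields $\mathbb{E}[R_{j,k}^{2m}] \le C_1 e^{-C_2\varepsilon_n \log n}$ for every pair $j\neq k$, where $R_{j,k} = n^{-1}\hat{\boldsymbol{\sigma}}^{(t_j)}\cdot\hat{\boldsymbol{\sigma}}^{(t_k)}$. Taking $m$ large, Markov and a union bound over $\binom{K_n}{2}$ pairs give $\max_{j\neq k}|R_{j,k}| \le \varepsilon_n'$ with probability $1-o(1)$, provided $K_n^{2} \ll n^{c\varepsilon_n}$. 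Translating to Hamming distance via $d_n = (1-R)/2$ puts every pair in $(1/2-\varepsilon_n'/2, 1/2+\varepsilon_n'/2)$, as required.

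Balancing the constraints ($\varepsilon_n \to 0$, $\varepsilon_n\log n \to \infty$, $K_n \to \infty$, $K_n\varepsilon_n \to 0$, $K_n^2 \ll n^{c\varepsilon_n}$) is straightforward; for instance $\varepsilon_n = (\log n)^{-1/2}$ and $K_n = (\log n)^{1/4}$ satisfy all of them. The main obstacle is not this balancing but rather the zero-temperature chaos estimate itself: \Cref{sk:chaos} as stated has constants $C_1, C_2$ depending on the inverse temperature $\beta$, and these a priori degenerate as $\beta\to\infty$. The bound must therefore be obtained at $\beta = \infty$ directly, which in \cite{chabook} proceeds via the variance estimate for the ground state energy (a superconcentration statement for the Parisi-type formula) combined with a Gaussian interpolation argument parallel to the one used at positive temperature. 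A secondary, purely bookkeeping obstacle is to use a high enough moment $m$ in the chaos bound that the union bound over $\binom{K_n}{2}$ pairs survives after Markov, which is why $K_n$ is only polylogarithmic in $n$ rather than polynomial.
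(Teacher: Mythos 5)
The paper does not give its own proof of this theorem; it is quoted from \cite{chabook} with a pointer to \cite[Section 4.3]{chabook}. Evaluating your proposal on its own terms, the global strategy --- ground states of OU-perturbed disorders, near-optimality from the Gaussian regression identity, near-orthogonality from a zero-temperature chaos estimate --- is indeed the route taken in \cite{chabook}, and your remark that the real difficulty lies in extending chaos to $\beta = \infty$ rather than in the bookkeeping is correct. However, your near-orthogonality step contains a genuine error. Having defined $\mathbf{X}^{n,(t)} := e^{-t}\mathbf{X}^n + \sqrt{1-e^{-2t}}\,(\mathbf{X}^n)'$ with a \emph{single} shared independent copy $(\mathbf{X}^n)'$, you assert that $(\mathbf{X}^{n,(t_j)}, \mathbf{X}^{n,(t_k)})$ is a Gaussian pair with entry-wise correlation $e^{-|t_j - t_k|}$. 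That is true for a stationary Ornstein--Uhlenbeck process, but it fails here: the entry-wise covariance is $e^{-t_j-t_k} + \sqrt{(1-e^{-2t_j})(1-e^{-2t_k})}$, which for small $t_j, t_k$ equals $1 - (\sqrt{t_j}-\sqrt{t_k})^2 + o(\max(t_j,t_k))$, not $1 - |t_j-t_k|$. On the arithmetic grid $t_j = j\varepsilon_n$, adjacent points near the top of the grid decorrelate only at scale $\varepsilon_n(\sqrt{j}-\sqrt{j+1})^2 = \Theta(\varepsilon_n/K_n)$, not $\Theta(\varepsilon_n)$. The chaos bound therefore gives $e^{-C(\varepsilon_n/K_n)\log n}$ for the worst pair, not $e^{-C\varepsilon_n\log n}$ as you wrote, and the correct union-bound constraint is $K_n^2 \ll n^{c\varepsilon_n/K_n}$.

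Your polylogarithmic scales $\varepsilon_n = (\log n)^{-1/2}$, $K_n = (\log n)^{1/4}$ happen to survive this correction, so the final conclusion still holds, but the proof as written hinges on a false covariance identity. Two clean fixes: (i) run a genuine stationary Ornstein--Uhlenbeck flow on the disorder, driven by a fresh Brownian motion rather than a single Gaussian copy, for which the claimed $e^{-|t_j-t_k|}$ correlation structure does hold; or (ii) --- simpler, and closer to \cite{chabook} --- take $K_n$ \emph{mutually independent} copies $(\mathbf{X}^{n,j})'$ of the disorder and set $\mathbf{X}^{n,j} := e^{-\varepsilon_n}\mathbf{X}^n + \sqrt{1-e^{-2\varepsilon_n}}\,(\mathbf{X}^{n,j})'$, so that every pair $j\neq k$ has entry-wise correlation exactly $e^{-2\varepsilon_n}$. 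Then the chaos estimate applies uniformly with parameter $2\varepsilon_n$, and all $K_n$ perturbed ground states have the \emph{same} expected energy gap $(1-e^{-\varepsilon_n})\big|\mathbb{E}\inf_{\boldsymbol{\sigma}}\mathcal{H}_n(\mathbf{X}^n;\boldsymbol{\sigma})\big| = \Theta(\varepsilon_n n^{3/2})$, which also tightens the Markov-plus-union-bound computation for near-optimality since you no longer pay a factor of $K_n$ from the grid endpoint.
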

In other words, there are infinitely many states which are significantly away from each other and have near-minimal energy (i.e., their energies are a small proportion away from the ground state energy). We refer to \cite[Section 4.3]{chabook} for the proofs and more details about the above two theorems.

At first glance, the conclusions of \Cref{sktight2} and \Cref{sk:mvp} seem to contradict each other. Upon a closer inspection, we observe that \Cref{sk:mvp} looks at near-ground states in a window of length $\bigO ( \rvert  \inf_{ \boldsymbol{\sigma}^{\prime} \in \mathcal{S}_{n}} \mathcal{H}_n(\mathbf{X}^n;\boldsymbol{\sigma}^{\prime})  \rvert  ) = \bigO(n^{3/2})$, as alluded by \Cref{eq1}; and concludes that there are infinitely many near-ground states in this window which are drastically different from each other. On the other hand, \Cref{sktight2} looks at near-ground states in a window of length $\bigO(\sqrt{n})$ and says that there are at most finitely many very different near-ground states in this shorter window. In the same spirit, we can compare the implication of \Cref{sk:chaos} to those of \Cref{skthm1} and \Cref{skthm2}. First of all, \Cref{sk:chaos} only deals with one perturbed system and compares its energy landscape with the original one, whereas \Cref{skthm1} and \Cref{skthm2} compare ground states of a large number of perturbed systems. We can measure the size of the perturbations in terms of the $L^2$-distance. In case of \Cref{sk:chaos} and small $\varepsilon >0$, this distance is 
\begin{align*}
\sum_{i<j} \mathbb{E} [( X_{n,(i,j)} - X_{n,(i,j)}^{\varepsilon})^2] &= n(n-1)(1-e^{-\varepsilon})  \approx n(n-1) \varepsilon \gg \dfrac{n^2}{\log n},
 \end{align*}
if $\varepsilon \gg 1/\log n$; whereas for \Cref{skthm1} and \Cref{skthm2} these distances are $2$ and $2(n-1)$ respectively. Thus, under smaller perturbations in \Cref{skthm1} and \Cref{skthm2}, the new perturbed ground states are generally close to the original one (in the sense that there are only a finite number of states which appear as ground states in the various perturbed systems), whereas larger perturbations in \Cref{sk:chaos} force the new energy landscape to be completely different.  Also, \Cref{sk:chaos} and \Cref{sk:mvp} require the disorders to be Gaussian. Although the methods applied in \cite{chabook} to prove these results might be extended to non-Gaussian cases, the scope remains unclear. In contrary, the results in this paper apply to almost all standard distributions. 


\subsection{The Edwards--Anderson model} 
The Edwards--Anderson Model is a model of spin glass where the spins are arranged on a $d$-dimensional lattice with only nearest neighbour interactions. Introduced by Edwards and Anderson in their seminal work~\cite{eamodel}, this model can be formally described as follows. Consider a system in a box $\Lambda_n \subset \mathbb{Z}^d$ made of interacting spins $\boldsymbol{\sigma} = (\sigma_i)_{i \in \Lambda_n}$ with $\sigma_i \in \{+1,-1\}$. The Edwards--Anderson (EA) model is defined by the Hamiltonian
\begin{equation}{\label{eahmaildef}}
\mathcal{H}_n( \mathbf{X}^n; \boldsymbol{\sigma}) = - \sum_{i,j \in \Lambda_n : \|i-j\|_1=1} X_{n,\{i,j\}}\sigma_i\sigma_j,
\end{equation}
where $\{X_{n,\{i,j\}}: i,j \in \Lambda_n , \|i-j\|_1=1\}$ are mean zero i.i.d.~random variables, the \textit{disorder} of the EA model. Note that the sum in the Hamiltonian is restricted to pairs of nearest neighbor sites, contrary to all pairs of interactions which are present in the Hamiltonian of SK model. This enforces the nature of the EA model to be heavily influenced by the geometry of the lattice box $\Lambda_n$, a phenomenon that is absent in SK model. We refer to \cite{contucci} for more detailed discussions on properties of EA model. Recent work on the near ground states for EA model at zero temperature can be found in \cite{chaea}. Like the SK model, the configuration with lowest energy yields the steady state of the system, in other words the optimization problem of interest, $\mathscr{P}_n$, in this case is 
\begin{equation}{\label{eaprob}}
\text{minimize } \;\;\mathcal{H}_n(\mathbf{X}^n;\boldsymbol{\sigma})=- \sum_{\|i-j\|_1=1} X_{n,\{i,j\}}\sigma_i\sigma_j, \; \text{over } \boldsymbol{\sigma} \in \{+1,-1\}^{\Lambda_n}.
\end{equation}
The domain of the spins, $\Lambda_n$, is generally taken to be boxes like $\{0,\ldots,n\}^d$ or $\{-n, \ldots, n\}^d$. We need not make such specific choices for $\Lambda_n$. All we shall assume about the sequence of boxes $\{\Lambda_n : n \geq 1\}$ are $(i)\, \Lambda_n$ is finite and connected (in the graph $\mathbb{Z}^d$), $(ii) \, \mathbf{0} \in \Lambda_n$ for all $n \geq 1$ and $(iii) \operatorname{card}(\Lambda_n) \to \infty$ as $n \to \infty$.

Many properties of the SK model are also present in EA model due to the obvious similarity of their Hamiltonians; invariance under $\boldsymbol{\sigma} \to - \boldsymbol{\sigma}$ is one of them. Like SK model, we therefore consider the parameter space
\begin{equation}{\label{eadndef}}
 \mathcal{S}_n := \{\boldsymbol{\sigma}=(\sigma_i)_{i \in \Lambda_n} : \sigma_{\mathbf{0}} =+1, \sigma_i \in \{+1,-1\}, \; \forall \; i \in \Lambda_n \setminus \{\mathbf{0} \} \}.
\end{equation}
Instead of the normalized Hamming distance, we metrize this space with the following distance:
\begin{align}{\label{defd:ea}}
d_n (\boldsymbol{\sigma}, \boldsymbol{\sigma}^{\prime} ) &= \biggl(\dfrac{1}{\operatorname{card}(\Lambda_n)} \sum_{i,j \in \Lambda_n : \|i-j\|_1=1}(\sigma_i\sigma_j-\sigma^{\prime}_i\sigma^{\prime}_j)^2 \biggr)^{1/2}, \notag\\
&\qquad \qquad \qquad \; \forall \; \boldsymbol{\sigma} = (\sigma_i)_{i \in \Lambda_n}, \boldsymbol{\sigma}^{\prime}=(\sigma_i^{\prime})_{i \in \Lambda_n} \in \mathcal{S}_n.
\end{align} 
This is the natural metric which arises from the Hamiltonian for EA model. The same can be said for SK model also albeit with a different scaling, see the metric $d_{2,n}$ for SK model. We were able to demonstrate in the proof of \Cref{sktight1} that $d_{2,n}(\boldsymbol{\sigma},\boldsymbol{\sigma}^{\prime})$ gives us enough information about the normalized Hamming distance between $ \boldsymbol{\sigma}$ and $\boldsymbol{\sigma}^{\prime}$; which enabled us to take the normalized Hamming distance as the metric $d_n$ on the parameter space for SK model. Unfortunately, similar kind of statement is not possible to make for EA model. For example consider $d=2$ and $\Lambda_n = \{-2n, \ldots,2n\}^2$.  Define $\boldsymbol{\sigma} \in \mathcal{S}_n$ as the configuration which assigns spin $+1$ to all the sites. Now for any $\varepsilon \in [0,2]$, define $\boldsymbol{\sigma}^{\varepsilon}$ in the following fashion.
$$ \sigma^{\varepsilon}_{(k,l)} = \begin{cases}
+1, & \text{ if $k$ is even and } 2n-\lfloor 2\varepsilon n\rfloor \leq l \leq 2n, \\
-1, & \text{ if $k$ is odd and } 2n-\lfloor 2\varepsilon n\rfloor \leq l \leq 2n,\\
+1, & \text{ if } 0 \leq k \leq 2n \text{ and } -2n \leq l < 2n-\lfloor 2\varepsilon n\rfloor,\\
-1, & \text{ if } -2n \leq k <0 \text{ and } -2n \leq l < 2n-\lfloor 2\varepsilon n\rfloor,
\end{cases}  
\forall \; k,l \in \{-2n,\ldots,2n\}.
$$
An easy computation yields that 
$$ \dfrac{1}{\operatorname{card}(\Lambda_n)} \sum_{i \in \Lambda_n} \mathbbm{1}(\sigma_i \neq \sigma^{\varepsilon}_i) \sim \dfrac{1}{2}\, \text{ and }  \, \dfrac{1}{\operatorname{card}(\Lambda_n)} \sum_{i,j \in \Lambda_n : \|i-j\|_1=1}(\sigma_i\sigma_j-\sigma^{\varepsilon}_i\sigma^{\varepsilon}_j)^2 \sim 2\varepsilon, \; \text{ as } n \to \infty.$$
Nevertheless, we need to argue that $d_n$, as defined in \Cref{defd:ea}, is a valid metric on $\mathcal{S}_n$ in the context of EA model. It is evident that $d_n$ is at least a pseudo-metric. Now observe that $d_n(\boldsymbol{\sigma},\boldsymbol{\sigma}^{\prime}) =0$ implies that $\sigma_i\sigma_j= \sigma_i^{\prime}\sigma_j^{\prime}$ for all $i,j \in \Lambda_n$ with $\|i-j|_1=1$. Since $\sigma_{\boldsymbol{0}} = \sigma^{\prime}_{\boldsymbol{0}} =1$ and $\Lambda_n$ is connected, we can conclude that $\boldsymbol{\sigma}=\boldsymbol{\sigma}^{\prime}$.

The disorder variables are again the random inputs; hence  $I_n = \{\{i,j \} : i,j \in \Lambda_n, \|i-j\|_1=1\}$. The connected-ness assumption on $\Lambda_n$ and the fact that each lattice point in $\mathbb{Z}^d$ has degree $2d$ guarantees that
\begin{equation}{\label{knbound}}
\operatorname{card}(\Lambda_n) -1 \leq \operatorname{card}(I_n)=:k_n \leq d\operatorname{card}(\Lambda_n).
\end{equation}
 The optimization problem in \Cref{eaprob}  satisfies \Cref{ass:linear} with $H_{n,\{i,j\}}(\boldsymbol{\sigma}) = -\sigma_i\sigma_j$, for all $i,j,\boldsymbol{\sigma}$ and \Cref{item:D1} is satisfied with $\lambda=0, \varsigma_{n,0} \equiv 1$.  We, therefore apply \Cref{genthm2} to get a tightness result for EA model optimization problem.

\begin{prop}{\label{eatight}}
Consider EA model as defined in \Cref{eahmaildef} where the disorder variables have common distribution $\mathcal{P}$ with density $f$ having zero mean and  satisfying \Cref{ass:p}. Then for any $\theta_n = \bigO(1)$, the sequence  $\{P(\mathcal{N}_{n,\theta_n}, d_n, \varepsilon) : n \geq 1\}$ is  tight. 
\end{prop}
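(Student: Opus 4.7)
The plan is to apply the linearized result \Cref{genthm2} to this setting; the key observation is that the metric $d_n$ chosen on $\mathcal{S}_n$ is, up to a known deterministic scaling, exactly the induced pseudo-metric $d_{2,n}$ arising from the objective, which makes condition \ref{item:D2} trivial to verify.

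First I would note that \Cref{ass:linear} is satisfied with $H_{n,\{i,j\}}(\boldsymbol{\sigma}) = -\sigma_i\sigma_j$, and that \Cref{unique} and \Cref{ass:meas} hold by \Cref{uniqueremark} and \Cref{meanupper} (the disorder has a density so ties occur with probability zero, and $\mathcal{S}_n$ is finite so measurability is automatic). Since $|H_{n,\{i,j\}}(\boldsymbol{\sigma})|=1$ uniformly, condition \ref{item:D1} holds with $\lambda=0$ and $\varsigma_{n,0}\equiv 1$.

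Next, a direct computation yields
\begin{equation*}
d_{2,n}(\boldsymbol{\sigma},\boldsymbol{\sigma}')^2 \;=\; \sum_{\substack{i,j\in\Lambda_n\\ \|i-j\|_1=1}}(\sigma_i\sigma_j-\sigma_i'\sigma_j')^2 \;=\; \operatorname{card}(\Lambda_n)\, d_n(\boldsymbol{\sigma},\boldsymbol{\sigma}')^2,
\end{equation*}
so $d_{2,n}=\sqrt{\operatorname{card}(\Lambda_n)}\,d_n$. Consequently, for every $\mathcal{B}\subseteq\mathcal{S}_n$ and every $\varepsilon>0$,
\begin{equation*}
P(\mathcal{B},d_n,\varepsilon) \;=\; P\!\left(\mathcal{B},d_{2,n},\sqrt{\operatorname{card}(\Lambda_n)}\,\varepsilon\right).
\end{equation*}
Setting $\tau_n=\sqrt{\operatorname{card}(\Lambda_n)}$, $\kappa_\varepsilon=\varepsilon$, and $K_\varepsilon=1$, this is exactly condition \ref{item:D2} (with equality, no $\limsup$ needed).

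Finally I would check the growth condition. With $\lambda=0$, the constraint in \Cref{genthm2} becomes $\theta_n=\mathcal{O}\!\left(\tau_n^{2}k_n^{-1}\varsigma_{n,0}^{-1}\right)=\mathcal{O}\!\left(\operatorname{card}(\Lambda_n)/k_n\right)$, which by \Cref{knbound} is $\mathcal{O}(1)$. Thus for any $\theta_n=\mathcal{O}(1)$, the hypotheses of \Cref{genthm2} are satisfied and the conclusion that $\{P(\mathcal{N}_{n,\theta_n},d_n,\varepsilon):n\geq 1\}$ is tight follows immediately. There is no serious obstacle here: the choice of $d_n$ in \eqref{defd:ea} was tailor-made so that $d_{2,n}$ is a deterministic rescaling of $d_n$, sidestepping the non-trivial combinatorial packing argument that was needed in the SK model proof (\Cref{sktight1}).
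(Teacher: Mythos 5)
Your proposal is correct and follows essentially the same route as the paper: the paper's proof likewise observes that $d_{2,n}=\sqrt{\operatorname{card}(\Lambda_n)}\,d_n$ is a proper metric (so \Cref{unique} and \Cref{ass:meas} hold), takes $\tau_n=\sqrt{\operatorname{card}(\Lambda_n)}$ in \ref{item:D2}, and uses \Cref{knbound} to get $k_n=\Theta(\operatorname{card}(\Lambda_n))$ before applying \Cref{genthm2}. Your write-up is just a slightly more explicit version of the same argument.
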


\begin{proof}
The proof is almost identical to the proof of \Cref{sktight1}. 
Since 
\[
d_n = \frac{d_{2,n}}{\sqrt{\operatorname{card}(\Lambda_n)}}
\]
is a valid metric, \Cref{unique} and \Cref{ass:meas} are guaranteed by existence of density $f$ with finite mean and finiteness of $\Lambda_n$. The assertion is now obvious after noting that $\tau_n = (\operatorname{card}(\Lambda_n))^{1/2}$ satisfies \Cref{item:D2} and $k_n = \Theta(\operatorname{card}(\Lambda_n))$ by \Cref{knbound}.
\end{proof}

We shall consider two types of perturbations for EA model. In the first case, the disorder associated with one pair of neighboring sites is replaced by an i.i.d.~copy; whereas in the later case we change all the disorders associated with a particular site. For both of the schemes, we are changing uniformly bounded many random inputs (at most $2d$ many), and hence we can apply \Cref{eatight} and \Cref{mostprop} to get the following stability result.

\begin{thm}{\label{eathm}}
Consider EA model as defined in \Cref{eahmaildef} where the disorder variables have common distribution $\mathcal{P}$ with density $f$ having zero mean and  satisfying Assumption~\ref{ass:p}. 
This optimization problem is stable under small perturbations for both of the following choices of perturbation blocks.
\begin{enumerate}
\item $\mathcal{J}_n = \{J_{n,\{i,j\}} : i,j\in \Lambda_n, \|i-j\|_1=1 \}$, where $J_{n,\{i,j\}} = \{\{i,j \}\}$.
\item $\mathcal{J}_n = \{J_{n,i} : i \in \Lambda_n \}$, where $J_{n,i} = \{\{i,j \} : j \in \Lambda_n, \|j-i\|_1=1\}$.
\end{enumerate}
\end{thm}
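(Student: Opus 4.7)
The plan is to observe that \Cref{eathm} is an immediate corollary of \Cref{mostprop} once the tightness input from \Cref{eatight} has been supplied. I would first verify the structural hypotheses of \Cref{mostprop}: the EA problem satisfies \Cref{ass:linear} with $H_{n,\{i,j\}}(\boldsymbol{\sigma}) = -\sigma_i\sigma_j$, so that $\sup_{\{i,j\} \in I_n} \sup_{\boldsymbol{\sigma} \in \mathcal{S}_n} |H_{n,\{i,j\}}(\boldsymbol{\sigma})| = 1$, and one may set $\varsigma_n \equiv 1$. Moreover, the zero-mean assumption on the disorder distribution $\mathcal{P}$ forces $\mathbb{E}_{\mathcal{P}}|X| < \infty$, providing the integrability requirement of \Cref{mostprop}. \Cref{unique} and \Cref{ass:meas} are already in force in \Cref{eatight} under the present assumptions on $f$.

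Next, the tightness condition demanded by \Cref{mostprop} reduces, since $\varsigma_n \equiv 1$, to the statement that $\{P(\mathcal{N}_{n, C}, d_n, \varepsilon) : n \geq 1\}$ is a tight sequence of random variables for every fixed $C, \varepsilon > 0$. This is precisely what \Cref{eatight} delivers when applied to the constant sequence $\theta_n \equiv C$, which trivially satisfies the growth condition $\theta_n = \bigO(1)$.

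Finally, I would verify the uniform cardinality bound on the perturbation blocks for both schemes. For scheme (1), each $J_{n,\{i,j\}}$ is a singleton, so $\sup_n \sup_l |J_{n,l}| = 1$. For scheme (2), each $J_{n,i}$ consists of the edges of $\Lambda_n \subseteq \mathbb{Z}^d$ incident to the site $i$, whose cardinality is bounded by $2d$, since every vertex of $\mathbb{Z}^d$ has degree exactly $2d$. In both cases $\sup_n \sup_l |J_{n,l}|$ is finite, so \Cref{mostprop} is applicable and yields the stability conclusion directly. Since the argument is a straightforward bookkeeping exercise once \Cref{eatight} is in hand, I do not foresee any genuine obstacle here: all of the essential analytic work has already been absorbed into \Cref{eatight} (and, through it, into the linearized tightness theorem \Cref{genthm2}).
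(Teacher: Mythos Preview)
Your proposal is correct and follows exactly the paper's own approach: invoke \Cref{eatight} to supply the tightness hypothesis of \Cref{mostprop}, check that $\varsigma_n\equiv 1$ and $\mathbb{E}_{\mathcal{P}}|X|<\infty$, and observe that both perturbation schemes replace at most $2d$ inputs so that \Cref{mostprop} applies directly.
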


As we did for the case of SK model in \Cref{skmodel}, we briefly discuss results on chaos in the context of EA model and compare them with our results. We again focus on the situation where the disorder variables are i.i.d.~standard Gaussian. The recent paper \cite{chaea} proves the chaotic nature of ground states in the EA model for two types of perturbations. Fix $\varepsilon \in (0,1)$ small and consider
\begin{enumerate}[label=(\Roman*)]
\item a perturbed disorder system $\{X_{n,\{i,j\}}^{(\varepsilon)}\}$ such that $\{(X_{n,\{i,j\}},X_{n,\{i,j\}}^{(\varepsilon)} ) \}$ is a collection of i.i.d.~bivariate mean-zero Gaussian random vectors with unit  variances and correlation $1-\varepsilon$, or 
\item replacing each disorder variable (simultaneously and independently) with probability $\varepsilon$. 
\end{enumerate}
We then consider the (almost surely unique) ground state of the original and the perturbed systems, denoted by $\boldsymbol{\sigma}$ and $\boldsymbol{\sigma}^{(\varepsilon)}$ respectively. Under chaotic nature of the ground state, we would expect that the overlap 
\[
R_{\varepsilon} := \frac{1}{\operatorname{card}(\Lambda_n)}\sum_{i \in \Lambda_n} \sigma_i \sigma_i^{(\varepsilon)}
\]
is typically small for some small $\varepsilon$. For the sake of simplicity, let us take $\Lambda_n = \left\{-n, \ldots,n\right\}^d$. Then \cite[Corollary 1]{chaea} shows that for any $\varepsilon >0$ and for both types of perturbations as described above, we have $\mathbb{E} R_{\varepsilon}^2 \leq Cn^{-d}\varepsilon^{-d}$, where $C$ is a finite constant depending only on dimension $d$. Thus, $R_{\varepsilon} \approx 0$ with high probability whenever $\varepsilon \gg n^{-1}$. This result does not contradict \Cref{eathm} for the reasons similar to the ones outlined at the end of \Cref{skmodel}. In particular, if $\varepsilon \gg n^{-1}$, the $L^2$ measure of perturbation( as discussed at the end of \Cref{skmodel}) is $\gg n^{d-1}$ for the setup in \cite{chaea}, whereas in \Cref{eathm} the same measure is $\bigO(1)$. For similar results on chaotic ground state for general $\Lambda_n$, one can look at \cite[Theorem 1]{chaea}.


\subsection{Graph optimization problems on weighted complete graphs}
\label{graphcom}

In this subsection we shall analyze a class of optimization problems regarding finite sub-graphs of an weighted complete graph. We shall focus on generic symmetric graph optimization problems where we take a complete graph on a fixed number of points, assign i.i.d.~weights to the edges and find out the shortest graph with these points as vertices among a fixed collection of pre-specified graphs. This can be formulated in the following form.
\begin{defn}[Symmetric graph optimization problem on weighted complete graphs]{\label{symgraph:def}}
A symmetric graph optimization problem on a weighted complete graph has following components:
\begin{enumerate}[label=(GrOpt:\Alph*)]
	\item \label{item:CG1} Number of input points $p_n \in \mathbb{N}$ with $p_n \uparrow \infty$ as $n \to \infty$.  Typically $p_n$ will be $\Theta(n)$. Let $E_n := \{\{i,j\} : i,j \in [p_n], i \neq j \}$ denote the edge set of the complete graph defined on the vertex set $V_n:=[p_n]$. 
	\item \label{item:CG2} A collection of edge-weights, $\{X_{n,e} : e \in E_n\}$ where $X_{n,e}$ is the cost associated with traversing the edge $e$.  
	\item \label{item:CG3} A collection of graphs $\mathcal{G}_n$ with vertex set $[p_n]=\{1,\ldots,p_n\}$. For any graph $G \in \mathcal{G}_n$, its edge set will be denoted by $E(G)$. We shall further assume following two conditions on the collection $\mathcal{G}_n$.
	\begin{enumerate}
	\item $\sup_{n \geq 1} \sup_{G \in \mathcal{G}_n} \operatorname{card}(E(G))/p_n  =: e_{\text{max}} < \infty.$ 
	\item For any $\pi$, permutation of the set $[p_n]$, and any $G \in \mathcal{G}_n$, we introduce the notation $\pi(G)$ to denote the graph with edge set $\{\{\pi(i),\pi(j)\} : \{i,j\} \in E(G) \}$. We assume the collection $\mathcal{G}_n$ to be invariant under permutation in the following sense:
	$$ G \in \mathcal{G}_n \Rightarrow \pi(G) \in \mathcal{G}_n, \; \forall \; \pi \in \operatorname{Symm}(p_n).$$
	\end{enumerate} 

\end{enumerate}
The graph optimization problem can be written as 
$$ \text{Minimize} \; \sum_{e \in E(G)} X_{n,e}, \;\text{ over } G \in \mathcal{G}_n.$$
\end{defn}

The simplest stochastic model of this problem assumes that $\{X_{n,e} : e \in E_n\}$ is an i.i.d.~collection from a probability measure $\mathcal{P}$ supported on $[0, \infty)$. The most well-studied examples in this situation are uniform  distribution on $(0,1)$ and exponential distributions. This simple stochastic model does not guarantee any kind of triangle inequality for the edge-weights. We could have managed to ensure validity of the triangle inequality if instead of choosing the edge-weights at random, we would have chosen the points at random from the Euclidean space and the edge-weights were taken to be Euclidean lengths of those edges. This would give rise to a completely different problem with vastly different properties and techniques to solve it. See \Cref{sec:nonlin} for analysis of such models in our context.

A large number of interesting problems fall into the category  described in \Cref{symgraph:def}; among them most well-understood examples include the traveling salesman problem (TSP), the minimum matching problem (MMP) and the minimum spanning tree (MST). 
For TSP, we have $p_n=n$ and we want to find the shortest Hamiltonian cycle  which passes through all of the $n$ points; i.e. $\mathcal{G}_n$ is collection of all Hamiltonian cycles. For MMP, we have $p_n=2n$ and we want to find the shortest perfect matching, i.e., $\mathcal{G}_n$ is the collection of all perfect matching on those $2n$ vertices. Similarly, for MST, we have $p_n=n$ and our goal is to find the shortest tree containing the $n$ points; in other words $\mathcal{G}_n$ is the collection of all spanning trees. It is easy to check that all of these examples satisfy the assumptions mentioned in \Cref{symgraph:def}. We refer interested readers to \cite{karp,frieze,friezetsp,berry} and references therein for detailed analyses of these particular problems, especially the limiting structure of the optimal graph and limiting behavior of the optimal length as the number of vertices becomes large. 

It is quite evident that the optimization problem described in \Cref{symgraph:def} satisfies \Cref{ass:linear} with $\mathcal{S}_n = \mathcal{G}_n$, $I_n = E_n, k_n = \operatorname{card}(I_n)=p_n(p_n-1)/2$ and $H_{n,e}(G) := \mathbbm{1}(e \in E(G))$ for all $e \in E_n, G \in \mathcal{G}_n$. Thus the objective function becomes
$$ \psi_n((x_{n,e})_{e \in E_n}; G) = \sum_{e \in E_n} x_{n,e}H_{n,e}(G) = \sum_{e \in E(G)} x_{n,e}.$$
Moreover, \Cref{item:D1} is satisfied for $\lambda=0$ and $\varsigma_{n,0} \equiv 1$. We endow the space $\mathcal{G}_n$ with metric $d_n$ which will denote the normalized Hamming distance, i.e.
$$ d_n(G_1,G_2) := \dfrac{1}{p_n} \sum_{e \in E_n} \rvert \mathbbm{1}(e \in E(G_1)) - \mathbbm{1}(e \in E(G_2))  \rvert = \dfrac{\operatorname{card}( E(G_1) \Delta E(G_2) )}{p_n}, \; \forall \; G_1, G_2 \in \mathcal{G}_n.$$
The choice of the normalizing constant is made in such a way that $0 \leq d_n \leq 2e_{\text{max}}$, as follows from \Cref{item:CG3}. With these notations having been introduced, we apply \Cref{genthm2} to get a tightness result for near optimal solution of symmetric graph optimization problem on weighted complete graph.

\begin{prop}{\label{tightgraphopt}}
Consider the optimization problem in Definition~\ref{symgraph:def} where the edge-weight variables are i.i.d.~with common distribution $\mathcal{P}$ with density $f$ supported on $[0, \infty)$, having finite mean and  satisfying \Cref{ass:p}. Then for any $\varepsilon >0$, the sequence $\{P(\mathcal{N}_{n,\theta_n}, d_n, \varepsilon) : n \geq 1\}$ is  tight if $\theta_n=\bigO(1/p_n)$. 
\end{prop}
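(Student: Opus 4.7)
The plan is to apply Theorem \ref{genthm2}, the linearized version of Theorem \ref{genthm}, which is tailor-made for problems of exactly this form. The problem satisfies Assumption \ref{ass:linear} with $d=1$ and $H_{n,e}(G) = \mathbbm{1}(e \in E(G)) \in \{0,1\}$, the input distribution has a density with finite mean and satisfies Assumption \ref{ass:p} by hypothesis, so I only need to verify Assumptions \ref{unique} and \ref{ass:meas}, check the two structural conditions \ref{item:D1} and \ref{item:D2} of Theorem \ref{genthm2}, and then compute the resulting growth rate for $\theta_n$.

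First I would observe that $\mathcal{G}_n$ is finite and, for any $G_1 \neq G_2 \in \mathcal{G}_n$, the induced pseudo-metric
\[
d_{2,n}(G_1,G_2) = \biggl(\sum_{e \in E_n}\bigl(H_{n,e}(G_1)-H_{n,e}(G_2)\bigr)^2\biggr)^{1/2} = \sqrt{\operatorname{card}\bigl(E(G_1) \Delta E(G_2)\bigr)}
\]
is strictly positive, so $d_{2,n}$ is a proper metric. Remark \ref{uniqueremark} then delivers Assumption \ref{unique}, and Remark \ref{meanupper}, combined with $|H_{n,e}(G)| \leq 1$ and $\mathbb{E}_\mathcal{P}|X|<\infty$, gives Assumption \ref{ass:meas}. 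The bound $|H_{n,e}(G)| \leq 1$ also shows that condition \ref{item:D1} holds with $\lambda = 0$ and $\varsigma_{n,0} \equiv 1$.

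The crucial observation for \ref{item:D2} is the exact identity
\[
d_{2,n}(G_1,G_2)^2 = \operatorname{card}\bigl(E(G_1) \Delta E(G_2)\bigr) = p_n \, d_n(G_1,G_2),
\]
so $d_n(G_1,G_2) > \varepsilon$ is equivalent to $d_{2,n}(G_1,G_2) > \sqrt{p_n\varepsilon}$. Consequently any $\varepsilon$-packing of $\mathcal{B} \subseteq \mathcal{G}_n$ under $d_n$ is automatically a $\sqrt{p_n\varepsilon}$-packing under $d_{2,n}$, which yields $P(\mathcal{B},d_n,\varepsilon) \leq P(\mathcal{B},d_{2,n},\sqrt{p_n\varepsilon})$ for every $\varepsilon>0$ and $\mathcal{B}$. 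Hence \ref{item:D2} is satisfied with $\tau_n = \sqrt{p_n}$, $\kappa_\varepsilon = \sqrt{\varepsilon}$, and $K_\varepsilon = 1$.

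Finally, plugging $\lambda = 0$, $\varsigma_{n,0} \equiv 1$, $\tau_n = \sqrt{p_n}$, and $k_n = p_n(p_n-1)/2$ into the growth condition $\theta_n = \bigO(\tau_n^{2+\lambda} k_n^{-1} \varsigma_{n,\lambda}^{-1-\lambda})$ of Theorem \ref{genthm2} gives $\theta_n = \bigO(p_n/p_n^2) = \bigO(1/p_n)$, which is exactly the range claimed. There is really no serious obstacle here: the proof is a short checklist because $d_n$ is, up to rescaling, the square of the natural pseudo-metric $d_{2,n}$ induced by the linear objective, which trivializes the otherwise delicate condition \ref{item:D2}. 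The real work for these graph problems is done upstream in the general machinery of Theorem \ref{genthm2}, and the permutation-invariance hypothesis in \ref{item:CG3} is not needed for this tightness statement (it will be needed only for the subsequent stability argument).
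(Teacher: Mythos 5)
Your proof is correct and follows essentially the same route as the paper: verifying Assumptions \ref{unique} and \ref{ass:meas} through the identity $d_{2,n}^2 = p_n\, d_n$, invoking Remarks \ref{uniqueremark} and \ref{meanupper}, taking $\lambda = 0$, $\varsigma_{n,0}\equiv 1$, $\tau_n = \sqrt{p_n}$, and plugging $k_n = p_n(p_n-1)/2$ into the growth condition of Theorem \ref{genthm2}. Your observation that the permutation-invariance hypothesis in \ref{item:CG3} is not used here is also correct; the paper makes the same point immediately after the proposition.
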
 

\begin{proof}
For this problem, the canonical pseudo-metric $d_{2,n}$ can be evaluated as follows: for any $G_1, G_2 \in \mathcal{G}_n$,
\begin{equation}{\label{d2n&dn}}
 d_{2,n}(G_1,G_2)^2 = \sum_{e \in E_n} ( \mathbbm{1}(e \in E(G_1))-\mathbbm{1}(e \in E(G_2)))^2 =  p_nd_n(G_1,G_2).
\end{equation}
It shows that $d_{2,n}$ is a proper metric. Therefore, existence of densities, non-negativity and integrability for the edge-weight variables guarantee that \Cref{unique} and \Cref{ass:meas} are satisfied. 
We have already discussed that \Cref{item:D1} is satisfied for $\lambda=0$ and $\varsigma_{n,0} \equiv 1$. On the other hand, the expression in \Cref{d2n&dn} shows that  $\ref{item:D2}$ is satisfied with $\tau_n = \sqrt{p_n}$. Since $k_n=p_n(p_n-1)/2 \sim p_n^2/2$, applying \Cref{genthm2}, we can now complete the proof.
\end{proof}

\Cref{tightgraphopt} does not require the ``invariance under permutation" assumption we made about $\mathcal{G}_n$ in \Cref{item:CG3}. Under that assumption, $(\pi,G) \mapsto \pi(G)$ defines a group action on the set $\mathcal{G}_n$ with respect to the group $\operatorname{Symm}(p_n)$ and hence for any $\pi \in \operatorname{Symm}(p_n)$ we have 
\begin{equation}{\label{same}}
\mathcal{G}_n = \{G : G \in \mathcal{G}_n\} = \{\pi(G) : G \in \mathcal{G}_n\}.
\end{equation}
The following lemma is a by-product of this observation.

\begin{lmm}{\label{probbound}}
Consider the optimization problem  in Definition~\ref{symgraph:def} where the edge-weight variables are i.i.d.~with common distribution $\mathcal{P}$ having density $f$. Let $\widehat{G}_n(\mathbf{X}^n)$ is the optimal graph for edge-weight input $\mathbf{X}^n = (X_{n,e})_{e \in E_n}$. Then for any $e \in E_n$, we have $\mathbb{P} ( e \in E(\widehat{G}_n(\mathbf{X}^n))) \leq 2e_{\text{max}}/(p_n-1).$
\end{lmm}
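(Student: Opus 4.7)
The plan is to exploit the two symmetries available: the edge weights are i.i.d.\ and the family $\mathcal{G}_n$ is invariant under relabelings of the vertex set. Together these should force the marginal probability $\mathbb{P}(e \in E(\widehat{G}_n(\mathbf{X}^n)))$ to be the same for every edge $e \in E_n$, after which a simple expectation bound on the total number of chosen edges will finish things off.

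First I would fix an arbitrary permutation $\pi \in \operatorname{Symm}(p_n)$ and consider the relabeled weights $Y_{n,e} := X_{n,\pi(e)}$, where $\pi$ acts on $E_n$ by $\pi(\{i,j\}) = \{\pi(i),\pi(j)\}$. Because the $X_{n,e}$'s are i.i.d., $\mathbf{Y}^n \stackrel{d}{=} \mathbf{X}^n$. On the other hand, $\sum_{e \in E(G)} Y_{n,e} = \sum_{e' \in E(\pi(G))} X_{n,e'}$, and since \Cref{item:CG3} says $\{\pi(G) : G \in \mathcal{G}_n\} = \mathcal{G}_n$, minimizing over $\mathcal{G}_n$ with weights $\mathbf{Y}^n$ is the same as minimizing over $\mathcal{G}_n$ with weights $\mathbf{X}^n$ after applying $\pi$. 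Hence, under the uniqueness granted by \Cref{unique} (which is available since $\mathcal{P}$ has a density, so ties between distinct $G$'s have probability zero), $\widehat{G}_n(\mathbf{Y}^n) = \pi^{-1}(\widehat{G}_n(\mathbf{X}^n))$. Combining this identity with $\mathbf{Y}^n \stackrel{d}{=} \mathbf{X}^n$ gives $\mathbb{P}(e \in E(\widehat{G}_n(\mathbf{X}^n))) = \mathbb{P}(\pi(e) \in E(\widehat{G}_n(\mathbf{X}^n)))$ for every $\pi$ and every $e$.

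Next I would invoke the fact that $\operatorname{Symm}(p_n)$ acts transitively on unordered pairs of distinct vertices, so the common value $q_n := \mathbb{P}(e \in E(\widehat{G}_n(\mathbf{X}^n)))$ does not depend on $e \in E_n$. Summing over all edges and using the uniform bound $\operatorname{card}(E(G)) \leq e_{\max} p_n$ from \Cref{item:CG3}, I obtain
\[
q_n \cdot \frac{p_n(p_n-1)}{2} \;=\; \sum_{e \in E_n} \mathbb{P}(e \in E(\widehat{G}_n(\mathbf{X}^n))) \;=\; \mathbb{E}[\operatorname{card}(E(\widehat{G}_n(\mathbf{X}^n)))] \;\leq\; e_{\max} p_n,
\]
which rearranges to $q_n \leq 2 e_{\max}/(p_n-1)$.

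The only subtle point is the measurability/uniqueness step, i.e., justifying $\widehat{G}_n(\mathbf{Y}^n) = \pi^{-1}(\widehat{G}_n(\mathbf{X}^n))$ almost surely. This is a routine consequence of \Cref{unique} together with finiteness of $\mathcal{G}_n$: the event that two distinct graphs in $\mathcal{G}_n$ achieve the same total weight is a finite union of hyperplane events in the continuous vector $\mathbf{X}^n$, hence has probability zero. Everything else is just transitivity of the symmetric group on edges and linearity of expectation.
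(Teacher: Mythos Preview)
Your proof is correct and follows essentially the same approach as the paper: both exploit the i.i.d.\ edge-weights together with the permutation invariance of $\mathcal{G}_n$ to show that $\mathbb{P}(e\in E(\widehat{G}_n(\mathbf{X}^n)))$ is the same for every edge $e$, and then sum over edges and use $\operatorname{card}(E(G))\le e_{\max}p_n$. The only cosmetic difference is that the paper phrases the symmetry as $\mathbb{P}(\widehat{G}_n=G^*)=\mathbb{P}(\widehat{G}_n=\pi(G^*))$ and sums over graphs containing a given edge, whereas you argue directly that $\pi^{-1}(\widehat{G}_n(\mathbf{X}^n))\stackrel{d}{=}\widehat{G}_n(\mathbf{X}^n)$.
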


\begin{proof}
Existence of the density $f$ guarantees that $\widehat{G}_n(\mathbf{X}^n)$ is uniquely defined. Since edge-weights are independent and identically distributed, it is easy to observe that for any $\pi \in \operatorname{Symm}(p_n)$,
$$ \{ \psi_n(\mathbf{X}^n; G) = \sum_{e \in E(G)} X_{n,e} : G \in \mathcal{G}_n\} \stackrel{d}{=} \biggl \{ \psi_n(\mathbf{X}^n; \pi(G)) = \sum_{e \in E(\pi(G))} X_{n,e} : G \in \mathcal{G}_n\biggr\},$$
and hence by \Cref{same} we obtain for any $G^* \in \mathcal{G}_n$,
\begin{align}
\mathbb{P} (\widehat{G}_n(\mathbf{X}^n)  = G^* ) &= \mathbb{P} ( \psi_n(\mathbf{X}^n; G^*) \leq \psi_n(\mathbf{X}^n; G), \; \forall \; G \in \mathcal{G}_n) \nonumber \\
 &= \mathbb{P} ( \psi_n(\mathbf{X}^n; \pi(G^*)) \leq \psi_n(\mathbf{X}^n; \pi(G)), \; \forall \; G \in \mathcal{G}_n) \nonumber \\
 & = \mathbb{P} ( \psi_n(\mathbf{X}^n; \pi(G^*)) \leq \psi_n(\mathbf{X}^n; G), \; \forall \; G \in \mathcal{G}_n) = \mathbb{P} (\widehat{G}_n(\mathbf{X}^n)  = \pi(G^*) ). \label{same2}
\end{align} 
Fix $e \in E_n$ and for any $e^{\prime} \in E_n$, let $\pi_{e,e^{\prime}}$ be a permutation on $[p_n]$ which takes endpoints of $e$ to endpoints of $e^{\prime}$; hence $e \in E(G)$ if and only if $e^{\prime} \in E(\pi_{e,e^{\prime}}(G))$, for any $G \in \mathcal{G}_n$. Combining this observation with \Cref{same2} and the fact that for any fixed $\pi$ the map $G \mapsto \pi(G)$ is a bijection, we obtain the following:
\begin{align*}
\mathbb{P} ( e \in E(\widehat{G}_n(\mathbf{X}^n))) &= \sum_{G : E(G) \ni e} \mathbb{P} (\widehat{G}_n(\mathbf{X}^n)   = G )  = \sum_{G : E(G) \ni e} \mathbb{P} (\widehat{G}_n(\mathbf{X}^n)  = \pi_{e,e^{\prime}}(G) ) \\
& = \sum_{G : E(\pi_{e,e^{\prime}}(G)) \ni e^{\prime}} \mathbb{P} (\widehat{G}_n(\mathbf{X}^n)  = \pi_{e,e^{\prime}}(G) ) \\
& = \sum_{G^{\prime} : E(G^{\prime}) \ni e^{\prime}} \mathbb{P} (\widehat{G}_n(\mathbf{X}^n)  =G^{\prime} ) = \mathbb{P} ( e^{\prime} \in E(\widehat{G}_n(\mathbf{X}^n))).
\end{align*}
Therefore,
\begin{align*}
\operatorname{card}(E_n)\mathbb{P} ( e \in E(\widehat{G}_n(\mathbf{X}^n))) = \sum_{e^{\prime} \in E_n} \mathbb{P} ( e^{\prime} \in E(\widehat{G}_n(\mathbf{X}^n))) &= \mathbb{E} \sum_{e^{\prime} \in E_n} \mathbbm{1}( e^{\prime} \in E(\widehat{G}_n(\mathbf{X}^n)))\\
& = \mathbb{E} \operatorname{card}( E(\widehat{G}_n(\mathbf{X}^n))) \leq e_{\text{max}} p_n.
\end{align*}
This completes the proof.
\end{proof}

We now consider the perturbation scheme which replaces one edge-weight variable at a time by an i.i.d.~copy. We get the following stability result for the symmetric graph optimization problem with the aid of \Cref{probbound}.

\begin{thm}{\label{graphopt:thm}}
Consider the optimization problem in Definition~\ref{symgraph:def} where the edge-weight variables are i.i.d.~with common distribution $\mathcal{P}$ with density $f$ supported on $[0, \infty)$, having finite mean and  satisfying \Cref{ass:p}. This optimization problem is stable under small perturbations with perturbation blocks $\mathcal{J}_n = \{J_{n,e} : e \in E_n\}$ where $J_{n,e} = \{e\}$ for all $e \in E_n$. 
\end{thm}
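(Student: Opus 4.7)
The plan is to apply \Cref{strat} directly, with window sequence $\theta_n$ of order $1/p_n$ and the most natural choice of sister solution, $\omega^*_{n,e} := \widehat{\omega}_n(\mathbf{X}^n_e)$, the optimizer for the perturbed input $\mathbf{X}^n_e$ itself. Note that \Cref{mostprop} does not directly apply: the uniform bound $\varsigma_n = 1$ on $\|H_{n,e}\|_2 = \mathbbm{1}(e \in E(G))$ only grants stability at window $\theta_n = \bigO(1)$, and the tightness hypothesis in \Cref{mostprop} is moreover only known at the finer scale $\theta_n = \bigO(1/p_n)$ given by \Cref{tightgraphopt}. The key point is that \Cref{probbound} provides an averaged bound on $H_{n,e}$, reflecting the sparsity of any particular graph in $\mathcal{G}_n$ (which has only $\bigO(p_n)$ edges out of $\binom{p_n}{2}$), and this is exactly what allows one to push $\theta_n$ down to the correct scale $1/p_n$.

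With this setup, condition \Cref{item:A} of \Cref{strat} is an immediate consequence of \Cref{tightgraphopt}, and the first display of condition \Cref{item:B} holds trivially since $\max_{e \in E_n} d_n(\widehat{\omega}_n(\mathbf{X}^n_e), \omega^*_{n,e}) \equiv 0$. For the second display I will verify the first sufficient condition in \Cref{rem:strateasy}, namely
\[
\sum_{e \in E_n} \mathbb{E}\bigl[\psi_n(\mathbf{X}^n; \widehat{\omega}_n(\mathbf{X}^n_e)) - \psi_{n,\mathrm{opt}}(\mathbf{X}^n)\bigr] = \bigO(m_n \theta_n) = \bigO(p_n),
\]
where $m_n = \binom{p_n}{2}$. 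Mimicking the calculation at the start of the proof of \Cref{mostprop}, and writing $\mu := \mathbb{E}_{\mathcal{P}}[X]$, the $e$-th summand equals $\mathbb{E}\bigl[(X_{n,e} - X_{n,e}^{(e)})\, \mathbbm{1}(e \in E(\widehat{\omega}_n(\mathbf{X}^n_e)))\bigr]$. By non-negativity of the edge weights the absolute value of the total is bounded by $\sum_e \mathbb{E}[X_{n,e}\,\mathbbm{1}(\cdots)] + \sum_e \mathbb{E}[X_{n,e}^{(e)}\,\mathbbm{1}(\cdots)]$, and the two pieces will be handled separately.

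For the first piece, exploit the independence of $X_{n,e}$ from $\widehat{\omega}_n(\mathbf{X}^n_e)$ (which is computed using $X_{n,e}^{(e)}$ in place of $X_{n,e}$), together with $\mathbf{X}^n_e \stackrel{d}{=} \mathbf{X}^n$ and \Cref{probbound}, to bound each term by $\mu \cdot \mathbb{P}(e \in E(\widehat{\omega}_n(\mathbf{X}^n))) \leq 2\mu e_{\text{max}}/(p_n-1)$, for a total of at most $\mu e_{\text{max}}\, p_n$. For the second piece, the joint distributional identity $(X_{n,e}^{(e)}, \mathbf{X}^n_e) \stackrel{d}{=} (X_{n,e}, \mathbf{X}^n)$ reduces the sum to $\mathbb{E}\bigl[\sum_{e \in E(\widehat{\omega}_n(\mathbf{X}^n))} X_{n,e}\bigr] = \mathbb{E}[\psi_{n,\mathrm{opt}}(\mathbf{X}^n)]$, which is bounded above by $\mu e_{\text{max}}\, p_n$ upon comparison with the objective evaluated at any fixed $G_0 \in \mathcal{G}_n$. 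Adding the two contributions yields the required $\bigO(p_n)$ bound, completing the verification of \Cref{strat}. The only real obstacle is identifying the correct window $\theta_n \asymp 1/p_n$, which is possible precisely because the permutation invariance of $\mathcal{G}_n$ built into \Cref{item:CG3} feeds into \Cref{probbound}; once that scale is pinned down, the rest is a routine adaptation of the argument proving \Cref{mostprop}.
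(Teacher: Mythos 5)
Your proposal is correct and follows essentially the same route as the paper: apply \Cref{strat} with window $\theta_n \asymp 1/p_n$ via \Cref{tightgraphopt}, take $\omega^*_{n,e} = \widehat{\omega}_n(\mathbf{X}^n_e)$, verify \Cref{item:B} through \Cref{rem:strateasy}, and use \Cref{probbound} (hence the permutation invariance in \Cref{item:CG3}) as the crucial ingredient to pin down the $1/p_n$ scale.

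The one place you take a slightly longer path: after reducing the $e$-th summand to $\mathbb{E}\bigl[(X_{n,e} - X_{n,e}^{(e)})\,\mathbbm{1}(e\in E(\widehat{\omega}_n(\mathbf{X}^n_e)))\bigr]$, you bound it in absolute value by the sum of two nonnegative pieces and then handle each piece separately, the second via the identification with $\mathbb{E}[\psi_{n,\mathrm{opt}}(\mathbf{X}^n)]$ and comparison with a fixed $G_0 \in \mathcal{G}_n$. The paper instead observes that the subtracted term $X_{n,e}^{(e)}\,\mathbbm{1}(\cdots)$ is nonnegative and simply drops it, yielding the one-sided bound
\[
\mathbb{E}\bigl[\psi_n(\mathbf{X}^n;\widehat{\omega}_n(\mathbf{X}^n_e)) - \psi_{n,\mathrm{opt}}(\mathbf{X}^n)\bigr] \leq \mathbb{E}[X_{n,e}]\,\mathbb{P}\bigl(e\in E(\widehat{\omega}_n(\mathbf{X}^n))\bigr) \leq \frac{2e_{\max}\mathbb{E}_{\mathcal{P}}X}{p_n-1},
\]
which is uniform over $e$ and therefore verifies the stronger (supremum) form in \Cref{rem:strateasy} without needing any bound on $\mathbb{E}[\psi_{n,\mathrm{opt}}]$. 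Both variants are valid and yield the same conclusion; yours just spends one extra step where a one-sided inequality would have sufficed.
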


\begin{proof}
Recall the notation $\mathbf{X}^n = \{X_{n,e} : e \in E_n\}$ which denote the collection of original edge-weight variables and for any $e^{\prime} \in E_n$, we denote by $\mathbf{X}^n_{e^{\prime}} = \{X^{(e^{\prime})}_{n,e} : e \in E_n\} $ the edge-weight variable collection which is obtained from $\mathbf{X}^n$ after replacing $X_{n,e^{\prime}}$ by an i.i.d.~copy $X_{n,e^{\prime}}^{(e^{\prime})}$. Our goal is to prove \Cref{most} and then apply \Cref{strat} to conclude the proof.
Observe that for any $e^{\prime} \in E_n$,  
\begin{align}
\mathbb{E} [\psi_n(\mathbf{X}^n;\widehat{G}_n(\mathbf{X}^n_{e^{\prime}})) - \psi_{n,\mathrm{opt}}(\mathbf{X}^n) ]&
=\mathbb{E} [\psi_n(\mathbf{X}^n;\widehat{G}_n(\mathbf{X}^n_{e^{\prime}})) - \psi_n(\mathbf{X}^n_{e^{\prime}};\widehat{G}_n(\mathbf{X}^n_{e^{\prime}})) ] \nonumber \\
& =\mathbb{E} \sum_{e \in E_n} (  X_{n,e} \mathbbm{1}( e \in \widehat{G}_n(\mathbf{X}^n_{e^{\prime}})) - X_{n,e}^{(e^{\prime})} \mathbbm{1}( e \in \widehat{G}_n(\mathbf{X}^n_{e^{\prime}})) ) \nonumber \\
& = \mathbb{E} [ ( X_{n,e^{\prime}} - X_{n,e^{\prime}}^{e^{\prime}}) \mathbbm{1}(e^{\prime} \in \widehat{G}_n(\mathbf{X}^n_{e^{\prime}})) ] \nonumber \\
& \leq \mathbb{E}[  X_{n,e^{\prime}} \mathbbm{1}(e^{\prime} \in  \widehat{G}_n(\mathbf{X}^n_{e^{\prime}})) ] \label{eq4} \\
& = \mathbb{E} X_{n,e^{\prime}} \mathbb{P}( e^{\prime} \in \widehat{G}_n(\mathbf{X}^n_{e^{\prime}})) \leq  \dfrac{2e_{\text{max}}\mathbb{E}_{\mathcal{P}}X}{(p_n-1)} = \bigO(1/p_n), \nonumber
\end{align}
where \Cref{eq4} follows from the observation that $X_{n,e^{\prime}} \perp\!\!\!\perp \mathbf{X}^n_{e^{\prime}}$; which implies that $X_{n,e^{\prime}} \perp\!\!\!\perp \widehat{G}_n(\mathbf{X}^n_{e^{\prime}})$. The final inequality follows from \Cref{probbound} and the fact that $\widehat{G}_n(\mathbf{X}^n_{e^{\prime}}) \stackrel{d}{=}\widehat{G}_n(\mathbf{X}^n).$ We can now complete the proof by applying \Cref{tightgraphopt}, \Cref{rem:strateasy} and \Cref{strat} for the choice $\theta_n=1/p_n$. 
\end{proof}
 
 \subsection{The random assignment problem}
 \label{ram}
 Consider the task of choosing an assignment of $n$ jobs to $n$ machines in order to minimize the total cost of performing the $n$ jobs. The basic input of the problem is $\{X_{n,(i,j)} : 1 \leq i ,j \leq n\}$, where $X_{n,(i,j)}$ is the cost of performing job $i$ on machine $j$. The \textit{assignment problem} is to determine an assignment that minimizes total cost, i.e.,
 \begin{equation}
 \label{ram:model}
 \text{Minimize } \; \sum_{i=1}^n X_{n,(i,\pi(i))}, \; \text{ over } \pi \in \operatorname{Symm}(n).
 \end{equation} 
 This problem can also viewed as the problem of finding the shortest perfect matching in a bipartite graph with $2n$ vertices with $X_{n,(i,j)}$ denoting the cost associated with traversing the edge connecting the $i$-th vertex of first group to $j$-th vertex of the second group. The simplest stochastic model we shall be considering here assumes that the cost variables $\{X_{n,(i,j)} : i,j \in [n]\}$ are i.i.d.~random variables from probability measure $\mathcal{P}$ supported on $[0, \infty)$ with $\mathcal{P}=$ Uniform distribution on $[0,1]$ and $\mathcal{P}=$ Exponential distribution with rate $1$ being the most well-studied examples. We refer to~\cite{coppersmith} and \cite{steele} for detailed history and description of these analyses. A pioneering work is due to Aldous~\cite{aldous}, who gave a rigorous proof of the limiting value of the expected optimal cost and limit distribution of costs and their rank orders in the optimal assignment. The following theorem summarizes the results on the expected value of the optimal cost.
 
 \begin{thm}[\cite{aldous, balaji, wastlund}]{\label{parisi}}
 Consider the random assignment problem as defined in \Cref{ram:model} with cost distribution $\mathcal{P}$ being Exponential distribution with rate $1$. Then
 \begin{equation}{\label{parisiconj}}
 \mathbb{E} \left[\min_{\pi \in \operatorname{Symm}(n)} \sum_{i=1}^n X_{n,(i,\pi(i))} \right]= \sum_{i=1}^n i^{-2}.
 \end{equation}
Hence, 
$$ \lim_{n \to \infty} \left[ \mathbb{E} \min_{\pi \in \operatorname{Symm}(n)} \sum_{i=1}^n X_{n,(i,\pi(i))} \right] = \sum_{i=1}^{\infty} i^{-2} = \zeta(2) = \dfrac{\pi^2}{6}.$$
 \end{thm}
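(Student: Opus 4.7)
The plan is to prove the stronger Coppersmith--Sorkin formula, from which the Parisi identity \eqref{parisiconj} follows immediately. For positive integers $k \le \min(m,n)$, let $C(m,n,k)$ denote the expected minimum cost of a $k$-edge partial matching in the complete bipartite graph $K_{m,n}$ whose edges are given i.i.d.\ $\mathrm{Exp}(1)$ weights. The main aim is to establish
\[
C(m,n,k) \;=\; \sum_{\substack{i,j \geq 0 \\ i+j < k}} \frac{1}{(m-i)(n-j)}. \qquad (\star)
\]
Setting $m=n=k$ in $(\star)$ and substituting $a=n-i$, $b=n-j$ rewrites the right-hand side as $S_n := \sum_{1 \le a,b \le n,\, a+b>n} 1/(ab)$, so \eqref{parisiconj} reduces to the identity $S_n = \sum_{k=1}^{n} 1/k^2$.

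The combinatorial identity $S_n = \sum_{k=1}^{n}1/k^2$ is handled by induction via $S_1 = 1$ and $S_n - S_{n-1} = 1/n^2$. Explicitly, $S_n - S_{n-1}$ picks up all terms with $a=n$ or $b=n$ (contributing $2H_n/n - 1/n^2$ after correcting for the double count at $(n,n)$) and loses the terms with $a+b=n$ and $a,b \le n-1$. The lost contribution equals $\sum_{a=1}^{n-1} 1/(a(n-a)) = (1/n)\sum_{a=1}^{n-1}(1/a + 1/(n-a)) = 2H_{n-1}/n$ by partial fractions, and the arithmetic $2H_n/n - 1/n^2 - 2H_{n-1}/n = 2(H_n - H_{n-1})/n - 1/n^2 = 2/n^2 - 1/n^2 = 1/n^2$ closes the induction.

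The heart of the argument is the incremental recursion
\[
C(m,n,k) - C(m,n,k-1) \;=\; \sum_{\substack{i,j \geq 0 \\ i+j = k-1}} \frac{1}{(m-i)(n-j)},
\]
which yields $(\star)$ by telescoping from $C(m,n,0) = 0$. To prove this recursion, I would follow the Nair--Prabhakar--Sharma / W\"astlund strategy of coupling with a continuous-time growth process: attach to each edge $e$ an independent rate-$1$ Poisson process of ``arrival times'' $T_{e,1} < T_{e,2} < \cdots$, which is equivalent in distribution to an infinite supply of i.i.d.\ $\mathrm{Exp}(1)$ weights per edge. Let $\nu(t)$ be the size of the maximum matching using only arrivals up to time $t$; then $\nu(t)$ is a non-decreasing jump process, the $j$-th jump time $T^{(j)}$ satisfies $\mathbb{E}[T^{(j)} - T^{(j-1)}]$ equal to the desired sum over $i+j=j-1$, and by a dominated-convergence / coupling argument the expected cost of the $j$-th edge in the optimal matching (on the original problem) equals $\mathbb{E}[T^{(j)} - T^{(j-1)}]$.

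The main obstacle is the exact computation of $\mathbb{E}[T^{(j)} - T^{(j-1)}]$: at the moment $t=T^{(j-1)}$ the ``state'' is a random bipartite subgraph in which no $j$-matching exists, and one must argue (by Hall's condition or a direct color/urn argument as in W\"astlund) that conditionally on the state, the holding time until the next jump is exponential with the appropriate rate $\sum_{i+j=j-1}(m-i)(n-j)$. Once this probabilistic identification is in place, $(\star)$ follows by telescoping; combined with the closed-form evaluation of $S_n$ above, this yields \eqref{parisiconj}. The second conclusion, $\lim_n \mathbb{E}\min \to \zeta(2)=\pi^2/6$, is then a standard fact about the Basel sum.
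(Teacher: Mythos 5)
The paper does not prove this theorem; it cites the Parisi identity to the literature (Aldous, Nair--Prabhakar--Sharma, W\"astlund), so there is no in-paper argument to compare against. Your algebraic reduction---from the Coppersmith--Sorkin formula $(\star)$ through the substitution $a=n-i$, $b=n-j$ to the identity $S_n=\sum_{k=1}^n 1/k^2$, verified by telescoping $S_n - S_{n-1} = 1/n^2$---is correct.

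The probabilistic core, however, has a genuine gap, and not merely an acknowledged ``obstacle.'' Two things fail. First, the identification of $C(m,n,k)-C(m,n,k-1)$ with $\mathbb{E}\bigl[T^{(k)}-T^{(k-1)}\bigr]$, where $T^{(k)}$ is the first time a $k$-matching exists among the arrived edges, is false. Take $m=n=k=2$ with i.i.d.\ $\mathrm{Exp}(1)$ costs: $T^{(2)}=\min\bigl(\max(c_{11},c_{22}),\max(c_{12},c_{21})\bigr)$ and $T^{(1)}=\min_{ij}c_{ij}$, giving $\mathbb{E}[T^{(2)}]=11/12$, $\mathbb{E}[T^{(1)}]=1/4$, so $\mathbb{E}[T^{(2)}-T^{(1)}]=2/3$, while $C(2,2,2)-C(2,2,1)=5/4-1/4=1$. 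The discrepancy arises because the minimum-cost matchings of successive sizes are not nested, so the Coppersmith--Sorkin increment is not the cost of a single ``added edge'' at a jump time. Second, even granting some coupling, your claim that the conditional holding time is exponential with rate $\sum_{i+j=k-1}(m-i)(n-j)$ would yield a mean of $\bigl(\sum_{i+j=k-1}(m-i)(n-j)\bigr)^{-1}$, not the needed $\sum_{i+j=k-1}\frac{1}{(m-i)(n-j)}$; these agree only when $k=1$. The actual Nair--Prabhakar--Sharma and W\"astlund proofs track a more refined combinatorial object (the alternating-path structure of the current optimal partial matching, resp.\ an auxiliary weighted/urn model), and the increment emerges by averaging exponential holding times against the nontrivial conditional distribution of that structure. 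That averaging, together with dealing with the failure of nestedness, is the missing hard content, and the single-exponential jump-time shortcut you propose cannot supply it.
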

 
The exact formula in \Cref{parisiconj} was conjectured by \citet{parisi} and later proved, along with some related variants of this conjecture, by \citet{wastlund} and \citet*{balaji}. In his seminal paper, \citet{aldous} proved the $\zeta(2)$ limit, without proving \textit{Parisi conjecture}, and also argued that the limit only depends on the cost distribution through its density at $0$ (assuming it exists). 
 
 
 This optimization problem bears many similarities, both in structure and analysis, to the problem discussed in \Cref{graphcom}. We shall therefore omit those details in this section. It is easy to observe that the optimization problem described in \Cref{ram:model} satisfies \Cref{ass:linear} with $\mathcal{S}_n$ being $\operatorname{Symm}(n)$; $I_n$ = $\{(i,j) : i,j \in [n]\}, k_n = \operatorname{card}(I_n)=n^2$ and $H_{n,(i,j)}(\pi) := \mathbbm{1}(\pi(i)=j)$ for all $i,j \in [n], \pi \in \operatorname{Symm}(n)$. The objective function becomes
 $$ \psi_n((x_{n,(i,j)})_{i,j \in [n]}; \pi) = \sum_{(i,j) \in I_n} x_{n,(i,j)}H_{n,(i,j)}(\pi) = \sum_{i=1}^n x_{n,(i,\pi(i))}.$$
 We endow the space $\mathcal{S}_n$ with metric $d_n$ that will denote the normalized Hamming distance on $\operatorname{Symm}(n)$, i.e.,
 $$ d_n(\pi_1,\pi_2) := \dfrac{1}{n} \sum_{i=1}^n \mathbbm{1}(\pi_1(i) \neq \pi_2(i) ), \; \forall \; \pi_1, \pi_2 \in \operatorname{Symm}(n).$$
 With these notations having been introduced, we apply \Cref{genthm2} to get a tightness result for near optimal solution of random assignment problem.
 
 \begin{prop}{\label{tightram}}
 Consider the random assignment problem defined in equation~\eqref{ram:model}, where the cost variables have common distribution $\mathcal{P}$ with density $f$ supported on $[0, \infty)$, having finite mean and  satisfying Assumption~\ref{ass:p}. Then for any $\varepsilon >0$, the sequence $\{P(\mathcal{N}_{n,\theta_n}, d_n, \varepsilon) : n \geq 1\}$ is  tight if $\theta_n=\bigO(1/n)$. 
 \end{prop}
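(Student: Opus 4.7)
The plan is to apply \Cref{genthm2} directly, following the same template used for \Cref{tightgraphopt} and \Cref{sktight1}. Since the objective function is already in the linearized form of \Cref{ass:linear} with $H_{n,(i,j)}(\pi) = \mathbbm{1}(\pi(i)=j)$, we need only (i) verify the two measurability/uniqueness assumptions, (ii) bound $\|H_{n,(i,j)}\|$ to get \ref{item:D1}, (iii) compute the canonical pseudo-metric $d_{2,n}$ and relate it to $d_n$ to obtain \ref{item:D2} with the correct normalizing sequence $\tau_n$, and finally (iv) read off the admissible $\theta_n$ from the growth condition in \Cref{genthm2}.

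First I would compute $d_{2,n}$ explicitly. For any $\pi_1,\pi_2\in\operatorname{Symm}(n)$, for each fixed $i\in[n]$ one has $\sum_{j=1}^n (\mathbbm{1}(\pi_1(i)=j)-\mathbbm{1}(\pi_2(i)=j))^2 = 2\mathbbm{1}(\pi_1(i)\neq \pi_2(i))$, hence
\begin{equation*}
d_{2,n}(\pi_1,\pi_2)^2 \;=\; \sum_{i,j\in[n]} (\mathbbm{1}(\pi_1(i)=j)-\mathbbm{1}(\pi_2(i)=j))^2 \;=\; 2n\, d_n(\pi_1,\pi_2).
\end{equation*}
In particular $d_{2,n}$ is a proper metric on $\operatorname{Symm}(n)$, so by \Cref{uniqueremark} and \Cref{meanupper} the existence of the density $f$, together with finiteness of $\operatorname{Symm}(n)$ and the assumed integrability of $\mathcal{P}$, ensures that \Cref{unique} and \Cref{ass:meas} hold.

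Next, since $|H_{n,(i,j)}(\pi)|\le 1$ uniformly, \ref{item:D1} is satisfied with $\lambda=0$ and $\varsigma_{n,0}\equiv 1$. For \ref{item:D2}, the identity $d_{2,n}^2=2n\,d_n$ implies that any $\varepsilon$-packing of a set $\mathcal{B}\subseteq\operatorname{Symm}(n)$ with respect to $d_n$ is automatically a $\sqrt{2n\varepsilon}$-packing with respect to $d_{2,n}$; thus
\begin{equation*}
P(\mathcal{B},d_n,\varepsilon)\;\le\; P(\mathcal{B},d_{2,n},\sqrt{2n\,\varepsilon}), \qquad \forall\,\mathcal{B}\subseteq\operatorname{Symm}(n),
\end{equation*}
which is \ref{item:D2} with $\tau_n=\sqrt{n}$, $K_\varepsilon=1$, $\kappa_\varepsilon=\sqrt{2\varepsilon}$ (valid for all $\varepsilon\in(0,1]$).

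Finally, plugging $k_n=n^2$, $\varsigma_{n,0}=1$, $\tau_n=\sqrt n$, $\lambda=0$ into the growth condition of \Cref{genthm2}, the permissible window length becomes $\theta_n = \bigO\!\bigl(\tau_n^{2}k_n^{-1}\varsigma_{n,0}^{-1}\bigr) = \bigO(1/n)$, and \Cref{genthm2} yields the claimed tightness. No real obstacle arises: the random assignment problem is essentially a ``bipartite cousin'' of the graph optimization setup of \Cref{graphcom}, and the metric comparison $d_{2,n}^2=2n\,d_n$ is even cleaner than the one in the proof of \Cref{tightgraphopt}.
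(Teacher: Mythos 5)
Your proposal is correct and follows essentially the same route as the paper: verify \ref{item:D1} with $\lambda=0$, $\varsigma_{n,0}\equiv 1$, compute $d_{2,n}^2 = 2n\,d_n$ to get \ref{item:D2} with $\tau_n=\Theta(\sqrt{n})$, and plug into \Cref{genthm2} with $k_n=n^2$ to obtain $\theta_n=\bigO(1/n)$. The only cosmetic difference is that you spell out the packing comparison and the verification of Assumptions \ref{unique} and \ref{ass:meas} (via \Cref{uniqueremark} and \Cref{meanupper}), which the paper leaves implicit by referring to the proof of \Cref{tightgraphopt}.
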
 
 
 \begin{proof}
 The proof is very similar to the proof of \Cref{tightgraphopt}. First of all, \Cref{item:D1} is satisfied with $\lambda=0, \varsigma_{n,0} \equiv 1$. Since  
 for any $\pi_1, \pi_2 \in \operatorname{Symm}(n)$,
 $$ d_{2,n}(\pi_1,\pi_2)^2 = \sum_{i,j \in [n]} ( \mathbbm{1}(\pi_1(i)=j)-\mathbbm{1}(\pi_2(i)=j))^2 = 2 \sum_{i=1}^n \mathbbm{1}( \pi_1(i) \neq \pi_2(i)) = 2nd_n(\pi_1,\pi_2),$$
 \Cref{item:D2} is  satisfied for $\tau_n=\sqrt{2n}$. Recalling that $k_n= n^2$ here and applying \Cref{genthm2}, we can now complete the proof.
 \end{proof}

\begin{remark}
 In light of \Cref{tightram}, it is instructive to discuss the AEU result proved by \citet{aldous}. We restrict our attention to standard exponential distribution for cost variables (which is not strictly necessary, as suggested in \cite{aldous}) and let $\pi_n \in \mathrm{Symm}(n)$ be the optimal assignment. The AEU result, as described in \cite[Theorem 4]{aldous}, states that for any $\delta \in (0,1)$ there exists $\varepsilon >0$ such that for any other (random) assignment $\mu_n \in \mathrm{Symm}(n)$ satisfying $\mathbb{E} d_n(\pi_n,\mu_n) \geq \delta$ for all $n \geq 1$, we have
 $$ \liminf_{n \to \infty} \mathbb{E} \sum_{i=1}^n X_{n,(i,\mu_n(i))} \geq \dfrac{\pi^2}{6} + \varepsilon.$$
 In other words, any (random) assignment which is a positive distance away from the optimal one, has the expected cost associated with it to be at least some positive margin greater than the minimum cost; thus the optimal assignment is effectively unique. This result suggests that for any $\varepsilon >0$ and $\theta_n = o(1)$, the sequence of random variables $P(\mathcal{N}_{n,\theta_n},d_n,\varepsilon)$ converges weakly to $1$ as $n \to \infty$. While, this is not a direct corollary of \cite[Theorem 4]{aldous}, this assertion might be proved using the same techniques as used in \cite{aldous}. This assertion is stronger than our conclusion in \Cref{tightram}, although \Cref{tightram} is enough for our purpose of proving stability.
 \end{remark}
 
 
 \begin{thm}{\label{ramthm}}
 Consider random assignment problem as defined in equation~\eqref{ram:model}, where the cost variables have common distribution $\mathcal{P}$ with density $f$ supported on $[0, \infty)$, having finite mean and  satisfying \Cref{ass:p}. 
 This optimization problem is stable under small perturbations with perturbation blocks $\mathcal{J}_n = \{J_{n,(i,j)} : (i,j) \in [n]^2\}$ where $J_{n,(i,j)} = \{(i,j)\}$ for all $(i,j) \in [n]^2$.
 \end{thm}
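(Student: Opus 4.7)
The plan is to apply Theorem \ref{strat} directly with window length $\theta_n = 1/n$, choosing the sister solution $\omega^*_{n,(i,j)} := \widehat{\pi}(\mathbf{X}^n_{(i,j)})$, i.e., the actual optimizer for the perturbed input. Condition \ref{item:A} of Theorem \ref{strat} is already in hand: Proposition \ref{tightram} shows that $\{P(\mathcal{N}_{n,c/n}, d_n, \varepsilon) : n\ge 1\}$ is tight for every $c,\varepsilon>0$. Therefore the entire task reduces to verifying condition \ref{item:B}. The first half of \eqref{most} is automatic since $\omega^*_{n,(i,j)} = \widehat{\omega}_n(\mathbf{X}^n_{(i,j)})$, so by Remark \ref{rem:strateasy} it suffices to show
$$\sup_{(i,j) \in I_n} \mathbb{E}\bigl[\psi_n(\mathbf{X}^n;\widehat{\pi}(\mathbf{X}^n_{(i,j)})) - \psi_{n,\mathrm{opt}}(\mathbf{X}^n)\bigr] = \bigO(1/n).$$

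To handle this bound, I will mirror the argument used in the proof of Theorem \ref{graphopt:thm}. Since $\mathbf{X}^n_{(i,j)} \stackrel{d}{=} \mathbf{X}^n$, I can rewrite the expectation above as $\mathbb{E}[\psi_n(\mathbf{X}^n;\widehat{\pi}(\mathbf{X}^n_{(i,j)})) - \psi_n(\mathbf{X}^n_{(i,j)};\widehat{\pi}(\mathbf{X}^n_{(i,j)}))]$. The two objective functions differ only at coordinate $(i,j)$, so this equals $\mathbb{E}[(X_{n,(i,j)} - X^{(i,j)}_{n,(i,j)})\mathbbm{1}(\widehat{\pi}(\mathbf{X}^n_{(i,j)})(i) = j)]$. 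Non-negativity of the cost distribution lets me discard the i.i.d.\ copy, and the independence $X_{n,(i,j)} \perp\!\!\!\perp \mathbf{X}^n_{(i,j)}$ then reduces the upper bound to $\mathbb{E}_{\mathcal{P}}[X]\cdot\mathbb{P}(\widehat{\pi}(\mathbf{X}^n_{(i,j)})(i) = j) = \mathbb{E}_{\mathcal{P}}[X]\cdot\mathbb{P}(\widehat{\pi}(\mathbf{X}^n)(i) = j)$, where the last equality is again by equality in distribution.

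The remaining step is the analogue of Lemma \ref{probbound} for the assignment problem: show that $\mathbb{P}(\widehat{\pi}(\mathbf{X}^n)(i) = j) = 1/n$ for every pair $(i,j)$. This follows from a one-line symmetry argument. Because the cost variables are i.i.d., the joint law of $\mathbf{X}^n$ is invariant under the column relabeling $X_{n,(k,l)} \mapsto X_{n,(k,\sigma(l))}$ for any $\sigma \in \operatorname{Symm}(n)$, under which the optimizer transforms as $\widehat{\pi} \mapsto \sigma^{-1}\circ\widehat{\pi}$ (almost sure uniqueness of the optimizer, needed for this identity, is guaranteed by the existence of the density $f$). Consequently $\widehat{\pi}(\mathbf{X}^n)(i)$ is uniform on $[n]$ for each fixed $i$, giving the bound $1/n$. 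I do not anticipate any genuine obstacle; the proof is essentially a template instantiation of the proof of Theorem \ref{graphopt:thm}, with ``an edge lies in the optimal graph'' replaced by ``a given coordinate lies in the optimal permutation'', and closes by invoking Theorem \ref{strat} with $\theta_n = 1/n$.
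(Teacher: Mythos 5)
Your proposal is correct and follows essentially the same route as the paper's proof: bound $\mathbb{E}[\psi_n(\mathbf{X}^n;\widehat{\pi}(\mathbf{X}^n_{(i,j)})) - \psi_{n,\mathrm{opt}}(\mathbf{X}^n)]$ by $\mathbb{E}_{\mathcal{P}}[X]\cdot\mathbb{P}(\widehat{\pi}(\mathbf{X}^n)(i)=j) = \mathbb{E}_{\mathcal{P}}[X]/n$ using distributional equality, non-negativity, independence, and the symmetry of the i.i.d.\ costs, then invoke Proposition \ref{tightram}, Remark \ref{rem:strateasy}, and Theorem \ref{strat} with $\theta_n = 1/n$. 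Your column-relabeling argument for the marginal uniformity of $\widehat{\pi}(\mathbf{X}^n)(i)$ is a minor (and perfectly valid) variant of the paper's appeal to the uniform distribution of $\widehat{\pi}_n(\mathbf{X}^n)$ on $\operatorname{Symm}(n)$.
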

 
 \begin{proof}
 Recall the notation $\mathbf{X}^n = \{X_{n,(i,j)} : i,j \in [n]\}$ which denote the collection of original cost variables and for any $k,l \in [n]$, we denote by $\mathbf{X}^n_{(k,l)} =\{X^{(k,l)}_{n,(i,j)} : i,j \in [n]\} $ the cost variable collection which is obtained from $\mathbf{X}^n$ after replacing $X_{n,(k,l)}$ by an i.i.d.~copy $X_{n,(k,l)}^{(k,l)}$. 
 A calculation similar to the one done in the proof of \Cref{graphopt:thm} establishes that
 \begin{align*}
 \mathbb{E} [\psi_n(\mathbf{X}^n;\widehat{\pi}_n(\mathbf{X}^n_{(k,l)})) - \psi_{n,\mathrm{opt}}(\mathbf{X}^n)]&
 =\mathbb{E} [\psi_n(\mathbf{X}^n;\widehat{\pi}_n(\mathbf{X}^n_{(k,l)})) - \psi_n(\mathbf{X}^n_{(k,l)};\widehat{\pi}_n(\mathbf{X}^n_{(k,l)})) ] \\
 & \leq \mathbb{E} [  X_{n,(k,l)} \mathbbm{1}( \widehat{\pi}_n(\mathbf{X}^n_{(k,l)})(k)=l) ] \\
 & = \mathbb{E} X_{n,(k,l)} \mathbb{P}( \widehat{\pi}_n(\mathbf{X}^n_{(k,l)})(k)=l) = (\mathbb{E}_{\mathcal{P}}X)/n.
 \end{align*}
 The last equality follows from the fact that, the cost variables being i.i.d., the symmetry between different jobs and different machines guarantee that $\widehat{\pi}_n(\mathbf{X}^n)$ is uniformly distributed on $\operatorname{Symm}(n)$; therefore $ \widehat{\pi}_n(\mathbf{X}^n)(k) \sim \text{Uniform}([n])$. We can now complete the proof by applying \Cref{tightram}, \Cref{rem:strateasy} and \Cref{strat} with the choice $\theta_n=1/n$. 
 \end{proof}

\subsection{Wigner matrices}
\label{eigmatrix}

Consider an  $n \times n$ real symmetric random matrix $\mathbf{M}_n = (X_{n,ij})_{i,j \in [n]}$ with $X_{n,ij}$ denoting the entry in $i$-th row and $j$-th column. Since it is  symmetric, we can guarantee that all of its eigenvalues are real and the problem of finding the smallest eigenvalue can be re-formulated as 
\begin{equation}{\label{eign:def}}
\text{ Minimize } \psi_n(\mathbf{M}_n; \boldsymbol{v}) := \boldsymbol{v}^{\top} \mathbf{M}_n\boldsymbol{v} = \sum_{i,j \in [n]} X_{n,ij}v_iv_j \;\; \text{over } \boldsymbol{v}=(v_1, \ldots,v_n) \in \mathscr{S}^{n-1},
\end{equation}
where $\mathscr{S}^{n-1}$ denotes the unit sphere in $\mathbb{R}^n$. A basic stochastic model for the entries in $M_n$ can be taken as follows: $\{X_{n,ij}:i,j \in [n], i<j\}$ and $\{X_{n,ii} : i \in [n]\}$ are two independent and identically distributed collections of mean-zero stochastic variables. Such a matrix is called a \textit{Wigner matrix} and if the entries are Gaussian then it is called \textit{Gaussian Wigner matrix}. In standard literature, the matrices are generally scaled by $\sqrt{n}$, but we shall not concern ourselves with that since we will be considering only near optimal vectors. The asymptotics for the empirical distribution of the eigenvalues of Wigner matrices were derived by Wigner~\cite{wigner} and can be considered as the starting point of random matrix theory. For a detailed account of  results on asymptotic behavior of the smallest eigenvalues of Wigner matrices, we refer to \cite{anderson} and references therein.

We shall assume that the entire collection  $\{X_{n,ij}:i,j \in [n], i \leq j\}$  is independent and identically distributed. Indeed the analysis of the general case is very similar and the results which we shall write in this section still apply. Nevertheless, since the general case does not fall into the premise presented in \Cref{stochopt} directly, we shall contend ourselves with this restricted scenario. 

The objective function expressed in \Cref{eign:def} is very similar in structure to the objective function of SK model in \Cref{skprob} and hence most of the computations follow in the exactly same fashion; in order to avoid repetition we shall omit many details in this section. We start by observing that $\boldsymbol{v}^{\top} \mathbf{M}_n\boldsymbol{v}$ is invariant under reflection through the origin,  i.e., $\boldsymbol{v}^{\top} \mathbf{M}_n\boldsymbol{v} = (-\boldsymbol{v})^{\top} \mathbf{M}_n(-\boldsymbol{v})$. To get a unique solution almost surely we therefore consider the following parameter space where we only keep those vectors whose first  co-ordinate is non-negative.
$$ \mathcal{S}_n = \{\boldsymbol{v}=(v_1, \ldots,v_n) : \boldsymbol{v} \in \mathscr{S}^{n-1},  v_1 \geq 0 \}.$$
We take $d_n$ to be the Euclidean distance. The upper triangular entries of the matrix are the random inputs of the problem; hence $I_n = \{(i,j) : 1 \leq i \leq j \leq n\}$ and $k_n=n(n+1)/2.$ The optimization problem in \Cref{eign:def} clearly satisfies \Cref{ass:linear} with 
$$ H_{n,(i,j)}(\boldsymbol{v}) = \begin{cases}
2v_iv_j, & \text{ if } i< j,\\
v_i^2, & \text{ if } i=j,
\end{cases}$$
 for all $1 \leq i \leq j\leq n$ and $\boldsymbol{v}$. We try to apply \Cref{genthm:g3} to get a tightness result for this optimization problem. Towards achieving that goal, we need a nice estimate for the expectation of the smallest eigenvalue of $\mathbf{M}_n$ so that the condition in \Cref{item:G1} is satisfied. This is obtained with the help of following result. For any $n \times n$ matrix $A_n$, we shall denote its eigenvalues by the following notation : $\lambda_1(A_n) \leq \cdots \leq \lambda_n(A_n)$.
 
 \begin{prop}{\label{eigen:expecbound}}
 Consider the random Wigner matrix $\textbf{M}_n = (X_{n,ij})_{i,j \in [n]}$ with upper triangular entries being i.i.d.~with common distribution $\mathcal{P}$ having mean zero and variance $\sigma^2 < \infty$. Further assume that there exists a constant $\beta \in (0, \infty)$ such that $\mathbb{E}_{\mathcal{P}} |X|^k \leq \sigma^{k}k^{\beta k}$, for all positive integers $k$. Then there exists another constant $C(\beta)$, depending on $\beta$ only, such that  
 $$ \mathbb{E} \lambda_1(\mathbf{M}_n) \geq -C(\beta) \sigma \sqrt{n}, \; \forall \; n \geq 1.$$ 
 Moreover, $n^{-1/2}\lambda_1(\mathbf{M}_n)$ converges in probability to $-2\sigma$ as $n \to \infty$. 
 \end{prop}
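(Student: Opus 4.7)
The proof plan is to use the classical trace/moment method of Wigner and F\"uredi--Koml\'os, which is well-suited to the given moment hypothesis $\mathbb{E}_\mathcal{P}|X|^k \leq \sigma^k k^{\beta k}$. Both conclusions (the non-asymptotic expectation bound and the in-probability convergence) will follow from a single bound on $\mathbb{E}\,\mathrm{tr}(\mathbf{M}_n^{2k})$.

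For the expectation bound, I would begin by observing that since $\mathbf{M}_n$ is real symmetric, $-\lambda_1(\mathbf{M}_n) \leq \|\mathbf{M}_n\|_{\mathrm{op}} = \max_i |\lambda_i(\mathbf{M}_n)|$, and for any positive integer $k$, $\|\mathbf{M}_n\|_{\mathrm{op}}^{2k} \leq \sum_i \lambda_i(\mathbf{M}_n)^{2k} = \mathrm{tr}(\mathbf{M}_n^{2k})$. Expanding the trace gives
\[
\mathbb{E}\,\mathrm{tr}(\mathbf{M}_n^{2k}) = \sum_{i_1, \ldots, i_{2k} \in [n]} \mathbb{E}\bigl[X_{n,i_1 i_2} X_{n,i_2 i_3} \cdots X_{n,i_{2k} i_1}\bigr],
\]
a sum over closed walks of length $2k$ on the complete graph $K_n$. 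Because the entries are independent and mean zero, only walks in which every edge is traversed at least twice contribute. Grouping walks by their shape (equivalently, by the associated F\"uredi--Koml\'os tree structure), the classical Wigner combinatorial count shows that the contribution of walks visiting $s$ distinct vertices is bounded by $\binom{n}{s} \cdot (\text{number of shapes}) \cdot (\text{product of moment factors})$. Using the hypothesis $\mathbb{E}|X_{n,ij}|^r \leq \sigma^r r^{\beta r}$ for the edge-multiplicity moments, together with the Catalan bound on tree-shaped walks and the fact that non-tree walks lose at least one factor of $n$, one obtains a non-asymptotic bound of the form $\mathbb{E}\,\mathrm{tr}(\mathbf{M}_n^{2k}) \leq (C_1(\beta))^{2k} k^{2\beta k} \sigma^{2k} n^{k+1}$. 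Taking $2k$-th roots, $\mathbb{E}\|\mathbf{M}_n\|_{\mathrm{op}} \leq (\mathbb{E}\|\mathbf{M}_n\|_{\mathrm{op}}^{2k})^{1/(2k)} \leq C_1(\beta) k^\beta \sigma \, n^{1/2 + 1/(2k)}$. Choosing $k \asymp \log n$ (or just $k$ large enough depending on $\beta$) makes the correction $n^{1/(2k)}$ bounded and produces the desired bound $\mathbb{E}\|\mathbf{M}_n\|_{\mathrm{op}} \leq C(\beta)\sigma\sqrt{n}$, whence $\mathbb{E}\lambda_1(\mathbf{M}_n) \geq -C(\beta)\sigma\sqrt{n}$.

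For the in-probability convergence $n^{-1/2}\lambda_1(\mathbf{M}_n) \xrightarrow{p} -2\sigma$, I would combine two standard ingredients. First, Wigner's semicircle law applies under the finite variance hypothesis, so the empirical spectral distribution of $n^{-1/2}\mathbf{M}_n$ converges weakly to the semicircle on $[-2\sigma, 2\sigma]$; this immediately yields $\limsup_{n \to \infty} \mathbb{P}(n^{-1/2}\lambda_1(\mathbf{M}_n) > -2\sigma + \varepsilon) = 0$ for every $\varepsilon > 0$ (since there must be mass in the left end of the semicircle). Second, the matching upper bound follows from the trace estimate above: by Markov's inequality,
\[
\mathbb{P}\bigl(n^{-1/2}|\lambda_1(\mathbf{M}_n)| \geq 2\sigma + \varepsilon\bigr) \leq \frac{\mathbb{E}\,\mathrm{tr}(\mathbf{M}_n^{2k})}{n^k (2\sigma + \varepsilon)^{2k}},
\]
and a more careful version of the trace count (keeping track of the Catalan constant $\binom{2k}{k}/(k+1) \sim 4^k/k^{3/2}$, noting that $4\sigma^2 = (2\sigma)^2$) shows that with $k = k_n \to \infty$ slowly, this probability tends to $0$. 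Combining the two bounds yields the in-probability convergence to $-2\sigma$.

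The main technical obstacle is the bookkeeping in the F\"uredi--Koml\'os walk enumeration with the specific moment growth $r^{\beta r}$: one must verify that the higher-moment factors arising from edges traversed many times do not ruin the Catalan bound, which requires showing that walks with edge multiplicities much greater than $2$ are suppressed by sufficient powers of $n^{-1}$. This is essentially the content of \cite[Lemma 2.1.23]{anderson} or Tao's treatment of Wigner matrices, and our hypothesis $\mathbb{E}|X|^k \leq \sigma^k k^{\beta k}$ (a stretched-exponential condition) is more than enough to close the argument; indeed, this is precisely the regime in which the F\"uredi--Koml\'os method was designed to operate, so no new ideas beyond careful quantitative accounting are required.
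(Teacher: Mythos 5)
Your overall route --- bounding $\mathbb{E}\,\mathrm{tr}(\mathbf{M}_n^{2k})$ by the F\"uredi--Koml\'os walk count and taking $2k$-th roots with $k$ growing slowly --- is exactly the route the paper takes: the paper imports the quantitative form of this count from \cite[Equation~2.1.32]{anderson}, namely $n^{-1}\mathbb{E}\sum_i(\lambda_i(\mathbf{M}_n)/\sqrt{n})^{2k}\le 4^k\sum_{i=0}^k(k^{c_1}/n)^i$, chooses $k=(\log n)^2$, and cites \cite[Lemma~2.1.23]{anderson} for the in-probability statement (your semicircle-law-plus-Markov argument is the standard proof of that lemma). The one substantive problem is quantitative, and it matters: the intermediate bound you display, $\mathbb{E}\,\mathrm{tr}(\mathbf{M}_n^{2k})\le (C_1(\beta))^{2k}k^{2\beta k}\sigma^{2k}n^{k+1}$, is too lossy to yield the proposition. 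Taking $2k$-th roots gives $\mathbb{E}\|\mathbf{M}_n\|_{\mathrm{op}}\le C_1(\beta)\,k^{\beta}\,\sigma\, n^{1/2+1/(2k)}$, and no choice of $k$ closes the argument: a fixed $k$ leaves the unbounded factor $n^{1/(2k)}$, while $k\asymp\log n$ leaves the factor $k^{\beta}\asymp(\log n)^{\beta}$, so you only obtain $\bigO(\sigma\sqrt{n}\,(\log n)^{\beta})$ rather than $C(\beta)\sigma\sqrt{n}$. The same spurious factor also destroys the Markov step in your convergence argument, since with $k\asymp\log n$ one has $k^{2\beta k}=n^{2\beta\log\log n}$, which overwhelms the geometric gain $\bigl((2\sigma)^2/(2\sigma+\varepsilon)^2\bigr)^{k}$.

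The fix is the mechanism you gesture at in your last paragraph but did not carry into the displayed estimate: the higher moments $r^{\beta r}$ enter only through walks containing an edge of multiplicity at least three, and every such walk visits at least one fewer vertex, i.e.\ loses a factor of order $k^{c(\beta)}/n$, while the tree walks carrying the leading term use only second moments. The correct bound therefore has the shape $\mathbb{E}\,\mathrm{tr}(\mathbf{M}_n^{2k})\le 4^k\sigma^{2k}n^{k+1}\sum_{i=0}^{k}(k^{c(\beta)}/n)^i$, with the $k$-dependent moment factors attached to the suppressed terms, not the main one. With that form, $k\asymp\log n$ (or $k=(\log n)^2$ as in the paper) gives $\mathbb{E}|\lambda_1(\mathbf{M}_n)|\le (2+o(1))\,\sigma\sqrt{n}$, and your Markov estimate for $\mathbb{P}\bigl(\|\mathbf{M}_n\|_{\mathrm{op}}\ge(2\sigma+\varepsilon)\sqrt{n}\bigr)$ then tends to zero, which together with the semicircle law yields the in-probability limit. (For the ``for all $n\ge1$'' form of the expectation bound, note it follows from the asymptotic statement because $\mathbb{E}|\lambda_1(\mathbf{M}_n)|$ is finite for each fixed $n$.) So the plan is sound and coincides with the paper's, but the key displayed inequality must be replaced by this sharper, standard version for the proof to go through.
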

 
 \begin{proof}
Without loss of generality we can assume that $\sigma =1$; otherwise we can work with the Wigner ensemble $\sigma^{-1}\mathbf{M}_n$.  The proof of the part regarding convergence in probability can be found in \cite[Lemma 2.1.23]{anderson}. We shall follow the same technique as that proof to arrive at the lower bound on the expectation. From \cite[Equation 2.1.32]{anderson}, we can conclude the existence of a constant $c_1=c_1(\beta)>0$ such that for all $k < n$, we have
 \begin{equation}{\label{eigenbound1}}
 \dfrac{1}{n} \mathbb{E} \biggl[ \sum_{i=1}^n \biggl( \dfrac{\lambda_i(\mathbf{M}_n)}{\sqrt{n}} \biggr)^{2k}\biggr] \leq 4^k \sum_{i=0}^k \biggl( \dfrac{k^{c_1}}{n} \biggr)^i.
 \end{equation} 
Take $k = k_n = (\log n)^2$ in \Cref{eigenbound1}. Since $k_n,k_n^{c_1} < n/2$ for all $n \geq N=N(\beta) \in \mathbb{N}$, we can conclude that
\begin{align*}
\mathbb{E} (\lambda_1(\mathbf{M}_n)^{2k_n}) \leq \mathbb{E} \biggl[ \sum_{i=1}^n (\lambda_i(\mathbf{M}_n))^{2k_n}\biggr] \leq  4^{k_n} n^{k_n+1}\sum_{i=0}^{k_n} \biggl( \dfrac{k_n^{c_1}}{n} \biggr)^i \leq 4^{k_n} n^{k_n+1}\sum_{i=0}^{k_n} 2^{-i} \leq 2n^{k_n+1} 4^{k_n}, 
\end{align*}
for all $n \geq N$. Therefore, by Jensen's inequality, 
$$ \mathbb{E} \big \rvert  \lambda_1(\mathbf{M}_n) \big \rvert \leq [ \mathbb{E} (\lambda_1(\mathbf{M}_n)^{2k_n}) ]^{1/2k_n} \leq 2\sqrt{n} (2n)^{1/2k_n}. $$
Since $\log (2n)^{1/2k_n} = (\log 2n)/(2(\log n)^2) \to 0$, as $n \to \infty$, we conclude that 
$$ \limsup_{n \to \infty}  [ -n^{-1/2}\mathbb{E}  \lambda_1(\mathbf{M}_n) ] \leq \limsup_{n \to \infty} n^{-1/2}\mathbb{E} \big \rvert  \lambda_1(\mathbf{M}_n) \big \rvert \leq 2.$$ 
This completes the proof.
 \end{proof}
   
\begin{remark}{\label{eigen:remark}}
From Proposition~\cref{subg:moments} and Stirling's approximation, one can observe that any mean-zero sub-Gamma variables with parameters $(1,1)$ satisfies the hypotheses of \Cref{eigen:expecbound} and hence the choice $M_n = C \sqrt{n}$ satisfies \Cref{item:G1} for some finite constant $C$.  
\end{remark}

With the aid of \Cref{genthm:g3} and \Cref{eigen:remark}, we can arrive at the tightness result for the optimization problem described in \Cref{eign:def}.

\begin{prop}{\label{eigtight1}}
Consider the optimization problem defined in equation~\eqref{eign:def}, where the upper triangular entries are i.i.d.~with common distribution $\mathcal{P}$ with density $f$ having zero mean and  satisfying Assumption~\ref{mu} for the pair $(\rho,g)$ with $g^{(1)}$ bounded away from $0$. Then for any $\varepsilon >0$, the sequence $\{P(\mathcal{N}_{n,\theta_n}, d_n, \varepsilon) : n \geq 1\}$ is  tight whenever $\theta_n=\bigO(1/\sqrt{n})$. 
\end{prop}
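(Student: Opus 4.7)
The plan is to invoke Theorem~\ref{genthm:g3}. Since the objective is linear in the entries with bounded coefficients ($|H_{n,(i,j)}(\boldsymbol{v})| \leq 1$), we only need to verify Assumptions~\ref{unique} and~\ref{ass:meas} along with conditions~\ref{item:G1} and~\ref{item:G2}. Existence of the density for $\mathcal{P}$ guarantees almost surely that the spectrum of $\mathbf{M}_n$ is simple (eigenvalues are real-analytic and non-constant functions of the entries), and that the first coordinate of the unique top eigenvector of $-\mathbf{M}_n$ is non-zero; this yields a.s.~uniqueness of the optimizer in $\mathcal{S}_n$. Integrability of $\lambda_1(\mathbf{M}_n)$ follows from $|\lambda_1(\mathbf{M}_n)| \leq \|\mathbf{M}_n\|_F$ and finiteness of $\mathbb{E}_{\mathcal{P}}|X|$ (which is implied by Assumption~\ref{mu}). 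Measurability of $\widehat{\omega}_n$ is standard since $\psi_n$ is jointly continuous and $\mathcal{S}_n$ is compact (see Remark~\ref{mescont}).

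For condition~\ref{item:G1}, observe that if the entries are centered and sub-Gamma with parameters $(1,1)$, then they satisfy the moment bound $\mathbb{E}|X|^k \leq k^{\beta k}$ for some $\beta$ (as noted in Remark~\ref{eigen:remark}), and Proposition~\ref{eigen:expecbound} supplies the estimate $\mathbb{E}\lambda_1(\mathbf{M}_n) \geq -C\sqrt{n}$ for a universal constant $C$. Hence we may take $M_n = C\sqrt{n}$. Given the growth condition $\theta_n = \bigO(\tau_n^2/M_n)$ in Theorem~\ref{genthm:g3}, the target rate $\theta_n = \bigO(1/\sqrt{n})$ then requires $\tau_n = \bigO(1)$, so the real work is to verify condition~\ref{item:G2} with the choice $\tau_n \equiv 1$.

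For~\ref{item:G2}, I would argue \emph{deterministically} (uniformly in $n$) that
\[
P(\mathcal{B}, d_n, \varepsilon) \leq 2\, P(\mathcal{B}, d_{g,n}, \eta\varepsilon/2) \quad \text{for every } \mathcal{B} \subseteq \mathcal{S}_n \text{ and } \varepsilon > 0,
\]
where $\eta := \inf_{y \in \mathbb{R}} |g^{(1)}(y)| > 0$ by hypothesis. The proof has two parts. First, introduce the ``projective'' pseudo-metric $\tilde{d}_n(\boldsymbol{v}_1, \boldsymbol{v}_2) := \min(\|\boldsymbol{v}_1 - \boldsymbol{v}_2\|_2, \|\boldsymbol{v}_1 + \boldsymbol{v}_2\|_2)$ on $\mathcal{S}_n$, which identifies $\boldsymbol{v}$ with $-\boldsymbol{v}$. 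Given any $\varepsilon$-packing $T \subseteq \mathcal{B}$ in $d_n$, consider the ``conflict graph'' on $T$ with edges between pairs satisfying $\|t+t'\|_2 \leq \varepsilon/2$. The triangle inequality combined with $d_n(t,t') > \varepsilon$ forces this graph to have maximum degree at most $1$; deleting one endpoint from each matched edge yields $T' \subseteq T$ with $|T'| \geq |T|/2$, which is an $(\varepsilon/2)$-packing in $\tilde{d}_n$. This gives $P(\mathcal{B}, d_n, \varepsilon) \leq 2\, P(\mathcal{B}, \tilde{d}_n, \varepsilon/2)$. Second, chain the elementary inequalities
\[
\tilde{d}_n(\boldsymbol{v}_1, \boldsymbol{v}_2) \;\leq\; \|\boldsymbol{v}_1\boldsymbol{v}_1^\top - \boldsymbol{v}_2\boldsymbol{v}_2^\top\|_F \;\leq\; d_{2,n}(\boldsymbol{v}_1, \boldsymbol{v}_2) \;\leq\; \eta^{-1} d_{g,n}(\boldsymbol{v}_1, \boldsymbol{v}_2),
\]
where the first inequality follows from the explicit computation $\|\boldsymbol{v}_1\boldsymbol{v}_1^\top - \boldsymbol{v}_2\boldsymbol{v}_2^\top\|_F = \|\boldsymbol{v}_1 - \boldsymbol{v}_2\|_2\,\|\boldsymbol{v}_1+\boldsymbol{v}_2\|_2/\sqrt{2}$ together with the bound $\max(\|\boldsymbol{v}_1-\boldsymbol{v}_2\|_2, \|\boldsymbol{v}_1+\boldsymbol{v}_2\|_2) \geq \sqrt{2}$ on unit vectors; the second from comparing $d_{2,n}^2$ with $\|A_1 - A_2\|_F^2$ using $H_{n,(i,j)}(\boldsymbol{v}) = 2v_iv_j$ for $i<j$ and $v_i^2$ for $i=j$; and the third from the pointwise bound $|g^{(1)}| \geq \eta$. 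Consequently, any $(\varepsilon/2)$-packing in $\tilde{d}_n$ is an $(\eta\varepsilon/2)$-packing in $d_{g,n}$, which completes the verification of~\ref{item:G2} with $K_{\varepsilon,\delta} = 2$ and $\kappa_{\varepsilon,\delta} = \eta/2$.

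The main obstacle is the sign ambiguity $\boldsymbol{v} \leftrightarrow -\boldsymbol{v}$ inherent in the eigenvector formulation: the objective function $\psi_n$ and the pseudo-metric $d_{g,n}$ depend on $\boldsymbol{v}$ only through the rank-one projector $\boldsymbol{v}\boldsymbol{v}^\top$, whereas $d_n$ is the Euclidean metric on the hemisphere $\mathcal{S}_n$. The pairing lemma in the previous paragraph is the key device that bridges these two perspectives, costing only a factor of $2$ in the packing number. Once~\ref{item:G1} and~\ref{item:G2} are in place, Theorem~\ref{genthm:g3} delivers the claimed tightness of $\{P(\mathcal{N}_{n,\theta_n}, d_n, \varepsilon)\}_{n \geq 1}$ for every $\theta_n = \bigO(1/\sqrt{n})$ and every $\varepsilon > 0$.
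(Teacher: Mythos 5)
Your proof is correct and reaches the same destination as the paper's, but by a noticeably different route in the key step, namely verifying condition~\ref{item:G2}. Both arguments reduce to the identity $d_{g,n}^2 \geq \eta^2\,d_{2,n}^2 \geq \eta^2\,\|\boldsymbol{v}_1\boldsymbol{v}_1^\top - \boldsymbol{v}_2\boldsymbol{v}_2^\top\|_F^2 = 2\eta^2\,d_n^2(1-d_n^2/4)$, and both then have to cope with the fact that $x\mapsto 2x^2(1-x^2/4)$ is not monotone on $[0,2]$ (this is precisely the sign ambiguity $\boldsymbol{v}\leftrightarrow-\boldsymbol{v}$). The paper resolves this via a covering argument --- inflating a $d_{2,n}$-cover by adjoining the reflections of each cover point, then converting back to packings through \Cref{pnp} --- following the template of its SK proof (\Cref{sktight1}). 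You instead stay entirely inside packing numbers: you introduce the projective pseudo-metric $\tilde{d}_n$, run a conflict-graph/matching argument to pass from a $d_n$-packing to a $\tilde{d}_n$-packing with at most a factor-$2$ loss, and then chain $\tilde{d}_n \leq \|\cdot\|_F \leq d_{2,n} \leq \eta^{-1}d_{g,n}$ deterministically. Your route is more self-contained (no detour through covers and \Cref{pnp}) and makes the projective geometry explicit; the paper's route has the advantage of being a verbatim reuse of an argument it already had on hand. Both give $K_{\varepsilon,\delta}=2$; one small slip in your write-up is that $\kappa_{\varepsilon,\delta}$ should be $\eta\varepsilon/2$, not $\eta/2$, to match the $d_{g,n}$-radius you actually proved (harmless, since $\kappa$ may depend on $\varepsilon$).
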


\begin{proof}
Thinking of $\mathbf{M}_n$ as a $\mathbb{R}^{n(n+1)}$-dimensional random vector with density with respect to Lebesgue measure, we can argue that its $n$ eigenvalues have joint density with respect to Lebesgue measure on $\mathbb{R}^n$ and hence the smallest eigenvalue has dimension of its eigenspace equal to $1$ almost surely. Moreover, it is well-known that, under the assumptions of existence of a density for $\mathbf{M}_n$, the probability of the eigenspace corresponding to the smallest eigenvalue being contained in a $(n-1)$-dimensional subspace of $\mathbb{R}^n$ is $0$ and hence with probability $1$ we have a unique eigenvector (corresponding to the smallest eigenvalue) in the space $\mathcal{S}_n$; this shows that \Cref{unique} is satisfied.  Integrability of the matrix entries now guarantees that \Cref{ass:meas} is satisfied. 
We have already argued that \Cref{item:G1} is satisfied with $M_n = C\sqrt{n}$ for some finite constant $C$, see \Cref{eigen:remark}. Thus it is enough to establish that \Cref{item:G2} holds true with the choice $\tau_n \equiv 1$ to complete the proof. 

The condition on $g$ guarantees existence of $\eta >0$ such that $|g^{(1)}(y)| \geq \eta$ for all $y \in \mathbb{R}$. A small computation yields the following for any $\boldsymbol{u}=(u_1, \ldots,u_n),\boldsymbol{v}=(v_1, \ldots,v_n) \in \mathscr{S}^{n-1}$: 
\begin{align}
d_{g,n}(\boldsymbol{u},\boldsymbol{v})^2 &\geq \eta^2\sum_{i <j} 4(u_iu_j-v_iv_j)^2 + \eta^2 \sum_{i=1}^n (u_i^2 - v_i^2)^2 \notag\\
& \geq \eta^2\sum_{i,j \in [n]}(u_iu_j-v_iv_j)^2  = 2\eta^2 d_{n}(\boldsymbol{u},\boldsymbol{v})^2 ( 1-d_{n}(\boldsymbol{u},\boldsymbol{v})^2/4). \label{fr1}
\end{align}
Using arguments similar to the one employed to prove \Cref{sktight1}, one can deduce from \Cref{fr1} that \Cref{item:G2} is satisfied for  $\tau_n \equiv 1$. This completes the proof.
\end{proof}

\Cref{eigtight1} is rather unsatisfactory in its scope since the assumption that $g^{(1)}$ is bounded away from zero severely restricts the possible choices for input distributions. Although it permits standard distributions like Gaussian and Laplace, it cannot be applied to Wigner ensembles with entry distributions having at least one bounded tail. With more effort the result can indeed be extended to situations where $g$ needs to be only strictly monotonic. Nevertheless, the rationale behind writing a weaker result like \Cref{eigtight1} is to demonstrate the applicability of the general technique derived in this article in the case of Wigner matrices to prove a stability result. We shall use some techniques of matrix computations to arrive at the following stronger version of \Cref{eigtight1}.

\begin{prop}{\label{eigtight2}}
Consider the random Wigner matrix $\mathbf{M}_n = (X_{n,ij})_{i,j \in [n]}$ with upper triangular entries being i.i.d.~with common distribution $\mathcal{P}$ having density $f$,  mean zero and satisfying the following condition. There exists a constant $\beta \in (0, \infty)$ such that $\mathbb{E}_{\mathcal{P}} |X|^k \leq \sigma^{k}k^{\beta k}$, for all positive integers $k$. 
Then for any $\varepsilon >0$, the sequence $\{P(\mathcal{N}_{n,\theta_n}, d_n, \varepsilon) : n \geq 1\}$ is  tight provided $\theta_n=\bigO(n^{-1/2})$.
\end{prop}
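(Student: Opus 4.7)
The plan is to bypass the general framework of Theorem~\ref{genthm:g3} (which is sensitive to $g^{(1)}$ vanishing through the metric $d_{g,n}$) and instead exploit the variational characterization of eigenvalues directly. The key observation is that any near-minimizer of the Rayleigh quotient $\boldsymbol{v} \mapsto \boldsymbol{v}^{\top} \mathbf{M}_n \boldsymbol{v}$ is forced to concentrate on the bottom eigenspace, with concentration radius controlled by the spectral gap; the tightness problem then reduces to a lower bound on this gap, for which the stretched-exponential moment hypothesis is more than sufficient.

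For the spectral-perturbation step, let $\lambda_1 \leq \cdots \leq \lambda_n$ denote the eigenvalues of $\mathbf{M}_n$ with orthonormal eigenvectors $u_1, \ldots, u_n$, chosen so that $u_1 \in \mathcal{S}_n$, and set $G_n := \lambda_2 - \lambda_1$. For any $\boldsymbol{v} \in \mathscr{S}^{n-1}$ expanded as $\boldsymbol{v} = \sum_i c_i u_i$,
\begin{equation*}
\boldsymbol{v}^{\top} \mathbf{M}_n \boldsymbol{v} - \lambda_1 \;=\; \sum_{i \geq 2} c_i^2 (\lambda_i - \lambda_1) \;\geq\; G_n (1 - c_1^2),
\end{equation*}
so any $\boldsymbol{v} \in \mathcal{N}_{n,\theta_n}$ satisfies $1 - c_1^2 \leq \theta_n/G_n$ and hence (via $\|\boldsymbol{v}\mp u_1\|_2^2 = 2 \mp 2c_1$) lies within Euclidean distance $\sqrt{2\theta_n/G_n}$ of either $u_1$ or $-u_1$. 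Restricting to the hemisphere $\mathcal{S}_n$, the set $\mathcal{N}_{n,\theta_n}$ is therefore covered by at most two $d_n$-balls of radius $\sqrt{2\theta_n/G_n}$.

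The second and main step is to show that $G_n$ is typically much larger than $\theta_n$: for every $\delta>0$ there is $K = K(\delta)$ such that $\mathbb{P}(G_n \geq K n^{-1/2}) \geq 1-\delta$ for all large $n$. Under the stretched-exponential moment condition $\mathbb{E}|X|^k \leq \sigma^k k^{\beta k}$, the entries of $\mathbf{M}_n$ are well within the scope of Tracy--Widom edge universality for real symmetric Wigner matrices (via Tao--Vu, Erd\H{o}s--Yau--Yin, and related works): the two smallest eigenvalues of $n^{-1/2}\mathbf{M}_n$, recentered around $-2\sigma$ and rescaled by $n^{2/3}$, converge jointly to the two smallest points of the Airy$_1$ point process. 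Consequently $n^{1/6} G_n$ converges in distribution to an almost surely positive random variable, so $n^{1/2} G_n \to \infty$ in probability, which delivers the desired uniform-in-$n$ lower bound.

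Combining these ingredients: given $\varepsilon, \delta > 0$, let $C < \infty$ satisfy $\theta_n \leq C n^{-1/2}$ and pick $K$ large enough that $\sqrt{2C/K} \leq \varepsilon$. On the event $\{G_n \geq K n^{-1/2}\}$, which has probability at least $1-\delta$ for all large $n$, the set $\mathcal{N}_{n,\theta_n}$ is covered by at most two $d_n$-balls of radius $\varepsilon$, hence $P(\mathcal{N}_{n,\theta_n}, d_n, \varepsilon) \leq 2$, which is exactly tightness. The main obstacle is producing the gap bound cleanly; if one wishes to avoid invoking full edge universality, the same conclusion can be obtained more elementarily by sharpening the moment-method computation behind Proposition~\ref{eigen:expecbound} (replacing $k = (\log n)^2$ by a larger choice and tracking the subleading term) to get joint concentration of $\lambda_1(\mathbf{M}_n)$ and $\lambda_2(\mathbf{M}_n)$ at a scale finer than $n^{-1/2}$, which precludes their gap from collapsing on that scale.
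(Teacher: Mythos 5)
Your main route is correct, but it is genuinely different from the paper's. You reduce everything to a lower bound on the spectral gap $G_n=\lambda_2(\mathbf{M}_n)-\lambda_1(\mathbf{M}_n)$: since any $\boldsymbol{v}\in\mathcal{N}_{n,\theta_n}$ has $1-c_1^2\le\theta_n/G_n$ in the eigenbasis, the near-optimal set sits inside one or two balls of radius $\sqrt{2\theta_n/G_n}$, and edge universality (Lee--Yin type results, which the stretched-exponential moment hypothesis comfortably satisfies) gives $n^{1/6}G_n\Rightarrow$ a.s.\ positive limit, hence $\theta_n/G_n=\bigO(n^{-1/3})\to 0$. This buys a much stronger conclusion than tightness (the near-optimal set collapses onto $\pm u_1$), at the cost of importing deep external input; the paper deliberately avoids universality (cf.\ \Cref{eigen:improve}, where Soshnikov-type gap statements are mentioned only under extra symmetry assumptions). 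The paper's proof instead never controls $\lambda_2-\lambda_1$ at all: it fixes $T$, shows that on the event $\{\lambda_T-\lambda_1>\varepsilon^{-2}\theta_n\}$ the set $\mathcal{N}_{n,\theta_n}$ embeds into $B_{T-1}(\mathbf{0},1)\times B_{n-T+1}(\mathbf{0},\varepsilon)$ and so has packing number at most $(1+2/\varepsilon)^{T-1}$, and then proves $\sqrt{n}(\lambda_T-\lambda_1)\gtrsim T$ with high probability by a self-contained argument: Cauchy interlacing (\Cref{interlace}), the quadratic-form perturbation bound of \Cref{gapestimate}, Paley--Zygmund anti-concentration of linear forms (\Cref{xuconv}, needing only fourth moments), and Azuma's inequality, together with the in-probability location of $n^{-1/2}\lambda_1$ from \Cref{eigen:expecbound}. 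So the trade-off is: your argument is shorter and sharper but leans on Tracy--Widom edge universality; the paper's is longer but elementary and uses the moment hypothesis only through \Cref{eigen:expecbound}.

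One caution about your proposed ``elementary'' fallback: sharpening the moment-method computation to get finer \emph{concentration} of $\lambda_1$ and $\lambda_2$ cannot substitute for the gap bound. Both eigenvalues concentrate around the same edge location, so concentration at any scale only yields an \emph{upper} bound on $\lambda_2-\lambda_1$; a lower bound on the gap is an anti-concentration/level-repulsion statement that the trace method does not provide. If you want to avoid universality, the way out is precisely the paper's observation that you do not need the gap $\lambda_2-\lambda_1$ at all, only that $\lambda_T-\lambda_1$ is of order $T n^{-1/2}$ for a large fixed $T$, which is accessible by interlacing plus anti-concentration of $\boldsymbol{u}^{\top}\mathbf{X}$ for unit vectors $\boldsymbol{u}$.
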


\begin{proof}
We start with the spectral decomposition of $\mathbf{M}_n$; $\mathbf{M}_n = \sum_{i=1}^n \lambda_i(\mathbf{M}_n)\boldsymbol{p}_i(M_n)\boldsymbol{p}_i(M_n)^{\top}$, where  $\boldsymbol{p}_i(M_n)$'s are corresponding orthonormal eigenvectors. 
Fix $T \geq 2$. It is clear that for any $\boldsymbol{v}=(v_1, \ldots,v_n) \in \mathscr{S}^{n-1}$, we can write the following:
\begin{align*}
&\boldsymbol{v}^{\top} \mathbf{M}_n \boldsymbol{v} - \inf_{\boldsymbol{u} \in \mathscr{S}^{n-1}} \boldsymbol{u}^{\top} \mathbf{M}_n \boldsymbol{u} \leq \theta_n \iff \sum_{i=1}^n \lambda_i(\mathbf{M}_n) (\boldsymbol{v} \cdot \boldsymbol{p}_i )^2 - \lambda_1(\mathbf{M}_n) \leq \theta_n \\
& \Rightarrow  \lambda_T(\mathbf{M}_n) \sum_{i=T}^n  (\boldsymbol{v} \cdot \boldsymbol{p}_i )^2 + \sum_{i=1}^{T-1} \lambda_i(\mathbf{M}_n) (\boldsymbol{v} \cdot \boldsymbol{p}_i )^2 - \lambda_1(\mathbf{M}_n) \leq \theta_n \\
& \Rightarrow  \lambda_T(\mathbf{M}_n) \biggl( 1- \sum_{i=1}^{T-1}  (\boldsymbol{v} \cdot \boldsymbol{p}_i)^2 \biggr) + \sum_{i=1}^{T-1} \lambda_i(\mathbf{M}_n) (\boldsymbol{v} \cdot \boldsymbol{p}_i )^2  - \lambda_1(\mathbf{M}_n) \leq \theta_n \\
& \Rightarrow \sum_{j=1}^{T-1} ( \lambda_{j+1}(\mathbf{M}_n) - \lambda_{j}(\mathbf{M}_n) ) \biggl(1 -  \sum_{i=1}^{j}  (\boldsymbol{v} \cdot \boldsymbol{p}_i )^2  \biggr) \leq \theta_n,
\end{align*}
and hence
$$ \mathcal{N}_{n,\theta_n} \subseteq\biggl \{\boldsymbol{v} \in \mathscr{S}^{n-1} : \sum_{j=1}^{T-1} ( \lambda_{j+1}(\mathbf{M}_n) - \lambda_{j}(\mathbf{M}_n) ) \biggl(1 -  \sum_{i=1}^{j}  (\boldsymbol{v} \cdot \boldsymbol{p}_i )^2  \biggr) \leq \theta_n\biggr\} =: \mathcal{N}^{\prime}_{n,T,\theta_n}.$$
Since $d_n(\boldsymbol{u},\boldsymbol{v})^2 = \|\boldsymbol{u}-\boldsymbol{v}\|_2^2 = \sum_{i=1}^n (\boldsymbol{u} \cdot \boldsymbol{p}_i-\boldsymbol{v} \cdot \boldsymbol{p}_i)^2$, for any $\boldsymbol{u}, \boldsymbol{v} \in \mathbb{R}^n$, we conclude that the metric space $(\mathcal{N}^{\prime}_{n,T,\theta_n},d_n)$ is isometric to the metric space 
$$\mathcal{N}_{n,T,\theta_n}^*:=\biggl\{\boldsymbol{x}=(x_1, \ldots,x_n) \in \mathscr{S}^{n-1} : \sum_{j=1}^{T-1} ( \lambda_{j+1}(\mathbf{M}_n) - \lambda_{j}(\mathbf{M}_n)) \biggl(1 -  \sum_{i=1}^{j}  x_i^2 \biggr) \leq \theta_n\biggr\},$$
 equipped with the Euclidean distance. Thus it is enough to focus our attention on the random set $\mathcal{N}_{n,T,\theta_n}^*$ for a suitably chosen $T$.  Fix $\varepsilon >0$ and consider the event $A_{n,T} := \{\lambda_{T}(\mathbf{M}_n) - \lambda_{1}(\mathbf{M}_n) > \varepsilon^{-2}\theta_n \}$. On the event $A_{n,T}$, any $\boldsymbol{x} = (x_1, \ldots,x_n) \in \mathcal{N}^*_{n,T,\theta_n}$ satisfies 
 \begin{align*}
 \theta_n \geq  \sum_{j=1}^{T-1} ( \lambda_{j+1}(\mathbf{M}_n) - \lambda_{j}(\mathbf{M}_n) ) \biggl(1 -  \sum_{i=1}^{j}  x_i^2 \biggr) &\geq  \sum_{j=1}^{T-1} ( \lambda_{j+1}(\mathbf{M}_n) - \lambda_{j}(\mathbf{M}_n) ) \biggl(1 -  \sum_{i=1}^{T-1}  x_i^2 \biggr) \\
 & = ( \lambda_{T}(\mathbf{M}_n) - \lambda_{1}(\mathbf{M}_n) ) \biggl(1 -  \sum_{i=1}^{T-1}  x_i^2 \biggr) \\
 & \geq \varepsilon^{-2}\theta_n \biggl(1 -  \sum_{i=1}^{T-1}  x_i^2 \biggr).
 \end{align*}  
 Therefore, on the event $A_{n,T}$, we have 
 \begin{equation}{\label{eigenstep1}}
 \mathcal{N}^*_{n,T,\theta_n} \subseteq \biggl\{\boldsymbol{x} \in \mathscr{S}^{n-1} : \sum_{j=T}^{n}   x_j^2 \leq \varepsilon^2 \biggr\} \subseteq B_{T-1}(\mathbf{0},1) \times B_{n-T+1}(\mathbf{0},\varepsilon).
 \end{equation}
 Here $B_d(\mathbf{0},r)$ is the closed ball of radius $r$ around $\mathbf{0}$ in $\mathbb{R}^d$. It is easy to see that 
 \begin{align}{\label{eigenstep2}}
 N ( B_{T-1}(\mathbf{0},1) \times B_{n-T+1}(\mathbf{0},\varepsilon), d_n,2\varepsilon) &\leq N ( B_{T-1}(\mathbf{0},1), d_{T-1},\varepsilon) N (  B_{n-T+1}(\mathbf{0},\varepsilon), d_{n-T+1},\varepsilon) \nonumber \\
 & = N ( B_{T-1}(\mathbf{0},1), d_{T-1},\varepsilon) \leq \biggl(1+\dfrac{2}{\varepsilon} \biggr)^{T-1},
 \end{align}   
 where the last equality follows from \cite[Corollary 4.2.13]{hdp}. Combining \Cref{eigenstep1} and \Cref{eigenstep2}, with the aid of \Cref{pnp}, we can guarantee that $P(\mathcal{N}_{n,T,\theta_n}^*, d_n, 4\varepsilon) \leq (1+2/\varepsilon)^{T-1}$,  on the event $A_{n,T}$. Therefore,
 \begin{align*}
  \limsup_{n \to \infty} \mathbb{P}( P(\mathcal{N}_{n,\theta_n}, d_n, 4\varepsilon) > (1+2/\varepsilon)^{T-1}) & \leq \limsup_{n \to \infty} \mathbb{P} (P(\mathcal{N}_{n,T,\theta_n}^*, d_n, 4\varepsilon) > (1+2/\varepsilon)^{T-1} ) \\
  & \leq \limsup_{n \to \infty} \mathbb{P} ( A_{n,T}^c),
  \end{align*}
 and hence, to conclude the proof, it is enough to show that $\lim_{T \to \infty} \limsup_{n \to \infty} \mathbb{P} ( A_{n,T}^c) =0$. 
 
 We shall make use of the comparison inequality from \Cref{interlace}. Let $\mathbf{M}_n^{(k)}$ be the random matrix consisting of only the first $k$ rows and columns of $\mathbf{M}_n$. By \Cref{interlace} applied to the symmetric matrix $\mathbf{M}_n$, we have $\lambda_1(\mathbf{M}_n^{(n-k+1)}) \leq \lambda_k(\mathbf{M}_n)$ for all $1 \leq k \leq n$ with equality for the case $k=1$ and hence 
 \begin{align}{\label{gap4}}
 	\sqrt{n}(\lambda_T(\mathbf{M}_n) - \lambda_1(\mathbf{M}_n) )& \geq  \sqrt{n}(\lambda_1(\mathbf{M}_n^{(n-T+1)}) - \lambda_1(\mathbf{M}_n^{(n)}) )  \nonumber \\
 	& \geq  \sum_{k=n-T+2}^{n} \sqrt{k}(\lambda_1(\mathbf{M}_n^{(k-1)}) - \lambda_1(\mathbf{M}_n^{(k)})),
 \end{align} 
where the last inequality in \Cref{gap4} is true since $\lambda_1(\mathbf{M}_n^{(k)})$ is non-increasing in $k$ by \Cref{interlace}.
 We shall now use \Cref{gapestimate} to estimate the difference  $\lambda_1(\mathbf{M}_n^{(k-1)}) - \lambda_1(\mathbf{M}_n^{(k)}) $ for any $n-T+2\leq k \leq n$. Let $\mathbf{X}^{(n,k)} = (X_{n,ki})_{1 \leq i \leq k-1}$ be the vector formed by the first $(k-1)$ entries of the $k$-th row of $\mathbf{M}_n^{(k)}$ and $\boldsymbol{u}_{n,k-1}$ be the orthonormal eigenvector associated with eigenvalue $\lambda_1(\mathbf{M}_n^{(k-1)})$ for the matrix  $\mathbf{M}_n^{(k-1)}$; the vector $\boldsymbol{u}_{n,k-1}$  is unique almost surely due to existence of densities for the matrix entries. Applying \Cref{gapestimate} to the matrix $-\mathbf{M}_n^{(k)}$, we obtain for any $t \in (0,1)$,
 \begin{equation}{\label{gap1}}
 \lambda_1(\mathbf{M}_n^{(k-1)}) - \lambda_1(\mathbf{M}_n^{(k)}) \geq 2\sqrt{t(1-t)}  \rvert (\boldsymbol{u}_{n,k-1})^{\top}\mathbf{X}^{(n,k)}\rvert + t \lambda_1(\mathbf{M}_n^{(k-1)})  - t X_{n,kk}.  
 \end{equation}
Now fix $\upsilon >0$ small enough such that the assertion in \Cref{xuconv} holds true with $\delta = \delta(\upsilon)>0$. Take any $0 < \eta < \min(\delta^2/(25\sigma^2), 3/4)$ and set $t=\eta/k$. Plugging-in this particular choice in \Cref{gap1}, we have, on the event $0>\lambda_1(\mathbf{M}_n^{(k-1)})> -3\sigma\sqrt{k}$, 
 \begin{equation}{\label{gap}}
 	\sqrt{k}(\lambda_1(\mathbf{M}_n^{(k-1)}) - \lambda_1(\mathbf{M}_n^{(k)}) )\geq \sqrt{\eta}\rvert (\boldsymbol{u}_{n,k-1})^{\top}\mathbf{X}^{(n,k)}\rvert -3\sigma \eta - \dfrac{\eta X_{n,kk}}{\sqrt{k}}. 
 \end{equation} 
Let $\Omega_{n,k} := \{ \sqrt{k}(\lambda_1(\mathbf{M}_n^{(k-1)}) - \lambda_1(\mathbf{M}_n^{(k)}) )\geq \sigma \eta\}$.  The estimate in \Cref{gap} yields the following on the event $ 0>\lambda_1(\mathbf{M}_n^{(k-1)})> -3\sigma\sqrt{k}$:
 \begin{align*}
\mathbb{P} ( \Omega_{n,k} \; \rvert \;\mathbf{M}_n^{(k-1)}) & \geq \mathbb{P} ( \rvert (\boldsymbol{u}_{n,k-1})^{\top}\mathbf{X}^{(n,k)}\rvert \geq 5\sigma \sqrt{\eta}, |X_{n,kk}|  \leq \sigma \sqrt{k} \; \rvert \;\mathbf{M}_n^{(k-1)}) \\
&  \geq \mathbb{P} (  \rvert (\boldsymbol{u}_{n,k-1})^{\top}\mathbf{X}^{(n,k)} \rvert \geq \delta \;\rvert \;\mathbf{M}_n^{(k-1)}) \mathbb{P}( |X_{n,kk}|  \leq \sigma \sqrt{k}) \geq \upsilon/2,
 \end{align*}
for all $k \geq \sqrt{n}$ and $n$ large enough. The last inequality is an application of \Cref{xuconv}, since $\mathbf{X}^{(n,k)}$ is independent of $\mathbf{M}_n^{(k-1)}$, whereas $\boldsymbol{u}_{n,k-1}$ is measurable with respect to $\mathbf{M}_n^{(k-1)}$.

Note that the sequence $\{\mathbbm{1}_{\Omega_{n,k}} - \mathbb{P} ( \Omega_{n,k} \; \rvert \;\mathbf{M}_n^{(k-1)}) : n-T+2 \leq k \leq n\}$ is a martingale difference sequence with respect to the filtration $\{\mathcal{F}_{n,k} : n-T+2 \leq k \leq n\}$ and with absolute values uniformly bounded above by $1$. Here $\mathcal{F}_{n,k}$ is the $\sigma$-algebra generated by the first $k$ rows and columns of $\mathbf{M}_n$. By Azuma's inequality, we have for any $\kappa >0$, 
\begin{equation}{\label{gap2}}
\mathbb{P} \biggl( \sum_{k=n-T+2}^n \mathbbm{1}_{\Omega_{n,k}} - \sum_{k=n-T+2}^n \mathbb{P} ( \Omega_{n,k} \; \rvert \;\mathbf{M}_n^{(k-1)})  \leq -\kappa \biggr) \leq \exp\biggl(-\dfrac{\kappa^2}{2(T-1)} \biggr). 	
\end{equation}
On the other hand, \Cref{gap} tells us that, provided that $n$ is large enough and $T \leq \sqrt{n}$, 
\begin{align}
	\sum_{k=n-T+2}^n \mathbb{P} ( \Omega_{n,k} \; \rvert \;\mathbf{M}_n^{(k-1)}) &\geq \dfrac{\upsilon}{2}\sum_{k=n-T+2}^n \mathbbm{1}( 0>\lambda_1(\mathbf{M}_n^{(k-1)})\notag \\
	&> -3\sigma\sqrt{k})  \geq \dfrac{\upsilon(T-1)}{2} \mathbbm{1}_{B_{n,T}}, \label{gap3}
\end{align}
where $B_{n,T}:= \{ 0>\lambda_1(\mathbf{M}_n^{(n-T+1)}),  \lambda_1(\mathbf{M}_n^{(n)})>-3\sigma\sqrt{n-T+1}\}$. The last inequality in \Cref{gap3} is again a consequence of non-increasingness of  $\lambda_1(\mathbf{M}_n^{(k)})$ in $k$. For any fixed $T$, by \Cref{eigen:expecbound}, we have both $n^{-1/2}\lambda_1(\mathbf{M}_n^{(n-T+1)})$ and $n^{-1/2}\lambda_1(\mathbf{M}_n^{(n)})$ converging to $-2\sigma$ in probability and hence $\mathbb{P}(B_{n,T}) \to 1$ as $n \to \infty$. Set $\kappa = \upsilon(T-1)/4$ in \Cref{gap2} and we have the following:
\begin{align*}
	&\mathbb{P}\biggl( \sum_{k=n-T+2}^n \mathbbm{1}_{\Omega_{n,k}} <  \dfrac{\upsilon(T-1)}{4}\biggr) \\
	& \leq \mathbb{P} \biggl( \sum_{k=n-T+2}^n \mathbbm{1}_{\Omega_{n,k}} - \sum_{k=n-T+2}^n \mathbb{P}( \Omega_{n,k} \; \rvert \;\mathbf{M}_n^{(k-1)})  \leq - \dfrac{\upsilon(T-1)}{4}\biggr)  \\
	&\qquad + \mathbb{P} \biggl( \sum_{k=n-T+2}^n \mathbb{P} ( \Omega_{n,k} \; \rvert \;\mathbf{M}_n^{(k-1)}) < \upsilon(T-1)/2\biggr) \\
	& \leq \exp(-\upsilon^{2}(T-1)/32) + \mathbb{P}(B_{n,T}^c).
\end{align*}
On the other hand, \Cref{gap4} implies that $ \sqrt{n}(\lambda_T(\mathbf{M}_n)-\lambda_1(\mathbf{M}_n) )  \geq \sigma \eta \sum_{k=n-T+2}^n \mathbbm{1}_{\Omega_{n,k}}$ and hence for all $n$ large enough and $T \leq \sqrt{n}$,
\begin{align*}
	\mathbb{P} \biggl( \sqrt{n}(\lambda_T(\mathbf{M}_n)-\lambda_1(\mathbf{M}_n)) < \dfrac{\sigma \eta \upsilon (T-1)}{4} \biggr) \leq \exp(-\upsilon^{2}(T-1)/32) + \mathbb{P}(B_{n,T}^c).
\end{align*}
Now recall that $A_{n,T} =\{ \lambda_T(\mathbf{M}_n)-\lambda_1(\mathbf{M}_n) \geq \varepsilon^{-2}\theta_n\}$. Since $\theta_n =\bigO(n^{-1/2})$, get $C < \infty$ such that $\theta_n \leq Cn^{-1/2}$ for all $n \geq 1$. Therefore, for all $T$ such that $\sigma \eta \upsilon (T-1)/4 > C\varepsilon^{-2}$, we have 
$$ \mathbb{P}(A_{n,T}^c) \leq \mathbb{P}( \sqrt{n}\left(\lambda_T(\mathbf{M}_n)-\lambda_1(\mathbf{M}_n) ) \leq C\varepsilon^{-2} \right) \leq  \exp(-\upsilon^{2}(T-1)/32) + \mathbb{P}(B_{n,T}^c).$$
Taking first $n \to \infty$ and then $T \to \infty$, we conclude our proof.
\end{proof}

\begin{remark}{\label{eigen:improve}}
Under some more assumptions on the distribution $\mathcal{P}$, one can actually prove a weak convergence result for the gaps between  eigenvalues at the edge of the spectrum: $$\{ n^{1/6}(\lambda_{i+1}(\mathbf{M}_n)-\lambda_i(\mathbf{M}_n) ) : 1 \leq i \leq k \}$$
 converges to a jointly continuous distribution for any $k \geq 1$. One example of such situation can be found in Soshnikov~\cite{soshnikov}, who proved the joint convergence for the eigenvalues at the edge of the spectrum, from which the aforementioned weak convergence for the gaps can be obtained, under the additional assumption that the distribution $\mathcal{P}$ is symmetric. In such situations, one can modify the proof of \Cref{eigtight2} to obtain a stronger result. In particular, one can estimate $\mathbb{P}(A_{n,T}^c)$, as defined in the proof of \Cref{eigtight2}, with the joint convergence result and complete the proof with the improved assumption that $\theta_n = \bigO(n^{-1/6})$. Nevertheless, such a stronger version of \Cref{eigtight2} will not be required in our analysis.
\end{remark}

Applying \Cref{eigtight2} , we can write our first stability result for Wigner matrices which states that \textit{the eigenvector corresponding to the smallest eigenvalue is stable under the perturbation scheme which replaces one matrix entry at a time}.

\begin{thm}{\label{eigenthm1}}
Consider a Wigner matrix  where the upper triangular entries are i.i.d.~with common distribution $\mathcal{P}$ with density $f$, having zero mean and satisfying the hypotheses of Proposition~\ref{eigtight2}.
The optimization problem in \Cref{eign:def} is then stable under small perturbations with perturbation blocks $\mathcal{J}_n = \{J_{n,(i,j)} : (i,j) \in [n]^2, i \leq  j\}$ where $J_{n,(i,j)} = \{(i,j)\}$ for all $1 \leq i \leq j \leq n$.
\end{thm}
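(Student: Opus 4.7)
The plan is to apply \Cref{strat} with window length $\theta_n = n^{-1/2}$, taking the sister solutions $\omega^*_{n,(i,j)} := \widehat{\omega}_n(\mathbf{X}^n_{(i,j)})$ to be the optimizers of the perturbed problems themselves. Condition \ref{item:A} of \Cref{strat} follows immediately from \Cref{eigtight2}, because $c\theta_n = \bigO(n^{-1/2})$ for every constant $c > 0$; and the first clause of Condition \ref{item:B} is vacuous with this choice of sister solutions.

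The substance of the proof lies in the second clause of Condition \ref{item:B}. By \Cref{rem:strateasy}, it suffices to establish
\[
\sum_{1 \leq i \leq j \leq n} \mathbb{E}\bigl[\psi_n(\mathbf{X}^n, \widehat{\omega}_n(\mathbf{X}^n_{(i,j)})) - \psi_{n,\mathrm{opt}}(\mathbf{X}^n)\bigr] = \bigO(m_n \theta_n) = \bigO(n^{3/2}).
\]
A direct invocation of \Cref{mostprop} is unavailable here: its hypothesis would require tightness of the near-optimal set at the scale $\varsigma_n \equiv 1$, whereas \Cref{eigtight2} only yields tightness at the finer scale $n^{-1/2}$. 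To close this gap I would adapt the signed-cancellation argument from the proof of \Cref{skthm2}.

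For each pair $(i,j)$ with $i \leq j$, use the equality in distribution $\mathbf{X}^n \stackrel{d}{=} \mathbf{X}^n_{(i,j)}$ to replace $\mathbb{E}\psi_{n,\mathrm{opt}}(\mathbf{X}^n)$ by $\mathbb{E}\psi_n(\mathbf{X}^n_{(i,j)}, \widehat{\omega}_n(\mathbf{X}^n_{(i,j)}))$, and then expand by linearity:
\[
\mathbb{E}\bigl[\psi_n(\mathbf{X}^n, \widehat{\omega}_n(\mathbf{X}^n_{(i,j)})) - \psi_n(\mathbf{X}^n_{(i,j)}, \widehat{\omega}_n(\mathbf{X}^n_{(i,j)}))\bigr] = \mathbb{E}\bigl[(X_{n,(i,j)} - X^{(i,j)}_{n,(i,j)}) H_{n,(i,j)}(\widehat{\omega}_n(\mathbf{X}^n_{(i,j)}))\bigr].
\]
The $X_{n,(i,j)}$ contribution vanishes, since $X_{n,(i,j)}$ has zero mean and is independent of $\mathbf{X}^n_{(i,j)}$, hence of $\widehat{\omega}_n(\mathbf{X}^n_{(i,j)})$. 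The remaining contribution, by the joint exchangeability $(X^{(i,j)}_{n,(i,j)}, \widehat{\omega}_n(\mathbf{X}^n_{(i,j)})) \stackrel{d}{=} (X_{n,(i,j)}, \widehat{\omega}_n(\mathbf{X}^n))$ and summation over $(i,j)$, telescopes to $-\mathbb{E}\bigl[\sum_{i \leq j} X_{n,(i,j)} H_{n,(i,j)}(\widehat{\omega}_n(\mathbf{X}^n))\bigr] = -\mathbb{E}[\lambda_1(\mathbf{M}_n)]$. Since the moment assumption of the theorem coincides with that of \Cref{eigen:expecbound}, this quantity is bounded in absolute value by $C\sqrt{n}$, so the full sum is $\bigO(\sqrt{n})$, comfortably below the required $\bigO(n^{3/2})$.

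The main obstacle is conceptual rather than computational: one must recognize that the natural near-optimal window size $n^{-1/2}$ dictated by \Cref{eigtight2} is strictly smaller than the naive $\bigO(1)$ perturbation scale produced by flipping a single entry, so a direct application of \Cref{mostprop} cannot work; the resolution is to exploit both the zero-mean i.i.d.\ structure of the disorder and the sublinear-in-$n$ growth of $|\mathbb{E}\lambda_1(\mathbf{M}_n)|$ to obtain a much sharper aggregate estimate on the mean energy gap.
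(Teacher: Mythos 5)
Your argument is correct, but it follows a genuinely different route from the paper's proof of \Cref{eigenthm1}. The paper chooses the smaller window $\theta_n = 1/n$ and bounds \emph{each individual} cost gap $\mathbb{E}[\psi_n(\mathbf{M}_n;\widehat{\boldsymbol{v}}(\mathbf{M}_n^{(k,l)})) - \psi_{n,\mathrm{opt}}(\mathbf{M}_n)]$ by first applying Cauchy--Schwarz to obtain $2\sqrt{2\operatorname{Var}_{\mathcal{P}}(X)\,\mathbb{E}\widehat{v}_1^2\widehat{v}_2^2}$, and then using exchangeability of the eigenvector coordinates to show $\mathbb{E}\widehat{v}_1^2\widehat{v}_2^2 \leq 1/(n(n-1))$, giving a $\bigO(1/n)$ per-term bound (so the stronger, $\sup$ version of \Cref{rem:strateasy} applies). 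You instead choose $\theta_n = n^{-1/2}$ and exploit the mean-zero structure plus the distributional identity $(X^{(i,j)}_{n,(i,j)}, \widehat{\omega}_n(\mathbf{X}^n_{(i,j)})) \stackrel{d}{=} (X_{n,(i,j)}, \widehat{\omega}_n(\mathbf{X}^n))$ to telescope the entire sum to $-\mathbb{E}[\lambda_1(\mathbf{M}_n)] = \bigO(\sqrt{n})$, which verifies the weaker, aggregate version of \Cref{rem:strateasy}. This is precisely the signed-cancellation technique the paper reserves for the \emph{row}-perturbation stability theorems (\Cref{skthm2} for SK and \Cref{eigthm2} for Wigner), where a per-entry bound would not suffice. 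Your proof works because \Cref{eigtight2} covers any $\theta_n = \bigO(n^{-1/2})$, not just $\theta_n = 1/n$, and because the required threshold $m_n\theta_n = \bigO(n^{3/2})$ is generous compared to the $\bigO(\sqrt{n})$ you obtain. What the paper's route buys is a tighter per-term estimate without appealing to \Cref{eigen:expecbound}; what your route buys is uniformity with the paper's other Wigner proof and avoidance of the eigenvector-exchangeability computation. Both are valid. One small remark: your opening sentence motivating the deviation from \Cref{mostprop} is actually unnecessary here, since the direct per-term Cauchy--Schwarz bound in the paper shows that single-entry replacement costs are $\bigO(1/n)$ rather than $\bigO(1)$; the obstruction to applying \Cref{mostprop} verbatim is not the size of the cost gap but the mismatch between the $\bigO(1)$ hypothesis in \Cref{mostprop} and the finer window scale required by \Cref{eigtight2}.
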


\begin{proof}
We have already shown that \Cref{unique} is satisfied in \Cref{eigtight1}. The rest of the proof goes following exactly the same arguments used in the proof of \Cref{mostprop} except for the last step. If $\widehat{\boldsymbol{v}}(\mathbf{M}_n)=(\widehat{v}_1, \ldots, \widehat{v}_n)$ denotes the eigenvector corresponding to the smallest eigenvalue for $\mathbf{M}_n$ and for any $k \leq l \in [n]$,  $\widehat{\boldsymbol{v}}(\mathbf{M}_n^{(k,l)})=(\widehat{v}_1^{(k,l)}, \ldots, \widehat{v}_n^{(k,l)})$ does the same for $\mathbf{M}_n^{(k,l)}$ where $\mathbf{M}_n^{(k,l)}$ is the matrix obtained from $\mathbf{M}_n$ after replacing $X_{n,(k,l)}$ (and hence $X_{n,(l,k)})$ by i.i.d.~copy $X_{n,(k,l)}^{(k,l)}$, then  
\begin{align*}
\mathbb{E} [\psi_n(\mathbf{M}_n;\widehat{\boldsymbol{v}}(\mathbf{M}_n^{(k,l)})) - \psi_{n, \mathrm{opt}}(\mathbf{M}_n) ] &= \mathbb{E} [\widehat{\boldsymbol{v}}(\mathbf{M}_n^{(k,l)})^{\top} \mathbf{M}_n\widehat{\boldsymbol{v}}(\mathbf{M}_n^{(k,l)}) -  \widehat{\boldsymbol{v}}(\mathbf{M}_n^{(k,l)})^{\top} \mathbf{M}_n^{(k,l)} \widehat{\boldsymbol{v}}(M_n^{(k,l)})] \\
& = \mathbb{E} [( 1 + \mathbbm{1}(k \neq l))( X_{n,kl} - X_{n,kl}^{(k,l)})\widehat{v}^{(k,l)}_k \widehat{v}^{(k,l)}_l ] \\
& \leq 2 \sqrt{2 \operatorname{Var}_{\mathcal{P}}(X) \mathbb{E}\widehat{v}_1^2\widehat{v}_2^2}.
\end{align*}
Since the distribution of the matrix is invariant under permutation of row and columns, we can conclude that $\{\widehat{v}_1, \ldots, \widehat{v}_n\}$ is an exchangeable collection and hence
\begin{align*}
1= \mathbb{E} \biggl[ \biggl(\sum_{i=1}^n \widehat{v}_i^2\biggr)^2\biggr] = \mathbb{E} \biggl[ \sum_{i,j=1}^n \widehat{v}_i^2  \widehat{v}_j^2\biggr] = n(n-1)\mathbb{E}\widehat{v}_1^2\widehat{v}_2^2 + n \mathbb{E}\widehat{v}_1^4 \geq n(n-1)\mathbb{E}\widehat{v}_1^2\widehat{v}_2^2,
\end{align*}
and thus 
$$ \max_{k \leq l} \mathbb{E} [\psi_n(\mathbf{M}_n;\widehat{\boldsymbol{v}}^{(k,l)}) - \psi_{n, \mathrm{opt}}(\mathbf{M}_n) ]  =\bigO(1/n).$$
We can now finish the proof by applying \Cref{eigtight2} and \Cref{rem:strateasy} with $\theta_n=1/n.$
\end{proof}

Another natural choice of perturbation blocks corresponds to replacing all entries in a particular row (and hence its corresponding column) by i.i.d.~entries. In other words, we consider the perturbation blocks $\mathcal{J}_n = \{J_{n,i} : i \in [n]\}$ where $J_{n,i}=\{(i,j) : j \geq i \} \cup \{(j,i) : j \leq i\}$. 

\begin{thm}{\label{eigthm2}}
Consider a Wigner matrix  where the upper triangular entries are i.i.d.~with common distribution $\mathcal{P}$ with density $f$ having zero mean and satisfying the hypotheses of Proposition~\ref{eigtight2}.
The optimization problem in \Cref{eign:def} is then stable under small perturbations with perturbation blocks 
$\mathcal{J}_n = \{J_{n,i} : i \in [n]\}$ where $J_{n,i}=\{(i,j) : j \geq i \} \cup \{(j,i) : j \leq i \}$, for all $i \in [n]$. 
\end{thm}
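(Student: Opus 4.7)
The plan is to apply \Cref{strat} with window $\theta_n = n^{-1/2}$ and sister solutions $\omega^*_{n,i} := \widehat{\boldsymbol{v}}(\mathbf{M}_n^{(i)})$, where $\mathbf{M}_n^{(i)}$ denotes the Wigner matrix obtained from $\mathbf{M}_n$ by replacing all entries of the $i$-th row (and hence the $i$-th column, by symmetry) with independent copies. Condition \ref{item:A} of \Cref{strat} is supplied directly by \Cref{eigtight2}, and the first assertion in \ref{item:B} holds tautologically since $\omega^*_{n,i}$ already equals the optimizer on the $i$-th perturbed input. By \Cref{rem:strateasy}, the second assertion of \ref{item:B} reduces to the uniform bound
\begin{equation*}
\sup_{i \in [n]} \mathbb{E}\bigl[\psi_n(\mathbf{M}_n; \omega^*_{n,i}) - \psi_{n, \textup{opt}}(\mathbf{M}_n)\bigr] = \bigO(n^{-1/2}),
\end{equation*}
which is the core estimate to be established. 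Unlike \Cref{eigenthm1}, here we cannot invoke \Cref{mostprop} directly, because the perturbation blocks $J_{n,i}$ have size $n$, not uniformly bounded in $n$.

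Write $\widehat{\boldsymbol{v}}^{(i)} := \widehat{\boldsymbol{v}}(\mathbf{M}_n^{(i)})$ and observe that, since $\mathbf{M}_n^{(i)} \stackrel{d}{=} \mathbf{M}_n$, the quantity of interest equals $\mathbb{E}[(\widehat{\boldsymbol{v}}^{(i)})^\top (\mathbf{M}_n - \mathbf{M}_n^{(i)}) \widehat{\boldsymbol{v}}^{(i)}]$. Condition on $\mathbf{M}_n^{(i)}$: the original row-$i$ entries of $\mathbf{M}_n$ are independent of $\widehat{\boldsymbol{v}}^{(i)}$ and have zero mean, so only the contribution of the replaced row $R^{(i)}_i$ of $\mathbf{M}_n^{(i)}$ survives. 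Splitting off the diagonal to avoid double counting and using the eigenvector relation $R^{(i)}_i \cdot \widehat{\boldsymbol{v}}^{(i)} = \lambda_1(\mathbf{M}_n^{(i)})\,\widehat{v}^{(i)}_i$, the expression collapses to
\begin{equation*}
\mathbb{E}\bigl[\psi_n(\mathbf{M}_n; \widehat{\boldsymbol{v}}^{(i)}) - \psi_{n, \textup{opt}}(\mathbf{M}_n)\bigr] = \mathbb{E}\bigl[X^{(i)}_{n,(i,i)} (\widehat{v}^{(i)}_i)^2\bigr] - 2\,\mathbb{E}\bigl[\lambda_1(\mathbf{M}_n^{(i)}) (\widehat{v}^{(i)}_i)^2\bigr].
\end{equation*}

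Each of the two terms is then controlled separately. By Cauchy--Schwarz together with $\mathbb{E}(\widehat{v}^{(i)}_i)^4 \leq 1/n$---which follows from the permutation invariance of the Wigner law (making the components of $\widehat{\boldsymbol{v}}^{(i)}$ exchangeable in squared value) and the elementary inequality $\sum_k (\widehat{v}^{(i)}_k)^4 \leq (\sum_k (\widehat{v}^{(i)}_k)^2)^2 = 1$---the first term is $\bigO(\sigma/\sqrt{n})$. For the $\lambda_1$-cross term, a direct Cauchy--Schwarz combined with $\mathbb{E}\lambda_1^2 = \bigO(n^2)$ only gives $\bigO(\sqrt{n})$, so truncation at scale $K\sqrt{n}$ is essential: on the event $\{|\lambda_1(\mathbf{M}_n^{(i)})| \leq K\sqrt{n}\}$ the contribution is bounded by $K\sqrt{n}\,\mathbb{E}(\widehat{v}^{(i)}_i)^2 = K/\sqrt{n}$, while on its complement the high-moment Markov estimate $\mathbb{P}(|\lambda_1| > K\sqrt{n}) \leq 2n(4/K^2)^{(\log n)^2}$ from the proof of \Cref{eigen:expecbound} yields super-polynomial decay that swamps the $\bigO(n)$ factor from $(\mathbb{E}\lambda_1^2)^{1/2}$ via a further Cauchy--Schwarz. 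Assembling these bounds gives the uniform $\bigO(n^{-1/2})$ estimate, and \Cref{strat} delivers stability. The principal obstacle is precisely the $\lambda_1$-weighted term: we must exploit the concentration of $\lambda_1$ at scale $\sqrt{n}$---encoded in the moment bounds of \Cref{eigen:expecbound}---to recover the sharp $n^{-1/2}$ order matching the window length from \Cref{eigtight2}, since any coarser tail estimate forces a suboptimal $\theta_n$ incompatible with the tightness input.
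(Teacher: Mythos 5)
Your proposal is correct, and it shares the paper's overall scaffolding: \Cref{strat} with window $\theta_n = n^{-1/2}$, tightness from \Cref{eigtight2}, sister solution equal to the perturbed optimizer, reduction via \Cref{rem:strateasy}, and the same first step in which the original row-$i$ entries drop out by zero mean and independence from $\widehat{\boldsymbol{v}}^{(i)}$. Where you diverge is in how the surviving replaced-row term is estimated. The paper (mirroring its proof of \Cref{skthm2}) exploits the row--column symmetry of the ensemble to average the expression over $i$, which identifies it with $\tfrac{1}{n}\mathbb{E}\bigl(2\psi_{n,\mathrm{opt}}(\mathbf{M}_n) - \sum_i X_{n,ii}\widehat{v}_i^2\bigr)$; it then needs only the stated expectation bound $\mathbb{E}\lambda_1(\mathbf{M}_n) \geq -\bigO(\sqrt{n})$ from \Cref{eigen:expecbound} together with a crude $\bigO(n^{1/4})$ bound on $\mathbb{E}\max_i X_{n,ii}$. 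You instead work row by row: the eigenvector identity collapses the replaced-row sum to $-2\lambda_1(\mathbf{M}_n^{(i)})(\widehat{v}^{(i)}_i)^2 + X^{(i)}_{n,ii}(\widehat{v}^{(i)}_i)^2$, and you control these using coordinate exchangeability ($\mathbb{E}(\widehat{v}^{(i)}_i)^2 = 1/n$, $\mathbb{E}(\widehat{v}^{(i)}_i)^4 \leq 1/n$, exactly as in \Cref{eigenthm1}), Cauchy--Schwarz, and a truncation of $\lambda_1$ at scale $K\sqrt{n}$ whose complement is killed by the $(\log n)^2$-moment Markov bound extracted from the \emph{proof} of \Cref{eigen:expecbound}. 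Both routes are sound; yours is slightly heavier in that it must reach into that proof for a super-polynomial tail (the stated convergence in probability would not suffice), whereas the paper's symmetrization uses only quoted statements, but in exchange your argument gives a genuinely per-$i$ bound that exposes the mechanism $-2\lambda_1(\widehat{v}_i)^2 \approx 4\sigma/\sqrt{n}$ directly. As a small remark, the truncation can be avoided: since $\lambda_1(\mathbf{M}_n^{(i)})$ is invariant under permutation conjugation, $(\lambda_1(\mathbf{M}_n^{(i)}), (\widehat{v}^{(i)}_k)^2_{k})$ is jointly exchangeable in $k$, so $\mathbb{E}\bigl[|\lambda_1(\mathbf{M}_n^{(i)})|(\widehat{v}^{(i)}_i)^2\bigr] = n^{-1}\mathbb{E}|\lambda_1(\mathbf{M}_n^{(i)})| = \bigO(n^{-1/2})$ using the bound $\mathbb{E}|\lambda_1| \leq 2\sqrt{n}(2n)^{1/2(\log n)^2}$ already displayed in that proof.
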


\begin{proof}
  This is similar to the proof of \Cref{skthm2}.
For all $i \in [n]$, let $\widehat{\boldsymbol{v}}(\mathbf{M}^n_i)=(\widehat{v}^{(i)}_1,\ldots,\widehat{v}_n^{(i)})$ be the eigenvector corresponding to the smallest eigenvalue of $\mathbf{M}_n^{(i)}=(X_{n,kl}^{(i)})_{k,l \in [n]}$, which is obtained by replacing the upper triangular entries of $i$-th row and column of $\mathbf{M}_n$ by i.i.d.~copies. $\widehat{\boldsymbol{v}}(\mathbf{M}_n)=(\widehat{v}_1,\ldots,\widehat{v}_n)$ is the same for matrix $\mathbf{M}_n$. Then for any $i \in [n] $, 
\begin{align}
\mathbb{E} [ \psi_n(\mathbf{M}_n;\widehat{\boldsymbol{v}}(\mathbf{M}_n^{(i)})) -\psi_{n,\mathrm{opt}}(\mathbf{M}_n)] & = \mathbb{E} [ \psi_n(\mathbf{M}_n;\widehat{\boldsymbol{v}}(M_n^{(i)})) - \psi_n(\mathbf{M}_n^{(i)};\widehat{\boldsymbol{v}}(\mathbf{M}_n^{(i)}))]
 \nonumber \\ 
& =  \mathbb{E} \sum_{j \in [n] } (   X_{n,ij} - X^{(i)}_{n,ij} ) \widehat{v}^{(i)}_i\widehat{v}^{(i)}_j (1+\mathbbm{1}(i \neq j)) \nonumber \\
& = - \mathbb{E} \sum_{j \in [n] } X^{(i)}_{n,ij} \widehat{v}^{(i)}_i\widehat{v}^{(i)}_j (1+\mathbbm{1}(i \neq j))  \label{mzero1} \\
& = -\mathbb{E} \sum_{j \in [n] } X_{n,ij} \widehat{v}_i\widehat{v}_j (1+\mathbbm{1}(i \neq j)) \label{eq5},
\end{align}
where the equality in \Cref{mzero1} again follows from the fact that the matrix entries have mean zero and  $\widehat{\boldsymbol{v}}(\mathbf{M}_n^{(i)})$ is independent of the collection  $ \{X_{n,ij} : j \in [n]\}$ for all $i \in [n]$.  
It is again obvious that the distribution of the random quantity in \Cref{eq5} does not depend upon $i$ and hence,
\begin{align}
\mathbb{E} \biggl[ \sum_{j \in [n] } X_{n,ij} \widehat{v}_i\widehat{v}_j (1+\mathbbm{1}(i \neq j)) \biggr] &= \dfrac{1}{n}\mathbb{E} \biggl[ \sum_{i \in [n]} \sum_{j \in [n] } X_{n,ij} \widehat{v}_i\widehat{v}_j (1+\mathbbm{1}(i \neq j)) \biggr] \nonumber \\
 &= \dfrac{1}{n}\mathbb{E} \biggl(2\psi_{n, \mathrm{opt}}(\mathbf{M}_n) - \sum_{i \in [n]} X_{n,ii} \widehat{v}_i^2 \biggr) \nonumber \\
& \geq   \dfrac{2}{n}\mathbb{E} [ \lambda_1(\mathbf{M}_n)] - \dfrac{1}{n} \mathbb{E}( \max_{i \in [n]} X_{n,ii}).
\end{align}
By \Cref{eigen:expecbound} we have $\mathbb{E} [ \lambda_1(\mathbf{M}_n)] \geq - \bigO(\sqrt{n})$, 
whereas 
$$ \mathbb{E}( \max_{i \in [n]} X_{n,ii}) \leq [\mathbb{E} \max_{i \in [n]} |X_{n,ii}|^4]^{1/4} \leq  \biggl[\mathbb{E} \sum_{i \in [n]} |X_{n,ii}|^4 \biggr]^{1/4} \leq (n \sigma^44^{4\beta} )^{1/4} = \bigO(n^{1/4}). $$
 Combining these estimates we obtain
$$ \max_{i \in [n]} \mathbb{E} [ \psi_n(\mathbf{M}_n;\widehat{\boldsymbol{v}}^{(i)}) -\psi_{n,\mathrm{opt}}(\mathbf{M}_n) ] = \bigO(n^{-1/2}). $$
Applying \Cref{eigtight2} and \Cref{rem:strateasy}, we observe that the hypotheses of \Cref{strat} is satisfied for $\theta_n=n^{-1/2}$. This concludes the proof.
\end{proof}

\begin{remark}
As we mentioned in \Cref{eigen:improve}, we can prove the result in \Cref{eigtight2} with $\theta_n = \bigO(n^{-1/6})$ under some additional constraints on the entry distribution. Since, the proof of \Cref{eigenthm1} and \Cref{eigthm2} requires only the assertion in \Cref{eigtight2} with $\theta_n = \bigO(\sqrt{n})$, it suggests that under additional assumptions on matrix entries, one might be able to replicate the conclusion of \Cref{eigthm2} for even larger perturbations blocks. One such example could be replacing multiple rows (and columns) with independent copies, where the number of such perturbed rows grows to infinity with the matrix dimension $n$. This can be an interesting topic for further inspection.
\end{remark}

We conclude this section with a short description of  chaotic properties of eigenvectors of random matrices. \citet{chaea} describes this results in the setting of \textit{Gaussian Unitary Ensembles} (GUE), which are Hermitian random matrices with independent entries on and above the diagonal,  where the diagonal entries are standard Gaussian variables and off-diagonal entries are standard complex Gaussian variables (i.e., real and imaginary parts are i.i.d.~$N(0,1/2)$). As suggested in the beginning of this section, this setting can be easily accommodated in our setup with little effort. \citet{chaea} proved the following result:  For any $\varepsilon \in (0,1)$, take an $n \times n$ GUE matrix, denoted by $\mathbf{M}_n$, and perturb it slightly to get 
\[
\mathbf{M}_n^{(\varepsilon)} := (1-\varepsilon) \mathbf{M}_n + \sqrt{2\varepsilon - \varepsilon^2}\mathbf{M}_n^{\prime},
\]
where $\mathbf{M}_n^{\prime}$ is an independent copy of $\mathbf{M}_n$. Note that, the $(i,j)$-th entries of $\mathbf{M}_n$ and $\mathbf{M}_n^{(\varepsilon)}$ have correlation $1-\varepsilon$, for $i \neq j$.  If $\widehat{\boldsymbol{v}}$ and $\widehat{\boldsymbol{v}}^{(\varepsilon)}$ are the unit eigenvectors corresponding to the smallest eigenvalues for $\mathbf{M}_n$ and $\mathbf{M}_n^{(\varepsilon)}$ respectively, then 
$$ \mathbb{E} \big \rvert \widehat{\boldsymbol{v}} \cdot \widehat{\boldsymbol{v}}^{(\varepsilon)}  \big \rvert^2 \leq C \varepsilon^{-1}n^{-1/3},$$
where $C$ is some universal constant. See \cite[Theorem 3.8]{chaea} for the statement and  proof of the above result. This shows that whenever $\varepsilon \gg n^{-1/3}$, the eigenvectors are nearly orthogonal with high probability. This result does not contradict \Cref{eigenthm1} or \Cref{eigthm2} for the reasons similar to the ones outlined at the end of \Cref{skmodel}. In particular, if $\varepsilon \gg n^{-1/3}$, the $L^2$ measure of perturbation, as discussed at the end of \Cref{skmodel}, is $\gg n^{5/3}$ for the GUE perturbations described above; whereas in \Cref{eigenthm1} and \Cref{eigthm2} the same measures are $\bigO(1)$ and $\bigO(n)$, respectively. Some other recent works on this approach were carried out by \cite{georges,georges2}, where they considered perturbative expansion of the empirical spectral distribution and co-ordinates of eigenvectors, respectively, for large Hermitian matrices perturbed by an independent random matrix with small operator norm.



\subsection{Wishart matrices}
We shall consider another random matrix model here and shall inspect the stability of the optimization problem associated with its largest eigenvalue. This is the first example where we shall encounter an objective function $\psi_n$ which is not linear in its arguments, i.e., it does not satisfy \Cref{ass:linear}. Let $\{m_n : n \geq 1\}$ be a sequence of positive integers. Consider an  $m_n \times n$ real random matrix $\mathbf{M}_n = (X_{n,ij})_{i \in [m_n], j \in [n]}$. The problem of finding the largest singular value of $\mathbf{M}_n$ or the largest eigenvalue of $\mathbf{Y}_n:=\mathbf{M}_n^{\top}\mathbf{M}_n$ can be reformulated as 
\begin{align}{\label{wish:def}}
\text{ Minimize } \psi_n(\mathbf{M}_n; \boldsymbol{v}) := -\|\mathbf{M}_n\boldsymbol{v}\|_2^2 &=- \sum_{i,j \in [n]} \sum_{k \in [m_n]}  X_{n,ki}X_{n,kj}v_iv_j \notag \\
 &\qquad \qquad \qquad \text{over } \boldsymbol{v}=(v_1, \ldots,v_n) \in \mathscr{S}^{n-1}.
\end{align}
The stochastic model for the entries in $\mathbf{M}_n$ will be taken as follows: $\{X_{n,ij}:i \in [m_n], j \in [n]\}$ is an independent and identically distributed collection of mean-zero random variables. The  matrix $\mathbf{Y}_n$ is called a \textit{Wishart matrix}, although the name is generally applied to the situation when the matrix entries of $\mathbf{M}_n$ are Gaussian variables.  The asymptotics for the empirical distribution of the eigenvalues of Wishart matrices, under the assumption that $m_n/n =\alpha +o(1)$ for some $\alpha \in [1,\infty)$, is well-known; the limiting law is the famous \textit{Mar\v{c}enko--Pastur law}, as stated in the following theorem.

\begin{thm}[\cite{marchenko}]{\label{marcenko}}
Consider an $m_n \times n$ real random matrix $\mathbf{M}_n$ with i.i.d.~entries of mean zero, variance $1$ and finite moments of all order. Let $\lambda_1(\mathbf{Y}_n) \leq \cdots \leq \lambda_n(\mathbf{Y}_n)$ be the sorted eigenvalues of $\mathbf{Y}_n = \mathbf{M}_n^{\top}\mathbf{M}_n$. Then the empirical measure $n^{-1}\sum_{i=1}^n \delta_{\lambda_i(\mathbf{Y}_n)/n}$ converges to $\mathcal{L}_{\alpha}$ weakly, in probability as $n \to \infty$. Here $\alpha = \lim_{n \to \infty} m_n/n \in [1,\infty)$ and $\mathcal{L}_{\alpha}$ is the probability measure supported on $((1-\sqrt{\alpha})^2,(1+\sqrt{\alpha})^2)$ with density 
$$ f_{\alpha} (x) := \dfrac{\sqrt{(x-(1-\sqrt{\alpha})^2)((1+\sqrt{\alpha})^2)-x)}}{2\pi x}, \; \forall \; x \in  ((1-\sqrt{\alpha})^2,(1+\sqrt{\alpha})^2).$$
\end{thm}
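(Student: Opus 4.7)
The plan is to prove Theorem~\ref{marcenko} via the classical method of moments. Since the Marčenko--Pastur distribution $\mathcal{L}_\alpha$ has compact support $[(1-\sqrt\alpha)^2,(1+\sqrt\alpha)^2]$, it is uniquely determined by its moments; consequently, to establish weak convergence in probability of the random empirical measure $\mu_n := n^{-1}\sum_{i=1}^n \delta_{\lambda_i(\mathbf{Y}_n)/n}$, it suffices to show that for each $k \geq 0$ the random variable $m_k(\mu_n) := \int x^k\, d\mu_n(x)$ converges in probability to $m_k(\mathcal{L}_\alpha)$. I will split this into (i) convergence of expectations $\mathbb{E}[m_k(\mu_n)] \to m_k(\mathcal{L}_\alpha)$, and (ii) a variance bound $\operatorname{Var}(m_k(\mu_n)) \to 0$.

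For step (i), I write
\begin{equation*}
\mathbb{E}[m_k(\mu_n)] \;=\; \frac{1}{n^{k+1}}\,\mathbb{E}\bigl[\operatorname{tr}(\mathbf{Y}_n^k)\bigr] \;=\; \frac{1}{n^{k+1}} \sum \mathbb{E}\bigl[X_{n,i_1 j_1} X_{n,i_1 j_2} X_{n,i_2 j_2} X_{n,i_2 j_3} \cdots X_{n,i_k j_1}\bigr],
\end{equation*}
the sum being over indices $i_1,\ldots,i_k \in [m_n]$ and $j_1,\ldots,j_k \in [n]$. Each term is indexed by a closed walk of length $2k$ on the bipartite graph $[m_n] \sqcup [n]$, alternating between row and column vertices. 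Because the $X_{n,ij}$ are i.i.d.\ with mean zero, any walk in which some edge is traversed an odd number of times contributes zero. The finite-moments assumption bounds each surviving contribution in absolute value by a constant depending on $k$, so one may restrict to walks in which each edge is traversed exactly twice; such walks correspond to pair partitions of $[2k]$ that respect the bipartite structure.

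The heart of the argument is the combinatorial classification: the dominant contribution comes from walks whose underlying edge-graph is a tree (equivalently, non-crossing pair partitions of the edges along the walk), which are in bijection with pairs $(\pi, s)$ where $\pi$ is a non-crossing pair partition of $[2k]$ and $s \in \{0,1,\ldots,k-1\}$ records how many of the $k$ pairs connect two column-vertices (the remainder connecting two row-vertices). The number of distinct vertex-labelings of such a walk is $m_n^{s+1}\, n^{k-s}$ if we fix which pair types are ``row'' vs.\ ``column'', up to lower-order terms. After normalizing by $n^{k+1}$ and using $m_n/n \to \alpha$, the total contribution becomes
\begin{equation*}
\sum_{s=0}^{k-1} N(k,s+1)\, \alpha^{s+1},
\end{equation*}
where $N(k,r) = \frac{1}{k}\binom{k}{r}\binom{k}{r-1}$ are the Narayana numbers; this is precisely the known moment formula for $\mathcal{L}_\alpha$, which can be verified directly from the density $f_\alpha$. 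Contributions from walks whose edge-graph contains a cycle are $O(n^{-1})$ by a vertex-counting argument, since each cycle reduces the number of distinct vertex labels by one.

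For step (ii), I would bound $\operatorname{Var}(\operatorname{tr}(\mathbf{Y}_n^k)) = \operatorname{Var}(\sum \prod X)$ by expanding as a double sum over pairs of walks and observing that independence forces nonzero covariances to come from pairs of walks sharing at least one edge; a similar graph-counting estimate shows this variance is $O(n^{2k+1})$, so after dividing by $n^{2(k+1)}$ the variance of $m_k(\mu_n)$ tends to zero. Chebyshev's inequality then converts (i) into convergence in probability. The main obstacle throughout is the combinatorial bookkeeping in step (i): verifying that the moment formula obtained from counting non-crossing bipartite walks agrees with the moment sequence of $\mathcal{L}_\alpha$, and carefully pruning sub-dominant walks, is where essentially all of the technical work lies.
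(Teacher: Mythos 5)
The paper does not give a proof of this theorem: it is stated as a citation to Mar\v{c}enko and Pastur \cite{marchenko}, and the paper merely invokes it. So there is no internal proof to compare your attempt against; I will assess your sketch on its own.

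Your outline is the standard method-of-moments proof of the Mar\v{c}enko--Pastur law, and it is correct in its essentials. The high-level architecture---reduce weak convergence in probability to convergence of moments (justified by compact support of $\mathcal{L}_\alpha$), compute $\mathbb{E}\operatorname{tr}(\mathbf{Y}_n^k)$ via closed walks on the bipartite vertex set $[m_n]\sqcup[n]$, kill odd-multiplicity edges by the mean-zero assumption, identify the tree/non-crossing walks as the dominant contributors, match against the Narayana-number moment formula, then control $\operatorname{Var}(\operatorname{tr}(\mathbf{Y}_n^k))$ by noting that pairs of walks whose combined edge-multiset has a singleton contribute zero covariance and that sharing an edge costs a free vertex---is exactly the argument found in standard references such as Anderson, Guionnet and Zeitouni \cite{anderson} or Bai and Silverstein.

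Two places where your sketch elides nontrivial bookkeeping, and which you would need to fill in for a complete proof. First, the classification of contributing tree walks is more delicate than ``non-crossing pair partitions together with a count $s$'': you must verify that a walk whose edge-multiset gives each edge multiplicity exactly two and whose underlying graph is a tree corresponds bijectively to a non-crossing pairing of the $2k$ steps, that the bipartite constraint forces the row/column coloring, and that the number of vertices of each color in the tree determines the power of $m_n$ versus $n$ in the leading term. In particular one needs to check that the resulting polynomial in $\alpha$ agrees with $\int x^k f_\alpha(x)\,dx$; you assert this but the identity $m_k(\mathcal{L}_\alpha)=\sum_r N(k,r)\alpha^r$ is itself a small computation (residues or generating functions). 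Second, in the variance bound, it is not enough to say that pairs of walks sharing no edge have zero covariance; one must rule out contributions from pairs where one walk individually has an odd-multiplicity edge that is ``repaired'' by the other, and then run the vertex-counting argument on the union graph. These are routine but not automatic, and they are exactly where the technical weight of the proof sits, as you correctly flag at the end. As a blind reconstruction of the classical proof, this is sound.
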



Similar to \Cref{eigmatrix}, we consider the parameter space $ \mathcal{S}_n = \{\boldsymbol{v}=(v_1, \ldots,v_n) : \boldsymbol{v} \in \mathscr{S}^{n-1},  v_1 \geq 0 \},$ and equip it with the Euclidean distance, $d_n$. The  entries of the matrix $\mathbf{M}_n$  are the random inputs of the problem; hence $I_n = \{ij : i \in [m_n], j \in [n]\}$ and $k_n=nm_n.$ We start our analysis by trying to show an expansion like \Cref{ass:psi} for this problem. Take $\mathbf{x}^n =(x_{ij})_{i \in [m_n], j \in [n]}$ and $\mathbf{c}^n =(c_{ij})_{i \in [m_n], j \in [n]}$; denote the corresponding matrices by $M(\mathbf{x}^n)$ and $M(\mathbf{c}^n)$ respectively. For any $\boldsymbol{v} \in \mathscr{S}^{n-1}$, the map $\mathbf{x}^n \mapsto \psi_n(M(\mathbf{x}^n); \boldsymbol{v}) = -\|M(\mathbf{x}^n)\boldsymbol{v}\|_2^2$ is jointly concave; hence 
\begin{align*}
\psi_n(M(\mathbf{x}^n)+M(\mathbf{c}^n); \boldsymbol{v})- \psi_n(M(\mathbf{x}^n); \boldsymbol{v}) \leq \sum_{i,j} c_{ij} \,\, \partial_{ij} \psi_n (M(\mathbf{x}^n)); \boldsymbol{v}),
\end{align*}
where $\partial_{ij}$ refers to the partial derivative of $\psi_n$ with respect to the $(i,j)$-th entry of the matrix input, i.e., $x_{ij}$. Computing the derivatives and plugging them in the above inequality yields the following:
 \begin{align*}
 \psi_n(M(\mathbf{x}^n)+M(\mathbf{c}^n); \boldsymbol{v})- \psi_n(M(\mathbf{x}^n); \boldsymbol{v}) \leq -\sum_{i \in [m_n], j \in [n]} 2c_{ij} \biggl( \sum_{k=1}^n x_{ik}v_jv_k\biggr),
 \end{align*}
and hence \Cref{item:B1} is satisfied for $L=0$, 
\begin{equation}{\label{wishart:H}}
H_{n,ij}(\mathbf{x}^n; \boldsymbol{v}) := -2 v_j \sum_{k=1}^n x_{ik}v_k, \;\forall \; i \in [m_n], \; j \in [n].
\end{equation}
We now want to apply \Cref{genthm:g} to prove a tightness result, but before that we need an analogue of \Cref{eigen:expecbound}.

 \begin{prop}{\label{eigen:expecbound2}}
\begin{enumerate}[label={(\alph*)}]
\item Assume that $m_n =\bigO(n^\gamma)$ for some $\gamma \in (0, \infty)$.  Consider the random  matrix $\mathbf{M}_n = (X_{n,ij} )_{i \in [m_n], j \in [n]}$ with  entries being i.i.d.~with common distribution $\mathcal{P}$ having mean zero and variance $\sigma^2 < \infty$. Further assume that there exists a constant $\beta \in (0, \infty)$ such that $\mathbb{E}_{\mathcal{P}} |X|^k \leq \sigma^{k}k^{\beta k}$, for all positive integers $k$. Then there exists another constant $\widetilde{C}(\beta)$, depending on $\beta$ only, such that  
 $$ \mathbb{E} \lambda_n(\mathbf{M}_n^{\top}\mathbf{M}_n) \leq \widetilde{C}(\beta) \sigma^2 (n+m_n), \; \forall \; n \geq 1.$$ 
 \item If we only assume that the distribution $\mathcal{P}$ has finite fourth moment, then we still have  
 $$\mathbb{P}( \lambda_n(\mathbf{M}_n^{\top}\mathbf{M}_n) < \varepsilon_n \max(m_n,n) ) =o(1),$$
  for any $\varepsilon_n=o(1)$. Here we don't assume any growth rate on $m_n$. 
\end{enumerate}

 \end{prop}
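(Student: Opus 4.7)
My plan for part (a) is to follow the moment/trace method used in the proof of Proposition~\ref{eigen:expecbound}, the main difference being that the walk combinatorics is now bipartite. First I would use the basic inequality $\lambda_n(\mathbf{Y}_n)^k \leq \operatorname{tr}(\mathbf{Y}_n^k)$ for any positive integer $k$ to reduce the problem to bounding trace moments. Expanding,
\[
\operatorname{tr}(\mathbf{Y}_n^k) = \sum_{\mathbf{i} \in [m_n]^k,\, \mathbf{j} \in [n]^k} \prod_{l=1}^k X_{n,i_l j_l}X_{n,i_l j_{l+1}}
\]
with the cyclic convention $j_{k+1}=j_1$, each summand is indexed by a closed walk of length $2k$ on the bipartite graph with parts $[m_n]$ and $[n]$. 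By the mean-zero hypothesis only walks whose every edge is traversed at least twice survive in expectation. A combinatorial argument paralleling \cite[Lemma 2.1.23]{anderson} -- parametrizing such walks by a planar tree encoding the first-visit structure together with a labelling specifying on which side of the bipartition each vertex lies, and using the moment growth $\mathbb{E}_\mathcal{P}|X|^r \leq \sigma^r r^{\beta r}$ to control edges traversed more than twice -- should yield a bound of the form
\[
\mathbb{E}[\operatorname{tr}(\mathbf{Y}_n^k)] \leq 2\sigma^{2k} C^k (n+m_n)^{k+1} \sum_{i=0}^{k} \Bigl(\tfrac{k^{c}}{n+m_n}\Bigr)^i
\]
for constants $C,c$ depending only on $\beta$.

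Next I would choose $k = k_n = \lfloor(\log n)^2\rfloor$. The growth assumption $m_n = \bigO(n^\gamma)$ ensures $k_n^c/(n+m_n) \to 0$, so the geometric series is bounded by $2$. Jensen's inequality then yields
\[
\mathbb{E}\lambda_n(\mathbf{Y}_n) \leq \bigl(\mathbb{E}\lambda_n(\mathbf{Y}_n)^{k_n}\bigr)^{1/k_n} \leq 4^{1/k_n}\sigma^2 C (n+m_n)^{1+1/k_n},
\]
and since $(n+m_n)^{1/k_n} = \exp\bigl(\log(n+m_n)/(\log n)^2\bigr) \to 1$, this delivers $\mathbb{E}\lambda_n(\mathbf{Y}_n) \leq \widetilde C(\beta)\sigma^2(n+m_n)$ for all sufficiently large $n$; finitely many small values of $n$ can be absorbed into the constant. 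The main obstacle is establishing the combinatorial walk-count estimate above: while the skeleton is standard for Wishart/Laguerre ensembles, the bipartite bookkeeping requires separately tracking which vertices of the underlying tree correspond to row indices in $[m_n]$ and which to column indices in $[n]$, with the moment growth condition controlling the price of edges visited more than twice.

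Part (b) is substantially simpler and does not require a moment method. Since $\mathbf{Y}_n = \mathbf{M}_n^\top\mathbf{M}_n$ has rank at most $\min(n,m_n)$, the spectral theorem gives
\[
\lambda_n(\mathbf{Y}_n) \geq \frac{\operatorname{tr}(\mathbf{Y}_n)}{\min(n,m_n)} = \frac{\|\mathbf{M}_n\|_F^2}{\min(n,m_n)} = \frac{1}{\min(n,m_n)}\sum_{i\in[m_n],\, j\in[n]} X_{n,ij}^2.
\]
The variables $X_{n,ij}^2$ are i.i.d.\ with mean $\sigma^2$ and, thanks to the finite fourth moment assumption on $\mathcal{P}$, finite variance. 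Chebyshev's inequality therefore gives $\|\mathbf{M}_n\|_F^2 \geq \tfrac12\sigma^2 n m_n$ with probability $1-\bigO(1/(nm_n))$, so $\lambda_n(\mathbf{Y}_n) \geq \tfrac{\sigma^2}{2}\max(n,m_n)$ with probability $1-o(1)$. For any $\varepsilon_n = o(1)$ we have $\varepsilon_n < \sigma^2/2$ for all sufficiently large $n$, yielding the claim at once (the non-degenerate case $\sigma^2 > 0$ being implicit, as otherwise $\mathbf{M}_n \equiv 0$).
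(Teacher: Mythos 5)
Your proposal follows essentially the same route as the paper's proof for both parts: in (a), the trace-moment method with $k_n=(\log n)^2$, the Euler-circuit/bipartite-walk counting paralleling \cite[Lemma 2.1.23]{anderson}, and the moment growth condition to price edges traversed more than twice; in (b), a second-moment concentration of $\operatorname{Tr}(\mathbf{Y}_n)=\|\mathbf{M}_n\|_F^2$ combined with the observation that the trace is at most $\min(n,m_n)$ times the top eigenvalue. The only cosmetic differences in (b) are that the paper uses Paley--Zygmund rather than Chebyshev, and it reaches $\max(n,m_n)$ by considering $\mathbf{M}_n^{\top}\mathbf{M}_n$ and $\mathbf{M}_n\mathbf{M}_n^{\top}$ separately rather than invoking the rank bound $\lambda_n(\mathbf{Y}_n)\geq\operatorname{Tr}(\mathbf{Y}_n)/\min(n,m_n)$ in one step, as you do -- your phrasing is, if anything, slightly tidier.
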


\begin{prop}{\label{wishtight1}}
Consider the optimization problem defined in \Cref{wish:def} where the  entries are i.i.d.~with common distribution $\mathcal{P}$ with density $f$ having zero mean, finite moments of all order and  satisfying Assumption~\ref{mu} for the pair $(\rho,g)$ with $g^{(1)}$ bounded away from $0$. Also assume that $m_n/n =\alpha +o(1)$ as $n \to \infty$ for some $\alpha \in [1,\infty)$.  Then for any $\varepsilon >0$, the sequence $\{P(\mathcal{N}_{n,\theta_n}, d_n, \varepsilon) : n \geq 1\}$ is  tight whenever $\theta_n=\bigO(1)$. 
\end{prop}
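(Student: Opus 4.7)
My plan is to mimic the spectral-decomposition argument used in the proof of Proposition~\ref{eigtight2}, but applied here to the Gram matrix $\mathbf{Y}_n = \mathbf{M}_n^{\top}\mathbf{M}_n$ and its largest eigenvalue. Although the Wishart objective is nonlinear and so Theorem~\ref{genthm:g} formally applies, a direct argument is both cleaner and substantially tighter in this case; the hypothesis that $g^{(1)}$ is bounded away from $0$ (which is only needed to invoke Theorem~\ref{genthm:g}) plays no role. For the standard verifications, existence of a density for the entries of $\mathbf{M}_n$ forces $\mathbf{Y}_n$ to have a.s.\ a simple top eigenvalue whose eigenvector has a.s.\ nonzero first coordinate, giving uniqueness of the optimizer in $\mathcal{S}_n$ and verifying Assumption~\ref{unique}, while the integrability requirement of Assumption~\ref{ass:meas} follows from part~(a) of Proposition~\ref{eigen:expecbound2}.

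Let $\lambda_1(\mathbf{Y}_n) \leq \cdots \leq \lambda_n(\mathbf{Y}_n)$ be the sorted eigenvalues of $\mathbf{Y}_n$ with orthonormal eigenvectors $\boldsymbol{p}_1, \ldots, \boldsymbol{p}_n$, and write $\boldsymbol{v} = \sum_i\alpha_i\boldsymbol{p}_i$. The identity $\boldsymbol{v}^{\top}\mathbf{Y}_n\boldsymbol{v} = \sum_i\lambda_i(\mathbf{Y}_n)\alpha_i^2$ shows that $\boldsymbol{v} \in \mathcal{N}_{n,\theta_n}$ if and only if $\sum_{i=1}^n(\lambda_n(\mathbf{Y}_n) - \lambda_i(\mathbf{Y}_n))\alpha_i^2 \leq \theta_n$. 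Fix $\varepsilon > 0$ and $T \geq 2$, and set $A_{n,T} := \{\lambda_n(\mathbf{Y}_n) - \lambda_{n-T+1}(\mathbf{Y}_n) > \theta_n/\varepsilon^2\}$. On $A_{n,T}$ every $\boldsymbol{v} \in \mathcal{N}_{n,\theta_n}$ satisfies $\sum_{i \leq n-T+1}\alpha_i^2 < \varepsilon^2$; after rotation by $[\boldsymbol{p}_1 \,\cdots\, \boldsymbol{p}_n]$ the image of $\mathcal{N}_{n,\theta_n}$ is contained in $B_{T-1}(\mathbf{0},1) \times B_{n-T+1}(\mathbf{0},\varepsilon)$, and the same covering-number computation as in the corresponding step of the proof of Proposition~\ref{eigtight2} yields $P(\mathcal{N}_{n,\theta_n}, d_n, 4\varepsilon) \leq (1 + 2/\varepsilon)^{T-1}$ on $A_{n,T}$. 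Tightness therefore reduces to finding, for each $\delta > 0$, a fixed $T = T(\varepsilon,\delta)$ (independent of $n$) with $\limsup_n\mathbb{P}(A_{n,T}^c) \leq \delta$.

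Since $\theta_n/\varepsilon^2$ is uniformly bounded, this in turn reduces to showing that the top-$T$ spectral gap $\lambda_n(\mathbf{Y}_n) - \lambda_{n-T+1}(\mathbf{Y}_n)$ diverges in probability as $n \to \infty$ for every fixed $T$. This is the main technical obstacle: the Marchenko--Pastur theorem (Theorem~\ref{marcenko}) does not suffice on its own, since for fixed $T$ it only gives $\lambda_{n-T+1}(\mathbf{Y}_n)/n \to (1+\sqrt{\alpha})^2 = \lim\lambda_n(\mathbf{Y}_n)/n$, and so at best asserts that the gap is $o(n)$. I will handle it by adapting the column-by-column interlacing and Azuma argument from the end of the proof of Proposition~\ref{eigtight2}. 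Let $\mathbf{Y}_n^{(k)}$ denote the $k \times k$ top-left principal submatrix of $\mathbf{Y}_n$, equivalently the Gram matrix of the first $k$ columns of $\mathbf{M}_n$. Cauchy interlacing (Lemma~\ref{interlace}) gives $\lambda_k(\mathbf{Y}_n^{(k)}) \leq \lambda_n(\mathbf{Y}_n)$ with equality at $k = n$, and monotonicity of $\lambda_k(\mathbf{Y}_n^{(k)})$ in $k$, so that
\begin{align*}
\lambda_n(\mathbf{Y}_n) - \lambda_{n-T+1}(\mathbf{Y}_n) \geq \sum_{k=n-T+2}^{n} \bigl(\lambda_k(\mathbf{Y}_n^{(k)}) - \lambda_{k-1}(\mathbf{Y}_n^{(k-1)})\bigr).
\end{align*}
Conditional on $\mathbf{M}_n^{(k-1)}$, the matrix $\mathbf{Y}_n^{(k)}$ is a bordered version of $\mathbf{Y}_n^{(k-1)}$ whose new row, column, and diagonal entry are determined by the $k$-th column of $\mathbf{M}_n$, which is independent of $\mathbf{M}_n^{(k-1)}$. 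Each column-addition increment on the right can therefore be bounded below by a bordered-matrix analog of Lemma~\ref{gapestimate} applied to the top eigenvalue, combined with the anti-concentration bound of Lemma~\ref{xuconv} for the projection of the new column onto the top eigenvector of $\mathbf{Y}_n^{(k-1)}$, and with the $\bigO(\sqrt{k})$-scale control of $\lambda_k(\mathbf{Y}_n^{(k)})$ provided by Proposition~\ref{eigen:expecbound2}(a). A martingale concentration step via Azuma's inequality, following the exact bookkeeping at the end of the proof of Proposition~\ref{eigtight2}, then yields a lower bound growing in $T$ on the right-hand side above with probability tending to~$1$, giving the required divergence of the top-$T$ gap and completing the proof of tightness.
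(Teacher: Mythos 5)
Your proof is correct in outline, but it is not the paper's route for this proposition: the paper proves Proposition~\ref{wishtight1} as an application of the general Langevin-based result, Theorem~\ref{genthm:g}, verifying the metric-comparison condition \ref{item:E3} (with $\tau_n=\sqrt{n}$) by an eigen-decomposition argument that uses the Marchenko--Pastur law (Theorem~\ref{marcenko}) and the hypothesis that $g^{(1)}$ is bounded away from zero, and replacing the \ref{item:E1}-type estimate by a direct bound via $\mathbb{E}\lambda_n(\mathbf{Y}_n)=\bigO(n)$ from Proposition~\ref{eigen:expecbound2}(a). What you propose instead is, essentially verbatim, the paper's proof of the \emph{next} result, Proposition~\ref{wishtight2}: spectral decomposition of $\mathbf{Y}_n$, the event $A_{n,T}$ and the $(1+2/\varepsilon)^{T-1}$ packing bound, followed by column-by-column interlacing, the bordered-matrix version of Lemma~\ref{gapestimate}, the anti-concentration Lemma~\ref{xuconv}, and Azuma's inequality to show the top-$T$ gap grows linearly in $T$ with probability tending to one. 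Since the hypotheses of Proposition~\ref{wishtight1} (density, zero mean, all moments finite) are stronger than those of Proposition~\ref{wishtight2} (density, zero mean, fourth moment), your argument does prove the statement — indeed it proves more, dispensing with the assumptions on $g^{(1)}$ and on $m_n/n$, exactly as the paper itself observes in the remark comparing the two propositions. What the paper's route buys, and what your route forgoes, is the demonstration that the general machinery of Theorem~\ref{genthm:g} applies to a genuinely nonlinear objective; what your route buys is a cleaner proof under weaker hypotheses.

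Two details in your sketch of the gap-divergence step should be fixed when writing it out, though neither is fatal since the full computation is the one carried out in the proof of Proposition~\ref{wishtight2}. First, the anti-concentration in Lemma~\ref{xuconv} must be applied to the inner product of the new column $(\mathbf{M}_n)_{\cdot k}$ with the top \emph{left} singular vector $\boldsymbol{u}_{k-1,n,k-1}$ of $\mathbf{M}_n^{\leq k-1}$ (a unit vector in $\mathbb{R}^{m_n}$), not with the top eigenvector of $\mathbf{Y}_n^{(k-1)}$, which lives in $\mathbb{R}^{k-1}$ and cannot be paired with the new column; the eigenvector of $\mathbf{Y}_n^{(k-1)}$ enters only through the identity $\boldsymbol{v}_{k-1}^{\top}(\mathbf{M}_n^{\leq k-1})^{\top}(\mathbf{M}_n)_{\cdot k}=\sigma_{k-1}\,\boldsymbol{u}_{k-1}^{\top}(\mathbf{M}_n)_{\cdot k}$. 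Second, the relevant control on $\lambda_{k-1}(\mathbf{Y}_n^{(k-1)})$ is a \emph{lower} bound bounded away from zero with high probability, which comes from Proposition~\ref{eigen:expecbound2}(b) (so that one may take $t=\eta/\lambda_{k-1}(\mathbf{Y}_n^{(k-1)})$ in Lemma~\ref{gapestimate} and obtain a constant-order increment); the ``$\bigO(\sqrt{k})$-scale'' normalization you describe belongs to the Wigner argument of Proposition~\ref{eigtight2}, not to the Wishart one.
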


\begin{proof}
Our strategy is to apply \Cref{genthm:g}, but with a slight modification. The conditions in \Cref{unique} and \Cref{ass:meas} are satisfied for reasons similar to the one discussed in the proof of \Cref{eigtight1}. 
We start by checking the validity of the condition \Cref{item:E3}. Fix $\varepsilon >0$ and consider an $\varepsilon$-net $\mathcal{N}_{n,\theta_n,\varepsilon}$  of $\mathcal{N}_{n,\theta_n}$, with respect to the random pseudo-metric $d_{g,n}$.  Now take $\boldsymbol{u}=(u_1, \ldots, u_n) \in \mathcal{N}_{n,\theta_n}$ and get $\boldsymbol{v}^* \in \mathcal{N}_{n,\theta_n,\varepsilon}$ such that $d_{g,n}(\boldsymbol{u},\boldsymbol{v}^*) \leq \varepsilon$. Introduce the notation $(\mathbf{M}_{n})_{i \cdot}$ to denote the $i$-th row of the matrix $\mathbf{M}_n = \mathbf{M}(\mathbf{X}^n)$ (as a column vector) and note that $H_{n,ij}(\mathbf{X}^n;\boldsymbol{v}) =  -2v_j (\mathbf{M}_{n})_{i \cdot}^{\top} \boldsymbol{v} = -2 (\mathbf{M}_{n})_{i \cdot}^{\top} \boldsymbol{v}\boldsymbol{v}^{\top}\mathbf{e}_j$, where $\mathbf{e}_j$ is the $j$-th co-ordinate vector. Since $|g^(1)| \geq \eta >0$ , 
\begin{align}{\label{stepwish1}}
\varepsilon^2 \geq d_{g,n}(\boldsymbol{v}^*,\boldsymbol{u})^2 & \geq \eta^2 \sum_{i,j} ( H_{n,ij}(\mathbf{X}^n;\boldsymbol{v}^*)  - H_{n,ij}(\mathbf{X}^n;\boldsymbol{u}) )^2 \nonumber \\
& = 4\eta^2  \sum_{i,j} ( (\mathbf{M}_{n})_{i \cdot}^{\top} \boldsymbol{v}^*\boldsymbol{v}^{*\top}\mathbf{e}_j - (\mathbf{M}_{n})_{i \cdot}^{\top} \boldsymbol{u}\boldsymbol{u}^{\top}\mathbf{e}_j)^2 \nonumber \\
& = 4\eta^2  \sum_{j=1}^n \sum_{i=1}^{m_n} \mathbf{e}_j^{\top} (\boldsymbol{v}^*\boldsymbol{v}^{*\top} - \boldsymbol{u}\boldsymbol{u}^{\top} ) (\mathbf{M}_{n})_{i \cdot}(\mathbf{M}_{n})_{i \cdot}^{\top} (\boldsymbol{v}^*\boldsymbol{v}^{*\top} - \boldsymbol{u}\boldsymbol{u}^{\top} ) \mathbf{e}_j \nonumber \\
& = 4\eta^2 \operatorname{Tr} ( (\boldsymbol{v}^*\boldsymbol{v}^{*\top} - \boldsymbol{u}\boldsymbol{u}^{\top} ) \mathbf{Y}_n (\boldsymbol{v}^*\boldsymbol{v}^{*\top} - \boldsymbol{u}\boldsymbol{u}^{\top} )) \nonumber \\
& = 4\eta^2 ( \boldsymbol{v}^{*\top} \mathbf{Y}_n \boldsymbol{v}^* + \boldsymbol{u}^{\top} \mathbf{Y}_n \boldsymbol{u} - 2 \boldsymbol{v}^{*\top} \mathbf{Y}_n \boldsymbol{u} \boldsymbol{v}^{*\top}\boldsymbol{u} ) \nonumber \\
& \geq 4\eta^2 ( \boldsymbol{v}^{*\top} \mathbf{Y}_n \boldsymbol{v}^* + \boldsymbol{u}^{\top} \mathbf{Y}_n \boldsymbol{u} - 2 |\boldsymbol{v}^{*\top} \mathbf{Y}_n \boldsymbol{u}| ).
\end{align}
Now, first consider the case when $\boldsymbol{v}^{*\top} \mathbf{Y}_n \boldsymbol{u} \geq 0$. Let $\mathbf{Y}_n = \Gamma_{n} \Lambda_n \Gamma_n^{\top}$ be the spectral decomposition of $\mathbf{Y}_n$, where $\Lambda_n = \operatorname{diag}(\lambda_1(\mathbf{Y}_n), \ldots, \lambda_n(\mathbf{Y}_n))$. Setting $\boldsymbol{x}=(x_1,\ldots,x_n) = \Gamma_n^{\top} \boldsymbol{v}^*$ and $\boldsymbol{y}=(y_1,\ldots,y_n) = \Gamma_n^{\top} \boldsymbol{u}$, we can rewrite \Cref{stepwish1} as 
$$ \sum_{i=1}^n \lambda_i(\mathbf{Y}_n)x_i^2 + \sum_{i=1}^n \lambda_i(\mathbf{Y}_n)y_i^2 - 2 \sum_{i=1}^n \lambda_i(\mathbf{Y}_n)x_iy_i \leq \dfrac{\varepsilon^2}{4\eta^2}.$$
By definition, $\boldsymbol{x},\boldsymbol{y}$ are both unit vectors. Pick any $m \in [n-1]$ and note that, since $\boldsymbol{v}^* \in \mathcal{N}_{n,\theta_n}$, we have 
\begin{align*}
 \lambda_n(\mathbf{Y}_n) - \theta_n \leq \boldsymbol{v}^{*\top} \mathbf{Y}_n \boldsymbol{v}^* = \sum_{i=1}^n \lambda_i(\mathbf{Y}_n)x_i^2 &\leq \lambda_n(\mathbf{Y}_n) \sum_{i >m} x_i^2 + \lambda_m(\mathbf{Y}_n) \sum_{i \leq  m} x_i^2 \\
 & \leq \lambda_n(\mathbf{Y}_n) \biggl(1-\sum_{i \leq m} x_i^2 \biggr) + \lambda_m(\mathbf{Y}_n) \sum_{i \leq  m} x_i^2,
\end{align*} 
and hence $\sum_{i \leq m} x_i^2 \leq \theta_n/( \lambda_n(\mathbf{Y}_n)- \lambda_m(\mathbf{Y}_n))$. Similarly, one can also show that  $\sum_{i \leq m} y_i^2 \leq \theta_n/( \lambda_n(\mathbf{Y}_n)- \lambda_m(\mathbf{Y}_n)).$ Hence,
\begin{align*}
d_n(\boldsymbol{v}^*,\boldsymbol{u})^2 = \|\boldsymbol{x}-\boldsymbol{y}\|_2^2 = \sum_{i=1}^n (x_i-y_i)^2 &\leq \sum_{i>m} (x_i-y_i)^2 + 2\sum_{i \leq m} x_i^2 + 2\sum_{i \leq m} y_i^2 \\
& \leq \dfrac{ \sum_{i=1}^n \lambda_i(\mathbf{Y}_n) (x_i-y_i)^2 }{\lambda_m(\mathbf{Y}_n)}+  \dfrac{4\theta_n}{ \lambda_n(\mathbf{Y}_n)- \lambda_m(\mathbf{Y}_n)} \\
& \leq \dfrac{\varepsilon^2 }{4\eta^2 \lambda_m(\mathbf{Y}_n)}+  \dfrac{4\theta_n}{ \lambda_n(\mathbf{Y}_n)- \lambda_m(\mathbf{Y}_n)}.
\end{align*}
If $\boldsymbol{v}^{*\top} \mathbf{Y}_n \boldsymbol{u} \leq 0$, we can perform similar computations with $\boldsymbol{v}^*$ replaced by $-\boldsymbol{v}^*$. Thus 
$$ 2N(\mathcal{N}_{n,\theta_n}, d_{g,n},\varepsilon) \geq N \biggl(\mathcal{N}_{n,\theta_n}, d_{n},\sqrt{\dfrac{\varepsilon^2 }{4\eta^2 \lambda_m(\mathbf{Y}_n)}+  \dfrac{4\theta_n}{ \lambda_n(\mathbf{Y}_n)- \lambda_m(\mathbf{Y}_n)}} \biggr),$$
or equivalently for any $\varepsilon >0$, we have $2N (\mathcal{N}_{n,\theta_n}, d_{g,n}, \eta \varepsilon^2 \sqrt{n}  ) \geq N (\mathcal{N}_{n,\theta_n}, d_{n}, \varepsilon ),$ provided there exists $m<n$ such that $\lambda_m(\mathbf{Y}_n) >\varepsilon n$ and $16\theta_n < 3\varepsilon^2 (\lambda_n(\mathbf{Y}_n)- \lambda_m(\mathbf{Y}_n))$.  Hence, by \Cref{pnp}, 
\begin{align*}
&\mathbb{P} ( 2P (\mathcal{N}_{n,\theta_n}, d_{g,n}, \eta \varepsilon^2 \sqrt{n} ) < P (\mathcal{N}_{n,\theta_n}, d_{n}, 2\varepsilon ) ) \nonumber \\ & \hspace{1 in} \leq \mathbb{P} \biggl( \nexists \, m \in [n-1]\, \text{ such that } \varepsilon n < \lambda_m(\mathbf{Y}_n) < \lambda_n(\mathbf{Y}_n) -\dfrac{ 16\theta_n}{3\varepsilon^2} \biggr) = o(1),
\end{align*}
 where the last convergence for small enough $\varepsilon >0$  follows from \Cref{marcenko}. This shows that \Cref{item:E3} is satisfied with $\tau_n = \sqrt{n}$.

There is nothing to check for \Cref{item:E2} here, since $L=0$. We shall not directly apply \Cref{item:E1}, rather than modify the proof of \Cref{genthm:g} in places where \Cref{item:E1} was used to get a better estimate. Recall that \Cref{item:E1} was used only in deriving the estimate \Cref{thm2:sec}. In our case, the first expression in the series of inequalities that led to \Cref{thm2:sec} becomes 
\begin{align*}
\mathbb{E} \sup_{ \boldsymbol{v} \in \mathcal{N}_{n,\theta_n}} -2\sum_{i,j} (V_{t,n,ij}+\widetilde{V}_{t,n,ij})v_j (\mathbf{M}_n)_{i\cdot}^{\top}\boldsymbol{v} & = \mathbb{E} \sup_{ \boldsymbol{v} \in \mathcal{N}_{n,\theta_n}} -2\sum_{i} (\mathbf{V}_{t,n}+\mathbf{\widetilde{V}}_{t,n})_{i\cdot}^{\top}\boldsymbol{v} (\mathbf{M}_n)_{i\cdot}^{\top}\boldsymbol{v} \\
&= \mathbb{E} \sup_{ \boldsymbol{v} \in \mathcal{N}_{n,\theta_n}} -2 \boldsymbol{v}^{\top}(\mathbf{V}_{t,n}+\mathbf{\widetilde{V}}_{t,n})^{\top} \mathbf{M}_n \boldsymbol{v} \\
& \leq 4 \mathbb{E} \sup_{ \boldsymbol{v} \in \mathcal{N}_{n,\theta_n}} \| (\mathbf{V}_{t,n}+\mathbf{\widetilde{V}}_{t,n})\boldsymbol{v}  \|_2 \|\mathbf{M}_n \boldsymbol{v}  \|_2 \\
& \leq \sqrt{\mathbf{E} \lambda_n((\mathbf{V}_{t,n}+\mathbf{\widetilde{V}}_{t,n})^{\top}(\mathbf{V}_{t,n}+\mathbf{\widetilde{V}}_{t,n}))}\sqrt{\mathbb{E}\lambda_n(\mathbf{Y}_n)} \\
& = t\bigO(n),
\end{align*} 
where the last equality follows from \Cref{eigen:expecbound2} and the fact that $t^{-1}(V_{t,n,ij}+\widetilde{V}_{t,n,ij})$ is sub-Gamma with parameters $\bigO(1)$. Plugging-in this estimate in \Cref{last:proof}, we observe that the conclusion of \Cref{genthm:g} is true provided $\theta_n = \bigO(1)$. This completes the proof.
\end{proof}

As was the case with Wigner matrices, we shall now prove a tightness result directly, without using the general results developed in this article, to improve upon the scope of \Cref{wishtight1}.

\begin{prop}{\label{wishtight2}}
Consider the optimization problem defined in \Cref{wish:def} where the  entries are i.i.d.~with common distribution $\mathcal{P}$ with density $f$ having zero mean and finite moment of order four. Then for any $\varepsilon >0$, the sequence $\{P(\mathcal{N}_{n,\theta_n}, d_n, \varepsilon) : n \geq 1\}$ is  tight whenever $\theta_n=\bigO(1)$. 
\end{prop}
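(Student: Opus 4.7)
Proof Plan.

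The plan is to mirror the argument for Proposition \ref{eigtight2}, but adapted to the top of the spectrum of $\mathbf{Y}_n := \mathbf{M}_n^\top \mathbf{M}_n$. First, spectrally decompose $\mathbf{Y}_n = \sum_{i=1}^n \lambda_i(\mathbf{Y}_n)\boldsymbol{p}_i\boldsymbol{p}_i^\top$ with eigenvalues in increasing order and orthonormal eigenvectors $\boldsymbol{p}_i$. For any $\boldsymbol{v} = \sum_i x_i\boldsymbol{p}_i \in \mathcal{N}_{n,\theta_n}$, the near-optimality condition $\boldsymbol{v}^\top \mathbf{Y}_n \boldsymbol{v} \geq \lambda_n(\mathbf{Y}_n) - \theta_n$ gives, for any $T<n$,
\begin{equation*}
\sum_{j=1}^{T-1}\bigl(\lambda_{n-j+1}(\mathbf{Y}_n) - \lambda_{n-j}(\mathbf{Y}_n)\bigr)\Bigl(1 - \sum_{i=n-j+1}^n x_i^2\Bigr) \leq \theta_n.
\end{equation*}
On the event $A_{n,T} := \{\lambda_n(\mathbf{Y}_n) - \lambda_{n-T+1}(\mathbf{Y}_n) > \varepsilon^{-2}\theta_n\}$ this forces $\sum_{i=1}^{n-T}x_i^2 \leq \varepsilon^2$, so in the eigenbasis $\mathcal{N}_{n,\theta_n}$ sits inside $B_{n-T}(\mathbf{0},\varepsilon)\times B_T(\mathbf{0},1)$, and the standard volume estimate delivers $P(\mathcal{N}_{n,\theta_n},d_n,4\varepsilon) \leq (1+2/\varepsilon)^{T-1}$ on $A_{n,T}$, exactly parallel to \eqref{eigenstep1}--\eqref{eigenstep2}.

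The main task, and the location of the principal obstacle, is to show $\lim_{T\to\infty}\limsup_{n\to\infty}\mathbb{P}(A_{n,T}^c)=0$ using only the finite fourth moment hypothesis. For each $k$, let $\mathbf{M}_n^{(k)}$ be the $m_n\times (n-k+1)$ matrix consisting of the last $n-k+1$ columns of $\mathbf{M}_n$, and set $\mathbf{Y}_n^{(k)} := (\mathbf{M}_n^{(k)})^\top \mathbf{M}_n^{(k)}$. Since $\mathbf{Y}_n^{(k+1)}$ is a principal submatrix of $\mathbf{Y}_n^{(k)}$, Cauchy interlacing gives $\lambda_{n-k+1}(\mathbf{Y}_n^{(k)}) \geq \lambda_{n-k}(\mathbf{Y}_n^{(k+1)})$, and the top eigenvalue is monotone in $k$, so telescoping yields
\begin{equation*}
\lambda_n(\mathbf{Y}_n) - \lambda_{n-T+1}(\mathbf{Y}_n) \geq \sum_{k=1}^{T-1}\bigl(\lambda_{n-k+1}(\mathbf{Y}_n^{(k)}) - \lambda_{n-k+1}(\mathbf{Y}_n^{(k+1)})\bigr),
\end{equation*}
which is the direct analogue of \eqref{gap4}. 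Each single-step increment is then controlled by a top-eigenvalue rank-one update gap estimate in the spirit of Lemma \ref{gapestimate}, applied to the Wishart perturbation that corresponds to adjoining the removed column $\mathbf{x}^{(n,k)}$ of $\mathbf{M}_n$: one obtains a lower bound involving $|\mathbf{x}^{(n,k)\top}\boldsymbol{u}_{n,k+1}|^2/\lambda_n(\mathbf{Y}_n^{(k+1)})$, where $\boldsymbol{u}_{n,k+1}$ denotes the leading singular vector of $\mathbf{M}_n^{(k+1)}$, together with an additive correction that can be absorbed.

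For the denominator, Proposition \ref{eigen:expecbound2}(b) gives $\lambda_n(\mathbf{Y}_n^{(k+1)}) \gtrsim \max(m_n,n-k)$ with high probability under finite fourth moment. For the numerator, an anti-concentration statement of the flavor of Lemma \ref{xuconv}, which uses only independence of $\mathbf{x}^{(n,k)}$ from $\mathbf{M}_n^{(k+1)}$ and the non-degeneracy of $\mathcal{P}$, supplies for every $k$ not too close to $n$ a universal $\upsilon>0$ lower bound on the conditional probability (given $\mathbf{M}_n^{(k+1)}$) that this increment exceeds a fixed positive constant. A backward Azuma--Hoeffding bound applied to the martingale differences $\mathbbm{1}_{\Omega_{n,k}} - \mathbb{P}(\Omega_{n,k}\mid \mathbf{M}_n^{(k+1)})$, with $\Omega_{n,k}$ the event that the $k$-th increment is large, then produces (exactly as in the conclusion of the proof of Proposition \ref{eigtight2}) a bound of the form $\mathbb{P}(A_{n,T}^c) \leq e^{-cT}+o(1)$, so that sending first $n\to\infty$ and then $T\to\infty$ finishes the argument. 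The hard part is the first point of this last paragraph: porting the Hermitian rank-one gap estimate and the anti-concentration for inner products with the leading eigenvector from the Wigner setting to the rectangular Wishart setting while retaining their validity under only a fourth moment, since the one-step Wishart perturbation is a nonadditive rank-one update driven by a column removal rather than the clean symmetric block update exploited in the Wigner case.
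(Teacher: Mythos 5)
Your overall architecture is the same as the paper's: the covering reduction on the event $A_{n,T}=\{\lambda_n(\mathbf{Y}_n)-\lambda_{n-T+1}(\mathbf{Y}_n)>\varepsilon^{-2}\theta_n\}$, the telescoping of the top-eigenvalue gap over a chain of nested principal Gram submatrices obtained by deleting/adjoining one column of $\mathbf{M}_n$ at a time (you delete from the front, the paper adjoins at the back via $\mathbf{M}_n^{\leq k}$ — the same chain in reverse), the anti-concentration input of \Cref{xuconv} through the independence of the new column from the smaller matrix, and the Azuma martingale bound. However, your key quantitative claim — that each one-step increment is bounded below by a quantity of the form $|\mathbf{x}^{(n,k)\top}\boldsymbol{u}_{n,k+1}|^2/\lambda_n(\mathbf{Y}_n^{(k+1)})$, with \Cref{eigen:expecbound2}(b) invoked ``for the denominator'' — is wrong, and as stated it breaks the argument. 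Since $\lambda_n(\mathbf{Y}_n^{(k+1)})\gtrsim \max(m_n,n)$, such an increment would be $\bigO(1/n)$, so the sum of the $T-1$ increments would be $\bigO(T/n)\to 0$ for fixed $T$, which can never exceed $\varepsilon^{-2}\theta_n$ when $\theta_n=\Theta(1)$; note also the internal inconsistency with your next sentence, where you assert the increment exceeds a fixed positive constant with conditional probability $\geq \upsilon$. (A lower bound on $\lambda_n(\mathbf{Y}_n^{(k+1)})$ sitting in a denominator could only hurt you, not help.)

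The correct per-step bound has no $\lambda$ in the denominator. Writing $\mathbf{Y}^{(k)}$ as a bordering of $\mathbf{Y}^{(k+1)}$ and using the SVD of the smaller rectangular matrix, the paper applies \Cref{gapestimate} with $t=\eta/\lambda_{\max}(\mathbf{Y}^{(k+1)})$ to get increment $\geq \sqrt{\eta}\,|\boldsymbol{u}^{\top}\mathbf{x}^{(n,k)}|-\eta$ on the event $\lambda_{\max}(\mathbf{Y}^{(k+1)})>4\eta/3$ — and this validity event, not a denominator, is the actual (and only) place where \Cref{eigen:expecbound2}(b) enters, via monotonicity of the top eigenvalue along the chain. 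Even more simply, on the $m_n\times m_n$ side one has $\mathbf{M}^{(k)}(\mathbf{M}^{(k)})^{\top}=\mathbf{M}^{(k+1)}(\mathbf{M}^{(k+1)})^{\top}+\mathbf{x}^{(n,k)}(\mathbf{x}^{(n,k)})^{\top}$, and testing with the top left singular vector $\boldsymbol{u}_{n,k+1}$ gives increment $\geq |(\mathbf{x}^{(n,k)})^{\top}\boldsymbol{u}_{n,k+1}|^2$ outright, which is of constant order with conditional probability $\geq\upsilon$ by \Cref{xuconv}; so the ``nonadditive rank-one update'' you flag as the main obstacle is in fact a clean additive rank-one update on the other Gram side, and no lower bound on the top eigenvalue is needed on that route. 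With the per-step bound corrected in either of these ways, the rest of your plan (Azuma plus sending $n\to\infty$ then $T\to\infty$) goes through exactly as in the paper.
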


\begin{proof}
The initial computation of the  proof goes in the same way as the proof of \Cref{eigtight2}. For any $T \geq 2$, we define $A_{n,T} := \{ \lambda_n(\mathbf{Y}_n) - \lambda_{n-T+1}(\mathbf{Y}_n)  > \varepsilon^{-2}\theta_n \}$ and can conclude that on the event $A_{n,T}$ we have $P ( \mathcal{N}_{n,\theta_n},d_n,\varepsilon ) \leq (1+2/\varepsilon)^{T-1}$. We, therefore, again concentrate on bounding the probability of $A_{n,T}^c$.  

 We shall again make use of the comparison inequality as stated in \Cref{interlace}. Let $\mathbf{Y}_n^{(k)}$ is the random matrix obtained by considering only the first $k$ rows and columns of $\mathbf{Y}_n$. By \Cref{interlace} applied to the symmetric matrix $\mathbf{Y}_n$, we have $\lambda_k(\mathbf{Y}_n^{(k)}) \leq \lambda_k(\mathbf{Y}_n)$ for all $1 \leq k \leq n$ with equality for the case $k=n$ and hence 
 \begin{align}
 	\lambda_n(\mathbf{Y}_n) - \lambda_{n-T+1}(\mathbf{Y}_n)  &\geq  \lambda_n(\mathbf{Y}_n^{(n)}) - \lambda_{n-T+1}(\mathbf{Y}_n^{(n-T+1)})  \notag \\
	&\geq  \sum_{k=n-T+2}^{n} (\lambda_k(\mathbf{Y}_n^{(k)}) - \lambda_{k-1}(\mathbf{Y}_n^{(k-1)}) ), \label{gapw4}
 \end{align} 
where the last inequality in \Cref{gapw4} is true since $\lambda_k(\mathbf{Y}_n^{(k)})$ is non-decreasing in $k$ by \Cref{interlace}.
 We shall now use \Cref{gapestimate} to estimate the difference  $\lambda_k(\mathbf{Y}_n^{(k)}) - \lambda_{k-1}(\mathbf{Y}_n^{(k-1)}) $ for any $n-T+2\leq k \leq n$. Let $\mathbf{M}_n^{\leq k}$ denotes the matrix formed by first $k$ many columns of $\mathbf{M}_n$ and observe that $\mathbf{Y}_n^{(k)} = (\mathbf{M}_n^{\leq k} )^{\top} \mathbf{M}_n^{\leq k}$ for all $k$. Moreover, if we write $ (\mathbf{M}_n)_{\cdot k}$ to denote the $k$-th row of $\mathbf{M}_n$, then 
 \begin{align*}
 \mathbf{Y}_n^{(k)} = (\mathbf{M}_n^{\leq k})^{\top} \mathbf{M}_n^{\leq k} &= \begin{bmatrix}
 (\mathbf{M}_n^{\leq k-1} )^{\top} \\
 (\mathbf{M}_n)_{\cdot k}^{\top}
 \end{bmatrix} \begin{bmatrix}
 \mathbf{M}_n^{\leq k-1} & (\mathbf{M}_n)_{\cdot k}
 \end{bmatrix} \\
 &= \begin{bmatrix}
 \mathbf{Y}_n^{(k-1)} & (\mathbf{M}_n^{\leq k-1} )^{\top}(\mathbf{M}_n)_{\cdot k} \\
 (\mathbf{M}_n)_{\cdot k}^{\top}\mathbf{M}_n^{\leq k-1} & (\mathbf{M}_n)_{\cdot k}^{\top}(\mathbf{M}_n)_{\cdot k}
 \end{bmatrix}.
 \end{align*}
   Since $m_n \geq n$, with probability one the matrix $\mathbf{M}_{n}^{\leq k}$ has full column rank (by existence of densities for its entries).  Consider the singular value decomposition 
   \begin{equation}{\label{gapw6}}
\mathbf{M}_{n}^{\leq k-1} = \sum_{i=1}^{k-1} \sigma_{i,n,k-1}\boldsymbol{u}_{i,n,k-1}\boldsymbol{v}_{i,n,k-1}^{\top},   
   \end{equation} 
   where the singular values are ordered such that $0 \leq |\sigma_{1,n,k-1}| \leq \cdots \leq |\sigma_{k-1,n,k-1}|.$ The eigenvalues of $\widetilde{\mathbf{Y}}_n^{(k-1)}$ are then $\lambda_i(\widetilde{\mathbf{Y}}_n^{(k-1)}) = \sigma_{i,n,k-1}^2$, with eigenvector $\boldsymbol{v}_{i,n,k-1}$, for all $i \leq k-1$. Applying \Cref{gapestimate}, we have for any $t \in (0,1)$, 
   \begin{align}{\label{gapw5}}
   \lambda_k(\mathbf{Y}_n^{(k)}) - \lambda_{k-1}(\mathbf{Y}_n^{(k-1)}) &\geq 2\sqrt{t(1-t)} \bigg \rvert \boldsymbol{v}_{k-1,n,k-1}^{\top}(\mathbf{M}_n^{\leq k-1} )^{\top}(\mathbf{M}_n)_{\cdot k}\bigg \rvert - t \lambda_{k-1}(\mathbf{Y}_n^{(k-1)}) \nonumber \\
   & \hspace{2.5 in}  + t (\mathbf{M}_n)_{\cdot k}^{\top}(\mathbf{M}_n)_{\cdot k}.
   \end{align}
   By the singular value decomposition in \Cref{gapw6}, we have 
   \begin{align*}
    \boldsymbol{v}_{k-1,n,k-1}^{\top}(\mathbf{M}_n^{\leq k-1})^{\top}(\mathbf{M}_n)_{\cdot k} &= \sum_{i=1}^{k-1} \sigma_{i,n,k-1}\boldsymbol{v}_{k-1,n,k-1}^{\top} \boldsymbol{v}_{i,n,k-1}\boldsymbol{u}_{i,n,k-1}^{\top}(\mathbf{M}_n)_{\cdot k} \\
    & = \sigma_{k-1,n,k-1}\boldsymbol{u}_{k-1,n,k-1}^{\top}(\mathbf{M}_n)_{\cdot k}, 
   \end{align*}
   and hence the estimate in \Cref{gapw5} simplifies to 
   \begin{equation}{\label{gapw7}}
   \lambda_k(\mathbf{Y}_n^{(k)}) - \lambda_{k-1}(\mathbf{Y}_n^{(k-1)}) \geq 2\sqrt{t(1-t)\lambda_{k-1}(\mathbf{Y}_n^{(k-1)})} \rvert \boldsymbol{u}_{k-1,n,k-1}^{\top}(\mathbf{M}_n)_{\cdot k} \rvert - t \lambda_{k-1}(\mathbf{Y}_n^{(k-1)}).
   \end{equation}
 
The remaining part of the proof is almost identical to the one in \Cref{eigtight2}. Fix $\upsilon >0$ small enough such that the assertion in \Cref{xuconv} holds true with $\delta = \delta(\upsilon)>0$. Take any $0 < \eta < \min(\delta^2/4)$ and set $t=\eta ( \lambda_{k-1}(\mathbf{Y}_n^{(k-1)}))^{-1}$. Plugging-in this particular choice in \Cref{gap1}, we have, on the event $\lambda_{k-1}(\mathbf{Y}_n^{(k-1)}) > 4\eta/3$, 
 \begin{equation}{\label{gapw}}
 	 \lambda_k(\mathbf{Y}_n^{(k)}) - \lambda_{k-1}(\mathbf{Y}_n^{(k-1)}) \geq \sqrt{\eta} \big \rvert \boldsymbol{u}_{k-1,n,k-1}^{\top}(\mathbf{M}_n)_{\cdot k} \big \rvert  -\eta. 
 \end{equation} 
Let $\widetilde{\Omega}_{n,k} := \{ \lambda_k(\mathbf{Y}_n^{(k)}) - \lambda_{k-1}(\mathbf{Y}_n^{(k-1)}) \geq  \eta\}$.  The estimate in \Cref{gapw} yields the following on the event $\lambda_{k-1}(\mathbf{Y}_n^{(k-1)}) > 4\eta/3$:
 \begin{align*}
\mathbb{P} ( \widetilde{\Omega}_{n,k} \; \rvert \;\mathbf{M}_n^{ \leq k-1}) & \geq \mathbb{P} (\rvert \boldsymbol{u}_{k-1,n,k-1}^{\top}(\mathbf{M}_n)_{\cdot k}\rvert \geq 2\sqrt{\eta} \; \rvert \;\mathbf{M}_n^{ \leq k-1}) \\
&  \geq \mathbb{P} ( \rvert \boldsymbol{u}_{k-1,n,k-1}^{\top}(\mathbf{M}_n)_{\cdot k} \rvert \geq \delta \; \rvert \;\mathbf{M}_n^{ \leq k-1})  \geq \upsilon,
 \end{align*}
for all $k,n$. The last inequality is an application of \Cref{xuconv}, since $\mathbf{M}_n)_{\cdot k}$ is independent of $\mathbf{M}_n^{ \leq k-1}$, whereas $\boldsymbol{u}_{k-1,n,k-1}$ is measurable with respect to $\mathbf{M}_n^{\leq k-1}$. Note that the sequence of random variables $\{\mathbbm{1}_{\widetilde{\Omega}_{n,k}} - \mathbb{P} ( \widetilde{\Omega}_{n,k} \; \rvert \;\mathbf{M}_n^{ \leq k-1}) : n-T+2 \leq k \leq n\}$ is an Martingale difference sequence with respect to the filtration $\{\mathcal{F}^{\prime}_{n,k} : n-T+2 \leq k \leq n\}$ and with absolute values uniformly bounded above by $1$. Here $\mathcal{F}^{\prime}_{n,k}$ is the $\sigma$-algebra generated by the first $k$ columns of $\mathbf{M}_n$. By Azuma's inequality, we have for any $\kappa >0$, 
\begin{equation}{\label{gapw2}}
\mathbb{P} \biggl( \sum_{k=n-T+2}^n \mathbbm{1}_{\widetilde{\Omega}_{n,k}} - \sum_{k=n-T+2}^n \mathbb{P} ( \widetilde{\Omega}_{n,k} \; \rvert \;\mathbf{M}_n^{ \leq k-1})  \leq -\kappa \biggr) \leq \exp\biggl(-\dfrac{\kappa^2}{2(T-1)} \biggr). 	
\end{equation}
On the other hand, \Cref{gapw} tells us that, 
\begin{align}{\label{gapw3}}
	\sum_{k=n-T+2}^n \mathbb{P} ( \widetilde{\Omega}_{n,k} \; \rvert \;\mathbf{M}_n^{ \leq k-1}) &\geq \upsilon\sum_{k=n-T+2}^n \mathbbm{1}( \lambda_{k-1}(\mathbf{Y}_n^{(k-1)}) > 4\eta/3)  \geq \upsilon(T-1) \mathbbm{1}_{B_{n,T}}, 
\end{align}
where $B_{n,T}:= \{ \lambda_{n-T+1}(\mathbf{Y}_n^{(n-T+1)})< 4\eta/3 \}$. The last inequality in \Cref{gapw3} is again a consequence of non-decreasingness of  $\lambda_k(\mathbf{Y}_n^{(k)})$ in $k$. 
Set $\kappa = \upsilon(T-1)/2$ in \Cref{gapw2} to get
\begin{align*}
	\mathbb{P}\biggl( \sum_{k=n-T+2}^n \mathbbm{1}_{\widetilde{\Omega}_{n,k}} <  \dfrac{\upsilon(T-1)}{2}\biggr) 
	& \leq \exp(-\upsilon^{2}(T-1)/8) + \mathbb{P}(B_{n,T}^c).
\end{align*}
By \Cref{gapw4}, $ \lambda_n(\mathbf{Y}_n)-\lambda_{n-T+1}(\mathbf{M}_n)  \geq \eta  \sum_{k=n-T+2}^n \mathbbm{1}_{\widetilde{\Omega}_{n,k}}$ and hence 
\begin{align*}
	\mathbb{P} \biggl( \lambda_n(\mathbf{Y}_n)-\lambda_{n-T+1}(\mathbf{M}_n)  < \dfrac{ \eta \upsilon (T-1)}{2} \biggr) \leq \exp(-\upsilon^{2}(T-1)/8) + \mathbb{P}(B_{n,T}^c).
\end{align*}
Since $\theta_n =\bigO(1)$, we can conclude that $\lim_{T \to \infty} \limsup_{n \to \infty} \mathbb{P}(A_{n,T}^c)=0$, provided we can show that $\lim_{T \to \infty} \limsup_{n \to \infty} \mathbb{P}(B_{n,T}^c)=0$. But this is obvious from \Cref{eigen:expecbound2}.
\end{proof}

\begin{remark}
Let us make a short comparison between the scope of \Cref{wishtight1} and \Cref{wishtight2}. In \Cref{wishtight2}, we did away with any assumption on the row size $m_n$ and we only assumed finite moments upto order $4$ for matrix entries. In comparison, \Cref{wishtight1} needed linear growth for row size $m_n$ and at least finite moments of all order for the matrix entries (and possibly more restriction on its tail due to the nature of \Cref{mu}). Nevertheless, our general framework derived in \Cref{genthm:g} allowed us to prove tightness assuming $\theta_n = \bigO(1)$, which will be enough for our examples. 
\end{remark}

\begin{remark}{\label{eigen:improve2}}
Under some more assumptions on the distribution $\mathcal{P}$, one can again prove an weak convergence result for the gaps between  eigenvalues at the edge of the spectrum : $$\{ n^{-1/3}(\lambda_{n-i+1}(\mathbf{Y}_n)-\lambda_{n-i}(\mathbf{Y}_n) ) : 1 \leq i \leq k\}$$
 converges to a jointly continuous distribution for any $k \geq 1$. One such situation occurs when the distribution $\mathcal{P}$ is Gaussian, see \cite{anderson} and references within for more details of such results. In such scenarios, one can prove the tightness with the improved assumption that $\theta_n = \bigO(n^{1/3})$. Nevertheless, such a stronger version of \Cref{eigtight2} will not be required in our analysis.
\end{remark}

The following two theorems are the analogues of \Cref{eigenthm1} and \Cref{eigthm2} in context of Wishart matrices. 

\begin{thm}{\label{wishthm1}}
Consider the setup of Proposition~\ref{wishtight2}. The optimization problem in \Cref{wish:def} is then stable under small perturbations with perturbation blocks $\mathcal{J}_n = \{J_{n,(i,j)} : (i,j) \in [n]^2, i \leq  j\}$ where $J_{n,(i,j)} = \{(i,j)\}$ for all $1 \leq i \leq j \leq n$.
\end{thm}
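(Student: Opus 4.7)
The plan is to apply Theorem~\ref{strat} with window length $\theta_n \equiv 1$ and the canonical choice of sister solutions $\omega_{n,(i,j)}^* := \widehat{\boldsymbol{v}}(\mathbf{M}_n^{(i,j)})$, where $\mathbf{M}_n^{(i,j)}$ denotes the matrix obtained from $\mathbf{M}_n$ by replacing the $(i,j)$-th entry $X_{n,ij}$ with an independent copy $X_{n,ij}^{(i,j)}$. With this choice the first condition in~\eqref{most} is automatic, Condition~\ref{item:A} of Theorem~\ref{strat} follows from Proposition~\ref{wishtight2}, and in view of Remark~\ref{rem:strateasy} it remains only to establish the uniform bound
\begin{equation*}
\sup_{i,j} \mathbb{E}\bigl[\psi_n(\mathbf{M}_n;\widehat{\boldsymbol{v}}(\mathbf{M}_n^{(i,j)})) - \psi_{n,\mathrm{opt}}(\mathbf{M}_n)\bigr] = \bigO(1).
\end{equation*}

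Following the template of Theorem~\ref{eigenthm1}, I would first use $\psi_n(\mathbf{M}_n^{(i,j)};\widehat{\boldsymbol{v}}(\mathbf{M}_n^{(i,j)})) = \psi_{n,\mathrm{opt}}(\mathbf{M}_n^{(i,j)}) \stackrel{d}{=} \psi_{n,\mathrm{opt}}(\mathbf{M}_n)$ to rewrite the excess as $\mathbb{E}[\|\mathbf{M}_n^{(i,j)}\widehat{\boldsymbol{v}}^{(i,j)}\|_2^2 - \|\mathbf{M}_n\widehat{\boldsymbol{v}}^{(i,j)}\|_2^2]$ (writing $\widehat{\boldsymbol{v}}^{(i,j)}$ for $\widehat{\boldsymbol{v}}(\mathbf{M}_n^{(i,j)})$). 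Since the two matrices agree off the $(i,j)$-th entry, only the $i$-th coordinates of $\mathbf{M}_n^{(i,j)}\widehat{\boldsymbol{v}}^{(i,j)}$ and $\mathbf{M}_n\widehat{\boldsymbol{v}}^{(i,j)}$ differ; setting $r := (\mathbf{M}_n^{(i,j)}\widehat{\boldsymbol{v}}^{(i,j)})_i$ (which is independent of $X_{n,ij}$), a short expansion gives
\begin{equation*}
\|\mathbf{M}_n^{(i,j)}\widehat{\boldsymbol{v}}^{(i,j)}\|_2^2 - \|\mathbf{M}_n\widehat{\boldsymbol{v}}^{(i,j)}\|_2^2 = 2r\, \widehat{v}_j^{(i,j)}(X_{n,ij}^{(i,j)} - X_{n,ij}) - (X_{n,ij}^{(i,j)} - X_{n,ij})^2(\widehat{v}_j^{(i,j)})^2.
\end{equation*}
Dropping the non-positive quadratic term and using $\mathbb{E} X_{n,ij} = 0$ together with the independence of $X_{n,ij}$ from $(r, \widehat{v}_j^{(i,j)})$ leaves the bound $2\mathbb{E}[X_{n,ij}^{(i,j)}\widehat{v}_j^{(i,j)}(\mathbf{M}_n^{(i,j)}\widehat{\boldsymbol{v}}^{(i,j)})_i]$.

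The crucial step is then a symmetrization. By $\mathbf{M}_n^{(i,j)} \stackrel{d}{=} \mathbf{M}_n$, this expectation equals $2\mathbb{E}[X_{n,ij}\widehat{v}_j(\mathbf{M}_n\widehat{\boldsymbol{v}})_i]$, and since the joint distribution of $\mathbf{M}_n$ is invariant under independent row and column permutations (and the integrand $X_{n,ij}\widehat{v}_j(\mathbf{M}_n\widehat{\boldsymbol{v}})_i$ transforms covariantly under such permutations, the sign ambiguity in $\widehat{\boldsymbol{v}}$ cancelling since it appears quadratically), the latter quantity does not depend on $(i,j)$. Averaging it over the $m_n n$ choices of $(i,j)$ and using the identities $\sum_j X_{n,ij}\widehat{v}_j = (\mathbf{M}_n\widehat{\boldsymbol{v}})_i$ and $\|\mathbf{M}_n\widehat{\boldsymbol{v}}\|_2^2 = \lambda_n(\mathbf{Y}_n)$ yields the clean inequality
\begin{equation*}
\sup_{i,j}\mathbb{E}\bigl[\psi_n(\mathbf{M}_n;\widehat{\boldsymbol{v}}^{(i,j)}) - \psi_{n,\mathrm{opt}}(\mathbf{M}_n)\bigr] \leq \frac{2}{m_n n}\mathbb{E}[\lambda_n(\mathbf{Y}_n)].
\end{equation*}

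The main obstacle is to establish the operator-norm moment estimate $\mathbb{E}[\lambda_n(\mathbf{Y}_n)] = \bigO(m_n + n)$ under the finite-fourth-moment hypothesis of Proposition~\ref{wishtight2}. This is a standard (if nontrivial) consequence of Latala/Seginer-type inequalities for matrices with i.i.d.~entries; alternatively, it is immediate from Proposition~\ref{eigen:expecbound2}(a) under the stronger moment hypothesis stated there, or from Theorem~\ref{marcenko} combined with a routine truncation argument if additionally $m_n/n \to \alpha \in [1,\infty)$. Once this operator-norm bound is in place, the right-hand side above is $\bigO(1/m_n + 1/n) = o(1)$, which is certainly $\bigO(\theta_n)$ with $\theta_n \equiv 1$; Remark~\ref{rem:strateasy} then verifies the second half of \eqref{most}, and Theorem~\ref{strat} completes the proof.
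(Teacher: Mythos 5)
Your proof is correct in outline and, at the decisive estimate, takes a different route from the paper's. The scaffolding is identical: Theorem \ref{strat} with $\theta_n\equiv 1$, sister solutions $\omega^*_{n,(i,j)}=\widehat{\boldsymbol{v}}(\mathbf{M}_n^{(i,j)})$, tightness from Proposition \ref{wishtight2}, and Remark \ref{rem:strateasy}; and your algebraic expansion of the excess is the paper's own computation read in the opposite direction (the paper expands $\psi_n(\mathbf{M}_n^{(k,l)};\widehat{\boldsymbol{v}})-\psi_n(\mathbf{M}_n;\widehat{\boldsymbol{v}})$ and then transfers via $(\mathbf{M}_n^{(k,l)},\widehat{\boldsymbol{v}})\stackrel{d}{=}(\mathbf{M}_n,\widehat{\boldsymbol{v}}^{(k,l)})$). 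The divergence is in how $2\sum_{j}\mathbb{E}[X_{n,kl}X_{n,kj}\widehat{v}_j\widehat{v}_l]$ is bounded: the paper keeps $(k,l)$ fixed and uses Cauchy--Schwarz together with exchangeability of the coordinates of $\widehat{\boldsymbol{v}}$ (so that $\mathbb{E}[\widehat{v}_1^2\widehat{v}_2^2]\le 1/(n(n-1))$ and $\mathbb{E}[\widehat{v}_1^4]\le 1/n$), which yields $\bigO(1)$ from fourth-moment bounds and no spectral information at all; you instead exploit row/column permutation invariance to average over all entries and collapse the bound to $\tfrac{2}{m_n n}\mathbb{E}\lambda_n(\mathbf{Y}_n)$ --- which is exactly the identity the paper records in the remark immediately after Theorem \ref{wishthm2} as the way to obtain the sharper rate $\bigO(1/n+1/m_n)$.

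The one soft spot is how you close. You present $\mathbb{E}\lambda_n(\mathbf{Y}_n)=\bigO(m_n+n)$ as the main obstacle, but none of the justifications you list is available under the hypotheses of Proposition \ref{wishtight2} (zero mean, density, finite fourth moment, no growth restriction on $m_n$): Proposition \ref{eigen:expecbound2}(a) needs $\mathbb{E}_{\mathcal{P}}|X|^k\le \sigma^k k^{\beta k}$ for all $k$ together with $m_n=\bigO(n^{\gamma})$, Theorem \ref{marcenko} needs all moments and $m_n/n\to\alpha$, and a Latala/Seginer-type inequality is external to the paper and is typically stated for $\mathbb{E}\|\mathbf{M}_n\|_{\mathrm{op}}$, so the second moment would still require an extra argument. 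Fortunately the obstacle is self-imposed: with $\theta_n\equiv 1$ you only need the excess to be $\bigO(1)$, i.e.\ $\mathbb{E}\lambda_n(\mathbf{Y}_n)=\bigO(m_n n)$, and this is immediate from the trace bound $\mathbb{E}\lambda_n(\mathbf{Y}_n)\le \mathbb{E}\operatorname{Tr}(\mathbf{M}_n^{\top}\mathbf{M}_n)= m_n n\,\mathbb{E}_{\mathcal{P}}X^2$, which uses only a finite second moment. Substituting that one line makes your argument complete with exactly the theorem's scope; the heavier operator-norm estimate is needed only if you want the $o(1)$ rate, in which case you inherit its additional hypotheses, just as the paper's remark does.
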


\begin{proof}
Suppose that $\widehat{\boldsymbol{v}}=(\widehat{v}_1, \ldots, \widehat{v}_n)$ denotes the eigenvector corresponding to the largest eigenvalue for $\mathbf{M}_n^{\top}\mathbf{M}_n = \mathbf{Y}_n$ and for any $k \leq l \in [n]$,  $\widehat{\boldsymbol{v}}^{(k,l)}=(\widehat{v}_1^{(k,l)}, \ldots, \widehat{v}_n^{(k,l)})$ does the same for the matrix  $(\mathbf{M}_n^{(k,l)})^{\top}\mathbf{M}_n^{(k,l)}$ where $\mathbf{M}_n^{(k,l)}$ is the matrix obtained from $\mathbf{M}_n$ after replacing $X_{n,(k,l)}$ by an i.i.d.~copy $X_{n,kl}^{(k,l)}$. Then we have 
\begin{align}
\psi_n(\mathbf{M}_n^{(k,l)};\widehat{\boldsymbol{v}}) - \psi_{n, \mathrm{opt}}(\mathbf{M}_n) 
&=  \psi_n(\mathbf{M}_n^{(k,l)};\widehat{\boldsymbol{v}}) - \psi_n(\mathbf{M}_n;\widehat{\boldsymbol{v}}) \nonumber  \\
&= -2\sum_{j \neq l} (X_{n,kl}^{(k,l)}-X_{n,kl})X_{n,kj}\widehat{v}_j \widehat{v}_l -((X_{n,kl}^{(k,l)})^2 -(X_{n,kl})^2 )\widehat{v}_l^2. \label{exp1}
\end{align} 
Note that, $\left(\mathbf{M}_n^{(k,l)},\widehat{\boldsymbol{v}}\right) \stackrel{d}{=} \left( \mathbf{M}_n, \widehat{\boldsymbol{v}}^{(k,l)}\right)$, we have 
$$ \mathbb{E} \left[\psi_n(\mathbf{M}_n;\widehat{\boldsymbol{v}}^{(k,l)}) - \psi_{n, \mathrm{opt}}(\mathbf{M}_n)  \right]  = \mathbb{E} \left[ \psi_n(\mathbf{M}_n^{(k,l)};\widehat{\boldsymbol{v}}) - \psi_{n, \mathrm{opt}}(\mathbf{M}_n) \right].$$
Combining \Cref{exp1} and the fact that  random variable $X_{n,kl}^{(k,l)}$ has zero mean and is independent of $\widehat{\boldsymbol{v}}$, we have 
\begin{align}
\mathbb{E} [\psi_n(\mathbf{M}_n;\widehat{\boldsymbol{v}}^{(k,l)}) - \psi_{n, \mathrm{opt}}(\mathbf{M}_n) ] 
& =  2\sum_{j \neq l} \mathbb{E}  X_{n,kl}X_{n,kj}\widehat{v}_j \widehat{v}_l + \mathbb{E} ((X_{n,kl})^2 -(X_{n,kl}^{(k,l)})^2 ) \widehat{v}_l^2 \nonumber \\
& \leq 2\sum_{j} \mathbb{E}  X_{n,kl}X_{n,kj}\widehat{v}_j \widehat{v}_l \label{gapwish3} \\
& \leq 2(n-1) \mathbb{E}_{\mathcal{P}}X^2 \sqrt{ \mathbb{E}(\widehat{v}_1^2\widehat{v}_2^2)} + 2 \sqrt{\operatorname{Var}_{\mathcal{P}}(X^4)\mathbb{E}(\widehat{v}_1^4)}, \nonumber,
\end{align}
where we have used the exchangeability of the collection   $\{\widehat{v}_1, \ldots, \widehat{v}_n\}$, which also implies that $\mathbb{E}\widehat{v}_1^2\widehat{v}_2^2 \leq 1/(n(n-1))$ and $\mathbb{E}\widehat{v}_1^4 \leq \mathbb{E}\widehat{v}_1^2 = 1/n$. 
Thus 
\begin{equation}{\label{gapwish1}}
\max_{k \leq l} \mathbb{E} [\psi_n(\mathbf{M}_n;\widehat{\boldsymbol{v}}^{(k,l)}) - \psi_{n, \mathrm{opt}}(\mathbf{M}_n) ]  =\bigO(1).
\end{equation}
We can now finish the proof by applying \Cref{wishtight2} and \Cref{rem:strateasy} with $\theta_n=1.$
\end{proof}


\begin{thm}{\label{wishthm2}}
Consider the set-up of part (a) in Proposition~\ref{eigen:expecbound2} and assume that $m_n = \bigO(n)$. Then the optimization problem in \Cref{eign:def} is then stable under small perturbations with perturbation blocks 
$\mathcal{J}_n = \{J_{n,i} : i \in [n]\}$ where $J_{n,i}=\{(k,i) : k \in [m_n] \}$, for all $i \in [n]$. 
\end{thm}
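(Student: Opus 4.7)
The plan is to apply Theorem \ref{strat} with $\theta_n \equiv 1$ and the natural choice $\omega^*_{n,i} = \widehat{\boldsymbol{v}}(\mathbf{M}_n^{(i)})$, drawing condition \ref{item:A} from Proposition \ref{wishtight2} (the moment hypotheses of Proposition \ref{eigen:expecbound2}(a) trivially imply the finite fourth moment required there). The first half of \eqref{most} is then automatic, so by Remark \ref{rem:strateasy} the problem reduces to showing
\begin{equation*}
\max_{i \in [n]} \mathbb{E}\bigl[\psi_n(\mathbf{M}_n; \widehat{\boldsymbol{v}}(\mathbf{M}_n^{(i)})) - \psi_{n,\mathrm{opt}}(\mathbf{M}_n)\bigr] = \bigO(1).
\end{equation*}

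First, I will use the distributional identity $(\mathbf{M}_n, \widehat{\boldsymbol{v}}(\mathbf{M}_n^{(i)})) \stackrel{d}{=} (\mathbf{M}_n^{(i)}, \widehat{\boldsymbol{v}}(\mathbf{M}_n))$ to rewrite the quantity above as $\mathbb{E}[\psi_n(\mathbf{M}_n^{(i)}; \widehat{\boldsymbol{v}}) - \psi_n(\mathbf{M}_n; \widehat{\boldsymbol{v}})]$ with $\widehat{\boldsymbol{v}} := \widehat{\boldsymbol{v}}(\mathbf{M}_n)$. Writing $u_i = (\mathbf{M}_n)_{\cdot i}$, $u_i' = (\mathbf{M}_n^{(i)})_{\cdot i}$, and $A_i = \sum_{j \neq i} \widehat{v}_j (\mathbf{M}_n)_{\cdot j}$, the matrices $\mathbf{M}_n$ and $\mathbf{M}_n^{(i)}$ agree outside column $i$, so a direct expansion gives
\begin{equation*}
\psi_n(\mathbf{M}_n^{(i)}; \widehat{\boldsymbol{v}}) - \psi_n(\mathbf{M}_n; \widehat{\boldsymbol{v}}) = 2\widehat{v}_i A_i^{\top}(u_i - u_i') + \widehat{v}_i^2\bigl(\|u_i\|_2^2 - \|u_i'\|_2^2\bigr).
\end{equation*}
The crucial observation is that the eigenvalue equation $\mathbf{Y}_n\widehat{\boldsymbol{v}} = \lambda_n(\mathbf{Y}_n)\widehat{\boldsymbol{v}}$, read in the $i$-th coordinate, gives $u_i^{\top}(\mathbf{M}_n\widehat{\boldsymbol{v}}) = \lambda_n(\mathbf{Y}_n)\widehat{v}_i$, hence $A_i^{\top} u_i = \widehat{v}_i(\lambda_n(\mathbf{Y}_n) - \|u_i\|_2^2)$. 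Since $u_i'$ is independent of $(A_i,\widehat{\boldsymbol{v}})$ and has mean zero, taking expectations yields
\begin{equation*}
\mathbb{E}\bigl[\psi_n(\mathbf{M}_n^{(i)}; \widehat{\boldsymbol{v}}) - \psi_n(\mathbf{M}_n; \widehat{\boldsymbol{v}})\bigr] = 2\mathbb{E}[\widehat{v}_i^2 \lambda_n(\mathbf{Y}_n)] - \mathbb{E}[\widehat{v}_i^2 \|u_i\|_2^2] - \frac{m_n \sigma^2}{n},
\end{equation*}
where the invariance of the law of $\mathbf{M}_n$ under column permutations supplies $\mathbb{E}[\widehat{v}_i^2] = 1/n$. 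Summing over $i \in [n]$ and using $\sum_i \widehat{v}_i^2 = 1$ collapses the total to $2\mathbb{E}[\lambda_n(\mathbf{Y}_n)] - \mathbb{E}[\sum_i \widehat{v}_i^2\|u_i\|_2^2] - m_n\sigma^2 \leq 2\mathbb{E}[\lambda_n(\mathbf{Y}_n)]$, and Proposition \ref{eigen:expecbound2}(a) together with $m_n = \bigO(n)$ bounds this by $\bigO(n)$. Since column exchangeability forces the $n$ summands to be equal, each one is $\bigO(1)$, as needed.

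The main obstacle here is that the perturbation replaces an entire column of $m_n$ entries at once, rather than the single entry of Theorem \ref{wishthm1}. A naive entrywise Cauchy--Schwarz bound would produce a gap of order $m_n = \Theta(n)$, far too large for $\theta_n = \bigO(1)$. Exploiting the eigenvalue identity for the $i$-th column is what cancels the otherwise dominant cross-term $2\widehat{v}_i A_i^{\top} u_i$, and summing over $i$ then uses $\sum_i \widehat{v}_i^2 = 1$ to average the remaining $\mathbb{E}[\widehat{v}_i^2\lambda_n(\mathbf{Y}_n)]$ piece down to $\bigO(1)$ per index.
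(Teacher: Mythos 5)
Your proposal is correct and follows essentially the same route as the paper: Theorem \ref{strat} with $\theta_n \asymp 1$, tightness from Proposition \ref{wishtight2}, and a bound on the expected excess via the mean-zero/independence of the replaced column, column exchangeability (so $\mathbb{E}\widehat{v}_i^2 = 1/n$), and $\mathbb{E}\lambda_n(\mathbf{Y}_n) = \bigO(n+m_n)$ from Proposition \ref{eigen:expecbound2}(a). The only cosmetic difference is that you swap to the unperturbed optimizer first and make the cancellation explicit through the eigenvalue equation $A_i^{\top}u_i = \widehat{v}_i(\lambda_n(\mathbf{Y}_n)-\|u_i\|_2^2)$, whereas the paper computes with the perturbed optimizer and performs the same cancellation implicitly before averaging over $i$.
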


\begin{proof}
For all $i \in [n]$, let $\widehat{\boldsymbol{v}}^{(i)}=(\widehat{v}^{(i)}_1,\ldots,\widehat{v}_n^{(i)})$ be the eigenvector corresponding to the largest eigenvalue of $(\mathbf{M}_n^{(i)})^{\top}\mathbf{M}_n^{(i)}$, where  $\mathbf{M}_n^{(i)}=(X_{n,kl}^{(i)})_{k,l \in [n]}$, which is obtained by replacing the  entries of $i$-th  column of $\mathbf{M}_n$ by i.i.d.~copies. $\widehat{\boldsymbol{v}}=(\widehat{v}_1,\ldots,\widehat{v}_n)$ is the same for matrix $\mathbf{M}_n$. Then for any $i \in [n] $, using similar arguments as in \Cref{eigthm2}, we arrive at  
\begin{align}
\mathbb{E} [ \psi_n(\mathbf{M}_n;\widehat{\boldsymbol{v}}^{(i)}) -\psi_{n,\mathrm{opt}}(\mathbf{M}_n) ] 
& =  \mathbb{E} \sum_{j \neq i } \sum_{k}  ( X^{(i)}_{n,ki} - X_{n,ki} )X_{n,kj} \widehat{v}^{(i)}_i\widehat{v}^{(i)}_j \nonumber \\
& \hspace{1 in } + \mathbb{E} \sum_{k}  ( (X^{(i)}_{n,ki})^2 - (X_{n,ki})^2) (\widehat{v}^{(i)}_i)^2 \nonumber \\
& \leq   \mathbb{E} \sum_{j \neq i } \sum_{k}   X^{(i)}_{n,ki} X_{n,kj} \widehat{v}^{(i)}_i\widehat{v}^{(i)}_j + \mathbb{E} \sum_{k}  (X^{(i)}_{n,ki})^2 (\widehat{v}^{(i)}_i)^2  \nonumber \\
& =  \mathbb{E} \sum_{j,k }  X^{(i)}_{n,ki} X_{n,kj} \widehat{v}^{(i)}_i\widehat{v}^{(i)}_j =  \mathbb{E} \sum_{j,k } X_{n,ki} X_{n,kj}\widehat{v}_i\widehat{v}_j.
\end{align}
It is again obvious that the distribution of the random quantity in \Cref{eq5} does not depend upon $i$ and hence,
\begin{align}
\mathbb{E} \biggl[ \sum_{j,k} X_{n,ki} X_{n,kj}  \widehat{v}_i\widehat{v}_j  \biggr] &= \dfrac{1}{n}\mathbb{E} \biggl[ \sum_{i ,j,k}  X_{n,ki}X_{n,kj} \widehat{v}_i\widehat{v}_j  \biggr] = \dfrac{1}{n}\mathbb{E} \psi_{n, \mathrm{opt}}(\mathbf{M}_n) = \bigO(1),
\end{align}
where we have applied \Cref{eigen:expecbound2}. Combining these estimates we obtain
\begin{equation}{\label{gapwish2}}
\max_{i \in [n]} \mathbb{E} [ \psi_n(\mathbf{M}_n;\widehat{\boldsymbol{v}}^{(i)}) -\psi_{n,\mathrm{opt}}(\mathbf{M}_n)] = \bigO(1).
\end{equation} 
Applying \Cref{wishtight2} and \Cref{rem:strateasy}, we observe that the hypotheses of \Cref{strat} is satisfied for $\theta_n=1$. This concludes the proof.
\end{proof}

\begin{remark}
One might be surprised to see that we got an $\bigO(1)$ upper bound in both \Cref{gapwish1} and~\Cref{gapwish2}. In fact, one would expect that the effect of replacing an entire column would be greater than the effect of replacing a single entry. This is indeed true and we can deduce a tighter upper bound in \Cref{wishtight1}. To see this, we start from \Cref{gapwish3} and then using exchangeability of the rows and columns of $\mathbf{M}_n$ we can write 
\begin{align}
&\mathbb{E} [\psi_n(\mathbf{M}_n;\widehat{\boldsymbol{v}}^{(k,l)}) - \psi_{n, \mathrm{opt}}(\mathbf{M}_n) ]  \leq  2\sum_{j} \mathbb{E}  X_{n,kl}X_{n,kj}\widehat{v}_j \widehat{v}_l \nonumber \\
& =  \dfrac{2}{n}  \mathbb{E}  \sum_{i,j \in [n]} X_{n,ki}X_{n,kj}\widehat{v}_i \widehat{v}_j  =  \dfrac{2}{nm_n}  \mathbb{E}  \sum_{i,j \in [n]} \sum_{k^{\prime} \in [m_n]} X_{n,k^{\prime}i}X_{n,k^{\prime}j}\widehat{v}_i \widehat{v}_j \nonumber \\
& \leq   \dfrac{2}{nm_n} \mathbb{E} \lambda_n \left( \mathbf{M}_n^{\top}\mathbf{M}_n\right) = \bigO \left(\dfrac{n+m_n}{nm_n} \right) = \bigO \left(1/n + 1/m_n \right). \label{gapwish4}
\end{align}
Provided that $m_n \to \infty$, this upper bound improves on \Cref{gapwish1}. Moreover, if $m_n/n$ is bounded away from $0$, then the upper bound in \Cref{gapwish4} becomes $\bigO(1/n)$. One such example is when we assume $m_n \geq n$ to make sure that the matrix $\mathbf{Y}_n=\mathbf{M}_n^{\top}\mathbf{M}_n$ is non-singular almost surely. On the other hand, if $m_n = \bigO(1)$, then \Cref{gapwish4} also results in an $\bigO(1)$ upper bound. This is quite reasonable since replacing each column here amounts to replacing only a finite number of entries.
\end{remark}





\subsection{Graph optimization on Euclidean spaces}\label{sec:nonlin}
In this section we shall analyze a class of optimization problems regarding finite graphs on Euclidean spaces. They are our quintessential examples of application of \Cref{genthm} and \Cref{genthm:g} with their full strength. We shall focus on generic graph optimization problems where we take a specific number of points, independent and identically distributed, from an Euclidean space and find out the shortest graph with these points as vertices among a fixed collection of pre-specified graphs. This can be formulated in the following form.
\begin{defn}[Graph Optimization Problem on Euclidean spaces]{\label{graph:def}}
A generic graph optimization problem on Euclidean space has following components.
\begin{enumerate}[label=(EuOpt\Alph*)]
	\item \label{item:1} Number of input points $p_n \in \mathbb{N}$ with $p_n \uparrow \infty$ as $n \to \infty$.  Typically $p_n$ will be $\Theta(n)$.
	\item \label{item:2} A collection of graphs with vertex set $[p_n]=\{1,\ldots,p_n\}$. We shall denote this collection by $\mathcal{G}_n$. For any graph $G \in \mathcal{G}_n$, its edge set will be denoted by $E(G)$. We shall further assume that 
	$$\sup_{n \geq 1} \sup_{G \in \mathcal{G}_n} \max_{i \in [p_n]} d_i(G) =: D < \infty,$$
	where $d_i(G)$ is the degree of the vertex $i$ in graph $G$. 
	\item A probability measure $\mathcal{P}$ on $\mathbb{R}^d$ which will be the distribution of the inputs and a \textit{power weight} parameter $ q \in [1,d)$.
\end{enumerate}
Get $X_{n,1},\ldots,X_{n,p_n} \stackrel{i.i.d.}{\sim} \mathcal{P}$. The graph optimization problem can be written as 
$$ \text{Minimize} \; \sum_{i,j \in [p_n] : \{i,j\} \in E(G)} \| X_{n,i} - X_{n,j}\|^{q}_2, \;\text{ over } G \in \mathcal{G}_n.$$
\end{defn}

The case with $q=1$ is considered the standard case where we intend to find out the graph with shortest Euclidean length. A vast number of interesting problems fall into the category as described in \Cref{graph:def}. Among them we shall particularly consider in detail the traveling salesman problem (TSP) and the minimum spanning tree (MST). 
These examples will be discussed separately in detail later in this section. Nevertheless, the assumptions made in \Cref{graph:def} are by themselves strong enough to guarantee a statement  like \Cref{item:A} for generic graph optimization problems as demonstrated via the following theorem.

\begin{assumption}{\label{graphtight:ass}}
Fix $q \in [1,d)$. Assume that $\mathcal{P}$ is a probability measure on $\mathbb{R}^d$ satisfying
\begin{enumerate}
\item $\mathcal{P}$ has a density $f$(with respect to the Lebesgue measure on $\mathbb{R}^d$), which is bounded, almost everywhere continuous and $\mathbb{E}_{\mathcal{P}}\|X\|_2^q < \infty$.
\item Assume that 
\begin{equation}{\label{rate}}
 \mathbb{E} \biggl[ \inf_{G \in \mathcal{G}_n} \sum_{i,j \in [p_n] : \{i,j\} \in E(G)} \| X_{i} - X_{j}\|^{q}_2  \biggr] = \bigO( p_n^{(d-q)/d}), \text{ as } n \to \infty,
\end{equation}
where $X_1,X_2,\ldots \stackrel{i.i.d.}{\sim} \mathcal{P}$. 
\item $\mathcal{P}$ satisfies Assumption~\ref{mu} for the pair $(\rho,g)$ where $\nabla g \neq 0$ almost everywhere. Moreover, if $q \in [1,1+2/d)$, then this assumption can be replaced by Assumption~\ref{mu}.  This includes the standard case of $q=1$. 
\end{enumerate}
\end{assumption}

\begin{thm}{\label{nearoptgraph}}
Consider a generic graph optimization problem as in Definition~\ref{graph:def}. Assume that $\mathcal{P}$ satisfies Assumption~\ref{graphtight:ass}.  Take  $X_1,X_2,\ldots \stackrel{i.i.d.}{\sim} \mathcal{P}$. Under these assumptions, $\{P(\mathcal{N}_{n,\theta_n},d_n,\varepsilon) : n \geq 1\}$ is tight for any $\varepsilon >0$ if $\theta_n = \bigO(p_n^{-q/d})$.
Here $d_n$ is the metric on $\mathcal{G}_n$ defined as follows.
\begin{align*}
d_n(G_1,G_2) &:= \dfrac{1}{p_n} \sum_{i \neq j \in [p_n]} \rvert \mathbbm{1}( \{i,j\} \in E(G_1)) - \mathbbm{1}( \{i,j\} \in E(G_2))\rvert \\
& = \dfrac{1}{p_n}\operatorname{card}(E(G_1)\Delta E(G_2) ), \forall \; G_1,G_2 \in \mathcal{G}_n,
\end{align*}
where $\Delta$ refers to symmetric difference between two sets.
\end{thm}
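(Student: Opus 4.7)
The plan is to apply \Cref{genthm} (or \Cref{genthm:g} when $q \ge 1 + 2/d$, which is exactly the range in which \Cref{graphtight:ass}(3) imposes the extra requirement that $\nabla g \ne 0$ almost everywhere) by carrying out the TSP computation of equations \Cref{ex:first}--\Cref{express3} for the general exponent $q \in [1,d)$. \Cref{unique} and \Cref{ass:meas} are immediate from joint continuity of $\psi_n$, finiteness of $\mathcal{G}_n$, and the bounded continuous density of $\mathcal{P}$ (which forces ties to have probability zero). For the Taylor-like expansion in \Cref{ass:psi}, I would exploit convexity of $u \mapsto \|u\|_2^q$ for $q \ge 1$: writing $\Delta x_{ij} := x_i - x_j$ and $\Delta c_{ij} := c_i - c_j$,
\begin{equation*}
\|\Delta x_{ij} + \Delta c_{ij}\|_2^q - \|\Delta x_{ij}\|_2^q \;\le\; q\,\|\Delta x_{ij}+\Delta c_{ij}\|_2^{q-2}(\Delta x_{ij}+\Delta c_{ij}) \cdot \Delta c_{ij}.
\end{equation*}
Splitting the right-hand side into its value at $\Delta c_{ij}=0$ plus a correction, summing over $\{i,j\}\in E(G)$, and applying H\"older's inequality just as in \Cref{ex:second} identifies the principal gradient
\begin{equation*}
H_{n,i}(\mathbf{x}^n;G) \;=\; \sum_{j:\,\{i,j\}\in E(G)} q\,\|x_i-x_j\|_2^{q-2}(x_i-x_j),
\end{equation*}
together with a single ($L=1$) remainder factored as $R_{n,1}(\mathbf{x}^n,\mathbf{c}^n;G)\,\widetilde{R}_{n,1}(\mathbf{x}^n+\mathbf{c}^n)$ in which $R_{n,1}$ carries a higher power of $\|\Delta c_{ij}\|_2$ and $\widetilde{R}_{n,1}$ an inverse power of the nearest-neighbour distances $\min_{j\ne i}\|x_i-x_j\|_2$, with homogeneity exponent $\upsilon_1$ an explicit function of $q$ and $d$.

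The moment-type conditions are then routine. For \Cref{item:C1}/\Cref{item:E1} the bounded-degree assumption gives $\|H_{n,i}(\mathbf{X}^n;G)\|_2 \le qD \max_j \|X_i-X_j\|_2^{q-1}$, which is uniformly bounded when $q=1$ (so \Cref{boundedH} applies with $\varsigma_{n,0} \equiv qD$) and, when $q>1$, is controlled in an appropriate $L^{(1+\lambda)/\lambda}$-norm via $\mathbb{E}_{\mathcal{P}}\|X\|_2^q<\infty$ together with elementary concentration of order statistics. For \Cref{item:C2}/\Cref{item:E2}, boundedness and almost-everywhere continuity of $f$ give the standard nearest-neighbour moment estimate $\mathbb{E}[\min_{j\ne i}\|X_i-X_j\|_2^{-\gamma}] = \bigO(p_n^{\gamma/d})$ for $\gamma<d$, which combined with the sub-Gaussian/sub-Gamma moments of the Metropolis--Hastings respectively Langevin proposal controls $\widetilde{R}_{n,1}$ and $R_{n,1}$ exactly as in the TSP worked example. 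The BHH-type rate \Cref{rate} from \Cref{graphtight:ass}(2) then calibrates these growth orders against $\theta_n$, producing the announced constraint $\theta_n=\bigO(p_n^{-q/d})$ once one plugs into \Cref{item:C4}/\Cref{item:E4}.

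The main obstacle is the packing-comparison hypothesis \Cref{item:C3}/\Cref{item:E3}: proving that the random pseudo-metric $d_{2,n}$ (respectively $d_{g,n}$) dominates $d_n$, up to the scaling $\tau_n$, with high probability on $\mathcal{N}_{n,\theta_n}$. The plan is to use the bounded-degree hypothesis to argue that $d_n(G_1,G_2)>\varepsilon$ forces at least $\varepsilon p_n/(4D)$ vertices $i$ whose neighbourhoods in $G_1$ and $G_2$ differ; at each such vertex the nearest-neighbour edge from $X_i$ is either present in exactly one of the two graphs or, if present in both, is paired with a second-nearest neighbour that contributes an edge on the symmetric difference, so that $\|H_{n,i}(\mathbf{X}^n;G_1)-H_{n,i}(\mathbf{X}^n;G_2)\|_2$ is bounded below by a multiple of the $(q-1)$-th power of the nearest-neighbour distance of $X_i$. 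A law of large numbers for nearest-neighbour distances under a bounded density then gives a positive fraction of distances of size $\Theta(p_n^{-1/d})$, and summing yields $d_{2,n}(G_1,G_2) \gtrsim \tau_n$ with $\tau_n$ the appropriate power of $p_n$ interpolating $\sqrt{p_n}$ (the TSP value when $q=1$). The hypothesis $\nabla g \ne 0$ almost everywhere in \Cref{graphtight:ass}(3) enters precisely here when $q\ge 1+2/d$ and the argument must be run inside \Cref{genthm:g}, in order to translate the above lower bound on $d_{2,n}$ into one on $d_{g,n}$. Feeding the resulting $\tau_n$ back into \Cref{item:C4}/\Cref{item:E4} then yields the advertised window $\theta_n=\bigO(p_n^{-q/d})$.
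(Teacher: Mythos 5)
Your high-level strategy matches the paper's: you correctly identify that the proof runs through \Cref{genthm} (for $q$ small) or \Cref{genthm:g} (for $q \geq 1+2/d$), that the gradient takes the form $H_{n,i}(\mathbf{x}^n;G)=q\sum_{j:\{i,j\}\in E(G)}(x_i-x_j)\|x_i-x_j\|_2^{q-2}$, that the remainder involves nearest-neighbour distances, and that the final calibration produces $\theta_n=\bigO(p_n^{-q/d})$. However, there are three genuine gaps.

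First, the claim $L=1$ in the Taylor-like expansion does not survive past $q=1$. For $q\in[1,2)$ the paper requires two remainder factors with homogeneity $\upsilon_1=\upsilon_2=1$ but different $R,\widetilde R$ (equations \eqref{choice6}--\eqref{choice7}), and for $q\in[2,d)$ it requires $L=\lfloor q\rfloor+1$ remainders with distinct homogeneity exponents $\upsilon_k=k$ for $k<\lfloor q\rfloor$, then $\lfloor q\rfloor-1$, then $q-1$ (equations \eqref{choice9}--\eqref{choice11new2}). These cannot all be absorbed into a single remainder because they carry different powers of the perturbation $\mathbf{c}^n$, and the packaging as $R\cdot\widetilde R$ forces a separate H\"older split for each.

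Second, for \Cref{item:C1}/\Cref{item:E1} your route through $\mathbb{E}_{\mathcal{P}}\|X\|_2^q<\infty$ and ``concentration of order statistics'' would not yield the correct $\varsigma_{n,\lambda}$. Your intermediate bound $\sum_i\|H_{n,i}\|_2^{q/(q-1)}\leq (qD)^{q/(q-1)}\sum_i\max_{j}\|X_i-X_j\|_2^q$ is the right quantity, but one then needs to observe that on $\mathcal{N}_{n,\theta_n}$ the double sum $\sum_i\sum_{j:\{i,j\}\in E(G)}\|X_i-X_j\|_2^q=2\psi_n(\mathbf{X}^n;G)\leq 2(\psi_{n,\mathrm{opt}}(\mathbf{X}^n)+\theta_n)$; combined with the BHH rate \eqref{rate}, this gives $\varsigma_{n,\lambda}=p_n^{(d-q)(q-1)/qd}$ with $\lambda=q-1$. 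The step exploiting near-optimality to control the $\ell^{q/(q-1)}$-norm of the gradient is the key to getting the right rate, and ``$L^q$ moments plus order-statistics concentration'' on their own give a strictly weaker (input-diameter-dependent) bound.

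Third, and most seriously, your sketch of \Cref{item:C3}/\Cref{item:E3} asserts that a difference of neighbourhoods at vertex $i$ forces $\|H_{n,i}(\mathbf{X}^n;G_1)-H_{n,i}(\mathbf{X}^n;G_2)\|_2$ to be bounded below. This is false deterministically: the $H_{n,i}$ are \emph{vector sums} of up to $D$ directional terms $(X_i-X_j)\|X_i-X_j\|_2^{q-2}$, and two graphs with disjoint edge sets at $i$ can nevertheless produce the same sum through geometric cancellation. There is no pointwise lower bound; what is needed is a \emph{probabilistic} one. The paper handles this by introducing, for every admissible sign pattern $\underline{\beta}\in\{-1,0,1\}^{2D}\setminus\{\mathbf 0\}$, the event $\Lambda^{(\underline{\beta})}_{n,M,i,\alpha}$ that some $2D$-tuple of points near $X_i$ yields a signed sum of length $\leq\alpha p_n^{(1-q)/d}$, then proves via a Poisson local-limit argument (total-variation comparison of the conditional distribution of points in a shrinking ball with a scaled uniform, equations \eqref{tv}--\eqref{g1}) that the asymptotic per-vertex probability $P_{\underline{\beta}}(M,\alpha)$ tends to $0$ as $\alpha\downarrow 0$, and controls the variance of $\frac{1}{p_n}\sum_i\mathbbm{1}_{\Lambda^{(\underline{\beta})}_{n,M,i,\alpha}}$ by an exchangeability/decoupling argument to get a law of large numbers \eqref{conv1}. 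The long-edge contributions are handled separately by the BHH rate via the events $\Gamma_{n,M,i}$ \eqref{secondevent}. Your proposal would need to replace the hand-waved geometric claim with this Poissonization machinery.
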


\begin{remark}
The uniform bounded degree assumption made in \Cref{item:2} guarantees that for all $G \in \mathcal{G}_n$, we have $\operatorname{card}(E(G)) \leq Dp_n$ . This motivates the choice of the scaling in the definition of $d_n$ since this ensures that $0 \leq d_n \leq 2D$. 
\end{remark}

\begin{remark}
As can be seen from \Cref{exmu} and \Cref{condrho}, the scope of \Cref{nearoptgraph} contains a plethora of standard examples like multivariate versions of Gaussian, Gamma distributions, product of uniform distributions, Dirichlet distributions, etc.
\end{remark}

\begin{proof}[Proof of Theorem \ref{nearoptgraph}]
We shall show that the assumptions made in the statement of \Cref{genthm:g} are satisfied in this case for $q \in [1,d)$, whereas for $q \in [1,2)$ they also satisfy the assumptions in \Cref{genthm}. To connect the current setup with the notations in \Cref{genthm} and \Cref{genthm:g}, note that we have $\mathcal{S}_n= \mathcal{G}_n$, $I_n = [p_n]$ and 
$$ \psi_{n}( ( x_{i})_{i \in [p_n]}; G) = \sum_{i,j \in [p_n] : \{i,j\} \in E(G)} \| x_{i} - x_{j}\|^{q}_2, \; \forall \; G \in \mathcal{G}_n.$$
We shall denote by $D$ the uniform bound on the degree of the graphs in $\mathcal{G}_n$, i.e. $d_i(G) \leq D < \infty$, for all $i \in [p_n], G \in \mathcal{G}_n$ and $n \geq 1$. It is enough to consider $X_{n,i}=X_i$, for all $i \in [p_n]$ and $n \geq 1$ where $X_1,X_2,\ldots \stackrel{i.i.d.}{\sim} f$. For the sake of brevity, for any $\mathbf{x}^n =(x_i)_{i \in [p_n]}$ and any $i,j \in [p_n]$, we shall use the notation $\Delta x_{ij}$ to denote $x_i-x_j$.  Throughout the proof we shall use the following crude bound many times without reference:
\begin{equation}{\label{crude}}
\biggl( \sum_{j=1}^m a_j \biggr)^r \leq m^r \sum_{j=1}^m a_j^r, \; \text{ for any } a_1,\ldots,a_m,r \geq 0, \; m \in \mathbb{N}.
\end{equation}
Starting the main arguments behind the proof, first of all note that \Cref{unique} is clearly satisfied since $\psi_n((X_{n,i})_{i \in [p_n]}; G)$'s are distinct with probability $1$; this is because $X_i$'s have density $f$ with respect to Lebesgue measure and $\mathcal{G}_n$ is finite. On the other hand,  
$$ 0 \leq \inf_{G \in \mathcal{G}_n} \psi_n((X_{i})_{i \in [p_n]}; G) \leq 2^{q}D \sum_{i \in [p_n]} \|X_i\|^{q}_2,$$
where the right hand side is integrable by assumption. This proves the  assertion required to satisfy \Cref{ass:meas}.  %
In order to show the validity of \Cref{ass:psi}, we observe that since the function 
$$\mathbf{x}^n = (x_{i})_{i \in [p_n]} \mapsto \psi_n((x_{i})_{i \in [p_n]}; G)$$
is jointly convex for any $G \in \mathcal{G}_n$ (a consequence of $q \geq 1$), we can apply a first order approximation as follows: for (almost) any $\mathbf{x}^n=(x_i)_{i \in [p_n]}, \mathbf{c}^n=(c_i )_{i \in [p_n]} \in (\mathbb{R}^d)^{[p_n]}$,
\begin{equation}{\label{tspstep31}}
\psi_n(\mathbf{x}^n+\mathbf{c}^n; G)-\psi_n(\mathbf{x}^n; G) \leq \sum_{i \in [p_n]} c_i \cdot \partial_i \psi_n(\mathbf{x}^n+\mathbf{c}^n; G),
\end{equation}
where $\partial_i$ refers to gradient with respect to the $i$-th input. Computing the gradient and plugging it into (\ref{tspstep31}), we obtain the following: 
\begin{align}{\label{tspstep311}}
\psi_n(\mathbf{x}^n+\mathbf{c}^n; G)-\psi_n(\mathbf{x}^n; G) & \leq q \sum_{i \in [p_n]} c_i \cdot \biggl[ \sum_{\substack{j : \{i,j \} \in E(G)}} \dfrac{\Delta x_{ij}+\Delta c_{ij}}{\big\|\Delta x_{ij}+\Delta c_{ij}\big\|^{2-q}_2}\biggr] \nonumber \\
& = q \sum_{\{i,j\} : \{i,j\} \in E(G)} \dfrac{\Delta c_{ij} \cdot (\Delta x_{ij}+\Delta c_{ij} )}{\big\|\Delta x_{ij}+\Delta c_{ij}\big\|^{2-q}_2}.
\end{align}
The equality in \Cref{tspstep311} is derived by changing the sum over vertices to a sum over edges and observing that each edge $\left\{i,j\right\} \in E(G)$ contributes a summand of the form 
$$ qc_i \cdot \dfrac{\Delta x_{ij}+\Delta c_{ij}}{\big\|\Delta x_{ij}+\Delta c_{ij}\big\|^{2-q}_2} + q c_j \cdot \dfrac{\Delta x_{ji}+\Delta c_{ji}}{\big\|\Delta x_{ji}+\Delta c_{ji}\big\|^{2-q}_2} = q \dfrac{\Delta c_{ij} \cdot (\Delta x_{ij}+\Delta c_{ij} )}{\big\|\Delta x_{ij}+\Delta c_{ij}\big\|^{2-q}_2},$$
to the sum.

Before diving further into the analysis of the term in the right hand side of \Cref{tspstep311}, where we try to prove that it permits an expansion satisfying \Cref{item:C1}, we introduce two generic terms which will be encountered repeatedly throughout the process of establishing this aforementioned expansion. For any $\mathbf{x}^n=(x_i )_{i \in [p_n]}, \mathbf{c}^n=(c_i)_{i \in [p_n]} \in (\mathbb{R}^d)^{[p_n]}$; $r_1 \in [0,\infty), r_2 \in [0,q)$ and $r_3,r \in [0,d)$, define the following for any $G \in \mathcal{G}_n$:
\begin{equation}{\label{termdef1}}
T( \mathbf{x}^n,\mathbf{c}^n; G;r_1,r_2,r_3) := \sum_{i \in [p_n]} \dfrac{\sum_{ j: \{i,j\} \in E(G)} \|\Delta c_{ij} \|_2^{r_1} \;\|\Delta x_{ij} \|_2^{r_2}}{\min_{j \in [p_n]: j \neq i} \|\Delta x_{ij} + \Delta c_{ij} \|_2^{r_3}},
\end{equation}
\begin{equation}{\label{termdef11}}
Q( \mathbf{x}^n;r) := \sum_{i \in [p_n]} \dfrac{1}{\min_{j \in [p_n]: j \neq i} \|\Delta x_{ij} \|_2^{r}}.
\end{equation}
Note that, 
\begin{equation}{\label{TScomp}}
T( \mathbf{x}^n,\mathbf{c}^n; G;0,0,r) \leq D Q( \mathbf{x}^n+\mathbf{c}^n;r),
\end{equation} 
where $D$ is the maximum possible degree for any vertex in the graph $G$, as warranted by the assumption in \Cref{item:2}. For any $u \in (0,\infty)$, H\"older's inequality, aided by \Cref{crude}, yields the following bound which will also be instrumental in our computations. Omitting the reference to $\mathbf{x}^n, \mathbf{c}^n$ and $G$ in the notation $T( \mathbf{x}^n,\mathbf{c}^n; G; r_1,r_2,r_3 )$ and $Q(\mathbf{x}^n;r)$ for the sake of brevity, note that
\begin{align}
T(r_1,r_2,r_3)&\leq \biggl[ \sum_{i \in [p_n]} \biggl(\sum_{j  : \{i,j\} \in E(G)}  \|\Delta c_{ij} \|_2^{r_1} \;\|\Delta x_{ij} \|_2^{r_2} \biggr)^{1+u} \biggr]^{1/(1+u)} \nonumber \\
& \hspace{ 1.5 in} \biggl[ \sum_{i \in [p_n]} \dfrac{1}{\min_{j \in [p_n]: j \neq i} \|\Delta x_{ij} + \Delta c_{ij} \|_2^{r_3(1+u)/u}} \biggr]^{u/(1+u)} \nonumber \\
& \leq D \biggl[ \sum_{i \in [p_n]} \sum_{j  : \{i,j\} \in E(G)}  \|\Delta c_{ij} \|_2^{r_1(1+u)} \;\|\Delta x_{ij} \|_2^{r_2(1+u)} \biggr]^{1/(1+u)} \nonumber \\
& \hspace{ 1.5 in} \biggl[ \sum_{i \in [p_n]} \dfrac{1}{\min_{j \in [p_n]: j \neq i} \|\Delta x_{ij} + \Delta c_{ij} \|_2^{r_3(1+u)/u}} \biggr]^{u/(1+u)} \nonumber \\
& \leq D \biggl[ T (r_1(1+u),r_2(1+u),0 )\biggr]^{\frac{1}{1+u}}\biggl[ T\biggl(0,0,\frac{r_3(1+u)}{u}\biggr)\biggr]^{\frac{u}{1+u}}. \label{termdef2}
\end{align}
Of course we assume $u$ satisfies $r_2(1+u)\in [0,q)$ and $r_3(1+u)/u \in [0,d)$ in \Cref{termdef2}.

	Coming back to our original computations, we consider the cases $q \in [1,2)$ and $q \in [2,d)$ separately. For the case $1 \leq q <2$,
	we apply \Cref{binombound2} to obtain the following for any $i \neq j \in [p_n]$:
	\begin{align}{\label{tspstep32}}
	 & \Bigg \|	 \dfrac{\Delta x_{ij}+\Delta c_{ij}}{\big\|\Delta x_{ij}+\Delta c_{ij} \big\|_2^{2-q}} - \dfrac{\Delta x_{ij}}{\|\Delta x_{ij}\|_2^{2-q}}  \Bigg \|_2 \nonumber \\
	   & \leq \|\Delta x_{ij}\|_2 \Bigg \rvert\dfrac{1}{\|\Delta x_{ij}+\Delta c_{ij}\|_2^{2-q}} - \dfrac{1}{\|\Delta x_{ij}\|_2^{2-q}} \Bigg \rvert + \dfrac{\|\Delta c_{ij}\|_2}{\|\Delta x_{ij}+\Delta c_{ij}\|^{2-q}_2} \nonumber \\
	& = \|\Delta x_{ij}\|_2 \dfrac{\big \rvert \|\Delta x_{ij}+\Delta c_{ij}\|^{2-q}_2 - \|\Delta x_{ij}\|_2^{2-q} \big \rvert}{\|\Delta x_{ij}+\Delta c_{ij}\|^{2-q}_2 \|\Delta x_{ij}\|_2^{2-q}}  + \dfrac{\|\Delta c_{ij}\|_2}{\|\Delta x_{ij}+\Delta c_{ij}\|^{2-q}_2} \nonumber \\
		& \leq \dfrac{(2-q)\|\Delta x_{ij}\|_2^{q-1}\|\Delta c_{ij}\|_2}{\|\Delta x_{ij}+\Delta c_{ij}\|_2^{2-q}}(   \|\Delta x_{ij} \|_2^{1-q} +  \|\Delta x_{ij} + \Delta c_{ij}\|_2^{1-q}  ) + \dfrac{\|\Delta c_{ij}\|_2}{\|\Delta x_{ij}+\Delta c_{ij}\|_2^{2-q}} \nonumber \\
		& \leq \dfrac{ \|\Delta c_{ij}\|_2 \; \|\Delta x_{ij}\|_2^{q-1}}{\|\Delta x_{ij}+\Delta c_{ij}\|_2} + \dfrac{2\|\Delta c_{ij}\|_2}{\|\Delta x_{ij}+\Delta c_{ij}\|_2^{2-q}}.
	\end{align}
The inequality in \Cref{tspstep32}, after plugging it  in \Cref{tspstep311}, yields the following.
\begin{align}{\label{choice5}}
& \psi_n(\mathbf{x}^n+\mathbf{c}^n; G)-\psi_n(\mathbf{x}^n; G) \nonumber  \\
& \leq q\sum_{\{i,j\} \in E(G)}  \dfrac{\Delta c_{ij} \cdot \Delta x_{ij}}{\|\Delta x_{ij}\|_2^{2-q}} +  q \sum_{\{i,j\} \in E(G)} \dfrac{ \|\Delta c_{ij}\|_2^2 \; \|\Delta x_{ij}\|_2^{q-1}}{\|\Delta x_{ij} + \Delta c_{ij}\|_2}  + 2q \sum_{\{i,j\} \in E(G)} \dfrac{ \|\Delta c_{ij}\|_2^2 }{\|\Delta x_{ij} + \Delta c_{ij}\|_2^{2-q}} \nonumber \\
 &  \leq q \sum_{i \in [p_n]} c_i \cdot  \biggl[ \sum_{\substack{j : \{i,j \} \in E(G)}} \dfrac{\Delta x_{ij}}{\|\Delta x_{ij}\|_2^{2-q}}\biggr] +  \dfrac{q}{2}  \sum_{i \in [p_n]} \dfrac{\sum_{j:\{i,j\} \in E(G)}  \|\Delta c_{ij}\|_2^2 \; \|\Delta x_{ij}\|_2^{q-1}}{\min_{j \in [p_n] : j \neq i} \|\Delta x_{ij} + \Delta c_{ij}\|_2}  \nonumber  \\
& \hspace{3 in} + q  \sum_{i \in [p_n]} \dfrac{\sum_{j:\{i,j\} \in E(G)}  \|\Delta c_{ij}\|_2^2 }{\min_{j \in [p_n] : j \neq i} \|\Delta x_{ij} + \Delta c_{ij}\|_2^{2-q}}  \nonumber \\
& = q \sum_{i \in [p_n]} c_i \cdot  \biggl[ \sum_{\substack{j : \{i,j \} \in E(G)}} \dfrac{\Delta x_{ij}}{\|\Delta x_{ij}\|_2^{2-q}}\biggr]  + \dfrac{q}{2}T(2,q-1,1) + qT(2,0,2-q) \nonumber \\
& \leq q \sum_{i \in [p_n]} c_i \cdot  \biggl[ \sum_{\substack{j : \{i,j \} \in E(G)}} \dfrac{\Delta x_{ij}}{\|\Delta x_{ij}\|_2^{2-q}}\biggr] \notag \\
&\qquad \qquad + \dfrac{qD}{2} \biggl[T (2(1+s),(q-1)(1+s),0)\biggr]^{\frac{1}{1+s}}  \biggl[T \biggl(0,0,\dfrac{1+s}{s}\biggr) \biggr]^{\frac{s}{1+s}} \nonumber  \\
& \hspace{ 2.5 in}  + qD \biggl[T(6,0,0)\biggr]^{1/3} \biggl[T\biggl(0,0,\dfrac{3(2-q)}{2}\biggr)\biggr]^{2/3},
\end{align}
for any $s \in (1,1/(q-1))$, which guarantees that $(q-1)(1+s)<q$ while $(1+s)/s<2 \leq d$.  The last inequality in \Cref{choice5} follows from \Cref{termdef2} applied with $u=s$ and $u=2$ respectively for the last two terms. We now clearly have the expansion required to satisfy \Cref{item:B1} with $L=2$, 
\begin{equation}{\label{choice1}}
 H_{n,i}(\mathbf{x}^n;G) = \sum_{\substack{j : \{i,j \} \in E(G)}} \dfrac{\Delta x_{ij}}{\|\Delta x_{ij}\|_2^{2-q}}, \forall\; i \in [p_n], 
\end{equation}
\begin{align}{\label{choice6}}
R_{n,1}=\dfrac{qD}{2} \biggl[T (2(1+s),(q-1)(1+s),0)\biggr]^{\frac{1}{1+s}}, \; \; R_{n,2} = qD \biggl[T(6,0,0)\biggr]^{1/3},
\end{align}
and
\begin{equation}{\label{choice7}}
\widetilde{R}_{n,1} =  \biggl[Q \biggl(\dfrac{1+s}{s}\biggr) \biggr]^{\frac{s}{1+s}}, \; \; \widetilde{R}_{n,2} =  \biggl[Q \biggl(\dfrac{3(2-q)}{2}\biggr) \biggr]^{3/2}.
\end{equation}
Here \Cref{item:B2} is satisfied for the choice $\upsilon_1 = \upsilon_2= 1$ and it is clear that \Cref{item:B2prime} also holds true.

We now concentrate on the case $q \in [2,d)$.  In this particular scenario, we can employ \Cref{binombound2} 
to arrive at the following for any $i \neq j \in [p_n]$:
\begin{align}{\label{alphageq2}}
&   \|	 (\Delta x_{ij} + \Delta c_{ij})\|\Delta x_{ij} + \Delta c_{ij} \|_2^{q-2} - (\Delta x_{ij})\|\Delta x_{ij}\|_2^{q-2}  \|_2 \nonumber \\   
& \leq \|\Delta x_{ij}\|_2  \rvert	\|\Delta x_{ij} + \Delta c_{ij} \|_2^{q-2} - \|\Delta x_{ij}\|_2^{q-2}  \rvert +  \|\Delta c_{ij}\|_2  	\|\Delta x_{ij} + \Delta c_{ij} \|_2^{q-2} \nonumber \\
& \leq \|\Delta x_{ij}\|_2 \biggl(\sum_{k=1}^{\lfloor q \rfloor -1} (q-2)^k \|\Delta c_{ij}\|_2^k \; \|\Delta x_{ij}\|_2^{q-2-k} \notag\\
&\qquad \qquad \qquad \qquad + (q-2)^{\lfloor q \rfloor -1} \|\Delta c_{ij}\|_2^{\lfloor q \rfloor -1} \|\Delta x_{ij} + \Delta c_{ij}\|^{q-\lfloor q \rfloor -1}\;  \biggr) \nonumber \\
& \hspace{3.5 in} + 2^{q-2} \|\Delta c_{ij}\|_2 (\|\Delta x_{ij}\|_2^{q-2} + \|\Delta c_{ij}\|_2^{q-2})  \nonumber \\
& \leq d^d \sum_{k=1}^{\lfloor q \rfloor -1} \|\Delta c_{ij}\|_2^k \; \|\Delta x_{ij}\|_2^{q-1-k} + d^d \|\Delta c_{ij}\|_2^{\lfloor q \rfloor -1} \|\Delta x_{ij}\|_2 \; \|\Delta x_{ij} + \Delta c_{ij}\|^{q-\lfloor q \rfloor -1} \nonumber \\
& \hspace{3 in} + 2^{q-2} \|\Delta c_{ij}\|_2 \|\Delta x_{ij}\|_2^{q-2} + 2^{q-2}\|\Delta c_{ij}\|_2^{q-1} \nonumber \\
&   \leq 2d^d \sum_{k=1}^{\lfloor q \rfloor -1} \|\Delta c_{ij}\|_2^k \; \|\Delta x_{ij}\|_2^{q-1-k} + d^d \dfrac{\|\Delta c_{ij}\|_2^{\lfloor q \rfloor -1} \|\Delta x_{ij}\|_2}{ \|\Delta x_{ij} + \Delta c_{ij}\|^{\lfloor q \rfloor +1-q}}  + 2^{q-2}\|\Delta c_{ij}\|_2^{q-1}.
\end{align}
Plugging in the upper bound from \Cref{alphageq2} in the inequality \Cref{tspstep311}, we obtain the following:
\begin{align}
& \psi_n(\mathbf{x}^n+\mathbf{c}^n; G)-\psi_n(\mathbf{x}^n; G) \nonumber \\
 &\leq q\sum_{i \in [p_n]} c_i \cdot  \biggl[ \sum_{\substack{j : \{i,j \} \in E(G)}} (\Delta x_{ij})\|\Delta x_{ij}\|_2^{q-2}\biggr] + qd^d \sum_{k=1}^{\lfloor q \rfloor -1} T ( k+1,q-1-k,0) \nonumber \\
 & \hspace{ 2.5 in}  + \dfrac{qd^d}{2} T ( \lfloor q \rfloor, 1, \lfloor q \rfloor+ 1-q) + q2^{q-3}T(q,0,0) \nonumber \\
 & \leq q\sum_{i \in [p_n]} c_i \cdot  \biggl[ \sum_{\substack{j : \{i,j \} \in E(G)}} (\Delta x_{ij})\|\Delta x_{ij}\|_2^{q-2}\biggr] + qd^d \sum_{k=1}^{\lfloor q \rfloor -1} T ( k+1, q-1-k,0) \nonumber \\
  & \hspace{ 0.5 in}  + \dfrac{qDd^d}{2} \biggl[ T ( 2\lfloor q \rfloor, 2,0)\biggr]^{1/2} \biggl[ T ( 0, 0, 2(\lfloor q \rfloor +1-q) )\biggr]^{1/2} + q2^{q-3}T(q,0,0). \label{ineqabove}
\end{align}
The inequality in \Cref{ineqabove}, aided by \Cref{TScomp},  shows that we have the expansion required to satisfy \Cref{item:B1} with the following choices : $L=\lfloor q \rfloor +1$, 
\begin{equation}{\label{choice8}}
 H_{n,i}(\mathbf{x}^n;G) = q\sum_{\substack{j : \{i,j \} \in E(G)}} (\Delta x_{ij} ) \|\Delta x_{ij}\|_2^{q-2}, \forall\; i \in [p_n].
\end{equation}
For all $1 \leq k \leq \lfloor q \rfloor -1$, we have 
\begin{equation}{\label{choice9}}
R_{n,k} = qd^d  T ( k+1,q-1-k,0),  
\end{equation}
and 
\begin{equation}{\label{choice10}}
R_{n,\lfloor q \rfloor} = \dfrac{qDd^d}{2} \biggl[ T ( 2\lfloor q \rfloor, 2,0 )\biggr]^{1/2}, \; \;  R_{n,\lfloor q \rfloor + 1} = q2^{q-3}T(q,0,0).
\end{equation}
Correspondingly, we can take $ \widetilde{R}_{n,k} \equiv 1 $, for all $k \neq \lfloor q \rfloor$, while 
\begin{equation}{\label{choice11new2}}
\widetilde{R}_{n,\lfloor q \rfloor}= \; \biggl[Q( 2(\lfloor q \rfloor +1-q))\biggr]^{1/2}.
\end{equation}
One can  check that \Cref{item:B2} and \cref{item:B2prime} are satisfied for the choices $\upsilon_k = k$ for $1 \leq k \leq \lfloor q \rfloor-1$, $\upsilon_{\lfloor q \rfloor} =\lfloor q \rfloor -1$ and  $\upsilon_{\lfloor q \rfloor+1}=q-1$. 




We shall now proceed  prove the assumptions in the statement of \Cref{genthm},  one by one.

\begin{itemize}


\item \textit{Proof of \Cref{item:C1} and \Cref{item:E1}:}

As we can see from the previous computations, for all $q \in [1,d)$, we have 
$$ H_{n,i}(\mathbf{x}^n;G) = q\sum_{\substack{j : \{i,j \} \in E(G)}} (\Delta x_{ij} ) \|\Delta x_{ij}\|_2^{q-2}, \forall\; i \in [p_n].$$
Therefore, 
\begin{align*}
&\sup_{G \in \mathcal{N}_{n,\theta_n}} \sum_{i \in [p_n]} \bigg \|H_{n,i}(\mathbf{X}^n;G) \bigg \|_2^{q/(q-1)} \\
& \leq q^{q/(q-1)}\sup_{G \in \mathcal{N}_{n,\theta_n}} \sum_{i \in [p_n]} \bigg \| \sum_{\substack{j : \{i,j \} \in E(G)}} (\Delta X_{ij} ) \|\Delta X_{ij}\|_2^{q-2} \bigg \|_2^{q/(q-1)} \\
& \leq (qD)^{q/(q-1)}\sup_{G \in \mathcal{N}_{n,\theta_n}} \sum_{i \in [p_n]}  \sum_{\substack{j : \{i,j\} \in E(G)}} \|\Delta X_{ij}\|_2^{q} \\
& = 2  (qD)^{q/(q-1)} \sup_{G \in \mathcal{N}_{n,\theta_n}} \psi_n ( \mathbf{X}^n; G ) \\
& \leq 2  (qD)^{q/(q-1)} \biggl( \inf_{G \in \mathcal{G}_{n}} \psi_n (\mathbf{X}^n; G) + \theta_n\biggr),
\end{align*}
which leads us to the following by virtue of the  assumption \Cref{rate} in the hypothesis of \Cref{nearoptgraph}. 
\begin{equation}{\label{nearoptgraph:E1C1}}
 \biggl[ \mathbb{E} \sup_{G \in \mathcal{N}_{n,\theta_n}} \sum_{i \in [p_n]}  \|H_{n,i}(\mathbf{X}^n;G) \|_2^{q/(q-1)} \biggr]^{(q-1)/q} = \bigO( p_n^{(d-q)(q-1)/qd}).
\end{equation}
The growth bound derived in \Cref{nearoptgraph:E1C1} shows that the assumptions in \Cref{item:C1} and \Cref{item:E1} are satisfied with $\lambda = q-1$ and $\varsigma_{n,\lambda} = p_n^{(d-q)(q-1)/qd}.$

\item \textit{Proof of \Cref{item:C2} and \Cref{item:E2}:}
Since \Cref{item:E2} is a stronger statement than \Cref{item:C2}, it is enough to only prove the former. Recall that $\mathbf{X}^n=(X_{i})_{i \in [p_n]}$, where $X_{i} \stackrel{i.i.d.}{\sim} f$; and 
take $\mathbf{Z}^n=(Z_{i})_{i \in [p_n]}$ to be a collection of sub-Gamma vectors with variance proxy $1$ and scale parameter $1$ such that $\{(X_i,Z_i)\}_{i \in [p_n]}$ is an i.i.d.~collection.  
Looking at the expressions of the functions $R_{n,k}$'s that we derived earlier, it is evident that, they are of the form 
\begin{equation}{\label{form}}
R_{n,k}( \mathbf{x}^n, \mathbf{c}^n;G ) = \mathrm{constant}(d,q,D)\biggl[T ( \mathbf{x}^n,\mathbf{c}^n;G;r_1,r_2,0 ) \biggr]^{t/(1+t)}
\end{equation}
for some $r_1 \in [0, \infty)$, $r_2 \in [0,q)$ and $t \in (0,\infty]$, where $\infty/(1+\infty):=1$; $\mathrm{constant}(d,q,D)$ is a constant depending on $d,q,D$ and the particular  optimization problem we are concerned with.  Therefore,
in order to prove \Cref{item:E2}, our attention should be focused on deriving upper bound to the following term.
$$ \mathbb{E} \bigg[\sup_{G \in \mathcal{N}_{n,\theta_n}} T (\mathbf{X}^n, \mathbf{Z}^n; G; r_1,r_2,0) \bigg].$$


Using H\"older's inequality, we obtain the following for ant $G \in \mathcal{N}_{n,\theta_n}$:
\begin{align}{\label{chistep1}}
&T(\mathbf{X}^n,\mathbf{Z}^n; G; r_1,r_2,0) =  \sum_{i \in [p_n]}  \sum_{\substack{j : \{i,j \} \in E(G)}}\|\Delta Z_{ij}\|_2^{r_1} \; \|\Delta X_{ij}\|_2^{r_2} \nonumber \\
& \leq \biggl( \sum_{i \in [p_n]}  \sum_{\substack{j : \{i,j\} \in E(G)}} \|\Delta Z_{ij}\|_2^{\frac{qr_1}{q-r_2}} \biggr)^{(q-r_2)/q}  \biggl( \sum_{i \in [p_n]}  \sum_{\substack{j : \{i,j \} \in E(G)}} \|\Delta X_{ij}\|_2^q\biggr)^{r_2/q} \nonumber \\
& \leq 2^{r_1}\biggl[ \sum_{i \in [p_n]}  \sum_{\substack{j : \{i,j \} \in E(G)}} \biggl(\|Z_i\|_2^{\frac{qr_1}{q-r_2}} + \|Z_j\|_2^{\frac{qr_1}{q-r_2}} \biggr) \biggr]^{(q-r_2)/q}  (\psi_n(\mathbf{X}^n;G) )^{r_2/q} \nonumber \\
& \leq 2^{r_1} (2D)^{(q-r_2)/q}\biggl[ \sum_{i \in [p_n]}  \|Z_i\|_2^{\frac{qr_1}{q-r_2}} \biggr]^{(q-r_2)/q}  (\psi_n(\mathbf{X}^n;G) )^{r_2/q}.
\end{align}
Hence, we arrive at the following bound:
\begin{align}
&\sup_{G \in \mathcal{N}_{n,\theta_n}} T(\mathbf{X}^n,\mathbf{Z}^n; G, r_1,r_2,0) \notag \\
& \leq 
2^{r_1} (2D)^{\frac{q-r_2}{q}}\biggl[ \sum_{i \in [p_n]}  \|Z_i\|_2^{\frac{qr_1}{q-r_2}} \biggr]^{\frac{q-r_2}{q}}  \biggl( \inf_{G \in \mathcal{G}_n} \psi_n(\mathbf{X}^n;G) + \theta_n\biggr)^{\frac{r_2}{q}}. \label{chistep2}
\end{align}
We concentrate on the two terms in the right hand side of \Cref{chistep2} separately. 
By Lemma~\Cref{lemsubg:4}, we have that 
\begin{equation}{\label{chistep3}}
\mathbb{E} \biggl[ \sum_{i \in [p_n]}  \|Z_i\|_2^{\frac{qr_1}{q-r_2}} \biggr] = \bigO (p_n),
\end{equation}
whereas  \Cref{rate} guarantees that 
\begin{equation}{\label{chistep4}}
\mathbb{E} \biggl[\inf_{G \in \mathcal{G}_n} \psi_n(\mathbf{X}^n;G) + \theta_n \biggr] = \bigO (p_n^{(d-q)/d})+ \bigO (p_n^{-q/d}) = \bigO(p_n^{(d-q)/d}).
\end{equation}
Since $r_2 \in [0,q)$, we combine \Cref{chistep2,chistep3,chistep4} with the aid of H\"older's inequality to obtain the following.
\begin{equation}{\label{chistep5}}
\mathbb{E} \biggl[ \sup_{G \in \mathcal{N}_{n,\theta_n}} T (\mathbf{X}^n, \mathbf{Z}^n; G; r_1,r_2,0) \biggr] = \bigO ( p_n^{1-r_2/d}),
\end{equation}
 Therefore, for any $R_{n,k}$ satisfying \Cref{form}, we have the following upper bound.
\begin{align}{\label{chistep6}}
\biggl[\mathbb{E} \biggl( \sup_{G \in \mathcal{N}_{n,\theta_n}} R_{n,k} (\mathbf{X}^n, \mathbf{W}^n; G)^{\frac{1+t}{t} } \biggr)\biggr]^{\frac{t}{1+t}} = \bigO (  p_n^{\frac{t(d-r_2)}{(1+t)d}}).
\end{align}
Similarly, looking at the expressions of the functions $\widetilde{R}_{n,k}$'s that we derived earlier, they are of the form 
\begin{equation}{\label{form2}}
\widetilde{R}_{n,k}( \mathbf{x}^n) = \biggl[Q ( \mathbf{x}^n,r)\biggl]^{1/(1+t)}
\end{equation}
for some $r\in [0,d)$ and $t \in (0,\infty]$, where $1/(1+\infty):=0$.
Thus we need to find upper bound to the following term : $\mathbb{E} Q(\mathbf{X}^n;r)$.
Applying \Cref{mindist}, we have the following.
\begin{equation}{\label{tildeestim}}
\mathbb{E} Q ( \mathbf{X}^n;r ) = p_n \mathbb{E} \biggl[\biggl( \min_{j \in [p_n] : j \neq 1} \|\Delta X_{ij}\|_2\biggr)^{-r}\biggr] =\bigO (p_n^{1+r/d} ).
\end{equation}
 Applying the above computation for the case described in \Cref{form2}, we obtain the following.
\begin{align}{\label{chistep7}}
[\mathbb{E}( \widetilde{R}_{n,k}( \mathbf{X}^n)^{t+1}) ]^{1/(t+1)} =  \biggl[\mathbb{E} Q( \mathbf{X}^n;r)\biggr]^{1/(t+1)} = \bigO ( p_n^{\frac{d+r}{d(1+t)}}).
\end{align}
One can check directly that the above computation is also valid for $t=\infty$.  Applying \Cref{chistep6} and \Cref{chistep7} for the quantities mentioned in \Cref{choice6}, \Cref{choice7}, \Cref{choice9}, \Cref{choice10} and \Cref{choice11new2}, we can easily check that the assumption \Cref{item:E2} is satisfied in each case.  The corresponding values of the sequences $\nu_{n,k}$, $\widetilde{\nu}_{n,k}$ and parameters $\gamma_k$ has been summarized in \Cref{tab2}. Note that, we have listed the value of $L$ to be equal to $2$ when $q \in [1,2)$. For the standard case, which corresponds to $q=1$, one can deduce from the last four columns of \Cref{tab2} that for the particular choice of $s=2$, the rows corresponding to the index values $k=1$ and $k=2$ become identical; hence if we choose $s=2$, we can satisfy \Cref{ass:psi} and the assumptions in \cref{item:C1} and \Cref{item:C2} (or similarly, the assumptions in \Cref{item:E1} and \Cref{item:E2}) with $L=2$. This matches with the discussion we had earlier in \Cref{exam:tsp} and its continuations in \Cref{stochopt} and \Cref{sec:thm1}. 
\begin{table}
\begin{center}
\renewcommand*{\arraystretch}{1.5}
\caption{List of  $\nu_{n,k}, \widetilde{\nu}_{n,k}, \upsilon_k$ and $\gamma_k$} \label{tab2}
\begin{tabular}{ccccccc}
\toprule
$q$ & $L$ &  \texttt{index}($k$) & $\nu_{n,k}$ & $\widetilde{\nu}_{n,k}$ & $\upsilon_k$ & $\gamma_k$ \\
\midrule
$[1,2)$ & $2$ & $1$ & $p_n^{\frac{1}{1+s}-\frac{q-1}{d}}$ & $p_n^{\frac{s}{1+s}+\frac{1}{d}}$ & $1$ & $\frac{1}{s}$ \\ 
& & $2$ & $p_n^{\frac{1}{3}}$ & $p_n^{\frac{2}{3}+\frac{2-q}{d}}$ & $1$ & $\frac{1}{2}$\\
$[2,d)$ & $\lfloor q \rfloor +1 $ & $\{1, \lfloor q \rfloor -1\}$ & $p_n^{1-\frac{q-k-1}{d}}$ & $1$ &$k$ & $\infty$  \\
 &  & $\lfloor q \rfloor$ & $p_n^{\frac{1}{2}-\frac{1}{d}}$ & $p_n^{\frac{1}{2}+\frac{\lfloor q \rfloor +1 -q}{d}}$& $\lfloor q \rfloor -1$ & $1$  \\
 & & $ \lfloor q \rfloor + 1$ & $p_n$  & $1$ & $q-1$ & $\infty$ \\
 \bottomrule
\end{tabular}
\end{center}
\end{table}

	\item \textit{Proof of \Cref{item:C3} and \Cref{item:E3}:}  
It will be enough just to prove \Cref{item:E3}, where we shall take $X_i = g(Y_i)$ with $Y_1,Y_2, \ldots \stackrel{i.i.d.}{\sim} \mathcal{P}^*$; here $\mathcal{P}^*$ is a probability measure on $\mathbb{R}^p$ with density $f^*$ which is continuous almost everywhere. $\mathcal{P}$ will still denote the law of $X_1$, with density $f$ which is continuous almost everywhere by hypothesis of this theorem. Note that, \Cref{item:C3} can be obtained by just taking $g$ to be the identity map and $f^*=f$.  For any $G_1,G_2 \in \mathcal{G}_n$, we have
	\begin{align}
		&d_{g,n}(G_1,G_2)^2 \notag\\
		&=  \sum_{i \in [p_n]} \biggl \| (\nabla g(Y_i))^{\top} \biggl( \sum_{\substack{j : \{i,j \} \in E(G_1)}} \dfrac{\Delta X_{ij}}{\|\Delta X_{ij}\|_2^{2-q}} - \sum_{\substack{k : \{i,k \} \in E(G_2)}} \dfrac{\Delta X_{ik}}{\|\Delta X_{ik}\|_2^{2-q}} \biggr)\biggr \|_2^2, \label{tspsq}
	\end{align} 
We shall  show that the (random) metric  $p_n^{-1/2+(q-1)/d}d_{g,n}$ is greater than or equal to the metric $d_n$ (upto some constant) with high probability. This observation will lead us to a statement like \Cref{item:E3}. In order to reach our target we now define a class of events which will be instrumental in our proof. 
Let $\mathcal{D}$ be the set of all $2D$-tuples, where each co-ordinate is from the set $\{-1,0,1\}$, with at least one non-zero co-ordinate. For any $\underline{\beta}=(\beta_1, \ldots,\beta_{2D}) \in \mathcal{D}$, define the following events,
	\begin{align}{\label{type1}}
		\Lambda^{(\underline{\beta})}_{n,M,i, \alpha} &:= \bigcup_{\substack{j_1,\ldots,j_{2D} \in [p_n] \\ i,j_1, \ldots,j_{2D} \text{ are distinct }}} \biggl\{  \|X_{j_l}-X_i\|_2 \leq Mp_n^{-1/d} \text{ for } l\in [2D], \text{ and } \nonumber \\
		& \qquad \qquad \biggl \|(\nabla g(Y_i))^{\top} \biggl( \sum_{l=1}^{2D} \dfrac{\beta_l(X_i-X_{j_l})}{\|X_i-X_{j_l}\|_2^{2-q}} \biggr)\biggr \|_2 \leq \alpha p_n^{(1-q)/d} \biggr\},
	\end{align}
	for $i \in [p_n]$ and $M, \alpha \in (0, \infty)$. We shall show that the above events have very small probability  for large enough $n$ and small enough $\alpha$. 
	To start, fix $\underline{\beta}\in \mathcal{D}$ and note that the events $\Lambda^{(\underline{\beta})}_{n,M,i, \alpha}$'s are exchangeable for different $i \in [p_n]$, while
	\begin{align*}
		&\mathbb{P} ( \Lambda^{( \underline{\beta})}_{n,M,p_n, \alpha}) \\
		 &= \int_{\mathbb{R}^p} f^*(y)\biggl[ \sum_{m \geq 0} \mathbb{P}\biggl( \exists \; j_1, \ldots, j_{2D} \in [p_n-1] \text{ distinct, such that } \\
		 &\qquad \qquad \qquad \max_{l=1}^{2D} \|g(y)-X_{j_l}\|_2 \leq Mp_n^{-1/d} \text{ and }  \\
		 &\qquad \qquad \qquad  \biggl \|(\nabla g(y))^{\top}  \biggl( \sum_{l=1}^{2D} \dfrac{\beta_l(g(y)-X_{j_l})}{\|g(y)-X_{j_l}\|_2^{2-q}} \biggr)\biggr \|_2 \leq \alpha p_n^{(1-q)/d}, \text{ and }\\
		&\qquad \qquad \qquad  \sum_{i=1}^{p_n-1}\mathbbm{1}(\|g(y)-X_i\|\leq Mp_n^{-1/d}) = m \biggr) \biggr] \, dy \\
		&= \int_{\mathbb{R}^p} f^*(y) \biggl\{ \sum_{m \geq 0} \mathbb{P}[ \mathrm{Binomial}( p_n-1, \mathcal{P}(B_d(g(y),Mp_n^{-1/d})))=m] \\
	& \qquad  \cdot \mathbb{P}\biggl( \exists \; j_1, \ldots, j_{2D} \in [m] \text{ distinct s.t. }\biggl \| (\nabla g(y))^{\top} \biggl(\sum_{l=1}^{2D} \dfrac{\beta_l(g(y)-X_{j_l})}{\|g(y)-X_{j_l} \|_2^{2-q}} \biggr) \biggr \|_2     \\
		&\qquad  \qquad \qquad \leq \alpha p_n^{(1-q)/d}  \biggl \rvert \; X_i \in B_d(g(y),Mp_n^{-1/d}), \;\forall \; i \in [m] \biggr) \biggr\} \, dy.
	\end{align*}
	Our basic intuition dictates that if we consider a finite i.i.d.~sample taken from $f$, conditioned on lying in a very small ball around $g(y)$, then the points will be independent and approximately uniformly distributed on that small ball. To make this precise, fix $y \in \mathbb{R}^p$ such that $f(g(y))>0$ and $f$ is continuous at $g(y)$; this happens almost everywhere on the set $f^*(y)>0$ since $X_1=g(Y_1)$ has a density $f$ (when $Y_1$ has density $f^*$) and set of discontinuities of $f$ has Lebesgue measure zero. Note that, for such $y$, we have  as $n \to \infty$, 
	$$ \dfrac{\mathcal{P}(B_d(g(y),Mp_n^{-1/d}))}{\operatorname{Vol}(B_d(g(y),Mp_n^{-1/d}))} =  \dfrac{1}{\operatorname{Vol}(B_d(g(y),Mp_n^{-1/d}))} \int_{B_d(g(y),Mp_n^{-1/d})} f(z) \, dz \longrightarrow f(g(y)),$$
	or equivalently, $p_n\mathcal{P}(B_d(g(y),Mp_n^{-1/d})) \to M^dC_df(g(y))$ as $n \to \infty$. Here $C_d$ is the volume of unit ball around $\mathbf{0}$ in $\mathbb{R}^d$. Since $f(g(y))>0$ and $f$ is continuous at $g(y)$, we have $f$ to be  bounded above and bounded below by a positive real number in a neighborhood around $g(y)$. Let ${V}_1, {V}_2,\ldots \stackrel{iid}{\sim} \text{Uniform}(B_d(\mathbf{0},1)).$ Using Pinsker's inequality and chain rule for Kullback--Leibler divergence, we can obtain the following fo any $m \geq 1$: 
	\begin{align}
		&\| \mathcal{L}( \{X_i\}_{i \in [m]}  \; \rvert \; X_i \in B_d(g(y),Mp_n^{-1/d}), \;\forall \; i \in [m] ), \mathcal{L} ( \{g(y) + Mp_n^{-1/d}{V}_i\}_{i \in [m]}) \|_{TV}^2 \nonumber \\
		& \leq \dfrac{1}{2} D_{KL} (\mathcal{L} ( \{g(y) + Mp_n^{-1/d}{V}_i\}_{i \in [m]}),\mathcal{L}( \{X_i\}_{i \in [m]} \; \rvert\; X_i \in B_d(g(y),Mp_n^{-1/d}), \;\forall \; i \in [m] )) \nonumber \\
		& = \dfrac{m}{2} D_{KL} (\mathcal{L} ( g(y) + Mp_n^{-1/d}{V}_1),\mathcal{L}( X_1  \; \rvert \; X_1 \in B_d(g(y),Mp_n^{-1/d}) )) \nonumber \\
		&= \dfrac{m}{2} \int_{B_d(g(y),Mp_n^{-1/d})}  \dfrac{1}{C_dM^dp_n^{-1}} \log  \dfrac{\mathcal{P}(B_d(g(y),Mp_n^{-1/d}))}{f(z)C_dM^dp_n^{-1}}  dz \nonumber \\
		&= \dfrac{m}{2} \int_{B_d(\mathbf{0},M)}  \dfrac{1}{C_dM^d} \log  \dfrac{\mathcal{P}(B_d(g(y),Mp_n^{-1/d}))}{f(g(y)+zp_n^{-1/d})C_dM^dp_n^{-1}}  dz \nonumber \\
		&= \dfrac{m}{2} \biggl[ \log \dfrac{p_n\mathcal{P}(B_d(g(y),Mp_n^{-1/d}))}{M^dC_df(g(y))} \notag \\
		&\qquad \qquad \qquad - \int_{B_d(\mathbf{0},M)}  \dfrac{1}{C_dM^d} \log  \dfrac{f(g(y)+zp_n^{-1/d})}{f(g(y))}  dz \biggr] 
	 \longrightarrow 0,\label{tv}
	\end{align}
	as $n \to \infty$. Therefore, for all $m \geq 0$, as $n \to \infty$
	\begin{align}{\label{tvimplication}}
	& \mathbb{P}\biggl( \exists \; \{j_l\}_{l=1}^{2D} \subseteq [m] \text{ distinct s.t. }\biggl \| (\nabla g(y))^{\top}  \biggl( \sum_{l=1}^{2D} \dfrac{\beta_l(g(y)-X_{j_l})}{\|g(y)-X_{j_l}\|_2^{2-q}} \biggr) \biggr \|_2 \leq \alpha p_n^{(1-q)/d} \nonumber \\
			& \qquad \qquad  \biggl \rvert\; X_i \in B_d(g(y),Mp_n^{-1/d}), \;\forall \; i \in [m] \biggr) \nonumber \\
			& = \mathbb{P}\biggl( \exists \; \{j_l\}_{l=1}^{2D} \subseteq [m] \text{ distinct, such that }\notag\\
			&\qquad \qquad \qquad \biggl \|(\nabla g(y))^{\top}  \biggl( \sum_{l=1}^{2D} \dfrac{\beta_l V_{j_l}}{\|V_{j_l}\|_2^{2-q}} \biggr)\biggr \|_2  \leq \alpha M^{1-q} \biggr) + o(1).
	\end{align}
	
	On the other hand, $\mathrm{Binomial}( p_n-1, \mathcal{P}(B_d(g(y),Mp_n^{-1/d})))$ distribution converges in law, and hence in total variation distance, to $\mathrm{Poisson}(M^dC_df(g(y)))$. Applying the dominated convergence theorem, we conclude that as $n \to \infty$, 
	\begin{align*}
		\mathbb{P} ( \Lambda^{(\underline{\beta})}_{n,M,1, \alpha}) &\longrightarrow \int_{\mathbb{R}^p} f^*(y) \biggl[ \sum_{m \geq 0} \mathbb{P}[ \mathrm{Poisson}( M^dC_df(g(y)))=m]  \\
		& \qquad \qquad \cdot \mathbb{P} \biggl( \exists \; j_1,\ldots,j_{2D} \in [m] \text{ distinct, such that } \\
		&\qquad \qquad \biggl \| (\nabla g(y))^{\top}  \biggl( \sum_{l=1}^{2D} \dfrac{\beta_l V_{j_l}}{\|V_{j_l}\|_2^{2-q}} \biggr) \biggr\|_2 \leq \alpha M^{1-q} \biggr)\biggr]dy.
	\end{align*}
For notational convenience, introduce the following notation for all $m \geq 0, \; M,\alpha >0$ and $y \in \mathbb{R}^p$. 
	\begin{align*} 
	p_{\underline{\beta}}(y, m,M, \alpha) &:= \mathbb{P} \biggl( \exists \; j_1, \ldots,j_{2D} \in [m] \text{ distinct, such that } \\
	&\qquad \biggl \| (\nabla g(y))^{\top}  \biggl( \sum_{l=1}^{2D} \dfrac{\beta_l V_{j_l}}{\|V_{j_l}\|_2^{2-q}} \biggr)\biggr\|_2 \leq \alpha M^{1-q} \biggr).
	\end{align*}
	Since the random vector $ \sum_{l=1}^{2D} \beta_l V_{j_l}\|V_{j_l} \|_2^{q-2}$ has density with respect to Lebesgue measure, we can conclude that 
	$$ \mathbb{P} \biggl[ (\nabla g(y))^{\top}  \biggl( \sum_{l=1}^{2D} \dfrac{\beta_l V_{j_l}}{\|V_{j_l}\|_2^{2-q}} \biggr)=\mathbf{0} \biggr]=0,$$
	provided that $\nabla g(y) \neq 0$. Hence, for such $y$, we have $p_{\underline{\beta}}(y,m,M,\alpha) \downarrow 0$ as $\alpha \downarrow 0$, it is non-decreasing in $m$ for all fixed $\alpha,M$; and as $n \to \infty$, 
	\begin{align} {\label{g1}}
		\mathbb{P} ( \Lambda^{(\underline{\beta})}_{n,M,1, \alpha}) 
		& \longrightarrow \int_{\mathbb{R}^p} f^*(y) \mathbb{E} [ p_{\underline{\beta}}(y,\mathrm{Poisson}( M^dC_df(g(y))),M, \alpha) ]\, dy  \nonumber \\
		& =\mathbb{E} [ p_{\underline{\beta}}(Y_1,\mathrm{Poisson}( M^dC_df(X_1)),M, \alpha) ]  =: P_{\underline{\beta}}(M,\alpha).
	\end{align}
Since we have assumed that $\nabla g \neq 0$ almost everywhere, we have $P_{\underline{\beta}}(M,\alpha)$ also goes to $0$ as $\alpha \downarrow 0$ for any fixed $M$.
	On the other hand.
	\begin{align*}
		& \mathbb{P} ( \Lambda^{(\underline{\beta})}_{n,M,1, \alpha} \cap \Lambda^{(\underline{\beta})}_{n,M,2, \alpha}  ) \\ &\leq  \mathbb{P}( \Lambda^{(\underline{\beta})}_{n,M,1, \alpha} \cap \Lambda^{(\underline{\beta})}_{n,M,2, \alpha} , \|X_1 - X_2\|_2 > 2Mp_n^{-1/d} ) +   \mathbb{P} ( \|X_1 - X_2\|_2 \leq  2Mp_n^{-1/d}  ) \\
		& \leq  \mathbb{P} ( \Lambda^{(\underline{\beta})}_{n,M,1, \alpha} \cap \Lambda^{(\underline{\beta})}_{n,M,2, \alpha} , \|X_1 - X_2\|_2 > 2Mp_n^{-1/d}  ) +  o(1), 
	\end{align*}
	where the last inequality is true since $p_n \to \infty$ as $n \to \infty$ and $X_1-X_2$ has density with respect to Lebesgue measure on $\mathbb{R}^d$. 
	For the first term in the right hand side of the above inequality, we have 
	\begin{align*}
		&\mathbb{P} ( \Lambda^{(\underline{\beta})}_{n,M,1, \alpha}  \cap \Lambda^{(\underline{\beta})}_{n,M,2, \alpha} , \|X_1 - X_2\|_2 > 2Mp_n^{-1/d} ) \\ 
		&= \int_{\mathbb{R}^p} \int_{\mathbb{R}^p} f^*(y_1) f^*(y_2)\mathbbm{1}( \|g(y_1) - g(y_2)\|_2 >  2Mp_n^{-1/d} ) \\
		& \qquad \qquad  \cdot \biggl[ \sum_{m_1,m_2 \geq 0} \mathbb{P}[ \mathrm{Multinomial}( p_n-2; \mathcal{P}(B_d(g(y_1),Mp_n^{-1/d})), \\
		&\qquad \qquad \qquad \qquad \qquad \qquad \mathcal{P}(B_d(g(y_2),Mp_n^{-1/d}))) =(m_1,m_2)] \\
			& \qquad \qquad \qquad \cdot\mathbb{P}\biggl( \exists \; j_1, \ldots, j_{2D} \in [m_1] \text{ distinct, such that }\\
			&\qquad \qquad \qquad \qquad \biggl \| (\nabla g(y_1))^{\top} \biggl(\sum_{l=1}^{2D} \dfrac{\beta_l(g(y_1)-X_{j_l})}{\|g(y_1)-X_{j_l} \|_2^{2-q}} \biggr) \biggr \|_2  \leq \alpha p_n^{(1-q)/d}    \\
				& \qquad \qquad \qquad \qquad \qquad  \biggl \rvert \; X_i \in B_d(g(y_1),Mp_n^{-1/d}), \;\forall \; i \in [m_1] \biggr) \\
	& \qquad \qquad \qquad \cdot \mathbb{P}\biggl( \exists \; j_1, \ldots, j_{2D} \in [m_2] \text{ distinct, such that }\\
	&\qquad \qquad \qquad \qquad  \biggl \| (\nabla g(y_2))^{\top} \biggl(\sum_{l=1}^{2D} \dfrac{\beta_l(g(y_2)-X_{j_l})}{\|g(y_2)-X_{j_l} \|_2^{2-q}} \biggr) \biggr \|_2  \leq \alpha p_n^{(1-q)/d}    \\
					&\qquad \qquad \qquad \qquad   \qquad   \biggl \rvert\; X_i \in B_d(g(y_2),Mp_n^{-1/d}), \;\forall \; i \in [m_2] \biggr)	\biggr]  \, dy_1 \, dy_2.
	\end{align*}
	Applying (\ref{tv}) and the fact that as $n \to \infty$, 
	\begin{align*}
	& \mathrm{Multinomial}( p_n-2; \mathcal{P}(B_d(g(y_1),Mp_n^{-1/d})), \mathcal{P}(B_d(g(y_2),Mp_n^{-1/d}))) \\ &\qquad \qquad \stackrel{d}{\longrightarrow}\mathrm{Poisson}(M^dC_df(g(y_1))) \otimes \mathrm{Poisson}(M^dC_df(g(y_2))),
	\end{align*}
	 almost everywhere on $\{f^*(y_1)>0,f^*(y_2)>0\}$,  we conclude that 
	 \begin{align*}
	 &\mathbb{P} ( \Lambda^{(\underline{\beta})}_{n,M,1, \alpha}  \cap \Lambda^{(\underline{\beta})}_{n,M,2, \alpha} , \|X_1 - X_2\|_2 > 2Mp_n^{-1/d} )  \\
	 & \longrightarrow  \int_{\mathbb{R}^p} \int_{\mathbb{R}^p} f^*(y_1) f^*(y_2)\mathbbm{1}( g(y_1) \neq g(y_2) ) \mathbb{E} [ p_{\underline{\beta}}(y_1,\mathrm{Poisson}( M^dC_df(g(y_1))),M, \alpha) ] \\
	 & \hspace{2 in} \cdot \mathbb{E} [ p_{\underline{\beta}}(y_2,\mathrm{Poisson}( M^dC_df(g(y_2))),M, \alpha) ]\, dy_1\,dy_2 \\
	 &  = \mathbb{E} [ p_{\underline{\beta}}(Y_1,\mathrm{Poisson}( M^dC_df(X_1)),M, \alpha)  p_{\underline{\beta}}(Y_2,\mathrm{Poisson}( M^dC_df(X_2)),M, \alpha); X_1 \neq X_2] \\
	 & = P_{\underline{\beta}}(M,\alpha)^2,
	 \end{align*}
where the last equality follows from the fact that $\mathbb{P}(X_1 =X_2)=0$, both random vectors having density and being independent. Therefore,	 
	$$ \mathbb{P} ( \Lambda^{(\underline{\beta})}_{n,M,1, \alpha} \cap \Lambda^{(\underline{\beta})}_{n,M,2, \alpha}   ) \leq   (\mathbb{P} ( \Lambda^{(\underline{\beta})}_{n,M,1, \alpha}  ))^2 + o(1)= P_{\underline{\beta}}(M,\alpha)^2 + o(1).$$
	As a consequence,
	\begin{align*}
		&\operatorname{Var}\biggl( \dfrac{1}{p_n}\sum_{i=1}^{p_n} \mathbbm{1}_{\Lambda^{(\underline{\beta})}_{n,M,i, \alpha}}\biggr) \\
		 & = \dfrac{1}{p_n^2}[ p_n\mathbb{P}(\Lambda^{(\underline{\beta})}_{n,M,1, \alpha}) (1-\mathbb{P} (\Lambda^{(\underline{\beta})}_{n,M,1, \alpha}))+ p_n(p_n-1)\mathbb{P} ( \Lambda^{(\underline{\beta})}_{n,M,1, \alpha} \cap \Lambda^{(\underline{\beta})}_{n,M,2, \alpha}   ) \\
		& \hspace{3 in} -p_n(p_n-1) (\mathbb{P}( \Lambda^{(\underline{\beta})}_{n,M,1, \alpha}  ))^2 ] \\
		& \leq o(p_n^{-1}) + o(1) = o(1),
	\end{align*}
	and hence as $n \to \infty$, 
	\begin{equation}{\label{conv1}}
		\dfrac{1}{p_n}\sum_{i=1}^n \mathbbm{1}_{\Lambda^{(\underline{\beta})}_{n,M,i, \alpha}} \stackrel{p}{\longrightarrow} P_{\underline{\beta}}(M,\alpha), \; \; \forall \; \underline{\beta} \in \mathcal{D}.
	\end{equation} 
	

	We shall need to define another set of events to facilitate our proof. Working towards that direction, fix $\varepsilon > 0$ and let $\mathcal{T}_{n,\varepsilon}$ be an optimal $\varepsilon$-packing set of $\mathcal{N}_{n,\theta_n}$ with respect tot he metric $d_n$. To stress the pedantic point, this choice can be done in a measurable fashion (since $\mathcal{G}_n$ is finite). Let $\widetilde{G}_{1,n}, \widetilde{G}_{2,n}$ be chosen in such a way that 
	$$ d_{g,n}( \widetilde{G}_{1,n}, \widetilde{G}_{2,n}) = \min_{G_1,G_2 \in \mathcal{T}_{n,\varepsilon} : G_1 \neq G_2} d_{g,n}(G_1,G_2).$$
	We again mention this choice can be done in a measurable way. If $\mathcal{T}_{n,\varepsilon}$ is a singleton set, then we define $\widetilde{G}_{1,n}= \widetilde{G}_{2,n}$
 and equal to the only element in $\mathcal{T}_{n,\varepsilon}$. Now consider the following event:
 $$ \Gamma_{n,M,i} := \bigcup_{j \in [p_n]} \{ \{i,j\} \in E(\widetilde{G}_{1,n})\cup E(\widetilde{G}_{2,n}) \text{ and } \|\Delta X_{ij}\|_2 > Mp_n^{-1/d}\}, \; \forall \; i \in [p_n].$$
 	Clearly,
 	$$ M^qp_n^{-q/d}\sum_{i=1}^{p_n} \mathbbm{1}_{\Gamma_{n,M,i} }   \leq 2 ( \psi_n (\mathbf{X}^n; \widetilde{G}_{1,n})+ \psi_n(\mathbf{X}^n; \widetilde{G}_{1,n})) \leq 4\inf_{G \in \mathcal{G}_n} \psi_n( \mathbf{X}^n;G)+4 \theta_n.$$
 	Taking expectations on both sides of the above inequality and applying \Cref{rate}, we conclude that 
 	\begin{equation}{\label{secondevent}}
 	\mathbb{E} \biggl[ \dfrac{1}{p_n} \sum_{i=1}^{p_n} \mathbbm{1}_{\Gamma_{n,M,i} } \biggr] \leq \dfrac{C^*(d,q,f)}{M^q}, \; \forall \; n \geq 1,
 	\end{equation}
 	where $C^*(d,q,f)$ is a finite constant depending on $d,q,f$ only (along with the specific optimization problem at hand). 
To understand why the events analyzed above are important in our proof, we recall that our goal is to compare the (random) pseudo-metric $d_{g,n}$ to the metric $d_n$. Towards that goal, letting $N(i,G)$ to be the set of neighbors of the vertex $i$ in graph $G$, we observe that 
	\begin{align}
		&d_{g,n}(\widetilde{G}_{1,n},\widetilde{G}_{2,n})^2 \notag\\
		&=  \sum_{i \in [p_n]}  \biggl \| ( \nabla g(Y_i) )^{\top} \biggl(\sum_{\substack{ \{i,j \} \in E(\widetilde{G}_{1,n})}} \dfrac{\Delta X_{ij}}{\|\Delta X_{ij}\|_2^{2-q}} - \sum_{\substack{ \{i,k \} \in E(\widetilde{G}_{2,n})}} \dfrac{\Delta X_{ik}}{\|\Delta X_{ik}\|_2^{2-q}} \biggr)\biggr \|_2^2 \nonumber \\
	 & \geq  \sum_{i=1}^{p_n} \alpha^2p_n^{2(1-q)/d} \mathbbm{1}_{(N(i,\widetilde{G}_{1,n}) \neq N(i,\widetilde{G}_{2,n}) )} \mathbbm{1}_{\Gamma_{n,M,i}^c } \prod_{\underline{\beta} \in \mathcal{D}} \mathbbm{1}_{(\Lambda^{(\underline{\beta})}_{n,M,i,\alpha})^c} \nonumber \\
		& \geq \alpha^2 p_n^{2(1-q)/d}\sum_{i=1}^{p_n} \biggl(\mathbbm{1}_{(N(i,\widetilde{G}_{1,n}) \neq N(i,\widetilde{G}_{2,n}) )} -  \mathbbm{1}_{\Gamma_{n,M,i}}-\sum_{\underline{\beta} \in \mathcal{D}} \mathbbm{1}_{\Lambda^{(\underline{\beta})}_{n,M,i,\alpha}} \biggr). \label{comp1}
	\end{align}
	Since, for any $G_1,G_2 \in \mathcal{G}_n$, 
	$$ \sum_{i \in [p_n]} \mathbbm{1}_{( N(i,G_1) \neq N(i,G_2))} \geq  \dfrac{2 \operatorname{card}( E(G_1) \Delta E(G_2))}{D} = \dfrac{2p_n}{D} d_n(G_1,G_2),$$
	we obtain the following comparison of $d_{g,n}$ and $d_n$ from \Cref{comp1}, provided that  $\mathcal{T}_{n,\varepsilon}$ is not a singleton set:
	\begin{align}{\label{comp2}}
	& p_n^{-1+2(q-1)/d} d_{g,n}(\widetilde{G}_{1,n},\widetilde{G}_{2,n})^2 \nonumber \\ & \hspace{ 0.2 in} \geq \dfrac{2\alpha^2}{D} d_{n}(\widetilde{G}_{1,n},\widetilde{G}_{2,n}) - \dfrac{\alpha^2}{p_n}\sum_{i \in [p_n]}\mathbbm{1}_{\Gamma_{n,M,i}}  - \alpha^2\sum_{\underline{\beta} \in \mathcal{D}} \dfrac{1}{p_n} \sum_{i \in [p_n]}  \mathbbm{1}_{\Lambda^{(\underline{\beta})}_{n,M,i,\alpha}} \nonumber \\
	&\hspace{ 0.2 in} \geq \dfrac{2\alpha^2 \varepsilon}{D}  - \dfrac{\alpha^2}{p_n}\sum_{i \in [p_n]}\mathbbm{1}_{\Gamma_{n,M,i}}  -  \alpha^2\sum_{\underline{\beta} \in \mathcal{D}} \dfrac{1}{p_n} \sum_{i \in [p_n]}  \mathbbm{1}_{\Lambda^{(\underline{\beta})}_{n,M,i,\alpha}}.	
	\end{align}

Set $\tau_n = p_n^{1/2-(q-1)/d}$ and fix $\delta >0$. Choose $M_{\varepsilon,\delta}$ large enough such that $2DC^*(d,q,f) \leq M_{\varepsilon,\delta}^q\varepsilon^2\delta$ and then choose $\alpha_{\varepsilon,\delta} >0$ small enough such that $\sum_{\underline{\beta} \in \mathcal{D}} P_{\underline{\beta}}(M_{\varepsilon,\delta},\alpha_{\varepsilon,\delta}) < \varepsilon^2/(2D).$ The second choice is possible due to the comment made after \Cref{g1}. In the following computation we drop $\varepsilon,\delta$ from the subscript of $ M_{\varepsilon,\delta},  \alpha_{\varepsilon,\delta}$ for the sake of brevity of notation.
\begin{align}{\label{es1}}
	&\mathbb{P} (P(\mathcal{N}_{n,\theta_n}, d_{g,n},\tau_n\alpha\sqrt{\varepsilon}/\sqrt{D} ) < P(\mathcal{N}_{n,\theta_n}, d_n,\varepsilon )) \nonumber \\
	& \hspace{0.5 in} \leq \mathbb{P} ( \mathcal{T}_{n,\varepsilon} \text{ is not a } \tau_n\alpha\sqrt{\varepsilon}/\sqrt{D}-\text{packing set for } \mathcal{N}_{n,\theta_n} \text{w.r.t. }d_{g,n}) \nonumber \\
	&  \hspace{0.5 in} \leq \mathbb{P} \biggl(  \min_{G_1,G_2 \in \mathcal{T}_{n,\varepsilon} : G_1 \neq G_2} d_{g,n}(G_1,G_2) \leq \tau_n\alpha\sqrt{\varepsilon}/\sqrt{D} \biggr) \nonumber \\
	& \hspace{0.5 in} = \mathbb{P} \biggl(  \tau_n^{-2} d_{g,n}(\widetilde{G}_{1,n},\widetilde{G}_{2,n})^2 \leq \dfrac{\alpha^2\varepsilon^2}{D} \biggr) \nonumber \\
	& \hspace{0.5 in} \leq \mathbb{P} \biggl( \dfrac{1}{p_n}\sum_{i \in [p_n]}\mathbbm{1}_{\Gamma_{n,M,i}} \geq \dfrac{\varepsilon^2}{2D}\biggr) + \mathbb{P} \biggl( \sum_{\underline{\beta} \in \mathcal{D}} \dfrac{1}{p_n} \sum_{i \in [p_n]}  \mathbbm{1}_{\Lambda^{(\underline{\beta})}_{n,M,i,\alpha}} \geq \dfrac{\varepsilon^2}{2D}\biggr) \nonumber \\
	& \hspace{0.5 in} \leq \dfrac{2DC^*(d,q,f)}{M^q\varepsilon^2} + \mathbb{P} \biggl( \sum_{\underline{\beta} \in \mathcal{D}} \dfrac{1}{p_n} \sum_{i \in [p_n]}  \mathbbm{1}_{\Lambda^{(\underline{\beta})}_{n,M,i,\alpha}} \geq \dfrac{\varepsilon^2}{2D}\biggr),
	\end{align} 
	where the last inequality follows from \Cref{secondevent}. The first term in the last expression of \Cref{es1} is at most $\delta$ while the second term converges to $0$ as $n \to \infty$; see \Cref{conv1}. Therefore,
\begin{align}{\label{es2}}
		\limsup_{n \to \infty} \mathbb{P} (P(\mathcal{N}_{n,\theta_n}, d_{g,n},\tau_n\alpha_{\varepsilon,\delta}\sqrt{\varepsilon}/\sqrt{D}) < P(\mathcal{N}_{n,\theta_n}, d_n,\varepsilon ) ) \leq \delta.
	\end{align}

Therefore \Cref{item:E3} and \Cref{item:C3} holds true with $\tau_n = p_n^{1/2-(q-1)/d}$, $\varepsilon_0 =1$, $K_{\varepsilon,\delta} \equiv 1$ and $\kappa_{\varepsilon,\delta} = \alpha_{\varepsilon,\delta}\sqrt{\varepsilon}/\sqrt{D}.$
\end{itemize}

Recall that we showed \Cref{item:E1} and \Cref{item:C1} are satisfied for the choice $\lambda=q-1$ and $\varsigma_{n,\lambda}= p_n^{(d-q)(q-1)/qd}$. We have just shown that \Cref{item:E3} and \Cref{item:C3} are satisfied with $\tau_n= p_n^{1/2-(q-1)/d}$. First consider the case when $\mathcal{P}$ satisfies \Cref{mu} for the pair $(\rho,g)$ where $\nabla g \neq 0$ almost everywhere. In this case, plugging-in values stated above and those obtained from \Cref{tab2} in \Cref{item:E4}, we can conclude using \Cref{genthm:g} that the assertion of this theorem is true provided 
$$ \theta_n = \bigO \biggl( \min \biggl\{ p^{\frac{1}{2}-\frac{q-1}{d}}_n, p_n^{- \frac{q-1}{d}}, p_n^{-\frac{q}{d}}\biggr\}\biggr), \; \mathrm{if} \;\; q \in [1,2),$$
and 
$$ \theta_n = \bigO \biggl( \min \biggl\{ p_n^{\frac{1}{2}-\frac{q-1}{d}}, p_n^{- \frac{q-1}{d}}, \min_{k=1}^{\lfloor q \rfloor -1} p_n^{\frac{1}{2}-\frac{1}{2k}-\frac{q}{d}}, p_n^{\frac{1}{2}-\frac{1}{2(q-1)}-\frac{q}{d}}\biggr\}\biggr), \; \mathrm{if} \; \; q \in [2,d).$$
All of the above are satisfied if and only if $\theta_n = \bigO(p_n^{-q/d})$, which completes the proof in this case.

Now consider the case when $\mathcal{P}$ satisfies \Cref{ass:p} and $q \in [1,1+ 2/d)$. In this case, plugging-in the values  in \Cref{item:C4}, we can conclude using \Cref{genthm} that the assertion of this theorem is true provided 
$$ \theta_n = \bigO \biggl( \min \biggl\{ p_n^{- \frac{q-1}{d}- \frac{q-1}{2}}, p_n^{-\frac{q}{d}}\biggr\}\biggr),$$ 
which will be satisfied if and only if $\theta_n = \bigO(p_n^{-q/d})$. This completes the proof.
\end{proof}

We shall now separately concentrate on two important  examples, namely \textit{Minimum Spanning Tree }(MST) and \textit{Traveling Salesman problem} (TSP), which fall under the scope of \Cref{graph:def} and prove that solutions to these problems are ``stable" under small perturbations as defined in this article. Another important problem which falls under the scope of \Cref{nearoptgraph} is the \textit{Minimum Matching Problem} (MMP). Indeed, one can easily see that MMP satisfies the conditions of \Cref{nearoptgraph} and hence tightness of the near-optimal solution sets follows. But proving  a result like \Cref{item:B}, and hence stability, seems quite tricky for MMP and remains a challenging problem for further research.

\subsection{Minimum spanning tree}{\label{mst}}
The minimum spanning tree (MST) problem on Euclidean spaces, a very well-studied problem in computational geometry, finds the shortest (with respect to Euclidean length) spanning tree with the given set of points as vertex set. This problem can be identified as a graph optimization problem, as defined in \Cref{graph:def}, for the choice $p_n=n$ and  $\mathcal{G}_n$ being collection of all trees on $n$ vertices. At a first glance, it is not clear why MST problem satisfies the uniform bound on the vertex degrees assumed in \Cref{item:2}. In reality, whenever two edges of a Euclidean minimum spanning tree meet at a vertex, they must form an angle of $60^{\circ}$ or more. This is because, for two edges forming a sharper angle, one of the two edges can be replaced by the third and shorter edge of the triangle they form and thus forming a shorter spanning tree. It is a well-known fact that maximum number of edges that can concur to a point in $\mathbb{R}^d$ so that all of those pairs of edges have an angle of at least $60^{\circ}$  between them  is same as the \textit{kissing number} in $\mathbb{R}^d$, which is the maximum number of non-overlapping unit spheres that can be arranged in $\mathbb{R}^d$ such that they each touch a common unit sphere. Let $\kappa(d) \in \mathbb{N}$ denotes the kissing number in $\mathbb{R}^d$. Some examples of particular values include $\kappa(2)=6,\kappa(3)=12$ and $\kappa(4)=24$. Since the optimal spanning tree has vertex degree bounded by $\kappa(d)$,  we shall identify MST problem on $\mathbb{R}^d$ as a graph optimization problem, as defined in \Cref{graph:def}, for the choice $p_n=n$ and  $\mathcal{G}_n$ being collection of all trees on $n$ vertices with vertex degrees bounded by $\kappa(d)$, i.e. $D=\kappa(d)< \infty$. 

In this section $L_{\mathrm{MST}}(x_1, \ldots,x_n;q)$ will denote the $q$-power length of the shortest spanning tree through the set of points $\{x_1, \ldots,x_n\}$. In other words,
$$ L_{\mathrm{MST}}(x_1, \ldots,x_n;q) := \min_{G \in \mathcal{G}_n} \sum_{\{i,j\} : \{i,j\} \in E(G)} \big \| x_i - x_j \big \|_2^q,$$
where $\mathcal{G}_n$ is as described above. The natural and most well-studied case corresponds to the situation where the power weight parameter $q=1$.

It is interesting to ask, and crucial in out application,``at what rate the typical length of the $q$-power weight spanning tree grow as $n \to \infty$?" We refer to the following theorem which appears in \cite[Chapter 7]{yukich} to answer the question posed above.

\begin{assumption}{\label{suff}}
Assume that $\mathcal{P}$ is a probability measure on $\mathbb{R}^d$ satisfying the following assumptions. 
\begin{enumerate}
\item $\mathcal{P}$ is absolutely continuous (with respect to the Lebesgue measure on $\mathbb{R}^d)$  with density $f$.
\item We let $A_k$ denote the annular shell in $\mathbb{R}^d$ with inner and outer radius $2^{k}$ and $2^{k+1}$ respectively, and we set
$$ a_{k,q}(f) := 2^{kdq/(d-q)} \int_{A_k} f({x})\, d {x}, \; \forall \; k \geq 0.$$
Assume that  the density $f$ satisfies 
\begin{equation*}
\sum_{k \geq 0} (a_{k,q}(f))^{(d-q)/d} < \infty.
\end{equation*}
\end{enumerate}
\end{assumption}

\begin{thm}[See \cite{yukich}]{\label{yukich:general:mst}}
Fix $d \geq 2$. Suppose that $\{{X}_i \}_{i \geq 1}$ is an i.i.d. sample from a probability measure $\mathcal{P}$ satisfying Assumption~\ref{suff}. 
Then
$$ n^{-(d-q)/d}L_{\mathrm{MST}}({X}_1, \ldots, {X}_n;q) \stackrel{a.s.}{\longrightarrow} \beta_{\mathrm{MST}}(d,q) \int_{\mathbb{R}^d} f({x})^{(d-q)/d}\, d {x},$$
for some universal finite constant $\beta_{\mathrm{MST}}(d,q)$.  Moreover, $\mathbb{E} L_{\mathrm{MST}}({X}_1, \ldots, {X}_n;q) = \bigO(n^{(d-q)/d})$. 
\end{thm}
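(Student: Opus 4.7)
My plan is to deduce this through the standard subadditive Euclidean functional machinery developed by Beardwood--Halton--Hammersley, Steele, and Redmond--Yukich, which is the backbone of Chapter 7 of Yukich's monograph. Write $L(F;q) := L_{\mathrm{MST}}(F;q)$ for any finite $F \subset \mathbb{R}^d$. The two structural properties I will need are (i) homogeneity of order $q$ and translation invariance, both of which are immediate from the definition, and (ii) a subadditive/superadditive pair of inequalities that compare $L$ on a cube to the sum of its values on a partition of that cube into congruent sub-cubes, with an error controlled by edges bridging the partition. The subadditive direction is obtained by gluing MSTs on sub-cubes using at most one short connector per face; the superadditive direction is obtained through a boundary-modified variant $L^B$ in which we allow all points in a given sub-cube to be identified for free at the boundary, and then one bounds $|L - L^B|$ by a sum of short edges having total length $o(n^{(d-q)/d})$.

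The first main step is the i.i.d.\ uniform-on-$[0,1]^d$ case. Partitioning $[0,1]^d$ into $m^d$ sub-cubes of side $1/m$, homogeneity gives $L$ on a sub-cube with $k$ points is $m^{-q}$ times $L$ on $[0,1]^d$ with $k$ points; conditioning on the multinomial point counts and applying the subadditive relation to $\mathbb{E} L(U_1,\ldots,U_n;q)/n^{(d-q)/d}$ shows that this sequence is asymptotically Cauchy along $n=N m^d$. A Kingman--Steele-type argument then yields existence of $\beta_{\mathrm{MST}}(d,q):=\lim_{n\to\infty} n^{-(d-q)/d}\mathbb{E} L(U_1,\ldots,U_n;q)\in(0,\infty)$, and finiteness in particular will follow from a greedy/partitioning upper bound. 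The almost sure convergence is then obtained from the Efron--Stein/Azuma concentration inequality: replacing one of the $n$ points in the unit cube perturbs $L$ by $O(n^{-q/d})$ (because an MST edge incident to a point sits in its near-neighborhood, whose diameter is $O(n^{-1/d})$), so $\operatorname{Var} L = O(n^{1-2q/d})$ and the a.s.\ limit follows via Borel--Cantelli after de-Poissonization. Scaling then gives the same limit, with a factor $s^q$, for the uniform distribution on any cube of side $s$.

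The second main step is the passage from uniform to general $f$. Fix a large $R$ and partition $[-R,R]^d$ into cubes $Q_1,\ldots,Q_M$ of side $s=2R/K$ small enough that $f$ is nearly constant on each $Q_i$ with value $\approx f_i$. Conditional on the counts $(N_i)_{i\leq M}$ with $N_i\sim\mathrm{Binomial}(n,\mathcal{P}(Q_i))$, the points in $Q_i$ are i.i.d.\ with density proportional to $f\mathbf 1_{Q_i}$, which is close in total variation to uniform on $Q_i$; applying the cube limit gives a within-$Q_i$ MST of expected $q$-power length $\approx \beta_{\mathrm{MST}}(d,q)\, s^q N_i^{(d-q)/d}$. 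The intra-cube MSTs can be patched into a single spanning tree using one connector per adjacent pair, whose total $q$-power length is $O(K^d s^q) = O(R^q)$ and thus negligible compared to $n^{(d-q)/d}$. Conversely, the superadditive (boundary-modified) inequality gives a matching lower bound. Taking expectation, using $\mathbb{E} N_i^{(d-q)/d}\sim (n\mathcal{P}(Q_i))^{(d-q)/d}$ by Jensen in the matching direction and by concentration $N_i/(n\mathcal{P}(Q_i))\to 1$ in the other direction, and letting first $n\to\infty$, then $K\to\infty$ (i.e.\ $s\to 0$), produces the Riemann sum $\beta_{\mathrm{MST}}(d,q)\sum_i s^d (n\mathcal{P}(Q_i)/s^d)^{(d-q)/d}\to\beta_{\mathrm{MST}}(d,q)\int_{[-R,R]^d} f^{(d-q)/d}$, which is exactly the claimed constant times the truncated integral.

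The main obstacle, and the place where Assumption~\ref{suff} is used, is controlling the contribution of points in the tail $\mathbb{R}^d\setminus[-R,R]^d$. Unlike TSP, an MST can absorb a faraway cluster via a single very long edge, and its $q$-th power can be large. I will decompose the tail into the dyadic annuli $A_k$ of Assumption~\ref{suff}, bound the MST length restricted to $A_k$ by $(\operatorname{diam} A_k)^q N_k^{(d-q)/d}\asymp 2^{kq}N_k^{(d-q)/d}$ up to connection edges, and then bound the single edge from $A_k$ back to the bulk by at most $(\operatorname{const}\cdot 2^{k+1})^q$. Taking expectations, Jensen gives
\begin{equation*}
n^{-(d-q)/d}\mathbb{E}[\text{tail contribution}] \;\lesssim\; \sum_{k\geq k_0} 2^{kq}\,\mathcal{P}(A_k)^{(d-q)/d} \;=\; \sum_{k\geq k_0} a_{k,q}(f)^{(d-q)/d},
\end{equation*}
which by Assumption~\ref{suff} can be made arbitrarily small by choosing $R$ (hence $k_0$) large. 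Combining this with the truncated-integral limit yields both the a.s.\ convergence to $\beta_{\mathrm{MST}}(d,q)\int f^{(d-q)/d}$ and, by the same tail bound in expectation, the uniform estimate $\mathbb{E} L_{\mathrm{MST}}(X_1,\ldots,X_n;q)=O(n^{(d-q)/d})$, completing the proof.
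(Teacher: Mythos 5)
Your proposal reproves considerably more than the paper does. The paper's stance is to \emph{cite} the almost-sure convergence from Yukich's monograph and only supply the missing expectation bound: it first derives the ``umbrella'' subadditive inequality \eqref{umbrella1}, truncates to the ball $B_d(\mathbf{0},2^r)$ where the conditional density is compactly supported and the classical BHH-type limit applies, and then controls the tail contribution (the annuli and the boundary connector) in expectation using Jensen and Assumption~\ref{suff}. This produces a $\limsup$ bound on $n^{-(d-q)/d}\mathbb{E} L_{\mathrm{MST}}$; combined with Fatou applied to the already-known a.s.\ limit, it yields $L^1$ convergence by Scheff\'e, which in particular gives the $O(n^{(d-q)/d})$ estimate. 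Your route, by contrast, re-derives the entire theorem from scratch: the compact-case a.s.\ limit via subadditivity, the Riemann-sum argument for a general density on a compact window, and the same dyadic tail bound. The two approaches agree completely on the truncation/annulus decomposition and the use of Assumption~\ref{suff}; they diverge in that the paper leverages the known a.s.\ result to shorten the argument, while you pay the cost of reproving it.

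On the parts you reprove, two points deserve attention. First, the concentration step in your Step 1 is stated too strongly: replacing one of the $n$ uniform points in $[0,1]^d$ does \emph{not} change $L_{\mathrm{MST}}$ by $O(n^{-q/d})$ in the worst case -- a single point in a configuration where it sits far from its neighbors can shift the MST by $\Theta(1)$. The actual smoothness property of the MST functional gives a perturbation bound of order $(\operatorname{diam})^q$, i.e.\ $O(1)$ on the unit cube, so a naive Efron--Stein/Azuma argument gives $\operatorname{Var} L = O(n)$, which is insufficient for the claimed normalization whenever $q \geq d/2$. The a.s.\ convergence for Euclidean functionals requires either the sharper Rhee--Talagrand isoperimetric concentration, or the subadditivity-along-lattice-subsequences argument of Steele; your sketch glosses over this. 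Second, your tail argument correctly produces the $\sum_k a_{k,q}(f)^{(d-q)/d}$ bound for the intra-annulus MSTs, but the total cost of the inter-annulus connectors is governed by $\mathbb{E}\bigl[2^{qs(n)}\bigr]$ where $2^{s(n)}\asymp\max_i\|X_i\|_2$; establishing that this is $o(n^{(d-q)/d})$ under Assumption~\ref{suff} requires its own estimate (the paper does this carefully in \eqref{umbrella7}--\eqref{umbrella8}), and your proposal only asserts that the contribution vanishes. Neither gap is fatal -- both are standard -- but both are genuinely absent from the sketch.
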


\begin{remark}{\label{rhee1}}
Under \Cref{suff}, we have the following upon application of H\"older's inequality:
\begin{align*}
\int_{A_k} f({x})^{(d-q)/d}\, d {x} & \leq \biggl(\int_{A_k} f({x})\, d {x} \biggr)^{(d-q)/d} \biggl(\int_{A_k} \, d {x} \biggr)^{q/d} \\
& \leq (2^{-kdq/(d-q)}a_{k,q}(f) )^{(d-q)/d} ( C_d2^{kd}(2^d-1))^{q/d} \\
& = ( C_d(2^d-1))^{q/d}(a_{k,q}(f))^{(d-q)/d},
\end{align*}
for all $k \geq 0$. Similarly,
$$ \int_{B_d(\mathbf{0},1)} f({x})^{(d-q)/d}\, d {x}  \leq \biggl(\int_{B_d(\mathbf{0},1)} f({x})\, d {x} \biggr)^{(d-q)/d} \biggl(\int_{B_d(\mathbf{0},1)} \, d {x} \biggr)^{q/d} \leq C_d^{q/d}.$$
Therefore, we can write
\begin{align*}
\int_{\mathbb{R}^d} f({x})^{(d-q)/d}\, d {x} &= \int_{B_d(\mathbf{0},1)} f({x})^{(d-q)/d}\, d {x} + \sum_{k \geq 0} \int_{A_k} f({x})^{(d-q)/d}\, d {x} \\
& \leq C_d^{q/d} + ( C_d(2^d-1))^{q/d} \sum_{k \geq 0} (a_{k,q}(f))^{(d-q)/d} < \infty,
\end{align*}
demonstrating that the almost sure limit in \Cref{yukich:general:mst} is indeed finite under the assumptions.
\end{remark}

\begin{remark}{\label{general:proof}}
We refer to \cite[Section 7.3]{yukich} for discussion and detailed proof of \Cref{yukich:general:mst}. Although the book doesn't explicitly prove the upper bound on the growth rate of the  expectations in the situations where $f$ has unbounded support, a proof can be obtained by following the arguments used to prove almost sure convergence. A short outline of the proof is given in \Cref{proofout}.
\end{remark}


\begin{remark}{\label{suff2}}
\begin{enumerate}[label=(\roman*)]
\item It is easy to see that \Cref{suff} implies $\int_{\mathbb{R}^d} \|x\|_2^q f(x)\, dx < \infty$. Indeed,
\begin{align*}
\int_{\mathbb{R}^d} \|x\|_2^q f(x)\, dx \leq 1 + \sum_{k \geq 0} 2^{q(k+1)} \int_{A_k} f(x)\, dx & \leq 1 + \sum_{k \geq 0} 2^{q(k+1)} \biggl(\int_{A_k} f(x)\, dx \biggr)^{(d-q)/d} \\
& = 1 + 2^q \sum_{k \geq 0} (a_{k,q}(f))^{(d-q)/d} < \infty.
\end{align*}
\item 
On the other hand, an easy to check moment condition which is sufficient for \Cref{suff} can be given as follows : $\int_{\mathbb{R}^d} \|x\|_2^r f(x)\, dx < \infty$, for some $r>dq/(d-q)$. To see why this is true, get $\varepsilon >0$ such that $r=(1+\varepsilon)dq/(d-q)$ and note that 
\begin{align*}
\sum_{k \geq 0} (a_{k,q}(f))^{(d-q)/d} & = \sum_{k \geq 0} 2^{-\varepsilon k q}\biggl(\int_{A_k} f({x})\, d {x} \biggr)^{(d-q)/d} 2^{(1+\varepsilon) k q} \\
& \leq \sum_{k \geq 0} 2^{-\varepsilon k q}\biggl(\int_{A_k} \big\|x \big\|_2^{(1+\varepsilon)dq/(d-q)}f({x})\, d {x} \biggr)^{(d-q)/d} \\
& \leq \sum_{k \geq 0} 2^{-\varepsilon k q}\biggl(\int_{A_k} \big\|x \big\|_2^{r}f({x})\, d {x} \biggr)^{(d-q)/d} \\
& \leq \biggl(\sum_{k \geq 0} 2^{-\varepsilon k d} \biggr)^{q/d} \biggl(  \sum_{k \geq 0} \int_{A_k} \|x \|_2^{r}f({x})\, d {x} \biggr)^{(d-q)/d} \\
& \leq (1-2^{-\varepsilon d} )^{-q/d} \biggl(  \int_{\mathbb{R}^d} \big\|x \big\|_2^{r}f({x})\, d {x} \biggr)^{(d-q)/d} < \infty. 
\end{align*}
This argument is an extension of the computation presented in \cite[pp.~85]{yukich} for the case $q=1$. 
\end{enumerate}
\end{remark}

\begin{remark}
\Cref{yukich:general:mst} is a special case of similar kind of results for a more general type of functionals defined on finite subsets of $\mathbb{R}^d$, called smooth and sub-additive \textit{Euclidean functionals}. The length of shortest MST as discussed in this section, the length of shortest traveling salesman tour  as discussed in \Cref{tsp} and the length of shortest matching are all examples of smooth and sub-additive Euclidean functionals. We refer to \cite[Chapter 3]{steele} and \cite[Chapter 3,4,7]{yukich} for detailed discussion and asymptotic results on this topic. The outline in \Cref{proofout} for upper bound on the growth rate of expectation of length of the $q$-power-weighted shortest MST can trivially be extended to other smooth and sub-additive Euclidean functionals like length of the shortest traveling salesman tour.
\end{remark}

We want to write a stability result for MST problem as defined in \Cref{stable}. Note that $\mathcal{G}_n$, the collection of all spanning trees on $n$ points with degree bounded bu $\kappa(d)$, is equipped with the metric $d_n$ where $d_n(G_1,G_2)$ is the number of edges belonging to exactly one of the graphs $G_1$ and $G_2$, normalized by $n$.
We make the obvious choice of the perturbation blocks that amounts to replacing one random input by an i.i.d. copy. In other words, $\mathcal{J}_n = \{J_{n,i} : i \in [n]\}$ where $J_{n,i} = \{i\}$ for all $i \in [n]$. The following theorem shows that MST problem is stable under small perturbations for the above mentioned choice of perturbation blocks.

\begin{thm}{\label{thm:mst}}
Fix $d \geq 2$ and $q \in [1,d)$. Consider the minimum Spanning Tree problem where the input distribution $\mathcal{P}$ satisfies Assumption~\ref{graphtight:ass} and Assumption~\ref{suff}. Then MST problem is stable under small perturbations with perturbation blocks $\mathcal{J}_n = \{J_{n,i} : i \in [n]\}$ where $J_{n,i} = \{i\}$ for all $i \in [n]$.
\end{thm}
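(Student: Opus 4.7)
The plan is to apply \Cref{strat} with window size $\theta_n = C n^{-q/d}$ for a sufficiently large constant $C > 0$. MST falls under \Cref{graph:def} with $p_n = n$ and uniform degree bound $D = \kappa(d)$, the kissing number in $\mathbb{R}^d$.

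Condition \ref{item:A} of \Cref{strat} is immediate from \Cref{nearoptgraph}: \Cref{graphtight:ass} is assumed outright, and the rate bound \eqref{rate} is supplied by \Cref{yukich:general:mst} under \Cref{suff}. Hence $\{P(\mathcal{N}_{n, c\theta_n}, d_n, \varepsilon) : n \geq 1\}$ is tight for every $c, \varepsilon > 0$.

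For Condition \ref{item:B}, I construct sister solutions $\omega^*_{n,l}$ by local surgery at vertex $l$. Writing $\widehat{G}_n^{(l)} := \widehat{\omega}_n(\mathbf{X}^n_l)$ and $v_{l,1}, \ldots, v_{l,k_l}$ for the $k_l \le \kappa(d)$ neighbors of $l$ in $\widehat{G}_n^{(l)}$, deletion of these $k_l$ incident edges partitions $\widehat{G}_n^{(l)}$ minus vertex $l$ into subtrees $C_{l,1}, \ldots, C_{l,k_l}$. I define $\omega^*_{n,l}$ by reattaching $l$ via the edges $(l, w_{l,i})$, where $w_{l,i} \in C_{l,i}$ minimizes $\|X_l - X_v\|_2$ over $v \in C_{l,i}$ (with deterministic tie-breaking, and a fallback to the next-nearest candidate in $C_{l,i}$ when the $\kappa(d)$ degree cap at the endpoint would otherwise be violated---a probability-$o(1)$ event). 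Since $\omega^*_{n,l}$ and $\widehat{G}_n^{(l)}$ differ in at most $2\kappa(d)$ edges, $d_n(\omega^*_{n,l}, \widehat{G}_n^{(l)}) \le 2\kappa(d)/n$ deterministically, which yields the first assertion of \eqref{most}.

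The hard part is the second assertion of \eqref{most}. Using $\mathbf{X}^n \stackrel{d}{=} \mathbf{X}^n_l$ (so $\mathbb{E}\psi_{n,\mathrm{opt}}(\mathbf{X}^n) = \mathbb{E}\psi_{n,\mathrm{opt}}(\mathbf{X}^n_l)$) together with the edge-level identity
\[
\psi_n(\mathbf{X}^n, \omega^*_{n,l}) - \psi_{n,\mathrm{opt}}(\mathbf{X}^n_l) = \sum_{i=1}^{k_l} \|X_l - X_{w_{l,i}}\|_2^q - \sum_{i=1}^{k_l} \|X_l' - X_{v_{l,i}}\|_2^q,
\]
and \Cref{rem:strateasy}, the task reduces to proving
\[
\frac{1}{n} \sum_{l=1}^n \mathbb{E}\biggl[\sum_{i=1}^{k_l} \|X_l - X_{w_{l,i}}\|_2^q\biggr] = \bigO(n^{-q/d}).
\]
The main obstacle is that $X_l$ is independent of $\widehat{G}_n^{(l)}$ but the components $C_{l,i}$ need not contain any vertex near the fresh sample $X_l$: the MST neighbors $v_{l,i}$ guarantee closeness only to $X_l'$. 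At least one component will contain the global nearest neighbor of $X_l$ in $\mathbf{X}^n \setminus \{X_l\}$, whose distance is $\mathcal{O}_p(n^{-1/d})$ by \Cref{mindist}; however, the other components---particularly the small pendant subtrees---can have their closest point to $X_l$ at order-$1$ distance. The resolution is to exploit the sum over $l$: by reinterpreting $\sum_{l,i} \|X_l - X_{w_{l,i}}\|_2^q$ as a global statistic of MSTs sampled from $\mathcal{P}$, the contribution from components of size at least $\alpha n$ is $\bigO(\kappa(d) n \cdot n^{-q/d}) = \bigO(n^{1-q/d})$ by \Cref{mindist}, while the contribution from "small" components is handled by controlling (in expectation over $l$) the number of pendant subtrees of bounded size in the MST and combining this count with the optimal length rate $\mathbb{E}\psi_{n,\mathrm{opt}}(\mathbf{X}^n) = \bigO(n^{(d-q)/d})$ from \Cref{yukich:general:mst}. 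This yields the required averaged bound, and \Cref{strat} then delivers the stability conclusion.
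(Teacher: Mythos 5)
Your handling of Condition~\ref{item:A} is exactly the paper's: apply \Cref{nearoptgraph} (with \Cref{graphtight:ass} assumed and \eqref{rate} supplied by \Cref{yukich:general:mst}). The gap is in your sister-solution construction for Condition~\ref{item:B}, and it is not a fixable detail---the surgery you propose genuinely does not yield the needed bound.

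Concretely, consider a vertex $l$ whose $\widehat{T}_n(\mathbf{X}^n_l;q)$-neighborhood includes a leaf $v$. Then one of your components, say $C_{l,1}=\{v\}$, is a singleton, and your surgery must add the edge $\{l,v\}$ to $\omega^*_{n,l}$. But $X_v$ sits near $X_l^{(l)}$ (it was an MST edge of the perturbed tree), whereas $X_l$ is an independent fresh sample; so $\mathbb{E}\bigl[\|X_l-X_v\|_2^q\mid v \text{ is a leaf neighbor of } l\bigr]=\Theta(1)$, not $\bigO(n^{-q/d})$. In the Euclidean MST a $\Theta(1)$ fraction of vertices have a leaf neighbor (there are $\Theta(n)$ leaves and the degree is bounded by $\kappa(d)$), so your averaged sum $\frac{1}{n}\sum_l \mathbb{E}\bigl[\sum_i\|X_l-X_{w_{l,i}}\|_2^q\bigr]$ has a $\Theta(1)$ contribution that nothing in your plan eliminates. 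In particular, the optimal-length rate $\mathbb{E}\psi_{n,\mathrm{opt}}(\mathbf{X}^n)=\bigO(n^{(d-q)/d})$ cannot rescue you: the offending reattachment edges join the fresh point $X_l$ to components laid out around the old point $X_l^{(l)}$, and these distances are not edge lengths of either tree, so they bear no quantitative relation to the MST length. The averaging over $l$ that saves the ``good'' component (the one containing $X_l$'s global nearest neighbor) does not save the others.

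The paper sidesteps this by a different surgery: after deleting $l$'s incident edges in $\widehat{T}_n(\mathbf{X}^n_l;q)$, it reconnects the orphaned neighbors $l_1,\dots,l_{d(l)}$ into a \emph{path} $l_1\text{--}l_2\text{--}\cdots\text{--}l_{d(l)}$ (yielding a tree on $n-1$ vertices) and then attaches $l$ via a \emph{single} nearest-neighbor edge $\{k,l\}$ with $k\in\arg\min_{j\neq l}\|X_j-X_l\|_2$. The chain edges $\{l_i,l_{i+1}\}$ are controlled by the triangle inequality against the \emph{removed} edges $\{l,l_i\}$ of $\widehat{T}_n(\mathbf{X}^n_l;q)$ (which involve $X_l^{(l)}$, the correct reference point), and by exchangeability over vertices $\frac{1}{n}\sum_l\mathbb{E}\sum_i\|X_{l_i}-X_l^{(l)}\|_2^q = \bigO(n^{-1}\mathbb{E}L_{\mathrm{MST}})=\bigO(n^{-q/d})$. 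The single $\{k,l\}$ edge is $\bigO(n^{-q/d})$ by \Cref{mindistmean}. That is, the paper pays only one fresh-to-old edge, and the rest of the repair stays among the old points. Your construction pays up to $\kappa(d)$ fresh-to-old edges, and that is exactly what breaks.
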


\begin{proof}
\Cref{nearoptgraph}, \Cref{rhee1} and \Cref{suff2} again guarantee that $\{P(\mathcal{N}_{n,\theta_n}), d_n, \varepsilon\}$ is a tight sequence for any $\theta_n = \bigO(n^{-q/d})$. All that remains is to establish a statement like \Cref{most} for MST.

Recall that for all $l \in [n]$, $\mathbf{X}^{n,l} = (X_{n,i}^{(l)} )_{i \in [n]}$ is an independent copy of $\mathbf{X}^n = (X_{n,i})_{i \in [n]}$ and $\mathbf{X}^n_{l}$ is obtained through replacing $X_{n,l}$ by $X_{n,l}^{(l)}$ in $\mathbf{X}^n$. Set $\widehat{T}_n(\mathbf{x}^n;q)$ to be the optimal $q$-power weight MST  for the points $\mathbf{x}^n = (x_1, \ldots,x_n)$. We define $T^*_{n,l}$, a random spanning tree depending on $(\mathbf{X}^n,\mathbf{X}^n_l)$, through the following algorithm for any 
$l \in [n]$.  This $T^*_{n,l}$ will be a ``sister" tree of $\widehat{T}_n(\mathbf{X}^n_l;q)$ (i.e. close to $\widehat{T}_n(\mathbf{X}^n_l;q)$ in metric $d_n$), but will typically  be a near-optimal tree for the input $\mathbf{X}^n$.
$l \in [n]$.  
\begin{enumerate}
\item Let $l_1, \ldots,l_{d(l)}$ are the neighbors of $l$ in $\widehat{T}_n(\mathbf{X}_l^n;q)$. Delete the edges $\{l,l_i\}$ for all $i=1,\ldots,d(l)$.
\item Add the edges $\{l_{i-1},l_i\}$ for all $i=1, \ldots,d(l)-1$, provided $d(l)>1$. This new graph is a tree through $n-1$ vertices.
\item Find $k \in [n] \setminus \{l\}$ such that $k \in \arg \min_{j \in [n]: j \neq l} \|X_{n,j}-X_{n,l}\|_2.$  
\item Add the edge $\{k,l\}$ to the modified tree to get $T^*_{n,l}$.
\end{enumerate}
It is easy to see that $d_n(\widehat{T}_n(\mathbf{X}^n_l;q),T^*_{n,l}) \leq 2d(l)/n \leq 2\kappa(d)/n$, whereas triangle inequality implies that 
\begin{align*}
\|X_{n,l_{i+1}}-X_{n,l_i}\|_2^q &\leq (\|X_{n,l_{i+1}}-X_{n,l}^{(l)}\|_2 + \|X_{n,l_{i}}-X_{n,l}^{(l)}\|_2 )^q \\
& \leq 2^{q-1} ( \|X_{n,l_{i+1}}-X_{n,l}^{(l)}\|_2^q + \|X_{n,l_{i}}-X_{n,l}^{(l)}\|_2^q),
\end{align*} 
for all $i=1,\ldots,d(l)$. Here $l_{d(l)+1}$ is defined to be $l_1$. Summing the above inequality over $i$, we get   
$$ 2^{q}\sum_{i=1}^{d(l)} \|X_{n,l_i}-X_{n,l}^{(l)}\|^q_2 \geq  \sum_{i=1}^{d(l)} \|X_{n,l_{i+1}}-X_{n,l_i}\|^q_2.$$ 
Therefore,
\begin{align*}
&\psi_n (\mathbf{X}^n; T^*_{n,l}) = \psi_n ( \mathbf{X}_l^n; \widehat{T}_{n}(\mathbf{X}^n_l;q)) - \sum_{i=1}^{d(l)} \|X_{n,l_i}-X_{n,l}^{(l)}\|_2^q + \sum_{i=1}^{d(l)-1} \|X_{n,l_{i+1}}-X_{n,l_i}\|_2^q \\
& \qquad + \|X_{n,k}-X_{n,l}\|_2^q \\
& \leq \psi_n ( \mathbf{X}_l^n; \widehat{T}_{n}(\mathbf{X}^n_l;q))  + (2^q-1)\sum_{i=1}^{d(l)} \|X_{n,l_i}-X_{n,l}^{(l)}\|^q_2  + \min_{j \in [n]: j \neq l} \|X_{n,j}-X_{n,l}\|^q_2.
\end{align*}
Taking expectations on both sides, we get
\begin{align*}
\mathbb{E} [ \psi_n \left(\mathbf{X}^n; T^*_{n,l}\right) - \psi_{n, \mathrm{opt}}(\mathbf{X}^n) ] &= \mathbb{E} [ \psi_n (\mathbf{X}^n; T^*_{n,l}) - \psi_n ( \mathbf{X}_l^n; \widehat{T}_{n}(\mathbf{X}^n_l;q))] \\
& \leq (2^q-1)\mathbb{E}\sum_{i=1}^{d(l)} \|X_{n,l_i}-X_{n,l}^{(l)}\|^q_2 + \mathbb{E} \min_{j \in [n]: j \neq l} \|X_{n,j}-X_{n,l}\|^q_2 \\
& = (2^q-1)\mathbb{E} \sum_{\substack{i \in [n] \\ \{i,l\} \in E(\widehat{T}_n(\mathbf{X}^n;q))}} \|X_{n,i}-X_{n,l}\|^q_2 \\
& \hspace{ 2 in} + \mathbb{E} \min_{j \in [n]: j \neq l} \|X_{n,j}-X_{n,l}\|^q_2 \\
& = \dfrac{2^q-1}{n} \mathbb{E} \sum_{j \in [n]} \sum_{\substack{i \in [n] \\ \{i,j\} \in E(\widehat{T}_n(\mathbf{X}^n;q))}} \|X_{n,i}-X_{n,j}\|^q_2 \\
& \hspace{ 2 in}+ \mathbb{E} \min_{j \in [n]: j \neq l} \|X_{n,j}-X_{n,l}\|^q_2 \\
& = \dfrac{2(2^q-1)}{n} \mathbb{E} L_{\mathrm{MST}}(X_1, \ldots, X_n;q) + \mathbb{E} \min_{j \in [n]: j \neq l} \|X_{n,j}-X_{n,l}\|^q_2 \\
&= \bigO(n^{-q/d}),
\end{align*}
where the last line follows from \Cref{yukich:general:mst} and \Cref{mindistmean}. We can now complete the proof by applying  \Cref{rem:strateasy} and \Cref{strat} for the choice $\theta_n=n^{-q/d}$.
\end{proof}

\begin{remark}
To be completely precise, the ``sister" tree in the proof of \Cref{thm:mst}, $T^*_{n,l}$, might have some vertices with degree $\kappa(d)+1$ and hence not lie in $\mathcal{G}_n$, the collection of trees on $n$ vertices with maximum degree $\kappa(d)$. This creates a problem in direct application of \Cref{strat}. Nevertheless, the ``sister" tree $T^*_{n,l}$ has maximum degree $\kappa(d)+1$ and hence we could have just defined the optimization problem regarding MST with parameter space $\mathcal{G}_n$ being  the collection of trees on $n$ vertices with maximum degree at most $\kappa(d)+1$. Under this new definition, \Cref{nearoptgraph} would still be valid and hence so is the proof of \Cref{thm:mst}.
\end{remark}

\begin{remark}{\label{mst:aeu}}
\citet*{aldous:mst} proved the following result: Suppose we have $n$ i.i.d.~uniformly distributed points $\mathbf{X}^n =(X_1, \ldots, X_n)$ on $[0,1]^d$ and $T_n^*$ is their minimum spanning tree with respect to the Euclidean distance (i.e., $q=1$). For any $\delta >0$, consider the trees which are at least $\delta$-distance away from the optimal tree and observe how much their lengths exceed the length of the MST. To be precise, define
$$ \varepsilon_n(\delta) := \inf \left\{ \dfrac{\psi_n(\mathbf{X}^n;T) - \psi_n(\mathbf{X}^n;T_n^*)}{n^{(d-1)/d}} \; : \; d_n(T,T_n^*) \geq \delta\right\}.$$
Then
$$ 0 < \liminf_{\delta \downarrow 0} \delta^{-2} \liminf_{n \to \infty} \mathbb{E} \varepsilon_n(\delta) \leq  \limsup_{\delta \downarrow 0} \delta^{-2} \limsup_{n \to \infty} \mathbb{E} \varepsilon_n(\delta) < \infty.$$
In other words, for any small $\delta >0$, if any tree is $\delta$-distance away from the optimal tree, then its length is likely to exceed the length of the MST by $\delta^2n^{(d-1)/d}$, upto some finite positive constant. The MST is said to have \textit{scaling exponent} $2$ since for small $\delta$ and large $n$, we have $\varepsilon_n(\delta) \sim \delta^2$, upto some positive constant. This shows that if we look only at the trees whose length exceeds the length of the MST by $\lito(n^{(d-1)/d})$, then they are asymptotically indistinguishable from the MST. This is a much stronger statement than \Cref{nearoptgraph} when applied for the MST with $q=1$. \citet{aldous:mst} also suggested that their result is very robust under model details. In fact, we can replace the uniform distribution on $[0,1]^d$ by any density supported on $[0,1]^d$ and bounded away from zero. Nevertheless, our result in \Cref{nearoptgraph} is strong enough to prove stability as shown in \Cref{thm:mst}. It also allows us to consider the cases with $q >1$ and $f$ having unbounded support, both of which might be out of scope of the results in \cite{aldous:mst}. 
\end{remark}


\subsection{Traveling salesman problem}{\label{tsp}}
The traveling salesman problem (TSP) on Euclidean spaces asks the following question : ``Given a list of points in an Euclidean space, what is the shortest possible route that visits each point exactly once and returns to the original point?" This problem can be identified as a graph optimization problem, as defined in \Cref{graph:def}, for the choice $p_n=n$ and  $\mathcal{G}_n$ being collection of all Hamiltonian cycles on $n$ vertices. In this situation, every vertex has degree $2$ in every graph $G \in \mathcal{G}_n$, yielding $D=2$. The natural and most well-studied case corresponds to the situation where the power weight parameter $q=1$.  In this section $L_{\mathrm{TSP}}(x_1, \ldots,x_n;q)$ will denote the $q$-power length of the shortest Hamiltonian tour through the set of points $\{x_1, \ldots,x_n \}$. In other words,
$$ L_{\mathrm{TSP}}(x_1, \ldots,x_n;q) := \min_{G \in \mathcal{G}_n} \sum_{\{i,j\} : \{i,j\} \in E(G)} \big \| x_i - x_j \big \|_2^q,$$
where $\mathcal{G}_n$ is collection of all Hamiltonian cycles on $n$ vertices. As mentioned in the previous section, we have the following growth rate for the typical length of TSP tour.



\begin{thm}[\cite{bjh, rhee} ]{\label{yukich:general:tsp}}
Fix $d \geq 2$. Suppose that $\{{X}_i \}_{i \geq 1}$ is an i.i.d. sample from a probability measure $\mathcal{P}$ satisfying Assumption~\ref{suff}. 
Then
$$ n^{-(d-q)/d}L_{\mathrm{TSP}}({X}_1, \ldots, {X}_n;q) \stackrel{a.s.}{\longrightarrow} \beta_{\mathrm{TSP}}(d,q) \int_{\mathbb{R}^d} f({x})^{(d-q)/d}\, d {x},$$
for some universal finite constant $\beta_{\mathrm{TSP}}(d,q)$.  Moreover, $\mathbb{E} L_{\mathrm{TSP}}({X}_1, \ldots, {X}_n;q) = \bigO(n^{(d-q)/d})$. 
\end{thm}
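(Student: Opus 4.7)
The proof will follow the general theory of subadditive and superadditive Euclidean functionals as developed by Beardwood--Halton--Hammersley, Steele, Rhee, and Yukich. My plan is to verify that $L_{\mathrm{TSP}}(\,\cdot\,;q)$ satisfies the axioms of a smooth subadditive Euclidean functional of order $q$, and then invoke the general machinery rather than redo the arguments from scratch. Concretely, I would verify the following properties: (i) translation invariance and the scaling relation $L_{\mathrm{TSP}}(\alpha x_1,\ldots,\alpha x_n;q)=\alpha^q L_{\mathrm{TSP}}(x_1,\ldots,x_n;q)$ for $\alpha\geq 0$; (ii) subadditivity on partitions of the cube (cutting a cube into $m^d$ subcubes of side $1/m$, one may stitch together the optimal tours on each subcube by adding at most $O(m^{d-1})$ connector edges, each of length $O(1/m)$, producing an error of order $m^{d-1}\cdot(1/m)^q = m^{d-q}$); and (iii) the smoothness bound $|L_{\mathrm{TSP}}(F\cup G;q)-L_{\mathrm{TSP}}(F;q)|\leq C\,(\mathrm{card}(G))^{(d-q)/d}$ for $F,G\subset [0,1]^d$, which follows from a nearest-insertion argument combined with a volume-packing estimate.

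Given these axioms, the first step would be to prove the theorem for the uniform distribution on $[0,1]^d$. Here the subadditive ergodic argument of Beardwood--Halton--Hammersley, in the form streamlined by Steele, yields $n^{-(d-q)/d}L_{\mathrm{TSP}}(X_1,\ldots,X_n;q)\to \beta_{\mathrm{TSP}}(d,q)$ almost surely, with $\beta_{\mathrm{TSP}}(d,q)\in(0,\infty)$ defined as a limit of normalized expectations on the unit cube. The superadditive counterpart $\widetilde{L}_{\mathrm{TSP}}$ (the boundary-TSP functional, which allows one free ``exit'' and ``entry'' across each face of a subcube) is used to sandwich $L_{\mathrm{TSP}}$ between quantities satisfying reverse inequalities, thereby delivering both upper and lower bounds converging to the same constant.

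The second step upgrades this to an arbitrary density $f$ on $[0,1]^d$ by a block-decomposition argument. Partition $[0,1]^d$ into cubes $Q_1,\ldots,Q_{m^d}$ of side $1/m$ and apply subadditivity plus smoothness: the dominant contribution on each subcube is asymptotically $\beta_{\mathrm{TSP}}(d,q)\, (N_i/m^d)^{(d-q)/d}\,(1/m)^q$ where $N_i$ is the (random) number of points in $Q_i$. Summing, invoking the law of large numbers on $N_i/n\to \int_{Q_i} f$, letting $n\to\infty$ and then $m\to\infty$, one obtains the Riemann-type sum $\beta_{\mathrm{TSP}}(d,q)\int f^{(d-q)/d}$, with the singular part of $\mathcal{P}$ contributing nothing by the smoothness axiom (this is the classical Rhee extension). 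The main obstacle here is the extension to densities with unbounded support, which requires truncation to a large ball $B_d(\mathbf{0},2^{K})$ and a careful bound on the contribution of points in the annular shells $A_k$ for $k\geq K$; this is where \Cref{suff} enters, as it ensures $\sum_{k\geq 0}a_{k,q}(f)^{(d-q)/d}<\infty$ so that the tail contribution can be made arbitrarily small uniformly in $n$.

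Finally, the expectation bound $\mathbb{E}\, L_{\mathrm{TSP}}(X_1,\ldots,X_n;q)=O(n^{(d-q)/d})$ follows from the same annular decomposition: on the shell $A_k$ the points, conditionally on $N_k=\mathrm{card}(\{i:X_i\in A_k\})$, are i.i.d.\ on $A_k$, and the diameter of $A_k$ is $O(2^k)$, so by the scaling property and the bounded-support case (applied to the rescaled cube enclosing $A_k$) one gets $\mathbb{E}[L_{\mathrm{TSP}}(\{X_i\in A_k\};q)\mid N_k]\leq C\,2^{kq} N_k^{(d-q)/d}$. Applying Jensen's inequality and $\mathbb{E} N_k = n\int_{A_k}f$ yields a contribution $C n^{(d-q)/d} a_{k,q}(f)^{(d-q)/d}$ per shell, and summing over $k$ together with the analogous ball contribution gives the claimed $O(n^{(d-q)/d})$ bound, as sketched in the outline pointed to by Remark~\ref{general:proof}. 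The key obstacle throughout is the interplay between the unbounded support and the non-uniform density; this is precisely what the summability assumption in \Cref{suff} is designed to control, and the argument is entirely parallel to the MST case already recorded as \Cref{yukich:general:mst}.
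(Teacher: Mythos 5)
Your sketch is correct and tracks the same route that the paper takes: the paper states Theorem~\ref{yukich:general:tsp} as a citation to \cite{bjh,rhee}, gives a detailed proof only for the MST analogue (\Cref{yukich:general:mst}) in the appendix via subadditivity, smoothness, a ball-plus-annular-shell decomposition, and Jensen's inequality on the shell counts, and then notes (\Cref{general:proof} and the remark following it) that this argument transfers verbatim to any smooth subadditive Euclidean functional of order $q$, of which $L_{\mathrm{TSP}}(\cdot;q)$ is one. Your verification of the axioms, your BHH/Steele argument on the cube, your block-decomposition extension to general densities, and your shell-wise estimate $\mathbb{E}[L_{\mathrm{TSP}}(\{X_i\in A_k\};q)\mid N_k]\lesssim 2^{kq}N_k^{(d-q)/d}$ followed by Jensen and summation over $k$ reproduce that argument accurately.
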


\begin{remark}
\Cref{yukich:general:tsp} was first proved in the context of $\mathcal{P}=\mathrm{Uniform}([0,1]^d)$ and $q=1$ by Beardwood et.al.~\cite{bjh}. Rhee~\cite{rhee} later provided a sufficient condition under which the statement is also true for $\mathcal{P}$ with unbounded support in the case of $q=1$. This sufficient condition is in effect the special case of the condition in \Cref{suff} for $q=1$.  Rhee~\cite{rhee} also showed that \Cref{suff} is the best that one can do in terms of conditions on $a_{k,q}(f)$. In particular, at least for the case $q=1$, for any sequence of positive real numbers $\{a_k\}_{k \geq 1}$ with $\sum_{k \geq 1} a_k^{(d-1)/d}= \infty$, there is a density $f$ on $\mathbb{R}^d$ such that $a_{k,1}(f) \leq a_k$ for all $k$, yet for which
$$  \int_{\mathbb{R}^d} f({x})^{(d-1)/d}\, d {x} < \infty \;\; \text{and} \;\;  L_{\mathrm{TSP}}({X}_1, \ldots, {X}_n;1)/n^{(d-1)/d} \stackrel{a.s.}{\longrightarrow} \infty.$$
\end{remark}
We shall now write a stability result for TSP corresponding to the perturbation blocks which replaces one point at a time by an i.i.d.~copy. We shall only consider the case $q=1$ since this is the standard (and most interesting) case while the applicability of triangle inequality facilitate the exposition to be a little clearer. The technique (and construction) in the proof of the following theorem also works for $q\in (1,d)$ albeit with a bit more effort.


\begin{thm}{\label{thm:tsp}}
Fix $d \geq 2$.  Consider the Traveling Salesman Problem where the input distribution $\mathcal{P}$ has density $f$ satisfying \Cref{graphtight:ass} and \Cref{suff} for the case $q=1$. Then TSP with $q=1$ is stable under small perturbations with perturbation blocks $\mathcal{J}_n = \{J_{n,i} : i \in [n]\}$ where $J_{n,i} = \{i\}$ for all $i \in [n]$.
\end{thm}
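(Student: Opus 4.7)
The plan is to apply Theorem~\ref{strat} with window length $\theta_n = n^{-1/d}$, following the overall template of the MST proof (Theorem~\ref{thm:mst}) but with an insertion/deletion construction adapted to Hamiltonian tours. Tightness of the near-optimal solution set (condition~\ref{item:A}) is immediate: TSP satisfies all the hypotheses of Definition~\ref{graph:def} with $p_n=n$ and $D=2$, Assumption~\ref{graphtight:ass} holds by the growth bound $\mathbb{E} L_{\mathrm{TSP}}(X_1,\ldots,X_n;1)=\bigO(n^{(d-1)/d})$ from Theorem~\ref{yukich:general:tsp}, and the density assumption is built into the hypothesis. So Theorem~\ref{nearoptgraph} yields tightness of $\{P(\mathcal{N}_{n,\theta_n},d_n,\varepsilon)\}$ for any $\theta_n=\bigO(n^{-1/d})$.

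The heart of the proof is constructing, for each $l\in[n]$, a sister tour $T^*_{n,l}$ satisfying~\eqref{most}. First, I will define $T^*_{n,l}$ by a two-step surgery on $\widehat{G}_n(\mathbf{X}^n_l)$. Let $l_1,l_2$ be the two neighbors of vertex $l$ in $\widehat{G}_n(\mathbf{X}^n_l)$; delete the edges $\{l,l_1\},\{l,l_2\}$ and insert the short-cut edge $\{l_1,l_2\}$, producing a Hamiltonian cycle on $[n]\setminus\{l\}$. Then pick $k\in\arg\min_{j\neq l}\|X_{n,j}-X_{n,l}\|_2$ (the nearest neighbor of $X_{n,l}$ among the remaining points), let $k'$ be either neighbor of $k$ in the shortened cycle, delete $\{k,k'\}$, and insert the pair $\{k,l\},\{l,k'\}$. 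The result is a Hamiltonian cycle on $[n]$ differing from $\widehat{G}_n(\mathbf{X}^n_l)$ in at most six edges, hence
$$d_n(\widehat{G}_n(\mathbf{X}^n_l),T^*_{n,l})\leq 6/n,$$
which is deterministic and gives the first assertion of~\eqref{most} trivially.

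Next I estimate the cost increment. Writing out $\psi_n(\mathbf{X}^n;T^*_{n,l})-\psi_n(\mathbf{X}^n_l;\widehat{G}_n(\mathbf{X}^n_l))$, the edges in $E(\widehat{G}_n(\mathbf{X}^n_l))$ not touching $l$ have identical cost under $\mathbf{X}^n$ and $\mathbf{X}^n_l$, so they cancel. The remaining terms are
$$\|X_{n,l_1}-X_{n,l_2}\|_2+\|X_{n,k}-X_{n,l}\|_2+\|X_{n,l}-X_{n,k'}\|_2-\|X_{n,l_1}-X_{n,l}^{(l)}\|_2-\|X_{n,l_2}-X_{n,l}^{(l)}\|_2-\|X_{n,k}-X_{n,k'}\|_2.$$
Two applications of the triangle inequality (this is where $q=1$ is crucial) bound the short-cut edge by $\|X_{n,l_1}-X_{n,l_2}\|_2\leq\|X_{n,l_1}-X_{n,l}^{(l)}\|_2+\|X_{n,l_2}-X_{n,l}^{(l)}\|_2$ and the insertion detour by $\|X_{n,k}-X_{n,l}\|_2+\|X_{n,l}-X_{n,k'}\|_2\leq 2\|X_{n,k}-X_{n,l}\|_2+\|X_{n,k}-X_{n,k'}\|_2$. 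Combining these gives the clean bound
$$\psi_n(\mathbf{X}^n;T^*_{n,l})-\psi_n(\mathbf{X}^n_l;\widehat{G}_n(\mathbf{X}^n_l))\leq 2\min_{j\neq l}\|X_{n,j}-X_{n,l}\|_2.$$

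Finally, since $\mathbf{X}^n_l\stackrel{d}{=}\mathbf{X}^n$, taking expectations yields
$$\mathbb{E}[\psi_n(\mathbf{X}^n;T^*_{n,l})-\psi_{n,\mathrm{opt}}(\mathbf{X}^n)]\leq 2\,\mathbb{E}\min_{j\neq l}\|X_{n,j}-X_{n,l}\|_2=\bigO(n^{-1/d}),$$
where the last estimate invokes Lemma~\ref{mindistmean} on nearest-neighbor distances under Assumption~\ref{suff} (as used already in the proof of Theorem~\ref{thm:mst}). By Remark~\ref{rem:strateasy} this verifies condition~\ref{item:B} of Theorem~\ref{strat} with $\theta_n=n^{-1/d}$, completing the proof. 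The only subtlety to watch is ensuring the sister cycle stays within the parameter space $\mathcal{G}_n$ (all vertices have degree two by construction, so this is automatic), whereas the genuinely forward-looking aspect is recognising that the naive choice $\omega^*_{n,l}=\widehat{G}_n(\mathbf{X}^n_l)$ fails---one really needs to replace $X_{n,l}^{(l)}$ by $X_{n,l}$ through explicit surgery, and that surgery must be cheap, which is exactly what the nearest-neighbor insertion delivers.
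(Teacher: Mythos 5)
Your proposal is correct and follows essentially the same approach as the paper: tightness via Theorem~\ref{nearoptgraph}, then the same two-step surgery (short-cut at $l$, reinsert $l$ next to its nearest neighbor $k$), the same triangle-inequality bound $\leq 2\min_{j\neq l}\|X_{n,j}-X_{n,l}\|_2$, and Lemma~\ref{mindistmean} plus Theorem~\ref{strat} to conclude. The only cosmetic difference is the choice of the second endpoint for the insertion: the paper picks $k_1$ to be a neighbor of $k$ in both the shortened cycle and the original tour $\widehat{G}_n(\mathbf{X}^n_l)$, whereas you allow $k'$ to be either neighbor in the shortened cycle; a short case check (when $k\in\{l_1,l_2\}$ and $k'=l_2$ or $l_1$, the inserted short-cut edge cancels against the deleted one in the cost formula) shows your looser choice gives the same bound, so this does not affect the argument.
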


\begin{proof}
\Cref{nearoptgraph}, \Cref{rhee1} and \Cref{suff2} guarantee that $\{P(\mathcal{N}_{n,\theta_n}), d_n, \varepsilon\}$ is a tight sequence for any $\theta_n = \bigO(n^{-1/d})$. All that remains is to establish a statement like \Cref{most} for TSP.

 Set $\widehat{G}_n(\mathbf{x}^n)$ to be the optimal Hamiltonian cycle for the points $\mathbf{x}^n = (x_1, \ldots,x_n)$. We define $G^*_{n,l}$, a random Hamiltonian cycle depending on $(\mathbf{X}^n,\mathbf{X}^n_l)$, through the following algorithm for any 
$l \in [n]$. 
\begin{enumerate}
\item If $l_1 \neq l_2$ are the neighbors of $l$ in $\widehat{G}_n(\mathbf{X}^n_l)$, then delete the edges $\{l_1,l\}$, $\{l_2,l\}$ and add the edge $\{l_1,l_2\}$. Note that the modified graph is still a Hamiltonian cycle, albeit with $n-1$ points since the point $X_{n,l}^{(l)}$ is not in the cycle.
\item Find $k \in [n] \setminus \{l\}$ such that $k \in \arg \min_{j \in [n]: j \neq l} \|X_{n,j}-X_{n,l}\|_2.$  
\item Let $k_1$ be one of the neighbors of $k$ in the modified graph after the first step which is also a neighbor of $k$ in the original graph $\widehat{G}_n(\mathbf{X}^n_l)$. It is always possible to find such neighbor of $k$ since $k \neq l$. To the modified graph,  add the edges $\{k,l\}$, $\{k_1,l\}$ and delete the edge $\{k,k_1\}$. Define this new cycle to be $G^*_{n,l}$. Note that it is a Hamiltonian cycle through all of the $n$ points of $\mathbf{X}^n$.
\end{enumerate}
It is easy to see that $d_n(\widehat{G}_n(\mathbf{X}^n_l),G^*_{n,l}) \leq 6/n$, whereas we have the following simple observation using triangle inequality.
\begin{align*}
\psi_n (\mathbf{X}^n; G^*_{n,l}) &= \psi_n ( \mathbf{X}_l^n; \widehat{G}_{n}(\mathbf{X}^n_l)) - \|X_{n,l}^{(l)}-X_{n,l_1}\|_2-\|X_{n,l}^{(l)}-X_{n,l_2}\|_2 + \|X_{n,l_1}-X_{n,l_2}\|_2\\
& \hspace{1.5 in}+ \|X_{n,k}-X_{n,l}\|_2+\|X_{n,k_1}-X_{n,l}\|_2-\|X_{n,k_1}-X_{n,k}\|_2 \\
& \leq  \psi_n ( \mathbf{X}_l^n; \widehat{G}_{n}(\mathbf{X}^n_l)) + 2\|X_{n,k}-X_{n,l}\|_2 \\
& \leq  \psi_n ( \mathbf{X}_l^n; \widehat{G}_{n}(\mathbf{X}^n_l)) + 2\min_{j \in [n]: j \neq l} \|X_{n,j}-X_{n,l}\|_2.
\end{align*}
Taking expectations on both sides, we get
\begin{align*}
\mathbb{E} [ \psi_n (\mathbf{X}^n; G^*_{n,l}) - \psi_{n, \mathrm{opt}}(\mathbf{X}^n)] &= \mathbb{E} [ \psi_n (\mathbf{X}^n; G^*_{n,l}) - \psi_n ( \mathbf{X}_l^n; \widehat{G}_{n}(\mathbf{X}^n_l))] \\
& \leq 2 \mathbb{E} \min_{j \in [n]: j \neq l} \|X_{n,j}-X_{n,l}\|_2 = \bigO(n^{-1/d}),
\end{align*}
where the last line follows from \Cref{mindistmean}. We can now complete the proof by applying  \Cref{rem:strateasy} and \Cref{strat} for the choice $\theta_n=n^{-1/d}$.
\end{proof}

\begin{remark}
In their work, \citet{aldous:mst} mentioned that there is fairly good evidence to suggest that TSP satisfies similar theorem as mentioned in \Cref{mst:aeu} with \textit{scaling exponent} $3$. In other words, for small $\delta>0$ and large $n$, we have $\varepsilon_n(\delta) \sim \delta^3$, up to some positive constant. Nevertheless, the proof of such a result remains elusive till now. Our result in \Cref{nearoptgraph} proves a weaker version of this kind of phenomenon while being strong enough to prove stability. 
\end{remark}

\section{Appendix}{}{\label{appnd}}

\subsection{Sub-Gamma variables}{\label{subgamma}}
Here we try to present a short survey of sub-Gamma variables and related results that are useful in our application. Basic definitions and initial results are borrowed from \cite[Section 2.4,2.5]{boucheron}. The following properties of sub-Gamma variables are instrumental in our analysis.

\begin{prop}{\label{subg}}

\begin{enumerate}[label={(\alph*)}, ref={\theprop~(\alph*)}]
\item \label{subg:scale} If $X \in \mathscr{G}(\sigma^2, c)$, then $tX \in \mathscr{G}(t^2\sigma^2, |t|c)$, for any $t \in \mathbb{R}$. This fact is obvious from \Cref{subGamma}.
\item \label{subg:tail} If $X \in \mathscr{G}(\sigma^2,c)$, then for any $t >0$,
$$ \mathbb{P}(X - \mathbb{E}X > \sqrt{2\sigma^2t} + ct ), \mathbb{P}(X - \mathbb{E}X < -\sqrt{2\sigma^2t} - ct ) \leq e^{-t}.$$
Conversely, if the above tail bound holds true for some $\sigma^2,c \geq 0$, then $X \in \mathscr{G}(4(8\sigma^2+16c^2),8c)$. 
\item \label{subg:moments} If $X \in \mathscr{G}(\sigma^2,c)$, then for every integer $q \geq 1$, we have
$$ \mathbb{E}[ ( X-\mathbb{E}X)^{2q}] \leq 8^qq!\sigma^{2q} + 4^{2q}(2q)!c^{2q}.$$
Conversely, if for some $A \in [0,\infty)$ and $B \in [0, \infty)$,
$$ \mathbb{E} [( X-\mathbb{E}X)^{2q}] \leq q!A^{2q} + (2q)!B^{2q}, \; \forall \; q \in \{1,2,\ldots\},$$
then $X \in \mathscr{G}( 4A^2+8B^2,2B)$. A proof of this characterization can be found in \cite[Theorem 2.3]{boucheron}.

\item \label{subg:maxc} If $X_i \in \mathscr{G}(\sigma^2,c)$, for all $1 \leq i \leq m$, then 
$$ \mathbb{E} \max_{i=1}^m (X_i-\mathbb{E}X_i ) \leq \sqrt{2\sigma^2\log m} + c \log m.$$ We refer to \cite[Corollary 2.6]{boucheron} for the proof of the above estimate.
\end{enumerate}
\end{prop}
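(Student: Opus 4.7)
The four parts of Proposition~\ref{subg} are standard facts about sub-Gamma variables; I would prove (a) directly from the definition and reduce (b), (c), (d) to well-known Chernoff/moment arguments, essentially reproducing the textbook treatment in~\cite{boucheron}.

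For part (a), the plan is immediate: if $\psi_X(\lambda):=\log \mathbb{E} e^{\lambda(X-\mathbb{E}X)}$ satisfies the sub-Gamma bound on $|\lambda|<1/c$, then
\[
\log \mathbb{E} e^{\lambda(tX - \mathbb{E}(tX))} = \psi_X(t\lambda) \leq \frac{(t\lambda)^2\sigma^2}{2(1-c|t\lambda|)} = \frac{\lambda^2(t^2\sigma^2)}{2(1-|t|c\,|\lambda|)}
\]
on $|\lambda|<1/(|t|c)$, which is exactly the sub-Gamma bound with parameters $(t^2\sigma^2,|t|c)$.

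For part (b), the forward direction is a Chernoff argument: for any $u>0$ and $\lambda\in(0,1/c)$, Markov's inequality gives $\mathbb{P}(X-\mathbb{E}X>u)\leq\exp(-\lambda u+\lambda^2\sigma^2/(2(1-c\lambda)))$; picking $\lambda=u/(\sigma^2+cu)$ yields the Bernstein form $\exp(-u^2/(2(\sigma^2+cu)))$. Splitting cases on whether the variance or scale term dominates and substituting $u=\sqrt{2\sigma^2t}+ct$ then produces the stated $e^{-t}$ bound (symmetrically for the lower tail). For the converse, I would use the identity $\mathbb{E} e^{\lambda(X-\mathbb{E}X)} = 1+\int_0^\infty \lambda e^{\lambda u}\mathbb{P}(|X-\mathbb{E}X|>u)\,du$ (applied symmetrically), change variables by $u=\sqrt{2\sigma^2t}+ct$, and dominate the resulting integrand to recover a sub-Gamma MGF estimate with the constants $(4(8\sigma^2+16c^2),8c)$; the extra factors come from the loose matching between the tail and MGF forms.

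For part (c), the moment bounds follow from the tail bound of (b) via
\[
\mathbb{E}[(X-\mathbb{E}X)^{2q}] = \int_0^\infty 2q\,u^{2q-1}\mathbb{P}(|X-\mathbb{E}X|>u)\,du,
\]
splitting into the Gaussian regime $u\leq\sqrt{2\sigma^2 t}+ct$ with $t=u^2/(4\sigma^2)$ and the exponential regime $t=u/(4c)$, and using $\int_0^\infty t^{q-1/2}e^{-t}\,dt\leq\Gamma(q+1)=q!$ and $\int_0^\infty t^{2q-1}e^{-t}\,dt=(2q-1)!$. The converse direction expands the MGF as $\mathbb{E} e^{\lambda(X-\mathbb{E}X)}=\sum_{q\geq 1}\lambda^{2q}\mathbb{E}[(X-\mathbb{E}X)^{2q}]/(2q)!$ (odd terms are handled by symmetrization or absorbed into a Cauchy--Schwarz bound), plugs in the moment hypothesis, and sums the resulting geometric-type series to recognize the sub-Gamma MGF on $|\lambda|<1/(2B)$.

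Part (d) is the classical maximal inequality: for $\lambda\in(0,1/c)$,
\[
\mathbb{E}\max_{i\leq m}(X_i-\mathbb{E}X_i) \leq \frac{1}{\lambda}\log\sum_{i=1}^m \mathbb{E} e^{\lambda(X_i-\mathbb{E}X_i)} \leq \frac{\log m}{\lambda}+\frac{\lambda\sigma^2}{2(1-c\lambda)},
\]
and optimizing by taking $\lambda=\sqrt{2\log m/\sigma^2}/(1+c\sqrt{2\log m/\sigma^2})$ (equivalently, the same value that saturates the Bernstein bound with $t=\log m$) gives exactly $\sqrt{2\sigma^2\log m}+c\log m$.

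None of these steps present a real obstacle: the only place requiring care is tracking the explicit constants in the tail-to-MGF conversion in (b) and the MGF-to-tail conversion behind (d), which is precisely the bookkeeping carried out in~\cite[Sections 2.3--2.5]{boucheron}, and can simply be cited.
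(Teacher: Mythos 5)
Your sketch is correct and is exactly the route the paper takes: the paper does not prove Proposition~\ref{subg} at all but points to \cite[Sections~2.3--2.5, Theorem~2.3, Corollary~2.6]{boucheron}, and what you have written is a faithful outline of those textbook arguments (Chernoff optimization for the tail bound, tail-to-moment and moment-to-MGF conversions, and the log-sum-exp maximal inequality). One minor remark: in part~(b) the relation $\mathbb{E} e^{\lambda(X-\mathbb{E}X)} = 1+\int_0^\infty \lambda e^{\lambda u}\mathbb{P}(|X-\mathbb{E}X|>u)\,du$ is an upper bound rather than an identity for a signed random variable, but since only an upper bound on the MGF is needed, the argument goes through unchanged.
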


\begin{lmm}{\label{lemsubg}}

\begin{enumerate}[label={(\alph*)}, ref={\thelemma~(\alph*)}]
\item \label{lemsubg:1} If $X \in \mathscr{G}(\sigma^2,c)$ with $\mathbb{E}|X| =: \mu < \infty$ and $|Y| \leq M$ almost surely, then 
$$XY \in \mathscr{G}( 2^8M^2(8\sigma^2 \vee \mu^2) + 2^7M^2c^2, 8Mc ).$$
\item \label{lemsubg:2} If $X_i \in \mathscr{G}(\sigma^2_i, c_i)$ for all $1 \leq i \leq m$ and $t_1, \ldots, t_m \in \mathbb{R}$, then $$\sum_{i=1}^m t_iX_i \in \mathscr{G}\biggl( m\sum_{i=1}^m t_i^2\sigma_i^2, m\max_{i=1}^m |t_i|c_i\biggr).$$
Moreover, if $X_i$'s are mutually independent, the above result can be improved to  
$$\sum_{i=1}^m t_iX_i \in \mathscr{G}\biggl( \sum_{i=1}^m t_i^2\sigma_i^2, \max_{i=1}^m |t_i|c_i\biggr).$$
\item \label{lemsubg:3} Let $\{X_t : 0 \leq t \leq T\}$ be a real-valued stochastic process with Lebesgue-measurable sample paths and $\int_{0}^T\mathbb{E}|X_t|\, dt< \infty.$ 
Further assume that $X_t \in \mathscr{G}(\sigma^2,c)$, for all $0 \leq t \leq T$. Then
$$ \int_{0}^T X_t \, dt \in \mathscr{G}( 32T^2\sigma^2 + 2^7T^2c^2, 8Tc).$$
\item \label{lemsubg:4} If $X:=(X_1,\ldots,X_d)$ is a sub-Gamma vector with parameters $(\sigma^2,c)$, then for any $k \geq 0$, we have
$$ \mathbb{E} \|X \|_2^k \leq 2^k d ( \sigma^k \left( 8(k+1)\right)^{k/2} + 8^kc^k(k+1)^k).$$
\item \label{lemsubg:5} If $X_i \in \mathscr{G}(\sigma^2,c)$, for all $1 \leq i \leq m$, then for any $\alpha >0$, 
$$ \mathbb{E} \max_{i=1}^m |X_i-\mathbb{E}X_i|^{\alpha} \leq \bigO ((\sqrt{2\sigma^2\log m} + c \log m )^{\alpha} ), \; \text{ as } m \to \infty.$$
\end{enumerate}
\end{lmm}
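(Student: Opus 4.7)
Each part is a manipulation at the level of the moment generating function (MGF) or the equivalent moment bound provided by \Cref{subg:moments}, combined with the tail/MGF equivalence of \Cref{subg:tail}. The common theme: when a direct MGF computation is clean, do it; when not, pass through moments, use a standard inequality (Minkowski, Jensen, or $|\sum a_i|^{2q}\le m^{2q-1}\sum |a_i|^{2q}$), and then re-apply \Cref{subg:moments} backwards to recover a sub-Gamma bound with explicit constants. The universal constants $8,\,2^7,\,2^8,\,16$ that appear in the statement are exactly what one picks up when translating tail $\Leftrightarrow$ MGF via \Cref{subg:tail} and moment $\Leftrightarrow$ MGF via \Cref{subg:moments}, so I will not try to optimize constants along the way.

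\textbf{Parts (a)-(c).} For \ref{lemsubg:1}, I would take the moment characterization of $X\in\mathscr{G}(\sigma^2,c)$ via \Cref{subg:moments}, bound $\mathbb{E}|XY-\mathbb{E}(XY)|^{2q}\le 2^{2q}M^{2q}\mathbb{E}|X|^{2q}$ (using $|\mathbb{E}(XY)|\le M\mu$ and $|Y|\le M$), expand $|X|^{2q}\le 2^{2q}(|X-\mathbb{E}X|^{2q}+\mu^{2q})$, plug in the moment bound for $X$, and then apply the converse direction of \Cref{subg:moments} to recover a sub-Gamma membership with the stated parameters. For \ref{lemsubg:2}, in the independent case the MGF factorizes and the bound
\[
\log\mathbb{E}\exp\Big(\lambda\sum t_i(X_i-\mathbb{E}X_i)\Big)\;\le\;\sum \frac{\lambda^2 t_i^2\sigma_i^2}{2(1-|\lambda t_i|c_i)}\;\le\;\frac{\lambda^2\sum t_i^2\sigma_i^2}{2(1-|\lambda|\max_i|t_i|c_i)}
\]
is valid for $|\lambda|<(\max_i|t_i|c_i)^{-1}$, giving the sharper form. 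Without independence, apply Jensen's inequality in the form $\exp(\lambda\sum t_iX_i)\le \tfrac{1}{m}\sum\exp(\lambda m t_iX_i)$, take expectations, and use the sub-Gamma MGF bound on each summand; this produces an extra factor of $m$ in both the variance proxy and the scale parameter. For \ref{lemsubg:3}, the continuous analogue of the same Jensen trick, $\exp\big(\lambda\int_0^T X_t\,dt\big)=\exp\big(\tfrac{1}{T}\int_0^T \lambda T X_t\,dt\big)\le \tfrac{1}{T}\int_0^T\exp(\lambda T X_t)\,dt$, combined with Fubini (justified by the $L^1$ assumption) yields $\int_0^TX_t\,dt\in\mathscr{G}(T^2\sigma^2,Tc)$ directly, and the claimed constants then follow by converting through the tail form of \Cref{subg:tail}.

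\textbf{Part (d).} I would use $\|X\|_2^k\le d^{k/2}\max_i|X_i|^k\le d^{k/2}\sum_{i=1}^d|X_i|^k$, so it suffices to bound $\mathbb{E}|X_i|^k$ for a scalar $X_i\in\mathscr{G}(\sigma^2,c)$. From the sub-Gamma tail bound (Proposition~\ref{subg:tail}) I get $\mathbb{P}(|X_i|>\sqrt{2\sigma^2 t}+ct)\le 2e^{-t}$, and then $\mathbb{E}|X_i|^k=\int_0^\infty k s^{k-1}\mathbb{P}(|X_i|>s)\,ds$ splits into a Gaussian and a Gamma tail integral that are evaluated via the change of variables $s=\sqrt{2\sigma^2 t}$ and $s=ct$ respectively. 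The bound $\Gamma(k+1)\le (k+1)^k$ absorbs the numerical constants into the $2^k,\,8^k,\,(k+1)^{k/2}$ factors in the statement.

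\textbf{Part (e).} By a union bound combined with \Cref{subg:tail}, $\mathbb{P}(\max_i|X_i-\mathbb{E}X_i|>\sqrt{2\sigma^2 t}+ct)\le 2me^{-t}$; equivalently, for $s\ge\sqrt{2\sigma^2\log(2m)}+c\log(2m)$ one has an exponential tail. Writing $\mathbb{E}\max_i|X_i-\mathbb{E}X_i|^\alpha=\int_0^\infty \alpha s^{\alpha-1}\mathbb{P}(\max_i|X_i-\mathbb{E}X_i|>s)\,ds$ and splitting the integral at $s^*:=\sqrt{2\sigma^2\log m}+c\log m$ gives a deterministic contribution of order $(s^*)^\alpha$ and a tail contribution that is also $\mathcal{O}((s^*)^\alpha)$ because the extra mass from the union bound is only polylogarithmic. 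The main (and only genuine) obstacle across the lemma is bookkeeping the numerical constants in parts (a) and (c) to land exactly on the form written in the statement, which is a mechanical but unilluminating exercise in applying the quantitative direction of \Cref{subg:tail} and \Cref{subg:moments}; the conceptual content is entirely captured by the Jensen/MGF manipulations described above.
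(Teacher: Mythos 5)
Your treatments of parts (a), (d) and (e) follow essentially the same elementary routes as the paper: for (a) the moment characterization of \Cref{subg:moments} in both directions, and for (d) and (e) a moment/tail-integration computation that matches the paper's up to bookkeeping (the paper bounds even moments and interpolates via Jensen for (d), you integrate the tail bound; both are fine at the level of constants you disclaim). Your part (c) is in fact cleaner than the paper's: applying Jensen with respect to the uniform measure on $[0,T]$ to the centered process, i.e. $\exp\bigl(\lambda\int_0^T Y_t\,dt\bigr)\le\frac1T\int_0^T e^{\lambda T Y_t}\,dt$ with $Y_t=X_t-\mathbb{E}X_t$, followed by Tonelli, gives $\int_0^T X_t\,dt\in\mathscr{G}(T^2\sigma^2,Tc)$ directly (the exponent bound is the same for every $t$, so averaging costs nothing), and this implies the stated class by the monotonicity of sub-Gamma classes; the paper instead goes through Fubini, H\"older and the moment characterization.

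The genuine gap is in part (b), the dependent case. Your AM--GM step $\exp\bigl(\lambda\sum_i t_iX_i\bigr)\le\frac1m\sum_i\exp(\lambda m t_iX_i)$ is correct, but after taking expectations you must bound $\log\bigl(\frac1m\sum_i\mathbb{E}e^{\lambda m t_iY_i}\bigr)$ with $Y_i=X_i-\mathbb{E}X_i$, and the only available estimate is by the largest exponent, giving
$$\log\mathbb{E}\exp\Bigl(\lambda\sum_i t_iY_i\Bigr)\;\le\;\max_i\frac{\lambda^2 m^2 t_i^2\sigma_i^2}{2\bigl(1-mc_i|\lambda t_i|\bigr)},$$
i.e. membership in $\mathscr{G}\bigl(m^2\max_i t_i^2\sigma_i^2,\,m\max_i|t_i|c_i\bigr)$. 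Since $m^2\max_i t_i^2\sigma_i^2\ge m\sum_i t_i^2\sigma_i^2$ (strictly, unless all $t_i^2\sigma_i^2$ coincide), this is weaker than the stated variance proxy, and the sub-Gamma monotonicity runs in the wrong direction to repair it; your claim that the trick "produces an extra factor of $m$ in both the variance proxy and the scale parameter" is not what the computation delivers. The fix — and the paper's actual argument — is to apply H\"older's inequality before taking logarithms: $\mathbb{E}\prod_i e^{\lambda t_iY_i}\le\prod_i\bigl(\mathbb{E}e^{\lambda m t_iY_i}\bigr)^{1/m}$, so the log of the product becomes the \emph{average} of the log-MGFs,
$$\frac1m\sum_i\log\mathbb{E}e^{\lambda m t_iY_i}\;\le\;\frac1m\sum_i\frac{\lambda^2m^2t_i^2\sigma_i^2}{2\bigl(1-mc_i|\lambda t_i|\bigr)}\;\le\;\frac{\lambda^2\, m\sum_i t_i^2\sigma_i^2}{2\bigl(1-m|\lambda|\max_i|t_i|c_i\bigr)},$$
which yields exactly the stated parameters.
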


\begin{proof}
\begin{enumerate}[label=(\alph*)]

\item Take any integer $q \geq 1$ and note that 
\begin{align*}
&\mathbb{E} [( XY-\mathbb{E}XY)^{2q}] \leq 4^{2q-1}[ \mathbb{E}Y^{2q} [( X-\mathbb{E}X)^{2q}] + ( \mathbb{E}X)^{2q}\mathbb{E}[(Y-\mathbb{E}Y )^{2q}] \\
& \qquad+ ( \mathbb{E}X)^{2q}( \mathbb{E}Y)^{2q}  + ( \mathbb{E}XY)^{2q}] \\
& \leq 4^{2q-1}[M^{2q}8^qq!\sigma^{2q} + (4M)^{2q}(2q)!c^{2q} + (2\mu M )^{2q} + (\mu M )^{2q} + (\mu M )^{2q} ]\\
& \leq q!8^q(4M \sigma)^{2q} + (4\mu M)^{2q} + (2q)!(4Mc)^{2q} \\
& \leq 2q! ( (8\sqrt{2}M\sigma) \vee 4\mu M)^{2q} + (2q)!(4Mc)^{2q} \\
&\leq  q! ( (16\sqrt{2}M\sigma) \vee 8\mu M)^{2q} + (2q)!(4Mc)^{2q}.
\end{align*} 
Applying Proposition~\Cref{subg:moments}, we conclude the proof.

\item Setting $Y_i=X_i-\mathbb{E}X_i$, we apply H\"older's inequality for any $|\lambda| < ( m \max_{i=1}^m |c_it_i|)^{-1}$,
\begin{align*}
\log \mathbb{E} \exp \biggl(\lambda \sum_{i=1}^m t_iY_i\biggr) \leq \log \prod_{i=1}^m [\mathbb{E} e^{\lambda mt_iY_i}]^{1/m} &= \dfrac{1}{m} \sum_{i=1}^m \log \mathbb{E} \exp ( \lambda mt_iY_i) \\
& \leq \dfrac{1}{m}\sum_{i=1}^m \dfrac{\lambda^2m^2t_i^2\sigma_i^2}{2( 1-mc_i|\lambda t_i|)} \\
& \leq  \dfrac{\lambda^2m \sum_{i=1}^m t_i^2\sigma_i^2}{2( 1-m  |\lambda| \max_{i=1}^m |t_ic_i|)},
\end{align*}
proving the first assertion. For the second assertion, note that for independent $X_i$'s, we have that for any $|\lambda| <   (  \max_{i=1}^m |c_it_i|)^{-1}$,
\begin{align*}
\log \mathbb{E} \exp \biggl(\lambda \sum_{i=1}^m t_iY_i\biggr) &= \sum_{i=1}^m \log \mathbb{E} \exp( \lambda t_iY_i ) \\
&\leq \sum_{i=1}^m \dfrac{\lambda^2t_i^2\sigma_i^2}{2( 1-c_i|\lambda t_i|)} \leq \dfrac{\lambda^2 \sum_{i=1}^m t_i^2\sigma_i^2}{2( 1-  |\lambda| \max_{i=1}^m |t_ic_i|)}.
\end{align*}
\item Without loss of generality we can assume that $\mathbb{E}X_t=0$ for all $t \in [0,T]$. For any integer $q \geq 1$, applying Proposition~\Cref{subg:moments}, Fubini's theorem and H\"older's inequality we can write the following:
\begin{align*}
\mathbb{E} \biggl( \int_{0}^T X_t \, dt\biggr)^{2q} &= \mathbb{E} \int_{[0,T]^{2q}} \biggl(\prod_{i=1}^{2q} X_{t_i} \biggr)\, \prod_{i=1}^{2q} dt_i = \int_{[0,T]^{2q}} \mathbb{E}\biggl(\prod_{i=1}^{2q} X_{t_i} \biggr)\, \prod_{i=1}^{2q} dt_i \\
& \leq  \int_{[0,T]^{2q}} \prod_{i=1}^{2q} (\mathbb{E}X_{t_i}^{2q})^{1/(2q)} \,  \prod_{i=1}^{2q} dt_i\\
& \leq  \int_{[0,T]^{2q}} \prod_{i=1}^{2q} (8^qq!\sigma^{2q}+4^{2q}(2q)!c^{2q} )^{1/(2q)} \,  \prod_{i=1}^{2q} dt_i \\
& = q!8^q(T\sigma)^{2q} + (2q)!(4cT)^{2q}.
\end{align*}
The assertion follows directly from here.

\item This is again a straight-forward application of Proposition~\Cref{subg:moments}. Let $m \in \mathbb{N}$  be such that $2m-2 < k \leq 2m$. By Jensen's inequality, we have
\begin{align*}
(\mathbb{E} \|X \|_2^k)^{1/k} \leq (\mathbb{E} \|X \|_2^{2m})^{1/(2m)} &= \biggl[\mathbb{E} \biggl(\sum_{i=1}^d X_i^2 \biggr)^{m} \biggr]^{1/(2m)} \\
 &\leq \biggl[ d^m \sum_{i=1}^d \mathbb{E} X_i^{2m}\biggr]^{1/(2m)} \\
& \leq d (8^{m}m! \sigma^{2m}+4^{2m}(2m)!c^{2m} )^{1/(2m)} \\
& \leq d (8^{m}m^m \sigma^{2m}+4^{2m}(2m)^{2m}c^{2m} )^{1/(2m)} \\
& \leq d ( \sigma \sqrt{8m} + 8cm ) \leq d ( \sigma \sqrt{8(k+1)} + 8c(k+1) ),
\end{align*}
since $m \leq k/2+1$. The upper bound computed above implies that
$$ \mathbb{E} \|X \|_2^k \leq 2^k d ( \sigma^k ( 8(k+1))^{k/2} + 8^kc^k(k+1)^k).$$

\item Without loss of generality we can assume $\mathbb{E}X_i =0$ for all $i \in [m]$. Using the tail bound in Proposition~\Cref{subg:tail}, we can get for $m \geq 2$, 
\begin{align*}
\mathbb{E} \biggl[ \dfrac{\max_{i=1}^m |X_i|^{\alpha}}{( \sqrt{2\sigma^2 \log m + c \log m})^{\alpha}}\biggr] & = \int_{0}^{\infty} \alpha t^{\alpha -1} \mathbb{P} \biggl[ \max_{i=1}^m |X_i| \geq t \sqrt{2\sigma^2 \log m + c \log m} \biggr] \, dt \\
& =  1 + \int_{1}^{\infty} \alpha t^{\alpha -1} \mathbb{P} \biggl[ \max_{i=1}^m |X_i| \geq t \sqrt{2\sigma^2 \log m + c \log m} \biggr] \, dt \\
& \leq 1 + \int_{1}^{\infty} \alpha t^{\alpha -1} \mathbb{P} \biggl[ \max_{i=1}^m |X_i| \geq \sqrt{2t\sigma^2 \log m + c t\log m} \biggr] \, dt \\
& \leq 1 + m \int_{1}^{\infty} \alpha t^{\alpha -1} \mathbb{P} [  |X_1| \geq \sqrt{2t\sigma^2 \log m + c t\log m} ] \, dt \\
& \leq 1 + 2m \int_{1}^{\infty} \alpha t^{\alpha -1} e^{-t \log m} \, dt \\
& = 1 + 2 \int_{1}^{\infty} \alpha t^{\alpha -1}m^{1-t} \, dt \leq 1 + 2 \int_{1}^{\infty} \alpha t^{\alpha -1}2 ^{1-t} \, dt < \infty,
\end{align*}
which completes the proof.
\end{enumerate}
\end{proof}

\begin{defn}{\label{subgamma:process}}
Let $(\mathcal{T}, d_{\mathcal{T}})$ be a metric space. A stochastic process $\{X_t : t \in \mathcal{T}\}$ is said to be sub-Gamma with parameters $(\sigma^2,c)$ if 
$$ X_t- X_s \in \mathscr{G} ( \sigma^2d_{\mathcal{T}}(t,s)^2, cd_{\mathcal{T}}(t,s)), \; \; \forall \; s,t \in \mathcal{T}.$$
\end{defn}

The following theorem is an analogue of Dudley's integral inequality (see \cite[Theorem 8.1.3]{hdp}) for sub-Gamma processes.

\begin{thm}{\label{dudley}}
Let $\{X_t : t \in \mathcal{T}\}$ be a separable mean-zero sub-Gamma process on the metric space $(\mathcal{T}, d_{\mathcal{T}})$ with parameters  $(\sigma^2,c)$. Then 
$$ \mathbb{E} \sup_{t \in \mathcal{T}} X_t \leq 12 \int_{0}^{\operatorname{diam}(\mathcal{T})}  ( \sigma  \sqrt{\log N(\mathcal{T},d_{\mathcal{T}},\varepsilon) } + c  \log N(\mathcal{T},d_{\mathcal{T}},\varepsilon) ) \, d\varepsilon.$$
\end{thm}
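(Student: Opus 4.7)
The plan is to mimic the classical chaining proof of Dudley's inequality, but with the sub-Gaussian tail bound replaced by the two-term sub-Gamma maximal bound from Proposition~\ref{subg:maxc}. So the whole argument splits cleanly into a Gaussian-looking piece (which produces the $\sqrt{\log N}$ term) and a Poisson-looking piece (which produces the $\log N$ term), and the rest is bookkeeping.

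First I would fix an arbitrary $t_0\in\mathcal{T}$ and set $\varepsilon_k=2^{-k}\operatorname{diam}(\mathcal{T})$ for $k\ge 0$. For each $k$, choose an optimal $\varepsilon_k$-net $\mathcal{T}_k\subseteq\mathcal{T}$ with $|\mathcal{T}_k|=N_k:=N(\mathcal{T},d_{\mathcal{T}},\varepsilon_k)$, taking $\mathcal{T}_0=\{t_0\}$, and define projections $\pi_k:\mathcal{T}\to\mathcal{T}_k$ with $d_{\mathcal{T}}(t,\pi_k(t))\le\varepsilon_k$. By separability of the process (together with continuity in $L^2$ implied by the sub-Gamma property), for every $t\in\mathcal{T}$ we have the telescoping identity
\begin{equation*}
X_t-X_{t_0}=\sum_{k=1}^{\infty}\bigl(X_{\pi_k(t)}-X_{\pi_{k-1}(t)}\bigr)
\end{equation*}
almost surely, and hence
\begin{equation*}
\mathbb{E}\sup_{t\in\mathcal{T}}X_t=\mathbb{E}\sup_{t\in\mathcal{T}}(X_t-X_{t_0})\le\sum_{k=1}^{\infty}\mathbb{E}\sup_{t\in\mathcal{T}}\bigl|X_{\pi_k(t)}-X_{\pi_{k-1}(t)}\bigr|,
\end{equation*}
using $\mathbb{E}X_{t_0}=0$.

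Next, for each $k\ge 1$ I would bound the $k$-th term. The collection $\{X_{\pi_k(t)}-X_{\pi_{k-1}(t)}:t\in\mathcal{T}\}$ consists of at most $N_kN_{k-1}\le N_k^2$ distinct mean-zero random variables, and each of them, by the sub-Gamma process assumption, lies in $\mathscr{G}(\sigma^2 r_k^2,cr_k)$ where $r_k:=\varepsilon_k+\varepsilon_{k-1}=3\varepsilon_k$. Applying Proposition~\ref{subg:maxc} to this family together with its negatives (at most $2N_k^2$ variables),
\begin{equation*}
\mathbb{E}\sup_{t\in\mathcal{T}}\bigl|X_{\pi_k(t)}-X_{\pi_{k-1}(t)}\bigr|\le 3\varepsilon_k\bigl(\sigma\sqrt{2\log(2N_k^2)}+c\log(2N_k^2)\bigr).
\end{equation*}
Using $\log(2N_k^2)\le 3\log N_k$ for $N_k\ge 2$ (and treating the trivial case $N_k=1$ separately), this becomes at most a universal constant times $\varepsilon_k\bigl(\sigma\sqrt{\log N_k}+c\log N_k\bigr)$.

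Finally, I would convert the sum $\sum_k\varepsilon_k(\sigma\sqrt{\log N_k}+c\log N_k)$ into the integrals in the statement. Since $\varepsilon\mapsto N(\mathcal{T},d_{\mathcal{T}},\varepsilon)$ is nonincreasing, for each $k\ge 1$,
\begin{equation*}
(\varepsilon_{k-1}-\varepsilon_k)\bigl(\sigma\sqrt{\log N_k}+c\log N_k\bigr)\le\int_{\varepsilon_k}^{\varepsilon_{k-1}}\bigl(\sigma\sqrt{\log N(\mathcal{T},d_{\mathcal{T}},\varepsilon)}+c\log N(\mathcal{T},d_{\mathcal{T}},\varepsilon)\bigr)\,d\varepsilon,
\end{equation*}
and $\varepsilon_{k-1}-\varepsilon_k=\varepsilon_k$, so summing over $k\ge 1$ telescopes the interval $(0,\varepsilon_0]=(0,\operatorname{diam}(\mathcal{T})]$ and yields
\begin{equation*}
\mathbb{E}\sup_{t\in\mathcal{T}}X_t\le C\int_0^{\operatorname{diam}(\mathcal{T})}\bigl(\sigma\sqrt{\log N(\mathcal{T},d_{\mathcal{T}},\varepsilon)}+c\log N(\mathcal{T},d_{\mathcal{T}},\varepsilon)\bigr)\,d\varepsilon
\end{equation*}
for some absolute constant $C$. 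The only real work is bookkeeping the constants so that $C\le 12$; the hard part is just being careful with the factor $3$ in $r_k=3\varepsilon_k$, the $\sqrt{2}$ in Proposition~\ref{subg:maxc}, and the logarithmic slack from $\log(2N_k^2)$ versus $\log N_k$. No genuinely new idea beyond the standard chaining scheme is required, precisely because Proposition~\ref{subg:maxc} packages the sub-Gaussian and Poisson contributions as additive terms in the same form the integral demands.
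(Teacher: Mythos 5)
Your overall strategy is the same as the paper's: chaining via dyadic $\varepsilon$-nets, control each increment layer with the sub-Gamma maximal inequality from Proposition~\ref{subg:maxc}, and convert the resulting geometric sum into a Dudley-type integral. The paper does exactly this (after reducing to finite $\mathcal{T}$ by separability, which sidesteps the convergence issue your infinite telescoping sum would otherwise need justified). However, there are two concrete problems with your execution.

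First, the monotonicity in your sum-to-integral step is reversed, and the inequality you wrote is false. On $[\varepsilon_k,\varepsilon_{k-1}]$ you have $\varepsilon\ge\varepsilon_k$, and since $\varepsilon\mapsto N(\mathcal{T},d_{\mathcal{T}},\varepsilon)$ is nonincreasing, this gives $N(\mathcal{T},d_{\mathcal{T}},\varepsilon)\le N_k$ on that interval, so
\begin{equation*}
\int_{\varepsilon_k}^{\varepsilon_{k-1}}\bigl(\sigma\sqrt{\log N(\mathcal{T},d_{\mathcal{T}},\varepsilon)}+c\log N(\mathcal{T},d_{\mathcal{T}},\varepsilon)\bigr)\,d\varepsilon \le (\varepsilon_{k-1}-\varepsilon_k)\bigl(\sigma\sqrt{\log N_k}+c\log N_k\bigr),
\end{equation*}
the opposite direction of what you need. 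The fix is to place the $k$-th term on the interval to the \emph{left} of $\varepsilon_k$, namely $[\varepsilon_{k+1},\varepsilon_k]$, where $\varepsilon\le\varepsilon_k$ and hence $N(\mathcal{T},d_{\mathcal{T}},\varepsilon)\ge N_k$; the cost is that $\varepsilon_k-\varepsilon_{k+1}=\varepsilon_k/2$, giving a factor of $2$. This is precisely how the paper's indexing works (compare $12\cdot 2^{-k-1}=6\cdot 2^{-k}$ there).

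Second, your doubling to $2N_k^2$ variables so as to control $\mathbb{E}\sup_t|X_{\pi_k(t)}-X_{\pi_{k-1}(t)}|$ is unnecessary and makes the constant $12$ unattainable. Since $\sup_t\sum_k Y_k(t)\le\sum_k\sup_t Y_k(t)$, one can apply the maximal inequality directly to the mean-zero increments themselves (at most $N_{k-1}N_k\le N_k^2$ of them) without taking absolute values; this yields exactly $6\varepsilon_k\sigma\sqrt{\log N_k}+6\varepsilon_kc\log N_k$ per layer, and combined with the corrected sum-to-integral conversion this gives the constant $12$. With your $\log(2N_k^2)\le 3\log N_k$ bound the coefficients come out to roughly $3\sqrt{6}\approx 7.35$ and $9$ per layer, which after the extra factor of $2$ overshoot $12$; so the ``bookkeeping'' you defer is not merely routine but requires dropping the absolute-value step entirely.
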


\begin{proof}
This proof closely follows the proof of the corresponding result for sub-Gaussian processes as in \cite[Theorem 8.1.4]{hdp}. Separability of the process guarantees that we can assume $\mathcal{T}$ is finite without loss of generality. Set $\varepsilon_k = 2^{-k}$ for all $k \in \mathbb{Z}$ and let $\mathcal{T}_k$ to be an optimum $\varepsilon_k$-net of $\mathcal{T}$, i.e. $|\mathcal{T}_k| = N_k =: N(\mathcal{T},d_{\mathcal{T}},2^{-k})$. Since $\mathcal{T}$ is finite, without loss of generality we can choose these nets in such a way that $\mathcal{T}_k=\{t_0\}$ for all $k \leq \kappa$ and $\mathcal{T}_k=\mathcal{T}$ for all $k \geq K$; where $\kappa < K \in \mathbb{Z}$. For a point $t \in \mathcal{T}$, let $\pi_k(t)$ be a closest point of $t$ in $\mathcal{T}_k$. Observe that $\pi_k(t)=t$ for $k \geq K$. Therefore, for any $t \in \mathcal{T}$,
\begin{equation}{\label{series}}
 X_t-X_{t_0} = ( X_{\pi_{\kappa}(t)} - X_{t_0} ) + \sum_{k=\kappa}^{K-1} ( X_{\pi_{k+1}(t)} - X_{\pi_k(t)} ) + ( X_t-  X_{\pi_K(t)}). 
\end{equation}
The first and last term in the right hand side of \Cref{series} being zero, we can conclude the following.
\begin{equation}{\label{expec}}
\mathbb{E} \sup_{t \in \mathcal{T}} X_t = \mathbb{E} \sup_{t \in \mathcal{T}} (X_t-X_{t_0} ) \leq \sum_{k=\kappa}^{K-1}  \mathbb{E} \sup_{t \in \mathcal{T}}( X_{\pi_{k+1}(t)} - X_{\pi_k(t)} ).
\end{equation}
By construction, 
$$d_{\mathcal{T}}(\pi_{k+1}(t),\pi_k(t) ) \leq d_{\mathcal{T}}(\pi_{k+1}(t),t ) + d_{\mathcal{T}}(t,\pi_k(t) ) \leq \varepsilon_{k+1}+\varepsilon_k \leq 32^{-k-1}, $$
and hence $X_{\pi_{k+1}(t)} - X_{\pi_k(t)} \in \mathscr{G}( 9 \times 2^{-2k-2}\sigma^2,3 \times 2^{-k-1}c).$ Also, observe that $(\pi_{k+1}(t),\pi_k(t) )$ can take at most $N_kN_{k+1} \leq N_{k+1}^2$ many values. Applying Proposition~\Cref{subg:maxc} therefore yields the following.
\begin{align*}
\mathbb{E} \sup_{t \in \mathcal{T}} X_t &\leq \sum_{k=\kappa}^{K-1}  \biggl[\sqrt{18 \times 2^{-2k-2}\sigma^2 \log N_{k+1}^2} + 3 \times 2^{-k-1}c \log N_{k+1}^2 \biggr] \\
& = \sum_{k=\kappa+1}^{K}  \biggl[6 \sigma  \times 2^{-k}\sqrt{\log N_{k}} + 6c \times 2^{-k} \log N_k \biggr] \\
&\leq 12 \sum_{k \in \mathbb{Z}} \int_{2^{-k-1}}^{2^{-k}} \biggl[ \sigma  \sqrt{\log N(\mathcal{T},d_{\mathcal{T}},2^{-k}) } + c  \log N(\mathcal{T},d_{\mathcal{T}},2^{-k}) \biggr] \, d\varepsilon \\
& \leq 12 \int_{0}^{\infty}  \biggl[ \sigma  \sqrt{\log N(\mathcal{T},d_{\mathcal{T}},\varepsilon) } + c  \log N(\mathcal{T},d_{\mathcal{T}},\varepsilon) \biggr] \, d\varepsilon.
\end{align*}
The integral clearly can be restricted to the region $[0,\operatorname{diam}(\mathcal{T})]$. This completes the proof.
\end{proof}

\subsection{Supplementary materials for Section~\ref{intro}}

\begin{lmm}{\label{argmincont}}
	Let $\mathcal{X}$ and $\mathcal{Y}$ be two topological spaces with $\mathcal{Y}$ being Hausdorff and  compact. Let $\psi : \mathcal{X} \times \mathcal{Y} \to \mathbb{R}$ be a continuous function such that for all $x \in \mathcal{X}$, there exists a unique $\varphi(x) \in \mathcal{Y}$ satisfying $\psi(x,\varphi(x)) = \inf_{y \in \mathcal{Y}} \psi(x,y)$. Then the map $\varphi : \mathcal{X} \to \mathcal{Y}$ is continuous. 
\end{lmm}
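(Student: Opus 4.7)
The plan is a standard Berge-maximum-theorem style argument, adapted to the general (not necessarily metrizable) topological setting by using nets. I would prove continuity by contradiction: suppose $\varphi$ fails to be continuous at some $x_0 \in \mathcal{X}$, and derive a contradiction with the uniqueness of the minimizer.

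First I would fix an open neighborhood $U \subseteq \mathcal{Y}$ of $\varphi(x_0)$ witnessing discontinuity, so that for every open neighborhood $V$ of $x_0$ there is some $x \in V$ with $\varphi(x) \in \mathcal{Y} \setminus U$. Indexing by the directed set of open neighborhoods of $x_0$ (ordered by reverse inclusion), this yields a net $(x_\alpha)$ with $x_\alpha \to x_0$ and $\varphi(x_\alpha) \in \mathcal{Y} \setminus U$ for every $\alpha$. Because $\mathcal{Y}$ is compact Hausdorff, the net $(\varphi(x_\alpha))$ has a convergent subnet $\varphi(x_{\alpha_\beta}) \to y^\ast$ for some $y^\ast \in \mathcal{Y}$, and since $\mathcal{Y} \setminus U$ is closed, $y^\ast \in \mathcal{Y} \setminus U$; in particular $y^\ast \neq \varphi(x_0)$.

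Next I would exploit joint continuity of $\psi$. On the one hand, $(x_{\alpha_\beta}, \varphi(x_{\alpha_\beta})) \to (x_0, y^\ast)$, so
\[
\psi(x_{\alpha_\beta}, \varphi(x_{\alpha_\beta})) \;\longrightarrow\; \psi(x_0, y^\ast).
\]
On the other hand, $(x_{\alpha_\beta}, \varphi(x_0)) \to (x_0, \varphi(x_0))$, so
\[
\psi(x_{\alpha_\beta}, \varphi(x_0)) \;\longrightarrow\; \psi(x_0, \varphi(x_0)).
\]
The minimizing property of $\varphi(x_{\alpha_\beta})$ at the point $x_{\alpha_\beta}$ gives
\[
\psi(x_{\alpha_\beta}, \varphi(x_{\alpha_\beta})) \;\leq\; \psi(x_{\alpha_\beta}, \varphi(x_0)),
\]
and passing to the limit along the subnet yields $\psi(x_0, y^\ast) \leq \psi(x_0, \varphi(x_0))$. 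By the uniqueness assumption this forces $y^\ast = \varphi(x_0)$, contradicting $y^\ast \in \mathcal{Y} \setminus U$.

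There is no real obstacle here; the only subtlety worth being careful about is that $\mathcal{X}$ is only assumed to be a topological space, so sequences are not enough and one must work with nets (or equivalently, use the open-neighborhood filter at $x_0$). Compactness of $\mathcal{Y}$ is used exactly once, to extract a convergent subnet of $(\varphi(x_\alpha))$; the Hausdorff hypothesis ensures the subnet limit $y^\ast$ is unique and is the value fed into $\psi$ in the limit. Uniqueness of the minimizer is used exactly once, in the final step to identify $y^\ast$ with $\varphi(x_0)$.
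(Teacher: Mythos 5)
Your proof is correct, but it takes a different route from the paper. The paper does not argue by contradiction with nets; instead it invokes the closed graph theorem for maps into a compact Hausdorff space and verifies directly that $\operatorname{Gr}(\varphi)^c$ is open: given $(x,y)$ with $y \neq \varphi(x)$, it picks a value $z$ strictly between $\psi(x,\varphi(x))$ and $\psi(x,y)$, and uses joint continuity to build the open set $W := \psi^{-1}((z,\infty)) \cap \bigl(\{x' : \psi(x',\varphi(x)) < z\} \times \mathcal{Y}\bigr)$, which contains $(x,y)$ and misses the graph because $\psi(x',\varphi(x')) \leq \psi(x',\varphi(x)) < z$ for every $x'$ in the second factor's projection. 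Thus the paper's use of compactness is hidden inside the black-box closed graph theorem, and the explicit part of its argument is a finite open-set construction with a separating level $z$; by contrast, you inline the compactness yourself by extracting a convergent subnet of minimizers (essentially reproving the relevant direction of the closed graph theorem in this special case), which makes your argument self-contained but requires the net machinery since $\mathcal{X}$ is an arbitrary topological space. Both proofs use joint continuity and invoke uniqueness of the minimizer exactly once, at the same identification step ($y^* = \varphi(x_0)$ for you, $(x',\varphi(x')) \notin U$ for the paper), and your handling of the topological subtleties (nets rather than sequences, closedness of $\mathcal{Y}\setminus U$ under net limits, subnets inheriting the limit $x_0$) is sound; the Hausdorff hypothesis is in fact not essential to your argument, since you only need one cluster point of the net of minimizers.
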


\begin{proof}
	By the closed graph theorem, it is enough to show that the graph of $\varphi$, defined as the set  $\operatorname{Gr}(\varphi) := \{(x,\varphi(x)) : x \in \mathcal{X}\}$, is closed in $\mathcal{X} \times \mathcal{Y}$; or equivalently $\operatorname{Gr}(\varphi)^c$ is open. Fix $(x,y) \in \operatorname{Gr}(\varphi)^c$ implying $y \neq \varphi(x)$, and get $z \in (\psi(x,\varphi(x)),\psi(x,y))$. Since $\psi$ is continuous, we conclude that $U:= \psi^{-1}((z,\infty))$ is an open set in $\mathcal{X} \times \mathcal{Y}$ containing $(x,y)$. On the other hand, the function $ x^{\prime} \mapsto \psi(x^{\prime},\varphi(x))$ being continuous, we can also claim that  $V:= \{x^{\prime} : \psi(x^{\prime},\varphi(x)) < z\}$ is an open set in $\mathcal{X}$ containing $x$. For any $x^{\prime} \in V$, we have $\psi(x^{\prime},\varphi(x^{\prime})) \leq \psi(x^{\prime},\varphi(x)) < z$ and hence $(x^{\prime},\varphi(x^{\prime})) \notin U$. Building upon these observations, we claim that $W:= U \cap (V \times \mathcal{Y})$ is an open set in $\mathcal{X} \times \mathcal{Y}$ containing $(x,y)$ but is disjoint from $\operatorname{Gr}(\varphi)$. This claim completes the proof. To see why the claim is true, note that openness of $W$ and the fact that it contains $(x,y)$ is clearly evident. If $(x^{\prime},\varphi(x^{\prime})) \in W$, then by definition $x^{\prime} \in V$ and $(x^{\prime},\varphi(x^{\prime})) \in U$, which contradicts our previous observation.
\end{proof}

\begin{proof}[Proof of Lemma \ref{f1}]
	This proof is basically a straightforward computation. Denoting the standard $d$-dimensional Gaussian density by $\varphi(\cdot;d)$, we observe that for any $s >0$ and $w \in \mathbb{R}^d$, the straight line joining $x$ to $x+sw$ lies entirely in $\operatorname{int}(S)$ for any $x \in S \setminus S_{s\|w\|_2}$. Therefore for $p \geq 0$ we have,
	\begin{align}
		&\mathbb{E} \biggl[\|W\|_2^p \biggl(1-\dfrac{f(X+sW)}{f(X)} \biggr)_+\biggr] \nonumber \\ &= \int_{\mathbb{R}^d} \int_{\mathbb{R}^d} \|w\|_2^p ( f(x)-f(x+sw))_{+} \varphi(w;d) \, dx \,  dw \nonumber\\
		& = \int_{\mathbb{R}^d} \int_{S \setminus S_{s\|w\|_2}} \|w\|_2^p ( f(x)-f(x+sw))_{+} \varphi(w;d) \, dx \,  dw \nonumber \\
		& \qquad  + \int_{\mathbb{R}^d} \int_{S_{s\|w\|_2}} \|w\|_2^p ( f(x)-f(x+sw))_{+} \varphi(w;d) \, dx \,  dw \nonumber  \\
		& \leq  \int_{\mathbb{R}^d} \int_{S \setminus S_{s\|w\|_2}} \|w\|_2^p \biggl(  \int_{0}^s w \cdot \nabla f(x+tw) \, dt\biggr)_{-} \varphi(w;d) \, dx \,  dw \nonumber \\
				& \qquad  +  \int_{\mathbb{R}^d} \int_{S_{s\|w\|_2}} \|w\|_2^p f(x) \varphi(w;d) \, dx \,  dw\nonumber \\
		& \leq \int_{0}^s \int_{\mathbb{R}^d} \int_{\mathbb{R}^d} \|w\|_2^{p+1}  \|\nabla f(x+tw)\|_2 \, \varphi(w;d) \, dx \,  dw\, dt +  C(f)\int_{\mathbb{R}^d}  \|w\|_2^ps \varphi(w;d) \,  dw\nonumber \\
		&= \int_{0}^s \int_{\mathbb{R}^d} \int_{\mathbb{R}^d} \|w\|_2^{p+1}  \|\nabla f(y)\|_2 \, \varphi(w;d) \, dy \,  dw\, dt +  C(f)\int_{\mathbb{R}^d}  \|w\|_2^ps \varphi(w;d) \,  dw \nonumber \\
		&= s \mathbb{E}\|W\|_2^{p+1} \int_{\mathbb{R}^d} \|\nabla f(x)\|_2 \, dx + sC(f)\mathbb{E}\|W\|_2^p, \nonumber
	\end{align}
	which completes the proof.
\end{proof}

\begin{proof}[Proof of Lemma~\ref{f2}]
Under the hypothesis of \Cref{f2}, $\mathcal{P}$ has density $f$ with respect to Lebesgue measure given by $f(x)=f_1(x_1)f_2(x_2)$ for all $x=(x_1,x_2) \in \mathbb{R}^d$. The proof now follows from the simple observation that 
$$ (1-uv)_{+} \leq (1-u)_+ + (1-v)_{+}, \; \forall \; u,v \in \mathbb{R},$$
which implies for $p \geq 0$,
\begin{align*}
\mathbb{E}\biggl[\|W\|_2^p \biggl( 1- \dfrac{f(X+sW)}{f(X)}\biggr)_{+} \biggr] &\leq \sum_{i=1}^2 \mathbb{E}\biggl[\|W\|_2^p \biggl( 1- \dfrac{f_i(X_i+sW_i)}{f_i(X_i)}\biggr)_{+}\biggr] \\
& \leq \sum_{i=1}^2 \mathbb{E}\biggl[( \|W_1\|_2+\|W_2\|_2)^p \biggl( 1- \dfrac{f_i(X_i+sW_i)}{f_i(X_i)}\biggr)_{+} \biggr]\\
&  \leq 2^{p \wedge 1}\sum_{i,j=1}^2 \mathbb{E}\biggl[\|W_j\|_2^p \biggl( 1- \dfrac{f_i(X_i+sW_i)}{f_i(X_i)}\biggr)_{+}\biggr] = \bigO(s),
\end{align*}
where $X=(X_1,X_2)$ and $W=(W_1,W_2)$. For the second assertion it is enough to take $a \in \mathbb{R}$, $b>0$ and to show that 
$$ \mathbb{E}\biggl[ \|W\|_2^p \biggl(1- \dfrac{f_{a,b}(a+bX+sW)}{f_{a,b}(a+bX)}\biggr)_{+}\biggr] = \bigO(s), \; \text{ as } s \downarrow 0,$$
where $f_{a,b}$ is the density of $a+bX$ and hence $f_{a,b}(\cdot) = b^{-1}f((\cdot-a)/b)$. Therefore,
\begin{align*}
	\mathbb{E} \biggl[\|W\|_2^p \biggl(1- \dfrac{f_{a,b}(a+bX+sW)}{f_{a,b}(a+bX)}\biggr)_{+}\biggr] = \mathbb{E} \biggl[\|W\|_2^p \biggl(1- \dfrac{f(X+b^{-1}sW)}{f(X)}\biggr)_{+}\biggr] = \bigO(s),
\end{align*}
completing the proof.
\end{proof}

\begin{proof}[Proof of Lemma~\ref{condrho}]
Fix $\alpha \in (0, \gamma_{-} \wedge \gamma_{+} \wedge 1)$ and consider $\rho(x) = |x|^{\alpha} \tanh^2(x)$ for all $x \in \mathbb{R}$. It is easy to observe that $\rho$ is differentiable on $\mathbb{R}$ with uniformly bounded derivative and hence satisfies the conditions \Cref{ass:F1,ass:F2,ass:F3}. $F_{\rho}$ and $f_{\rho}$ will respectively denote the distribution function and density associated with the probability measure $\mathcal{Q}_{\rho}$. It is now enough to show that $h := F^{\leftarrow} \circ F_{\rho}$ is twice continuously differentiable with uniformly bounded first and second order derivatives. 

We start by observing that for any $x \in  \mathbb{R}$, 
\begin{equation}{\label{h}}
 h^{(1)}(x) = \dfrac{f_{\rho}(x)}{f(h(x))},
\end{equation}
whereas 
\begin{equation}{\label{h2}}
 h^{(2)}(x) =  \dfrac{f_{\rho}^{(1)}(x)}{f(h(x))} - \dfrac{f_{\rho}(x)f^{(1)}(h(x))h^{(1)}(x)}{(f(h(x)))^2} = \dfrac{f^{(1)}(h(x))}{f(h(x))} \biggl[ \dfrac{f^{(1)}_{\rho}(x)}{f^{(1)}(h(x))} - \biggl(\dfrac{f_{\rho}(x)}{f(h(x))} \biggr)^2\biggr].
\end{equation}
Since $h$ takes value in $(a_-,a_+)$ and $f$ is strictly positive and continuously differentiable on $(a_-,a_+)$, we can guarantee that $h$ is twice continuously differentiable on any compact subset of the real line; for $h^{(2)}$ we need to consider the first expression rather than the second in \Cref{h2} to validate this argument. Thus it is enough to show that both $h^{(1)},h^{(2)}$ remains bounded as $|x| \to \infty$. We shall write down the proof for the case $x \to -\infty$, the other case is exactly similar. The proof actually shows that $h^{(1)}(x),h^{(2)}(x) \to 0$ as $x \to -\infty$. The case which satisfies the condition in (\Cref{n1}) is trivial  from the expressions in \Cref{h} and \Cref{h2} since $h(x) \downarrow a_-$ as $x \downarrow - \infty$. We therefore consider the remaining cases only.

Suppose that we have already established the fact that $h^{(1)}(x) \to 0$ as $x \to -\infty$. Since $f(h(x))$ converges to either $0$ or $\infty$ as $x \downarrow -\infty$, we apply l'H\^{o}spital's rule to conclude that 
$$ 0 = \lim_{x \to -\infty} h^{(1)}(x) = \lim_{x \to -\infty}  \dfrac{f_{\rho}(x)}{f(h(x))} =  \lim_{x \to -\infty}  \dfrac{f_{\rho}^{(1)}(x)}{f^{(1)}(h(x))h^{(1)}(x)}, $$
and 
\begin{align*} 
\beta_-&= \lim_{y \to a_-} \dfrac{\log f(y)}{\log F(y)} = \lim_{y \to a_-} \dfrac{f^{(1)}(y)/f(y)}{f(y)/F(y)}\\
&\qquad \qquad \qquad  \Rightarrow \dfrac{f^{(1)}(h(x))}{f(h(x))} \sim \beta_-\dfrac{f(h(x))}{F(h(x))} = \beta_-\dfrac{f(h(x))}{F_{\rho}(x)}, \; \text{ as } x \downarrow - \infty.
\end{align*}
Though we only have the convergence for $\log f(y)/\log F(y)$ in case (\Cref{n2}), we shall show that this is indeed true with $\beta_-=1$ for the case satisfying (\Cref{n3}). Combining above two observations we can write that
\begin{align*}
h^{(2)}(x) &= \dfrac{f^{(1)}(h(x))}{f(h(x))} \dfrac{f_{\rho}(x)}{f(h(x))} \biggl[\dfrac{f_{\rho}^{(1)}(x)}{f^{(1)}(h(x))h^{(1)}(x)} - h^{(1)}(x) \biggr] \\
&= \beta_- \dfrac{f_{\rho}(x)}{F_{\rho}(x)} o(1) = o(1), \text{ as } x \to - \infty,
\end{align*}
since application of  l'H\^{o}spital's rule also shows that $f_{\rho}(x) \sim \alpha |x|^{\alpha-1}F_{\rho}(x)$ as $x \to -\infty$. Hence we only need to show that $h^{(1)}(x)$ converges to $0$.

Since $f_{\rho}(x)/F(h(x)) = f_{\rho}(x)/F_{\rho}(x) = o(1)$, the assertion is trivial when $F(y)/f(y) =\bigO(1)$ as $y \downarrow a_-$ . Otherwise we have $a_- = -\infty$ and $\log f(y) \sim -A_-|y|^{\gamma_{-}}$ as $y \to - \infty$. Using L'H\^{o}spital's rule again, we obtain $ f^{(1)}(y) \sim -A_-\gamma_{-}f(y)|y|^{\gamma_{-}-1}$, and thus 
\[
f(y) \sim A_-\gamma_{-}F(y)|y|^{\gamma_{-}-1}
\]
as $y \downarrow - \infty$. This also implies  that $\log F(y) \sim \log f(y) \sim -A_-|y|^{\gamma_{-}}$ as $y \to - \infty$. We already know that $\log F_{\rho}(x) \sim -|x|^{\alpha}$ as $x \to - \infty$ from the definition of $\rho$ and l'H\^{o}spital's rule. Combining these asymptotics with the fact that $F_{\rho}(x)=F(h(x))$, we conclude the following. 
$$ A_-|h(x)|^{\gamma_{-}} \sim |x|^{\alpha}, \; \text{ as } x \downarrow -\infty.$$
On the other hand,
\begin{align*}
h^{(1)}(x) =  \dfrac{f_{\rho}(x)}{f(h(x))} \sim \dfrac{\alpha|x|^{\alpha-1}F_{\rho}(x)}{A_-\gamma_{-}F(h(x))|h(x)|^{\gamma_{-}-1}} &= \dfrac{\alpha}{A_-\gamma_{-}}\dfrac{|x|^{\alpha}}{|h(x)|^{\gamma_-}} \dfrac{|h(x)|}{|x|} \\
& \sim \dfrac{\alpha}{\gamma_-} A_-^{-1/\gamma_-} |x|^{\alpha/\gamma_- -1} = o(1) , \; \text{ as }  x \to - \infty,
\end{align*}
since $\alpha < \gamma_-$. This completes the proof.
\end{proof}

\subsection{Supplementary Materials for Section \ref{sec:lin}}

\begin{lmm}{\label{lem:brw}}
Consider the set-up for Theorem~\ref{brw:thm}. Assume that probability distribution $\mathcal{P}$ has finite mean and is  supported on the interval $(a_1,a_2)$ with $-\infty \leq a_1<a_2 \leq \infty$. Then 
$$ - \inf_{s >0} \dfrac{\Psi(s)}{s} > a_1.$$
Moreover, for any given $\varepsilon, \delta >0$ there exists $b_1,b_2 \in (a_1,a_2)$ such that, conditioned on the survival of the tree, 
$$ \liminf_{n \to \infty}  \dfrac{1}{\varepsilon n}   \inf_{v \in D_n} \sum_{e \in I(A_{\lfloor (1-\varepsilon)n \rfloor}(v) \leftrightarrow v)} \mathbbm{1}( b_1 \leq X_e \leq b_2)   \geq 1-\delta, \; \text{almost surely}.$$
The second assertion doesn't need finite mean or exponential moment assumption on $\mathcal{P}$.
\end{lmm}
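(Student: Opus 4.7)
The first assertion is a short convexity calculation. By Remark~\ref{BRW:sol} the function $s \mapsto \Psi(s)/s$ has a unique positive minimizer $s^\ast$, at which $s^\ast\Psi^{(1)}(s^\ast) = \Psi(s^\ast)$; consequently $\inf_{s>0}\Psi(s)/s = \Psi^{(1)}(s^\ast)$, so it suffices to show $\Psi^{(1)}(s^\ast) < -a_1$. When $a_1 = -\infty$ this is trivial, so suppose $a_1 \in \mathbb{R}$. Since $\mathcal{P}$ puts full mass on the open interval $(a_1, a_2)$, we have $X > a_1$ almost surely, whence $\mathbb{E}[(X - a_1)e^{-s^\ast X}] > 0$; rearranging gives $\Psi^{(1)}(s^\ast) = -\mathbb{E}[X e^{-s^\ast X}]/\mathbb{E}[e^{-s^\ast X}] < -a_1$, as required.

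For the second assertion, the plan is a first-moment bound summed over all of $D_n$ plus Borel--Cantelli. Fix $\varepsilon, \delta > 0$; because $\mathcal{P}((a_1, a_2)) = 1$, for every $\eta > 0$ one can find $b_1, b_2 \in (a_1, a_2)$ with $p := \mathbb{P}(X \notin [b_1, b_2]) \leq \eta$. Writing $L_n := n - \lfloor (1-\varepsilon)n\rfloor$ (so that $L_n / n \to \varepsilon$) and
\[
B_v := \sum_{e \in I(A_{\lfloor(1-\varepsilon)n\rfloor}(v) \leftrightarrow v)} \mathbbm{1}(X_e \notin [b_1, b_2])
\]
for $v \in D_n$, the fact that conditional on the tree these are sums of $L_n$ i.i.d.~Bernoulli$(p)$ indicators gives $B_v \mid \mathbb{T} \sim \mathrm{Binomial}(L_n, p)$. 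Linearity of expectation, the identity $\mathbb{E} Z_n = m^n$, and the standard Chernoff bound (valid once $p < \delta$) then yield
\[
\mathbb{E}\sum_{v \in D_n}\mathbbm{1}(B_v \geq \delta\varepsilon n) \;=\; m^n \cdot \mathbb{P}\bigl(\mathrm{Bin}(L_n, p) \geq \delta\varepsilon n\bigr) \;\leq\; \exp\bigl(n[\log m - \varepsilon D(\delta \| p)] + o(n)\bigr),
\]
where $D(\delta\|p)$ is the Kullback--Leibler divergence of two Bernoullis.

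The only quantitative step that needs attention is the tuning of the window: because $D(\delta \| p) \to \infty$ as $p \downarrow 0$ for fixed $\delta > 0$, one can take $\eta$ small enough (hence $b_1, b_2$ close enough to $a_1, a_2$) to ensure $\varepsilon D(\delta\|p) > \log m + 1$, so that the bound above is geometrically summable in $n$. Borel--Cantelli then shows that almost surely $\sum_{v \in D_n}\mathbbm{1}(B_v \geq \delta\varepsilon n) = 0$ for all but finitely many $n$; equivalently, eventually every $v \in D_n$ has at most $\delta\varepsilon n$ ``bad'' edges among its last $L_n$ ancestral edges, leaving at least $L_n - \delta\varepsilon n \geq (1-\delta)\varepsilon n$ good ones. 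Dividing by $\varepsilon n$ and using $L_n/(\varepsilon n) \to 1$ gives $\liminf \geq 1 - \delta$ almost surely on the whole sample space, and in particular almost surely on the survival event. Note that only $\mathbb{E} Z_n = m^n$ and a Binomial tail bound enter, consistent with the lemma's remark that no moment hypothesis on $\mathcal{P}$ is needed for the second assertion.
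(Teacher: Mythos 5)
Your proof is correct, but it follows a genuinely different route from the paper's, most notably for the second assertion. For the first assertion you pass through the interior minimizer $s^\ast$ of $\Psi(s)/s$ and show $\Psi^{(1)}(s^\ast)<-a_1$; the paper avoids the minimizer altogether by writing $\Psi(s)+a_1s=\log m+\log \mathbb{E}\exp(-s(X-a_1))\to-\infty$ as $s\to\infty$ (dominated convergence, since $X-a_1>0$ a.s.), which sidesteps any appeal to the stationarity equation of Remark~\ref{BRW:sol}; your version is fine, though you are implicitly using that $s^\ast$ exists in the interior, which the remark asserts (and which can also be checked directly when $a_1$ is finite). For the second assertion, your argument is a first-moment bound: conditionally on $\mathbb{T}$ the number of ``bad'' edges on the last $L_n=n-\lfloor(1-\varepsilon)n\rfloor$ ancestral edges of each $v\in D_n$ is $\mathrm{Binomial}(L_n,p)$, so the expected number of bad vertices is $m^n\,\mathbb{P}(\mathrm{Bin}(L_n,p)\geq\delta\varepsilon n)$, and choosing $p$ small enough that $\varepsilon D(\delta\|p)>\log m$ makes this geometrically summable, whence Borel--Cantelli. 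The paper instead proves the Gaussian case first by applying Theorem~\ref{brw:thm} to the modified BRW with displacements $-|X_e|$, combining the linear speed $\gamma'$ with the SLLN along the uniformly distributed ancestor $u_n^\ast$ at generation $\lfloor(1-\varepsilon)n\rfloor$ to bound $\inf_{v}(S'_v-S'_{A_{\lfloor(1-\varepsilon)n\rfloor}(v)})/n$ from below, converts this into a count of edges with $|X_e|<b$ via the pointwise bound $X'_e\leq -b+b\mathbbm{1}(X'_e>-b)$, and then treats general $\mathcal{P}$ through the monotone quantile coupling $F^{\leftarrow}\circ\Phi$ applied to Gaussian displacements. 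Your approach is more elementary and self-contained: it uses only $m<\infty$, a Chernoff bound and Borel--Cantelli, needs no detour through the Gaussian case or Theorem~\ref{brw:thm}, and makes completely transparent why no moment assumption on $\mathcal{P}$ is required; the paper's approach buys the avoidance of any large-deviation computation by reusing the already-established linear-speed result, at the price of the modified BRW and the coupling step. Both yield the stated conclusion, including the reduction to the survival event.
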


\begin{proof}
For the first assertion, it is enough to show that there exists $t>0$ with $\psi(t) < -a_1t$. If $a_1=-\infty$, this is trivial from the hypothesis of \Cref{brw:thm}; hence we assume $a_1 \in \mathbb{R}$. Since $\mathcal{P}$ is supported on $(a_1, \infty)$, we have $X-a_1 >0$ with probability $1$ where $X \sim \mathcal{P}$ and therefore $\mathbb{E}\exp(-s(X-a_1)) \to 0$ as $s \to \infty$. In other words,
\begin{align*}
 \Psi(s) + a_1s &= \log m + a_1s + \log \mathbb{E}\exp(-sX) \\
 &= \log m +  \log \mathbb{E}\exp(-s(X-a_1)) \to -\infty, \; \text{ as } s \to \infty,
 \end{align*}
and hence we can get $t>0$ such that $\Psi(t) < -a_1t.$ 

For the second assertion, we first consider the case when $\mathcal{P}$ is standard Gaussian. Consider the modified BRW on the same tree $\mathbb{T}$, but with different displacements defined as $X_e^{\prime} = -|X_e| \sim \mathcal{P}^{\prime},$ for all $e \in E$;
 $\{S_v^{\prime} : v \in V\}$ will be the new vertex positions. The modified displacement distribution $\mathcal{P}^{\prime}$ clearly satisfies assumptions of \Cref{brw:thm} and hence, almost surely on non-extinction of $\mathbb{T}$, we have
$$ 0 \geq \lim_{n \to \infty} \dfrac{1}{n}\min_{v \in D_n} S_v^{\prime} = \gamma^{\prime} := - \inf_{s >0} \dfrac{\log m + \log \mathbb{E} \exp(-sX^{\prime})}{s} > -\infty,$$
where $X^{\prime} \sim \mathcal{P}^{\prime}$. Fix $\varepsilon, \delta >0$ and let $u_n^*$ be the vertex in $\lfloor (1-\varepsilon)n \rfloor$-th generation which has a descendant in the $n$-th generation with largest negative displacement among all the particles in $D_n$ from their ancestor in $\lfloor (1-\varepsilon)n \rfloor$-th generation in the modified BRW. In other words,
$$ v_n^* \in \arg \inf_{v \in D_n} ( S_v^{\prime} - S^{\prime}_{A_{\lfloor (1-\varepsilon)n \rfloor}(v)}),$$
and $u_n^* := A_{\lfloor (1-\varepsilon)n \rfloor}(v_n^*)$. Using independence of displacement variables, it is easy to see that $u_n^* \sim \text{Uniform}(D_{\lfloor (1-\varepsilon)n \rfloor})$, conditioned on the tree $\mathbb{T}$, and hence $S_{u_n^*}^{\prime} \sim \mathcal{P}^{\prime \otimes \lfloor (1-\varepsilon)n \rfloor}.$ Strong law of large numbers guarantees that $S^{\prime}_{u_n^*}/n$ converges almost surely (on the non-extinction of the tree) to $\mu^{\prime}(1-\varepsilon)$ (where $\mu^{\prime}$ is the mean of $\mathcal{P}^{\prime}$) whereas the following simple observation is quite self-evident.
$$ \inf_{v \in D_n} S_v^{\prime} \leq S_{v_n^*}^{\prime} = S_{u_n^*}^{\prime} + \inf_{v \in D_n} ( S_v^{\prime} - S^{\prime}_{A_{\lfloor (1-\varepsilon)n \rfloor}(v)}).$$
We can therefore conclude that
\begin{equation}{\label{liminf}}
 \liminf_{n \to \infty} \dfrac{1}{n} \inf_{v \in D_n} ( S^{\prime}_v - S^{\prime}_{A_{\lfloor (1-\varepsilon)n \rfloor}(v)}) \geq \gamma^{\prime} - \mu^{\prime}(1-\varepsilon), \; \text{ almost surely on } (\mathbb{T}\text{ survives}).
\end{equation}
It is easy to see that for any $b > 0 $, we have $X_e^{\prime} \leq -b+b\mathbbm{1}( X^{\prime}_e > -b)$ and hence
\begin{align*}
 \dfrac{1}{\varepsilon n}  \inf_{v \in D_n} \sum_{e \in I(A_{\lfloor (1-\varepsilon)n \rfloor}(v) \leftrightarrow v)} \mathbbm{1}( X_e^{\prime} > -b) &\geq \dfrac{1}{\varepsilon n}  \inf_{v \in D_n} \sum_{e \in I(A_{\lfloor (1-\varepsilon)n \rfloor}(v) \leftrightarrow v)} \dfrac{X_e^{\prime}+b}{b} \\
 &= \dfrac{1}{\varepsilon n}  \inf_{v \in D_n} \dfrac{ S_v^{\prime} - S^{\prime}_{A_{\lfloor (1-\varepsilon)n \rfloor}(v)}+(n-\lfloor (1-\varepsilon)n \rfloor) b}{b}.
\end{align*}
Taking $\liminf$ on both sides in the above equation and applying \Cref{liminf}, we can conclude that, almost surely on non-extinction of the tree,
$$ \liminf_{n \to \infty}  \dfrac{1}{\varepsilon n}  \inf_{v \in D_n} \sum_{e \in I(A_{\lfloor (1-\varepsilon)n \rfloor}(v) \leftrightarrow v)} \mathbbm{1}( |X_e| < b) \geq \dfrac{\gamma^{\prime} - \mu^{\prime}(1-\varepsilon) + \varepsilon b}{\varepsilon b} = 1- \dfrac{ \mu^{\prime}(1-\varepsilon)-\gamma^{\prime}}{\varepsilon b}.$$
Since the right hand side converges to $1$ as $b$ goes to infinity, we can get large enough $b$ such that the right hand side is at least $1-\delta$.

For the general case, suppose $F$ denotes the distribution function of $\mathcal{P}$ which is supported on $(a_1,a_2)$ and $\Phi$ is the standard Gaussian distribution function. We have $\{X_e : e \in E\} \stackrel{d}{=} \{F^{\leftarrow} \circ \Phi(W_e) : e \in E\}$, where $F^{\leftarrow}$ is the (left-continuous)  inverse of $F$ and $\{W_e : e\in E\}$ is an i.i.d.~collection of standard Gaussian variables conditioned on the tree $\mathbb{T}$. Since $F^{\leftarrow} \circ \Phi$ is non-decreasing,  we can use the previous part and conclude that for any $\varepsilon, \delta>0$, there exists $b \in \mathbb{R}$ satisfying the following inequality almost surely on the non-extinction of the tree.
\begin{align*}
 &\liminf_{n \to \infty}  \dfrac{1}{\varepsilon n}  \inf_{v \in D_n} \sum_{e \in I(A_{\lfloor (1-\varepsilon)n \rfloor}(v) \leftrightarrow v)} \mathbbm{1}( F^{\leftarrow} \circ \Phi (-b) \leq X_e \leq F^{\leftarrow} \circ \Phi(b)) \\
 & \hspace{2 in} \geq \liminf_{n \to \infty}  \dfrac{1}{\varepsilon n}  \inf_{v \in D_n} \sum_{e \in I(A_{\lfloor (1-\varepsilon)n \rfloor}(v) \leftrightarrow v)} \mathbbm{1}( |W_e| < b) \geq 1-\delta.
\end{align*}
Since, $F^{\leftarrow} \circ \Phi(\mathbb{R}) \subseteq (a_1,a_2)$, we can complete the proof with the choices $b_1=F^{\leftarrow} \circ \Phi(-b) $ and $b_2=F^{\leftarrow} \circ \Phi(b) .$
\end{proof}

\begin{lmm}{\label{quadmeanvar}}
Let $\mathbf{X}=(X_1,\ldots,X_n)$ is a vector of independent random variables with mean zero, variance $\sigma^2$ and $\mathbb{E}X_i^4 = \nu$ for all $i\in [n]$. Let $A$ be an $n \times n$ real symmetric matrix. Then $\mathbb{E} \mathbf{X}^{\top}A \mathbf{X} = \sigma^2\operatorname{Tr}(A)$ and 
$$ \operatorname{Var} (\mathbf{X}^{\top}A \mathbf{X} ) = (\nu-3\sigma^4) \operatorname{Tr}(A_{\mathrm{diag}}^2) + 2\sigma^4 \operatorname{Tr}(A^2),$$
where $ A_{\mathrm{diag}}$ is the diagonal matrix of $A$.
\end{lmm}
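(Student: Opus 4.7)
The plan is a direct moment computation, exploiting independence and mean zero of the $X_i$'s to kill all cross terms that do not have every index appearing an even number of times. I would start by expanding $\mathbf{X}^{\top}A\mathbf{X}=\sum_{i,j}A_{ij}X_iX_j$. Since the $X_i$ are independent with mean zero and variance $\sigma^2$, we have $\mathbb{E}[X_iX_j]=\sigma^2\delta_{ij}$, which instantly gives $\mathbb{E}[\mathbf{X}^{\top}A\mathbf{X}]=\sigma^2\sum_i A_{ii}=\sigma^2\operatorname{Tr}(A)$.

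Next I would expand the second moment:
\[
\mathbb{E}[(\mathbf{X}^{\top}A\mathbf{X})^2]=\sum_{i,j,k,l}A_{ij}A_{kl}\,\mathbb{E}[X_iX_jX_kX_l].
\]
By independence and mean zero, the expectation $\mathbb{E}[X_iX_jX_kX_l]$ vanishes unless every index appearing does so with even multiplicity. This leaves exactly four patterns: (a) $i=j=k=l$, contributing $\nu$; (b) $i=j\neq k=l$, contributing $\sigma^4$; (c) $i=k\neq j=l$, contributing $\sigma^4$; and (d) $i=l\neq j=k$, contributing $\sigma^4$. Summing over the four patterns gives
\[
\nu\sum_i A_{ii}^2+\sigma^4\sum_{i\neq k}A_{ii}A_{kk}+\sigma^4\sum_{i\neq j}A_{ij}A_{ij}+\sigma^4\sum_{i\neq j}A_{ij}A_{ji},
\]
where I will use the symmetry $A_{ij}=A_{ji}$ to merge the last two off-diagonal sums.

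The rest is bookkeeping: recognize $\sum_i A_{ii}^2=\operatorname{Tr}(A_{\mathrm{diag}}^2)$, $\sum_{i,k}A_{ii}A_{kk}=(\operatorname{Tr}(A))^2$, and $\sum_{i,j}A_{ij}^2=\operatorname{Tr}(A^2)$, then restore the ``$i=k$'' and ``$i=j$'' diagonals I excluded from cases (b)-(d). After collecting terms,
\[
\mathbb{E}[(\mathbf{X}^{\top}A\mathbf{X})^2]=(\nu-3\sigma^4)\operatorname{Tr}(A_{\mathrm{diag}}^2)+\sigma^4(\operatorname{Tr}(A))^2+2\sigma^4\operatorname{Tr}(A^2),
\]
and subtracting $(\mathbb{E}[\mathbf{X}^{\top}A\mathbf{X}])^2=\sigma^4(\operatorname{Tr}(A))^2$ yields the claimed formula for the variance. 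There is no substantive obstacle; the only thing to be careful about is the inclusion-exclusion bookkeeping when converting sums over distinct-index configurations into the trace quantities $(\operatorname{Tr}(A))^2$, $\operatorname{Tr}(A^2)$, and $\operatorname{Tr}(A_{\mathrm{diag}}^2)$, since the ``$-3\sigma^4$'' coefficient in front of $\operatorname{Tr}(A_{\mathrm{diag}}^2)$ arises precisely from the three diagonal corrections (one from case (b), two from the combined (c)+(d)).
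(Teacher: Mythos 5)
Your proposal is correct and follows essentially the same route as the paper's proof: expand $\mathbb{E}[(\mathbf{X}^{\top}A\mathbf{X})^2]$ over the four even-multiplicity index patterns ($i=j=k=l$ giving $\nu$, and the three pairings giving $\sigma^4$), collect the diagonal corrections into the $(\nu-3\sigma^4)\operatorname{Tr}(A_{\mathrm{diag}}^2)$ term, and subtract $\sigma^4(\operatorname{Tr}(A))^2$. The only cosmetic difference is that the paper computes the mean via $\operatorname{Tr}(A\,\mathbb{E}\mathbf{X}\mathbf{X}^{\top})$ rather than coordinate-wise, which is equivalent.
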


\begin{proof}
Let $A=(a_{ij})_{i,j \in [n]}$. Then 
\begin{align*}
\mathbb{E} \mathbf{X}^{\top}A \mathbf{X} = \mathbb{E} \operatorname{Tr}(A \mathbf{X}\mathbf{X}^{\top}) = \operatorname{Tr} (A \,\mathbb{E}\mathbf{X}\mathbf{X}^{\top}) = \operatorname{Tr} (\sigma^2 A \, \mathrm{Id}_n ) = \sigma^2\operatorname{Tr}(A), 
\end{align*}
whereas
\begin{align*}
\mathbb{E} (\mathbf{X}^{\top}A \mathbf{X})^2 &= \sum_{i,j,k,l=1}^n a_{ij}a_{kl}\mathbb{E}( X_iX_jX_kX_l) \\
&= \sum_{i,j,k,l=1}^n a_{ij}a_{kl} ( \nu\mathbbm{1}_{i=j=k=l}+ \sigma^4\mathbbm{1}_{i=j\neq k=l}+\sigma^4\mathbbm{1}_{i=k \neq j =l}+\sigma^4\mathbbm{1}_{i=l \neq j=k}) \\
& = \nu \sum_{i=1}^n a_{ii}^2 + \sigma^4 \sum_{i=1}^n \biggl[ \sum_{k \neq i} a_{ii}a_{kk} + 2 \sum_{j \neq i} a_{ij}^2\biggr] \\
& = \nu \sum_{i=1}^n a_{ii}^2 + \sigma^4 \sum_{i=1}^n \biggl[ \sum_{k=1}^n  a_{ii}a_{kk} - a_{ii}^2+ 2 \sum_{j=1}^n a_{ij}^2 - 2a_{ii}^2\biggr] \\
& = (\nu-3\sigma^4) \sum_{i=1}^n a_{ii}^2 + \sigma^4 \biggl( \sum_{i=1}^n a_{ii} \biggr)^2 + 2 \sigma^4 \sum_{i,j=1}^n a_{ij}^2 \\
&= (\nu-3\sigma^4) \operatorname{Tr}(A_{\mathrm{diag}}^2) + \sigma^4 ( \operatorname{Tr}(A))^2 +  2\sigma^4 \operatorname{Tr}(A^2).
\end{align*}
This proves the expression for variance.
\end{proof}

\begin{lmm}{\label{xuconv}}
Let $X_1,X_2, \ldots$ be a sequence of i.i.d.~random variables with mean zero, variance $\sigma^2 \in (0, \infty)$ and $\mathbb{E}X_1^4 = \nu < \infty$.  Then there exists $c>0$ such that such that 
$$ a_n := \inf_{(u_1,\ldots,u_n) \in \mathbb{R}^n : \sum_{i=1}^n u_i^2} \mathbb{E} \biggl \rvert \sum_{i=1}^n u_iX_i\biggr \rvert \geq c,\; \forall \; n \geq 1. $$
Therefore, for any small enough  $\varepsilon >0$, there exists $\delta =\delta(\varepsilon)>0$ such that 
$$ \mathbb{P} \biggl( \biggl \rvert \sum_{i=1}^n u_iX_i\biggr \rvert \geq \delta\biggr) \geq \varepsilon, \; \forall\; (u_1, \ldots,u_n) \in \mathbb{R}^n \text{ with } \sum_{i=1}^n u_i^2=1,\; n \geq 1.$$
\end{lmm}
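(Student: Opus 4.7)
The plan is to establish the first assertion by controlling $\mathbb{E}|S_n|$ from below, where $S_n := \sum_{i=1}^n u_i X_i$ for a unit vector $(u_1,\ldots,u_n)$, via an interpolation inequality between the first and fourth absolute moments. Specifically, H\"older's inequality with exponents $3/2$ and $3$ applied to the identity $|S_n|^2 = |S_n|^{2/3} \cdot |S_n|^{4/3}$ gives
\[
\mathbb{E} S_n^2 \;\leq\; \bigl(\mathbb{E}|S_n|\bigr)^{2/3}\bigl(\mathbb{E} S_n^4\bigr)^{1/3},
\]
so that $\mathbb{E}|S_n| \geq (\mathbb{E} S_n^2)^{3/2}/(\mathbb{E} S_n^4)^{1/2}$. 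Since the $X_i$ are independent with mean zero and $\sum u_i^2 = 1$, one immediately has $\mathbb{E} S_n^2 = \sigma^2$, reducing the problem to finding a \emph{uniform} upper bound on $\mathbb{E} S_n^4$ over all unit vectors and all $n \geq 1$.

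The second step is a direct computation: expanding $S_n^4$ and using independence together with $\mathbb{E} X_i = 0$ leaves only the ``all equal'' and ``pairwise'' terms, yielding
\[
\mathbb{E} S_n^4 \;=\; \nu \sum_{i=1}^n u_i^4 + 3\sigma^4\!\!\sum_{i\neq j} u_i^2 u_j^2 \;=\; (\nu - 3\sigma^4)\sum_{i=1}^n u_i^4 + 3\sigma^4,
\]
after using $\sum u_i^2 = 1$. Since $0 \leq \sum u_i^4 \leq 1$, this gives the uniform bound $\mathbb{E} S_n^4 \leq \max(\nu, 3\sigma^4) =: C$. Combining with the interpolation inequality,
\[
a_n \;=\; \inf_{\sum u_i^2 = 1} \mathbb{E}|S_n| \;\geq\; \frac{\sigma^3}{\sqrt{C}} \;=:\; c \;>\; 0,
\]
which is exactly the first assertion. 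This step should be entirely routine; the only point requiring attention is verifying the H\"older exponents, and noting that by Jensen $\nu \geq \sigma^4$ so $C < \infty$ is guaranteed.

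For the second assertion, I would deduce it from the lower bound on $\mathbb{E}|S_n|$ via a truncation argument. For any $\delta > 0$ and any unit vector, Cauchy--Schwarz yields
\[
c \;\leq\; \mathbb{E}|S_n| \;=\; \mathbb{E}\bigl[|S_n|\mathbbm{1}_{|S_n| < \delta}\bigr] + \mathbb{E}\bigl[|S_n|\mathbbm{1}_{|S_n| \geq \delta}\bigr] \;\leq\; \delta + \sigma\,\sqrt{\mathbb{P}(|S_n| \geq \delta)},
\]
so that $\mathbb{P}(|S_n| \geq \delta) \geq (c-\delta)^2/\sigma^2$. Taking $\delta = \delta(\varepsilon)$ small enough that $(c-\delta)^2/\sigma^2 \geq \varepsilon$ (which is possible whenever $\varepsilon < c^2/\sigma^2$) completes the proof. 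Since no step involves serious analytic difficulty, I do not anticipate a real obstacle here; the only subtlety worth flagging is that the argument is entirely $n$-uniform precisely because the fourth-moment bound $C$ depends only on $\sigma^2$ and $\nu$, not on $n$ or on the particular unit vector chosen.
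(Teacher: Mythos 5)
Your proof is correct, but it takes a genuinely different route from the paper's. For the first assertion the paper argues by contradiction: it assumes $a_n \downarrow 0$, picks near-optimal unit vectors $\boldsymbol{u}_k$, uses its quadratic-form moment lemma (Lemma~\ref{quadmeanvar}) to show that $(\mathbf{X}_k^{\top}\boldsymbol{u}_k)^2$ has uniformly bounded variance, hence is uniformly integrable, and then invokes Vitali's convergence theorem to upgrade the $L^1$ convergence to $L^2$ convergence, contradicting $\mathbb{E}(\mathbf{X}_k^{\top}\boldsymbol{u}_k)^2=\sigma^2$; for the second assertion it applies the Paley--Zygmund inequality. You instead get the first assertion directly from the $L^1$--$L^2$--$L^4$ H\"older interpolation $\mathbb{E}S_n^2 \le (\mathbb{E}|S_n|)^{2/3}(\mathbb{E}S_n^4)^{1/3}$ together with the same fourth-moment expansion ($\mathbb{E}S_n^4=(\nu-3\sigma^4)\sum_i u_i^4+3\sigma^4\le\max(\nu,3\sigma^4)$), and the second assertion from a truncation plus Cauchy--Schwarz bound, which is essentially Paley--Zygmund unwound by hand. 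Your exponent bookkeeping and the fourth-moment computation are both right, and the bound is $n$-uniform for exactly the reason you flag. What your route buys is an explicit constant $c=\sigma^3/\sqrt{\max(\nu,3\sigma^4)}$ and an explicit $\delta(\varepsilon)$, with no compactness or uniform-integrability machinery; what the paper's route buys is reuse of a lemma it needs elsewhere and a softer argument that would survive if one only had uniform integrability of the squares rather than a clean fourth-moment formula. Either proof is acceptable.
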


\begin{proof}
We shall prove by contradiction. First note that, the sequence $a_n$ is non-increasing and hence if the statement of the lemma is false, then $a_n \downarrow 0$ as $n \to \infty$. In particular, for all $k \geq 1$, we can find $\boldsymbol{u}_{k}=(u_{k,1},\ldots,u_{k,k}) \in \mathbb{R}^{k}$ such that $\sum_{i=1}^k X_iu_{k,i}$ converges to $0$ in $L^1$ as $k \to \infty$. Introduce the notation $\mathbf{X}_k=(X_1,\ldots,X_k)$. By \Cref{quadmeanvar}, we have
$$ \mathbb{E}\biggl[\biggl(\sum_{i=1}^k X_iu_{k,i} \biggr)^2\biggr] = \mathbb{E} \mathbf{X}_k^{\top}\boldsymbol{u}_{k}\boldsymbol{u}_{k}^{\top}\mathbf{X}_k = \sigma^2 \operatorname{Tr}(\boldsymbol{u}_{k}\boldsymbol{u}_{k}^{\top} )= \sigma^2 \|\boldsymbol{u}_{k}\|_2^2 =\sigma^2,$$
whereas
\begin{align*}
\operatorname{Var}\biggl[\biggl(\sum_{i=1}^k X_iu_{k,i} \biggr)^2 \biggr) &= \operatorname{Var}( \mathbf{X}_k^{\top}\boldsymbol{u}_{k}\boldsymbol{u}_{k}^{\top}\mathbf{X}_k ) \\
&= (\nu-3\sigma^4) \operatorname{Tr}((\boldsymbol{u}_{k}\boldsymbol{u}_{k}^{\top})_{\mathrm{diag}}^2) + \sigma^4 ( \operatorname{Tr}(\boldsymbol{u}_{k}\boldsymbol{u}_{k}^{\top}))^2 +  2\sigma^4 \operatorname{Tr}((\boldsymbol{u}_{k}\boldsymbol{u}_{k}^{\top})^2) \\
&= (\nu-3\sigma^4)\sum_{i=1}^k u_{k,i}^4 + 3\sigma^4 \leq (\nu-3\sigma^4)\|\boldsymbol{u}_{k} \|_{\infty}^2\|\boldsymbol{u}_{k} \|_{2}^2 + 3\sigma^4 \leq \nu.
\end{align*}
This shows that $\{(\mathbf{X}_k^{\top}\boldsymbol{u}_{k})^2 : k \geq 1\}$ is uniformly integrable and hence, by Vitali's convergence theorem (see \cite[Theorem 22.7]{schilling} for reference), $\mathbb{E}(\mathbf{X}_k^{\top}\boldsymbol{u}_{k})^2 \to 0$ as $k \to \infty$, which gives a contradiction.

For the second part, we apply the Paley--Zygmund inequality (see \cite[Exercise 2.4]{boucheron} for the statement) to note that for any $(u_1, \ldots,u_n) \in \mathbb{R}^n$ with $\sum_{i=1}^n u_i^2=1$ and $\theta \in [0,1]$,
\begin{align*}
\mathbb{P} \biggl( \biggl \rvert \sum_{i=1}^n u_iX_i\biggr \rvert \geq \theta c \biggr) &\geq \mathbb{P} \biggl( \biggl \rvert \sum_{i=1}^n u_iX_i\biggr \rvert \geq \theta \mathbb{E}  \biggl \rvert \sum_{i=1}^n u_iX_i\biggr \rvert \biggr) \\
& \geq (1-\theta)^2 \dfrac{( \mathbb{E} \rvert \sum_{i=1}^n u_iX_i\rvert )^2}{\mathbb{E}(\sum_{i=1}^k u_{i}X_i )^2} \geq \dfrac{c^2(1-\theta)^2}{\sigma^2}.
\end{align*} 
Therefore, for any $\varepsilon \in (0,c^2/\sigma^2)$, we can take $\delta(\varepsilon) = c-\sqrt{\varepsilon}\sigma >0$, which satisfies the the second assertion in the statement of the lemma.
\end{proof}

\begin{lmm}{\label{interlace}}
Let $A =(a_{ij})_{i,j \in [n]}$ be a real symmetric matrix of order $n \times n$ with eigenvalues $\lambda_1 \leq \cdots \leq \lambda_n$. For $k \in \{1,\ldots,n\}$, let $B_k$ be the symmetric matrix of order $ k \times k$ defined as $B_k := (a_{ij})_{i,j \in [k]}$, i.e., $B_k$ is obtained by considering only the first $k$ many rows and columns of $A$. Let $\mu_k^{+}$ and $\mu_k^-$ be the largest and the smallest eigenvalues of $B_k$ respectively. Then $\mu_{n-k+1}^- \leq \lambda_k \leq \mu_k^+$ for all $1 \leq k \leq n.$ As for $k=1,n$ case, we have one-sided equality : $\mu_{n}^- = \lambda_1, \lambda_n = \mu_n^{+}$. 
\end{lmm}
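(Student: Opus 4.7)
The plan is to deduce both inequalities directly from the Courant--Fischer min-max characterization of the eigenvalues of a real symmetric matrix. Recall that for a symmetric matrix $M$ of order $m \times m$ with eigenvalues $\nu_1 \leq \cdots \leq \nu_m$, we have for every $k \in [m]$,
$$\nu_k \;=\; \min_{\substack{V \subseteq \mathbb{R}^m \\ \dim V = k}} \;\max_{\substack{v \in V \\ \|v\|_2 = 1}} v^{\top} M v \;=\; \max_{\substack{V \subseteq \mathbb{R}^m \\ \dim V = m-k+1}} \;\min_{\substack{v \in V \\ \|v\|_2 = 1}} v^{\top} M v.$$
The key observation tying $A$ to its principal submatrices is that, for every $k \in [n]$, the subspace
$$W_k := \{v = (v_1, \ldots, v_n) \in \mathbb{R}^n : v_{k+1} = \cdots = v_n = 0\}$$
has dimension $k$, and the restriction of the quadratic form $v \mapsto v^{\top} A v$ to $W_k$ coincides, after identifying $W_k$ with $\mathbb{R}^k$ via the first $k$ coordinates, with $\tilde{v} \mapsto \tilde{v}^{\top} B_k \tilde{v}$. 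This makes $W_k$ the natural test subspace for the min-max formulas.

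To prove the upper bound $\lambda_k \leq \mu_k^+$, I would apply the first min-max formula for $\lambda_k$ and simply plug in $V = W_k$. Since the maximum of $v^{\top} A v$ over unit vectors in $W_k$ equals the maximum of $\tilde{v}^{\top} B_k \tilde{v}$ over unit vectors in $\mathbb{R}^k$, which is $\mu_k^+$ by the Rayleigh quotient characterization, this gives $\lambda_k \leq \mu_k^+$. For the lower bound $\mu_{n-k+1}^- \leq \lambda_k$, I would use the second (dual) min-max formula and take $V = W_{n-k+1}$, which has dimension $n-k+1$. The infimum of $v^{\top} A v$ over unit vectors in $W_{n-k+1}$ equals the smallest eigenvalue of $B_{n-k+1}$, namely $\mu_{n-k+1}^-$, and the dual min-max formula then yields $\lambda_k \geq \mu_{n-k+1}^-$.

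The boundary equalities at $k = 1$ and $k = n$ are immediate: since $B_n = A$ by construction, the smallest eigenvalue of $B_n$ is $\lambda_1$ (so $\mu_n^- = \lambda_1$), while the largest eigenvalue of $B_n$ is $\lambda_n$ (so $\mu_n^+ = \lambda_n$). There is no genuine obstacle in the proof; the entire argument is a direct application of Courant--Fischer, with the only mild subtlety being the book-keeping of which subspace to use for which of the two min-max formulas in order to hit the correct eigenvalue index on the submatrix side.
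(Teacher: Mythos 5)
Your proof is correct and takes essentially the same route as the paper: both use the variational (Courant--Fischer) characterization of eigenvalues and test against the coordinate subspace $W_k$ to obtain $\lambda_k \leq \mu_k^+$. The only cosmetic difference is in the lower bound: you invoke the dual (max-min) form of Courant--Fischer directly with $V = W_{n-k+1}$, whereas the paper first establishes the upper bound for all $k$ and then applies it to $-A$, using the sign-flip relation between eigenvalues of $A$ and $-A$; the two derivations are equivalent.
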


\begin{proof}
The assertions $\lambda_{1}=\mu_n^+$ and $\mu_{n}^- = \lambda_1$  are trivial since $A=B_n$. For $1 \leq k \leq n$, we shall use the following variational characterization of eigenvalues.
\begin{equation}
\lambda_k = \inf \biggl\{ \sup_{ \boldsymbol{x} \in U : \|\boldsymbol{x}\|_2=1} \boldsymbol{x}^{\top}A\boldsymbol{x} \; \biggl\rvert \; U \text{ linear subspace of } \mathbb{R}^n, \operatorname{dim}(U)=k \biggr\}.
\end{equation}
Now consider the subspace $U_k := \{\boldsymbol{x}=(x_1,\ldots,x_n) : \; x_i=0, \; \text{ for all } i > k\}$. This is a $k$-dimensional subspace of $\mathbb{R}^n$ and 
$$ \sup_{ \boldsymbol{x} \in U_k : \|\boldsymbol{x}\|_2=1} \boldsymbol{x}^{\top}A\boldsymbol{x} = \sup_{\substack{x_1, \ldots,x_k \in \mathbb{R} \\ \sum_{i=1}^k x_i^2 =1}} \sum_{i,j=1}^k a_{ij}x_ix_j = \sup_{ \boldsymbol{x} \in \mathbb{R}^k : \|\boldsymbol{x}\|_2=1} \boldsymbol{x}^{\top}B_k\boldsymbol{x} =\mu_k^+,$$
and hence 
$$ \mu_k^+ = \sup_{ \boldsymbol{x} \in U_k : \|\boldsymbol{x}\|_2=1} \boldsymbol{x}^{\top}A\boldsymbol{x} \geq \inf \biggl\{ \sup_{ \boldsymbol{x} \in U : \|\boldsymbol{x}\|_2=1} \boldsymbol{x}^{\top}A\boldsymbol{x} \; \biggl \rvert \; U \text{ linear subspace of } \mathbb{R}^n, \operatorname{dim}(U)=k \biggr\} = \lambda_k.$$

For the other inequality, we apply what we already proved to the symmetric matrix $-A$. Note that the eigenvalues of $-A$ are $-\lambda_n \leq \cdots \leq -\lambda_1$ and the largest eigenvalue of $-B_k$ is $-\mu_k^{-}$. Therefore, for any $1 \leq k \leq n$, we have $-\lambda_{n-k+1} \leq - \mu_{k}^-$ which yields $\mu_{k}^- \leq \lambda_{n-k+1}$. Substituting $k$ by $n-k+1$, we get our required inequality. 
\end{proof}

\begin{lmm}{\label{gapestimate}}
Consider the setup of Lemma~\ref{interlace}. Let $\boldsymbol{a}:=(a_{ni})_{1 \leq i \leq n-1}$, and  let $\boldsymbol{x}=(x_1,\ldots,x_{n-1})$ be an orthonormal eigenvector of $B_{n-1}$ corresponding the eigenvalue $\mu_{n-1}$. Then for any $\varepsilon \in [0,1]$, we have 
$$ \lambda_n - \mu_{n-1} \geq 2\sqrt{\varepsilon(1-\varepsilon)} \big \rvert \boldsymbol{a}^{\top}\boldsymbol{x}\big \rvert - \varepsilon \mu_{n-1} + \varepsilon a_{nn} .$$
\end{lmm}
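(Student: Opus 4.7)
The plan is to prove this by constructing an explicit test vector and invoking the variational characterization
\[
\lambda_n = \sup_{\boldsymbol{y} \in \mathbb{R}^n : \|\boldsymbol{y}\|_2 = 1} \boldsymbol{y}^{\top} A \boldsymbol{y}.
\]
The proof of Lemma~\ref{interlace} used the subspace spanned by the first $n-1$ coordinate vectors to get $\mu_{n-1} \leq \lambda_n$ by taking the supremum over $\{x_n = 0\}$. The present lemma is a quantitative refinement: by allowing a small component $\pm\sqrt{\varepsilon}$ in the last coordinate, one can gain strictly more than $\mu_{n-1}$, and the size of the gain is captured by the off-diagonal interaction $\boldsymbol{a}^{\top}\boldsymbol{x}$.

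Concretely, I would set $\boldsymbol{y}_{\pm} := (\sqrt{1-\varepsilon}\,\boldsymbol{x}^{\top}, \pm\sqrt{\varepsilon})^{\top} \in \mathbb{R}^n$. Since $\|\boldsymbol{x}\|_2 = 1$, we have $\|\boldsymbol{y}_{\pm}\|_2 = 1$ for every $\varepsilon \in [0,1]$. Writing $A$ in block form as
\[
A = \begin{pmatrix} B_{n-1} & \boldsymbol{a} \\ \boldsymbol{a}^{\top} & a_{nn} \end{pmatrix},
\]
and using $B_{n-1}\boldsymbol{x} = \mu_{n-1}\boldsymbol{x}$, a direct calculation gives
\[
\boldsymbol{y}_{\pm}^{\top} A \boldsymbol{y}_{\pm} = (1-\varepsilon)\mu_{n-1} \pm 2\sqrt{\varepsilon(1-\varepsilon)}\,\boldsymbol{a}^{\top}\boldsymbol{x} + \varepsilon a_{nn}.
\]
Choosing the sign so that $\pm\boldsymbol{a}^{\top}\boldsymbol{x} = |\boldsymbol{a}^{\top}\boldsymbol{x}|$ and applying the variational characterization yields
\[
\lambda_n \geq (1-\varepsilon)\mu_{n-1} + 2\sqrt{\varepsilon(1-\varepsilon)}\,|\boldsymbol{a}^{\top}\boldsymbol{x}| + \varepsilon a_{nn},
\]
which rearranges to the claimed bound. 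There is no real obstacle here; the only thing to be careful about is choosing the weights $\sqrt{1-\varepsilon}$ and $\sqrt{\varepsilon}$ so that the test vector is unit-norm and the cross term appears with coefficient $2\sqrt{\varepsilon(1-\varepsilon)}$ as required for the downstream application in Proposition~\ref{eigtight2}.
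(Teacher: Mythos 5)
Your proof is correct and is essentially identical to the paper's argument: both construct the unit test vector $(\sqrt{1-\varepsilon}\,\boldsymbol{x}^{\top}, \pm\sqrt{\varepsilon})^{\top}$, with the sign chosen to align $\boldsymbol{a}^{\top}\boldsymbol{x}$ with its absolute value, expand the quadratic form using $\boldsymbol{x}^{\top} B_{n-1}\boldsymbol{x} = \mu_{n-1}$, and invoke the variational characterization $\lambda_n \geq \boldsymbol{y}^{\top} A \boldsymbol{y}$. The only difference is cosmetic (you defer the sign choice; the paper fixes $s = \operatorname{sgn}(\boldsymbol{a}^{\top}\boldsymbol{x})$ at the outset).
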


\begin{proof}
Set $s= \operatorname{sgn}(\boldsymbol{a}^{\top}\boldsymbol{x})$ and define  $\boldsymbol{y}:=(\sqrt{1-\varepsilon}x_1,\ldots,\sqrt{1-\varepsilon}x_{n-1},s\sqrt{\varepsilon})$ is an unit vector in $\mathbb{R}^n$ and thus 
\begin{align*}
\lambda_n \geq \boldsymbol{y}^{\top}A\boldsymbol{y} &= (1-\varepsilon)\sum_{i=1}^{n-1}a_{ij}x_ix_j + 2s\sqrt{\varepsilon(1-\varepsilon)}\sum_{i=1}^{n-1}a_{ni}x_i + \varepsilon a_{nn} \\
& = (1-\varepsilon)\mu_{n-1} + 2\sqrt{\varepsilon(1-\varepsilon)}\big \rvert \boldsymbol{a}^{\top}\boldsymbol{x}\big \rvert+ \varepsilon a_{nn},
\end{align*}
which completes the proof.
\end{proof}

\begin{proof}[Proof of Proposition \ref{eigen:expecbound2}]
Without loss of generality, we can assume that $\sigma=1$. The proof of part (a) follows closely the arguments of \cite[Lemma 2.1.23]{anderson}.
\begin{align*}
\mathbb{E} [(\lambda_n(\mathbf{M}_n^{\top}\mathbf{M}_n) )^{k}] &\leq \mathbb{E}\biggl[ \sum_{i=1}^n (\lambda_i(\mathbf{M}_n^{\top}\mathbf{M}_n))^k\biggr] \nonumber \\ 
&= \mathbb{E} [ \operatorname{Tr}[(\mathbf{M}_n^{\top}\mathbf{M}_n )^{k}]] \nonumber \\
& = \mathbb{E} \sum_{i_1, \ldots,i_k \in [m_n]} \sum_{j_1, \ldots,j_k \in [n]} X_{n,i_1j_1}X_{n,i_1j_2}X_{n,i_2j_2}X_{n,i_2j_3}\cdots X_{n,i_kj_k}X_{n,i_kj_1}.  
\end{align*}
For any $i_1, \ldots,i_k \in [m_n], j_1, \ldots,j_k \in [n]$, we can consider the bipartite graph with vertex sets being $\{i_1,\ldots,i_k\}$ and $\{j_1,\ldots,j_k\}$, along with the $2k$ edges connecting $i_{\ell}$ to $j_{\ell}$ and $j_{\ell+1}$ for all $\ell=1,\ldots,k$ (where $j_{k+1}:=j_1$); of course the graph can have multiple edges connecting the same two vertices. The edges of this graph clearly form an Euler circuit. Let us denote by $\mathcal{BG}(n;k,t)$ to be the set of all such graphs with $t$ vertices, $2k$ will be called the length of the graph.  For any such graph $G \in \mathcal{BG}(n;k,t)$ and edge $e=\{i,j\} \in [m_n] \times [n]$, let $N_e(G)$ denotes the number of times the edge appears in the graph $G$. Then from our computation above and independence of matrix entries, we have 
\begin{equation}{\label{expec:wish}}
\mathbb{E} [(\lambda_n(\mathbf{M}_n^{\top}\mathbf{M}_n) )^{k}] \leq \sum_{t=2}^{2k} \sum_{G \in \mathcal{BG}(n;k,t)} \prod_{e \in [m_n] \times [n]} \mathbb{E} ( X_{n,e}^{N_e(G)}),
\end{equation}
where $X_{n,e}:=X_{n,ij}$ for $e=\{i,j\}$. For any $u \geq 1$, let $E(G,u):=\{e : N_e(G) = u\}$, whereas $E(G) :=\{e : N_e(G) >0\}$, the set of edges that appears in $G$. If $E(G,1) \neq \emptyset$, then the corresponding summand for $G$ in the right hand side of \Cref{expec:wish} will be $0$ (since the matrix entries have mean zero) and hence, to have non-zero contribution, all the edges in $G$ must appear at least twice. This means we can restrict our attention to the case when $G$ has at most $k$ many distinct edges, i.e., $\operatorname{card}(E(G)) \leq k$. Since the graphs in $\mathcal{BG}(n;k,t)$ are connected, we can restrict ourselves to $t \leq \operatorname{card}(E(G)) + 1 \leq k+1$. Moreover, if $\operatorname{card}(E(G,2))=l$, then we have $2k \geq 2l + 3(\operatorname{card}(E(G))-l)$ which implies $2k-2l \leq 6(k+1-t)$. Now, for such $G$, by H\"older's inequality, we have 
\begin{align*}
&\prod_{e \in [m_n] \times [n]} \mathbb{E} ( X_{n,e}^{N_e(G)}) \leq \mathbb{E}_{\mathcal{P}}( |X|^{\sum_{e} N_e(G)\mathbbm{1}(N_e(G)>2)}) \\
&\qquad \qquad = \mathbb{E}_{\mathcal{P}}( |X|^{2k-2l}) \leq (2k-2l)^{\beta(2k-2l)} \leq (6(k+1-t))^{6\beta (k+1-t)},
\end{align*}
and hence 
$$ \mathbb{E} [(\lambda_n(\mathbf{M}_n^{\top}\mathbf{M}_n) )^{k}] \leq \sum_{t=2}^{k+1} \sum_{G \in \mathcal{BG}(n;k,t) : E(G,1) = \emptyset} (6(k+1-t))^{6\beta (k+1-t)}.$$
Now any graph in $G \in \mathcal{BG}(n;k,t)$ with $E(G,1)=\emptyset$ can be thought of as a graph (with possibly self-edge and multiple edges) with $t$ vertices and length $2k$, with each edge appearing at least twice and the edges of the graph form an Euler circuit. By \cite[Lemma 2.1.23]{anderson}, number of such graphs is bounded above by $(n+m_n)^t4^k(2k)^{6(k+1-t)}$. Therefore, 
\begin{align*}
\mathbb{E} [(\lambda_n(\mathbf{M}_n^{\top}\mathbf{M}_n) )^{k}] & \leq \sum_{t=2}^{k+1} (n+m_n)^t4^k(2k)^{6(k-t+1)} (6(k+1-t))^{6\beta (k+1-t)} \\
& \leq (n+m_n)^{k+1} 4^k \sum_{t=2}^{k+1} \biggl( \dfrac{(2k)^6(6(k+1-t))^{6\beta}}{n+m_n}\biggr)^{k+1-t} \nonumber \\
& \leq (n+m_n)^{k+1} C^k \sum_{t=2}^{k+1} \biggl( \dfrac{k^{6(\beta+1)}}{n+m_n}\biggr)^{k+1-t} \leq (n+m_n)^{k+1}C^k \sum_{i=2}^{k} \biggl( \dfrac{k^{6(\beta+1)}}{n}\biggr)^{i},
\end{align*}
for some finite constant $C$, depending on $\beta$. Employing arguments similar to the ones we used in the proof of \Cref{eigen:expecbound}, we complete the proof of the first assertion.

For the part (b) in the statement, we shall use Paley--Zygmund inequality. Note that 
$$ \mathbb{E} \operatorname{Tr}( \mathbf{M}_n^{\top}\mathbf{M}_n) = \sum_{i,j} \mathbb{E} X_{n,ij}^2 = nm_n,$$
whereas if $\mathbb{E}_{\mathcal{P}}|X|^4 = \nu < \infty$, then 
\begin{align*}
\mathbb{E} [\operatorname{Tr}( \mathbf{M}_n^{\top}\mathbf{M}_n )^2] \sum_{i,j,k,l} \mathbb{E} X_{n,ij}^2X_{n,kl}^2 &= \sum_{i,j} \mathbb{E} X_{n,ij}^4 + \sum_{\substack{i,j,k,l \\ i \neq k, j \neq l}} \mathbb{E} X_{n,ij}^2X_{n,kl}^2  \\
& = \nu nm_n + n(n-1)m_n(m_n-1).
\end{align*} 
Hence, for any $\varepsilon_n =o(1)$, we have
\begin{align*}
\mathbb{P} [\operatorname{Tr}( \mathbf{M}_n^{\top}\mathbf{M}_n ) < \varepsilon_nnm_n ] \leq 1-(1-\varepsilon_n)^2 \dfrac{(\mathbb{E} \operatorname{Tr}( \mathbf{M}_n^{\top}\mathbf{M}_n ) )^2}{\mathbb{E} [\operatorname{Tr}( \mathbf{M}_n^{\top}\mathbf{M}_n)^2]} = o(1).
\end{align*}
Since $\operatorname{Tr}( \mathbf{M}_n^{\top}\mathbf{M}_n ) \leq n \lambda_n(\mathbf{M}_n^{\top}\mathbf{M}_n)$, this shows that $\mathbb{P}( \lambda_n(\mathbf{M}_n^{\top}\mathbf{M}_n) < \varepsilon_n m_n ) =o(1)$ for any $\varepsilon_n=o(1)$. Now, observing that the largest eigenvalue of $\mathbf{M}_n\mathbf{M}_n^{\top}$ is also $\lambda_n(\mathbf{M}_n^{\top}\mathbf{M}_n)$, we complete the proof.
\end{proof}



\begin{lmm}{\label{binombound}}
Fix $m \in \mathbb{N} \cup \{0\}$ and $q \in [m,m+1]$. Then for any $a, \delta \geq 0$, we have
$$ (a+\delta)^q - a^q \leq \sum_{j=1}^{m+1} q^j\delta^ja^{q-j}.$$
\end{lmm}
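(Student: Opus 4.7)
The plan is to reduce the inequality to a one-variable statement and prove that by induction on $m$. For $a>0$, I would divide both sides by $a^{q}$ and set $u:=\delta/a\ge 0$; the claim becomes
\begin{equation*}
(1+u)^{q}-1\ \le\ \sum_{j=1}^{m+1}q^{j}u^{j},\qquad u\ge 0,\ q\in[m,m+1].
\end{equation*}
The degenerate case $a=0$ reads $\delta^{q}\le q^{\lceil q\rceil}\delta^{\lceil q\rceil}\cdot 0^{q-\lceil q\rceil}$, which is immediate under the convention $0^{0}=1$ when $q$ is an integer and follows by continuity otherwise; in any event the lemma is invoked later only where the base quantity is strictly positive almost surely, so the reduction to $a>0$ loses nothing.

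The base case $m=0$, $q\in[0,1]$, follows from concavity of $x\mapsto x^{q}$ on $[0,\infty)$: the tangent line at $x=1$ gives $(1+u)^{q}\le 1+qu$ for all $u\ge 0$. For the inductive step, I assume the inequality for $m-1$ and fix $q\in[m,m+1]$, so that $q-1\in[m-1,m]$. Writing
\begin{equation*}
(1+u)^{q}-1\ =\ q\int_{0}^{u}(1+t)^{q-1}\,dt\ =\ qu\ +\ q\int_{0}^{u}\bigl[(1+t)^{q-1}-1\bigr]\,dt,
\end{equation*}
I would apply the inductive hypothesis to the integrand (with $q-1$ in place of $q$ and $m-1$ in place of $m$), integrate termwise, and then use the crude bounds $(q-1)^{j}\le q^{j}$ and $1/(j+1)\le 1$ to obtain
\begin{equation*}
(1+u)^{q}-1\ \le\ qu\ +\ \sum_{j=1}^{m}\frac{q\,(q-1)^{j}}{j+1}\,u^{j+1}\ \le\ qu\ +\ \sum_{j=1}^{m}q^{j+1}u^{j+1}\ =\ \sum_{k=1}^{m+1}q^{k}u^{k}.
\end{equation*}

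There is no real analytical obstacle here; the only point worth flagging is that the stated bound is deliberately wasteful. One throws away the factors $1/(j+1)$ and replaces $(q-1)^{j}$ with $q^{j}$, and this slack is precisely what makes the right-hand side collapse to the clean single-parameter sum appearing in the lemma, which is the form needed in the expansion \eqref{alphageq2} used in the proof of Theorem~\ref{nearoptgraph}.
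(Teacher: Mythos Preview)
Your proof is correct and follows essentially the same approach as the paper: both argue by induction on $m$, use the integral representation $(a+\delta)^q-a^q=q\int_a^{a+\delta}x^{q-1}\,dx$, apply the inductive hypothesis to the exponent $q-1$, and then discard the factor $(q-1)^j$ in favor of $q^j$. The only cosmetic differences are your preliminary normalization to one variable and the fact that you integrate the inductive bound termwise (picking up a harmless extra factor $1/(j+1)$ that you then drop), whereas the paper bounds the integral by $q\delta(a+\delta)^{q-1}$ and applies the hypothesis to $(a+\delta)^{q-1}$ directly.
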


\begin{proof}
We prove by induction on $m$. For $m=0$, the assertion is true since for any $q \in (0,1]$, we have 
$$(a+\delta)^q - a^q = q\int_{a}^{a+\delta} x^{q-1}\, dx \leq q\delta a^{q-1},$$
while the inequality is trivially true for $q=0$. Assume that the assertion is true for $m=0, \ldots,n-1$ for some $n \in \mathbb{N}$ and fix $q \in [n,n+1]$. We can then write down the following by the induction hypothesis since $q-1 \in [n-1,n]$.
\begin{align*}
(a+\delta)^q - a^q = q\int_{a}^{a+\delta} x^{q-1}\, dx \leq q\delta(a+\delta)^{q-1} 
& \leq q\delta \biggl(a^{q-1} + \sum_{j=1}^{n}(q-1)^j\delta^ja^{q-1-j}  \biggr) \\
& \leq  q\delta \biggl(a^{q-1} + \sum_{j=1}^{n}q^j\delta^ja^{q-1-j} \biggr) \\
&= \sum_{j=1}^{n+1} q^j\delta^ja^{q-j},
\end{align*}
which completes the proof.
\end{proof}

\begin{lmm}{\label{binombound2}}
Fix $m \in \mathbb{N} \cup \{0\}$ and $q \in [m,m+1]$. Then for any $x,y \in \mathbb{R}^d$, we have
$$ \rvert \|x+y\|_2^q - \|x\|_2^q \rvert \leq \sum_{j=1}^{m+1} q^j\; \|y\|_2^j \;\|x\|_2^{q-j} + q^{m+1} \; \|y\|_2^{m+1}\; \|x+y\|_2^{q-m-1}.$$
\end{lmm}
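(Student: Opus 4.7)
The plan is to reduce this vector-valued inequality to the scalar inequality of Lemma~\ref{binombound} by splitting into two cases according to whether $\|x+y\|_2$ is larger or smaller than $\|x\|_2$ and invoking the triangle inequality in each.

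First I would handle the easy case $\|x+y\|_2 \geq \|x\|_2$. The triangle inequality gives $\|x+y\|_2 \leq \|x\|_2 + \|y\|_2$, and monotonicity of $t \mapsto t^q$ on $[0,\infty)$ then yields
\[
\|x+y\|_2^q - \|x\|_2^q \leq (\|x\|_2 + \|y\|_2)^q - \|x\|_2^q,
\]
so Lemma~\ref{binombound} applied with $a = \|x\|_2$ and $\delta = \|y\|_2$ immediately bounds this by $\sum_{j=1}^{m+1} q^j \|y\|_2^j \|x\|_2^{q-j}$, which is already stronger than the right-hand side of the claim (the trailing term involving $\|x+y\|_2^{q-m-1}$ is not needed in this regime).

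Next I would handle the harder case $\|x+y\|_2 < \|x\|_2$. Here the triangle inequality applied to $x = (x+y) + (-y)$ gives $\|x\|_2 \leq \|x+y\|_2 + \|y\|_2$, and Lemma~\ref{binombound} with $a = \|x+y\|_2$ and $\delta = \|y\|_2$ produces
\[
\|x\|_2^q - \|x+y\|_2^q \leq \sum_{j=1}^{m+1} q^j \|y\|_2^j \|x+y\|_2^{q-j}.
\]
The remaining wrinkle, and the one genuine subtlety of the proof, is that the target bound is stated in terms of $\|x\|_2^{q-j}$ rather than $\|x+y\|_2^{q-j}$. For $j = 1, \ldots, m$ the exponent $q - j$ is non-negative, so since $\|x+y\|_2 \leq \|x\|_2$ in this case we may freely replace $\|x+y\|_2^{q-j}$ by the larger quantity $\|x\|_2^{q-j}$. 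For $j = m+1$, however, the exponent $q - m - 1 \in [-1,0]$ is non-positive and this replacement would reverse the inequality; this is exactly why the final summand in the claim retains the factor $\|x+y\|_2^{q-m-1}$ rather than $\|x\|_2^{q-m-1}$. Combining the two cases yields the stated inequality, with the only real obstacle being this asymmetric handling of the last term in the second case.
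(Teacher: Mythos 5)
Your proof is correct and follows the same two-case structure as the paper: both bound $\|x+y\|_2^q - \|x\|_2^q$ directly via the triangle inequality and Lemma~\ref{binombound} with $(a,\delta)=(\|x\|_2,\|y\|_2)$, and in the reverse case apply the same lemma with $(a,\delta)=(\|x+y\|_2,\|y\|_2)$ then upgrade $\|x+y\|_2^{q-j}$ to $\|x\|_2^{q-j}$ only for $j\le m$, leaving the final term in terms of $\|x+y\|_2$. You have also correctly identified the one subtle point, namely that the sign of the exponent $q-j$ is what forces the asymmetric last term.
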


\begin{proof}
It follows from \Cref{binombound} that 
\begin{equation}{\label{proofstep1}}
\|x+y\|_2^q \leq ( \|x\|_2+\|y\|_2)^q \leq \|x\|_2^q + \sum_{j=1}^{m+1} q^j\; \|y\|_2^j \;\|x\|_2^{q-j}.
\end{equation}
On the other hand, if $\|x\|_2 \geq \|x+y\|_2$, we have
\begin{align}
\|x\|_2^q &\leq ( \|x+y\|_2 +\|y\|_2)^q \leq \|x+y\|_2^q + \sum_{j=1}^{m+1} q^j \; \|y\|_2^j\; \|x+y\|_2^{q-j} \nonumber \\
&\leq \|x+y\|_2^q + \sum_{j=1}^{m} q^j \; \|y\|_2^j \;\|x\|_2^{q-j} + q^{m+1} \; \|y\|_2^{m+1}\; \|x+y\|_2^{q-m-1}. \label{proofstep2}
\end{align}
Combining \Cref{proofstep1} and \Cref{proofstep2}, we complete our proof.
\end{proof}

\begin{lmm}{\label{mindist}}
Let $X_1, \ldots, X_n \stackrel{i.i.d.}{\sim} f$, a density on $\mathbb{R}^d$ with $\|f\|_{\infty}< \infty$. Then for any $\alpha \in [0,d)$, we have
$$ \mathbb{E}\biggl[ \biggl(\min_{2 \leq j \leq n} \|X_1-X_j\|_2 \biggr)^{-\alpha}\biggr] = \bigO(n^{\alpha/d}).$$ 
\end{lmm}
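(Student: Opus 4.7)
My plan is to control the negative moments of $R:=\min_{2\leq j\leq n}\|X_1-X_j\|_2$ via the layer cake representation, after first obtaining a clean tail estimate on the small-distance behavior of $R$.

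First, I would get a uniform upper bound on $\mathbb{P}(R<r)$. Using a union bound and integrating out $X_1$ against its density,
\begin{equation*}
\mathbb{P}(R<r)\leq (n-1)\,\mathbb{P}(\|X_1-X_2\|_2<r)=(n-1)\int_{\mathbb{R}^d}f(x)\,\mathcal{P}(B_d(x,r))\,dx\leq (n-1)\,\|f\|_\infty\,C_d\,r^d,
\end{equation*}
where $C_d$ denotes the volume of the unit ball in $\mathbb{R}^d$. Since $R\geq 0$, this is an unconditional bound, and it is also trivially bounded by $1$.

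Next, I would apply the layer cake formula. Writing $R^{-\alpha}$ as $\int_0^\infty \mathbbm{1}(R<s^{-1/\alpha})\,ds$ and changing variables $r=s^{-1/\alpha}$ yields
\begin{equation*}
\mathbb{E}[R^{-\alpha}]=\int_0^\infty \alpha\, r^{-\alpha-1}\,\mathbb{P}(R<r)\,dr.
\end{equation*}
Here the assumption $\alpha<d$ is crucial: the integrand is integrable at $0$ precisely because $\mathbb{P}(R<r)=O(r^d)$.

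Finally, I would split the integral at the natural scale $r_0:=n^{-1/d}$, using the polynomial bound on $(0,r_0]$ and the trivial bound $\mathbb{P}(R<r)\leq 1$ on $[r_0,\infty)$:
\begin{equation*}
\mathbb{E}[R^{-\alpha}]\leq \alpha(n-1)\|f\|_\infty C_d\int_0^{r_0}r^{d-\alpha-1}\,dr+\alpha\int_{r_0}^\infty r^{-\alpha-1}\,dr=\frac{\alpha\|f\|_\infty C_d(n-1)}{d-\alpha}r_0^{d-\alpha}+r_0^{-\alpha}.
\end{equation*}
Plugging in $r_0=n^{-1/d}$ makes both terms of order $n^{\alpha/d}$, giving the claimed $\bigO(n^{\alpha/d})$ bound. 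There is no real obstacle in the argument; the only point requiring a bit of care is the use of $\|f\|_\infty<\infty$ to obtain the $r^d$ scaling of $\mathbb{P}(\|X_1-X_2\|_2<r)$ uniformly, which is exactly the hypothesis of the lemma.
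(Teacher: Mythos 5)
Your argument is correct and matches the paper's approach in all essentials: both express $\mathbb{E}[R^{-\alpha}]$ via the layer cake representation, both use a tail bound $\mathbb{P}(R<r)\leq\min(C\,n\,r^d,\,1)$, and both split at the natural scale $n^{-1/d}$. The only cosmetic difference is that you obtain the tail bound from a union bound, while the paper writes the exact product form $1-(1-\mathcal{P}(B_d(y,r)))^{n-1}$ and then applies Bernoulli's inequality, which amounts to the same estimate.
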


\begin{proof}

It is enough to only consider the situation when $\alpha >0$. Introduce the notation 
\[
S_n := \min_{2 \leq j \leq n} \|X_1-X_j\|_2.
\]
For any $y \in \mathbb{R}^d$ and $t >0$,
\begin{align*}
	h_n(y,t):=\mathbb{P} \biggl( \min_{2 \leq j \leq n} \|y-X_j\|_2 \leq t \biggr) 
	= 1- \biggl( 1- \int_{B_d(y,t)} f(x) \, dx\biggr)^{n-1},
\end{align*}
where $B_d(y,t)$ is the closed ball of radius $t$ around $y$. 
It is easy to see
\begin{align}{ \label{bound}}
	h_n(y,t^{-1}n^{-1/d})  \leq  1- ( 1- C_d\|f\|_{\infty}t^{-d}n^{-1})_{+}^{n-1} 
	&\stackrel{(i)}{\leq} C_d\|f\|_{\infty}t^{-d} \wedge 1,
\end{align}
for universal constant $C_d = \operatorname{Vol}(B_d(\mathbf{0},1)) \in (0,\infty)$. The assertion $(i)$ follows from an application of Bernoulli's inequality. In particular,
\begin{align}{ \label{bound1}}
	\mathbb{P}(n^{1/d}S_{n} \leq t^{-1}) = \int_{\mathbb{R}^d} h_n(y,t^{-1}n^{-1/d}) \, f(y)\, dy &\leq C_d\|f\|_{\infty}t^{-d} \wedge 1.
\end{align}
Using integration-by-parts, we then obtain the following.
\begin{align*}
n^{-\alpha/d}	\mathbb{E} S_n^{-\alpha} = \int_{0}^{\infty} \mathbb{P} ( n^{1/d}S_n < t^{-1/\alpha})\, dt  & \leq 1+ \int_{1}^{\infty} \mathbb{P} ( n^{1/d}S_n < t^{-1/\alpha}) \, dt\\
& \leq 1+ \int_{1}^{\infty} (C_d\|f\|_{\infty}t^{-d/\alpha} \wedge 1 ) \, dt < \infty,
\end{align*}
for $0 < \alpha <d$. This concludes the proof.
\end{proof}

\begin{lmm}{\label{mindist2}}
Let $X_1, \ldots, X_n \stackrel{i.i.d.}{\sim} f$, a density on $\mathbb{R}^d$. Then we have
$$  \min_{2 \leq j \leq n} \|X_1-X_j\|_2  = O_p(n^{-1/d}).$$
\end{lmm}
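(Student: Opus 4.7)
The plan is to compute the tail probability $\mathbb{P}(n^{1/d}S_n > t)$ where $S_n := \min_{2 \leq j \leq n}\|X_1 - X_j\|_2$, identify its limit as $n \to \infty$ using Lebesgue's differentiation theorem, and then let $t \to \infty$ to obtain tightness. Unlike the proof of Lemma \ref{mindist}, we cannot exploit a uniform bound on $f$; instead we use that Lebesgue differentiation holds almost everywhere regardless of whether $f$ is bounded.

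First I would condition on $X_1 = y$ and use independence to write
$$\mathbb{P}(n^{1/d}S_n > t) = \int_{\mathbb{R}^d}\bigl(1 - \mathcal{P}(B_d(y, tn^{-1/d}))\bigr)^{n-1} f(y)\, dy,$$
where $\mathcal{P}$ is the probability measure with density $f$. By the Lebesgue differentiation theorem, $r^{-d}\mathcal{P}(B_d(y,r)) \to C_d f(y)$ as $r \downarrow 0$ for Lebesgue-almost every $y$, and hence for $f(y)\,dy$-almost every $y$ as well. Consequently, for each fixed $t > 0$ and $f(y)\,dy$-a.e.\ $y$, we have $n\,\mathcal{P}(B_d(y, tn^{-1/d})) \to C_d t^d f(y)$, which gives
$$\bigl(1 - \mathcal{P}(B_d(y, tn^{-1/d}))\bigr)^{n-1} \longrightarrow \exp(-C_d t^d f(y)).$$
Since the integrand is dominated by $f(y)$, the dominated convergence theorem yields
$$\lim_{n \to \infty}\mathbb{P}(n^{1/d}S_n > t) = \int_{\mathbb{R}^d}\exp(-C_d t^d f(y))\, f(y)\, dy.$$

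Finally, as $t \to \infty$, the integrand $\exp(-C_d t^d f(y))\,f(y)$ converges pointwise to $0$ for every $y$ with $f(y) > 0$ (and is identically $0$ where $f(y) = 0$), while remaining dominated by $f(y)$; a second application of dominated convergence shows this integral vanishes as $t \to \infty$. Therefore $\sup_{n \geq 1}\mathbb{P}(n^{1/d}S_n > t)$ can be made arbitrarily small by taking $t$ large enough (uniformly in $n$, after absorbing finitely many $n$ using the same representation), establishing tightness of $\{n^{1/d}S_n\}_{n \geq 1}$ and thus $S_n = \mathcal{O}_p(n^{-1/d})$. The only genuinely delicate step is the Lebesgue differentiation argument, which replaces the pointwise uniform bound $\|f\|_\infty < \infty$ used in Lemma \ref{mindist}; the rest of the argument is two routine applications of dominated convergence.
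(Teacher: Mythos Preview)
Your proof is correct and follows essentially the same approach as the paper's: both condition on $X_1$, invoke the Lebesgue differentiation theorem to show that $(1-\mathcal{P}(B_d(y,tn^{-1/d})))^{n-1}$ converges to $\exp(-C_d t^d f(y))$, and then apply dominated convergence twice (first in $n$, then in $t$). The only cosmetic difference is that the paper inserts the inequality $(1-x)^{n-1} \leq e^{-(n-1)x}$ to obtain a $\limsup$ bound rather than an exact limit, but this does not change the substance of the argument.
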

\begin{proof}
We continue with the notations introduced in the proof of \Cref{mindist}. For any $t >0$, we have
\begin{align*}
\mathbb{P} \biggl( n^{1/d}\min_{2 \leq j \leq n} \|X_1-X_j\|_2 > t\biggr) &= \int_{\mathbb{R}^d} (1-h_n(y,tn^{-1/d})) f(y)\,dy \\
& \leq \int_{\mathbb{R}^d} \exp \biggl(-(n-1)\int_{B_d(y,tn^{-1/d})} f(x) \, dx \biggr) f(y)\, dy.
\end{align*}
By the Lebesgue differentiation theorem, we have
$$ \dfrac{1}{C_dt^dn^{-1}}\int_{B_d(y,tn^{-1/d})} f(x) \, dx \stackrel{a.e.}{\longrightarrow} f(y),$$
and hence the dominated convergence theorem yields that
\begin{equation}{\label{tightmin}}
\limsup_{n \to \infty} \mathbb{P} \biggl( n^{1/d}\min_{2 \leq j \leq n} \|X_1-X_j\|_2 > t\biggr) \leq \int_{\mathbb{R}^d} \exp (-C_df(y)t^d) f(y)\, dy.
\end{equation}
Since the right hand side goes to $0$ as $t \to \infty$ (by applying the dominated convergence theorem), this suffices to prove the required assertion.
\end{proof}

\begin{proof}[Proof of Theorem \ref{yukich:general:mst}]{\label{proofout}}
As mentioned in \Cref{general:proof}, it suffices to prove only the upper bound on the growth rate of the expectation. We shall actually prove a stronger statement, i.e., we shall write down the proof of the $L^1$ convergence of the sequence of random variables  $n^{-(d-q)/d}L_{\mathrm{MST}}(X_1, \ldots, X_n;q)$ as $n \to \infty$, since the rest can be found in the proof of \cite[Theorem 7.6]{yukich}. Recall that $B_d(\mathbf{0},2^r)$ be the ball of radius $2^r$ around $\mathbf{0}$ in $\mathbb{R}^d$; then the following expression holds true whose derivation can be found at \cite[pp. 87]{yukich}. The expression is a consequence of simple sub-additivity and growth bound for the functional $L_{\mathrm{MST}}(\cdot;q)$ defined in the sense as described in \cite[Chapter 3]{yukich}.
\begin{align}{\label{umbrella1}}
L_{\mathrm{MST}}(X_1, \ldots, X_n;q) &\leq L_{\mathrm{MST}}(\{X_1, \ldots, X_n\} \cap B_d(\mathbf{0},2^r);q) \notag\\
&\qquad \qquad + \sum_{r \leq k \leq s(n)} M_12^{kq} (N_k(n))^{(d-q)/d} + M_12^{qs(n)},
\end{align}
where $M_1$ is some finite constant (depending only on $q$ and $d$), $s(n)$ is the largest $k \in \mathbb{N}$ such that the set  $\{X_1, \ldots, X_n\} \cap A_k \neq \emptyset$ and $N_k(n) := \operatorname{card}(\{X_1, \ldots, X_n\} \cap A_k)$.  Here we have abused the notation by writing $L_{\mathrm{MST}}(S;q)$ for some set $S \subseteq \mathbb{R}^d$; as usual it is the length of the $q$-power weighted shortest spanning tree through the points in $S$. Arguments at the end of \cite[pp. 88]{yukich} shows that Set $1-\varepsilon(r) = \mathcal{P}(B_d(\mathbf{0},2^r))$ and let $Z_1,Z_2,\ldots$ be i.i.d.~random vectors from the density $f_r:=(1-\varepsilon(r))^{-1}f \rvert_{B_d(\mathbf{0},2^r)}$. It is then easy to see that 
\begin{equation}{\label{umbrella2}}
 \mathbb{E} L_{\mathrm{MST}}(\{X_1, \ldots, X_n\} \cap B_d(\mathbf{0},2^r);q ) = \mathbb{E} L_{\mathrm{MST}}(Z_1, \ldots, Z_{\texttt{Bin}(n,1-\varepsilon(r))};q)
\end{equation}
where $\texttt{Bin}(n,1-\varepsilon(r))$ is a Binomial random variables with those mentioned parameters, independent of the collection $\{Z_i : i \geq 1 \}$. By smoothness of the functional $L_{\mathrm{MST}}(\cdot;q)$ (see \cite[Chapter 3]{yukich} for the definition of smoothness), we can arrive at the following estimate. 
\begin{align}{\label{umbrella3}}
  &\mathbb{E} L_{\mathrm{MST}}(Z_1, \ldots, Z_{\texttt{Bin}(n,1-\varepsilon(r))};q) \notag \\
  &\leq \mathbb{E} L_{\mathrm{MST}}(Z_1, \ldots, Z_{n(1-\varepsilon(r))};q) \nonumber \\
  & \hspace{ 1 in }+ M_22^{rp}\big \rvert \texttt{Bin}(n,1-\varepsilon(r)) - n(1-\varepsilon(r)) \big \rvert^{(d-q)/d}, 
\end{align}
for some finite constant $M_2$, depending on $q$ and $d$ only.
Since the density $f_r$ is compactly supported,  we conclude the following using the argument made in \cite[Section 7.2]{yukich}. 
\begin{align}{\label{umbrella4}}
 &n^{-(d-q)/d}\mathbb{E} L_{\mathrm{MST}}(Z_1, \ldots, Z_{n(1-\varepsilon(r))};q) \notag\\
 & \stackrel{n \to \infty}{\longrightarrow} \beta_{\mathrm{MST}}(d,q) \int_{\mathbb{R}^d} f_r(x)^{(d-q)/d}\, dx \nonumber \\
 & = (1-\varepsilon(r))^{-(d-q)/d} \beta_{\mathrm{MST}}(d,q)  \int_{B_d(\mathbf{0},2^r)} f(x)^{(d-q)/d}\, dx, 
\end{align}
whereas by Jensen's inequality,
\begin{align}{\label{umbrella5}}
 &\mathbb{E} |\texttt{Bin}(n,1-\varepsilon(r))-n(1-\varepsilon(r))|^{(d-q)/d} \nonumber \\
 &\leq ( \mathbb{E} |\text{Binomial}(n,1-\varepsilon(r))-n(1-\varepsilon(r))|^2)^{(d-q)/(2d)}   \nonumber \\
 & \leq n^{(d-q)/(2d)}.
\end{align}
Observing that $N_k(n) \sim \text{Binomial}(n,\mathcal{P}(A_k))$, we obtain the following in a similar fashion.
\begin{align}{\label{umbrella6}}
\mathbb{E} \sum_{k=r}^{s(n)} 2^{kq} (N_k(n))^{(d-q)/d} &\leq \sum_{k \geq r} 2^{kq} (\mathbb{E}N_k(n))^{(d-q)/d} \notag\\
&= \sum_{k \geq r}  2^{kq}n^{(d-q)/d}( \mathcal{P}(A_k))^{(d-q)/d} \nonumber \\
& =  \sum_{k \geq r}  2^{kq}n^{(d-q)/d}(2^{-dkq/(d-q)}a_{k,q}(f))^{(d-q)/d} \nonumber \\
& = n^{(d-q)/d} \sum_{k \geq r} (a_{k,q}(f))^{(d-q)/d}.
\end{align}
Finally, from the definition of $s(n)$, we know that $2^{s(n)} \leq  \max_{i=1}^n \|X_i\|_2 \leq 2^{s(n)+1}$ and hence,
\begin{align}{\label{umbrella7}}
\mathbb{E} 2^{qs(n)} \leq \mathbb{E} \max_{i=1}^n \|X_i\|_2^q &= n^{(d-q)/d} \int_{0}^{\infty} \mathbb{P}\biggl(\max_{i=1}^n \|X_i\|_2^q > tn^{(d-q)/d} \biggr) \, dt. 
\end{align}
Fix $\delta, \eta>0$. Since $\sum_{k \geq 1} a_{k,q}(f)^{(d-q)/d} < \infty$, we can guarantee the existence of $N_{\delta} \in \mathbb{N}$ large enough such that  $\mathcal{P}(A_k) \leq \delta 2^{-kdq/(d-q)}$ for all $k \geq N_{\delta}$; and hence for  $\ell \geq N_{\delta}$ we have
\begin{align*}
1- \mathcal{P}( B_d(\mathbf{0},2^\ell)) = \sum_{k \geq \ell} \mathcal{P}(A_k) \leq  \delta \sum_{k \geq \ell} 2^{-kdq/(d-q)} \leq \delta M_3 2^{-\ell dq/(d-q)},
\end{align*}
for some finite constant $M_3$, depending on $d$ and $q$ only; in particular for $u \geq 2^{N_{\delta}}$,
$$ 1- \mathcal{P}( B_d(\mathbf{0},u)) \leq 1- \mathcal{P}( B_d(\mathbf{0},2^{\lfloor \log_2 u \rfloor})) \leq \delta M_3 2^{-\lfloor \log_2 u \rfloor dq /(d-q)} \leq \delta M_4u^{-dq/(d-q)}, $$
for some finite constant $M_4$, depending on $d$ and $q$ only. Plugging in the above estimate in \Cref{umbrella7} we obtain,
\begin{align}{\label{umbrella8}}
&\int_{0}^{\infty} \mathbb{P}\biggl(\max_{i=1}^n \|X_i\|_2^q > tn^{(d-q)/d} \biggr) \, dt \nonumber \\
 & \hspace{0.8 in} \leq \max(2^{qN_{\delta}}n^{-(d-q)/d},\eta) + \int_{\max(2^{qN_{\delta}}n^{-(d-q)/d},\eta)}^{\infty} \mathbb{P}\biggl(\max_{i=1}^n \|X_i\|_2^q > tn^{(d-q)/d} \biggr) \, dt \nonumber \\
 &\hspace{0.8 in} \leq 2^{qN_{\delta}}n^{-(d-q)/d} + \eta +  \int_{\max(2^{qN_{\delta}}n^{-(d-q)/d},\eta)}^{\infty} n(1-\mathcal{P}(B_d(\mathbf{0},t^{1/q}n^{(d-q)/dq}))) \, dt \nonumber \\
  &\hspace{0.8 in} \leq 2^{qN_{\delta}}n^{-(d-q)/d} + \eta + \delta M_4 \int_{\eta}^{\infty} t^{-d/(d-q)} \, dt \nonumber \\
  & \hspace{0.8 in} = 2^{qN_{\delta}}n^{-(d-q)/d} + \eta + \dfrac{\delta M_4 (d-q)}{q} \eta^{-q/(d-q)}.
\end{align}
Combining all of the estimates from \Cref{umbrella1,umbrella2,umbrella3,umbrella4,umbrella5,umbrella6,umbrella7,umbrella8}, we conclude that for any $r \in \mathbb{N}$ and  $\delta, \eta >0$,
\begin{align}{\label{umbrella9}}
&\limsup_{n \to \infty}  \dfrac{\mathbb{E} L_{\mathrm{MST}}(X_1, \ldots, X_n;q)}{n^{(d-q)/d}}\notag \\
& \leq (1-\varepsilon(r))^{-(d-q)/d} \beta_{\mathrm{MST}}(d,q)  \int_{B_d(\mathbf{0},2^r)} f(x)^{(d-q)/d}\, dx \nonumber \\
& \hspace{ 0.5 in} + M_1 \biggl(\sum_{k \geq r} (a_{k,q}(f))^{(d-q)/d} +  \eta + \dfrac{\delta M_4 (d-q)}{q} \eta^{-q/(d-q)} \biggr).
\end{align}
Now we first take $r \uparrow \infty$ in the right hand side of \Cref{umbrella9}, use the observation that $\varepsilon(r) \downarrow 0$ as $r \to \infty$ and the assumption that $\sum_{k \geq 1} a_{k,q}(f)^{(d-q)/d}$ is summable to conclude that for any $\delta,\eta >0$,
\begin{align}{\label{umbrella10}}
&\limsup_{n \to \infty}  \dfrac{\mathbb{E} L_{\mathrm{MST}}(X_1, \ldots, X_n;q)}{n^{(d-q)/d}}\notag \\
& \leq  \beta_{\mathrm{MST}}(d,q)  \int_{\mathbb{R}^d} f(x)^{(d-q)/d}\, dx  + M_1 \biggl(  \eta + \dfrac{\delta M_4 (d-q)}{q} \eta^{-q/(d-q)} \biggr).
\end{align}
Take $\delta \downarrow 0$ first and then $\eta \downarrow 0$ to yield
\begin{align}{\label{umbrella11}}
&\limsup_{n \to \infty}  \dfrac{\mathbb{E} L_{\mathrm{MST}}(X_1, \ldots, X_n;q)}{n^{(d-q)/d}}\notag \\
& \leq  \beta_{\mathrm{MST}}(d,q)  \int_{\mathbb{R}^d} f(x)^{(d-q)/d}\, dx  =: \beta_{\mathrm{MST}}(d,q,f).
\end{align}
By \cite[Theorem 7.6]{yukich}, we already know that $n^{-(d-q)/d}L_{\mathrm{MST}}(X_1, \ldots, X_n;q)$ converges almost surely to $\beta_{\mathrm{MST}}(d,q,f)$ and hence by Fatou's lemma, we have 
\begin{equation}{\label{umbrella12}}
 \liminf_{n \to \infty}  \dfrac{\mathbb{E} L_{\mathrm{MST}}(X_1, \ldots, X_n;q)}{n^{(d-q)/d}} \geq  \beta_{\mathrm{MST}}(d,q,f).
\end{equation}
Assertions \Cref{umbrella11} and \Cref{umbrella12}, aided by the a.s.~convergence of $n^{-(d-q)/d}L_{\mathrm{MST}}(X_1, \ldots, X_n;q)$ and Scheffe's lemma, imply that $n^{-(d-q)/d}L_{\mathrm{MST}}(X_1, \ldots, X_n;q)$ converges in $L^1$ to the number $\beta_{\mathrm{MST}}(d,q,f)$.
\end{proof}

\begin{lmm}{\label{mindistmean}}
Consider the setup of Lemma~\ref{mindist2}. Also assume that $f$ satisfies Assumption~\ref{suff}. Then for any $q \in [1,d)$, 
$$ \mathbb{E} \min_{i \in [n] : i \neq 1} \|X_i-X_1\|^q_2 = \bigO(n^{-q/d}).$$
\end{lmm}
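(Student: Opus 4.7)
The plan is to reduce the problem to the growth rate of the minimum spanning tree, which is already available via Theorem~\ref{yukich:general:mst}. Write $d_i := \min_{j \in [n] : j \neq i} \|X_i - X_j\|_2$ for the nearest-neighbor distance of the $i$-th point. Since the $X_i$'s are i.i.d., they are in particular exchangeable, so
\[
\mathbb{E}\bigl[d_1^q\bigr] \;=\; \frac{1}{n}\,\mathbb{E}\!\left[\sum_{i=1}^n d_i^q\right].
\]
Hence it suffices to show $\mathbb{E}\sum_{i=1}^n d_i^q = \mathcal{O}(n^{(d-q)/d})$, because $n^{(d-q)/d}/n = n^{-q/d}$.

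The key step will be to invoke the cut property of minimum spanning trees. Because $f$ is a density, with probability one the $\binom{n}{2}$ pairwise distances $\|X_i - X_j\|_2$ are all distinct, so the MST on $\{X_1, \ldots, X_n\}$ is unique. For each fixed $i$, apply the cut property to the cut $(\{i\}, [n]\setminus\{i\})$: the uniquely minimum-weight edge across that cut, namely $\{X_i, X_{k(i)}\}$ with $k(i) := \arg\min_{j \neq i}\|X_i - X_j\|_2$, must lie in the MST, and its length equals $d_i$. In particular, each $d_i$ is realized as the length of some edge of the MST. An edge $\{X_i, X_j\}$ of the MST is realized in this way at most twice (once through $i$, once through $j$), so
\[
\sum_{i=1}^n d_i^q \;=\; \sum_{i=1}^n \|X_i - X_{k(i)}\|_2^q \;\leq\; 2\!\!\sum_{\{i,j\} \in E(\widehat T_n)}\!\!\|X_i - X_j\|_2^q \;=\; 2\,L_{\mathrm{MST}}(X_1, \ldots, X_n; q),
\]
where $\widehat T_n$ denotes the (a.s.\ unique) minimum spanning tree.

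Taking expectations and invoking Theorem~\ref{yukich:general:mst}, which gives $\mathbb{E} L_{\mathrm{MST}}(X_1, \ldots, X_n; q) = \mathcal{O}(n^{(d-q)/d})$ under Assumption~\ref{suff}, yields
\[
\mathbb{E}\bigl[d_1^q\bigr] \;\leq\; \frac{2}{n}\,\mathbb{E} L_{\mathrm{MST}}(X_1, \ldots, X_n; q) \;=\; \mathcal{O}\bigl(n^{-q/d}\bigr),
\]
which is the desired conclusion. There is no real obstacle here: the only subtleties are verifying a.s.\ uniqueness of the MST (immediate from the existence of a density) and observing that any MST edge is the nearest-neighbor edge at no more than its two endpoints. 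Everything else is a direct appeal to Theorem~\ref{yukich:general:mst}.
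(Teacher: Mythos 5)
Your proof is correct and takes essentially the same route as the paper: exchangeability reduces the claim to bounding $\mathbb{E}\sum_i d_i^q$, which is then dominated by $2\,\mathbb{E} L_{\mathrm{MST}}$ and controlled via Theorem~\ref{yukich:general:mst}. The only cosmetic difference is that you justify $\sum_i d_i^q \le 2L_{\mathrm{MST}}$ via the cut property (which needs a.s.\ uniqueness of the MST), whereas the paper states the inequality as a deterministic fact for arbitrary points; the simpler observation that every vertex has degree at least one in a spanning tree already gives $\sum_i d_i^q \le 2L_T(\cdot;q)$ for any spanning tree $T$, with no uniqueness needed.
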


\begin{proof}
For any $x_1, \ldots, x_n \in \mathbb{R}^d$, we have 
$$ L_{\mathrm{MST}}(x_1, \ldots,x_n;q) \geq \dfrac{1}{2}\sum_{i \in [n]} \min_{j \in [n] : j \neq i} \|x_j-x_i\|^q_2.$$
Hence,
$$ \mathbb{E} \min_{i \in [n] : i \neq 1} \|X_i-X_1\|^q_2 = \dfrac{1}{n} \mathbb{E} \sum_{i \in [n]} \min_{j \in [n] : j \neq i} \|X_j-X_i\|^q_2 \leq  \dfrac{2}{n} \mathbb{E}L_{\mathrm{MST}}(X_1, \ldots,X_n;q) = \bigO(n^{-q/d}),$$
where we have used \Cref{yukich:general:mst} in the last step.
\end{proof}

\section*{Acknowledgements} 
S.C.~was partially supported by NSF grants DMS-2113242 and DMS-2153654. A version of this work was submitted as the doctoral dissertation of S.R.~at Stanford University. The authors would like to thank Persi Diaconis and Amir Dembo for their useful comments and inputs.

\bibliographystyle{abbrvnat}
\bibliography{reference_souvik}

\begin{thebibliography}{64}
\providecommand{\natexlab}[1]{#1}
\providecommand{\url}[1]{\texttt{#1}}
\expandafter\ifx\csname urlstyle\endcsname\relax
  \providecommand{\doi}[1]{doi: #1}\else
  \providecommand{\doi}{doi: \begingroup \urlstyle{rm}\Url}\fi

\bibitem[Addario-Berry et~al.(2017)Addario-Berry, Broutin, Goldschimdt, and
  Miermont]{berry}
L.~Addario-Berry, N.~Broutin, C.~Goldschimdt, and G.~Miermont.
\newblock The scaling limit of the minimum spanning tree of the complete graph.
\newblock \emph{The Annals of Probability}, 45\penalty0 (5):\penalty0
  3075--3144, 2017.

\bibitem[Ahlberg et~al.(2023)Ahlberg, Deijfen, and Sfragara]{ahlberg}
D.~Ahlberg, M.~Deijfen, and M.~Sfragara.
\newblock Chaos, concentration and multiple valleys in first-passage
  percolation.
\newblock \emph{Arxiv:Probability}, 2023.
\newblock URL \url{https://doi.org/10.48550/arxiv.2302.11367}.

\bibitem[A\"id\'ekon(2013)]{aidekon}
E.~A\"id\'ekon.
\newblock Convergence in law of the minimum of a branching random walk.
\newblock \emph{The Annals of Probability}, 41\penalty0 (3A):\penalty0
  1362--1426, 2013.

\bibitem[Aldous(2001)]{aldous}
D.~J. Aldous.
\newblock The $\zeta(2)$ limit in the random assignment problem.
\newblock \emph{Random Stuctures \& Algorithms}, 18\penalty0 (4):\penalty0
  381--418, 2001.

\bibitem[Aldous et~al.(2008)Aldous, Bordenave, and Lelarge]{aldous:mst}
D.~J. Aldous, C.~Bordenave, and M.~Lelarge.
\newblock Near-minimal spanning trees : A scaling exponent in probability
  models.
\newblock \emph{Annales de l'Institut Henri Poincar\'e - Probabilit\'es
  Statistiques}, 44\penalty0 (5):\penalty0 962--976, 2008.

\bibitem[Anderson et~al.(2009)Anderson, Guionnet, and Zeitouni]{anderson}
G.~W. Anderson, A.~Guionnet, and O.~Zeitouni.
\newblock \emph{An Introduction to Random matrices}.
\newblock Cambridge University Press, Cambridge, UK, 2009.
\newblock ISBN 978-0-521-19452-5.

\bibitem[Bates and Chatterjee(2020)]{bates}
E.~Bates and S.~Chatterjee.
\newblock Localization in gaussian disordered systems at low temperature.
\newblock \emph{The Annals of Probability}, 48\penalty0 (6):\penalty0
  2755--2806, 2020.

\bibitem[Beardwood et~al.(1959)Beardwood, Halton, and Hammersley]{bjh}
J.~Beardwood, J.~H. Halton, and J.~M. Hammersley.
\newblock The shortest path through many points.
\newblock \emph{Proceedings of the Cambridge Philosophical Society},
  55:\penalty0 299--327, 1959.

\bibitem[Benaych-Georges et~al.(2019)Benaych-Georges, Enriquez, and
  Micha\"{i}l]{georges}
F.~Benaych-Georges, N.~Enriquez, and A.~Micha\"{i}l.
\newblock Empirical spectral distribution of a matrix under perturbation.
\newblock \emph{Jpournal of Theoretical Probability}, 32:\penalty0 1220--1251,
  2019.

\bibitem[Benaych-Georges et~al.(2021)Benaych-Georges, Enriquez, and
  Micha\"{i}l]{georges2}
F.~Benaych-Georges, N.~Enriquez, and A.~Micha\"{i}l.
\newblock Eigenvectors of a matrix under random perturbation.
\newblock \emph{Random Matrices: Theory and Applications}, 10\penalty0
  (02):\penalty0 2150023, 2021.

\bibitem[Bhattacharya and Waymire(2009)]{bhatt}
R.~N. Bhattacharya and E.~C. Waymire.
\newblock \emph{Stochastic Processes with Applications}.
\newblock Classics in Applied Mathematics, Society of Industrial and Applied
  Mathematics, 2009.

\bibitem[Biggins(1976)]{Biggins}
J.~Biggins.
\newblock The first-and last-birth problems for a multitype age-dependent
  branching process.
\newblock \emph{Advances in Applied Probability}, 8\penalty0 (3):\penalty0
  446--459, 1976.

\bibitem[Biskup et~al.(2020)Biskup, Gufler, and Louidor]{biskup}
M.~Biskup, S.~Gufler, and O.~Louidor.
\newblock Near-maxima of the two-dimensional discrete gaussian free field.
\newblock \emph{Arxiv:Probability}, 2020.
\newblock URL \url{https://doi.org/10.48550/arxiv.2020.13939}.

\bibitem[Boucheron et~al.(2013)Boucheron, Lugosi, and Massart]{boucheron}
S.~Boucheron, G.~Lugosi, and P.~Massart.
\newblock \emph{{Concentration Inequalities : A Nonasymptotic Theory of
  Independence}}.
\newblock Oxford University Press, 2013.
\newblock ISBN 978-0-19-953525-5.

\bibitem[Burago and Ivanov(2001)]{bbi}
D.~B.~Y. Burago and S.~Ivanov.
\newblock \emph{A Course in Metric Geomtery}.
\newblock Graduate Studies in mathematics, Volume 33; American Mathematical
  Society, 2001.
\newblock ISBN 978-0-8218-2129-9.

\bibitem[Chatterjee(2008)]{cha}
S.~Chatterjee.
\newblock A new method for normal approximations.
\newblock \emph{The Annals of Probability}, 36\penalty0 (4):\penalty0
  1584--1610, 2008.

\bibitem[Chatterjee(2014)]{chabook}
S.~Chatterjee.
\newblock \emph{Superconcentration and Related Topics}.
\newblock Springer Monographs in Mathematics, 2014.
\newblock ISBN 978-3-319-03887-2.

\bibitem[Chatterjee(2023{\natexlab{a}})]{cha:surface}
S.~Chatterjee.
\newblock Superconcentration in surface growth.
\newblock \emph{Random Stuctures \& Algorithms}, 62\penalty0 (2):\penalty0
  304--334, 2023{\natexlab{a}}.

\bibitem[Chatterjee(2023{\natexlab{b}})]{chaea}
S.~Chatterjee.
\newblock Spin glass phase at zero temperature in edwards-anderson model.
\newblock \emph{Arxiv:Probability}, 2023{\natexlab{b}}.
\newblock URL \url{https://doi.org/10.48550/arxiv.2301.04112}.

\bibitem[Chen et~al.(2017)Chen, Dey, and Panchenko]{kuo-chen}
W.~Chen, P.~Dey, and D.~Panchenko.
\newblock Fluctuations of the free energy in the mixed $p$-spin models with
  external field.
\newblock \emph{Probability theory and Related Fields}, 168:\penalty0 41--53,
  2017.

\bibitem[Chen et~al.(2018)Chen, Handschy, and Lerman]{kuo-chen2}
W.~Chen, M.~Handschy, and G.~Lerman.
\newblock On the energy landscape of mixed even $p$-spin model.
\newblock \emph{Probability theory and Related Fields}, 171:\penalty0 53--95,
  2018.

\bibitem[Contucci and Giardin\`a(2013)]{contucci}
P.~Contucci and C.~Giardin\`a.
\newblock \emph{Perspective on Spin Glasses}.
\newblock Cambridge University Press, Cambridge, UK, 2013.
\newblock ISBN 978-0-521-76334-9.

\bibitem[Coppersmith and Sorkin(1999)]{coppersmith}
D.~Coppersmith and G.~Sorkin.
\newblock Constructive bounds and exact expectations for the random assignment
  problem.
\newblock \emph{Random Structures \& Algorithms}, 15\penalty0 (2):\penalty0
  113--144, 1999.

\bibitem[Ding and Zeitouni(2014)]{ding2014}
J.~Ding and O.~Zeitouni.
\newblock Extreme values for two-dimensional discrete gaussian free field.
\newblock \emph{The Annals of Probability}, 42\penalty0 (4):\penalty0
  1480--1515, 2014.

\bibitem[Ding et~al.(2015)Ding, Eldan, and Zhai]{ding}
J.~Ding, R.~Eldan, and A.~Zhai.
\newblock On multiple peaks and moderate deviations for the supremum of a
  guassian field.
\newblock \emph{The Annals of Probability}, 43\penalty0 (6):\penalty0
  3468--3493, 2015.

\bibitem[Dubins and Meilijson(1974)]{dubins}
L.~E. Dubins and I.~Meilijson.
\newblock On stability for optimization problems.
\newblock \emph{The Annals of Probability}, 2\penalty0 (2):\penalty0 243--255,
  1974.

\bibitem[Edwards and Anderson(1975)]{eamodel}
S.~F. Edwards and P.~W. Anderson.
\newblock Theory of spin glasses.
\newblock \emph{Journal of Physics F: Metal Physics}, 5:\penalty0 965, 1975.

\bibitem[Frieze(1985)]{frieze}
A.~M. Frieze.
\newblock On the value of random minmum spanning tree problem.
\newblock \emph{Discrete Applied Mathematics}, 10:\penalty0 47--56, 1985.

\bibitem[Frieze(2004)]{friezetsp}
A.~M. Frieze.
\newblock On random symmetric travelling salesman problem.
\newblock \emph{Mathematics of Operations Research}, 29\penalty0 (4):\penalty0
  878--890, 2004.

\bibitem[Gamarnik et~al.(2020)Gamarnik, Jagannath, and Wein]{gamarnik}
D.~Gamarnik, A.~Jagannath, and A.~S. Wein.
\newblock Low-degree hardness of random optimization problems.
\newblock \emph{IEEE 61st Annual Symposium on Foundations of Computer Science
  (EOCS)}, pages 131--140, 2020.

\bibitem[Ganguly and Hammond(2020)]{ganguly2020}
S.~Ganguly and A.~Hammond.
\newblock Stability and chaos in dynamical last passage percolation.
\newblock \emph{Arxiv:Probability}, 2020.
\newblock URL \url{https://doi.org/10.48550/arxiv.2020.05837}.

\bibitem[Ganguly and Hammond(2023)]{ganguly2023}
S.~Ganguly and A.~Hammond.
\newblock The geometry of near ground states in gaussian polymer models.
\newblock \emph{Electronic Journal of Probability}, 28:\penalty0 1--28, 2023.

\bibitem[Hammersley(1974)]{Hammersley}
J.~M. Hammersley.
\newblock Postulates for subadditive processes.
\newblock \emph{The Annals of Probability}, 2\penalty0 (4):\penalty0 652--680,
  1974.

\bibitem[Huber and Ronchetti(2009)]{robust}
P.~J. Huber and E.~M. Ronchetti.
\newblock \emph{Robust Statistics}.
\newblock Wiley, 2009.
\newblock ISBN 978-0-470-12990-6.

\bibitem[Karp(1979)]{karp}
R.~M. Karp.
\newblock A ptching problem for the non-symmetric travelling salesman problem.
\newblock \emph{SIAM Journal on Computing}, 8:\penalty0 561--573, 1979.

\bibitem[Kesten and Stigum(1966)]{Kesten}
H.~Kesten and B.~P. Stigum.
\newblock A limit theorem for multi-dimensional gaton-watson processes.
\newblock \emph{Annals of Mathematical Statistics}, 37:\penalty0 1211--1223,
  1966.

\bibitem[Kingman(1975)]{Kingman}
J.~F.~C. Kingman.
\newblock The first birth problem for an age-dependant branching process.
\newblock \emph{The Annals of Probability}, 3\penalty0 (5):\penalty0 790--801,
  1975.

\bibitem[Lachieze-Rey et~al.(2019)Lachieze-Rey, Schulte, and Yukich]{lachieze}
R.~Lachieze-Rey, M.~Schulte, and J.~E. Yukich.
\newblock Normal approximations for stabilizing functionals.
\newblock \emph{Annals of Applied Probability}, 29\penalty0 (2):\penalty0
  931--993, 2019.

\bibitem[Liang et~al.(2010)Liang, Liu, and Carroll]{mcmc}
F.~Liang, C.~Liu, and R.~J. Carroll.
\newblock \emph{Advanced Markov Chain Monte Carlo Methods : Learning From Past
  Samples}.
\newblock Wiley Series in Computational Statistics, 2010.
\newblock ISBN 978-0-470-74826-8.

\bibitem[Linusson and W\"{a}stlund(2004)]{wastlund}
S.~Linusson and J.~W\"{a}stlund.
\newblock A proof of parisi's conjecture on the random assignment problem.
\newblock \emph{Probability Theory and Related Fields}, 128:\penalty0 419--440,
  2004.

\bibitem[Mar\v{c}enko and Pastur(1967)]{marchenko}
V.~A. Mar\v{c}enko and L.~A. Pastur.
\newblock Distribution of eigenvalues in certain sets of random matrices.
\newblock \emph{Mathematics of USSR-Sbornik}, 1:\penalty0 457--483, 1967.

\bibitem[Mordant and Segers(2021)]{mordant}
G.~Mordant and J.~Segers.
\newblock Maxima and near-maxima of a gaussian random assignment field.
\newblock \emph{Statistics \& Probability Letters}, 173, 2021.

\bibitem[Nair et~al.(2003)Nair, Prabhakar, and Sharma]{balaji}
C.~Nair, B.~Prabhakar, and M.~Sharma.
\newblock Proofs of the parisi and coppersmith-sorkinconjectures for the finite
  random assignment problem.
\newblock \emph{44th Annual IEEE Symposium on Foundations of Computer Science,
  2003. Proceedings. Cambridge, MA}, pages 168--178, 2003.

\bibitem[Newman and Stein(2022)]{newman}
C.~M. Newman and D.~L. Stein.
\newblock Ground state stability and the nature of the spin glass phase.
\newblock \emph{Physical Review E}, 105\penalty0 (4):\penalty0 044132, 2022.

\bibitem[O'Donnell(2014)]{ryan}
R.~O'Donnell.
\newblock \emph{Analysis of Boolean Functions}.
\newblock Cambridge University Press, New York, USA, 2014.
\newblock ISBN 978-1-107-03832-5.

\bibitem[Panchenko(2013)]{panchenko}
D.~Panchenko.
\newblock \emph{The Sherrington-Kirkpatrick Model}.
\newblock Spinger Monographs in mathematics, Springer, New York, NY, 2013.
\newblock ISBN 978-1-4614-6288-0.

\bibitem[Parisi(1998)]{parisi}
G.~Parisi.
\newblock A conjecture on random bipartite matching.
\newblock \emph{Technical report, Arxiv:cond-mat}, 1998.
\newblock URL \url{https://doi.org/10.48550/arxiv.cond-mat/9801176}.

\bibitem[Penrose(2007)]{penrose}
M.~D. Penrose.
\newblock Law of large numbers in stochastic geometry with statistical
  applications.
\newblock \emph{Bernoulli}, 13\penalty0 (4):\penalty0 1124--1150, 2007.

\bibitem[Penrose and Yukich(2001)]{penrose:yukich}
M.~D. Penrose and J.~E. Yukich.
\newblock Central limit theorems for some graphs in computational geometry.
\newblock \emph{Annals of Applied Probability}, 11\penalty0 (4):\penalty0
  1005--1041, 2001.

\bibitem[Penrose and Yukich(2003)]{penrose:yukich2}
M.~D. Penrose and J.~E. Yukich.
\newblock Weak laws of large numbers in geometric probability.
\newblock \emph{Annals of Applied Probability}, 13\penalty0 (1):\penalty0
  277--303, 2003.

\bibitem[Penrose and Yukich(2005)]{penrose:yukich3}
M.~D. Penrose and J.~E. Yukich.
\newblock Normal approximation in geomtric probability : Stein's method and
  applications.
\newblock \emph{Lecture notes Series. Institiute of mathematical Science.
  National University of Singapore.}, pages 37--58, 2005.

\bibitem[Penrose and Yukich(2013)]{penrose:yukich4}
M.~D. Penrose and J.~E. Yukich.
\newblock Limit theory for point processes in manifolds.
\newblock \emph{Annals of Applied Probability}, 23\penalty0 (6):\penalty0
  2161--2211, 2013.

\bibitem[Rhee(1993)]{rhee}
W.~T. Rhee.
\newblock On the stochastic euclidean traveling salesman problem for
  distributions with unbounded support.
\newblock \emph{Mathematics of Operations Research}, 18\penalty0 (2):\penalty0
  292--299, 1993.

\bibitem[Royer(2007)]{sobolev}
G.~Royer.
\newblock \emph{An Initiation to Logarithmic Sobolev Inequalities}.
\newblock American Mathematical Society, Providence, R.I., 2007.

\bibitem[Schilling(2017)]{schilling}
R.~L. Schilling.
\newblock \emph{Measures, Integrals and Martingales}.
\newblock Cambridge University Press, 2017.
\newblock ISBN 978-1-316-62024-3.

\bibitem[Sherrington and Kirkpatrick(1975)]{skmodel}
D.~Sherrington and S.~Kirkpatrick.
\newblock Solvable model of a spin glass.
\newblock \emph{Physical Review letters}, 35:\penalty0 1792--1796, 1975.

\bibitem[Shi(2015)]{shi2015}
Z.~Shi.
\newblock \emph{Branching random walks}.
\newblock Springer, 2015.
\newblock ISBN 978-3-319-25371-8.

\bibitem[Soshnikov(1999)]{soshnikov}
A.~Soshnikov.
\newblock Universality at the edge of the spectrum in wigner random matrices.
\newblock \emph{Communications in mathematical physics}, 207\penalty0
  (3):\penalty0 697--733, 1999.

\bibitem[Steele(1997)]{steele}
M.~J. Steele.
\newblock \emph{Probability Theory and Combinatorial Optimization}.
\newblock Number~69 in CBMS-NSF Regional Conference Series in Applied
  Mathematics. Society for Industrial and Applied Mathematics, 1997.
\newblock ISBN 0-89871-380-3.

\bibitem[Talagrand(2011)]{talagrand}
M.~Talagrand.
\newblock \emph{Mean Field models for Spin Glasses. Volume I : Basic Examples}.
\newblock A Series of Modern Surveys in Mathematics, Springer, 2011.
\newblock ISBN 978-3-642-15203-0.

\bibitem[Trinh(2019)]{khanh}
K.~D. Trinh.
\newblock On central limit theorems in stochastic geometry for add-one cost
  stabilizing functionals.
\newblock \emph{Electronic Communications in Probability}, 24\penalty0
  (76):\penalty0 1--15, 2019.

\bibitem[Vershynin(2018)]{hdp}
R.~Vershynin.
\newblock \emph{High-dimensional Probability : An Introduction with Application
  in Data Science}.
\newblock Number-47 in Cambridge Series in Statistical and Probabilistic
  Mathematics, Cambridge University Press, 2018.
\newblock ISBN 978-1-108-41519-4.

\bibitem[Wigner(1955)]{wigner}
E.~P. Wigner.
\newblock Characteristic vectors of bordered matrices with infinite dimensions.
\newblock \emph{Annals of Mathematics}, 62:\penalty0 548--564, 1955.

\bibitem[Yukich(1998)]{yukich}
J.~E. Yukich.
\newblock \emph{Probability Theory of Classical Euclidean Optimization
  Problems}.
\newblock Lecture Notes in Mathematics, Springer, 1998.
\newblock ISBN 3-540-63333-8.

\end{thebibliography}


\end{document}